\documentclass[reqno, twoside]{amsart}
\usepackage[pdfstartview=FitH,
            pdfauthor={Antieau, Mathew, Morrow, and Nikolaus},
            pdftitle={On the Beilinson fiber square},
            colorlinks,
            linkcolor=reference,
            citecolor=citation,
            urlcolor=e-mail,
            ]{hyperref}
\newcommand{\crys}{\mathrm{crys}}
\usepackage{enumerate}
\usepackage{mathrsfs}
\newcommand{\LL}{\mathbb{L}}
\newcommand{\fil}{\mathrm{Fil}}
\newcommand{\qlrsp}{\mathrm{qrsPerfd}}

\newcommand{\qsyn}{\mathrm{QSyn}}
\usepackage{relsize} 
\usepackage[bbgreekl]{mathbbol}
\DeclareSymbolFontAlphabet{\mathbb}{AMSb} 
\DeclareSymbolFontAlphabet{\mathbbl}{bbold}

\newcommand{\Prism}{{ \widehat{\mathbbl{\Delta}}}}
\newcommand{\Prismnc}{{ {\mathbbl{\Delta}}}}

\usepackage{verbatim}
\usepackage[left=1.2in, right=1.2in, top=1.5in, bottom=1.5in]{geometry}
\usepackage{amsfonts}
\usepackage{amssymb}
\usepackage{amsmath}
\usepackage{cleveref}
\usepackage{amsthm}

\usepackage{color}
\definecolor{todo}{rgb}{1,0,0}
\definecolor{conditional}{rgb}{0,1,0}
\definecolor{e-mail}{rgb}{0,.40,.80}
\definecolor{reference}{rgb}{.20,.60,.22}
\definecolor{mrnumber}{rgb}{.80,.40,0}
\definecolor{citation}{rgb}{0,.40,.80}

\newcommand{\TP}{\mathrm{TP}}

\newcommand{\can}{\mathrm{can}}
\newcommand{\grcyc}{\mathrm{GrCycSp}}
\newcommand{\triv}{\mathrm{triv}}
\newcommand{\pfd}{\mathrm{Perfd}}

\newcommand{\qrsp}{\mathrm{QRSPerfd}}
\newcommand{\ainf}{\mathrm{A}_{\mathrm{inf}}}
\newcommand{\acrys}{\mathrm{A}_{\mathrm{crys}}}
\newcommand{\bcrys}{\mathrm{B}^+_{\mathrm{crys}}}
\newcommand{\bdr}{\mathrm{B}_{\mathrm{dR}}}

\newcommand{\op}{\mathrm{op}}
\newcommand{\Map}{\mathrm{Map}}
\newcommand{\Mod}{\mathrm{Mod}}

\DeclareMathOperator{\spec}{Spec}
\DeclareMathOperator{\Spec}{Spec}

\DeclareMathOperator{\spf}{Spf}

\newcommand{\cycsp}{\mathrm{CycSp}}

\newcommand{\THH}{\mathrm{THH}}

\newcommand{\tr}{\mathrm{tr}}

\newcommand{\alg}{\mathrm{Alg}}
\renewcommand{\hom}{\mathrm{Hom}}

\newcommand{\fun}{\mathrm{Fun}}

\usepackage{xy}
\input xy
\renewcommand{\sp}{\mathrm{Sp}}
\newcommand{\et}{\mathrm{\acute{e}t}}
\newcommand{\iso}{\cong}

\newcommand{\sub}[1]{{\mbox{\rm \scriptsize #1}}}

\xyoption{all}

\theoremstyle{definition}
\newtheorem{definition}{Definition}[section]

\newtheorem{cons}[definition]{Construction}
\newtheorem{construction}[definition]{Construction}
\newtheorem{example}[definition]{Example}
\newtheorem{remark}[definition]{Remark}
\theoremstyle{theorem}
\newtheorem{proposition}[definition]{Proposition}
\newtheorem{lemma}[definition]{Lemma}
\newtheorem{corollary}[definition]{Corollary}

\newtheorem{theorem}[definition]{Theorem}

\renewcommand{\L}{{L}}
\newcommand{\WW}{\mathbb{W}}

\newcommand{\FF}{\mathbb{F}}
\newcommand{\ZZ}{\mathbb{Z}}
\newcommand{\QQ}{\mathbb{Q}}
\newcommand{\Sp}{\mathrm{Sp}}
\newcommand{\we}{\simeq}
\renewcommand{\SS}{\mathbb{S}}
\newcommand{\Oscr}{\mathcal{O}}
\newcommand{\Nscr}{\mathcal{N}}
\newcommand{\K}{\mathrm{K}}

\newcommand{\R}{\mathrm{R}}
\newcommand{\dR}{\mathrm{dR}}
\newcommand{\HP}{\mathrm{HP}}
\newcommand{\HC}{\mathrm{HC}}
\newcommand{\Cscr}{\mathcal{C}}
\renewcommand{\phi}{\varphi}
\newcommand{\Dscr}{\mathcal{D}}

\newcommand{\Fun}{{\mathrm{Fun}}}

\newcommand{\TC}{\mathrm{TC}}
\newcommand{\HH}{\mathrm{HH}}

\newcommand{\act}{\mathrm{act}}
\newcommand{\xto}{\xrightarrow}
\newtheoremstyle{named}{}{}{\itshape}{}{\bfseries}{.}{.5em}{#1 \thmnote{#3}}
\theoremstyle{named}
\newtheorem*{namedtheorem}{Theorem}
\newtheorem*{namedcorollary}{Corollary}

\theoremstyle{definition}
\newtheorem{conjecture}[definition]{Conjecture}
\newtheorem{question}[definition]{Question}


\keywords{$p$-adic $K$-theory, cyclic homology, deformation  of algebraic cycles, motivic cohomology}
\subjclass[2010]{14F30, 14F40, 19D55, 19E15}

\begin{document}

\title{On the Beilinson fiber square}

\author[B. Antieau]{Benjamin Antieau}
\address{Department of Mathematics, Northwestern University}
\email{antieau@northwestern.edu}

\author[A. Mathew]{Akhil Mathew}
\address{Department of Mathematics, University of Chicago}
\email{amathew@math.uchicago.edu}

\author[M. Morrow]{Matthew Morrow}
\address{CNRS, Institut de Math\'ematiques de Jussieu--Paris Rive Gauche, Sorbonne Universit\'e}
\email{matthew.morrow@imj-prg.fr}

\author[T. Nikolaus]{Thomas Nikolaus}
\address{FB Mathematik und Informatik, Universit\"at M\"unster }
\email{nikolaus@uni-muenster.de}

\date{\today}

\maketitle

\begin{abstract}
Using topological cyclic homology, we give a  refinement of Beilinson's
$p$-adic Goodwillie isomorphism between relative continuous $\K$-theory and cyclic homology. 
As a result, we generalize results of Bloch--Esnault--Kerz and Beilinson
on the $p$-adic deformations of $\K$-theory classes. 
Furthermore, we prove structural results for the Bhatt--Morrow--Scholze filtration on
$\mathrm{TC}$ and identify the graded pieces with the syntomic cohomology of Fontaine--Messing.
\end{abstract}

\tableofcontents

\section{Introduction}

\subsection{Fiber squares}
For any ring $R$, one has its connective algebraic $\K$-theory $\K(R)$ and its negative cyclic homology $\HC^-(R)$; they are related via
the Goodwillie--Jones trace map  $\mathrm{tr}_\sub{GJ}\colon \K(R) \to \HC^-(R)$, often
interpreted and referred to as a Chern character, \cite[Ch.~8]{Loday}.
Moreover, when $R$ is a $\mathbb{Q}$-algebra, the 
map $\mathrm{tr}_{\sub{GJ}}$ induces an isomorphism on relative theories for
nilimmersions, via the following theorem of Goodwillie.

\begin{theorem}[Goodwillie~\cite{G86}] 
If $I \subseteq R$ is a nilpotent ideal in an associative $\mathbb{Q}$-algebra $R$, then the
commutative square
\[ \xymatrix{
    \K(R)\ar[r]\ar[d]^{\tr_{\sub{GJ}}}&\K(R/I)\ar[d]^{\tr_{\sub{GJ}}}\\
    \HC^-(R)\ar[r]&\HC^-(R/I)
}\]
is cartesian, i.e., the Goodwillie--Jones trace map induces an equivalence
$\tr_{\sub{GJ}}\colon\K(R,I)\we\HC^-(R,I)$ on relative theories.
\end{theorem}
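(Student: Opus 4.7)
My plan is to combine two inputs: the Dundas--Goodwillie--McCarthy theorem and the rational degeneration of the cyclotomic structure on $\THH$ of a $\QQ$-algebra. By Dundas--Goodwillie--McCarthy, for any nilpotent extension $R \twoheadrightarrow R/I$ of connective $\EE_1$-rings, the cyclotomic trace induces an equivalence on relative theories, $\K(R, I) \simeq \TC(R, I)$. Thus the analogue of the theorem's square, with $\HC^-$ replaced by $\TC$, is already cartesian for any associative $R$ (no rationality assumption). It then remains only to identify $\TC(R, I)$ with $\HC^-(R, I)$ in the $\QQ$-algebra case, compatibly with the two trace maps.

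For this identification, I would use that when $R$ is a $\QQ$-algebra, $\THH(R) \simeq \HH(R/\QQ)$ as cyclotomic spectra, so $\TC^-(R) \simeq \HC^-(R/\QQ) = \HC^-(R)$ and $\TP(R) \simeq \HP(R)$. In the Nikolaus--Scholze formulation, $\TC$ is the fiber of a map whose target involves $p$-completions of $\TP$; on the \emph{relative} parts these vanish because the relative theories are $\QQ$-linear. Consequently the fiber computing $\TC$ collapses onto $\TC^-$ at the level of relative theories, giving $\TC(R, I) \simeq \TC^-(R, I) = \HC^-(R, I)$. The composite $\K \to \TC \to \TC^- \simeq \HC^-$ then agrees with the Goodwillie--Jones trace $\tr_{\sub{GJ}}$ by the standard comparison between the cyclotomic trace and the Chern character through $\THH$, and so the cartesian property transfers.

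The main obstacle is controlling the cyclotomic Frobenius precisely: although one expects it to disappear in the $\QQ$-linear, nilpotent setting, verifying this on relative theories while simultaneously matching the two trace maps requires care about connective covers and $\mathbb{S}$-linearity (since absolutely $\TC(R) \neq \HC^-(R)$ even when $R$ is rational, because of contributions from $\TC(\mathbb{S})$). A more classical alternative, following Goodwillie's original 1986 argument, would avoid $\TC$ altogether: filter $I \supseteq I^2 \supseteq \cdots \supseteq I^n = 0$ to reduce by induction to a square-zero extension, then identify both relative theories with a shifted Hochschild bar construction on $I$ as an $R/I$-bimodule, using an explicit combinatorial model of $\K$-theory (Volodin spaces or the $+$-construction) and an explicit trace. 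Either route ultimately reduces the theorem to the same rational linearization computation.
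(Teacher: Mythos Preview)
The paper does not give its own proof of this statement: it is cited as a classical result of Goodwillie \cite{G86} in the introduction, serving as background and motivation for the paper's own results. So there is no ``paper's proof'' to compare against.

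That said, your proposed route via Dundas--Goodwillie--McCarthy is a legitimate (if anachronistic) modern re-derivation. One small correction: you do not need to pass to relative theories to kill ``contributions from $\TC(\SS)$.'' For a $\QQ$-algebra $R$, the spectrum $\THH(R)$ is already a $\QQ$-module, so $\THH(R)^{tC_p}=0$ for every prime $p$; in the Nikolaus--Scholze formula the target $\prod_p \TP(R)_p^\wedge$ vanishes outright, and one gets $\TC(R)\simeq\TC^-(R)\simeq\HC^-(R)$ \emph{absolutely}, not merely on fibers. The compatibility with $\tr_{\sub{GJ}}$ then follows from the standard factorization of the cyclotomic trace through $\TC^-$. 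Your ``classical alternative'' --- filtering by powers of $I$, reducing to a square-zero extension, and comparing explicit models via Volodin or plus-construction $\K$-theory --- is essentially what Goodwillie actually did in 1986, and is what the citation points to.
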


Here for a pair $(R, I)$ with $I \subseteq R$ an ideal, we write $\K(R, I)$
for the
fiber of $\K(R) \to\K(R/I)$, and similarly for other functors such as $\HC^-$ and so on.

In order to extend Goodwillie's theorem to more general rings, 
one uses topological cyclic homology $\mathrm{TC}(R)$,
introduced in \cite{BHM} in the $p$-complete case and in \cite{DGM} integrally,
and the cyclotomic trace $\mathrm{tr}\colon\K(R) \to \TC(R)$, which
refines the Goodwillie--Jones trace map.

\begin{theorem}[Dundas--Goodwillie--McCarthy~\cite{DGM}]\label{thm:dgm}
    If $I \subseteq R$ is a nilpotent ideal in an associative ring $R$,
    then the commutative square
    \[ \xymatrix{
        \K(R)\ar[r]\ar[d]^{\tr}&\K(R/I)\ar[d]^{\tr}\\
        \TC(R)\ar[r]&\TC(R/I)
    }\] is cartesian, i.e., the cyclotomic trace induces an equivalence
    $\mathrm{tr} \colon \K(R,I) \we \TC(R,I)$ on relative theories.
\end{theorem}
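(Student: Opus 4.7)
The plan is to reduce to the case of a square-zero ideal by dévissage, and then to analyze that case via Goodwillie's calculus of functors applied to both $\K$ and $\TC$ as functors of the ideal (viewed as a bimodule).

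Since $I$ is nilpotent, $I^n=0$ for some $n\geq 1$. I would filter $R$ by the quotients $R = R/I^n \twoheadrightarrow R/I^{n-1} \twoheadrightarrow \cdots \twoheadrightarrow R/I$; each transition $R/I^{k+1}\twoheadrightarrow R/I^k$ has square-zero kernel $I^k/I^{k+1}$. Because cartesian squares compose along pullbacks, the fiber square for $R\to R/I$ decomposes into the fiber squares for these square-zero quotients, so it suffices to assume $I^2=0$. Setting $A=R/I$ and viewing $I$ as an $A$-bimodule with zero multiplication, we may identify $R\we A\oplus I$ as the trivial square-zero extension.

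The heart of the argument is then to compare the two functors
\[ F_{\K}(M) := \K(A\oplus M,M), \qquad F_{\TC}(M) := \TC(A\oplus M,M) \]
on connective $A$-bimodule spectra, together with the natural transformation $\tr\colon F_{\K}\to F_{\TC}$ induced by the cyclotomic trace. I would apply Goodwillie's calculus of functors to both sides. By the theorem of Dundas--McCarthy on the Goodwillie derivatives of $\K$-theory, the layers of the Taylor tower of $F_{\K}$ at the zero bimodule are given by explicit $\THH$-invariants: the $n$-th layer is, up to shift, a cyclic/Tate-type construction on $\THH(A;M^{\otimes_A n})$ with its $C_n$-action. Exactly the same formulas describe the layers of the Taylor tower of $F_{\TC}$, essentially by the construction of $\TC$ out of the cyclotomic structure on $\THH$, and the cyclotomic trace is compatible with both descriptions. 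Consequently $\tr$ induces an equivalence on each polynomial approximation.

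To conclude, one needs convergence: since $I$ is a discrete (hence connective) $A$-bimodule, standard connectivity estimates in Goodwillie calculus show that both Taylor towers converge to $F_{\K}(I)$ and $F_{\TC}(I)$, and the levelwise equivalence assembles to the desired $\tr\colon \K(R,I)\we \TC(R,I)$. The main obstacle is the identification of the Goodwillie derivatives of $\K$-theory with explicit $\THH$-invariants: this is the substantive content of McCarthy's theorem and its refinements, and requires delicate analysis of cross-effects, $S^1$-actions, and the cyclotomic structure. Once that identification is in hand, the rest of the argument, including the dévissage and the convergence, is essentially formal.
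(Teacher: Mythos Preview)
The paper does not give its own proof of this theorem; it is stated in the introduction as a result of Dundas--Goodwillie--McCarthy, cited from \cite{DGM}, and then used as a black box throughout (e.g., in the proof of Theorem~A and in Proposition~\ref{isogeny_GDM}). So there is no in-paper argument to compare your proposal against.

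That said, your sketch is a reasonable outline of the standard approach in \cite{DGM}, with one gap worth flagging. After d\'evissage you assert that a square-zero extension $R$ of $A=R/I$ by $I$ ``may be identified with the trivial square-zero extension $A\oplus I$.'' This is false in general: square-zero extensions of $A$ by a fixed bimodule need not split. As written, your Goodwillie-calculus argument only analyzes the functors $M\mapsto \K(A\oplus M,M)$ and $M\mapsto \TC(A\oplus M,M)$ on \emph{trivial} square-zero extensions, so an additional step is needed to pass to the non-split case. In the literature this is handled either by applying Goodwillie calculus to functors on a larger source (e.g., augmented $A$-algebras, or simply linearizing $\K$ and $\TC$ as functors of rings along the map $R\to R/I$ directly), or by a \v{C}ech-nerve/pullback trick that expresses an arbitrary square-zero extension in terms of trivial ones---compare the argument in Proposition~\ref{nontrivsqzero} of this paper. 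Once that is addressed, the rest of your outline (identification of the layers via Dundas--McCarthy, convergence from connectivity estimates) is the correct shape.
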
 

Topological cyclic homology is thus the primary tool in  
calculations of relative $\K$-theory (see for example \cite{Madsen94, HM, HM03}), but it is a significantly 
more complicated invariant than cyclic homology. 
However, recently Beilinson \cite{Beilinson}
gave a version of Goodwillie's original result 
in a $p$-adic setting, when the ideal in question is $(p)$ and $R$ is assumed
to be complete along $(p)$. 
The first goal of this paper is to construct a variant of the Chern character
and prove a  strengthening of Beilinson's theorem. 

Throughout this paper, we fix a prime number $p$. 
We use the convention that 
the modifier ``$\mathbb{Z}_p$'' refers to $p$-adic completion of an object, and
``$\mathbb{Q}_p$'' to
the rationalization of the $p$-completion; for example $\K(R; \mathbb{Z}_p)$
denotes the $p$-complete $\K$-theory of $R$, and $\K(R; \mathbb{Q}_p)$ denotes the
rationalization of $\K(R;\ZZ_p)$. 
Similarly, the modifier ``$\mathbb{Q}$'' refers to rationalization. 
We denote by  $\HP$
(resp.~$\HC$)  periodic cyclic (resp.~cyclic) homology. 

\begin{namedtheorem}[A]\label{thm:a}
For an associative ring $R$, there is a natural  
$p$-adic Chern character map 
\begin{equation} \label{cchern} \mathrm{tr}_{\sub{crys}}\colon \K(R/p; \mathbb{Q}_p) \to \HP(R; \mathbb{Q}_p)
\end{equation} 
which fits into a natural commutative square
\begin{equation}\label{Bpullback}\begin{gathered} \xymatrix{
    \K(R;\QQ_p)\ar[r]\ar[d]^{\tr_{\sub{GJ}}}&\K(R/p;\QQ_p)\ar[d]^{\tr_\sub{crys}}\\
    \HC^-(R;\QQ_p)\ar[r]&\HP(R;\QQ_p).}
\end{gathered}\end{equation}
If $R$ is commutative and henselian along $(p)$ then this square is cartesian, thereby giving an equivalence $\K(R, (p); \mathbb{Q}_p) \simeq \Sigma \HC(R; \mathbb{Q}_p)$. 
\end{namedtheorem}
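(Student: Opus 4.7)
The strategy is to reduce the statement to topological cyclic homology using rigidity, analyze $\TC$ via the Nikolaus--Scholze formula, and compare with cyclic homology rationally after $p$-completion.

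To construct $\tr_\crys$, I would take the composite
\[
\K(R/p; \QQ_p) \xrightarrow{\tr} \TC(R/p; \QQ_p) \xrightarrow{\can} \TP(R/p; \QQ_p) \xleftarrow{\sim} \TP(R; \QQ_p) \to \HP(R; \QQ_p),
\]
where the third arrow is a rigidity equivalence asserting that after $p$-completion and rationalization $\TP$ depends only on the mod $p$ reduction---to be proved via the Bhatt--Morrow--Scholze motivic filtration on $\TP(-; \ZZ_p)$, whose graded pieces are Breuil--Kisin twisted Hodge-completed prismatic cohomology of the special fiber and hence depend only on $R/p$ after inverting $p$---and the last arrow is the linearization induced by $\THH \to \HH$. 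Commutativity of \eqref{Bpullback} would then follow by naturality: both composites $\K(R; \QQ_p) \to \HP(R; \QQ_p)$ factor through $\TC(R; \QQ_p) \to \TP(R; \QQ_p)$ (the Nikolaus--Scholze equalizer description of $\TC$ forces the two candidate maps $\TC \to \TP$, via $\varphi$ or $\can$, to coincide) followed by the linearization to $\HP$.

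For the cartesian statement, assume $R$ is commutative and henselian along $(p)$. The Clausen--Mathew--Morrow rigidity theorem for $p$-adic $K$-theory of henselian pairs (extending Theorem~\ref{thm:dgm} from the nilpotent to the henselian setting) gives $\K(R, (p); \QQ_p) \simeq \TC(R, (p); \QQ_p)$. Applying the Nikolaus--Scholze fiber sequence relatively yields
\[
\TC(R, (p); \QQ_p) \simeq \mathrm{fib}\bigl(\varphi - \can\colon \TC^-(R, (p); \QQ_p) \to \TP(R, (p); \QQ_p)\bigr).
\]
Since the crystalline rigidity above implies $\TP(R, (p); \QQ_p) = 0$, this simplifies to $\TC^-(R, (p); \QQ_p)$. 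The $S^1$-norm fiber sequence $\Sigma \THH_{hS^1} \to \TC^- \to \TP$, combined with the same vanishing, then identifies $\TC^-(R, (p); \QQ_p) \simeq \Sigma (\THH(R, (p))_{hS^1})^{\wedge}_p \otimes \QQ_p$. Finally, the linearization $\THH \to \HH$---which becomes an equivalence after $\otimes \QQ_p$ on bounded below spectra, since the fiber of $\mathbb{S} \to H\ZZ$ is rationally trivial---together with $\HC(R/p; \QQ_p) = 0$ (as $\HH(R/p)$ is $p$-power torsion) identifies this with $\Sigma \HC(R; \QQ_p) \simeq \mathrm{fib}(\HC^-(R; \QQ_p) \to \HP(R; \QQ_p))$, via the Tate fiber sequence. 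This chain of equivalences realizes the total horizontal fiber of \eqref{Bpullback}, giving the cartesianness.

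The main obstacle is the crystalline rigidity $\TP(R; \QQ_p) \simeq \TP(R/p; \QQ_p)$, whose proof requires quasisyntomic descent to reduce to the Bhatt--Morrow--Scholze identification of $\TP(-; \ZZ_p)$ with a (Breuil--Kisin twisted) crystalline invariant of the special fiber, combined with the observation that these twists trivialize rationally. A secondary difficulty is the linearization step: the maps $\TC^-(R; \QQ_p) \to \HC^-(R; \QQ_p)$ and $\TP(R; \QQ_p) \to \HP(R; \QQ_p)$ are \emph{not} equivalences in general, but the relevant \emph{relative} terms do become equivalent after sufficient care, a statement relying on the structural analysis of the motivic filtration developed in the body of the paper.
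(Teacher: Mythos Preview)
Your proposal rests on the claimed ``crystalline rigidity'' $\TP(R; \QQ_p) \simeq \TP(R/p; \QQ_p)$, and this is false. For $R = \mathcal{O}_C$ with $C$ a complete algebraically closed nonarchimedean field of mixed characteristic, one has $\pi_0 \TP(\mathcal{O}_C; \QQ_p) = \ainf(\mathcal{O}_C)[1/p]$, whereas $\pi_0 \TP(\mathcal{O}_C/p; \QQ_p)$ is the rationalized Nygaard-completed $\acrys(\mathcal{O}_C)$; Fontaine's element $t = \log[\epsilon]$ lies in the latter but not in $\ainf[1/p]$, so the reduction map on $\pi_0$ is not surjective and $\TP(\mathcal{O}_C,(p); \QQ_p) \neq 0$. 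Your BMS justification also misfires: the graded pieces of $\TP(R; \ZZ_p)$ are $\Prism_R\{i\}$, the prismatic cohomology of $R$ itself rather than of $R/p$, and in any event that filtration requires $R$ to be commutative and quasisyntomic, whereas the square \eqref{Bpullback} must be built for arbitrary associative $R$.

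The paper's argument is completely different and elementary. B\"okstedt's calculation gives a cofiber sequence $\ZZ_{hC_p} \to \ZZ^{\triv} \to \THH(\FF_p)$ in $\cycsp$, and the Tate orbit lemma forces $(X \otimes_{\SS} \ZZ_{hC_p})^{tS^1}$ to vanish $p$-adically for any bounded-below $X$. Taking $X = \THH(R)$ yields $(\THH(R) \otimes_{\SS} \ZZ)^{tS^1} \simeq \TP(R \otimes_{\SS} \FF_p; \ZZ_p)$: the intermediate ``$\otimes\, \ZZ^{\triv}$'' is the whole point, since the linearization $\THH(R) \otimes_{\SS} \ZZ \to \HH(R)$ then connects this to $\HP(R; \ZZ_p)$. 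One checks the resulting square is cartesian after inverting $p$ by comparing horizontal fibers, which are homotopy orbits and hence behave well rationally; the vertical maps (e.g.\ $\TP \to \HP$) are never asserted to be equivalences. Replacing $R \otimes_{\SS} \FF_p$ by $R/p$ is a separate quasi-isogeny step on $\TC$, and the final passage from $\TC$ to $\K$ is the Clausen--Mathew--Morrow theorem.
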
 

In \cite{Beilinson}, 
Beilinson constructs a natural equivalence\footnote{Since the methods are different, we do not know if
our identification on fiber terms
 is the same 
as Beilinson's.}
$\K^{\mathrm{cts}}(R, (p); \mathbb{Q}_p) \simeq \Sigma \HC(R; \mathbb{Q}_p)$
under the assumption that $R$ is $p$-complete with bounded
$p$-power torsion,   $R/p$
has finite stable range,\footnote{The stable range of a ring $R$ was defined
    in~\cite{bass-stable} (see also~\cite[V.3]{bass}) and is sometimes, as
    in~\cite{Beilinson}, called the stable rank.}
    and the relative $\K$-theory term $\K(R, (p))$ 
is replaced by the ``continuous'' relative $\K$-theory
$\K^{\mathrm{cts}}(R, (p)) = \varprojlim\K(R/p^n,
(p))$; this replacement does not affect the conclusion if $R$ is 
commutative 
thanks to \cite[Theorem 5.23]{CMM}. 
Beilinson's arguments  rely on some $p$-adic Lie theory.

In this
paper, we will construct the map 
\eqref{cchern} 
using the description of topological
cyclic homology from Nikolaus--Scholze \cite{NS18}, as a consequence of
B\"okstedt's calculation of $\THH(\mathbb{F}_p)$. 
Together with  the rigidity results of Clausen--Mathew--Morrow \cite{CMM}, 
we explain a short, homotopy-theoretic proof of Theorem~\hyperref[thm:a]{A}. In
fact, Theorem~\hyperref[thm:a]{A} and all the corollaries listed below hold for
any (possibly non-commutative) ring $R$ if we replace $\K$-theory by $\TC$ (see
\Cref{cons:basicfibersquare} and \Cref{TCformThmA}); the henselian condition is only needed to
translate between $\K$-theory and $\TC$.

Next, we observe some consequences of and complements to Theorem~\hyperref[thm:a]{A}.
In \cite{Beilinson},  slightly more than an equivalence of
rational spectra $\K(R,(p);\QQ_p)\we\Sigma\HC(R;\QQ_p)$ is proved: there is a natural zig-zag of 
``quasi-isogenies'' of spectra before inverting $p$.    By definition, a
quasi-isogeny is a map which is an equivalence up to uniformly bounded denominators in any finite range of degrees.
We also obtain the same conclusion in 
our setting and can keep track of the denominators at least in some
range.

\begin{namedcorollary}[B] 
\label{quasiisogenythmB}
Let $R$ be a commutative ring which is henselian along $(p)$. Then there is a natural
zig-zag of quasi-isogenies between $\K(R, (p); \mathbb{Z}_p)$ and $  \Sigma \HC(R, (p);
\mathbb{Z}_p)$. If $R$ is moreover $p$-torsion free, then there are
isomorphisms $\pi_i\K(R,(p);\ZZ_p)\simeq\pi_i\Sigma\HC(R,(p);\ZZ_p)$ for $i\leq 2p-5$.
\end{namedcorollary}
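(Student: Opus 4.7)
The plan is to work with an integral version of the square~\eqref{Bpullback} and track the $p$-power denominators obstructing integral cartesianness. Applying the square naturally to the map $R\to R/p$ and taking fibers yields a commutative square
\[\xymatrix{
\K(R,(p);\ZZ_p)\ar[r]\ar[d] & 0\ar[d]\\
\HC^-(R,(p);\ZZ_p)\ar[r] & \HP(R,(p);\ZZ_p),
}\]
using that $\K(R/p,(p);\ZZ_p)=0$ and that the square for $R/p$ is trivially cartesian rationally because $\HH(R/p;\QQ_p)=0$ (since $R/p$ is an $\FF_p$-algebra, hence $\HH(R/p;\ZZ_p)$ is $p$-annihilated on homotopy). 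Taking horizontal fibers produces a natural integral map
\[\alpha\colon\K(R,(p);\ZZ_p)\longrightarrow\Sigma\HC(R,(p);\ZZ_p),\]
which is a rational equivalence by the relative form of Theorem~\hyperref[thm:a]{A}. The corollary thus amounts to showing $\alpha$ (together with the auxiliary arrows in a natural zig-zag realizing it) is a quasi-isogeny, and is an isomorphism on $\pi_i$ for $i\leq 2p-5$ when $R$ is $p$-torsion free.

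For the quasi-isogeny claim, I would use CMM rigidity to replace $\K(R,(p);\ZZ_p)$ by $\TC(R,(p);\ZZ_p)$ and invoke the Nikolaus--Scholze description of $\TC$ via the fiber sequence $\TC(-;\ZZ_p)\to\TC^-(-;\ZZ_p)\xrightarrow{\phi-\can}\TP(-;\ZZ_p)$, composed with the base-change maps $\TC^-\to\HC^-$ and $\TP\to\HP$ induced by $\SS\to\ZZ$. The failure of $\alpha$ to be an integral equivalence is then assembled from the relative term $\tau_{\geq 1}\THH(\ZZ_p;\ZZ_p)$, whose homotopy groups $\pi_{2n-1}\cong\ZZ/p^{v_p(n)}$ (with the positive even-degree groups vanishing, by B\"okstedt) have uniformly bounded $p$-power torsion in any finite range of degrees. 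The $(-)^{hS^1}$ and $(-)^{tS^1}$ spectral sequences preserve this bounded-denominator property in any fixed range, giving the quasi-isogeny.

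For the range statement, $p$-torsion freeness upgrades the above to an honest isomorphism in low degrees. Since $\tau_{\geq 1}\THH(\ZZ_p;\ZZ_p)$ is $(2p-2)$-connective, the failure terms detected in the previous step vanish identically in sufficiently low degrees for $p$-torsion free $R$. Combined with the NS fiber sequence $\TC\to\TC^-\xrightarrow{\phi-\can}\TP$ and the shift in $\Sigma\HC\to\HC^-\to\HP$, the claim reduces to a degree count. The main obstacle is precisely this bookkeeping: verifying that the drop from the connectivity bound $2p-3$ to the stated range $i\leq 2p-5$ absorbs exactly one shift from the Frobenius fiber sequence and one from the $\Sigma$-shift in the HC fiber sequence, and that no further shifts arise in tracing through the construction of $\tr_{\crys}$.
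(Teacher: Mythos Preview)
Your proposal has a genuine gap in the quasi-isogeny step. You want to control the failure of $\alpha$ via the maps $\TC^-(R,(p);\ZZ_p)\to\HC^-(R,(p);\ZZ_p)$ and $\TP(R,(p);\ZZ_p)\to\HP(R,(p);\ZZ_p)$, ultimately tracing back to $\tau_{\geq 1}\THH(\ZZ_p;\ZZ_p)$. But the homotopy groups $\pi_{2n-1}\THH(\ZZ;\ZZ_p)\cong\ZZ/p^{v_p(n)}$ have \emph{unbounded} $p$-torsion as $n$ grows, and the constructions $(-)^{hS^1}$ and $(-)^{tS^1}$ are homotopy \emph{limits}: the homotopy fixed point spectral sequence feeds all $\pi_t$ with $t\geq m$ into $\pi_m$ of the result. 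So ``bounded torsion in any finite range of degrees before taking fixed points'' does not propagate to the output, and the maps $\TC^-\to\HC^-$, $\TP\to\HP$ are not quasi-isogenies in general. Your sketch does not explain how to avoid this accumulation.

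The paper's route is essentially different and engineered to avoid homotopy limits. It introduces the cyclotomic spectrum $\ZZ_{hC_p}=\mathrm{fib}(\ZZ^{\triv}\to\THH(\FF_p))$, whose key property is that $(\ZZ_{hC_p})^{tC_p}=0$ by the Tate orbit lemma. Tensoring with $\THH(R;\ZZ_p)$ and applying $\TC$ then reduces, by \Cref{TCS1orb}, to $S^1$-\emph{orbits} (a colimit, which does preserve quasi-isogenies). This gives the fiber sequence of \Cref{cor_ring},
\[
\Sigma\bigl(\THH(R;\ZZ_p)\otimes_\SS\ZZ_{hC_p}\bigr)_{hS^1}\to\TC(R;\ZZ_p)\otimes_\SS\ZZ\to\TC(R\otimes_\SS\FF_p;\ZZ_p),
\]
which one then compares termwise to $\TC(R,(p);\ZZ_p)\to\TC(R;\ZZ_p)\to\TC(R/p;\ZZ_p)$. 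The required quasi-isogenies are $\SS\to\ZZ$ (finite stable stems, giving $\TC(R)\to\TC(R)\otimes_\SS\ZZ$) and $R\otimes_\SS\FF_p\to R/p$ (handled via Dundas--Goodwillie--McCarthy or \Cref{isogenyTHH}); the final step compares $\bigl(\THH(R)\otimes_\SS\ZZ_{hC_p}\bigr)_{hS^1}$ with $\HC(R,(p);\ZZ_p)$, again via $S^1$-orbits. The connectivity bounds $2p-3$ for $\mathrm{fib}(\SS_{(p)}\to\ZZ_{(p)})$ and $2p-2$ for $\mathrm{fib}(R\otimes_\SS\FF_p\to R/p)$ (when $R$ is $p$-torsion free) then yield the range $i\leq 2p-5$ after accounting for the $(-1)$-connectivity of $\TC$ and the $\Sigma$-shift. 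Your $2p-2$ connectivity for $\tau_{\geq 1}\THH(\ZZ_p;\ZZ_p)$ is a correct but different number, and the bookkeeping you describe does not match this mechanism.
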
 

A similar result for an arbitrary nilpotent ideal, albeit in a smaller range of
degrees (depending on the exponent of nilpotence of the ideal), is proved in \cite{Brunfiltered}. The argument we use here seems to be special to the ideal
$(p)$. 

As explained above, one could formulate Theorem~\hyperref[thm:a]{A} entirely in
the language of topological cyclic homology, completely avoiding the mention
of $\K$-theory. At some point in the proof, however, we translate back into
$\K$-theory and use a homology argument. Therefore,  we also offer an alternative purely cyclotomic proof of this step. 
This relies on a study of 
quasi-isogenies in the homotopy theory of cyclotomic spectra, based on the
$t$-structure introduced by Antieau--Nikolaus \cite{AN18}.  
The key step is an extension of a theorem of Geisser--Hesselholt \cite{GHfinite}
and Land--Tamme \cite{LandTamme}. 

\begin{namedtheorem}[C]\label{thm:c}
    Let $f\colon  A\rightarrow A'$ be a map of connective associative ring
    spectra. Suppose that
    \begin{enumerate}
        \item[{\rm (1)}] $f$ is a quasi-isogeny of spectra and
        \item[{\rm (2)}] the map $\pi_0(f)\colon\pi_0(A)\rightarrow\pi_0(A')$
            is surjective with nilpotent kernel.
    \end{enumerate}
    Then $\THH(A; \mathbb{Z}_p)\rightarrow\THH(A'; \mathbb{Z}_p)$ is a quasi-isogeny in cyclotomic spectra
    and in particular the induced map $\TC(A;\ZZ_p)\rightarrow\TC(A';\ZZ_p)$ is a
    quasi-isogeny.
\end{namedtheorem}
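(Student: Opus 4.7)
The strategy is to prove the $\THH$ statement first and then deduce the $\TC$ statement formally. For the latter, $\TC(-;\ZZ_p)$ is corepresented by the $p$-complete sphere in cyclotomic spectra, and on bounded-below cyclotomic spectra (our setting, since $A$ and $A'$ are connective) this mapping-out construction preserves quasi-isogenies, so granted the $\THH$ statement the $\TC$ statement follows. Throughout I would work within the $t$-structure on cyclotomic spectra of Antieau--Nikolaus \cite{AN18}, in which quasi-isogenies admit a clean description: a map is a quasi-isogeny iff its fiber has uniformly $p^n$-torsion homotopy groups.

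For the $\THH$ statement, I would factor the map through the ``bimodule $\THH$.'' Set $J := \mathrm{fib}(f)$, which is uniformly $p^n$-torsion as a spectrum by the quasi-isogeny hypothesis. The cofiber sequence of $A$-bimodules $J \to A \to A'$ yields a cofiber sequence of cyclotomic spectra
\[
\THH(A; J) \longrightarrow \THH(A) \longrightarrow \THH(A; A'),
\]
where $\THH(A;A') := A' \otimes_{A \otimes A^{\op}}^{\mathbb{L}} A$. Since $\THH(A;-)$ is linear in its coefficient, $\THH(A; J)$ is also annihilated by $p^n$; thus $\THH(A) \to \THH(A; A')$ is a quasi-isogeny in cyclotomic spectra.

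What remains is to show the comparison $\THH(A; A') \to \THH(A')$ is a quasi-isogeny of cyclotomic spectra. This is the heart of the argument and is where I would extend the excision theorem of Geisser--Hesselholt \cite{GHfinite} and Land--Tamme \cite{LandTamme}. Classically, when $\pi_0(A) \to \pi_0(A')$ is surjective with nilpotent kernel this map is an equivalence, established via a bar-resolution argument in which the iterated multiplication $A' \otimes_A^{\mathbb{L}} \cdots \otimes_A^{\mathbb{L}} A' \to A'$ becomes arbitrarily highly connected. In our setting, the fiber of the basic multiplication $A' \otimes_A^{\mathbb{L}} A' \to A'$ is $J \otimes_A^{\mathbb{L}} A'$, which is uniformly $p^n$-torsion; iterating through the bar resolution while using nilpotence of $\pi_0 J$ to truncate at a finite stage should yield a uniform quasi-isogeny bound.

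The main obstacle will be the uniform propagation of $p$-power torsion through the bar resolution --- each stage could in principle degrade the torsion order --- together with the verification that these bounds are compatible with the cyclotomic Frobenius. The $t$-structure of \cite{AN18} is the essential tool for the second issue, since in its bounded-below region quasi-isogenies admit a manageable characterization and form a class closed under the colimit and tensor operations appearing in the bar construction. Uniform control on the torsion bound should come from the interaction between the finite nilpotence degree of $\pi_0 J$ (bounding the number of bar iterations needed) and the uniform $p^n$-annihilation of $J$ itself.
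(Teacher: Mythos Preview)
Your reduction has a structural gap at the very first step: the objects $\THH(A;J)$ and $\THH(A;A')$ you write down are \emph{not} cyclotomic spectra. Hochschild homology with bimodule coefficients is the realization of the simplicial object $[n]\mapsto M\otimes A^{\otimes n}$, which is not cyclic when $M\neq A$ (the distinguished $M$-slot cannot be rotated), so there is no $S^1$-action and a fortiori no cyclotomic Frobenius. Hence your displayed sequence is a cofiber sequence only in $\Sp$, not in $\cycsp$, and knowing that $\THH(A;J)$ is torsion as a spectrum says nothing about the cyclotomic fiber. This is exactly the phenomenon the paper warns about just before Proposition~\ref{prop:tr}: $\THH(\FF_p)$ has underlying spectrum quasi-isogenous to zero but is not quasi-isogenous to zero in $\cycsp$. (A secondary point: quasi-isogeny of $f$ does \emph{not} give a single $p^n$ killing $J$; it only gives, for each $m$, some $N_m$ killing $\tau_{\le m}J$.)

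The paper's argument supplies precisely the missing control over the Frobenius, and it never passes through bimodule $\THH$. The route is: (i) for a \emph{trivial} square-zero extension $A\oplus M$ with $M$ quasi-isogenous to zero, $\THH(A\oplus M)$ is a \emph{graded} cyclotomic spectrum (Appendix~\ref{app:gradedcyc}), and Proposition~\ref{gradedisogeny} shows that the positively-graded part---which has Frobenius shifting degrees, hence nilpotent on each finite truncation---is quasi-isogenous to zero in $\cycsp$; (ii) general square-zero extensions reduce to trivial ones by taking the \v{C}ech nerve of $\widetilde{A}\to A$ (Proposition~\ref{nontrivsqzero}); (iii) the augmented case $B'\in\alg_{B//B}$ reduces to iterated square-zero extensions by climbing the Postnikov tower of the augmentation ideal (Proposition~\ref{augmentedcase}); (iv) the general $A\to A'$ reduces to the augmented case by the \v{C}ech nerve of $A\to A'$. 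The grading in step (i) is the device that makes the Frobenius tractable---this is the idea your proposal identifies as ``the main obstacle'' but does not provide.
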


\subsection{$p$-adic deformations of $\K$-theory classes}
In our first main application of  Theorem~\hyperref[thm:a]{A}, we generalize work of
Bloch--Esnault--Kerz~\cite{bek1} and Beilinson~\cite{Beilinson} on the formal
$p$-adic deformation of
rational $\K$-theory classes. 
Let us first recall the motivation for their work. 

Fix a complete discretely valued field $K$ of mixed characteristic $(0, p)$ with ring
of integers $\mathcal{O}_K$ and perfect residue field $k$ as well as a proper smooth
scheme $X \to \Spec(\mathcal{O}_K)$ with special fiber $X_k$ and generic fiber
$X_K$. 
Given $X$, we can consider the algebraic
de Rham cohomology $H^{*}_{\dR}(X_K/K)$ of the generic fiber, together with its
Hodge filtration $\mathrm{Fil}^{\geq\star} H^{*}_{\dR}(X_K/K)$; these are
finite-dimensional $K$-vector spaces, and arise as the 
cohomology groups of objects $\mathrm{Fil}^{\geq \star} R \Gamma_{\dR}(X_K/K)$
in the derived category of $K$.

As usual, we have a Chern character 
\begin{equation} \mathrm{ch}\colon \K_0(X; \mathbb{Q}) 
\twoheadrightarrow \K_0(X_K; \mathbb{Q}) \to 
H^{\mathrm{even}}_{\mathrm{dR}}(X_K/K). \end{equation}
A foundational motivating question is to determine the image of this map: in
other words, to determine which cohomology classes come from algebraic cycles
on $X_K$ (or equivalently on $X$). 
A conjecture of Fontaine--Messing, a $p$-adic analog of the variational Hodge
conjecture of Grothendieck \cite[Footnote 13]{Gro66},  predicts 
that this question can essentially be reduced from mixed to equal characteristic. 

To formulate the conjecture, we consider also the 
(absolute)
crystalline cohomology 
$H^*_{\crys}(X_k)$ of the special fiber, a family of finitely generated
$W(k)$-modules. 
By the de Rham-to-crystalline
comparison \cite{BO83}, we have an isomorphism  
$H^*_{\crys}(X_k) \otimes_{W(k)} K \iso H^*_{\dR}(X_K/K) .$
Finally, we have the crystalline Chern character map \cite{Gros85},
\begin{equation} \label{cryschernchari}
\mathrm{ch}_{\mathrm{crys}}\colon \K_0(X_k) \to \bigoplus_{i \geq 0}
H^{2i}_{\crys}(X_k; \mathbb{Q}_p), 
\end{equation}
leading to a commutative diagram
\begin{equation} 
    \begin{gathered}
\xymatrix{
\K_0(X; \mathbb{Q})  \ar[d]^{\mathrm{ch}}  \ar[r] &  \K_0(X_k; \mathbb{Q})
\ar[d]^{\mathrm{ch}_{\mathrm{crys}}}  \\
H^{\mathrm{even}}_{\mathrm{dR}}(X_K/K) \ar[r]^<<<<{\simeq} \ar[r] &  
H^{\mathrm{even}}_{\mathrm{crys}}(X_k) \otimes_{W(k)} K.
}
    \end{gathered}
\end{equation}

\begin{conjecture}[$p$-adic variational Hodge conjecture] \label{FMconj} Let 
$\alpha \in H^{\mathrm{even}}_{\mathrm{dR}}(X_K/K)$. Then $\alpha$ belongs to
the image of the Chern character from $\K_0(X; \mathbb{Q})$ if and only if
\begin{enumerate}
\item the image of $\alpha$ under the 
de Rham-to-crystalline isomorphism in 
$H^{\mathrm{even}}_{\mathrm{crys}}(X_k) \otimes_{W(k)} K$ belongs to the image
of the crystalline Chern character from 
$\K_0(X_k; \mathbb{Q})$ and
\item the class $\alpha$ belongs to $\bigoplus_i \mathrm{Fil}^{\geq i}
H^{2i}_{\mathrm{dR}}(X_K/K) \subseteq H^{\mathrm{even}}_{\mathrm{dR}} (X_K/K)$. 

\end{enumerate}
\end{conjecture}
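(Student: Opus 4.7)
The plan is to deduce the conjecture from Theorem~\hyperref[thm:a]{A} combined with Grothendieck's formal existence theorem (GFGA) and the HKR identification of the Hodge filtration on $\HC^-$.

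The necessity direction is classical. The Chern character $\mathrm{ch}\colon\K_0(X;\mathbb{Q})\to H^{\mathrm{even}}_{\dR}(X_K/K)$ refines to a map landing in $\bigoplus_i\mathrm{Fil}^{\geq i}H^{2i}_{\dR}(X_K/K)$, since Chern classes of algebraic vector bundles lie in the Hodge filtration; this gives condition~(2). Reducing mod~$p$ and composing with the crystalline Chern character yields a class in $\K_0(X_k;\mathbb{Q})$ whose image matches the de Rham-to-crystalline image of $\alpha$; this gives condition~(1).

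For the sufficiency direction, first localize Theorem~\hyperref[thm:a]{A} over a Zariski affine cover of the formal scheme $\widehat{X}$; each local ring is $p$-complete (hence $p$-henselian) because $\mathcal{O}_K$ is. This produces the rational fiber square
\[\xymatrix{
\K^{\mathrm{cts}}(\widehat{X};\mathbb{Q}_p)\ar[r]\ar[d]&\K(X_k;\mathbb{Q}_p)\ar[d]\\
\HC^-(\widehat{X};\mathbb{Q}_p)\ar[r]&\HP(\widehat{X};\mathbb{Q}_p).
}\]
Condition~(1) supplies $\beta\in \K_0(X_k;\mathbb{Q})$ whose image in $\HP_0(\widehat{X};\mathbb{Q}_p)\cong\prod_i H^{2i}_{\dR}(X_K/K)$ equals $\alpha$. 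Condition~(2) supplies a lift of $\alpha$ to $\HC^-_0$: by HKR the inclusion $\HC^-_0\hookrightarrow \HP_0$ identifies rationally with $\prod_j\mathrm{Fil}^{\geq j}H^{2j}_{\dR}\hookrightarrow\prod_j H^{2j}_{\dR}$, so the Hodge-filtration condition is exactly the lifting obstruction. Both pieces map to $\alpha$ in $\HP_0$, and the fiber square assembles them into a continuous class $\widetilde{\alpha}\in \K_0^{\mathrm{cts}}(\widehat{X};\mathbb{Q}_p)$.

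Finally, Grothendieck's formal GAGA yields an equivalence $\perf(X)\xrightarrow{\sim}\perf(\widehat{X})$ from the properness and smoothness of $X$ over $\mathcal{O}_K$, hence $\K_0(X;\mathbb{Q}_p)\cong \K_0^{\mathrm{cts}}(\widehat{X};\mathbb{Q}_p)$, and $\widetilde{\alpha}$ lifts to $\widetilde{\alpha}^{\mathrm{alg}}\in \K_0(X;\mathbb{Q})\otimes_{\mathbb{Q}}\mathbb{Q}_p$ with de Rham Chern character equal to $\alpha$; in other words, $\alpha$ is a $\mathbb{Q}_p$-linear combination of Chern characters of algebraic classes. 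The main obstacle is then the final rationality descent. Set $V_{\mathbb{Q}}:=\mathrm{im}(\K_0(X;\mathbb{Q})\to H^{\mathrm{even}}_{\dR}(X_K/K))$ and $V_{\mathbb{Q}_p}:=V_{\mathbb{Q}}\otimes_{\mathbb{Q}}\mathbb{Q}_p\subseteq H^{\mathrm{even}}_{\dR}(X_K/K)$; the above shows $\alpha\in V_{\mathbb{Q}_p}$, whereas the conjecture demands $\alpha\in V_{\mathbb{Q}}$. Since $V_{\mathbb{Q}_p}$ strictly contains $V_{\mathbb{Q}}$ in general, this last step is a genuinely motivic question — tied to Bloch--Beilinson-type conjectures — that the $p$-adic/cyclotomic input of Theorem~\hyperref[thm:a]{A} cannot resolve on its own, and it is this obstruction that separates what is provable by the above strategy from the full statement.
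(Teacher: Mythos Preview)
This is Conjecture~\ref{FMconj}, the $p$-adic variational Hodge conjecture of Fontaine--Messing, which the paper does \emph{not} prove; it is stated as open motivation. What the paper establishes (Theorem~\hyperref[thm:d]{D}, via essentially the fiber-square argument you sketch in your middle paragraph) is the weaker statement with $\K_0^{\mathrm{cts}}(\mathfrak{X};\mathbb{Q})$ in place of $\K_0(X;\mathbb{Q})$, and it remarks explicitly that since $\K(X)\to\K^{\mathrm{cts}}(\mathfrak{X})$ is generally not an equivalence, this does not imply the conjecture.

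The genuine gap in your argument is the formal GAGA step, not (only) the rationality descent you flag at the end. Formal GAGA for proper $X/\mathcal{O}_K$ gives an equivalence of categories $\perf(X)\simeq\varprojlim_n\perf(X_n)$, hence $\K(X)\simeq\K\bigl(\varprojlim_n\perf(X_n)\bigr)$. But $\K^{\mathrm{cts}}(\widehat{X})=\varprojlim_n\K(X_n)$, and $\K$-theory does not commute with this inverse limit of categories: the comparison map $\K(X)\to\varprojlim_n\K(X_n)$ is expected to fail to be an isomorphism even on $\pi_0$ and even rationally (the paper points to \cite[App.~B]{BEK14} for a related equicharacteristic-zero counterexample). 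So your claimed isomorphism $\K_0(X;\mathbb{Q}_p)\cong\K_0^{\mathrm{cts}}(\widehat{X};\mathbb{Q}_p)$ is unjustified and likely false; this is exactly the gulf between what the Beilinson fiber square can reach and the full Fontaine--Messing conjecture. The $\mathbb{Q}$-versus-$\mathbb{Q}_p$ descent you identify is a further obstruction layered on top of this one, but the argument already breaks before you get there.
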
 
For further details and arithmetic applications of the $p$-adic variational Hodge
 conjecture, we refer
to~\cite{emerton-variational}.

Motivated by Conjecture~\ref{FMconj},
 Bloch--Esnault--Kerz \cite{bek1} considered  
the following {$p$-adic deformation} question, which starts with a $\K_0$-class
on the special fiber (rather than a cohomology class) and asks when it lifts
infinitesimally. 

\begin{question}[The $p$-adic deformation problem] 
Given the data as above, define 
the ``continuous'' $\K$-theory $$\K^{\mathrm{cts}}(X)
\stackrel{\mathrm{def}}{=} \varprojlim \K(X/\pi^n)
,$$ 
where $\pi$ is a uniformizer of $\Oscr_K$.
Given a class $x \in 
\K_0(X_k; \mathbb{Q})$, when does it belong to the image of 
the reduction map from 
the continuous $\K$-theory
$\K_0^{\mathrm{cts}}(X; \mathbb{Q}) = \pi_0
(\K^{\mathrm{cts}}(X))_{\mathbb{Q}}$? 
\end{question}

Since the map $\K(X) \to \K^{\mathrm{cts}}(X)$ is generally not an equivalence,
the $p$-adic deformation problem does not imply \Cref{FMconj}. 
However, the $p$-adic deformation problem 
is a (pro)-infinitesimal one, so it can be
studied using methods of topological cyclic homology. 
Using the Beilinson fiber square, we answer the $p$-adic deformation problem as
follows; 
in \cite{bek1}, this result is proved for  a smooth projective scheme of
dimension $d < p + 6$ when $K$ is unramified. 

\begin{namedtheorem}[D] \label{thm:d} 
Let $X$ be a proper smooth scheme over $\mathcal{O}_K$. 
A class $x \in \K_0(X_k; \mathbb{Q})$ lifts to $\K_0^{\mathrm{cts}}(X;
\mathbb{Q})$ if and only if 
$\mathrm{ch}_{\mathrm{crys}}(x) \in \bigoplus_{i \geq
0}H^{2i}_{\mathrm{crys}}(X_k; {\mathbb{Q}_p})$ is carried by the de
Rham-to-crystalline comparison isomorphism to a class in $\bigoplus_{i \geq 0}
\mathrm{Fil}^{\geq i}H^{2i}_{\dR}(X_K/K) \subseteq \bigoplus_{i \geq 0}
H^{2i}_{\dR}(X_K/K)$.
\end{namedtheorem}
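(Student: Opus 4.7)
The plan is to deduce Theorem~\hyperref[thm:d]{D} from Theorem~\hyperref[thm:a]{A}, appropriately globalized to $X$, combined with the filtered HKR identification of $\HC^-$ and $\HP$ with Hodge-filtered de Rham cohomology, and a descent from $\QQ_p$- to $\QQ$-coefficients.

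First I would establish a scheme-theoretic, continuous form of the Beilinson fiber square for $X$. Affine-locally on each Artinian thickening $X/\pi^n$, the ring of sections is henselian along $(p)$ (since $p$ is nilpotent in $\mathcal{O}_K/\pi^n$), so Theorem~\hyperref[thm:a]{A} applies; Zariski descent for $\K$, $\HC^-$, and $\HP$ globalizes the square to $X/\pi^n$, and taking the inverse limit over $n$ yields a cartesian square
\[
\xymatrix{
\K^{\mathrm{cts}}(X;\QQ_p)\ar[r]\ar[d]^{\tr_{\sub{GJ}}} & \K(X_k;\QQ_p)\ar[d]^{\tr_{\sub{crys}}}\\
\HC^{-,\mathrm{cts}}(X;\QQ_p)\ar[r] & \HP^{\mathrm{cts}}(X;\QQ_p).
}
\]
A priori the top right is $\K(X/p;\QQ_p)$, but the nilpotent thickening $X_k\hookrightarrow X/p$ induces an equivalence on $\QQ_p$-$\K$-theory: by Theorem~\ref{thm:dgm} the relative term identifies with $\TC(X/p,X_k;\QQ_p)$, which vanishes since $\THH(-;\QQ_p)=0$ on any $\FF_p$-scheme, because $\pi_*\THH(\FF_p)$ is $p$-torsion.

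Next I would invoke the filtered HKR theorem to identify, for $X$ proper smooth over $\mathcal{O}_K$,
\[
\HP^{\mathrm{cts}}(X;\QQ_p)\we\prod_{i}R\Gamma_{\dR}(X_K/K)[2i],\qquad
\HC^{-,\mathrm{cts}}(X;\QQ_p)\we\prod_{i}\fil^{\geq i}_{\mathrm{Hodge}}R\Gamma_{\dR}(X_K/K)[2i],
\]
with the bottom map given by inclusion of the Hodge filtration in each cohomological degree. Taking $\pi_0$ of the cartesian square, a class $x\in\K_0(X_k;\QQ_p)$ lifts to $\K_0^{\mathrm{cts}}(X;\QQ_p)$ if and only if $\tr_{\sub{crys}}(x)$ lies in $\bigoplus_i\fil^{\geq i}H^{2i}_{\dR}(X_K/K)\subseteq \bigoplus_i H^{2i}_{\dR}(X_K/K)$. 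One must then verify that $\tr_{\sub{crys}}$ agrees with the classical crystalline Chern character $\mathrm{ch}_{\mathrm{crys}}$ post-composed with the crystalline-to-de Rham comparison; by multiplicativity and functoriality this reduces to the case of line bundles, where both constructions assign $[\mathcal{L}]\mapsto\exp(c_1(\mathcal{L}))$ by standard normalization.

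Finally, to descend from $\QQ_p$- to $\QQ$-coefficients one notes that since $X$ is proper smooth, the image $I\subseteq\K_0(X_k)$ of $\K_0^{\mathrm{cts}}(X)=\pi_0\lim_n\K(X/\pi^n)$ is a subgroup of the finitely generated $\K_0(X_k)$, so the image of $\K_0^{\mathrm{cts}}(X;\QQ)\to\K_0(X_k;\QQ)$ is $I\otimes\QQ$ while the image of $\K_0^{\mathrm{cts}}(X;\QQ_p)\to\K_0(X_k;\QQ_p)$ is $I\otimes\QQ_p=(I\otimes\QQ)\otimes_\QQ\QQ_p$. Faithful flatness of $\QQ\hookrightarrow\QQ_p$ then shows that a class in $\K_0(X_k;\QQ)$ lies in $I\otimes\QQ$ if and only if its image in $\K_0(X_k;\QQ_p)$ lies in $I\otimes\QQ_p$, which is precisely the $\QQ_p$-criterion proved above. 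The main obstacle is the globalization in the first step: one must carefully verify that the Beilinson fiber square of Theorem~\hyperref[thm:a]{A}, formulated for rings, admits a genuine scheme-theoretic continuous variant whose bottom row realizes the continuous negative cyclic and periodic homologies of $X$ as claimed. A secondary technical point is the identification of $\tr_{\sub{crys}}$ with $\mathrm{ch}_{\mathrm{crys}}$, which requires tracing through the Nikolaus--Scholze construction used to produce the trace in Theorem~\hyperref[thm:a]{A} and comparing it to the classical crystalline Chern character.
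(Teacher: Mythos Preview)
Your overall architecture matches the paper's: globalize Theorem~\hyperref[thm:a]{A} to a continuous fiber square for $X$, identify the bottom row with Hodge-filtered de Rham cohomology via HKR, and identify the right vertical map with the crystalline Chern character. But two of your steps have genuine gaps.

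The main one is the identification of $\tr_{\mathrm{crys}}$ with $\mathrm{ch}_{\mathrm{crys}}$. Your claim that ``both constructions assign $[\mathcal{L}]\mapsto\exp(c_1(\mathcal{L}))$ by standard normalization'' is not justified: $\tr_{\mathrm{crys}}$ is produced from the Nikolaus--Scholze cyclotomic machinery and carries no evident normalization on line bundles. Worse, as constructed the map $\K(X_k;\QQ_p)\to\HP^{\mathrm{cts}}(X;\QQ_p)$ depends a priori on the thickening $X$, not just on $X_k$, so you cannot yet speak of it as a functor of $k$-schemes to which the splitting principle applies. The paper handles this in two stages: a left Kan extension argument (\Cref{TCKanextlemma}) shows the map is in fact a natural transformation of functors on smooth $k$-schemes, and then evaluating on the universal bundle over $BGL_n$ (\Cref{chpiscrysch}) forces the map to be $\mathrm{ch}_{\mathrm{crys}}$ composed with rescaling by $\lambda^i$ in weight $i$ for some undetermined $\lambda\in K^\times$. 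The paper never pins down $\lambda$; this is harmless because the Hodge filtration is scale-invariant. (Relatedly, your HKR identification should be for $\HP(X/\mathcal{O}_K;\QQ_p)$ rather than absolute $\HP$; the paper reduces to the relative version in \Cref{BfibOKprop} using that $\HH(\mathcal{O}_K;\QQ_p)\simeq K$.)

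Your descent from $\QQ_p$ to $\QQ$ is based on a false premise: $\K_0(X_k)$ need not be finitely generated (already $\mathrm{Pic}^0(X_k)(k)$ can be large). The correct argument is simpler and uses no finiteness: by the cartesian square~\eqref{cartsquare1} the fiber of $\K^{\mathrm{cts}}(X)\to\K(X_k)$ is $p$-complete, so its rationalization and its $\QQ_p$-localization coincide, and the lifting obstruction for a class in $\K_0(X_k;\QQ)$ is literally the same element as for its image in $\K_0(X_k;\QQ_p)$. The paper's body statement (\Cref{BEKthm}) is already phrased with $\QQ_p$-coefficients for this reason. Finally, a smaller slip: the vanishing of $\K(X/p,X_k;\QQ_p)\simeq\TC(X/p,X_k;\QQ_p)$ does \emph{not} follow from $\THH(-;\QQ_p)=0$ on $\FF_p$-schemes, since $\TC$ does not commute with rationalization (e.g.\ $\TC(\FF_p;\QQ_p)\neq 0$); one needs the Geisser--Hesselholt result (\Cref{GHfiniteresult}) that relative $\TC$ for nilpotent ideals in characteristic $p$ is quasi-isogenous to zero.
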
 

Our main observation is that 
Theorem~\hyperref[thm:a]{A} together with
Hochschild--Kostant--Rosenberg comparisons between cyclic and
de Rham cohomology yield a fiber square
\begin{equation}\label{OKfibsquare}\begin{gathered}  \xymatrix{
\K^{\mathrm{cts}}(X; \mathbb{Q}) \ar[d] \ar[r] &  \K(X_k; \mathbb{Q}) \ar[d]
\\
\prod_{i \in \mathbb{Z}} \mathrm{Fil}^{\geq i} R \Gamma_{\mathrm{dR}}(X_K/K)[2i] \ar[r] &  
\prod_{i \in \mathbb{Z}}  R \Gamma_{\mathrm{dR}}(X_K/K)[2i].
}\end{gathered} \end{equation}
Moreover, on $\K_0$, one checks that the vertical map on the right-hand side induces 
 the crystalline Chern character \eqref{cryschernchari}, at least up to
 scalars, implying the result. 
For this argument, it is crucial that one has the fiber square 
\eqref{OKfibsquare}, rather than a fiber sequence alone. 

One can also generalize the above questions to higher $\K$-theory. 
In  \cite{Beilinson}, 
the Beilinson fiber sequence is used to prove that if $x \in \K_j(X_k;
\mathbb{Q})$, then 
there exists a natural obstruction class 
in 
$\bigoplus_{i \geq 0} H^{2i-j}_{\mathrm{dR}}(X_K)/\mathrm{Fil}^{\geq i}
H^{2i-j}_{\mathrm{dR}}(X_K)$  which vanishes if and only if $x $ lifts to the
continuous $\K$-theory $\K^{\mathrm{cts}}_i( X; \mathbb{Q})$;
however, \cite{Beilinson}
does not 
identify the class
with the crystalline Chern character for $i  = 0$. 
Here we also extend this result to an
arbitrary quasi-compact and quasi-separated (qcqs) scheme with bounded $p$-power
torsion, using $p$-adic derived de Rham
cohomology \cite{Bhattpadic} and results of~\cite{antieauperiodic}.

\begin{namedtheorem}[E]\label{thm:e}
    Let $X$ be a qcqs scheme with
	 bounded $p$-power torsion. For each
     $n$ we write $X_n$ for $X\times_{\Spec\ZZ}\ZZ/p^n$, and put $\K^{\mathrm{cts}}(X)
\stackrel{\mathrm{def}}{=} \varprojlim \K(X_n)$.
	 Given a
    class $x\in\K_j(X_1;\QQ)$, there is a natural class
    $$c(x)\in\bigoplus_{i\geq
    0}H^{2i-j}(\L\Omega_X/\L \Omega^{\geq i}_X)_{\mathbb{Q}_p},$$ where 
	$\L\Omega_X$ is the $p$-adic derived de Rham cohomology of $X$ with the derived Hodge
	filtration $\L\Omega^{\geq\star}_X$. The class
    $x$ lifts to $\K_i^{\mathrm{cts}}(X; \mathbb{Q})$ if and only if
    $c(x)=0$.
\end{namedtheorem}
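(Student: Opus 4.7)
The plan is to globalize the Beilinson fiber square of Theorem~\hyperref[thm:a]{A} to a qcqs scheme $X$ with bounded $p$-power torsion, take the inverse limit over the $X_n$, and identify the resulting fiber via $p$-adic HKR.

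\textbf{Step 1: a continuous Beilinson fiber square.} For each fixed $n$, every local ring of $X_n$ is trivially henselian along $(p)$ (the ideal is nilpotent), so Theorem~\hyperref[thm:a]{A} applies Zariski-locally on $X_n$. Since $\K$, $\HC^-$, and $\HP$ all satisfy Zariski (indeed \'etale) descent, the local fiber squares assemble into a cartesian square of presheaves on $X$, and hence globally into
\[
\xymatrix{
\K(X_n; \QQ_p) \ar[r] \ar[d] & \K(X_1; \QQ_p) \ar[d] \\
\HC^-(X_n; \QQ_p) \ar[r] & \HP(X_n; \QQ_p).
}
\]
Taking $\varprojlim_n$, and noting that the upper right corner is constant in $n$, yields a cartesian square whose top row is $\K^{\mathrm{cts}}(X;\QQ_p)\to \K(X_1;\QQ_p)$.

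\textbf{Step 2: bottom terms via $p$-adic HKR.} Using the HKR filtrations on $\HC^-$ and $\HP$, together with the derived de Rham comparisons of \cite{Bhattpadic} and the periodic cyclic computations of \cite{antieauperiodic}, the bounded $p$-power torsion hypothesis lets one identify (rationally)
\[
\varprojlim_n \HC^-(X_n;\QQ_p) \simeq \prod_{i} \L\Omega^{\geq i}_X[2i]_{\QQ_p}, \qquad \varprojlim_n \HP(X_n;\QQ_p) \simeq \prod_{i} \L\Omega_X[2i]_{\QQ_p},
\]
with the bottom map of the square coming from the inclusions $\L\Omega^{\geq i}_X\hookrightarrow \L\Omega_X$. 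Consequently the fiber $F$ of the bottom horizontal map admits a complete filtration with graded pieces $(\L\Omega_X/\L\Omega^{\geq i}_X)[2i-1]_{\QQ_p}$, which rationally splits, giving
\[
\pi_{j-1} F \simeq \bigoplus_{i\geq 0} H^{2i-j}(\L\Omega_X/\L\Omega^{\geq i}_X)_{\QQ_p}.
\]
The obstruction class $c(x)$ is then defined as the image of $x$ under the boundary map $\K_j(X_1;\QQ_p)\to\pi_{j-1} F$ coming from the cartesian square of Step~1; by construction $c(x)=0$ if and only if $x$ lifts to $\K^{\mathrm{cts}}_j(X;\QQ_p)$.

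\textbf{Step 3: from $\QQ_p$ to $\QQ$; main obstacles.} It remains to check that a class over $\QQ$ lifts exactly when its image over $\QQ_p$ does. This holds because the fiber $F$ is already $p$-complete: for each $n$, the relative theory $\K(X_n, X_1)$ is identified with $\TC(X_n, X_1)$ by \Cref{thm:dgm}, and $\TC$ of a $p$-nilpotent input is $p$-complete; $p$-completeness is preserved under the inverse limit. Thus $F_\QQ \simeq F_{\QQ_p}$, so the boundary map for the rational fiber sequence factors through the obstruction group above, and naturality of $c(x)$ follows. The main technical obstacle is Step~2: carefully identifying the $p$-adically completed inverse limits of $\HC^-$ and $\HP$ with the Hodge-filtered $p$-adic derived de Rham cohomology of $X$, and verifying rational degeneration of the HKR filtration for a qcqs scheme only assumed to have bounded $p$-power torsion; this hypothesis enters precisely to make the derived de Rham comparisons and filtration convergence behave well.
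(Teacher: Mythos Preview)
Your overall strategy is sound and close to the paper's, but the paper takes a shorter path that sidesteps the issues you flag in Step~2. Rather than keeping the full fiber \emph{square} and identifying $\HC^-$ and $\HP$ separately, the paper reduces to the affine case $\mathfrak{X}=\spf(R)$ by Zariski descent, uses the cartesian square \eqref{cartsquare1} to replace the cofiber of $\K^{\mathrm{cts}}(\mathfrak{X};\QQ)\to\K(\mathfrak{X}_1;\QQ)$ by that of $\TC(R;\QQ_p)\to\TC(R/p;\QQ_p)$, and then invokes \Cref{thm_a_text} to identify this cofiber directly with $\HC(R;\QQ_p)[2]$. The HKR filtration from \cite{antieauperiodic} is then applied to $\HC$ alone, where the graded pieces are already the uncompleted quotients $L\Omega_R/L\Omega_R^{\geq n}[2n]$, and Adams operations (as in \cite[Sec.~9.4]{BMS2}) split the filtration rationally.

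Your route via $\HC^-$ and $\HP$ runs into a genuine subtlety you did not mention: the HKR/BMS filtrations on $\HC^-(R;\ZZ_p)$ and $\HP(R;\ZZ_p)$ have graded pieces given by \emph{Hodge-completed} derived de Rham cohomology $\widehat{L\Omega}_R^{\geq i}$ and $\widehat{L\Omega}_R$, not the uncompleted versions you wrote. This does not ultimately matter for the fiber, since $\widehat{L\Omega}_R/\widehat{L\Omega}_R^{\geq i}\simeq L\Omega_R/L\Omega_R^{\geq i}$, but it means your displayed identifications in Step~2 are incorrect as stated and the argument for why the theorem involves \emph{uncompleted} $L\Omega$ is obscured. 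The paper's approach via $\HC$ avoids this entirely. You should also make the splitting mechanism explicit: it is the Adams operations, not a generic ``rationally splits,'' that gives the direct-sum decomposition. Finally, your Step~1 takes $\varprojlim_n$ of $\QQ_p$-localized squares; since rationalization does not commute with inverse limits, the top-left corner is a priori $\varprojlim_n\K(X_n;\QQ_p)$ rather than $\K^{\mathrm{cts}}(X;\QQ_p)$. The paper avoids this by passing to $\TC$ first (via \eqref{cartsquare1}) and using \Cref{contcompmapeq} to identify $\TC^{\mathrm{cts}}(\mathfrak{X};\ZZ_p)\simeq\TC(R;\ZZ_p)$ before rationalizing.
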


\subsection{The motivic filtration on $\mathrm{TC}$}
In 
\cite{BMS2}, 
 Bhatt--Morrow--Scholze 
discovered a fundamental additional structure on the $p$-adic topological cyclic
homology $\mathrm{TC}(-; \mathbb{Z}_p)$ of $p$-adic \emph{commutative} rings: a
``motivic filtration''
$\mathrm{Fil}^{\geq\star} \mathrm{TC}( -; \mathbb{Z}_p)$
on $\mathrm{TC}(-; \mathbb{Z}_p)$
with associated graded terms 
denoted
 $\mathbb{Z}_p(i)[2i]$. 
 The objects $\mathbb{Z}_p(i)$ thus obtained are related to integral $p$-adic Hodge
 theory and can be defined (independently of topological cyclic homology) as a type of filtered Frobenius invariants on the
 prismatic cohomology \cite{Prisms}. 
 They are known explicitly in some cases: in characteristic $p> 0$ they can be identified
 with logarithmic de Rham--Witt sheaves (up to a shift), and for formally smooth
 algebras over $\mathcal{O}_{C}$ (for $C$ a complete algebraically closed
 nonarchimedean field), they can be identified with truncated $p$-adic nearby
 cycles.

Recall also that for $p$-adic (commutative) rings, $\mathrm{TC}(-;
\mathbb{Z}_p)$ is $p$-adic \'etale $\K$-theory in
nonnegative degrees \cite{GH99, CMM, CM}. Therefore, it is expected (but not known
in mixed characteristic) that 
the filtration 
$\mathrm{Fil}^{\geq\star} \mathrm{TC}( -; \mathbb{Z}_p)$
is the \'etale sheafified motivic filtration on algebraic $\K$-theory, and that
the $\mathbb{Z}_p(i)$ are $p$-adic \'etale motivic cohomology, at least where all of
these objects are defined. 
One also has constructions of Schneider and Sato \cite{Sa07} of ``$p$-adic
\'etale Tate twists,'' which satisfy a type of arithmetic duality. 
In general, one expects that the $\mathbb{Z}_p(i)$ should be related to
important foundational questions in arithmetic geometry and $\K$-theory. 
An advantage of the construction of 
$\mathrm{Fil}^{\geq\star} \mathrm{TC}( -; \mathbb{Z}_p)$
and the $\mathbb{Z}_p(i)$ as in \cite{BMS2}
is that it
works in a much more general setting (for the quasisyntomic rings; see
Section~\ref{sec:review}
below for a review) than existing approaches to motivic
cohomology. Moreover, its definition is extremely direct: it is simply a sheafified
Postnikov tower (albeit for a ``large'' topology). 
 
Using Theorem~\hyperref[thm:a]{A}, 
we will give a description of the $\mathbb{Z}_p(i)$ for $i \leq p-2$ and of
the $\mathbb{Q}_p(i)$ for all $i$ in terms of syntomic
cohomology as considered by Fontaine--Messing \cite{FM87} and Kato \cite{Ka87}.
In particular, this construction gives a description of the $\mathbb{Z}_p(i)$
(with the above restrictions) that relies only on derived de Rham theory, rather
than prismatic theory. 
Our result is an analog of 
a result of Geisser \cite{Ge04} for \'etale motivic cohomology for
smooth schemes over Dedekind rings. 

To formulate the result, we write $L \Omega_R$ for the $p$-adic derived de Rham
cohomology for a commutative ring $R$ equipped with its derived Hodge filtration
$L \Omega_R^{\geq\star}$ \cite{Bhattpadic}. 
The object $L \Omega_R$ carries a crystalline Frobenius $\varphi\colon
L \Omega_R \to L \Omega_R$. 
For $i < p$, 
one has a ``divided'' Frobenius $\varphi/p^i\colon L \Omega_R^{\geq i} \to L
\Omega_R$. Using the techniques of \cite{BMS2} (in particular, quasisyntomic
sheafification) applied to the Beilinson fiber square, we deduce our next
theorem.

\begin{namedtheorem}[F]\label{thm:f}
Let $R$ be a quasisyntomic ring.
\begin{enumerate}
    \item[{\rm (1)}]  
For each $i \geq 0$, there is an identification
$$ \mathbb{Q}_p(i)(R) \simeq 
\mathrm{fib}( \varphi - p^i\colon L \Omega_R^{\geq i} \to L
\Omega_R)_{\mathbb{Q}_p}.$$
\item[{\rm (2)}]
For $i \leq p-2$, there is an identification 
$$ \mathbb{Z}_p(i)(R) \simeq \mathrm{fib}( \phi/p^i - \mathrm{id}\colon L
\Omega_R^{\geq i} \to L \Omega_R ).$$
\end{enumerate}
\end{namedtheorem}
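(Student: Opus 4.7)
The plan is to apply quasisyntomic descent to the Beilinson fiber square of Theorem~\hyperref[thm:a]{A} and then pass to graded pieces of the motivic filtrations. Every quasiregular semiperfectoid ring $S$ is $p$-complete and hence henselian along $(p)$, so Theorem~\hyperref[thm:a]{A} yields a cartesian square
\[\xymatrix{
\TC(S;\QQ_p) \ar[r]\ar[d] & \TC(S/p;\QQ_p) \ar[d]\\
\HC^-(S;\QQ_p) \ar[r] & \HP(S;\QQ_p),
}\]
and all four functors satisfy quasisyntomic descent (the $\TC$ corners by the Bhatt--Morrow--Scholze technology, the cyclic-homology corners by direct checking), so the square extends to every quasisyntomic $R$. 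I would then equip each corner with its motivic filtration, defined by sheafifying the double-speed Postnikov tower from quasiregular semiperfectoid test objects. On $\TC(R;\QQ_p)$ and $\TC(R/p;\QQ_p)$, the $i$-th graded piece is $\QQ_p(i)(R)[2i]$ and $\QQ_p(i)(R/p)[2i]$ by definition; on $\HC^-$ and $\HP$, the $i$-th graded pieces are the Hodge-filtered $p$-adic derived de Rham cohomologies $\L\Omega_R^{\geq i}[2i]_{\QQ_p}$ and $\L\Omega_R[2i]_{\QQ_p}$, with the horizontal map being the canonical inclusion.

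All four filtrations come from the same sheafified Postnikov construction, so the Beilinson square refines to a cartesian square of filtered objects, and passing to the $i$-th graded piece yields
\[\xymatrix{
\QQ_p(i)(R)[2i] \ar[r] \ar[d] & \QQ_p(i)(R/p)[2i] \ar[d]^{\psi}\\
\L\Omega_R^{\geq i}[2i]_{\QQ_p} \ar[r]^{\mathrm{can}} & \L\Omega_R[2i]_{\QQ_p}.
}\]
The main obstacle is identifying the right vertical map $\psi$, the graded piece of the crystalline Chern character, with something built from the de Rham Frobenius. By the classical characteristic-$p$ Fontaine--Messing computation, $\QQ_p(i)(R/p)$ is itself the fiber of $\varphi - p^i$ on $\L\Omega_{R/p}^{\geq i} \to \L\Omega_{R/p}$, and the rational de Rham--to--crystalline comparison identifies $\L\Omega_{R/p}$ with $\L\Omega_R$. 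Unwinding the construction of the crystalline Chern character in Theorem~\hyperref[thm:a]{A} (via B\"okstedt's $\THH(\FF_p)$ and the cyclotomic Frobenius), I would show that under these identifications $\psi$ is the map induced by $\varphi$. With $\psi$ so identified, the cartesian pullback becomes precisely the equalizer of $\varphi$ and $p^i\cdot\mathrm{can}$ from $\L\Omega_R^{\geq i}_{\QQ_p}$ to $\L\Omega_R{}_{\QQ_p}$, i.e., $\mathrm{fib}(\varphi - p^i)_{\QQ_p}$, proving (1).

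Part (2) follows by rerunning the argument integrally. The motivic filtration on $\TC(-;\ZZ_p)$ is integral, and for $i \leq p-2$ the divided Frobenius $\varphi/p^i\colon \L\Omega_R^{\geq i} \to \L\Omega_R$ is well-defined over $\ZZ_p$ by the classical $p$-adic divisibility $\varphi(\mathrm{Fil}^{\geq i}) \subseteq p^i$ in PD-crystalline theory. The same cartesian-square identification then yields $\ZZ_p(i)(R) \simeq \mathrm{fib}(\varphi/p^i - \mathrm{id}\colon \L\Omega_R^{\geq i} \to \L\Omega_R)$; the bound $i \leq p-2$ is precisely the regime in which the divided Frobenius exists integrally, accounting for the restriction.
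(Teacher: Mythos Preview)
Your overall shape is right---the paper does obtain a graded Beilinson fiber square (\Cref{Beilinsonongr}) by applying $\tau_{[2i-1,2i]}$ on quasiregular semiperfectoids and unfolding, just as you outline. But there are two genuine gaps.

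\textbf{Identifying the right vertical map.} You write that ``unwinding the construction of the crystalline Chern character'' shows $\psi$ is induced by $\varphi$. The paper does \emph{not} do this directly, and it is unclear that direct unwinding is feasible: the map $\chi_i\colon\mathbb{Q}_p(i)(R/p)\to(L\Omega_R)_{\mathbb{Q}_p}$ is built from the cyclotomic Frobenius on $\THH$ and the B\"okstedt element, and tracing it through the various identifications is delicate. Instead, the paper proves (\Cref{imagechern}) that the image of $\chi_i$ is exactly the $\varphi=p^i$-eigenspace by an indirect rigidity argument: one first shows $\chi_i$ descends to a natural transformation of functors on $\mathrm{qSyn}_{\mathbb{F}_p}$ (\Cref{howtogetnaturalmap}, using \Cref{rem:frob}), then uses that any graded $\mathbb{E}_\infty$-endomorphism of $\bigoplus_i\mathbb{Z}_p(i)$ on $\mathrm{qSyn}_{\mathbb{F}_p}$ is scalar (\Cref{endoisscalar}, proved via corepresentability of $\mathbb{Z}_p(1)$ and \'etale-local generation by $\mathrm{dlog}$ classes). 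Even once the image is identified, extracting the fiber of $\varphi-p^i$ from the cartesian square requires the odd vanishing theorem of Bhatt--Scholze (\Cref{oddvanishingconj}) to make everything discrete, plus a separate argument that $\varphi-p^i$ is surjective on $(L\Omega_R)_{\mathbb{Q}_p}$; you skip both.

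\textbf{The integral case is not ``rerunning the argument.''} For $i\le p-2$, the paper must prove that the Fontaine--Messing sheaf $\zps{i}$ is discrete and $p$-torsion free on $\mathrm{qSyn}_{\mathbb{Z}_p}$ (\Cref{discretenessFMsheaf}), which is an analog of the odd vanishing conjecture and is not formal. This requires: computing $\zps{i}$ for absolutely integrally closed $p$-adic valuation rings (\Cref{aicsyntomic}) and perfect $\mathbb{F}_p$-algebras (\Cref{perfectFpalg}); $\arcp$-descent on perfectoids (\Cref{arcsheavesonpfd}); a reduction to weight one via \Cref{syntomicperfectoid}; matching $\zps{1}$ with $\mathbb{Z}_p(1)$ on perfectoids (\Cref{ZP1}); Andr\'e's lemma to kill higher cohomology (\Cref{discreteness}); and a key surjectivity on $H^1$ (\Cref{surjH1perf}). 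Only after all this does the integral fiber square and the torsion-freeness of $\mathrm{cofib}(\mathbb{Z}_p(i)(R)\to\mathbb{Z}_p(i)(R/p))$ from \eqref{integralBfibgr} yield the identification. Your claim that the restriction $i\le p-2$ accounts only for the existence of $\varphi/p^i$ misses that the entire discreteness argument depends on it.
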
 

We explicitly analyze Theorem~\hyperref[thm:f]{F} in
three cases in which one has alternate descriptions of the $\mathbb{Z}_p(i)$: rings of integers in $p$-adic fields, perfectoid rings, and
formally smooth $\mathcal{O}_C$-algebras where $C$ is an algebraically closed,
complete nonarchimedean field of mixed characteristic.
The first case recovers classical calculations of the rational $p$-adic $\K$-theory of $p$-adic fields;
the second case recovers the fundamental exact sequence in $p$-adic
Hodge theory; the last case 
recovers results of Colmez--Nizio\l~
\cite{CN17} on $p$-adic vanishing cycles, albeit only in the formally smooth case.

Finally, Theorem~\hyperref[thm:f]{F}  provides a complete computation of low-degree or rationalized
$\mathrm{TC}$ in terms of syntomic cohomology. This computation relies on the  following 
 connectivity estimate about the
$\mathbb{Z}_p(i)$ and about the filtration on $\mathrm{TC}(-;
\mathbb{Z}_p)$.  
The estimate for algebras over a perfectoid ring is stated in
\cite[Constr.~7.4]{BMS2}; the argument for all quasisyntomic rings relies on the use of
relative topological Hochschild homology and the spectral sequence of
Krause--Nikolaus~\cite{KN}. 

\begin{namedtheorem}[G]
If $R$ is a quasisyntomic ring, then 
$\mathbb{Z}_p(i)(R) \in D^{\leq i+1}( \mathbb{Z}_p)$. 
If $R$ is
$w$-strictly local (e.g., strictly henselian local), then 
$\mathbb{Z}_p(i)(R) \in D^{\leq i}(
\mathbb{Z}_p)$. 
Consequently, $\mathrm{Fil}^{\geq i} \mathrm{TC}(R; \mathbb{Z}_p)$ is
concentrated in homological degrees $\geq i-1$ (and pro-\'etale locally $\geq
i$). 
\end{namedtheorem}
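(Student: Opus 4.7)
The plan is to reduce to the quasi-regular semiperfectoid case, where the bound is nearly immediate, and then use quasisyntomic descent controlled by the Krause--Nikolaus spectral sequence. If $R$ is quasi-regular semiperfectoid, both $\mathcal{N}^{\geq i}\Prism_R\{i\}$ and $\Prism_R\{i\}$ are discrete, so the description $\mathbb{Z}_p(i)(R) = \mathrm{fib}(\varphi_i - \mathrm{can}\colon \mathcal{N}^{\geq i}\Prism_R\{i\} \to \Prism_R\{i\})$ places $\mathbb{Z}_p(i)(R)$ in $D^{\leq 1}(\mathbb{Z}_p)$, which is much sharper than what is needed.

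For a general quasisyntomic $R$, the natural move is to choose a quasisyntomic cover by a quasi-regular semiperfectoid ring and form the associated \v{C}ech nerve $R^\bullet$; then $\mathbb{Z}_p(i)(R) \simeq \mathrm{Tot}\,\mathbb{Z}_p(i)(R^\bullet)$ by descent. A priori a totalization of objects in $D^{\leq 1}$ need not be bounded above, and this is where the Krause--Nikolaus spectral sequence enters: working with relative $\THH$ over $\THH(\mathbb{S})$ or a suitable polynomial base, one expresses the graded pieces of the motivic filtration on $\THH(R;\mathbb{Z}_p)$ in terms of derived exterior powers of the cotangent complex $L_{R/\mathbb{Z}_p}$. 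Because $R$ is quasisyntomic, $L_{R/\mathbb{Z}_p}$ has $p$-complete Tor-amplitude in $[-1,0]$, so $\wedge^i L_{R/\mathbb{Z}_p}$ lies in Tor-amplitude $[-i,0]$. Tracking these amplitudes through the spectral sequence yields $\mathbb{Z}_p(i)(R) \in D^{\leq i+1}(\mathbb{Z}_p)$. This is the main technical obstacle: one must verify that the combinatorics of descent, combined with the cotangent-complex bound, produces exactly the shift by $i+1$ rather than anything worse.

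If $R$ is moreover $w$-strictly local, one exploits that higher \v{C}ech cohomology vanishes for the relevant sheaves on such a base, removing one degree of slack from the descent argument and upgrading the bound to $\mathbb{Z}_p(i)(R) \in D^{\leq i}(\mathbb{Z}_p)$. The consequence for $\mathrm{Fil}^{\geq i}\TC(R;\mathbb{Z}_p)$ is then formal: since $\mathrm{gr}^j\TC(R;\mathbb{Z}_p) \simeq \mathbb{Z}_p(j)(R)[2j]$ lies in homological degrees $\geq j-1$ for each $j\geq i$, and the motivic filtration on $\TC(R;\mathbb{Z}_p)$ is complete, an inverse-limit argument places $\mathrm{Fil}^{\geq i}\TC(R;\mathbb{Z}_p)$ in homological degrees $\geq i-1$. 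Pro-\'etale locally the $w$-strictly local bound applies, so this improves to $\geq i$.
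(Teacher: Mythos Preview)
Your proposal has the right raw ingredients (the Krause--Nikolaus fiber sequence and the Tor-amplitude of $L_{R/\mathbb{Z}_p}$) but there is a genuine gap, and the descent framing obscures where the real difficulty lies. The cotangent-complex bound, via relative $\THH$ over $\mathbb{S}[z]$ and the fiber sequence
\[
\gr^n\THH(R;\mathbb{Z}_p)\to \gr^n\THH(R/\mathbb{S}[z];\mathbb{Z}_p)\to \gr^{n-1}\THH(R/\mathbb{S}[z];\mathbb{Z}_p)[2],
\]
gives $\mathcal{N}^n\Prism_R\in D^{\le n}(\mathbb{Z}_p)$ directly for every quasisyntomic $R$, not merely a bound on a \v{C}ech nerve that then has to be totalized. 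But this only controls the Nygaard \emph{graded} pieces. It says nothing by itself about $\mathbb{Z}_p(i)(R)=\mathrm{fib}(\mathrm{can}-\varphi_i\colon \mathcal{N}^{\ge i}\Prism_R\{i\}\to\Prism_R\{i\})$, because for general $R$ neither $\mathcal{N}^{\ge i}\Prism_R\{i\}$ nor $\Prism_R\{i\}$ is bounded above, and the fiber of a map between such objects has no a priori cohomological bound. The paper closes this gap with a second filtration argument that your sketch omits entirely: working modulo $p$, one filters both sides by powers of the prismatic ideal $I=(\tilde\xi)$ and shows (from $\varphi_i(\mathcal{N}^{\ge i+r})\subset I^r$) that $\varphi_i$ vanishes on $I$-adic associated gradeds. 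A connectivity lemma then says $\mathrm{fib}(\mathrm{can}-\varphi_i)$ inherits the bound from $\mathrm{fib}(\mathrm{can})$ on each $I^s$-stage, and the latter is $(I^s\Prism/I^s\mathcal{N}^{\ge i+1})[-1]\in D^{\le i+1}$ by the Nygaard-quotient bound. This Frobenius-nilpotence step is the heart of the argument and is absent from your proposal.

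Your treatment of the $w$-strictly local case is also off: quasisyntomic covers of a $w$-strictly local ring are not split, so there is no reason for higher \v{C}ech cohomology of the relevant sheaves to vanish. The paper instead proves a rigidity theorem, $\mathrm{fib}(\mathbb{Z}_p(i)(R)\to\mathbb{Z}_p(i)(R/I))\in D^{\le i}$ for any henselian pair, and uses it to reduce the computation of $H^{i+1}(\mathbb{F}_p(i)(R))$ to the case of an ind-smooth $\mathbb{F}_p$-algebra. There one has the explicit description $\mathbb{F}_p(i)\simeq \mathrm{fib}(1-C^{-1}\colon \Omega^i\to\Omega^i/d\Omega^{i-1})[-i]$, and the Artin--Schreier-type surjectivity of $1-C^{-1}$ over a $w$-strictly local base kills $H^{i+1}$.

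The final deduction for $\mathrm{Fil}^{\ge i}\TC(R;\mathbb{Z}_p)$ is correct as you state it.
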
 

\begin{namedcorollary}[H]
If $R$ is any commutative ring, then there is a natural equivalence
\[ \TC(R; \mathbb{Q}_p) \simeq \bigoplus_{i \geq 0} \mathrm{fib}( \varphi -
p^i\colon L \Omega_R^{\geq i} \to L \Omega_R)_{\mathbb{Q}_p}.  \]
\end{namedcorollary}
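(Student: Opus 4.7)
The plan is to derive this by applying the Beilinson fiber square for $\TC$, which, as noted after Theorem A, holds for any commutative ring $R$. Starting from the cartesian square
\[
\xymatrix{
\TC(R;\QQ_p) \ar[r]\ar[d] & \TC(R/p;\QQ_p)\ar[d] \\
\HC^-(R;\QQ_p) \ar[r] & \HP(R;\QQ_p),
}
\]
I would identify the three non-$\TC$ corners in terms of $\L\Omega_R$ with its Hodge filtration and crystalline Frobenius, and then take the pullback weight-by-weight.

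Rationally and after $p$-completion, HKR in its $p$-adic derived form yields
\[
\HP(R;\QQ_p)\simeq \prod_{i\in\ZZ}\L\Omega_R[2i]_{\QQ_p}\quad\text{and}\quad \HC^-(R;\QQ_p)\simeq \prod_{i\geq 0}\L\Omega_R^{\geq i}[2i]_{\QQ_p},
\]
with the lower horizontal map of the square being weight-wise the Hodge inclusion. For the upper-right corner, the comparison between $p$-completed derived de Rham and crystalline cohomology of $R/p$, combined with the rational Frobenius-eigenspace decomposition of $\TC(-;\QQ_p)$ on $\FF_p$-algebras, gives
\[
\TC(R/p;\QQ_p)\simeq \bigoplus_{i\geq 0}\mathrm{fib}\bigl(\varphi - p^i\colon \L\Omega_R\to\L\Omega_R\bigr)_{\QQ_p}[2i],
\]
where only nonnegative weights contribute by positivity of crystalline Frobenius slopes, and the right vertical map restricts on each eigenspace to its canonical inclusion into $\L\Omega_R$.

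With these identifications in hand, the fiber square decomposes weight-by-weight. Using the elementary stable identity $A\times_C\mathrm{fib}(h\colon C\to D)\simeq \mathrm{fib}(h\circ f\colon A\to D)$ for $f\colon A\to C$, the $i$-th weight piece of the pullback is identified with $\mathrm{fib}(\varphi - p^i\colon \L\Omega_R^{\geq i}\to \L\Omega_R)_{\QQ_p}[2i]$, and summing over $i\geq 0$ yields the corollary.

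I anticipate two main obstacles. The first is reconciling direct sums and direct products: the HKR-type decompositions of $\HP$ and $\HC^-$ are a priori products indexed by weight, while the characteristic-$p$ decomposition of $\TC(R/p;\QQ_p)$ and the statement of the corollary are direct sums. This is resolved by Theorem G, whose connectivity estimates on $\ZZ_p(i)$ force only finitely many weight summands to contribute in each fixed homological degree, so degreewise sum and product agree. The second, which I expect to be the harder part, is establishing the identifications uniformly for arbitrary commutative $R$ (as opposed to just quasisyntomic or smooth $R$): this requires a left-Kan-extension argument along polynomial $\ZZ$-algebras, and one must check that all the functors involved are compatible with this extension in the rationalized $p$-complete setting where such density arguments are known to behave well.
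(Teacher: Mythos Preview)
Your strategy is essentially the route the paper takes to prove the rational part of Theorem~F, and the paper then derives Corollary~H from Theorem~F via the motivic filtration on $\TC$: reduce to $p$-complete $R$, use the left-Kan-extended BMS filtration with graded pieces $\QQ_p(i)(R)[2i]$, split via Adams operations, and identify each piece with the syntomic fiber by Theorem~F. So your proposal and the paper's argument ultimately have the same content; you are bypassing the intermediate packaging as $\QQ_p(i)$ and working directly in the fiber square.

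There is, however, a genuine gap. The assertion that ``the right vertical map restricts on each eigenspace to its canonical inclusion into $L\Omega_R$'' is precisely the hard step, and you do not justify it. The map $\chi_i\colon\QQ_p(i)(R/p)\to(L\Omega_R)_{\QQ_p}$ produced by the Beilinson square comes from the topologically defined $p$-adic Chern character $\TC(R/p;\QQ_p)\to\HP(R;\QQ_p)$, while the identification $\QQ_p(i)(R/p)\simeq(L\Omega_R)_{\QQ_p}^{\varphi=p^i}$ comes from de Rham--Witt theory in characteristic $p$. There is no a priori reason these match up, and the paper devotes real effort to this: one first shows that $\chi_i$ descends to a natural transformation of functors on $\mathrm{qSyn}_{\mathbb{F}_p}$ (via a Kan-extension argument exploiting that Frobenius acts as $p^i$ on $\ZZ_p(i)$), and then that any graded-multiplicative endomorphism of $\bigoplus_i\ZZ_p(i)$ on $\mathrm{qSyn}_{\mathbb{F}_p}$ is multiplication by powers of a scalar (using that logarithmic de Rham--Witt forms are \'etale-locally generated by $\dlog$ classes). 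Only then does one conclude that $\chi_i$ is a nonzero scalar multiple of the eigenspace inclusion. This is the obstacle you should have flagged; your two anticipated obstacles are comparatively minor.

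Two smaller corrections. First, the HKR-type filtrations on $\HP(R;\QQ_p)$ and $\HC^-(R;\QQ_p)$ have graded pieces given by the \emph{Hodge-completed} $\widehat{L\Omega}_R$ and $\widehat{L\Omega}_R^{\geq i}$, not $L\Omega_R$; a decompletion step is needed, which the paper handles by noting that the source of $\chi_i$ is left Kan extended from polynomial algebras. Second, your sum-versus-product concern for $\HP$ is not resolved by Theorem~G, since $\HP$ has contributions from all weights $i\in\ZZ$ in every homological degree; the paper avoids this by working directly with the filtration on $\TC$ (where Theorem~G does apply) and splitting only at the end.
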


\subsection*{Notation} 
Throughout, we write $\sp$ for the $\infty$-category of spectra. 
Given a ring $R$, we write $D(R)$
for the derived $\infty$-category of $R$.

We will use homological indexing conventions indicated with a subscript
when referring to spectra and
cohomological indexing conventions indicated with a superscript when referring to objects of the derived
category. 
For instance, given $n$, we
write $\sp_{\geq n}$ (resp.~$\sp_{\leq n}$) for spectra with homotopy groups
concentrated in degrees $\geq n$ (resp.~$\leq n$); 
we write $D(R)^{\leq n}$ (resp.~$D(R)^{\geq n})$ for objects of $D(R)$ with cohomology groups concentrated in
degrees $\leq n$ (resp.~$\geq n$).

We write $\HH(R)$ for the Hochschild homology of $R$, always relative to
$\mathbb{Z}$ and always computed in a derived sense (also known as Shukla
homology), and we let $\THH(R)$ denote the topological Hochschild homology of
$R$. We write
$\HC^-(R)  = \HH(R)^{hS^1}$ for negative cyclic homology and $\HP(R) =
\HH(R)^{tS^1}$ for periodic cyclic homology. 
For a scheme $X$, we let $L\Omega_X$ denote its $p$-completed derived
de Rham cohomology (relative to $\mathbb{Z}$) and 
$\L\Omega_X^{\geq i}$ for the $i$th stage of the
(derived) Hodge filtration. 
We denote the Hodge-completed variants by $\widehat{\L \Omega}_X$
and $\widehat{\L
\Omega}^{\geq i}_X$, respectively.

\subsection*{Acknowledgments}
We are very grateful to Peter Scholze for suggesting this question to us and for
sharing many insights.  We would also like to thank Johannes Ansch\"utz,
Alexander Beilinson, Bhargav Bhatt, H\'el\`ene Esnault,   Lars Hesselholt,
Luc Illusie, Moritz Kerz,
Arthur-C\'esar Le Bras,
Wies\l{}awa Nizio\l{}, and
Sam Raskin for helpful conversations. 
The third author would like to thank Uwe Jannsen, Moritz Kerz, and Guido Kings
for organising and inviting him to the 2014 Kleinwalsertal workshop on
Beilinson's paper \cite{Beilinson}. 

Two referees provided detailed, invaluable comments on the paper which has been
improved as a result.

The first two authors would like to thank the University of Bonn and the University
of M\"unster for their hospitality. This material is based upon work supported by the National Science Foundation
under Grant No. DMS-1440140 while the first three authors were in residence at the
Mathematical Sciences Research Institute in Berkeley, California, during the
Spring 2019 semester.
The first author was supported by NSF Grant DMS-1552766.
This work was done while the second author was a Clay Research Fellow. The
fourth author was funded by the Deutsche Forschungsgemeinschaft (DFG, German Research Foundation) under Germany's Excellence Strategy 
EXC 2044 Ð390685587, Mathematics M\"unster: Dynamics--Geometry--Structure.

\section{The Beilinson fiber square}\label{sec:square}

\subsection{Background}

We review some background on the theory of  cyclotomic spectra and topological cyclic homology as in
\cite{NS18}, of which we will  use the $p$-typical variant. This theory uses the $\infty$-category $\mathrm{Fun}(BS^1,\Sp)$ of spectra
equipped with $S^1$-actions. Given a spectrum $X$ equipped with an
$S^1$-action, we can form the homotopy $S^1$-orbits $X_{hS^1}$, the homotopy
$S^1$-fixed points $X^{hS^1}$, and the $S^1$-Tate construction $X^{tS^1}$.
These are related by a natural fiber sequence $\Sigma X_{hS^1}\rightarrow
X^{hS^1}\rightarrow X^{tS^1}$, which we will use constantly and without further
comment. See for example~\cite[Cor.~I.4.3]{NS18}.

\begin{definition}[Nikolaus--Scholze \cite{NS18}] 
We let $\cycsp$ denote the symmetric monoidal, stable $\infty$-category of
\emph{cyclotomic spectra}.\footnote{Our conventions are slightly different from
those of \cite{NS18}, which requires a Frobenius map for each prime number
$q$ and not only the fixed prime $p$. What we call a cyclotomic spectrum is called a
\emph{$p$-typical cyclotomic spectrum} by \cite{AN18}, and what we write as
$\cycsp$ is written as $\cycsp_p$ in \cite{AN18}. For $p$-complete objects
(the primary case of interest), there is no difference between a
cyclotomic spectrum in the sense of \cite{NS18} and the definition used here.}
An object of $\cycsp$ consists of a spectrum $X$
equipped with an $S^1$-action and an $S^1$-equivariant
map $\varphi_p\colon X \to
X^{tC_p}$ called the cyclotomic Frobenius.

Given $X \in \cycsp$, we write
    \[ \TC^-(X) = X^{hS^1} \quad\text{and}\quad \TP(X) = X^{tS^1}.  \]
We will define only $p$-complete $\TC$ for $X \in \cycsp$  and will assume that $X$ is bounded below.
We have two maps
\( \can\colon \TC^-(X;\ZZ_p) \to \TP(X;\ZZ_p)\text{ and } \varphi\colon \TC^-(X;\ZZ_p) \to
\TP(X;\ZZ_p).  \)
By definition, $\can$ is the canonical map from $S^1$-invariants to the Tate
construction, and $\varphi$ is induced from the Frobenius $\varphi_p$. 
The $p$-complete {topological cyclic homology} $\TC(X;\ZZ_p)$, for $X \in
\cycsp$ bounded below, can be
computed as the
fiber  of the difference of the two maps, i.e., 
\begin{equation} \label{TCformula}  \TC(X;\ZZ_p) = \mathrm{fib}\left(\can  - \varphi\colon
\TC^-(X;\ZZ_p) \to \TP(X;\ZZ_p)\right).   \end{equation}
\end{definition}

\begin{remark}
    We will use throughout the basic fact that if $X \in \cycsp$ has underlying
    $n$-connective spectrum, then $\TC(X;\ZZ_p)$ is $(n-1)$-connective (see for
    example~\cite[Lem.~2.5 and Rem.~2.14]{CMM}).
\end{remark}

\begin{example} \label{ex_cyc}
\begin{enumerate}
\item  
Given a ring $R$, we can form the topological Hochschild
homology $\THH(R)$ as a cyclotomic spectrum.   
\item
Given a spectrum $Y$, 
we let $Y^{\triv}$ be the cyclotomic spectrum where we view $Y$ as a spectrum
with trivial $S^1$-action and with cyclotomic Frobenius
given by the natural map
$Y \to Y^{hC_p} \to Y^{tC_p}$. 
\item For a spectrum $X$ with $S^1$-action we get a cyclotomic spectrum by
    letting $\varphi_p\colon X \to X^{tC_p}$ be zero (as an $S^1$-equivariant map). 
\end{enumerate}
\end{example} 

\begin{remark} 
Let $Y$ be a bounded below spectrum of finite type, meaning that each $\pi_iY$
is a finitely generated abelian group. If $X \in \cycsp$ is
$p$-complete and bounded below, then there is a natural equivalence
    \begin{equation} \label{TCotimestriv} \TC(X \otimes_{\SS} Y^{\triv}; \mathbb{Z}_p) \simeq \TC(X;
\mathbb{Z}_p) \otimes_\SS Y.\end{equation} This is a
consequence of the fact that the functor $\TC(-; \mathbb{Z}_p)$ commutes with 
geometric realizations of
connective cyclotomic spectra, since it is exact and carries $n$-connective
objects into $(n-1)$-connective objects. 
This even holds if $Y$ is only assumed to be bounded below, as long as the
right side of \eqref{TCotimestriv} is $p$-adically completed, by~\cite[Th.~2.7]{CMM}.
\end{remark} 

As a simple exercise with the above definitions, we prove the following result
for use below. This has been used in other references as well, e.g.,
\cite[Sec.~1.4]{HN}. 

\begin{proposition}
\label{TCS1orb}
If $X \in \cycsp$ is bounded below and $\TP(X;\ZZ_p) =0 $, then we have natural equivalences
$ \TC(X;\ZZ_p) \simeq 
(\Sigma X_{hS^1})^\wedge_p \simeq
\TC^-(X;\ZZ_p)
$.\end{proposition}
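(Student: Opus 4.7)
The plan is to read off both equivalences directly from the defining fiber sequences of $\TC^-$, $\TP$, and $\TC$, using the vanishing hypothesis to collapse them.

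First, I would obtain the equivalence $\TC(X;\ZZ_p)\simeq \TC^-(X;\ZZ_p)$ as an immediate consequence of formula \eqref{TCformula}. Indeed, that formula expresses $\TC(X;\ZZ_p)$ as the fiber of a map from $\TC^-(X;\ZZ_p)$ to $\TP(X;\ZZ_p)$; since by hypothesis the target vanishes, this fiber is canonically identified with $\TC^-(X;\ZZ_p)$. The boundedness hypothesis on $X$ is exactly what lets us invoke \eqref{TCformula}.

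For the equivalence $(\Sigma X_{hS^1})^\wedge_p \simeq \TC^-(X;\ZZ_p)$, I would start from the fundamental norm fiber sequence
\[ \Sigma X_{hS^1} \longrightarrow X^{hS^1} \longrightarrow X^{tS^1} \]
recorded in the background paragraph of this section. Applying $p$-completion, which is a finite-limit-preserving (and hence fiber-sequence-preserving) functor on spectra, I obtain a fiber sequence
\[ (\Sigma X_{hS^1})^\wedge_p \longrightarrow \TC^-(X;\ZZ_p) \longrightarrow \TP(X;\ZZ_p). \]
By assumption the third term vanishes, so the first map is an equivalence.

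Combining these two observations gives the desired chain of equivalences. I do not anticipate any serious obstacle: the content is essentially a bookkeeping exercise with the standard fiber sequences, with the only subtle point being that one needs $X$ bounded below so that \eqref{TCformula} applies and that $p$-completion behaves well on the relevant spectra. I would just make sure to note this use of the hypothesis explicitly in the write-up.
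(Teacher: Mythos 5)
Your proposal is correct and is essentially the paper's own argument: the first equivalence is read off from the fiber formula \eqref{TCformula} with vanishing target, and the second from the ($p$-completed) norm cofiber sequence $\Sigma X_{hS^1}\to X^{hS^1}\to X^{tS^1}$ together with $\TP(X;\ZZ_p)=0$. The only cosmetic remark is that the relevant property of $p$-completion is its exactness as a functor of stable $\infty$-categories (so it preserves (co)fiber sequences), which is what your "finite-limit-preserving" phrase is standing in for.
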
 

\begin{proof} 
    In fact, the formula \eqref{TCformula} shows that $\TC(X;\ZZ_p) = \TC^-(X;\ZZ_p)$. 
    Now $\TP(X;\ZZ_p)$ is the cofiber of the norm map $(\Sigma X_{hS^1})^\wedge_p \to \TC^-(X;\ZZ_p)$.
    Since we have assumed $\TP(X;\ZZ_p) =0$, we have $(\Sigma X_{hS^1})^\wedge_p\simeq
    \TC^-(X;\ZZ_p)$.
    Combining the two identifications, we conclude. 
\end{proof} 

Next, we apply this to a specific crucial example.

\begin{cons}[The cyclotomic spectrum $\mathbb{Z}_{hC_p}$]\label{cons:thhfp}
Recall first that the cyclotomic
trace (or a direct construction) gives a map  \begin{equation}
\mathbb{Z}^{\triv} \to \THH( \mathbb{F}_p) \label{ZtoTHHFp}
\end{equation} 
in $\cycsp$, and that 
as objects of $\fun(BS^1, \sp)$ we have 
$\mathrm{THH}( \mathbb{F}_p) \simeq \tau_{\geq 0}(
\mathbb{Z}^{tC_p})$ by \cite[Sec.~IV-4]{NS18}; here we obtain the $S^1$-action
on $\mathbb{Z}^{tC_p}$ via the sequence $C_p \to S^1 \xrightarrow{z \mapsto
z^p} S^1$. This is a refinement of (and deduced from) B\"okstedt's calculation of $\THH(
\mathbb{F}_p)$. 
Consequently, 
there is a cofiber sequence in $\cycsp$,
\begin{equation} \ZZ_{hC_p}\rightarrow\ZZ^\triv\rightarrow\THH(\FF_p),
\end{equation} where $\ZZ_{hC_p}$ is
a cyclotomic spectrum with 
underlying spectrum with $S^1$-action $\mathbb{Z}_{hC_p} \in \fun(BS^1, \sp)$. 

Note that 
$(\ZZ_{hC_p})^{tC_p}\we 0$ by the Tate orbit lemma \cite[Lem.~I.2.1]{NS18}.
In particular, the map 
$\mathbb{Z}^{\mathrm{triv}} \to \mathrm{THH}( \mathbb{F}_p)$ induces an
equivalence on $\TP(-; \mathbb{Z}_p)$, i.e., 
$\mathbb{Z}_p^{tS^1} \simeq \mathrm{THH}(\mathbb{F}_p)^{tS^1}=\TP(\FF_p)$.
We obtain 
by \Cref{TCS1orb} that $\mathrm{TC}( \mathbb{Z}_{hC_p}; \mathbb{Z}_p) \simeq
\Sigma (\mathbb{Z}_p)_{hS^1}$. 
Our next observation is that this remains true after tensoring with any bounded
below cyclotomic spectrum.
\end{cons}

\begin{lemma} 
\label{Tatevanish}
If $X \in \fun(BS^1, \sp)$, then
\(
    (X \otimes_\SS \mathbb{Z}_{hC_p})^{tS^1}
\)
is $p$-adically zero (here we use the diagonal $S^1$-action).
\end{lemma}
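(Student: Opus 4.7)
The plan is to reduce the $p$-adic vanishing of $(X \otimes_\SS \ZZ_{hC_p})^{tS^1}$ to the vanishing $(\ZZ_{hC_p})^{tC_p} \simeq 0$ already noted in Construction~\ref{cons:thhfp}, by combining a projection-formula argument at the level of $C_p$-Tate with the Nikolaus--Scholze passage from $C_p$-Tate to $S^1$-Tate after $p$-completion.

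The first step is to establish that the class of $Y \in \fun(BC_p, \sp)$ with $Y^{tC_p} \simeq 0$ is closed under diagonal tensor product with arbitrary $W \in \fun(BC_p, \sp)$. Any such bounded-below $Y$ lies in the localizing subcategory generated by the induced (free) $C_p$-spectra $\mathrm{Ind}_e^{C_p}(Z) = Z \otimes_\SS \SS[C_p]$, and the shearing equivalence/projection formula
\[
W \otimes_\SS \mathrm{Ind}_e^{C_p}(Z) \simeq \mathrm{Ind}_e^{C_p}(W|_e \otimes_\SS Z)
\]
shows that the diagonal tensor of $W$ with an induced object remains induced, hence $tC_p$-acyclic. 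Restricting the $S^1$-actions on $X$ and on $\ZZ_{hC_p}$ to $C_p \subset S^1$ and using $(\ZZ_{hC_p})^{tC_p} \simeq 0$, this yields
\[
(X \otimes_\SS \ZZ_{hC_p})^{tC_p} \simeq 0
\]
for every $X \in \fun(BS^1, \sp)$.

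The second step is to upgrade this $C_p$-Tate vanishing to $p$-adic $S^1$-Tate vanishing. Here I would invoke the standard Nikolaus--Scholze comparison (cf.~\cite[Lem.~II.4.2]{NS18}): for a bounded-below $Z \in \fun(BS^1, \sp)$ there is a natural $p$-adic equivalence
\[
Z^{tS^1} \simeq (Z^{tC_p})^{h(S^1/C_p)},
\]
using the residual $S^1/C_p$-action on $Z^{tC_p}$ and the identification $S^1/C_p \simeq S^1$ via the $p$-th power map. Applied to $Z = X \otimes_\SS \ZZ_{hC_p}$, the vanishing from the first step forces the right-hand side to vanish, giving the claim.

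The main obstacle I anticipate is the bounded-below hypothesis in the second step. Since $\ZZ_{hC_p}$ is connective, the hypothesis holds automatically when $X$ is bounded below. For unbounded $X$ I would reduce to this case by writing $X \simeq \mathrm{colim}_n \tau_{\geq -n} X$, using that $-\otimes_\SS \ZZ_{hC_p}$ commutes with this filtered colimit and that the $p$-adic vanishing of each $\pi_i$ of the $S^1$-Tate can be detected from the bounded-below truncations via a Milnor-type exact sequence.
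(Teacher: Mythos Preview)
Your two-step strategy (reduce to $C_p$-Tate vanishing, then bootstrap to $S^1$-Tate via \cite[Lem.~II.4.2]{NS18}) is reasonable, but Step~1 as written has a genuine gap. The assertion that a bounded-below $Y$ with $Y^{tC_p}\simeq 0$ ``lies in the localizing subcategory generated by the induced $C_p$-spectra'' is vacuous: since $\mathrm{Ind}_e^{C_p}\SS=\SS[C_p]$ already generates all of $\fun(BC_p,\sp)$ (its right orthogonal vanishes because $\mathrm{Res}$ is conservative), \emph{every} object lies in that localizing subcategory. The real issue is that $(-)^{tC_p}$ does not preserve filtered colimits, so ``$W\otimes Y$ is a colimit of induced objects $\Rightarrow (W\otimes Y)^{tC_p}=0$'' fails at the last step. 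A correct fix, staying close to your idea, is to use the projection formula for the $p$th-power map: writing $p_!\dashv p^*$ for the adjunction on $\fun(BS^1,\sp)$ along $S^1\xrightarrow{z\mapsto z^p}S^1$, one has $\ZZ_{hC_p}=p_!(\ZZ^{\triv})$ and hence $X\otimes_\SS\ZZ_{hC_p}\simeq p_!(p^*X\otimes_\SS\ZZ)=(p^*X\otimes_\SS\ZZ)_{hC_p}$, after which the Tate orbit lemma applies directly (for $X$ bounded below).

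The paper's proof takes a different and much shorter route: it observes that $\ZZ_{hC_p}$ is an $S^1$-equivariant module over the ring $\ZZ^{hC_p}$, so by lax monoidality of $(-)^{tS^1}$ the spectrum $(X\otimes_\SS\ZZ_{hC_p})^{tS^1}$ is a module over $(\ZZ^{hC_p})^{tS^1}$; the latter vanishes $p$-adically by the Tate \emph{fixed point} lemma \cite[Lem.~I.2.2]{NS18} combined with \cite[Lem.~II.4.2]{NS18}. This argument needs no boundedness hypothesis on $X$ whatsoever, so it sidesteps the truncation/Milnor maneuver in your last paragraph. The paper also sketches an alternative closer to your projection-formula approach: $(\FF_p)_{hC_p}\in\fun(BS^1,\sp)$ is induced from the trivial subgroup, and the projection formula for $\mathrm{Ind}_e^{S^1}$ then forces $p$-adic $tS^1$-vanishing of $X\otimes_\SS\ZZ_{hC_p}$ directly, without passing through $C_p$.
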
 

\begin{proof}
    The spectrum $(X \otimes_\SS \mathbb{Z}_{hC_p})^{tS^1}$ is a module over
    $(\mathbb{Z}^{hC_p})^{tS^1}$. Since $(\mathbb{Z}^{hC_p})^{tS^1}$ 
    vanishes $p$-adically by the Tate fixed point lemma \cite[Lem.~I.2.2 and
	 Lem.~II.4.2]{NS18}, the lemma follows.
	 Alternatively, one can easily verify that $(\mathbb{F}_p)_{hC_p} \in
	 \fun(BS^1, \sp)$, is induced from the
	 trivial subgroup, which forces the $p$-adic Tate vanishing. 
\end{proof} 

Combining Proposition~\ref{TCS1orb} and Lemma~\ref{Tatevanish}, we conclude that if $X$ is any bounded below
cyclotomic spectrum, then there are equivalences
\begin{equation} \label{TCXtimesZ}
    \TC(X \otimes_\SS \mathbb{Z}_{hC_p}; \mathbb{Z}_p) \simeq \Sigma((X
    \otimes_\SS \mathbb{Z}_{hC_p})_{hS^1})^\wedge_p \simeq \TC^-(X \otimes_\SS \mathbb{Z}_{hC_p};\ZZ_p).
\end{equation}

\subsection{Pullback squares} Next, we establish some 
pullback squares involving cyclotomic spectra and give a proof of
Theorem~\hyperref[thm:a]{A}. 

\begin{proposition}
\label{mainfibseq}
Let $X \in \cycsp$ be a bounded below cyclotomic spectrum. Then 
the commutative square
\begin{equation} \label{TChtpycart}\begin{gathered}
\xymatrix{
\TC( X \otimes_\SS \mathbb{Z}^{\triv}; \mathbb{Z}_p) \ar[d]  \ar[r] &  \TC(X
    \otimes_\SS \THH(
\mathbb{F}_p); \mathbb{Z}_p)
\ar[d]  \\
\TC^-(X  \otimes_\SS \mathbb{Z}^{\triv};\ZZ_p) \ar[r] & \TC^-(X \otimes_\SS
\THH(\mathbb{F}_p);\ZZ_p)
} \end{gathered}
\end{equation}
is cartesian,
where the horizontal maps arise from the map $\mathbb{Z}^{\triv} \to
\THH(\mathbb{F}_p)$ in $\cycsp$ of Construction~\ref{cons:thhfp} and the vertical maps are the
canonical maps $\TC(-;\ZZ_p) \to \TC^-(-;\ZZ_p)$ arising from the definition of
$\TC(-;\ZZ_p)$.

Moreover, there is a natural fiber sequence 
\begin{equation}
\label{fibseqTC1}
(\Sigma (X \otimes_\SS \mathbb{Z}_{hC_p})_{hS^1})^\wedge_p \to \TC(X
    \otimes_\SS
\mathbb{Z}^{\triv}; \mathbb{Z}_p)  \to \TC(X \otimes_\SS \THH(\mathbb{F}_p);
\mathbb{Z}_p). 
\end{equation}
\end{proposition}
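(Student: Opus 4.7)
The plan is to reduce everything to the cofiber sequence $\mathbb{Z}_{hC_p}\to\mathbb{Z}^{\triv}\to\THH(\mathbb{F}_p)$ of cyclotomic spectra from Construction~\ref{cons:thhfp} and then invoke \eqref{TCXtimesZ} twice. First I would tensor this cofiber sequence with $X$, using that $\otimes_\SS$ is exact in each variable on $\cycsp$, to produce a cofiber sequence
\[
X\otimes_\SS\mathbb{Z}_{hC_p}\longrightarrow X\otimes_\SS\mathbb{Z}^{\triv}\longrightarrow X\otimes_\SS\THH(\mathbb{F}_p)
\]
in $\cycsp$, all terms of which are bounded below since $X$ is bounded below.

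Next I would apply the exact functors $\TC(-;\mathbb{Z}_p)$ and $\TC^-(-;\mathbb{Z}_p)$ to obtain two horizontal fiber sequences together with the natural comparison map $\TC(-;\mathbb{Z}_p)\to\TC^-(-;\mathbb{Z}_p)$ between them. This produces the commutative diagram
\[
\xymatrix{
\TC(X\otimes_\SS\mathbb{Z}_{hC_p};\mathbb{Z}_p)\ar[r]\ar[d] & \TC(X\otimes_\SS\mathbb{Z}^{\triv};\mathbb{Z}_p)\ar[r]\ar[d] & \TC(X\otimes_\SS\THH(\mathbb{F}_p);\mathbb{Z}_p)\ar[d] \\
\TC^-(X\otimes_\SS\mathbb{Z}_{hC_p};\mathbb{Z}_p)\ar[r] & \TC^-(X\otimes_\SS\mathbb{Z}^{\triv};\mathbb{Z}_p)\ar[r] & \TC^-(X\otimes_\SS\THH(\mathbb{F}_p);\mathbb{Z}_p),
}
\]
in which the two rows are fiber sequences. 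Part~(2) then follows immediately: the fiber of the top row is $\TC(X\otimes_\SS\mathbb{Z}_{hC_p};\mathbb{Z}_p)$, which by \eqref{TCXtimesZ} (i.e.\ the combination of Proposition~\ref{TCS1orb} and Lemma~\ref{Tatevanish}) is naturally equivalent to $(\Sigma(X\otimes_\SS\mathbb{Z}_{hC_p})_{hS^1})^\wedge_p$.

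For part~(1), the square \eqref{TChtpycart} is cartesian if and only if the induced map of fibers of the two horizontal rows is an equivalence, i.e.\ if and only if the leftmost vertical map
\[
\TC(X\otimes_\SS\mathbb{Z}_{hC_p};\mathbb{Z}_p)\longrightarrow\TC^-(X\otimes_\SS\mathbb{Z}_{hC_p};\mathbb{Z}_p)
\]
is an equivalence. But this is precisely the second equivalence in \eqref{TCXtimesZ}, whose content is that $\TP(X\otimes_\SS\mathbb{Z}_{hC_p};\mathbb{Z}_p)=0$ (by Lemma~\ref{Tatevanish}) and therefore $\TC(-;\mathbb{Z}_p)$ and $\TC^-(-;\mathbb{Z}_p)$ agree on this term by Proposition~\ref{TCS1orb}.

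There is no real obstacle here once Construction~\ref{cons:thhfp} and Lemma~\ref{Tatevanish} are in place; the only mild subtlety is noting that tensoring the cofiber sequence with $X$ is legitimate (exactness of $\otimes_\SS$ on $\cycsp$) and that bounded-belowness is preserved so that $p$-complete $\TC$ is defined on each term. The entire argument is simply the observation that the leftmost column of the diagram becomes an equivalence after applying \eqref{TCXtimesZ}, which simultaneously identifies the fiber term in part~(2) and forces the square in part~(1) to be cartesian.
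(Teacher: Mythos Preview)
Your proof is correct and matches the paper's own argument almost verbatim. The paper in fact gives two proofs: one via the equalizer description $\TC(Z;\mathbb{Z}_p)=\mathrm{eq}(\TC^-(Z;\ZZ_p)\rightrightarrows\TP(Z;\ZZ_p))$ together with the observation that $X\otimes_\SS\mathbb{Z}^{\triv}\to X\otimes_\SS\THH(\mathbb{F}_p)$ induces an equivalence on $\TP(-;\ZZ_p)$, and an explicitly stated alternative which compares the fibers of the horizontal maps via \eqref{TCXtimesZ}; your argument is exactly this alternative.
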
 
\begin{proof}
Since $\mathrm{TC}(Z; \mathbb{Z}_p)$  for a bounded below cyclotomic spectrum $Z$ is an equalizer of two maps 
$\TC^-(Z;\ZZ_p) \rightrightarrows \TP(Z;\ZZ_p)$, the statement that 
\eqref{TChtpycart} is cartesian follows from 
the fact that $X \otimes_{\SS} \mathbb{Z}^{\triv} \to X \otimes_{\SS} \THH(\mathbb{F}_p)$
induces an equivalence on $\TP(-;\ZZ_p)$, via 
\Cref{Tatevanish}. 
Moreover, the fiber sequence \eqref{fibseqTC1} then follows from 
\eqref{TChtpycart} via taking fibers, and using \Cref{Tatevanish} again
to replace homotopy fixed points by homotopy orbits. 
Alternatively, to prove that \eqref{TChtpycart} is cartesian, one observes that the
fibers of the horizontal arrows
are $\TC(X \otimes_\SS \mathbb{Z}_{hC_p}; \mathbb{Z}_p)$ and $\TC^-(X
    \otimes_\SS
\mathbb{Z}_{hC_p};\ZZ_p)$
and these are naturally equivalent as in \eqref{TCXtimesZ}. 
\end{proof}

\begin{corollary}\label{cor_ring}
For every connective ring spectrum $R$ we have a natural fiber sequence
of $p$-complete spectra
\[
\Sigma  \left(\THH(R;\ZZ_p) \otimes_\SS \mathbb{Z}_{hC_p}\right)_{hS^1} \to \TC(R;\ZZ_p)\otimes_\SS \ZZ  \to \TC(R\otimes_\SS\FF_p;\ZZ_p)
\]
\end{corollary}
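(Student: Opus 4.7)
The statement is essentially a specialization of Proposition~\ref{mainfibseq}, and the plan is to deduce it by setting $X = \THH(R)$ there and then identifying the three resulting terms.

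First, apply the fiber sequence \eqref{fibseqTC1} with $X = \THH(R) \in \cycsp$, which is bounded below because $R$ is connective. This gives a natural fiber sequence of $p$-complete spectra
\[
(\Sigma(\THH(R) \otimes_\SS \mathbb{Z}_{hC_p})_{hS^1})^\wedge_p \to \TC(\THH(R) \otimes_\SS \mathbb{Z}^{\triv}; \mathbb{Z}_p) \to \TC(\THH(R) \otimes_\SS \THH(\mathbb{F}_p); \mathbb{Z}_p).
\]
The leftmost term already matches the desired one: up to $p$-completion (which is implicit throughout) we can equivalently write the underlying spectrum with $S^1$-action as $\THH(R;\ZZ_p) \otimes_\SS \ZZ_{hC_p}$.

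Next, I would identify the middle term using \eqref{TCotimestriv}. Taking the bounded-below spectrum $Y = \ZZ$ and applying that equivalence (which is automatic here because the whole diagram is being considered $p$-completely), one obtains
\[
\TC(\THH(R) \otimes_\SS \ZZ^{\triv}; \ZZ_p) \simeq \bigl(\TC(\THH(R); \ZZ_p) \otimes_\SS \ZZ\bigr)^\wedge_p \simeq \TC(R; \ZZ_p) \otimes_\SS \ZZ,
\]
using $\TC(\THH(R); \ZZ_p) = \TC(R; \ZZ_p)$ and the fact that $\ZZ$ is already a finite type spectrum so no further completion is needed on the right.

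For the right-hand term, I would invoke the fact that $\THH$ is a symmetric monoidal functor from associative ring spectra to cyclotomic spectra (see \cite{NS18}), which gives a natural equivalence $\THH(R \otimes_\SS \FF_p) \simeq \THH(R) \otimes_\SS \THH(\FF_p)$ in $\cycsp$. Applying $\TC(-;\ZZ_p)$ yields $\TC(\THH(R) \otimes_\SS \THH(\FF_p); \ZZ_p) \simeq \TC(R \otimes_\SS \FF_p; \ZZ_p)$. Splicing the three identifications into the fiber sequence above produces exactly the desired sequence.

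The only genuinely delicate point is verifying that the identification on the right-hand term arises as a map of cyclotomic spectra compatibly with the map $\ZZ^{\triv} \to \THH(\FF_p)$ appearing in Proposition~\ref{mainfibseq}, so that the horizontal arrow in the fiber sequence really is induced by the reduction map $R \to R \otimes_\SS \FF_p$. This follows from naturality of the cyclotomic trace/unit $\ZZ^{\triv} \to \THH(\FF_p)$ together with the symmetric monoidal structure, but it is the one spot where care is needed.
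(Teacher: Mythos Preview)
Your proof is correct and follows essentially the same approach as the paper: apply Proposition~\ref{mainfibseq} with $X$ the topological Hochschild homology of $R$, then identify the middle term via \eqref{TCotimestriv} and the right-hand term via the symmetric monoidality $\THH(R)\otimes_\SS\THH(\FF_p)\simeq\THH(R\otimes_\SS\FF_p)$. The only cosmetic difference is that the paper takes $X=\THH(R;\ZZ_p)$ (already $p$-complete) rather than $\THH(R)$, which makes the application of \eqref{TCotimestriv} slightly cleaner; your version is fine once you note, as you do, that everything is $p$-completed at the end.
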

\begin{proof}
We apply  \Cref{mainfibseq} to $X = \THH(R; \mathbb{Z}_p)$. We have that $\THH(R)
\otimes_\SS \THH(\FF_p) \simeq \THH(R \otimes_\SS \FF_p)$ which gives the
identification of the third term. For the identification of the term in the
middle we observe that   $ \TC(X \otimes_\SS
\mathbb{Z}^{\triv}; \mathbb{Z}_p) \simeq  \TC(X; \mathbb{Z}_p) \otimes_\SS
\mathbb{Z}$ by 
\eqref{TCotimestriv}. 
Note finally that for bounded below spectra, tensoring with $\mathbb{Z}$
preserves $p$-completeness as $\mathbb{Z}$ is finite type. 
\end{proof}

Next, we study what happens in \eqref{TChtpycart} after rationalization. 

\begin{corollary} 
\label{getcartsquare1}
Let $X \in \cycsp$ be a bounded below, $p$-complete cyclotomic spectrum. 
Then there exists a natural map 
$\TC(X \otimes_{\SS} \THH(\mathbb{F}_p); \mathbb{Z}_p) \to (X \otimes_{\SS}
\mathbb{Z}^{\triv})^{tS^1}$ which fits into a natural commutative square
    \begin{equation} \label{secondsquare}\begin{gathered}  \xymatrix{
\TC(X \otimes_\SS \mathbb{Z}^{\triv}; \mathbb{Z}_p) \ar[d] \ar[r] &  \TC(X
        \otimes_\SS\THH(\mathbb{F}_p);
\mathbb{Z}_p) \ar[d] \\
(X \otimes_\SS \mathbb{Z}^{\triv})^{hS^1} \ar[r] & (X \otimes_\SS
\mathbb{Z}^{\triv})^{tS^1}.
    }\end{gathered} \end{equation}
Moreover, this square 
becomes cartesian after inverting $p$. 
\end{corollary}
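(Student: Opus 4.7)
The plan is to take the cartesian square of Proposition~\ref{mainfibseq} as the primary input and modify its bottom row to obtain \eqref{secondsquare}, using Lemma~\ref{Tatevanish} to identify the relevant $\TP$ terms and the universal property of $p$-completion to produce the right vertical map into the (non-$p$-completed) Tate construction.

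First I construct the right vertical. Composing the canonical maps $\TC(X\otimes_\SS\THH(\FF_p);\ZZ_p)\to\TC^-(X\otimes_\SS\THH(\FF_p);\ZZ_p)\xrightarrow{\can}\TP(X\otimes_\SS\THH(\FF_p);\ZZ_p)$ with the equivalence $\TP(X\otimes_\SS\THH(\FF_p);\ZZ_p)\simeq\TP(X\otimes_\SS\ZZ^\triv;\ZZ_p)$ coming from Lemma~\ref{Tatevanish} (applied to the fiber $X\otimes_\SS\ZZ_{hC_p}$) lands in $\TP(X\otimes_\SS\ZZ^\triv;\ZZ_p)=((X\otimes_\SS\ZZ^\triv)^{tS^1})^\wedge_p$. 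Since the source $\TC(X\otimes_\SS\THH(\FF_p);\ZZ_p)$ is $p$-complete and the fiber of $A\to A^\wedge_p$ is $\SS/p$-acyclic for any spectrum $A$, the universal property of $p$-completion uniquely lifts this composite to the desired natural map $\TC(X\otimes_\SS\THH(\FF_p);\ZZ_p)\to(X\otimes_\SS\ZZ^\triv)^{tS^1}$. Commutativity of the resulting square follows by naturality of the whole construction applied to the cyclotomic map $X\otimes_\SS\ZZ^\triv\to X\otimes_\SS\THH(\FF_p)$, combined with the same universal property.

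For the cartesianness after inverting $p$, I would compute the fibers of the horizontal maps. By \eqref{fibseqTC1} together with \eqref{TCXtimesZ}, the fiber of the top row is $\TC(X\otimes_\SS\ZZ_{hC_p};\ZZ_p)\simeq(\Sigma(X\otimes_\SS\ZZ_{hC_p})_{hS^1})^\wedge_p$, while the $S^1$-norm cofiber sequence identifies the fiber of the bottom row with $\Sigma(X\otimes_\SS\ZZ^\triv)_{hS^1}$. Applying $(-)_{hS^1}$ to the cofiber sequence $X\otimes_\SS\ZZ_{hC_p}\to X\otimes_\SS\ZZ^\triv\to X\otimes_\SS\THH(\FF_p)$ in $\fun(BS^1,\sp)$ and using that $\THH(\FF_p)$ is $p$-torsion by B\"okstedt's computation (hence $(X\otimes_\SS\THH(\FF_p))_{hS^1}[1/p]=0$) yields $(X\otimes_\SS\ZZ_{hC_p})_{hS^1}[1/p]\simeq(X\otimes_\SS\ZZ^\triv)_{hS^1}[1/p]$. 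The main obstacle I anticipate is then the final identification $(\Sigma(X\otimes_\SS\ZZ_{hC_p})_{hS^1})^\wedge_p[1/p]\simeq\Sigma(X\otimes_\SS\ZZ_{hC_p})_{hS^1}[1/p]$---the commutation of derived $p$-completion with inversion of $p$ on this homotopy-orbit spectrum---which fails for general spectra and will require either exploiting the bounded-below $p$-complete structure of $X\otimes_\SS\ZZ_{hC_p}$ or reformulating the cartesianness as a property of a $p$-completed intermediate square before inverting $p$.
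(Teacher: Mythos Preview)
Your approach is workable but gets entangled in $p$-completion issues that are in fact vacuous. The point you are missing is that since $X$ is bounded below and $p$-complete, the same holds for $X\otimes_\SS\ZZ^\triv$ and $X\otimes_\SS\ZZ_{hC_p}$, and consequently $(X\otimes_\SS\ZZ^\triv)^{hS^1}$, $(X\otimes_\SS\ZZ^\triv)^{tS^1}$, and $\Sigma(X\otimes_\SS\ZZ_{hC_p})_{hS^1}$ are already $p$-complete (the last via the cell filtration of $BS^1$, whose graded pieces have connectivity tending to $\infty$). Thus $\TP(X\otimes_\SS\ZZ^\triv;\ZZ_p)=(X\otimes_\SS\ZZ^\triv)^{tS^1}$ on the nose and no lifting is needed, while your ``main obstacle'' simply evaporates. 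Note also that your appeal to the universal property of $p$-completion runs in the wrong direction: that property governs maps \emph{into} $p$-complete targets, not maps \emph{out of} $p$-complete sources, and in general a map $Y\to A^\wedge_p$ from a $p$-complete $Y$ need not lift to $A$; it is only the automatic $p$-completeness of the target here that saves you.

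The paper's argument is shorter and avoids horizontal fibers altogether. It vertically extends the cartesian square \eqref{TChtpycart} by the natural maps $(-)^{hS^1}\to(-)^{tS^1}$; the resulting bottom row $(X\otimes_\SS\ZZ^\triv)^{tS^1}\to(X\otimes_\SS\THH(\FF_p))^{tS^1}$ is an equivalence by Lemma~\ref{Tatevanish}, and identifying along it yields \eqref{secondsquare} together with the right vertical map. Cartesianness after inverting $p$ then follows because \eqref{TChtpycart} is already cartesian and the second map in the right-column composite, $(X\otimes_\SS\THH(\FF_p))^{hS^1}\to(X\otimes_\SS\THH(\FF_p))^{tS^1}$, has $p$-power-torsion fiber $\Sigma(X\otimes_\SS\THH(\FF_p))_{hS^1}$ and hence becomes an equivalence upon inverting $p$. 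Your fiber comparison is correct once the $p$-completion confusion is cleared up, but this pasting argument is cleaner and more robust.
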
 
\begin{proof} 
We can vertically extend the cartesian square \eqref{TChtpycart} 
via the canonical maps $(-)^{hS^1} \to (-)^{tS^1}$. 
In this case, as we saw earlier, the map 
$(X \otimes_\SS \mathbb{Z}^{\triv})^{tS^1} \to (X \otimes_\SS
\THH(\mathbb{F}_p))^{tS^1}$ is an equivalence. 
Using this identification, we obtain the commutative square \eqref{secondsquare}. 
The fact that \eqref{secondsquare} is cartesian after inverting $p$
follows from the facts that \eqref{TChtpycart} is cartesian and that
$(X \otimes_\SS \THH(\mathbb{F}_p))^{hS^1} \to 
(X \otimes_\SS \THH(\mathbb{F}_p))^{tS^1} \simeq 
(X \otimes_\SS \mathbb{Z}^{\triv})^{tS^1}$ becomes an equivalence after inverting
$p$. 
\end{proof}

\begin{remark}[Effective bounds for the denominators in \Cref{getcartsquare1}] 
\label{remark:effbound1}
For future reference, it will be helpful to give a more effective version of 
\Cref{getcartsquare1}. 
Consider the total cofiber (cofiber of horizontal cofibers) 
of the square \eqref{secondsquare}. This is given by $\Sigma^2 ( X \otimes_\SS \THH(
\mathbb{F}_p))_{hS^1}$ because \eqref{TChtpycart} is homotopy cartesian.
If $X$ is connective, then it follows that the $\tau_{\leq 2i}$ of the total cofiber is 
annihilated by $p^i$, since $\tau_{\leq 2i-2} \THH(\mathbb{F}_p)$ is
$S^1$-equivariantly annihilated by $p^i$: indeed, this follows by the
($S^1$-equivariant) Postnikov
filtration, since each nonzero homotopy group of $\THH(\mathbb{F}_p)$ is in even
degree and is an
$\mathbb{F}_p$-vector space. 
\end{remark} 

Consequently, we can deduce the following fiber square, which is the basic
$\TC$-theoretic result from which the Beilinson fiber square is a consequence. 
\begin{theorem}[] 
\label{cons:basicfibersquare}
Let $R$ be a ring (or more generally, a connective associative
$H\mathbb{Z}$-algebra spectrum). 
Then there is a natural commutative square of spectra
    \begin{equation}\label{eqn:basicsquare}\begin{gathered}
\xymatrix{
\TC(R; \mathbb{Z}_p) \ar[d]  \ar[r] &  \TC(R \otimes_{\mathbb{S}} \mathbb{F}_p)
\ar[d]  \\
    \HC^-(R; \mathbb{Z}_p) \ar[r] &  \HP(R; \mathbb{Z}_p),}\end{gathered}
\end{equation}
which becomes cartesian after inverting $p$. 
Aside from the right vertical arrow, all the maps are the canonical
ones. 
\end{theorem}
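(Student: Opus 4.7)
The plan is to deduce the square by applying Corollary~\ref{getcartsquare1} to $X = \THH(R;\mathbb{Z}_p)$ and then identifying each of the four corners with the expressions in the statement, at least after inverting $p$.

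First I apply Corollary~\ref{getcartsquare1} with $X = \THH(R;\mathbb{Z}_p)$, a bounded below, $p$-complete cyclotomic spectrum. The resulting square is cartesian after inverting $p$, with corners $\TC(\THH(R;\mathbb{Z}_p)\otimes_\SS \ZZ^{\triv};\mathbb{Z}_p)$, $\TC(\THH(R;\mathbb{Z}_p)\otimes_\SS \THH(\FF_p);\mathbb{Z}_p)$, $(\THH(R;\mathbb{Z}_p)\otimes_\SS \ZZ^{\triv})^{hS^1}$, and $(\THH(R;\mathbb{Z}_p)\otimes_\SS \ZZ^{\triv})^{tS^1}$. The top-right identifies with $\TC(R\otimes_\SS \FF_p;\mathbb{Z}_p)$ via the K\"unneth equivalence $\THH(R)\otimes_\SS \THH(\FF_p)\simeq \THH(R\otimes_\SS \FF_p)$, and the top-left identifies with $\TC(R;\mathbb{Z}_p)\otimes_\SS \ZZ$ by \eqref{TCotimestriv}, receiving a natural map from $\TC(R;\mathbb{Z}_p)$ via $\SS\to\ZZ$.

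Since $R$ is an $H\mathbb{Z}$-algebra and $\HH(R)$ lies in $D(\mathbb{Z})$, the linearisation $\THH(R)\to\HH(R)$ factors through a natural $S^1$-equivariant map $\THH(R;\mathbb{Z}_p)\otimes_\SS \ZZ^{\triv}\to\HH(R;\mathbb{Z}_p)$. Applying $(-)^{hS^1}$ and $(-)^{tS^1}$ produces comparison maps to $\HC^-(R;\mathbb{Z}_p)$ and $\HP(R;\mathbb{Z}_p)$ from the bottom two corners. Together with the top-row identifications these assemble into a map of squares from the Corollary~\ref{getcartsquare1} square to the square of the theorem, whose right vertical arrow is defined as the composite
\[
\TC(R\otimes_\SS \FF_p;\mathbb{Z}_p)\xrightarrow{\can}\TP(R\otimes_\SS \FF_p;\mathbb{Z}_p)\xleftarrow{\sim}(\THH(R;\mathbb{Z}_p)\otimes_\SS \ZZ^{\triv})^{tS^1}\longrightarrow\HP(R;\mathbb{Z}_p),
\]
using the Tate vanishing of Lemma~\ref{Tatevanish} to invert the middle arrow.

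To conclude that the resulting square becomes cartesian after inverting $p$, I check that each of the four comparison maps above becomes an equivalence rationally. This rests on two facts: (1) for any rational spectrum $Y$ the map $Y\to Y\otimes_\SS \ZZ$ is an equivalence (since $H\QQ\wedge_\SS H\ZZ\simeq H\QQ$); and (2) for $R$ an $H\mathbb{Z}$-algebra, the linearisation $\THH(R)\to\HH(R)$ is a rational equivalence because $\pi_\ast \THH(\ZZ)\otimes \QQ$ is concentrated in degree $0$. These give equivalences of the corresponding corners after inverting $p$, so cartesian-ness transfers from Corollary~\ref{getcartsquare1}. The main obstacle is the bookkeeping needed to produce the integral commutative square as an honest diagram of natural transformations, in particular defining the right vertical arrow and verifying commutativity of the full cube relating it to Corollary~\ref{getcartsquare1}; once in place the cartesian conclusion is formal.
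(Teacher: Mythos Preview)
Your overall strategy matches the paper's: apply Corollary~\ref{getcartsquare1} to $X=\THH(R;\ZZ_p)$ and then compare the resulting square to the one in the theorem. But there are two problems with the execution.

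The first is organizational. The comparison maps you describe do not all point in the same direction: the map $\TC(R;\ZZ_p)\to\TC(R;\ZZ_p)\otimes_\SS\ZZ$ goes \emph{from} the theorem square \emph{into} the corollary square, while the linearisation maps on the bottom row go the other way. So you do not get a single ``map of squares from the Corollary square to the square of the theorem''; the paper handles this by building a three-level diagram with $\TC(R;\ZZ_p)$ on top, the corollary square in the middle, and the $\HC^-/\HP$ row on the bottom.

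The second is a genuine gap. You argue that the bottom two comparison maps
\[
(\THH(R;\ZZ_p)\otimes_\SS\ZZ)^{hS^1}\to\HC^-(R;\ZZ_p),\qquad (\THH(R;\ZZ_p)\otimes_\SS\ZZ)^{tS^1}\to\HP(R;\ZZ_p)
\]
are equivalences after inverting $p$ because the underlying $S^1$-equivariant map $\THH(R;\ZZ_p)\otimes_\SS\ZZ\to\HH(R;\ZZ_p)$ is. But $(-)^{hS^1}$ and $(-)^{tS^1}$ are infinite limits and do \emph{not} in general preserve rational equivalences of bounded-below spectra: for example, $\THH(\FF_p)$ is rationally trivial while $\THH(\FF_p)^{hS^1}=\TC^-(\FF_p)$ has $\pi_0=\ZZ_p$. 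So your inference is not valid, and you have not shown that these two corner maps are rational equivalences. The paper sidesteps this entirely: instead of comparing the four corners, it shows the bottom comparison square is cartesian after inverting $p$ by comparing \emph{horizontal fibers}, which are $\Sigma(\THH(R;\ZZ_p)\otimes_\SS\ZZ)_{hS^1}\to\Sigma\HH(R;\ZZ_p)_{hS^1}$. Since $(-)_{hS^1}$ is a colimit, it does commute with inverting $p$, and the conclusion follows. You only need the bottom square to be cartesian rationally, not that its vertical maps are equivalences.
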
 
\begin{proof} 
    Via \eqref{secondsquare} for $X = \THH(R;\ZZ_p)$, 
    we obtain a natural commutative diagram
    \[ 
    \xymatrix{
        \TC(R;\ZZ_p)\ar[d]&\\
    \TC(\THH(R) \otimes_\SS \mathbb{Z}^{\triv}; \mathbb{Z}_p) \ar[d] \ar[r] &  \TC( R \otimes_{\mathbb{S}} \mathbb{F}_p)
    \ar[d]  \\
    (\THH(R; \mathbb{Z}_p) \otimes_\SS \mathbb{Z}^{\triv})^{hS^1} \ar[r] \ar[d]&  (\THH(R;
    \mathbb{Z}_p)
    \otimes_\SS \mathbb{Z})^{tS^1}\ar[d]\\
    \HC^-(R;\ZZ_p)\ar[r]&\HP(R;\ZZ_p),
    }
    \]
    where we use the natural cyclotomic map
    $\THH(R;\ZZ_p)\rightarrow\THH(R;\ZZ_p)\otimes_\SS\ZZ^\triv$
    and the natural $S^1$-equivariant map
    $\THH(R;\ZZ_p)\otimes_\SS\ZZ\rightarrow\HH(R;\ZZ_p)$. The upper square is
    cartesian after inverting $p$ by Corollary~\ref{getcartsquare1}.
    The map
    $\TC(R;\ZZ_p)\rightarrow\TC(\THH(R;\ZZ_p)\otimes_\SS\ZZ^\triv)\we\TC(R;\ZZ_p)\otimes_\SS\ZZ$
    is an equivalence after inverting $p$. The induced map on the bottom horizontal
    fibers is
    $\Sigma(\THH(R;\ZZ_p)\otimes_\SS\ZZ)_{hS^1}\rightarrow\Sigma\HH(R;\ZZ_p)_{hS^1}$,
    which is an equivalence after inverting $p$ since
    $\THH(R;\ZZ_p)\otimes_\SS\ZZ\rightarrow\HH(R;\ZZ_p)$ is an equivalence
	 after inverting $p$ and this property is preserved by taking $S^1$-homotopy orbits. Thus, the bottom square is cartesian
    after inverting $p$. Using these identifications, the theorem follows.
\end{proof} 

\begin{remark}[Effective bounds II]
\label{effectiveII}
Again, one can make effective the statement that \eqref{eqn:basicsquare} is
cartesian after inverting $p$, at least in the range $\leq 2p-4$. 
In this case, 
we find (via \Cref{remark:effbound1}) that 
for $i \leq p-1$, $\tau_{\leq 2i}$ of the total cofiber of
\eqref{eqn:basicsquare} is annihilated by $p^i$. Indeed, the map on cofibers of
the bottom rows of 
\eqref{secondsquare} and 
\eqref{eqn:basicsquare} is given by 
$\Sigma^2 (\THH(R) \otimes_\SS \mathbb{Z}^{\mathrm{triv}} )_{hS^1} \to 
\Sigma^2  \HH(R; \mathbb{Z}_p) _{hS^1}$. This map is an equivalence in degrees
$\leq 2p-2$. 
\end{remark}

\begin{definition}[The $p$-adic Chern character] 
Let $R$ be a ring. 
Consider the map  $\TC(R \otimes_{\mathbb{S}} \mathbb{F}_p) \to \HP(R;
\mathbb{Z}_p)$ from above. After inverting $p$, in view of \Cref{isogenyTHH}
below, 
we have an equivalence $\TC( R \otimes_{\mathbb{S}} \mathbb{F}_p; \mathbb{Q}_p)
\simeq \TC(R/p; \mathbb{Q}_p)$. 
We therefore obtain a map 
$\beta\colon\TC(R/p; \mathbb{Q}_p) \to \HP(R; \mathbb{Q}_p)$, and precomposing with the
trace we obtain 
\[\textrm{tr}_{\mathrm{crys}}=\mathrm{tr}\circ\beta\colon\K(R/p; \mathbb{Q}_p) \to \HP(R; \mathbb{Q}_p).\] 
We call $\textrm{tr}_{\mathrm{crys}}$ the \emph{$p$-adic Chern character} and record that it fits into a natural commutative diagram
    \begin{equation}\label{eqn:basicsquare2}\begin{gathered}
\xymatrix{
\K(R;\mathbb Q_p)\ar[r]\ar[d]_{\mathrm{tr}} &\K(R/p;\mathbb Q_p)\ar[d]^{\mathrm{tr}}\ar@/^1.5cm/[dd]^{\textrm{tr}_{\mathrm{crys}}} \\ 
\TC(R;\mathbb Q_p)\ar[r]\ar[d] & \TC(R/p;\mathbb Q_p)\ar[d]^\beta \\
\HC^-(R;\mathbb Q_p)\ar[r] & \HP(R;\mathbb Q_p)
    }\end{gathered}\end{equation}
in which the bottom square is a pullback.
\end{definition} 

\begin{remark} 
In recent work, Petrov--Vologodsky \cite{PV19} have shown that if 
$p>2$ and $R$ is $p$-torsion free, then there is a natural equivalence
$\HP(R; \mathbb{Z}_p) \simeq \TP(R/p; \mathbb{Z}_p)$. 
Thus, one could attempt to compare the 
$p$-adic Chern character $\mathrm{tr}_{\mathrm{crys}}$ with the usual trace
$\K(R/p; \mathbb{Z}_p) \to \TP(R/p; \mathbb{Z}_p)$. We have not considered this
question. 
\end{remark} 

We can now give a quick proof of Theorem A by combining the above results with the following theorem.

\begin{theorem}[Clausen--Mathew--Morrow \cite{CMM}]\label{CMMthm}
If $R$ is commutative and henselian along $(p)$, then the trace induces an equivalence
$\K(R,(p); \ZZ_p) \simeq  \TC(R,(p); \ZZ_p)$.
\end{theorem}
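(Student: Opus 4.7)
The plan is to bootstrap from the classical Dundas--Goodwillie--McCarthy theorem (Theorem~\ref{thm:dgm}), which handles nilpotent ideals, up to the henselian case by combining it with continuity statements on both $\K$ and $\TC$.

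First, for each $n \geq 1$, the ideal $(p)/(p^n) \subseteq R/p^n$ is nilpotent, so DGM supplies
\[
\K(R/p^n, (p); \ZZ_p) \xrightarrow{\simeq} \TC(R/p^n, (p); \ZZ_p).
\]
Passing to the inverse limit over $n$ gives an equivalence $\lim_n \K(R/p^n,(p);\ZZ_p) \simeq \lim_n \TC(R/p^n,(p);\ZZ_p)$, and the task reduces to identifying each side of this limit with the corresponding absolute relative theory for~$R$.

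On the $\TC$ side, this is the easier input: $\THH(-;\ZZ_p)$, as a functor to cyclotomic spectra, is continuous along the $p$-adic tower in the sense that $\THH(R;\ZZ_p)\simeq\lim_n\THH(R/p^n;\ZZ_p)$, and both $(-)^{hS^1}$ and the Frobenius $\varphi_p$ preserve such $p$-complete limits in the relevant connectivity range. Taking fibers against $R/p$ then yields $\TC(R,(p);\ZZ_p) \simeq \lim_n \TC(R/p^n,(p);\ZZ_p)$.

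The main obstacle is the parallel statement for $\K$-theory: one must show that for $R$ henselian along $(p)$ the natural map
\[
\K(R,(p);\ZZ_p) \xrightarrow{\simeq} \lim_n \K(R/p^n,(p);\ZZ_p)
\]
is an equivalence. This is a $p$-adic rigidity theorem, and it is the genuine new content beyond DGM. The subtlety is that classical Gabber--Suslin rigidity applies to coefficients \emph{invertible} in the henselian ideal, whereas here $p$ itself lies in the ideal. The approach I would take is to reduce the problem modulo~$p$, where $\K(-;\ZZ_p)$ agrees in non-negative degrees with étale/selmer $\K$-theory by Thomason--Gabber--Suslin-type descent and the Bhatt--Clausen--Mathew identification of $\K^{\mathrm{sel}}$ with $\TC$ (in characteristic $p$). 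Henselicity of $(R,(p))$ is then used to lift perfect complexes and idempotents from the tower $\{R/p^n\}$ back to $R$, controlling the fiber of the comparison map uniformly in~$n$. The technical core is to bootstrap a pro-vanishing of the relative $\K$-theory fibers along the henselization from the corresponding vanishing in the $p$-adically complete setting, possibly using Nisnevich or cdh descent as auxiliary tools; this is exactly where CMM's full machinery of truncating invariants and rigidity for commutative rings comes in, and is the only step whose length outstrips the elementary DGM/continuity part.
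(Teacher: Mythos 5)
First, note that the paper does not prove this statement at all: it is quoted verbatim from Clausen--Mathew--Morrow and used as a black box, so the relevant question is whether your sketch would constitute an independent proof of the CMM theorem. It would not, for two reasons. The less serious one is the $\TC$-continuity step: the equivalence $\TC(R;\ZZ_p)\simeq\varprojlim_n\TC(R/p^n;\ZZ_p)$ (via $\THH(R;\ZZ_p)\simeq\varprojlim_n\THH(R/p^n;\ZZ_p)$) is only known under a hypothesis such as bounded $p$-power torsion, which is not part of the statement you are proving; the actual CMM theorem is about arbitrary henselian pairs $(R,(p))$ and is not routed through the $p$-adic tower at all, precisely to avoid such hypotheses.

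The serious gap is the $\K$-theoretic step. You correctly isolate the claim $\K(R,(p);\ZZ_p)\simeq\varprojlim_n\K(R/p^n,(p);\ZZ_p)$ as ``the genuine new content,'' but in the literature this continuity statement is a \emph{corollary} of the CMM comparison theorem (combined with DGM and $\TC$-continuity), not an independent input to it; your plan therefore has the logical order reversed and, when you write that the technical core ``is exactly where CMM's full machinery of truncating invariants and rigidity for commutative rings comes in,'' you are in effect citing the theorem you set out to prove. The sketched mechanism for that core is also not viable as stated: Gabber--Suslin rigidity is unavailable (as you note, $p$ lies in the ideal), henselianness lets you lift idempotents and \'etale algebras but not arbitrary perfect complexes along the tower $\{R/p^n\}$, and the mod-$p$ identification of $\K$ with \'etale/Selmer $\K$-theory in characteristic $p$ is again downstream of the CMM-type results. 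The actual proof goes in the opposite direction: one shows the fiber of the cyclotomic trace is a finitary, truncating (hence, by Land--Tamme, nilinvariant) invariant, and then proves directly that such invariants are rigid modulo $p$ for henselian pairs, by reducing via N\'eron--Popescu to (ind-)smooth algebras and performing an explicit mod-$p$ comparison of relative $\K$ and $\TC$ there. Your proposal does not contain a substitute for that rigidity argument, so the main step remains unproved.
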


\begin{remark} 
If $R$ is only associative, but $p$-complete and has bounded $p$-power
torsion,\footnote{If $R$ is non-commutative and 
$p$-complete,  it is natural to ask whether there is still an
equivalence $\K(R,(p); \ZZ_p) \simeq  \TC(R,(p); \ZZ_p)$. We do not know the answer to this question.} then there is an equivalence
$\varprojlim\K(R/p^n,
(p)) \simeq \TC(R,(p); \ZZ_p)$.
This follows by the Dundas--Goodwillie--McCarthy theorem \cite{DGM} and the
$p$-adic continuity of $\mathrm{TC}$, cf.~\cite[Theorem 5.19]{CMM}. 
\end{remark} 

\begin{proof}[Proof of Theorem~{\hyperref[thm:a]{A}}]
As we have already noted, the square (\ref{eqn:basicsquare}) from
    \Cref{cons:basicfibersquare} is a pullback after inverting $p$; that is,
    the bottom square in (\ref{eqn:basicsquare2}) is a pullback. But the top
    square in (\ref{eqn:basicsquare2}) is a pullback by Theorem \ref{CMMthm};
    assembling these cartesian squares completes the proof of the theorem.
\end{proof}

\subsection{Fiber sequences up to quasi-isogeny}

Next, we review some definitions and terminology as in \cite{Beilinson},
identify more carefully the fiber terms in the above squares, and prove
Corollary~{\hyperref[quasiisogenythmB]{B}} from the introduction.

\begin{definition}[Isogenies and quasi-isogenies] 
\label{isogenies}
Given an additive category (or $\infty$-category) $\mathcal{C}$, we say that a
map $f\colon X \to Y$ is an \emph{isogeny} if there exists $g\colon Y \to X$ and an
integer $N > 0$
such that $g \circ f = N \mathrm{id}_X$ and $f \circ g = N \mathrm{id}_Y$. 
Let $\mathcal{C}$ be a stable $\infty$-category equipped with a $t$-structure 
which is left-complete.\footnote{Recall that $\mathcal{C}$ said to be
    left-complete (with respect to the given $t$-structure) if the natural map
$\Cscr\rightarrow\varprojlim_n\Cscr_{\leq n}$ is an equivalence. This is a technical
condition satisfied by many stable $\infty$-categories such as $\Sp$ and
$\Dscr(\ZZ)$.} We say that a map $f\colon X \to Y$ of bounded below objects is 
a \emph{quasi-isogeny} if the following equivalent conditions are satisfied: 
\begin{enumerate}
\item for each $n$, 
the map $\tau_{\leq n} f\colon \tau_{\leq n} X \to \tau_{\leq n }Y $ 
is an isogeny in $\mathcal{C}; $
\item 
for each $n$, the map 
$\pi_n X \to \pi_n Y$ in the heart $\mathcal{C}^{\heartsuit}$ is an isogeny. 
\end{enumerate}

\end{definition} 
We will need some elementary observations about quasi-isogenies. 
A map $f\colon X\rightarrow Y$ of bounded below objects in $\Cscr$ is a
quasi-isogeny if and only if the fiber $\mathrm{fib}(f)$ is quasi-isogenous to zero.
If one restricts to $\mathcal{C}_{\geq 0}$ (i.e., connective
objects), then quasi-isogenies are preserved under finite colimits and
geometric realizations (but generally not under filtered colimits). 
Next, let $\mathcal{C}, \mathcal{D}$ be stable $\infty$-categories with left-complete
$t$-structures. 
Given a right $t$-exact functor $F\colon\mathcal{C} \to \mathcal{D}$ (or just a right
bounded exact functor), 
it is easy to see that $F$ preserves quasi-isogenies.\footnote{A functor
$\Cscr\rightarrow\Dscr$ is right $t$-exact with respect to fixed
$t$-structures on $\Cscr$ and $\Dscr$ if it restricts to a functor
$\Cscr_{\geq 0}\rightarrow\Dscr_{\geq 0}$. It is right bounded if it
restricts to a functor $\Cscr_{\geq 0}\rightarrow\Dscr_{\geq n}$ for some $n\in\ZZ$.}

Given an $\infty$-category $\mathcal{I}$, we will say that a natural
transformation $f \to g$ of functors 
$f, g \colon \mathcal{I} \to \mathcal{C}$ 
is a \emph{quasi-isogeny} if it is a quasi-isogeny in $\fun(\mathcal{I}, \mathcal{C})$
with the pointwise $t$-structure. 
We will say that two functors are \emph{naturally quasi-isogenous} if they are
are related by a zig-zag of quasi-isogenies of functors.

\begin{example}
For $\Cscr = \Sp$, the map $\mathbb{S} \to \mathbb{Z}$ is a quasi-isogeny but
of course not an isogeny (as there is no nontrivial map back). In fact, in $\Sp$
one has the following formality result of Beilinson \cite{Beilinson}: every
bounded below spectrum $X$ is quasi-isogenous to the spectrum $\bigoplus_n
H\pi_n(X)[n]$. In particular, two bounded-below spectra $X$ and $Y$ are
quasi-isogenous precisely if for each $n$ separately the abelian groups $\pi_nX$
and $\pi_n Y$ are isogenous. To see that every spectrum is formal in the above
sense, it suffices to observe that every $k$-invariant of a
connective spectrum $X$ is
bounded torsion (where the torsion degree only depends on the degree of the
$k$-invariant and not on the specific homotopy groups). For explicit bounds,
cf.~\cite{Mat16}. 

This formality result of course does depend on choices and thus does not give
similar results in functor categories $\mathcal{C} = \fun(\mathcal{I} ,\Sp)$. 
\end{example}

The fiber sequence of Corollary~\ref{cor_ring} is the key to obtain our version of Beilinson's theorem 
\cite{Beilinson}, as follows. 

\begin{theorem}\label{thm_a_text}
For any associative ring $R$ the following spectra are naturally quasi-isogenous
to each other (i.e., related via a natural zig-zag of quasi-isogenies)
\[
\TC(R,(p);\ZZ_p) \qquad \qquad  \Sigma \HC(R, (p); \ZZ_p) \qquad \qquad \Sigma \HC(R;\ZZ_p)  \ . 
\]
Moreover,
    \begin{enumerate}
        \item[{\rm (a)}] 
if $R$ is $p$-torsion free, then the first two are equivalent after $(2p-5)$-truncation, and
\item[{\rm (b)}]
if $R$ is $p$-torsion free and\footnote{This is true pro-\'etale locally if $R$
is commutative, thanks to \cite[Th.~F]{HM}.} $\pi_{-1}( \TC(R; \mathbb{Z}_p)) = 0$, then the
first two are equivalent after $(2p-4)$-truncation. 
\end{enumerate}
\end{theorem}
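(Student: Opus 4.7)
The plan is to deduce the quasi-isogeny statement from the Beilinson fiber square of \Cref{cons:basicfibersquare}, and the effective bounds in (a) and (b) from \Cref{cor_ring}. For the first assertion, I take horizontal fibers in \eqref{eqn:basicsquare}. The bottom horizontal fiber is $\Sigma\HC(R;\ZZ_p)$, and since the square becomes cartesian after inverting $p$ with effective $p$-power denominator bounds supplied by \Cref{effectiveII}, the induced map on horizontal fibers is a quasi-isogeny. Combining with \Cref{isogenyTHH}, which identifies $\TC(R\otimes_\SS\FF_p;\ZZ_p)$ with $\TC(R/p;\ZZ_p)$ up to bounded denominators, this produces a natural quasi-isogeny $\TC(R,(p);\ZZ_p)\to\Sigma\HC(R;\ZZ_p)$. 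On the cyclic homology side, $\HH(R/p)$ is a module over $R/p\otimes_\ZZ^{\L}R/p$ and so is $\FF_p$-linear, hence $\HC(R/p;\ZZ_p)$ is annihilated by $p$; the cofiber sequence $\HC(R,(p);\ZZ_p)\to\HC(R;\ZZ_p)\to\HC(R/p;\ZZ_p)$ then makes the left map a quasi-isogeny, completing the claimed zig-zag.

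For (a) and (b), the cleanest route is via \Cref{cor_ring} applied directly to $R$. When $R$ is $p$-torsion free, $R\otimes_\SS\FF_p\simeq R/p$ as ring spectra, so the corollary produces a fiber sequence
\[
\Sigma(\THH(R;\ZZ_p)\otimes_\SS\ZZ_{hC_p})_{hS^1} \longrightarrow \TC(R;\ZZ_p)\otimes_\SS\ZZ \longrightarrow \TC(R/p;\ZZ_p).
\]
Comparing with the defining fiber sequence of $\TC(R,(p);\ZZ_p)$, the discrepancy between $\TC(R,(p);\ZZ_p)$ and $\Sigma(\THH(R;\ZZ_p)\otimes_\SS\ZZ_{hC_p})_{hS^1}$ is encoded by the fiber of the natural map $\TC(R;\ZZ_p)\to\TC(R;\ZZ_p)\otimes_\SS\ZZ$, which is $\TC(R;\ZZ_p)\otimes_\SS\tau_{\geq 1}\SS$. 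The key input is the connectivity of $\tau_{\geq 1}\SS$: for odd $p$, the first nonzero $p$-local stable stem above degree $0$ occurs in degree $2p-3$, so $(\tau_{\geq 1}\SS)_{(p)}$ is $(2p-3)$-connective. Since $\TC(R;\ZZ_p)$ is automatically $(-1)$-connective for a ring, the comparison map is thus an isomorphism on $\pi_i$ for $i\leq 2p-5$; assuming further $\pi_{-1}\TC(R;\ZZ_p)=0$, $\TC(R;\ZZ_p)$ is connective and the range improves to $i\leq 2p-4$.

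To finish I identify $\Sigma(\THH(R;\ZZ_p)\otimes_\SS\ZZ_{hC_p})_{hS^1}$ with $\Sigma\HC(R,(p);\ZZ_p)$ in the same range. Using the cyclotomic cofiber sequence $\ZZ_{hC_p}\to\ZZ^{\triv}\to\THH(\FF_p)$ of \Cref{cons:thhfp} and $R\otimes_\SS\FF_p=R/p$, one rewrites
\[
(\THH(R;\ZZ_p)\otimes_\SS\ZZ_{hC_p})_{hS^1} \simeq \mathrm{fib}\bigl((\THH(R;\ZZ_p)\otimes_\SS\ZZ)_{hS^1}\to \THH(R/p;\ZZ_p)_{hS^1}\bigr),
\]
and the target is $\mathrm{fib}(\HC(R;\ZZ_p)\to\HC(R/p;\ZZ_p))$. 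The natural maps $\THH(-;\ZZ_p)\otimes_\SS\ZZ\to\HH(-;\ZZ_p)$ and $\THH(R/p;\ZZ_p)\to\HH(R/p;\ZZ_p)$ are equivalences in degrees $\leq 2p-2$ (their fibers factor through a smash with $\tau_{\geq 1}\THH(\ZZ)$), and this is preserved under $S^1$-homotopy orbits and formation of relative terms. The main obstacle I anticipate is the careful bookkeeping of degree shifts so that the two separate connectivity estimates (from $\tau_{\geq 1}\SS$ and from $\tau_{\geq 1}\THH(\ZZ)$) combine to give the tight bounds $2p-5$ and $2p-4$; in particular the statement is essentially vacuous for $p=2$ (where $2p-5=-1$), and real content requires $p\geq 3$.
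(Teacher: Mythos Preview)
Your overall strategy matches the paper's, and your first paragraph is essentially correct (modulo the minor point that \Cref{effectiveII} only gives explicit bounds for $i\le p-1$; the full quasi-isogeny of the horizontal fibers requires additionally that $\THH(R;\ZZ_p)\otimes_\SS\ZZ\to\HH(R;\ZZ_p)$ is a quasi-isogeny, as in \Cref{quasiisog2}).

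The genuine gap is in your treatment of (a) and (b): the claim ``when $R$ is $p$-torsion free, $R\otimes_\SS\FF_p\simeq R/p$ as ring spectra'' is false. Since $R$ is an $H\ZZ$-module, $R\otimes_\SS\FF_p\simeq R\otimes_\ZZ(H\ZZ\wedge H\FF_p)$, whose homotopy is $R/p\otimes_{\FF_p}H_*(H\ZZ;\FF_p)$ and carries nontrivial classes from the dual Steenrod algebra beginning in degree $2p-2$; already for $R=\ZZ$ this is not discrete. So the fiber sequence from \Cref{cor_ring} ends in $\TC(R\otimes_\SS\FF_p;\ZZ_p)$, not $\TC(R/p;\ZZ_p)$, and your two fiber sequences do not share a right-hand term. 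You correctly invoked \Cref{isogenyTHH} for this comparison in your first paragraph but silently dropped it for the effective bounds, where it is equally essential. The paper supplies the missing input as \Cref{isogeny_GDM}: when $R$ is $p$-torsion free, the map $\TC(R\otimes_\SS\FF_p;\ZZ_p)\to\TC(R/p;\ZZ_p)$ has $(2p-1)$-connective fiber (proved via Dundas--Goodwillie--McCarthy and the corresponding $\K$-theory statement). With this in hand the right-hand error term is more connective than the middle one you correctly bounded, and (a) and (b) follow; the same correction is needed in your third paragraph, where the relevant fiber is over $\THH(R\otimes_\SS\FF_p;\ZZ_p)_{hS^1}$ rather than $\THH(R/p;\ZZ_p)_{hS^1}$.
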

\begin{proof}

For every associative ring $R$ we have the following commutative diagram of fiber
sequences 
\begin{equation}\label{diagram_big}
\begin{gathered}
\xymatrix{
\Sigma  \left(\THH(R;\ZZ_p) \otimes_\SS \mathbb{Z}_{hC_p}\right)_{hS^1}\ar[r]&\TC(R;\ZZ_p)\otimes_\SS\ZZ\ar[r]&\TC(R\otimes_\SS\FF_p;\ZZ_p)\\
F \ar[u] \ar[d] \ar[r] &  \mathrm{TC}(R; \mathbb{Z}_p) \ar[d]^{\mathrm{id}}  \ar[r]
\ar[u] &  \mathrm{TC}( R
\otimes_{\mathbb{S}} \mathbb{F}_p; \mathbb{Z}_p) \ar[d] \ar[u]^{\mathrm{id}} \\
\TC(R,(p); \ZZ_p)\ar[r]&\TC(R; \ZZ_p)\ar[r]&\TC(R/p; \ZZ_p).
}
\end{gathered}
\end{equation}
To form the above diagram, 
we use the map  $\mathrm{TC}(R; \mathbb{Z}_p) \to \mathrm{TC}(R; \mathbb{Z}_p)
\otimes_{\mathbb{S}} \mathbb{Z}$
induced from the map $\SS \to \ZZ$ as well as the map on $\mathrm{TC}(-;
\mathbb{Z}_p)$ induced by the  Postnikov section $R \otimes_\SS \FF_p \to R/p$. All horizontal sequences in
\eqref{diagram_big} are fiber sequences, either by \Cref{cor_ring} or by
definition; that is, $F$ is defined as the fiber of $\mathrm{TC}(R;
\mathbb{Z}_p) \to \mathrm{TC}( R \otimes_{\mathbb{S}} \mathbb{F}_p;
\mathbb{Z}_p)$.

We claim now that all the vertical maps in diagram \eqref{diagram_big}  are quasi-isogenies. 

\begin{lemma} 
The map 
$\mathrm{TC}(R; \mathbb{Z}_p) \to \mathrm{TC}(R; \mathbb{Z}_p)
\otimes_{\mathbb{S}} \mathbb{Z}$ in the 
diagram \eqref{diagram_big} is a natural quasi-isogeny of spectra.
Moreover, its fiber is  $(2p-4)$-connective.
If $\pi_{-1}(\TC(R; \mathbb{Z}_p)) = 0$, then the fiber is $(2p-3)$-connective.
\end{lemma} 
\begin{proof} 
The first part follows from the observation that tensoring a quasi-isogeny (in this case $\SS \to \ZZ$) with a bounded below spectrum (here $\TC(R;\ZZ_p)$) is again a quasi-isogeny. 
Moreover, the fiber of $\TC(R; \ZZ_p) \to \TC(R; \ZZ_p)\otimes_\SS \ZZ$ is $(2p-4)$
connective since $\TC(R;\ZZ_p)$ is $(-1)$-connective and the fiber of $\SS_{(p)} \to \ZZ_{(p)}$ is $(2p-3)$ connective. 
The last assertion follows similarly. 
\end{proof} 

The right horizontal map $\TC(R\otimes_\SS\FF_p;\ZZ_p)\rightarrow\TC(R/p;\ZZ_p)$ in diagram \eqref{diagram_big} is also a quasi-isogeny. This
follows from \Cref{isogenyTHH} that we will discuss and prove in
Section \ref{sec_thmb} and which is purely internal to cyclotomic spectra. But
we also want to give a direct proof here using $\K$-theory and the
Dundas--Goodwillie--McCarthy theorem. 

\begin{proposition}\label{isogeny_GDM}
The natural map $\TC(R \otimes_\SS \FF_p;\ZZ_p) \to \TC(R/p; \ZZ_p)$ is a quasi-isogeny. If $R$ is $p$-torsion free then the fiber is $(2p-1)$-connective.
\end{proposition}
\begin{proof}
The map of connective ring spectra $R \otimes_\SS \FF_p \to R/p$ is an
isomorphism on $\pi_0$. Thus the Dundas--Goodwillie--McCarthy theorem (for ring spectra) implies that its fiber is equivalent to the fiber of the map
\[
\K(R \otimes_\SS \FF_p;\ZZ_p) \to \K(R/p; \ZZ_p)
\]
But the map $R \otimes_\SS \FF_p \to R/p$ of ring spectra is a quasi-isogeny
and, if $R$ is $p$-torsion free, has fiber which is $(2p-2)$-connective. Thus, the
map on $\K$-theory is a quasi-isogeny and has fiber which is $(2p-1)$-connective,
cf.~\cite[Prop. 2.19]{LandTamme} (the proof in \emph{loc.~cit.} shows that the map is truly a
quasi-isogeny of functors).  
\end{proof}

Now we know that the vertical maps in diagram \eqref{diagram_big} are quasi-isogenies, so we conclude that $\Sigma  \left(\THH(R,\ZZ_p) \otimes_\SS \mathbb{Z}_{hC_p}\right)_{hS^1}$
and $\TC(R, (p); \ZZ_p)$ are quasi-isogenous to one another. Moreover, if $R$ is
$p$-torsion free, then the vertical maps from $F$ have $(2p-4)$-connective
fibers by the above discussion (which upgrades to $(2p-3)$-connective fibers if
$\pi_{-1} \TC(R; \mathbb{Z}_p) = 0$). Thus, we conclude that  $\Sigma  \left(\THH(R,\ZZ_p) \otimes_\SS \mathbb{Z}_{hC_p}\right)_{hS^1}$
and $\TC(R, (p); \ZZ_p)$ are equivalent in degrees $\leq (2p-5)$, and in
degrees $\leq 2p-4$ if $\pi_{-1} \TC(R; \mathbb{Z}_p) = 0$. 
\Cref{thm_a_text} now follows from the arguments above and the following lemma. 
\end{proof} 

\begin{lemma}
\label{quasiisog2}
The following spectra are naturally quasi-isogenous to each other
\[
\left(\THH(R;\ZZ_p) \otimes_\SS \mathbb{Z}_{hC_p}\right)_{hS^1} \qquad \qquad \HC(R, (p); \ZZ_p) \qquad \qquad \HC(R;\ZZ_p)
\]
and the first two are equivalent after $(2p-4)$-truncation if $R$ is $p$-torsion free.
\end{lemma}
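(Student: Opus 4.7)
The plan is to derive both comparisons from the cofiber sequence $\ZZ_{hC_p}\to\ZZ^{\triv}\to\THH(\FF_p)$ of \Cref{cons:thhfp}. Passing to underlying $S^1$-spectra, tensoring with $\THH(R;\ZZ_p)$ over $\SS$, and taking (implicitly $p$-completed) homotopy $S^1$-orbits produces a natural fiber sequence
\[
(\THH(R;\ZZ_p)\otimes_\SS\ZZ_{hC_p})_{hS^1}\to(\THH(R;\ZZ_p)\otimes_\SS\ZZ)_{hS^1}\to\THH(R\otimes_\SS\FF_p;\ZZ_p)_{hS^1},
\]
using the identification $\THH(R)\otimes_\SS\THH(\FF_p)\simeq\THH(R\otimes_\SS\FF_p)$ in the third term.

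Next, I would construct a natural map of fiber sequences to $\HC(R,(p);\ZZ_p)\to\HC(R;\ZZ_p)\to\HC(R/p;\ZZ_p)$ using, in the middle column, the $S^1$-equivariant linearization $\THH(R;\ZZ_p)\otimes_\SS\ZZ\to\HH(R;\ZZ_p)$, and in the right column the composite $\THH(R\otimes_\SS\FF_p;\ZZ_p)\to\THH(R/p;\ZZ_p)\to\HH(R/p;\ZZ_p)$ obtained from the Postnikov map $R\otimes_\SS\FF_p\to R/p$ followed by the linearization for $R/p$. The middle comparison is a quasi-isogeny by \Cref{effectiveII}; in the right comparison, the first arrow is a quasi-isogeny by the $\THH$-half of the argument of \Cref{isogeny_GDM} (or by Theorem~\hyperref[thm:c]{C}) applied to the $\pi_0$-equivalence with connective fiber $R\otimes_\SS\FF_p\to R/p$, and the second is a quasi-isogeny by linearization. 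Since homotopy $S^1$-orbits preserve connectivity, taking $(-)_{hS^1}$ preserves these quasi-isogenies, and the five lemma for fiber sequences then produces a natural quasi-isogeny $(\THH(R;\ZZ_p)\otimes_\SS\ZZ_{hC_p})_{hS^1}\simeq\HC(R,(p);\ZZ_p)$. To close the zig-zag to $\HC(R;\ZZ_p)$, I would observe that $\HC(R/p;\ZZ_p)$ is itself quasi-isogenous to $0$: since $R/p$ is an $\FF_p$-algebra, every homotopy group of $\HH(R/p;\ZZ_p)$, and hence of $\HC(R/p;\ZZ_p)=\HH(R/p;\ZZ_p)_{hS^1}$, is annihilated by $p$, so the map to $0$ is an isogeny in each degree. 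The defining fiber sequence $\HC(R,(p);\ZZ_p)\to\HC(R;\ZZ_p)\to\HC(R/p;\ZZ_p)$ then forces $\HC(R,(p);\ZZ_p)\to\HC(R;\ZZ_p)$ to be a quasi-isogeny.

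The step I anticipate requiring the most care is the sharp effective $(2p-4)$-truncation bound in the $p$-torsion free case. When $R$ is $p$-torsion free, $R\otimes_\SS\FF_p\to R/p$ is an equivalence, eliminating one potential source of loss, and \Cref{effectiveII} provides equivalence after $(2p-4)$-truncation on both the middle and right vertical maps. A naive application of the five lemma to the comparison of fiber sequences would lose one degree and yield only equivalence after $(2p-5)$-truncation on the leftmost map, so extracting the advertised $(2p-4)$-bound will require a more direct analysis of the map on cofibers, for instance by exploiting the multiplicative structure of the linearization relative to $\THH(\ZZ)$ so as to identify the relevant comparison modulo higher-order terms and recover the missing degree.
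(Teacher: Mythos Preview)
Your overall strategy for the quasi-isogeny part is correct and is exactly what the paper does: build the commutative square of $S^1$-spectra
\[
\xymatrix{
\THH(R;\ZZ_p)\otimes_\SS\ZZ \ar[r]\ar[d] & \THH(R\otimes_\SS\FF_p;\ZZ_p)\ar[d]\\
\HH(R;\ZZ_p)\ar[r] & \HH(R/p;\ZZ_p),
}
\]
observe that both vertical maps are quasi-isogenies (and that the bottom-right corner is quasi-isogenous to zero), and pass to $S^1$-orbits.

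There is one factual error in your effective-bound discussion: for $R$ $p$-torsion free, the map $R\otimes_\SS\FF_p\to R/p$ is \emph{not} an equivalence. Since $R$ is a $\ZZ$-algebra, $R\otimes_\SS\FF_p\simeq R\otimes_\ZZ(H\ZZ\wedge H\FF_p)$, and $\pi_*(H\ZZ\wedge H\FF_p)$ is $\FF_p$ in degree $0$ and then nonzero again in degree $2p-2$ (the class $\xi_1$). So the fiber of $R\otimes_\SS\FF_p\to R/p$ is only $(2p-2)$-connective, not contractible.

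This correction is also the key to the sharp bound, and the fix is much simpler than the multiplicative argument you propose. Working with the square above (before orbits), the left vertical map has fiber in degrees $\geq 2p-3$, while the right vertical map has fiber in degrees $\geq 2p-2$: the composite $\THH(R\otimes_\SS\FF_p)\to\THH(R/p)\to\HH(R/p)$ has first fiber $(2p-2)$-connective (from the ring map just discussed) and second fiber $(2p-1)$-connective (from $\THH(\ZZ;\ZZ_p)\to\ZZ_p$). The one extra degree of connectivity on the right column is exactly what the fiber-sequence argument needs: if $\mathrm{fib}(B\to B')\in\Sp_{\geq 2p-3}$ and $\mathrm{fib}(C\to C')\in\Sp_{\geq 2p-2}$, then $\mathrm{fib}(A\to A')\in\Sp_{\geq 2p-3}$, not merely $\Sp_{\geq 2p-4}$. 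Taking $S^1$-orbits preserves this connectivity, giving the claimed $(2p-4)$-truncation equivalence with no further work.
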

\begin{proof}
We have that $\THH(R;\ZZ_p) \otimes_\SS \mathbb{Z}_{hC_p}$ is equivalent to the fiber of 
\[
\THH(R; \ZZ_p)\otimes_\SS \ZZ \to \THH(R \otimes_\SS \FF_p; \ZZ_p) 
\]
and this map sits in a commutative square 
\[
\xymatrix{
\THH(R; \ZZ_p)\otimes_\SS \ZZ \ar[r] \ar[d]&  \THH(R \otimes_\SS \FF_p; \ZZ_p)  \ar[d]\\
\HH(R; \ZZ_p) \ar[r] & \HH(R/p; \ZZ_p)
}
\]
of spectra with $S^1$-action. Both vertical maps are quasi-isogenies, so that we
get the desired quasi-isogeny between the first two terms of the statement by
taking $S^1$-orbits. The term $\HH(R/p)$ is 
quasi-isogenous to $0$, so that we get the last quasi-isogeny too. 
If $R$ is $p$-torsion free, the fibers of the left and right vertical maps are in
degrees $\geq 2p-3$ and $\geq 2p-2$, respectively, so the last assertion follows
too. 
\end{proof}

Corollary~{\hyperref[quasiisogenythmB]{B}} follows by
combining Theorem \ref{CMMthm} with Theorem \ref{thm_a_text}.  In particular, we
have an isomorphism $\K_*(R, (p);\ZZ_p) \iso  \HC_{*-1}(R, (p);\ZZ_p)$ for
$\ast \leq 2p-5$, for $R$ commutative, $p$-torsion free, and henselian along
$(p)$.
Note also that 
with the same proof,  we can deduce the following variant of Theorem
\ref{thm_a_text} for arbitrary connective $\mathbb{Z}$-algebra ring spectra
(also known as $\ZZ$-linear dgas).

\begin{proposition}
    If $R$ is a connective $\ZZ$-algebra spectrum, then the fiber of $\TC(R;\ZZ_p)
\to \TC(R \otimes_\ZZ \FF_p; \ZZ_p)$ is quasi-isogenous to $\Sigma \HC(R;
\mathbb{Z}_p)$ and after $(2p-5)$-truncation equivalent to the fiber of
\[
\Sigma \HC(R;\ZZ_p) \to \Sigma \HC(R \otimes_\ZZ \FF_p; \ZZ_p) \ .
\]
\end{proposition}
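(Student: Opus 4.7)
The strategy is to follow the proof of \Cref{thm_a_text} essentially verbatim, using the $\ZZ$-algebra structure on $R$ as the substitute for the $p$-torsion freeness hypothesis. The key observation is that, for $R$ a connective $\ZZ$-algebra ring spectrum, the natural map of associative ring spectra
\[ R \otimes_\SS \FF_p \longrightarrow R \otimes_\ZZ \FF_p = R/p \]
is an isomorphism on $\pi_0$ and a quasi-isogeny whose fiber is $(2p-3)$-connective. This follows from the well-known fact that the lowest positive-degree homotopy group of $H\ZZ \otimes_\SS H\FF_p$ sits in degree $2(p-1)$; tensoring this fiber with the connective $H\ZZ$-module $R$ over $H\ZZ$ preserves connectivity.

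\textbf{Execution.} Given this input, the Dundas--Goodwillie--McCarthy theorem together with \cite[Prop.~2.19]{LandTamme}, applied as in the proof of \Cref{isogeny_GDM}, shows that the induced map
\[ \TC(R \otimes_\SS \FF_p; \ZZ_p) \longrightarrow \TC(R/p; \ZZ_p) \]
is a quasi-isogeny with $(2p-2)$-connective fiber. Combining this with \Cref{thm_a_text}, which identifies the fiber of $\TC(R;\ZZ_p) \to \TC(R \otimes_\SS \FF_p;\ZZ_p)$ up to quasi-isogeny with $\Sigma \HC(R;\ZZ_p)$, proves the first claim.

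For the $(2p-5)$-truncated equivalence with $\Sigma \HC(R,(p);\ZZ_p) = \mathrm{fib}(\Sigma\HC(R;\ZZ_p) \to \Sigma \HC(R/p;\ZZ_p))$, I would reassemble the analog of diagram \eqref{diagram_big} with $R/p$ replacing $R \otimes_\SS \FF_p$ throughout. Three connectivity bounds feed into the truncation statement: $(2p-4)$ for the fiber of $\TC(R;\ZZ_p) \to \TC(R;\ZZ_p) \otimes_\SS \ZZ$ (valid for any ring spectrum), $(2p-2)$ for the $\TC$ map above (without any $p$-torsion freeness hypothesis, by the $\ZZ$-algebra structure), and the $\ZZ$-algebra analog of \Cref{quasiisog2} for the bottom row. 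Taking the minimum reproduces the $(2p-5)$ truncation bound.

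\textbf{Main obstacle.} The subtle point is extending \Cref{quasiisog2} to connective $\ZZ$-algebra ring spectra. One needs $\HH(R/p;\ZZ_p)$ to be quasi-isogenous to $0$ (clear because $R/p$ is an $\FF_p$-algebra, so each homotopy group of $\HH(R/p/\ZZ)$ is $p$-torsion) and the map $\THH(R;\ZZ_p) \otimes_\SS \ZZ \to \HH(R;\ZZ_p)$ to be a quasi-isogeny with sufficient connectivity in the fiber. The latter follows from B\"okstedt's computation that $\THH(\ZZ) \to H\ZZ$ is a quasi-isogeny with $3$-connective fiber, combined with the base-change identification $\HH(R/\ZZ) \simeq \THH(R) \otimes_{\THH(\ZZ)} H\ZZ$ valid for any $\ZZ$-algebra ring spectrum.
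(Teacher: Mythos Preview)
Your proposal is correct and follows essentially the same route as the paper, which simply says ``with the same proof'' as \Cref{thm_a_text}. A couple of minor connectivity counts are off by one: the fiber of $R\otimes_\SS\FF_p \to R\otimes_\ZZ\FF_p$ is $(2p-2)$-connective (not $(2p-3)$), since $\pi_*(H\ZZ\otimes_\SS H\FF_p)$ has its first positive-degree class in degree $2p-2$; and in your ``Main obstacle'' paragraph the relevant $p$-adic connectivity of the fiber of $\THH(\ZZ;\ZZ_p)\to\ZZ_p$ is $2p-1$, not $3$. Neither slip affects the argument or the $(2p-5)$ bound, since the bottleneck remains the $(2p-4)$-connective fiber of $\TC(R;\ZZ_p)\to\TC(R;\ZZ_p)\otimes_\SS\ZZ$.
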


\begin{remark} 
In all of the above, the denominators involved in the above quasi-isogenies are
uniform: they do not depend on the choice of $R$. More formally, one could state
all of the above quasi-isogenies via the $\infty$-category of functors from
rings $R$ to spectra. The denominators in the next result are not independent in the same fashion. 
\end{remark} 

\begin{theorem} 
Let $(R, I)$ be a pair consisting of an associative ring $R$ and a nilpotent ideal $I$. 
Then there is a natural zig-zag of quasi-isogenies between $\K(R, I;
\mathbb{Z}_p)$ and $\Sigma \HC(R, I; \mathbb{Z}_p)$. 
\end{theorem}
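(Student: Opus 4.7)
The plan is to deduce the general nilpotent-ideal statement from the already-established case $I=(p)$ (Theorem~\ref{thm_a_text}) by taking fibers over the surjection $R\to R/I$ and using Theorem~\hyperref[thm:c]{C} to kill the residual error term. First, by the Dundas--Goodwillie--McCarthy theorem (\Cref{thm:dgm}), the cyclotomic trace gives a natural equivalence $\K(R,I;\ZZ_p)\simeq\TC(R,I;\ZZ_p)$, so it suffices to produce a natural zig-zag of quasi-isogenies between $\TC(R,I;\ZZ_p)$ and $\Sigma\HC(R,I;\ZZ_p)$.

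Theorem~\ref{thm_a_text} furnishes, functorially in any associative ring $S$, a natural zig-zag of quasi-isogenies $\TC(S,(p);\ZZ_p)\sim\Sigma\HC(S;\ZZ_p)$. Evaluating each leg of the zig-zag at the morphism $R\to R/I$ and passing to horizontal fibers yields a natural zig-zag of quasi-isogenies
\[
\mathrm{fib}\bigl(\TC(R,(p);\ZZ_p)\to\TC(R/I,(p);\ZZ_p)\bigr)\;\sim\;\Sigma\HC(R,I;\ZZ_p);
\]
here I use the elementary fact that fibers of pointwise quasi-isogenies of natural transformations are again pointwise quasi-isogenies, verified by a nine-lemma fiber-sequence chase using that bounded-torsion-in-each-degree is preserved under fibers. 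Rearranging iterated fibers identifies the left-hand side with $\mathrm{fib}\bigl(\TC(R,I;\ZZ_p)\to\TC(R/p,\overline I;\ZZ_p)\bigr)$, where $\overline I=(I+pR)/pR\subset R/p$ denotes the image of $I$.

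It remains to show that $\TC(R/p,\overline I;\ZZ_p)$ is quasi-isogenous to zero, equivalently that the map of discrete rings $R/p\to(R/I)/p$ induces a quasi-isogeny on $\TC(-;\ZZ_p)$. The ideal $\overline I\subset R/p$ is nilpotent (as $I$ is) and is annihilated by $p$ as an abelian group (being a subset of $R/p$); hence $\pi_0$ of $R/p\to(R/I)/p$ is surjective with nilpotent kernel, and the fiber $H\overline I$ of this map of connective ring spectra is bounded $p$-torsion in its only nonzero homotopy group, so the map is itself a quasi-isogeny of spectra. Theorem~\hyperref[thm:c]{C} therefore applies and yields the desired conclusion. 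Composing this quasi-isogeny with the one from the previous paragraph gives the required natural zig-zag $\TC(R,I;\ZZ_p)\sim\Sigma\HC(R,I;\ZZ_p)$, which combined with Dundas--Goodwillie--McCarthy proves the theorem.

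The hard part is invoking Theorem~\hyperref[thm:c]{C} correctly and the bookkeeping with iterated fibers of natural quasi-isogenies; once these formal ingredients are in place, the argument is a clean three-step composition (DGM, Theorem~\ref{thm_a_text} applied to $R$ and $R/I$, and Theorem~\hyperref[thm:c]{C} applied to $R/p\to(R/I)/p$) of fiber-sequence manipulations.
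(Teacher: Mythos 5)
Your argument is correct and is essentially the paper's own proof: after Dundas--Goodwillie--McCarthy, one compares $\TC(R,I;\ZZ_p)$ with $\mathrm{fib}(\TC(R,(p);\ZZ_p)\to\TC(R/I,(p);\ZZ_p))$, notes that the discrepancy is controlled by $\TC(R/p;\ZZ_p)\to\TC(R/(I,p);\ZZ_p)$ being a quasi-isogeny via Theorem~\hyperref[thm:c]{C}, and then feeds in the natural zig-zag of \Cref{thm_a_text} for $R$ and $R/I$. Your extra bookkeeping (the total-fiber rearrangement and the five-lemma-type stability of quasi-isogenies under taking fibers of compatible maps) just makes explicit what the paper leaves implicit.
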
 
\begin{proof} 
By the Dundas--Goodwillie--McCarthy theorem, we can replace $\K$-theory with $\TC$. 
We have a natural map 
    \begin{equation} \label{modIqiso}  \TC(R, I;\mathbb{Z}_p) \to \mathrm{fib}\left( \TC(R, (p); \mathbb{Z}_p) \to \TC(R/I, (p);
\mathbb{Z}_p)\right)  \end{equation}
Now $\TC(R/p; \mathbb{Z}_p) \to \TC(R/(I, p); \mathbb{Z}_p)$
is a quasi-isogeny in view of \Cref{isogenyTHH} below, 
so that \eqref{modIqiso} is a quasi-isogeny. Combining with the quasi-isogenies
of \Cref{thm_a_text} now completes the proof. 
\end{proof}

\section{Quasi-isogenies of cyclotomic spectra}\label{sec_thmb}

In this section, we systematically study quasi-isogenies in  cyclotomic spectra,
give another
proof of Theorem~{\hyperref[thm:a]{A}} and
Corollary~{\hyperref[quasiisogenythmB]{B}}, and prove Theorem~{\hyperref[thm:c]{C}},
sharpening some
results of Geisser--Hesselholt \cite{GHfinite}.

\subsection{Preliminaries}
We will apply the notion of quasi-isogeny (\Cref{isogenies}) to the
$\infty$-category $\cycsp$ of  cyclotomic
spectra using the $t$-structure of~\cite{AN18};\footnote{Recall that what we
denote by $\cycsp$ is denoted $\cycsp_p$ in \cite{AN18}.} 
this $t$-structure is defined so that the connective objects of $\cycsp$ are
those whose underlying spectrum is connective and it is checked
in~\cite[Theorem~2.1]{AN18} that the $t$-structure is left-complete. 
Note that a quasi-isogeny of bounded-below cyclotomic spectra $f\colon X \to Y$ is a quasi-isogeny of
underlying spectra, and $\TC(f;\ZZ_p)\colon \TC(X;\ZZ_p) \to \TC(Y;\ZZ_p)$ is also a quasi-isogeny. However,  $\THH(\mathbb{F}_p)
\in \cycsp$ has underlying spectrum quasi-isogenous to zero, but is not itself
quasi-isogenous to zero because $\TC(\mathbb{F}_p)\we\TC(\FF_p;\ZZ_p)$ is torsion free and
nonzero: $\pi_0\TC(\FF_p;\ZZ_p)\iso\pi_{-1}\TC(\FF_p;\ZZ_p)\iso\ZZ_p$.

\newcommand{\TR}{\mathrm{TR}}
In the next result, we use the notion of $\TR$ of a cyclotomic spectrum, which
plays an important role in the work \cite{AN18}. See \cite{BM15} for an account
of $\TR$ in the approach to cyclotomic spectra via genuine equivariant homotopy theory. 
Implicitly, $\TR$ is computed with respect to our fixed prime $p$, but it will
not generally be $p$-complete unless we $p$-complete it forming $\TR(X;\ZZ_p)$ for a
cyclotomic spectrum $X$.

\begin{proposition}\label{prop:tr}
A map $f\colon X \to Y$ of bounded-below cyclotomic spectra is a
quasi-isogeny in $\cycsp$
if and only if the map of spectra $\TR(f)\colon \TR(X) \to \TR(Y)$ is a quasi-isogeny
of spectra.
\end{proposition}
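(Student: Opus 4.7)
The plan is to reduce the assertion to a formal statement about the $t$-structure on $\cycsp$ constructed in \cite{AN18}. The key input is that, under this $t$-structure (whose connective objects are those with connective underlying spectrum), the heart $\cycsp^{\heartsuit}$ is equivalent to a category of $p$-typical Cartier modules, and the functor $\TR\colon \cycsp \to \Sp$ is $t$-exact on bounded-below objects with respect to the standard $t$-structure on $\Sp$. In particular, for any bounded-below $X \in \cycsp$ and any $n \in \ZZ$, the abelian group $\pi_n \TR(X)$ is naturally identified with the underlying abelian group of the heart-valued homotopy object $\pi_n^{\cycsp}(X) \in \cycsp^{\heartsuit}$, with naturality in $X$.

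Granting this, the proposition reduces to a purely algebraic observation: a morphism $\alpha\colon M \to N$ in $\cycsp^{\heartsuit}$ is an isogeny if and only if its underlying map of abelian groups is an isogeny. One direction is immediate, since the forgetful functor to $\mathrm{Ab}$ preserves finite (co)limits and sends $N \cdot \mathrm{id}$ to $N \cdot \mathrm{id}$. For the converse, the forgetful functor $\cycsp^{\heartsuit} \to \mathrm{Ab}$ is exact and conservative, so if the underlying map has kernel and cokernel killed by $N$, then the kernel and cokernel of $\alpha$ in the heart are already $N$-torsion. A lift of $N \cdot \mathrm{id}_N$ through $\alpha$ (possible because the cokernel of $\alpha$ is $N$-torsion), followed by multiplication by $N$ (to kill the ambiguity from the $N$-torsion kernel), produces a morphism $\beta\colon N \to M$ in the heart with $\alpha\beta = N^2 \cdot \mathrm{id}$ and $\beta\alpha = N^2 \cdot \mathrm{id}$.

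Putting these ingredients together yields the chain of equivalences: $f$ is a quasi-isogeny in $\cycsp$ iff each $\pi_n^{\cycsp}(f)$ is an isogeny in $\cycsp^{\heartsuit}$ (by the definition of quasi-isogeny and left-completeness of the cyclotomic $t$-structure), iff each $\pi_n \TR(f)$ is an isogeny of abelian groups (by the two claims above), iff $\TR(f)$ is a quasi-isogeny of spectra (again by definition).

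The main obstacle is the appeal to \cite{AN18} for the existence of the $t$-structure, the identification of its heart with Cartier modules, and the $t$-exactness of $\TR$ on bounded-below objects; the rest of the argument is formal. A minor point to verify is that the naturality of the identification $\pi_n \TR(X) \iso |\pi_n^{\cycsp}(X)|$ really does carry $\pi_n \TR(f)$ to the underlying abelian group map of $\pi_n^{\cycsp}(f)$, so that the chain of equivalences above applies to the given $f$ rather than only to a proxy.
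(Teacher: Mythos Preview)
Your proof is correct and follows essentially the same approach as the paper's: use the cyclotomic $t$-structure of \cite{AN18} to identify $\pi_n^{\cycsp}(X)$ with $\pi_n \TR(X)$ equipped with its Cartier module structure, then verify that a map of Cartier modules is an isogeny iff the underlying abelian group map is. The paper's proof is terser on the algebraic step (it just notes that the heart is a full subcategory of $p$-typical Cartier modules, essentially modules over a ring, and leaves the check as easy), but the strategy is identical.
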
 
\begin{proof} 
    This follows from the description of 
    the cyclotomic $t$-structure of \cite{AN18}. In particular, the cyclotomic homotopy groups
    of $X \in \cycsp$ are precisely the homotopy groups of the spectrum $\TR(X)$,
    together with the Frobenius and Verschiebung maps. These are $p$-typical
    Cartier modules, which is to say abelian groups $M$ with endomorphisms $F$ and $V$
    satisfying $FV=p$,
    and the heart $\cycsp^\heartsuit$ is equivalent to a full subcategory of the
    category of $p$-typical Cartier modules. Now, it suffices to check that a
    map $h\colon W\rightarrow Z$ between $p$-typical Cartier
    modules is an isogeny precisely if the underlying map of abelian groups is
    an isogeny. One implication is immediate and for the other suppose
    $g\colon Z\rightarrow W$ is a map of abelian groups such that $g\circ h=N\mathrm{id}_W$ and $h\circ
    g=N\mathrm{id}_Z$. The map $g$ might not respect the $F$ and $V$ maps.
    However, for $z\in Z$, $F(g(z))-g(F(z))$ and $V(g(z))-g(F(z))$ are both in
    the kernel of $h$, which consists of $N$-torsion elements of $W$.
    Therefore, $Ng$ is a map of $p$-typical Cartier modules, $Ng\circ
    h=N^2\mathrm{id}_W$, and $h\circ Ng =N^2\mathrm{id}_Z$.
\end{proof} 

\begin{proposition} \label{zero_Frob}
Let $X \in \cycsp$ be a cyclotomic spectrum such that $X$ is bounded-below, such
that the
Frobenius $\varphi\colon X \to X^{tC_p}$ is nullhomotopic in $\fun(BS^1, \sp)$, and 
such that $X$ is quasi-isogenous to zero as a spectrum. 
Then $X$ is quasi-isogenous to zero as a cyclotomic spectrum. 
\end{proposition}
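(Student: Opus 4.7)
By Proposition~\ref{prop:tr}, it suffices to show that $\TR(X)$ is quasi-isogenous to zero as a spectrum. Since the cyclotomic structure is $p$-typical and $X$ is bounded below, the content lies in the $p$-completion $\TR(X;\ZZ_p)$; away from $p$, one essentially recovers the underlying spectrum $X$, which is already quasi-isogenous to zero by hypothesis.

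The strategy is to exploit the nullhomotopy of $\varphi$ through the Nikolaus--Scholze description of $\TR(X;\ZZ_p)$ as an inverse limit of a tower of spectra $\TR^{n+1}(X;\ZZ_p)$, whose terms fit in fiber sequences
\[
\TR^{n+1}(X;\ZZ_p) \to X^{hC_{p^n}} \xrightarrow{\varphi^{(n)} - \can} X^{tC_{p^n}},
\]
where $\varphi^{(n)}$ is an iterate of the cyclotomic Frobenius, and whose transition (restriction) maps are also built from $\varphi$. Under the hypothesis $\varphi \simeq 0$, all maps involving $\varphi$ become nullhomotopic, so $\varphi^{(n)} \simeq 0$ and the fiber sequence collapses to identify $\TR^{n+1}(X;\ZZ_p) \simeq (X_{hC_{p^n}})^{\wedge}_p$ via the norm cofiber sequence $X_{hC_{p^n}} \to X^{hC_{p^n}} \to X^{tC_{p^n}}$. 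This parallels the direct calculation $\TC(X;\ZZ_p) \simeq (\Sigma X_{hS^1})^{\wedge}_p$ when $\varphi = 0$, obtained immediately from the formula $\TC(X;\ZZ_p) = \mathrm{fib}(\varphi - \can)$, in the spirit of Proposition~\ref{TCS1orb}.

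It then remains to show that each $X_{hC_{p^n}}$ is quasi-isogenous to zero and that the inverse limit assembling them preserves this property. The first claim is a standard spectral-sequence computation: for $X$ bounded below with each $\pi_j X$ of bounded exponent, the homotopy orbit spectral sequence $E^2_{s,t} = H_s(C_{p^n};\pi_t X) \Rightarrow \pi_{s+t}(X_{hC_{p^n}})$ has, in any fixed total degree, only finitely many nonzero $E^2$-terms, each of bounded exponent, so the abutment is of bounded exponent too. The main obstacle will be handling the inverse limit carefully: an $\mathrm{Rlim}$ could in principle introduce $\mathrm{lim}^1$-contributions that spoil the bounded-exponent property, but the transition maps in the tower inherit the simplifications from $\varphi \simeq 0$ and should be shown to yield a controlled limit, quasi-isogenous to zero in each fixed degree. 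This last point is where the proof will require the most care.
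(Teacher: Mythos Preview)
Your overall strategy matches the paper's: reduce to showing $\TR(X)$ is quasi-isogenous to zero (via Proposition~\ref{prop:tr}), exploit the vanishing of $\varphi$ to simplify the description of $\TR$, and use that homotopy orbits of a spectrum quasi-isogenous to zero remain so. The paper's proof is two sentences long because it invokes the iterated-pullback description of $\TR$ (\cite[Rem.~2.5]{AN18}, \cite[Cor.~II.4.7]{NS18}) to conclude directly that $\TR(X)\simeq\prod_{n\geq 0} X_{hC_{p^n}}$ when $\varphi$ is null, and then observes that this product is quasi-isogenous to zero.

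Two points where your write-up diverges and could be tightened. First, the displayed fiber sequence $\TR^{n+1}(X;\ZZ_p)\to X^{hC_{p^n}}\xrightarrow{\varphi^{(n)}-\can} X^{tC_{p^n}}$ is not the correct description of $\TR^{n+1}$; this looks like a $\TC$-style equalizer rather than the iterated pullback that actually defines the $\TR$-tower. The right picture is $\TR^{n+1}(X)\simeq X^{hC_{p^n}}\times_{(X^{tC_p})^{hC_{p^{n-1}}}}\TR^n(X)$, with the map from $\TR^n(X)$ built from $\varphi$. When $\varphi\simeq 0$ this pullback splits as a product of $\TR^n(X)$ with the fiber of the canonical map, and unwinding gives $\TR^{n+1}(X)\simeq\prod_{k\leq n} X_{hC_{p^k}}$, with transition maps in the tower given by projection off the top factor.

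Second, and this is the point you flag as the ``main obstacle'': once you see the tower consists of finite products with split-surjective projections, the inverse limit is simply the infinite product and there is no $\varprojlim^1$. What you then need --- and your homotopy-orbit spectral sequence argument actually delivers, though you don't emphasize it --- is that the exponent of $\pi_j(X_{hC_{p^n}})$ is bounded \emph{uniformly in $n$}: if $N_t$ kills $\pi_t X$, then $N_t$ kills $H_s(C_{p^n};\pi_t X)$ for every $s$ and every $n$, so $\prod_{c\leq t\leq j} N_t$ kills $\pi_j(X_{hC_{p^n}})$ for all $n$ and hence kills $\pi_j\bigl(\prod_n X_{hC_{p^n}}\bigr)$. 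The detour through $p$-completion is unnecessary; the paper works with $\TR(X)$ directly.
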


\begin{proof} 
The assumption that the Frobenius is nullhomotopic implies that $\mathrm{TR}(X)$
is a product $\prod_{n \geq 0} X_{hC_{p^n}}$, using the description of $\TR$ as an
iterated pullback, cf.~\cite[Remark 2.5]{AN18} and \cite[Corollary II.4.7]{NS18}. 
The assumption that $X$ is quasi-isogenous to zero now implies that the
above product 
    is also quasi-isogenous to zero, so we conclude by
    Proposition~\ref{prop:tr}. 
\end{proof} 

We observe that the theory of cyclotomic spectra 
admits a natural graded variant. 
A \emph{graded spectrum} is an object of the functor category $\fun(\mathbb{Z}_{\geq 0}^{\mathrm{ds}},
\sp)$ where $\mathbb{Z}_{\geq 0}^{\mathrm{ds}}$ denotes the discrete category
of nonnegative integers with no non-identity morphisms; given a graded spectrum $X$ we let $X_i \in
\sp, i
\geq 0$ denote the $i$th graded piece. 
We let $\mathrm{GrSp}$ denote the $\infty$-category of graded spectra, which we
consider as a symmetric monoidal $\infty$-category under Day convolution using
the multiplication symmetric monoidal structure on $\mathbb{Z}_{\geq
0}^{\mathrm{ds}}$. 
A \emph{graded cyclotomic spectrum} $X$ consists of a 
graded 
spectrum $X = \left\{X_i\right\}$ equipped with an $S^1$-action together with a
family of $S^1$-equivariant maps 
$\varphi_i\colon X_i \to X_{pi}^{tC_p}$ for $i \geq 0$.  
We let $\grcyc$ denote the $\infty$-category of graded cyclotomic spectra. 
Any graded cyclotomic spectrum $X = \left\{X_i\right\}$ has an
underlying cyclotomic spectrum 
$\bigoplus_{i \geq 0} X_i$, and this defines a forgetful functor $\grcyc \to
\cycsp$. 

More formally, the $\infty$-category $\grcyc$ is defined  as follows.  We consider the $\infty$-category 
$\fun(BS^1, \mathrm{GrSp})$ 
of graded spectra equipped with an $S^1$-action. 
This admits a natural endofunctor $F$
which sends $\left\{X_i, i \geq 0\right\}$ to $\left\{X_{pi}^{tC_p}\right\}$, 
where we regard $X_{pi}^{tC_p}$ as a spectrum with an $S^1/C_p \simeq
S^1$-action. Then $\grcyc$ is defined as the $\infty$-category of
$F$-coalgebras, as in \cite[Section~II.5]{NS18}. 

Given a graded ring spectrum $R$, there is a graded  cyclotomic spectrum 
$\THH(R)$ 
obtained by applying the cyclic bar construction in the category of graded
spectra. This refines the usual $\THH$ and admits an $S^1$-action in graded
spectra. See \Cref{app:gradedcyc} for the details of this construction. Compare also \cite{Brunfiltered} for a treatment of filtered cyclotomic spectra and
filtered $\TC$ using more classical methods.

\begin{proposition} 
\label{gradedisogeny}
Let $X$ be a graded  cyclotomic spectrum.
If
    \begin{enumerate}
        \item[{\rm (1)}] the underlying spectrum of $X$ is quasi-isogenous to zero,
        \item[{\rm (2)}] the graded piece $X_0 $ is contractible, and
        \item[{\rm (3)}] the connectivity of the pieces $X_i$ tends to infinity in $i$,
\end{enumerate}
then $X$ is quasi-isogenous to zero as an object of $\cycsp$. 
\end{proposition}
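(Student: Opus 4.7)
The plan is to filter $X$ by grading, exploit the fact that the cyclotomic Frobenius raises the grading by a factor of $p$, and apply \Cref{zero_Frob} to the resulting slices.

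For $N\geq 0$, let $F^{\geq N}X\subset X$ denote the graded subobject whose $i$-th piece is $X_i$ for $i\geq N$ and $0$ otherwise. Since $pi\geq i\geq N$ when $i\geq N$, the graded Frobenius $\varphi_i\colon X_i\to X_{pi}^{tC_p}$ restricts, so $F^{\geq N}X$ is a subobject in $\grcyc$; by hypothesis (2) we have $X=F^{\geq 1}X$. I then form the slices
\[
Z_k := F^{\geq p^{k-1}}X\,/\,F^{\geq p^{k}}X,\qquad k\geq 1,
\]
whose only nonzero graded pieces are the $X_i$ for $p^{k-1}\leq i<p^k$. For any such $i$ the Frobenius $\varphi_i$ has target in degree $pi\geq p^k$, which has been quotiented out, so the graded Frobenius of $Z_k$ vanishes in every grade. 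In particular, the underlying cyclotomic Frobenius of $Z_k$ factors through the nullhomotopic assembled map $\bigoplus_i(Z_k)_i\to\bigoplus_i(Z_k)_{pi}^{tC_p}$, hence is nullhomotopic in $\fun(BS^1,\sp)$.

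Next I verify that the underlying spectrum of each $Z_k$ is quasi-isogenous to zero. Each $X_i$ is itself quasi-isogenous to zero as a spectrum: by hypothesis (3) only finitely many $X_j$ contribute in any given homotopy degree, so hypothesis (1) forces each $\pi_n X_i$ to be bounded torsion. Hence $Z_k$, whose underlying spectrum is a direct sum of such $X_i$ with only finitely many contributions in each degree, is also quasi-isogenous to zero. Applying \Cref{zero_Frob} shows each $Z_k$ is quasi-isogenous to zero in $\cycsp$.

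Finally, I assemble the slices. Iterating the cofiber sequences $Z_k\to X/F^{\geq p^k}X\to X/F^{\geq p^{k-1}}X$ exhibits each $X/F^{\geq p^k}X$ as a finite iterated extension of $Z_1,\dots,Z_k$ in $\cycsp$. Since ``isogenous to zero'' is a condition on each object in the heart of the cyclotomic $t$-structure of~\cite{AN18} (where it simply means annihilation by some integer), it is stable under extensions, so each $\pi_n^{\cycsp}(X/F^{\geq p^k}X)$ is bounded torsion. By hypothesis (3), the underlying spectrum of $F^{\geq p^k}X=\bigoplus_{i\geq p^k}X_i$ becomes arbitrarily connective as $k\to\infty$; left-completeness of the cyclotomic $t$-structure then gives $\pi_n^{\cycsp}(X)\iso\pi_n^{\cycsp}(X/F^{\geq p^k}X)$ for $k$ sufficiently large depending on $n$. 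Hence every $\pi_n^{\cycsp}(X)$ is bounded torsion, i.e., $X$ is quasi-isogenous to zero in $\cycsp$. The main point requiring care is the identification of the underlying cyclotomic Frobenius of $Z_k$ as nullhomotopic in $\fun(BS^1,\sp)$, which relies on compatibility between the forgetful functor $\grcyc\to\cycsp$ and the Frobenius structures; the rest is standard bookkeeping around left-completeness of the cyclotomic $t$-structure.
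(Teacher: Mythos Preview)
Your proof is correct and follows essentially the same strategy as the paper: filter $X$ by grading, observe that each slice has nullhomotopic cyclotomic Frobenius because the Frobenius raises the grade, apply \Cref{zero_Frob} to each slice, and assemble using the increasing connectivity hypothesis. The only differences are cosmetic: the paper truncates from above via quotients $X_{\leq n}$ with single-degree slices $\mathrm{fib}(X_{\leq i}\to X_{\leq i-1})$, whereas you truncate from below via subobjects $F^{\geq N}X$ and group your slices into $p$-adic intervals $[p^{k-1},p^k)$; either choice yields slices with zero Frobenius for the same reason ($pi>i$ for $i\geq 1$), so the extra grouping is harmless. The paper phrases the assembly in terms of $\mathrm{TR}$ via \Cref{prop:tr}, while you work directly with cyclotomic homotopy groups; these are equivalent formulations.
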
 
\begin{proof} 
Given a graded cyclotomic spectrum $X$, for each $i$, we can
construct a graded 
cyclotomic spectrum $X_{\leq i} \in \grcyc$  such that $(X_{\leq i})_j = 0$ for
$j > i$ and $(X_{\leq i})_j = X_i$ for $j \leq i$ and a tower of maps 
$X \to \dots \to X_{\leq n} \to X_{\leq n-1} \to \dots \to X_{\leq 1}$. 
This is a tower in $\grcyc$, and we can consider it as a tower of underlying
objects in $\cycsp$ too. 

We need to show that for each $j$, $\pi_j( \TR(X))$ is isogenous to zero. 
Our assumptions imply that $\pi_j(\TR(X)) \to \pi_j  \TR( X_{\leq n})$ is an
isomorphism for $n \gg 0$.
However, the object $\mathrm{fib}( X_{\leq i} \to X_{\leq i-1})$ defines a
cyclotomic spectrum with Frobenius homotopic to zero, in view of the grading. 
It follows from \Cref{zero_Frob} that 
$\TR( \mathrm{fib}(X_{\leq i} \to X_{\leq i-1}))$ is quasi-isogenous to zero, and by
induction $\TR( X_{\leq n})$ is quasi-isogenous to zero. Putting these observations
    together with Proposition~\ref{prop:tr} completes the proof. 
\end{proof}

\subsection{Quasi-isogenies on $\THH$}
Our main result here is the following, which restates
Theorem~\hyperref[thm:c]{C}.
On  $\TC$ and for discrete rings in which $p$ is nilpotent, it is due to Geisser--Hesselholt
\cite{GHfinite}, and the main arguments are based on theirs. 

\begin{theorem} 
\label{isogenyTHH}
Let $f\colon A \to A'$ be a map of connective associative ring spectra. 
If
\begin{enumerate}
    \item[{\rm (i)}] $f$ is a quasi-isogeny of spectra and
    \item[{\rm (ii)}] the map $\pi_0(f)\colon \pi_0(A) \to \pi_0(A')$ is surjective with nilpotent
kernel,
\end{enumerate}
then $\THH(A) \to \THH(A')$ is a quasi-isogeny in $\cycsp$. 
\end{theorem}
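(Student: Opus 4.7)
The plan is to prove the cofiber of $\THH(f)\colon \THH(A) \to \THH(A')$ is quasi-isogenous to zero in $\cycsp$, by realizing it as (the underlying cyclotomic spectrum of) a graded cyclotomic spectrum satisfying the hypotheses of Proposition~\ref{gradedisogeny}. Let $J = \mathrm{fib}(f)$; by hypothesis $J$ is connective (since $\pi_0 f$ is surjective), quasi-isogenous to zero as a spectrum (since $f$ is), and the ideal $I = \ker(\pi_0 f)\subset \pi_0 A$ is nilpotent, say $I^n = 0$.

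First I would construct the relevant graded cyclotomic spectrum refining the cofiber. The natural candidate is a bar-style filtration of $\THH(f)$ tracking the number of $J$-factors: at simplicial level $m$ of the cyclic bar construction, the map $A^{\otimes(m+1)}\to (A')^{\otimes(m+1)}$ has a cofiber filtered by the number of tensor slots in which a $J$-factor appears rather than an $A'$-factor, and this filtration is compatible with the cyclic and $p$-typical cyclotomic structures. Realizing simplicially, using the graded cyclotomic machinery of Appendix~\ref{app:gradedcyc}, produces a graded cyclotomic spectrum $X = \bigoplus_k X_k$ with $X_0 \simeq \THH(A')$ and $X_{>0}$ quasi-isogenous (as a cyclotomic spectrum) to the cofiber of $\THH(f)$.

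Next I would verify the hypotheses of Proposition~\ref{gradedisogeny} on $X_{>0}$. Hypothesis (2) (contractibility of $X_0$ in the positive-graded part) is tautological. Hypothesis (1) follows because each $X_k$ for $k\geq 1$ is assembled from tensor products involving at least one copy of $J$, which is quasi-isogenous to zero; quasi-isogenies of spectra are preserved under tensor products with bounded-below spectra and under colimits. The delicate point is hypothesis (3): the connectivity of $X_k$ should tend to infinity with $k$. When $\pi_0 f$ is an isomorphism, $J$ is $1$-connective and the $k$-fold tensor product $J^{\otimes k}$ is $k$-connective, so (3) is immediate. In general I would reduce to this case by inducting on the nilpotence index $n$: factor $f$ through a tower $A = A_0 \to A_1 \to \cdots \to A_n = A'$ in which each $A_i \to A_{i+1}$ either is an isomorphism on $\pi_0$ or has square-zero kernel in $\pi_0$, and use that quasi-isogenies in $\cycsp$ form a stable subclass (closed under cofiber sequences) together with the extension argument from Proposition~\ref{prop:tr}.

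The main obstacle will be the square-zero inductive step: when $I^2 = 0$ but $J$ is only $0$-connective, iterated tensor products of $J$ generally do not gain connectivity (e.g., over $\mathbb{F}_p[x]/x^2 \to \mathbb{F}_p$ one has $\mathrm{Tor}^A_i(\mathbb{F}_p,\mathbb{F}_p) = \mathbb{F}_p$ for all $i\geq 0$). One must exploit the square-zero relation directly rather than via derived tensor connectivity. The natural workaround is to build a graded ring spectrum model $A' \oplus J\{1\}$ in which the relation $J \cdot J = 0$ is imposed by construction, so that the graded pieces in gradings $\geq 2$ simply vanish and hypothesis (3) is then trivially satisfied; producing this model for associative (non-commutative) ring spectra and keeping track of the cyclotomic Frobenius at each simplicial level is the technical core of the argument, since free CDGA tools are unavailable here and the cyclic structure must be controlled explicitly.
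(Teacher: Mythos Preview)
Your overall strategy---reduce to \Cref{gradedisogeny} via a graded cyclotomic structure---matches the paper's, and you have correctly located the real obstruction: hypothesis~(3) fails for the naive ``count the $J$-factors'' filtration because $J$ need only be $0$-connective. But the workaround you propose does not go through, and the missing idea is substantive.

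The problem is that your graded model $A' \oplus J\{1\}$ \emph{is the trivial square-zero extension}, not $A$ itself. A general square-zero extension $\widetilde A \to A$ by a bimodule $M$ is classified by a (possibly nonzero) derivation $A \to M[1]$, and there is no grading on $\widetilde A$ with $(\widetilde A)_0 = A$ unless that derivation is null. So you cannot simply ``impose $J\cdot J = 0$ by construction'' without changing the ring. Likewise, your proposed tower $A = A_0 \to \cdots \to A_n = A'$ does not obviously exist for a bare map of associative ring spectra: the Postnikov/deformation-theoretic factorization into square-zero steps requires a section $A' \to A$ (i.e.\ the augmented setting $B' \in \alg_{B//B}$), which you do not have.

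The paper supplies the missing ingredient: the \v{C}ech nerve, used twice. First, the \v{C}ech nerve of $A \to A'$ resolves $A'$ by iterated fiber products $A \times_{A'} \cdots \times_{A'} A$, each of which lies in $\alg_{A//A}$ via face/degeneracy maps; this manufactures the section needed to run the Postnikov argument and reduce to square-zero extensions (\Cref{augmentedcase}). Second, for a (possibly non-trivial) square-zero extension $\widetilde A \to A$, the \v{C}ech nerve again resolves $A$ by iterated fiber products, and the key point is that $\widetilde A \times_A \widetilde A$ is a \emph{trivial} square-zero extension of $\widetilde A$---hence genuinely graded, so \Cref{THHgrisog} applies (\Cref{nontrivsqzero}). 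Both steps conclude by taking geometric realizations, using that quasi-isogenies in $\cycsp_{\geq 0}$ are preserved under them. This \v{C}ech nerve trick is exactly what converts your heuristic filtration into an honest graded ring spectrum at each simplicial level.
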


 We will first verify some special cases. 

\begin{proposition} 
\label{THHgrisog}
Let $R$ be a connective associative \emph{graded} ring spectrum. 
If
\begin{enumerate}
    \item[{\rm (a)}] each $R_i, i > 0$, is isogenous to zero as a spectrum and
    \item[{\rm (b)}] the connectivity of the $R_i$ tends to $\infty$  as $i \to \infty$,
\end{enumerate}
then the map $\THH(R) \to \THH(R_0)$ is a quasi-isogeny in
$\cycsp$.  \end{proposition}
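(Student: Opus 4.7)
The strategy is to identify the fiber $X := \mathrm{fib}(\THH(R) \to \THH(R_0))$ as a graded cyclotomic spectrum (in $\grcyc$) and apply \Cref{gradedisogeny} to it. Using the cyclic bar construction model (cf.~\Cref{app:gradedcyc}), the weight-$k$ part of $\THH(R)$ is the geometric realization of a simplicial spectrum whose $n$-th term is $\bigoplus_{j_0+\dots+j_n=k} R_{j_0}\otimes\dots\otimes R_{j_n}$. In weight zero every tensor factor is forced to lie in $R_0$, so $\THH(R)_0 \simeq \THH(R_0)$ and the map is an equivalence; and $\THH(R_0)_k = 0$ for $k > 0$. Hence $X_0 \simeq 0$ and $X_k \simeq \THH(R)_k$ for $k > 0$, verifying condition~(2) of \Cref{gradedisogeny}; it remains to check (1) and (3).

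For condition~(3), that $\mathrm{conn}(X_k)\to\infty$: write $c_i$ for the connectivity of $R_i$, so $c_0 \geq 0$ and $c_i \to \infty$ by hypothesis~(b). The skeletal filtration of the realization gives
\[
\mathrm{conn}(\THH(R)_k) \;\geq\; \inf_{n \geq 0,\; j_0+\dots+j_n=k} \Bigl(n + \sum_l c_{j_l}\Bigr).
\]
Given $N > 0$, pick $i_0$ with $c_i \geq N$ for $i \geq i_0$. For $n \geq N$ the bracketed quantity is at least $N$. For $n < N$ and $k \geq N i_0$, any decomposition $j_0+\dots+j_n=k$ has some $j_l \geq k/(n+1) \geq i_0$, so $\sum_l c_{j_l} \geq c_{j_l} \geq N$. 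Thus $\mathrm{conn}(\THH(R)_k) \geq N$ whenever $k \geq N i_0$. For condition~(1), by hypothesis~(a) choose integers $N_j$ with $N_j \cdot \mathrm{id}_{R_j} \simeq 0$ and set $N_k := \mathrm{lcm}\{N_j : 1 \leq j \leq k\}$. Every simplex in the weight-$k$ cyclic bar has at least one factor $R_{j_l}$ with $1 \leq j_l \leq k$, hence is $N_k$-torsion; so is the realization $\THH(R)_k$. Combined with the connectivity bound, only finitely many $X_k$ contribute to each $\pi_d$ of $\bigoplus_k X_k$, and each contribution is isogenous to zero, so $\bigoplus_k X_k$ is quasi-isogenous to zero. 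Applying \Cref{gradedisogeny} yields that $X$ is quasi-isogenous to zero in $\cycsp$.

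The main obstacle is the connectivity argument of condition~(3): since low-weight pieces such as $R_1$ need not have positive connectivity, decompositions with many small parts could a priori keep the simplex-level connectivity bounded near zero. The resolution is the trade-off between simplicial level and part size: at high simplicial level $n \geq N$ the skeletal filtration supplies $n$ extra suspensions, while at low level $n < N$ a pigeonhole on $k \geq N i_0$ forces a large part, triggering hypothesis~(b).
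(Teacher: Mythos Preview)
Your proof is correct and follows essentially the same route as the paper: identify the fiber of $\THH(R)\to\THH(R_0)$ as a graded cyclotomic spectrum with vanishing weight-zero piece, verify that its positive-weight pieces are quasi-isogenous to zero with connectivity tending to infinity, and invoke \Cref{gradedisogeny}. The paper packages the verification more abstractly---it introduces the subcategory $\mathcal{C}\subset\mathrm{GrSp}_{\ge 0}$ of connective graded spectra whose positive-weight pieces are quasi-isogenous to zero with connectivity going to infinity, and simply asserts that $\mathcal{C}$ is closed under tensor products and geometric realizations, hence contains $\THH(R)$---whereas you unpack this explicitly via the skeletal filtration and a pigeonhole bound. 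Your trade-off argument (large $n$ gives $n$ suspensions, small $n$ forces a large part) is exactly what underlies the paper's closure-under-geometric-realizations claim.
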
 
\begin{proof} 
Since $R$ is a graded ring spectrum, $\THH(R)$ admits the structure
of a graded cyclotomic spectrum (refining the usual cyclotomic structure on
$\THH(R)$), and in degree zero one has $\THH(R_0)$. 
For this, compare \Cref{app:gradedcyc}, or the work of Brun \cite{Brunfiltered}, who uses 
the more classical approach to cyclotomic spectra. 

Now we wish to apply \Cref{gradedisogeny}. 
Consider the subcategory $\mathcal{C} \subseteq \mathrm{GrSp}_{\geq 0}$ of \emph{connective} graded spectra 
spanned by graded spectra $Z$ such that $Z_i$ is quasi-isogenous to zero for $i
>0$ and such that the connectivity of $Z_i$ grows without bound as $i \to
\infty$. Then $\mathcal{C}$ is closed under tensor products and geometric
realizations. 
The assumptions on $R$ imply that $R \in \mathcal{C}$, and
consequently 
$\THH(R) \in \mathcal{C}$ as well. That is, 
$\THH(R)_i$ is quasi-isogenous to zero for $i
>0$ and the connectivity of $\THH(R)_i$ grows without bound as $i \to
\infty$. 
Thus, we can apply 
\Cref{gradedisogeny}. 
\end{proof}

\begin{proposition} 
\label{nontrivsqzero}
Let $A$ be a connective associative ring spectrum. 
Let $M$ be a connective $(A, A)$-bimodule which is quasi-isogenous to zero. 
Suppose $\widetilde{A}$ is a square-zero extension of $A$ by $M$, in the sense
that 
one has a map 
$f\colon A \to A \oplus M[1]$ in $\alg_{/A}$ and a pullback diagram
\[ \xymatrix{
\widetilde{A} \ar[d]  \ar[r] &  A \ar[d]^0  \\
A \ar[r]^-{f} & A \oplus M[1].
}\]
Then the map $\THH(\widetilde{A}) \to \THH(A)$ is a quasi-isogeny in $\cycsp$. 
\end{proposition}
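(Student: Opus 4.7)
The plan is to construct a natural decreasing filtration on $\widetilde{A}$ whose associated graded fits into the hypotheses of \Cref{THHgrisog}, and then transfer that quasi-isogeny back up to $\widetilde{A}$. Concretely, I will put $F^0 = \widetilde{A}$, $F^1 = M = \fib(\widetilde{A}\to A)$, and $F^{\geq 2} = 0$. The pullback square defining $\widetilde{A}$ says exactly that the multiplication $F^1\otimes F^1 \to \widetilde{A}$ is nullhomotopic, i.e., $F^1\cdot F^1 \subseteq F^2 = 0$, so this is a filtration by associative ring spectra. The associated graded is the graded connective associative ring spectrum $R$ with $R_0 = A$, $R_1 = M$, $R_{\geq 2} = 0$, equipped with its natural $A$-bimodule structure on $M$ and zero multiplication on $R_1\otimes R_1$.

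Next, I would apply the cyclic bar construction to the filtered ring spectrum to produce a filtered cyclotomic spectrum $\mathrm{Fil}^\bullet\THH(\widetilde{A})$ whose associated graded, via Day convolution, is the graded cyclotomic spectrum $\THH(R)$ of \Cref{app:gradedcyc}. Since $R_i$ is quasi-isogenous to zero for $i\geq 1$ (either $M$ or $0$) and the connectivity condition holds vacuously, \Cref{THHgrisog} gives that $\THH(R) \to \THH(R_0) = \THH(A)$ is a quasi-isogeny in $\cycsp$. In particular, each graded piece $\THH(R)_i$ for $i\geq 1$ is quasi-isogenous to zero.

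The remaining step is to show that the filtered structure on $\THH(\widetilde{A})$ is complete in a sufficiently strong sense. The key observation is a connectivity bound: at simplicial level $k$ of the cyclic bar,
\[
\mathrm{Fil}^n\bigl(\widetilde{A}^{\otimes(k+1)}\bigr) = \sum_{i_0+\cdots+i_k \geq n} F^{i_0}\otimes\cdots\otimes F^{i_k}
\]
vanishes whenever $n > k+1$, since each $i_j \in \{0,1\}$. Together with connectivity of $A$ and $M$, geometric realization then gives that $\mathrm{Fil}^n\THH(\widetilde{A})$ is at least $(n-1)$-connective. Consequently, for each fixed $j$, the homotopy group $\pi_j$ of $\THH(\widetilde{A})$ admits a finite filtration, of length at most $j+2$, whose associated graded pieces are $\pi_j(\THH(R)_i)$ for $0 \leq i \leq j+1$. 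Since $\pi_j(\THH(R)_0) = \pi_j\THH(A)$ and $\pi_j(\THH(R)_i)$ is isogenous to zero for $i\geq 1$, the map $\pi_j\THH(\widetilde{A}) \to \pi_j\THH(A)$ is an isogeny. The same argument, carried out in the heart of the cyclotomic $t$-structure of~\cite{AN18} (so that the graded pieces are genuinely quasi-isogenous to zero in $\cycsp$), yields the desired quasi-isogeny in $\cycsp$.

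The main technical obstacle I expect is the formal construction and coherence of the filtered/graded cyclic bar diagram in cyclotomic spectra — in particular, verifying that the cyclotomic Frobenius on $\THH(\widetilde{A})$ respects the filtration and that its associated graded recovers the Frobenius on $\THH(R)$ constructed in \Cref{app:gradedcyc}. Once this bookkeeping is in place (mirroring the graded setup already used for \Cref{THHgrisog}), the connectivity argument and the invocation of \Cref{THHgrisog} are essentially formal.
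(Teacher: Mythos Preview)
Your approach is correct but follows a genuinely different route from the paper's. The paper instead forms the \v{C}ech nerve $X_\bullet$ of the map $\widetilde{A}\to A$ and observes that each $X_n = \widetilde{A}\times_A\cdots\times_A\widetilde{A}$ is a \emph{trivial} square-zero extension (of $\widetilde{A}$, by copies of $M$). Since trivial square-zero extensions are honestly graded, \Cref{THHgrisog} applies directly at each level, so all maps in the simplicial object $\THH(X_\bullet)$ are quasi-isogenies in $\cycsp$; taking the geometric realization $|\THH(X_\bullet)|\simeq\THH(A)$ then finishes.

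Both arguments ultimately reduce to \Cref{THHgrisog} for a trivial square-zero extension; the difference is how one passes from the non-trivial $\widetilde{A}$ to a trivial one. You filter $\widetilde{A}$ directly and pass to associated graded; the paper resolves $A$ by the \v{C}ech nerve, whose terms are already trivial extensions. The paper's route is shorter and stays entirely within the graded cyclotomic world, thereby sidestepping exactly the two technical points you identify (the filtered $\mathbb{E}_1$-structure on $\widetilde{A}$ and the compatibility of the associated-graded functor with the twisted cyclotomic Frobenius). Your route is more conceptually direct, but one step is too quick as written: ``the pullback square says exactly that $F^1\otimes F^1\to\widetilde{A}$ is nullhomotopic, so this is a filtration by associative ring spectra.'' A single nullhomotopy does not by itself yield a filtered $\mathbb{E}_1$-structure with all its coherences. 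The clean fix is to take the defining pullback square \emph{in filtered ring spectra}, with $A$ concentrated in filtration $0$ and $A\oplus M[1]$ filtered via its grading; the resulting pullback is then a filtered $\mathbb{E}_1$-ring with exactly the filtration $F^0=\widetilde{A}$, $F^1=M$, $F^{\geq 2}=0$ and associated graded $A\oplus M$. Once that is in place, your connectivity and d\'evissage argument (noting that $\mathrm{gr}^n$ for $n\geq 1$ has zero cyclotomic Frobenius since $pn\geq n+1$, so \Cref{zero_Frob} applies) goes through.
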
 
\begin{proof} 
We can form the \v{C}ech nerve of the map $\widetilde{A} \to A$, i.e., the
simplicial object
$ \dots \widetilde{A} \times_A \widetilde{A} \rightrightarrows \widetilde{A}$. 
This yields a simplicial object $X_\bullet$ of $\alg$ which resolves $A$. 
It follows that $|\THH(X_\bullet)| \simeq \THH(A)$ in $\cycsp$. 

Now $\widetilde{A} \times_A \widetilde{A}$ is a \emph{trivial} square-zero extension of
$\widetilde{A}$ by $M$. It follows 
that $\THH(\widetilde{A} \times_A \widetilde{A})$ is quasi-isogenous to
$\THH(\widetilde{A})$ by \Cref{THHgrisog}, since a trivial square-zero extension can be
given a grading. Continuing in this way, it follows that all the 
maps in the simplicial object $\THH(X_\bullet)$ are quasi-isogenies. 
Taking geometric realizations now, it follows that
$\THH(\widetilde{A}) \to |\THH(X_\bullet)| \simeq \THH(A)$ is a quasi-isogeny in
$\cycsp$. 
\end{proof}

\begin{proposition} 
\label{augmentedcase}
Let $B$ be a connective associative ring spectrum and let 
$B'$ be 
an object of $\alg_{B//B}$. Suppose that 
the augmentation map $B' \to B $ is a quasi-isogeny  and the map $\pi_0(B') \to
\pi_0(B)$ has nilpotent kernel. Then $\THH(B) \to \THH(B')$ is a quasi-isogeny.
\end{proposition}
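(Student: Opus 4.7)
The plan is to reduce to the square-zero extension case (Proposition~\ref{nontrivsqzero}) by filtering $B'$ as an iterated square-zero extension of $B$, using the Postnikov tower of the augmentation ideal.

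\emph{Setup.} Let $F = \mathrm{fib}(B'\to B)$, which is connective because $\pi_0(B')\to\pi_0(B)$ is surjective (its kernel being nilpotent). The augmentation-ideal functor identifies $\alg_{B//B}$ with the $\infty$-category of non-unital associative algebras in $(B,B)$-bimodules, and under this equivalence $B'$ corresponds to $F$. Because $B'\to B$ is a quasi-isogeny, each $\pi_n(F)$ is isogenous to zero as an abelian group; in particular $\pi_0(F)$, being the kernel of an isogeny of abelian groups, is bounded-torsion.

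\emph{Postnikov tower of the augmentation ideal.} The Postnikov tower of $F$ as a connective non-unital $\mathbb{E}_1$-algebra transports, via the augmentation-ideal equivalence, to a tower in $\alg_{B//B}$,
\[ B' \simeq \varprojlim_n B'^{(n)} \to \cdots \to B'^{(n)} \to B'^{(n-1)} \to \cdots \to B'^{(0)} = B, \]
where $B'^{(n)}$ corresponds to $\tau_{\leq n}F$. By the standard deformation-theoretic description of the Postnikov layers of a connective (non-unital) $\mathbb{E}_1$-algebra, each step $B'^{(n)}\to B'^{(n-1)}$ is a square-zero extension in the sense of Proposition~\ref{nontrivsqzero}, with kernel the $(B'^{(n-1)},B'^{(n-1)})$-bimodule $\pi_n(F)[n]$. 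Since $\pi_n(F)$ is isogenous to zero, this bimodule is quasi-isogenous to zero, so Proposition~\ref{nontrivsqzero} yields that $\THH(B'^{(n)})\to\THH(B'^{(n-1)})$ is a quasi-isogeny in $\cycsp$ for every $n\geq 0$. Composing, $\THH(B'^{(n)})\to\THH(B)$ is a quasi-isogeny in $\cycsp$ for every $n$.

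\emph{Passage to the limit.} The fiber of $B'\to B'^{(n)}$ is $\tau_{\geq n+1}F$, which is $(n+1)$-connective. Since $\THH$ and the fixed-point functors that assemble $\TR$ take $(n+1)$-connective maps between connective ring spectra to maps whose fiber connectivity grows with $n$, for each fixed $j$ the map $\pi_j\TR(B')\to\pi_j\TR(B'^{(n)})$ is an isomorphism for $n\gg j$. Combined with the isogeny $\pi_j\TR(B'^{(n)})\to\pi_j\TR(B)$ coming from the tower, $\pi_j\TR(B')\to\pi_j\TR(B)$ is an isogeny for each $j$. By Proposition~\ref{prop:tr}, the augmentation $\THH(B')\to\THH(B)$ is a quasi-isogeny in $\cycsp$. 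Since $B\to B'\to B$ is the identity, the unit $\THH(B)\to\THH(B')$ is a section of this quasi-isogeny, hence is itself a quasi-isogeny. The main obstacle is justifying that the Postnikov filtration of the augmentation ideal can be assembled as a tower of square-zero extensions of the form required by Proposition~\ref{nontrivsqzero}; once this setup is in place, the rest reduces to iteration and to standard connectivity estimates for $\THH$ and $\TR$.
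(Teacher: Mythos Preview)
Your proof is correct and follows essentially the same approach as the paper: reduce via the Postnikov tower of the augmentation ideal $F=\mathrm{fib}(B'\to B)$ to the case of iterated square-zero extensions by bimodules quasi-isogenous to zero, apply Proposition~\ref{nontrivsqzero} at each layer, and pass to the limit using connectivity of $\TR$. The paper cites \cite[Sec.~7.4.1]{HA} and \cite{Basterra99} for the square-zero decomposition you flag as the main obstacle.

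One minor slip: you assert $F$ is connective ``because $\pi_0(B')\to\pi_0(B)$ is surjective (its kernel being nilpotent)''---but nilpotent kernel does not imply surjectivity. The correct reason is that $B'\in\alg_{B//B}$ comes equipped with a unit map $B\to B'$ splitting the augmentation, so $\pi_0(B')\to\pi_0(B)$ is split surjective. This does not affect the argument.
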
 
\begin{proof} 
Recall first that $\alg_{B//B}$ is equivalent to the $\infty$-category of
\emph{nonunital} associative algebra objects in $(B, B)$-bimodules. 
In particular, $I = \mathrm{fib}(B' \to B)$ has such a structure.
We can work up the Postnikov tower $\tau_{\leq\star} I$; since
$\TR$ behaves well with respect to Postnikov towers, it suffices to prove the
result for each $\tau_{\leq n} I$. 
General results as in \cite[Section~7.4.1]{HA} (which go back at least to
\cite{Basterra99}) 
now show that $\tau_{\leq n} I$ can be obtained in finitely
many steps via square-zero extensions from $B$, by bimodules which 
are quasi-isogenous to zero.
Now we conclude via \Cref{nontrivsqzero}. 
\end{proof} 

\begin{proof}[Proof of Theorem~\ref{isogenyTHH}] 
We consider the \v{C}ech nerve of $A \to A'$. 
We obtain a simplicial object $X_\bullet$ in $\alg_{\geq 0}$ 
such that $|X_\bullet| \simeq A'$ and such that each $X_i$ is an iterated fiber
product of copies of $A$ over $A'$. 
Each $X_i$ can be given the structure of an object of $\alg_{A//A}$ 
(via appropriate face and degeneracy maps), 
whence we conclude by \Cref{augmentedcase} that all the maps in the simplicial object
$\THH(X_\bullet)$ are quasi-isogenies in $\cycsp$. Finally, the result now
follows by taking geometric realizations. 
\end{proof} 

One important corollary of Theorem~\ref{isogenyTHH} is the following result
of Geisser and Hesselholt; see \cite{LandTamme} for generalizations.

\begin{corollary}[Geisser--Hesselholt \cite{GHfinite}] \label{GHfiniteresult}
    If $p$ is nilpotent in $A$ and $I\subseteq A$ is a two-sided
    nilpotent ideal, then $\K(A,I) \simeq \TC(A, I)$ is quasi-isogenous to zero.
\end{corollary}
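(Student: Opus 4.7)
The plan is to combine the Dundas--Goodwillie--McCarthy theorem with Theorem~\ref{isogenyTHH}. Since $I$ is a nilpotent two-sided ideal, Theorem~\ref{thm:dgm} gives a canonical equivalence $\K(A,I) \we \TC(A,I)$, so it suffices to prove that $\TC(A,I)$ is quasi-isogenous to zero. Equivalently, the map $\TC(A) \to \TC(A/I)$ should be a quasi-isogeny of spectra, since the long exact sequence in homotopy shows that a fiber sequence $F \to X \to Y$ with $X \to Y$ a quasi-isogeny forces each $\pi_n F$ to be bounded torsion (it is an extension of a subgroup of $\ker \pi_{n-1}(f)$ by a quotient of $\mathrm{coker}\, \pi_n(f)$, both of which are annihilated by the relevant isogeny constants).

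To apply Theorem~\ref{isogenyTHH} to the map $f\colon A \to A/I$, I would verify the two hypotheses. Condition~(ii) is immediate: $\pi_0(f)$ is surjective with kernel (the image of) $I$, which is nilpotent by assumption. For condition~(i), the fiber of $f$ as a map of spectra is the discrete spectrum $I$ concentrated in degree zero. Because $p$ is nilpotent in $A$, say $p^k = 0$ in $A$, the ideal $I$ is annihilated by $p^k$; thus $I$ is bounded torsion as an abelian group, which is precisely the condition for the Eilenberg--MacLane spectrum $HI$ to be quasi-isogenous to zero. Hence $f$ is a quasi-isogeny of spectra.

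Theorem~\ref{isogenyTHH} then yields that $\THH(A) \to \THH(A/I)$ is a quasi-isogeny in $\cycsp$, and in particular, since $\TC(-;\mathbb{Z}_p)$, and indeed $\TC$ itself, carry quasi-isogenies in $\cycsp$ to quasi-isogenies of spectra (cf.\ the remarks following Definition~\ref{isogenies} on right bounded exact functors, applied here to $\TR$ and hence to $\TC$), we conclude that $\TC(A) \to \TC(A/I)$ is a quasi-isogeny of spectra. Taking fibers, $\TC(A,I)$ is quasi-isogenous to zero, and the identification with $\K(A,I)$ via Dundas--Goodwillie--McCarthy completes the proof.

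The main conceptual point — and the only step that requires any care — is the verification that the spectrum-level map $A \to A/I$ is a quasi-isogeny, which crucially uses the hypothesis that $p$ is nilpotent in $A$ (without it, $I$ could be a torsion-free ideal and the argument would fail). Everything else is bookkeeping on top of Theorem~\ref{isogenyTHH}.
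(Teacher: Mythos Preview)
Your argument is correct and follows essentially the same route as the paper: apply Theorem~\ref{isogenyTHH} to $A \to A/I$ and deduce that $\TC(A,I)$ is quasi-isogenous to zero. The only cosmetic difference is in verifying hypothesis~(i): you argue that the fiber $I$ is bounded $p$-power torsion, whereas the paper simply observes that both $A$ and $A/I$ are themselves quasi-isogenous to zero (since $p$ is nilpotent in each), which makes any map between them trivially a quasi-isogeny.
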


\begin{proof}
    In this case, $A\rightarrow A/I$ is a quasi-isogeny (they are both
    quasi-isogenous to zero) and hence Theorem~\ref{isogenyTHH} applies to
    prove that $\THH(A)\rightarrow\THH(A/I)$ is a quasi-isogeny of cyclotomic
    spectra. This implies in particular that $\TC(A,I)$ is quasi-isogenous to
    zero. \end{proof}

\subsection{Quasi-isogenies and the Beilinson fiber sequence}

Recall that Proposition \ref{isogeny_GDM}, which was key in proving Theorem \hyperref[thm:a]{A}, asserts that for every ring $R$ the induced map 
 $\TC(R \otimes_\SS \FF_p;\ZZ_p) \to \TC(R/p; \ZZ_p)$ is a quasi-isogeny. Our
 proof in Section \ref{sec:square} relied on $\K$-theory. Alternatively we can now
 also deduce this fact directly from Theorem~\ref{isogenyTHH}, which implies
 that $\THH(R \otimes_\SS \FF_p) \to \THH(R/p)$ is a quasi-isogeny of cyclotomic
 spectra and therefore $\TC(R \otimes_\SS \FF_p;\ZZ_p) \to \TC(R/p; \ZZ_p)$ is a
 quasi-isogeny of spectra. 
 This immediately implies also the following variant of 
Theorem \hyperref[thm:a]{A}. 

\begin{corollary} \label{TCformThmA}
Let $R$ be a ring. Then there is a natural pullback square
\begin{equation}
    \begin{gathered}
\xymatrix{
\TC(R; \mathbb{Q}_p) \ar[d]  \ar[r] & \TC(R/p; \mathbb{Q}_p) \ar[d]  \\
\HC^-(R; \mathbb{Q}_p) \ar[r] &  \HP(R; \mathbb{Q}_p). }
    \end{gathered}
\end{equation} 
\end{corollary}

 In this section we want to take this a step further and prove a cyclotomic version of Theorem \hyperref[thm:a]{A}.

\begin{theorem}\label{thm_cyc}
For every ring $R$, the following  cyclotomic spectra are quasi-isogenous to each other
\[
\THH\left(R, (p);\ZZ_p\right) \qquad\qquad  \HH(R;\ZZ_p)  \qquad \qquad  \HH(R,
    (p);\ZZ_p)
\]
where  $\HH(R;\mathbb{Z}_p)$ and $\HH(R, (p);\mathbb{Z}_p)$ are equipped with the canonical $S^1$-actions and  the zero Frobenius (see Example \ref{ex_cyc}). 
Moreover if $R$ is $p$-torsion free then we have an equivalence of cyclotomic spectra
\[
\tau^{\mathrm{cyc}}_{\leq (2p -4)} \THH\left(R, (p);\ZZ_p\right) \quad
    \simeq \quad \tau^{\mathrm{cyc}}_{\leq (2p - 4)}  \HH(R, (p);\ZZ_p).
\]

\end{theorem}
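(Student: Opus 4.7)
The plan is to build a zig-zag of natural quasi-isogenies in $\cycsp$ linking the three cyclotomic spectra, combining Theorem~\ref{isogenyTHH} with the cofiber sequence $\ZZ_{hC_p}\to \ZZ^\triv\to \THH(\FF_p)$ in $\cycsp$ from Construction~\ref{cons:thhfp}, and to extract the $(2p-4)$-truncation statement from the resulting connectivity bounds. Throughout, Proposition~\ref{zero_Frob} is the main tool for upgrading a quasi-isogeny of underlying spectra (with vanishing Frobenius on the relevant cofiber) to a quasi-isogeny in $\cycsp$. As a warm-up, $\HH(R;\ZZ_p)$ and $\HH(R,(p);\ZZ_p)$, both carrying zero Frobenius, are immediately quasi-isogenous in $\cycsp$: their cofiber $\HH(R/p;\ZZ_p)$ has $\FF_p$-vector space homotopy groups, hence is quasi-isogenous to zero as an underlying spectrum, and Proposition~\ref{zero_Frob} applies.

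For the main identification of $\THH(R,(p);\ZZ_p)$ with $\THH(R;\ZZ_p)\otimes_\SS \ZZ_{hC_p}$ in $\cycsp$ I would proceed in two stages. First, Theorem~\ref{isogenyTHH} applied to the right-hand term of the defining fiber sequence for $\THH(R,(p);\ZZ_p)$ identifies it up to quasi-isogeny in $\cycsp$ with $\THH(R;\ZZ_p)\otimes_\SS F$, where $F=\mathrm{fib}(\SS\to \THH(\FF_p))$ is formed in $\cycsp$ using the cyclotomic structure on $\THH(\FF_p)$. Second, the cyclotomic unit $\SS\to \ZZ^\triv$ produces a natural map of fiber sequences in $\cycsp$ and hence a map $F\to \ZZ_{hC_p}$ whose cofiber is $\mathrm{cof}(\SS\to \ZZ^\triv)$; after $p$-completion this cofiber is $(2p-2)$-connective, since $\pi_n\SS_{(p)}=0$ for $0<n<2p-3$ with the first $p$-local positive stable stem being $\alpha_1\in\pi_{2p-3}$. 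Tensoring with $\THH(R;\ZZ_p)$ therefore yields a comparison map that is an equivalence of underlying spectra in degrees $\leq 2p-3$, and a quasi-isogeny in general because every $\pi_n\SS$ is finite. The Tate orbit lemma gives $(\ZZ_{hC_p})^{tC_p}\simeq 0$, so $\THH(R;\ZZ_p)\otimes_\SS \ZZ_{hC_p}$ carries zero Frobenius, and Proposition~\ref{zero_Frob} then lifts the comparison to a quasi-isogeny in $\cycsp$.

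To complete the identification with $\HH(R;\ZZ_p)$, both with zero Frobenius, Proposition~\ref{zero_Frob} reduces the problem to an $S^1$-equivariant comparison. The map $\ZZ_{hC_p}\to \ZZ^\triv$ has cofiber $\THH(\FF_p)$ with $\FF_p$-vector space homotopy, so tensoring with $\THH(R;\ZZ_p)$ gives an $S^1$-equivariant quasi-isogeny $\THH(R;\ZZ_p)\otimes_\SS \ZZ_{hC_p}\to \THH(R;\ZZ_p)\otimes_\SS \ZZ$, and the natural augmentation $\THH(R;\ZZ_p)\otimes_\SS \ZZ\to \HH(R;\ZZ_p)$ is an $S^1$-equivariant quasi-isogeny whose fiber is controlled by $\tau_{\geq 1}\THH(\ZZ;\ZZ_p)$, which is $(2p-1)$-connective by B\"okstedt's calculation $\pi_{2k-1}\THH(\ZZ)=\ZZ/k$. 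Combining this with the connectivity of the previous step yields the $(2p-4)$-truncation equivalence when $R$ is $p$-torsion free.

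The main obstacle will be the Frobenius bookkeeping in the second step: while $\THH(R;\ZZ_p)\otimes_\SS\ZZ_{hC_p}$ has zero Frobenius by the Tate orbit lemma, the intermediate cofiber $\THH(R;\ZZ_p)\otimes_\SS \mathrm{cof}(\SS\to \ZZ^\triv)$ does not \emph{a priori} do so, so one must either refine the construction to exhibit a zig-zag at each step of which Proposition~\ref{zero_Frob} applies, or else use the cyclotomic $t$-structure of \cite{AN18} together with the connectivity bounds to force the desired quasi-isogeny in $\cycsp$. The truncation statement itself follows more readily from the spectrum-level connectivity estimates and does not require the full strength of this upgrade.
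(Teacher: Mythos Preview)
Your proposal is essentially the same argument as the paper's, but you have reached for the wrong tool at exactly the point you flag as ``the main obstacle,'' and this makes the step look harder than it is.

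The gap is in your second step, upgrading the map
\[
\THH(R;\ZZ_p)\otimes_\SS F \longrightarrow \THH(R;\ZZ_p)\otimes_\SS \ZZ_{hC_p}
\]
to a quasi-isogeny in $\cycsp$. You try to apply Proposition~\ref{zero_Frob} to the cofiber, notice the Frobenius there is not visibly nullhomotopic, and then propose more elaborate workarounds. But Proposition~\ref{zero_Frob} is not the right lever here. The point is that $\SS^\triv\to\ZZ^\triv$ is already a quasi-isogeny \emph{in $\cycsp$}, because $(-)^\triv$ is right $t$-exact for the cyclotomic $t$-structure; taking fibers over the common target $\THH(\FF_p)$ shows that $F\to\ZZ_{hC_p}$ is a quasi-isogeny in $\cycsp$. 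Then Lemma~\ref{lem:tensor} (tensoring a cyclotomic quasi-isogeny with a bounded-below cyclotomic spectrum preserves quasi-isogenies) finishes the step immediately. This is exactly how the paper organizes it: it presents the two fiber sequences
\[
\THH(R;\ZZ_p)\otimes_\SS\ZZ_{hC_p}\to\THH(R;\ZZ_p)\otimes_\SS\ZZ^\triv\to\THH(R\otimes_\SS\FF_p;\ZZ_p)
\quad\text{and}\quad
\THH(R,(p);\ZZ_p)\to\THH(R;\ZZ_p)\to\THH(R/p;\ZZ_p),
\]
observes that the middle vertical comparison is a cyclotomic quasi-isogeny by Lemma~\ref{lem:tensor} and the right one by Theorem~\ref{isogenyTHH}, and reads off the quasi-isogeny on left fibers together with the $(2p-4)$-truncation equivalence from the connectivity of those vertical fibers. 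Your intermediate object $\THH(R;\ZZ_p)\otimes_\SS F$ is precisely the implicit roof of this zig-zag, so the content is identical once you swap in Lemma~\ref{lem:tensor}.

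Your third step is fine in outline and matches the paper's final lemma: one equips the square
\[
\xymatrix{
\THH(R;\ZZ_p)\otimes_\SS\ZZ \ar[r]\ar[d] & \THH(R\otimes_\SS\FF_p;\ZZ_p)\ar[d]\\
\HH(R;\ZZ_p)\ar[r] & \HH(R/p;\ZZ_p)
}
\]
with zero Frobenii, applies Proposition~\ref{zero_Frob} to the vertical maps, and identifies the horizontal fibers with $\THH(R;\ZZ_p)\otimes_\SS\ZZ_{hC_p}$ and $\HH(R,(p);\ZZ_p)$. One small correction: the connectivity of the fiber of $\THH(R;\ZZ_p)\otimes_\SS\ZZ\to\HH(R;\ZZ_p)$ is not governed directly by $\tau_{\geq 1}\THH(\ZZ;\ZZ_p)$ in the way you state (that controls $\THH(R)\to\HH(R)$, a different map); the paper records the bound $\geq 2p-3$ for this fiber, which is still enough for the $(2p-4)$-truncation statement.
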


We note that the last theorem immediately implies Theorem \hyperref[thm:a]{A} by passing to $\TC(-; \ZZ_p)$ since $\TC(-;\ZZ_p)$ of a cyclotomic spectrum with zero Frobenius is just given by the $p$-completion of the shifted $S^1$-orbits. But Theorem \ref{thm_cyc} is strictly stronger than Theorem \hyperref[thm:a]{A} since quasi-isogenies cannot be detected on $\TC$. The remainder of this section is devoted to proving Theorem~\ref{thm_cyc}. 
\begin{lemma}\label{lem:tensor}
    Suppose that $X\rightarrow Y$ is a quasi-isogeny of  cyclotomic spectra and
    that $M$ is any bounded below cyclotomic spectrum. Then, $X\otimes_\SS
    M\rightarrow Y\otimes_\SS M$ is a quasi-isogeny of cyclotomic spectra.
\end{lemma}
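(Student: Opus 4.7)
My plan is to deduce \Cref{lem:tensor} directly from the general principle recorded just after \Cref{isogenies}: a right bounded exact functor between stable $\infty$-categories with left-complete $t$-structures carries quasi-isogenies to quasi-isogenies. Concretely, I will apply this principle to the endofunctor $F = (-)\otimes_{\SS} M \colon \cycsp \to \cycsp$, where $\cycsp$ is equipped with the Antieau--Nikolaus $t$-structure from \cite{AN18} recalled at the start of this section.

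There are essentially two hypotheses of the principle to verify. For exactness, I use that $\cycsp$ is a presentable stable symmetric monoidal $\infty$-category (with smash product coming from the Nikolaus--Scholze construction of $\cycsp$ as $F$-coalgebras in $\fun(BS^1, \Sp)$), so $-\otimes_{\SS}-$ is cocontinuous in each variable and $F$ is in particular exact. For right boundedness, recall that $\cycsp_{\geq 0}$ is defined so that the forgetful functor $\cycsp \to \Sp$ detects connectivity, and that this forgetful functor is symmetric monoidal. Hence if $M$ is bounded below, say $k$-connective as an underlying spectrum, then for any $X \in \cycsp_{\geq 0}$ the underlying spectrum of $X \otimes_{\SS} M$ is $k$-connective, placing $X \otimes_{\SS} M$ in $\cycsp_{\geq k}$. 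Thus $F$ sends $\cycsp_{\geq 0}$ into $\cycsp_{\geq k}$, as required.

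The lemma then follows by invoking the general principle, which itself rests on the fact that an isogeny is preserved by any additive functor (by transporting the back-and-forth equations $gf = N\cdot \mathrm{id}$ and $fg = N\cdot \mathrm{id}$) together with the observation that right boundedness ensures $\tau^{\mathrm{cyc}}_{\leq n} F(f)$ is controlled by $F$ applied to a suitable truncation of $f$, which is an isogeny. I do not expect a serious obstacle; the one subtlety worth double-checking is that the Nikolaus--Scholze smash product on $\cycsp$ indeed makes the forgetful functor symmetric monoidal, so that connectivity behaves multiplicatively and the right-boundedness computation above is legitimate. This is routine from the construction but is the one nontrivial input, and the place where the verification would most plausibly slip.
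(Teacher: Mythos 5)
Your argument is correct and is essentially the paper's own proof: the paper likewise reduces to the observation (recorded after \Cref{isogenies}) that a right $t$-exact (or right bounded) exact endofunctor of $\cycsp$ with the Antieau--Nikolaus $t$-structure preserves quasi-isogenies, noting that one may assume $M$ connective so that $-\otimes_\SS M$ is right $t$-exact. Your extra verification that the underlying-spectrum functor is symmetric monoidal and detects connectivity is exactly the (routine) input the paper leaves implicit, so there is nothing further to fix.
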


\begin{proof}
    We can assume $M$ is connective, in which case the functor $-\otimes_\SS M$ is
    a right $t$-exact endofunctor of $\cycsp$ and hence preserves quasi-isogenies.
\end{proof}

We now want to apply a similar proof-strategy as in Section \ref{sec:square} and consider the diagram
$$\xymatrix{
\THH(R;
\mathbb{Z}_p)\otimes_\SS\ZZ_{hC_p}\ar[r]&\THH(R;
\mathbb{Z}_p)\otimes_\SS\ZZ^\triv\ar[r]&\THH(R\otimes_\SS\FF_p;
\mathbb{Z}_p)\ar[d]\\
\THH(R, (p); \mathbb{Z}_p)\ar[r]&\THH(R; \mathbb{Z}_p)\ar[u]\ar[r]&\THH(R/p;
\mathbb{Z}_p),}$$
of cyclotomic spectra in which the horizontal rows are fiber sequences. 
Both vertical maps are quasi-isogenies in cyclotomic spectra: the first
by Lemma~\ref{lem:tensor} and because $\SS^\triv\rightarrow\ZZ^\triv$ is a quasi-isogeny (since $(-)^\triv$
is right $t$-exact) and the second by Theorem~\ref{isogenyTHH}.
The right vertical map has homotopy fiber in degrees $\geq 2p-2$, while the
middle vertical map has homotopy fiber in degrees $\geq 2p-3$. 
It follows that $\THH(R, (p); \ZZ_p)$ is quasi-isogenous in cyclotomic spectra to
$\THH(R; \mathbb{Z}_p)\otimes_\SS\ZZ_{hC_p}$, and their cyclotomic
$(2p-4)$-truncations $\tau_{\leq 2p-4}^{\mathrm{cyc}}$ are equivalent. Note that the cyclotomic
Frobenius on $\THH(R;\ZZ_p)\otimes_\SS\ZZ_{hC_p}$ is nullhomotopic by the Tate
orbit lemma.

Now the next lemma finishes the proof of Theorem \ref{thm_cyc}.

\begin{lemma}
The following cyclotomic spectra are quasi-isogenous to each other
\[
\THH(R; \mathbb{Z}_p)\otimes_\SS\ZZ_{hC_p} \qquad \qquad \HH(R;\ZZ_p)
    \qquad\qquad \HH(R, (p);\ZZ_p).
\]
Moreover, the cyclotomic truncations $\tau_{\leq 2p-4}^{\mathrm{cyc}}$ of the
first and the third are
naturally equivalent. 
\end{lemma}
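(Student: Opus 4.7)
The plan is to reduce all three quasi-isogeny questions to \Cref{zero_Frob} by first checking that each of the three cyclotomic spectra has nullhomotopic cyclotomic Frobenius. For $\HH(R;\ZZ_p)$ and $\HH(R,(p);\ZZ_p)$ this is by assumption of \Cref{thm_cyc}; for $\THH(R;\ZZ_p) \otimes_\SS \ZZ_{hC_p}$, the tensor-product Frobenius factors through $(\ZZ_{hC_p})^{tC_p} \simeq 0$ by the Tate orbit lemma, as already exploited in \Cref{Tatevanish}.

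The quasi-isogeny $\HH(R;\ZZ_p) \sim \HH(R,(p);\ZZ_p)$ is immediate from the fiber sequence
\[
\HH(R,(p);\ZZ_p) \to \HH(R;\ZZ_p) \to \HH(R/p;\ZZ_p)
\]
in cyclotomic spectra with zero Frobenius. The third term is a module over the $\FF_p$-algebra $R/p$, hence $p$-torsion in each homotopy group and thus quasi-isogenous to zero as a spectrum; since it carries zero Frobenius, \Cref{zero_Frob} promotes this to a cyclotomic quasi-isogeny to zero.

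For $\THH(R;\ZZ_p) \otimes_\SS \ZZ_{hC_p} \sim \HH(R;\ZZ_p)$, I use the two-step zig-zag
\[
\THH(R;\ZZ_p) \otimes_\SS \ZZ_{hC_p} \to \HH(R;\ZZ_p) \otimes_\SS \ZZ_{hC_p} \to \HH(R;\ZZ_p).
\]
The first map arises by tensoring the $S^1$-equivariant quasi-isogeny $\THH(R;\ZZ_p) \to \HH(R;\ZZ_p)$ (whose fiber is controlled by $\mathrm{fib}(\THH(\ZZ;\ZZ_p) \to \ZZ_p)$, in degrees $\geq 2p-1$ for $R$ $p$-torsion free) with $\ZZ_{hC_p}$; by \Cref{lem:tensor} this is a cyclotomic quasi-isogeny, both sides carrying nullhomotopic Frobenius. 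The second map, arising from the augmentation $\ZZ_{hC_p} \to \ZZ$ and the $\ZZ$-linearity of $\HH$, factors through $\HH(R;\ZZ_p) \otimes_\SS \ZZ^\triv$; the resulting cofiber term $\HH(R;\ZZ_p) \otimes_\SS \THH(\FF_p)$ has zero Frobenius (inherited from $\HH$) and is $p$-torsion on underlying spectra, hence cyclotomically quasi-isogenous to zero by \Cref{zero_Frob}.

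The main obstacle is the cyclotomic $(2p-4)$-truncation equivalence between the first and third. Because $\HH(R/p;\ZZ_p)$ is concentrated in degrees $0$ and $1$, the zig-zag through the middle term above cannot preserve this bound; instead, I plan to construct a direct map of cyclotomic cofiber sequences paralleling the diagram used immediately before the lemma, with top row
\[
\THH(R;\ZZ_p) \otimes_\SS \ZZ_{hC_p} \to \THH(R;\ZZ_p) \otimes_\SS \ZZ^\triv \to \THH(R \otimes_\SS \FF_p;\ZZ_p)
\]
and bottom row
\[
\HH(R,(p);\ZZ_p) \to \HH(R;\ZZ_p) \to \HH(R/p;\ZZ_p),
\]
joined by natural $\THH \to \HH$-type vertical transitions. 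The middle vertical map has fiber in degrees $\geq 2p-1$, and the right vertical factors through the Geisser--Hesselholt quasi-isogeny $\THH(R \otimes_\SS \FF_p;\ZZ_p) \to \THH(R/p;\ZZ_p)$ of \Cref{isogeny_GDM}, whose fiber is in degrees $\geq 2p-1$. Carefully tracking connectivities in the resulting long exact sequence on horizontal fibers, and in particular controlling the low-degree contribution from $\THH(R/p;\ZZ_p) \to \HH(R/p;\ZZ_p)$, is the main technical point and should produce the required $\tau_{\leq 2p-4}^{\mathrm{cyc}}$ equivalence.
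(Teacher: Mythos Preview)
Your overall strategy is sound, and the diagram of fiber sequences you propose for the truncation statement is exactly the square the paper uses to prove the \emph{entire} lemma. Two points deserve attention.

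First, your invocation of \Cref{lem:tensor} is not quite right. That lemma tensors a \emph{cyclotomic} quasi-isogeny with a bounded-below cyclotomic spectrum, but $\THH(R;\ZZ_p)\to\HH(R;\ZZ_p)$ is only an $S^1$-equivariant map: there is no natural cyclotomic map from $\THH(R;\ZZ_p)$ with its usual Frobenius to $\HH(R;\ZZ_p)$ with zero Frobenius, since the required Frobenius-compatibility square asks that $\THH(R;\ZZ_p)\to\THH(R;\ZZ_p)^{tC_p}\to\HH(R;\ZZ_p)^{tC_p}$ be nullhomotopic, which it is not in general. The same issue recurs in your truncation diagram, where the ``$\THH\to\HH$-type vertical transitions'' are again only $S^1$-equivariant. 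The paper sidesteps this by equipping \emph{all four corners} of the square
\[
\xymatrix{
\THH(R;\ZZ_p)\otimes_\SS\ZZ \ar[r]\ar[d] & \THH(R\otimes_\SS\FF_p;\ZZ_p)\ar[d]\\
\HH(R;\ZZ_p)\ar[r] & \HH(R/p;\ZZ_p)
}
\]
with the zero Frobenius from the outset, so that every $S^1$-equivariant map is automatically cyclotomic. The vertical cofibers are then zero-Frobenius cyclotomic spectra whose underlying spectra are quasi-isogenous to zero, so \Cref{zero_Frob} promotes the vertical maps to cyclotomic quasi-isogenies. Passing to horizontal fibers gives directly the cyclotomic quasi-isogeny $\THH(R;\ZZ_p)\otimes_\SS\ZZ_{hC_p}\to\HH(R,(p);\ZZ_p)$; since $(\THH(R;\ZZ_p)\otimes_\SS\ZZ_{hC_p})^{tC_p}=0$, the zero Frobenius on the left fiber is the canonical one. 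This makes your separate zig-zag through $\HH(R;\ZZ_p)\otimes_\SS\ZZ_{hC_p}$ unnecessary.

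Second, for the truncation the paper avoids any long-exact-sequence chase. Once you have the direct map $\THH(R;\ZZ_p)\otimes_\SS\ZZ_{hC_p}\to\HH(R,(p);\ZZ_p)$ of zero-Frobenius cyclotomic spectra, it suffices to check that it is an equivalence of \emph{underlying spectra} in degrees $\leq 2p-4$ (this was already done in \Cref{quasiisog2}); since $\TR(X)\simeq\prod_{n\geq 0}X_{hC_{p^n}}$ for zero-Frobenius $X$, an underlying equivalence in a range yields an equivalence of cyclotomic homotopy groups in the same range. Your connectivity claims (``fiber in degrees $\geq 2p-1$'' for the middle vertical, and citing \Cref{isogeny_GDM}, which is a $\TC$ statement, for the $\THH$ right vertical) are somewhat optimistic or misattributed, but the weaker bounds actually established in \Cref{quasiisog2} already suffice.
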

\begin{proof}
We consider the square
\[
\xymatrix{
\THH(R; \ZZ_p)\otimes_\SS \ZZ \ar[r] \ar[d]&  \THH(R \otimes_\SS \FF_p; \ZZ_p)  \ar[d]\\
\HH(R; \ZZ_p) \ar[r] & \HH(R/p; \ZZ_p)
}
\]
of spectra with $S^1$-action, in which the vertical maps are quasi-isogenies
    and the right hand terms are quasi-isogenous to zero. We consider it as a
    square of cyclotomic spectra by equipping all spectra with the zero
    Frobenius map. It follows from Proposition \ref{zero_Frob} that the
    vertical maps are quasi-isogenies of cyclotomic spectra. The horizontal fibers are
    equivalent to $\THH(R;\ZZ_p)\otimes_\SS\ZZ_{hC_p}$ and $\HH(R, (p);\ZZ_p)$,
    which shows they are quasi-isogenous to each other.

Finally, the induced map of cyclotomic spectra
(with zero Frobenii)
$ \THH(R;\ZZ_p)\otimes_\SS\ZZ_{hC_p} \to \HH(R, (p);\ZZ_p)$
has the property that it is an equivalence of underlying spectra in degrees $\leq 2p-4$
(as in \Cref{quasiisog2}) 
and
consequently induces an equivalence on 
cyclotomic homotopy groups in degrees $\leq 2p-4$, e.g., using the description
of $\TR$ as in the proof of~\Cref{zero_Frob}. 
\end{proof}

\newcommand{\CycSp}{\mathrm{CycSp}}
\newcommand{\Sscr}{\mathcal{S}}

\section{Application to $p$-adic deformations}

\newcommand{\X}{\mathfrak{X}}
In this section, we prove
Theorems~\hyperref[thm:d]{D} and~\hyperref[thm:e]{E}.
Throughout this section, let $\mathfrak{X}$ be a quasi-compact and
quasi-separated (qcqs) $p$-adic formal scheme  with bounded
$p$-power torsion, and write $\mathfrak{X}_n=\mathfrak{X}\times_{\Spec\ZZ_{p}}\Spec\ZZ/p^n$. 
We are interested in the following invariants of $\X$, and in
particular the $p$-adic deformation problem (\Cref{imageofctsKq} below).  \begin{definition}[Continuous invariants of formal schemes] 

\label{continuousinv}
Let $F$ be an invariant of schemes (such as $\K, \THH, \HH, 
\HC^-, \HP, \TC$).
Given the formal scheme $\X$, we define
$F^{\mathrm{cts}}(\mathfrak{X})$  via
\begin{equation} 
F^{\mathrm{cts}}( \mathfrak{X}) = \varprojlim_n F(\mathfrak{X}_n)
.
\end{equation} 
\end{definition} 

 If the $p$-adic formal scheme $\X$ arises as the $p$-adic
completion of a scheme $X$, we have a natural comparison map 
\begin{equation}  \label{contcompmap} F( X) \to F^{\mathrm{cts}}( \X) .  \end{equation}

\begin{proposition} 
\label{contcompmapeq}
Suppose $\X$ is the $p$-adic completion of a qcqs scheme $X$ with bounded $p$-power
torsion. Then the maps 
\eqref{contcompmap} for $F = \HH, \THH, \HC^-, \HP, \TC$ are $p$-adic equivalences. 
\end{proposition}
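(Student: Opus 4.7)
The plan is to reduce the assertion to the affine case and then invoke $p$-adic continuity for each invariant separately. Since $\HH$, $\THH$, $\HC^-$, $\HP$, and $\TC$ all satisfy Zariski descent on qcqs schemes, \v{C}ech descent along a finite affine open cover of $X$ reduces the problem to $X = \Spec R$ with $R$ a commutative ring with bounded $p$-power torsion. In that case $\X = \Spf \widehat{R}$, where $\widehat{R}$ denotes the $p$-adic completion of $R$, and $\X_n = \Spec(R/p^n)$; the bounded $p$-power torsion hypothesis is used precisely to identify $\widehat{R}/p^n$ with $R/p^n$.

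For $\HH$ and $\THH$, I would first note that both functors send $p$-adic equivalences of connective ring spectra to $p$-adic equivalences after $p$-completion, so $\HH(R;\ZZ_p) \simeq \HH(\widehat{R};\ZZ_p)$ and likewise $\THH(R;\ZZ_p) \simeq \THH(\widehat{R};\ZZ_p)$. The heart of the argument is then the $p$-adic continuity statement
\[
\THH(\widehat{R};\ZZ_p) \simeq \varprojlim_n \THH(\widehat{R}/p^n),
\]
and its analogue for $\HH$. One proves this by fixing a power $p^k$ and showing that modulo $p^k$ the tower $\{\THH(\widehat{R}/p^n)\}$ is pro-constant for $n \geq k$ up to uniformly bounded error; this lets one recover $\THH(\widehat{R};\ZZ_p)/p^k$ as the limit and then reassemble across $k$ using the Milnor sequence and the bounded $p$-power torsion.

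For $\HC^-$ and $\HP$, I would exploit $\HC^- = \HH^{hS^1}$ and $\HP = \HH^{tS^1}$. Since homotopy $S^1$-fixed points commute with inverse limits of spectra, the $\HC^-$ case is immediate from the $\HH$ case after $p$-completion. The Tate construction does not a priori commute with inverse limits, so the $\HP$ case requires more care; however, the norm fiber sequence
\[
\Sigma \HH_{hS^1} \to \HC^- \to \HP
\]
reduces the $\HP$ statement to continuity of $(\HH_{hS^1})(-;\ZZ_p)$. The latter follows from the $\HH$ case because, the inputs $\HH(X_n)$ being connective $S^1$-spectra with controlled $p$-adic behavior, one can interchange $(-)_{hS^1}$ with the inverse limit after $p$-completion. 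Alternatively, one appeals to the HKR filtration on $\HP(-;\ZZ_p)$ whose graded pieces are shifts of the $p$-complete derived de Rham complex $\L\Omega$, for which the desired continuity can be checked directly from the defining left Kan extension.

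Finally, the $\TC$ case is immediate from \cite[Theorem 5.19]{CMM}, the $p$-adic continuity of $\TC$ for rings with bounded $p$-power torsion. I expect the main technical obstacle to be the $\HP$ case, where one must commute the Tate construction with an inverse limit; the fiber-sequence reduction above, combined with the $\HC^-$ case and continuity of $\HH_{hS^1}$ under the bounded-torsion hypothesis, will complete the argument.
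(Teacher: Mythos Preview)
Your proposal is correct and follows essentially the same route as the paper: Zariski descent to affines, continuity of $\THH$ (and hence $\HH$) via the cyclic bar construction as in \cite[Theorem~5.19]{CMM}, passage to $\HC^-$ and $\HP$ via $S^1$-invariants and the norm fiber sequence, and the direct citation of \cite[Theorem~5.19]{CMM} for $\TC$. The paper is terser---it obtains $\HH$ from $\THH$ by tensoring over $\THH(\ZZ)$ with $\ZZ$ and does not spell out the $\HP$ reduction or your HKR alternative---but the logical skeleton matches yours.
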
 
\begin{proof} 
Using Zariski descent on $X$, we may assume that $X = \spec(R)$ where $R$ is a
ring of bounded $p$-power torsion, and then $\X = \spf( \hat{R}_p)$. 
Using the cyclic bar construction, 
it is not difficult to show that 
$\THH^{\mathrm{cts}}( \X; \mathbb{Z}_p) = \THH(R; \mathbb{Z}_p)$, i.e., that 
\eqref{contcompmap} is a $p$-adic equivalence for $F = \THH$
(cf.~the
proof of \cite[Theorem 5.19]{CMM}). 
Tensoring over $\THH(\mathbb{Z})$ with $\mathbb{Z}$, one deduces the result for
$\HH$, and then
taking $S^1$-invariants and coinvariants, we find that 
\eqref{contcompmap} is a $p$-adic equivalence for $F = \HC^-, \HP$. 
Running the above argument with $\THH$ instead of $\HH$, one concludes 
that \eqref{contcompmap} is a $p$-adic equivalence for $F = \TC$
\cite[Theorem 5.19]{CMM}. See also \cite[Cor.~4.8]{Morrow_Dundas} for these results,
when $R$ is assumed noetherian and $F$-finite. 
\end{proof}

By contrast, it is much more difficult to control \eqref{contcompmap} when $F =
\K$. We mention the two following cases. 

\begin{example}[Formal affine schemes] 
Suppose $\mathfrak{X}$ is affine, i.e., 
$\mathfrak{X} = \spf(R)$, for $R$ a \emph{$p$-adically complete ring} with bounded
$p$-power torsion. We can then write $\mathfrak{X}$ as the $p$-adic completion
(as a formal scheme) of $X = \spec(R)$. 
In this case, 
the comparison map 
\eqref{contcompmap} is a $p$-adic equivalence for $F = \K$ as well, cf.~
\cite[Theorem 5.23]{CMM} and \cite[Theorem C]{GH06}. 
\end{example}
\begin{example}[Proper schemes]
Suppose $R$ is  $p$-complete. 
Suppose $\mathfrak{X}$ is the $p$-completion of a proper scheme $X
\to \spec(R)$. The map $\K(X; \mathbb{Z}_p) \to \K^{\mathrm{cts}}(\mathfrak{X};
\mathbb{Z}_p)$ is probably not an equivalence; compare \cite[App. B]{BEK14} for
a related counterexample in equal characteristic zero. 
In this case, $\K^{\mathrm{cts}}( \mathfrak{X}; \mathbb{Z}_p)$ is generally much
more tractable than $\K(X; \mathbb{Z}_p)$ via comparisons with topological
cyclic homology. 
\end{example}

\begin{question}[The $p$-adic deformation problem] 
Let $\mathfrak{X}$ be a $p$-adic formal scheme with special fiber $\mathfrak{X}_1$ as above.
\label{imageofctsKq}
For $i \geq 0$,
what is the image\footnote{By the Milnor exact sequence, this is equivalent to describing the image of the map 
$\left(\varprojlim_n \K_i(\mathfrak{X}_n))\right)_{\mathbb{Q}} \to 
\K_i(\mathfrak{X}_1; {\mathbb{Q}})$.} 
of the map
\begin{equation}
\label{bekmap1}
\K_i^{\mathrm{cts}}(\mathfrak{X}; \QQ)\rightarrow\K_i(\mathfrak{X}_1; \QQ) ?
\end{equation}
\end{question} 

We first observe that \Cref{imageofctsKq} is essentially a $p$-adic question in
$\TC$. 
For each $n \geq 1$, let  $\K(\X_n,\X_1)$ be the
fiber of $\K(\X_n)\rightarrow\K(\X_1)$. Since $\X_n$ is a $p$-adic nilpotent thickening of $\X_1$, 
the relative $\K$-theory $\K(\X_n, \X_1)$ has homotopy groups which are
bounded $p$-power torsion (cf.~\Cref{GHfiniteresult}, due to \cite{GHfinite}), and the spectrum is therefore $p$-complete. 
Using the Dundas-Goodwillie-McCarthy theorem \cite{DGM}, and taking limits, we obtain a
cartesian square
\begin{equation}
\label{cartsquare1}\xymatrix{\K^{\mathrm{cts}}(
\mathfrak{X}) \ar[r]\ar[d]&\K(\X_1)\ar[d]\\
\TC^{\mathrm{cts}}( \mathfrak{X}; \mathbb{Z}_p)\ar[r]&\TC(\X_1;\ZZ_p).} \end{equation}
Since the $\X_n, n \geq 1$ are $p$-power torsion schemes, their $\TC$ are already $p$-adically complete. 
Using this diagram, we see that  it suffices to determine the image of the map 
$\TC_i^{\mathrm{cts}}( \mathfrak{X}; \mathbb{Q}_p) \to \TC_i( \mathfrak{X}_1;
\mathbb{Q}_p)$. 

In this section, we will describe an explicit obstruction class for
Question~\ref{imageofctsKq} in case $i = 0$ 
(sharpening results of \cite{bek1}) in certain geometric situations and construct general obstruction 
classes in all cases (after \cite{Beilinson}). 

\subsection{The Bloch--Esnault--Kerz theorem}

In \cite{bek1}, Bloch--Esnault--Kerz consider Question~\ref{imageofctsKq} in the
case $i = 0 $ and where $\mathfrak{X} $ has the following form. 
Let $K$ be a complete discretely valued field of
characteristic zero with ring of integers $\mathcal{O}_K$,
whose residue field $k$ is perfect of characteristic $p>0$. We let $\pi \in
\mathcal{O}_K$ be a uniformizer and denote by $K_0$ the ring of fractions
$W(k)[\tfrac{1}{p}]$. 
We take $\mathfrak{X} \to \mathrm{Spf}(\mathcal{O}_K)$ to be a smooth $p$-adic formal scheme, 
with special fiber $\mathfrak{X}_k \to \spec(k)$ and rigid analytic generic
fiber $\mathfrak{X}_K$
over $K$. The goal is to understand the image of the map 
$\K_0^{\mathrm{cts}}( \mathfrak{X}; \mathbb{Q}_p) \to \K_0(\mathfrak{X}_k; \mathbb{Q}_p)$. 

We refer to \cite{bek1, emerton-variational} for more detailed motivation for the above question, as
well as \cite{BEK14, Mor14, Mor19} for discussions of the analogous question in
equal characteristic. 
Note that when $\mathfrak{X}$ arises from a smooth proper scheme $X \to \spf
(\mathcal{O}_K)$, the above question says nothing about the image of the map 
$\K_0(X; \mathbb{Q})\to \K_0(X_k; \mathbb{Q})$; this (at least up to homological
equivalence) is the subject of the far more difficult $p$-adic variational Hodge conjecture 
of Fontaine--Messing 
(Conjecture~\ref{FMconj}).

Here we will unwind the Beilinson fiber square to answer
Question~\ref{imageofctsKq} in this case in terms of the crystalline Chern
character. 
To begin with, we need to review the crystalline Chern
character and the crystalline to de Rham comparison.

\begin{construction}[de Rham cohomology] 
Given a smooth $p$-adic formal scheme $\X \to \mathrm{Spf}(\mathcal{O}_K)$, 
we will consider the ($p$-adic)
de Rham cohomology $R \Gamma_{\dR}(\X/\mathcal{O}_K) \in
D(\mathcal{O}_K)$, equipped with the
descending, multiplicative Hodge
filtration 
$\mathrm{Fil}^{\geq \star} R \Gamma_{\dR}(\X/\mathcal{O}_K)$. 
When $\X$ is also assumed proper, then 
all of these are perfect complexes in $D(\mathcal{O}_K)$. 
Furthermore, after inverting $p$, we write
$R \Gamma_{\dR}(\X_K/K) \in D(K)$ and 
$\mathrm{Fil}^{\geq \star} R \Gamma_{\dR}(\X_K/K)$ for the induced objects. 

A basic fact we will use is that when $\X$ is proper, the induced spectral sequence from the Hodge
filtration 
on $R \Gamma_{\dR}(\X_K/K)$
degenerates after rationalization; this is the degeneration of the
Hodge-to-de Rham spectral sequence for proper smooth rigid analytic varieties,
proved by Scholze \cite{Sc13b}. 
\end{construction} 

\begin{construction}[Comparison between crystalline and de Rham cohomology] 
\label{comparisondRcrys}
Given $\X \to \spf(\mathcal{O}_K)$ a smooth $p$-adic formal scheme, 
we can consider the crystalline cohomology $R \Gamma_{\crys}(\X_k)$ of the
special fiber as well. In the absolutely unramified case (when $\mathcal{O}_K = W(k)$), the usual de
Rham to crystalline comparison theorem yields an equivalence 
$R \Gamma_{\crys}(\X_k) \simeq R \Gamma_{\dR}(\X/\mathcal{O}_K)$. 
In general, by \cite[Theorem 2.4]{BO83}, we have a natural equivalence after
rationalization
\begin{equation} \label{dRtocrys} R \Gamma_{\dR}(\X_K/K) \simeq  R
\Gamma_{\crys}(\X_k; \mathbb{Q}_p)  \otimes_{K_0} K. \end{equation}
\end{construction} 
\begin{construction}[The crystalline Chern character] 
Let $Y$ be a regular scheme of 
characteristic $p$. 
Given a vector bundle $\mathcal{V}$ on $Y$, we can define \emph{Chern classes}
$c_i (\mathcal{V}) \in H^{2i}_{\mathrm{crys}}(Y)$ for $i \geq 0$ satisfying the
usual axioms, cf.~\cite{Gros85} (e.g., using the classical method of \cite{Gro58}). The usual formula then yields a \emph{crystalline Chern
character}, i.e., a
natural ring homomorphism
into the rationalized crystalline cohomology
\[ \mathrm{ch}_{\mathrm{crys} }\colon \K_0(Y) \to \bigoplus_{i \geq 0}
H^{2i}_{\mathrm{crys}}(Y; \mathbb{Q}_p),  \]
which carries the class of a  line bundle  $\mathcal{L}$ to $1 +
c_1(\mathcal{L})$. 
\end{construction}

Our main result is the following theorem, which extends results of
Bloch--Esnault--Kerz \cite{bek1}. In \cite{bek1}, this result is proved in the
case where $K=K_0$ is absolutely unramified, $X$ arises
from a smooth projective scheme, and  $p >
\mathrm{dim}(X) + 6$. In \cite{Beilinson}, it is shown that there is an obstruction 
in $\bigoplus_{i \geq 0} H^{2i}_{\mathrm{dR}}(X_K)/\mathrm{Fil}^{\geq
i}H^{2i}_{\mathrm{dR}}(X_K)$, but the obstruction is not identified with the
Chern character; see Section~\ref{sec:beilinsonobstruction} below for more discussion. 

\begin{theorem} 
\label{BEKthm}
Let $K$ be a complete discretely valued field of characteristic zero with ring of integers $\mathcal{O}_K$,
whose residue field $k$ is perfect of characteristic $p > 0$. Let $\X \to \mathrm{Spf}(\mathcal{O}_K)$ be a
 proper smooth $p$-adic formal scheme with special fiber $\X_k$.
A class $x \in \K_0(\X_k; \mathbb{Q}_p)$ lifts to
$\K_0^{\mathrm{cts}}(\X;
\mathbb{Q}_p)$ if and only if the
crystalline 
Chern character $\mathrm{ch}_{\mathrm{crys}}(x) \in \bigoplus_{i \geq 0}
H^{2i}_{\mathrm{crys}}(\X_k; \mathbb{Q}_p)$ maps (via the comparison map of
\eqref{dRtocrys}) to 
$\bigoplus_{i \geq 0}
\mathrm{Fil}^{\geq i} H^{2i}_{\dR}(\X_{K}/K) \subseteq \bigoplus_{i\geq 0}
H^{2i}_{\dR}(\X_{K}/K)$. 
\end{theorem}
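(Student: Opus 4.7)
The plan is to derive the theorem from the Beilinson fiber square (Theorem~\hyperref[thm:a]{A}) applied to $\X$, combined with the Hochschild--Kostant--Rosenberg (HKR) theorem and Hodge-to-de Rham degeneration. The end goal is to realize the cartesian square~\eqref{OKfibsquare}, from which the theorem follows by a diagram chase on $\pi_0$.

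First, I would sheafify Theorem~\hyperref[thm:a]{A} over $\X$ and take continuous invariants as in Definition~\ref{continuousinv}. Any affine open $\spf(R) \subseteq \X$ is the formal spectrum of a $p$-adically complete ring $R$ of bounded $p$-power torsion, and hence henselian along $(p)$, so Theorem~\hyperref[thm:a]{A} applies to $R$. By Zariski descent combined with Proposition~\ref{contcompmapeq} (which identifies continuous and scheme-theoretic $\HH, \HC^-, \HP, \TC$), these affine squares assemble into a cartesian square
\[
\xymatrix{
\K^{\mathrm{cts}}(\X; \QQ_p) \ar[r] \ar[d] & \K(\X_k; \QQ_p) \ar[d]^{\mathrm{tr}_{\crys}} \\
\HC^-(\X; \QQ_p) \ar[r] & \HP(\X; \QQ_p).
}
\]

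Next, I would unwind the bottom row via HKR. Since $\mathcal{O}_K$ is flat over $\ZZ$ with $p$-power-torsion K\"ahler differentials over $\ZZ$, rationally $\HH(\X/\ZZ; \QQ_p) \simeq \HH(\X_K/K)$, and HKR for the smooth proper rigid analytic variety $\X_K$ equips this with an $S^1$-equivariant HKR filtration whose graded pieces are $\Omega^i_{\X_K/K}[i]$. Combined with Scholze's rational Hodge-to-de Rham degeneration for proper smooth rigid analytic varieties~\cite{Sc13b}, this produces equivalences
\[
\HC^-(\X; \QQ_p) \simeq \prod_{i\geq 0} \mathrm{Fil}^{\geq i} R\Gamma_{\dR}(\X_K/K)[2i] \quad \text{and} \quad \HP(\X; \QQ_p) \simeq \prod_{i\in\ZZ} R\Gamma_{\dR}(\X_K/K)[2i],
\]
with the bottom map exhibiting the inclusion of the Hodge filtration in each weight. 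This realizes~\eqref{OKfibsquare}. Passing to the long exact sequence on $\pi_0$ and invoking the de Rham-to-crystalline comparison~\eqref{dRtocrys}, the cartesian square implies: $x \in \K_0(\X_k; \QQ_p)$ lifts to $\K_0^{\mathrm{cts}}(\X; \QQ_p)$ if and only if the image of $\mathrm{tr}_{\crys}(x)$ in $\prod_i H^{2i}_{\dR}(\X_K/K)$ lies in $\prod_i \mathrm{Fil}^{\geq i} H^{2i}_{\dR}(\X_K/K)$.

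The main obstacle is to identify this composite $\K_0(\X_k; \QQ_p) \to \prod_i H^{2i}_{\dR}(\X_K/K) \cong \prod_i H^{2i}_{\crys}(\X_k; \QQ_p) \otimes_{K_0} K$ with the crystalline Chern character~\eqref{cryschernchari}, at least up to nonzero rational scalars in each weight, which do not affect the lifting condition. I would reduce this to the case of a line bundle via the splitting principle; in that case both sides are governed by $c_1$, and the equality of the $c_1$ extracted from the trace with the standard crystalline $c_1$ can be checked by universality on $B\mathbb{G}_m$ using the $\mathrm{dlog}$ of a local trivialization. Once this identification is in place, the diagram chase above concludes the proof.
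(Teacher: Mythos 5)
Your overall plan --- glue Theorem~\hyperref[thm:a]{A} over affine opens of $\X$, rewrite the bottom row via HKR relative to $\mathcal{O}_K$, and identify the resulting right-hand vertical map with the crystalline Chern character up to nonzero scalars in each weight --- is the paper's strategy (\Cref{BfibOKprop} together with \Cref{chpiscrysch}). However, there is a genuine gap at exactly the step you call the main obstacle. The map $\K(\X_k;\mathbb{Q}_p)\to\prod_i R\Gamma_{\dR}(\X_K/K)[2i]$ extracted from the fiber square is, a priori, only natural in the formal scheme $\X$ over $\mathcal{O}_K$: it is built out of $\TC^{\mathrm{cts}}(\X;\mathbb{Q}_p)$ and $\HC^{-,\mathrm{cts}}(\X/\mathcal{O}_K;\mathbb{Q}_p)$. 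Your splitting-principle/universality argument on $B\mathbb{G}_m$ or $BGL_n$ requires this map to be a natural transformation of functors of the smooth $k$-scheme (indeed $k$-stack) $\X_k$ alone: for a class of a vector bundle on $\X_k$ that does not lift to $\X$ --- and detecting such classes is the whole point of the theorem --- the classifying map $\X_k\to BGL_n$ admits no lift over $\mathcal{O}_K$ through which to pull back from the universal case. Establishing this functoriality in the special fiber is precisely the paper's key intermediate step: the target is first rewritten via the crystalline comparison \eqref{dRtocrys} as a functor of $\X_k$, and then the Kan-extension/Frobenius-twisting machinery of \Cref{app:Kan} (\Cref{TCKanextlemma}, \Cref{adjunctionppower}), together with $\TC(-;\mathbb{Q}_p)\simeq\TC(\pi_0(-);\mathbb{Q}_p)$ from \Cref{isogenyTHH}, shows that the transformation \eqref{XChmapOK} descends to the transformation \eqref{ZChmapk} defined on all smooth $k$-schemes. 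Without this descent your reduction to the universal case does not get off the ground; with it, your dlog/splitting-principle identification (up to the scalars $\lambda^i$, which indeed do not affect the lifting criterion) is essentially the argument of \Cref{chpiscrysch}.

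A secondary imprecision: the decompositions $\HP^{\mathrm{cts}}(\X/\mathcal{O}_K;\mathbb{Q}_p)\simeq\prod_i R\Gamma_{\dR}(\X_K/K)[2i]$ and $\HC^{-,\mathrm{cts}}(\X/\mathcal{O}_K;\mathbb{Q}_p)\simeq\prod_i \mathrm{Fil}^{\geq i}R\Gamma_{\dR}(\X_K/K)[2i]$ do not follow from HKR plus Hodge-to-de Rham degeneration alone; degeneration controls the graded pieces but does not split the HKR/Tate filtration, and a functorial splitting is needed for the subsequent naturality argument. The paper obtains it from the filtrations of \cite{antieauperiodic} split functorially by Adams operations as in \cite[Sec.~9.4]{BMS2}; Scholze's degeneration theorem is then what identifies $\pi_0$ of the filtered terms with $\bigoplus_i\mathrm{Fil}^{\geq i}H^{2i}_{\dR}(\X_K/K)$ sitting inside $\bigoplus_i H^{2i}_{\dR}(\X_K/K)$.
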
 

The proof of \Cref{BEKthm} will be carried out as follows. First, we give an
analogous form of the Beilinson fiber square when we work relative to
$\mathcal{O}_K$ (\Cref{BfibOKprop}). Next, we will show that 
the $p$-adic Chern character can be defined entirely in terms of the
special fiber (which will use some Kan extension techniques from
\Cref{app:Kan}), and then identify it with the crystalline Chern character
(\Cref{chpiscrysch}). \Cref{BEKthm} will then follow directly.  

In the next result, we 
will use the continuous Hochschild (resp.~negative cyclic, periodic cyclic)
homology of a formal scheme over $\mathcal{O}_K$, defined as 
in \Cref{continuousinv}; note that \Cref{contcompmapeq} applies to these
relative theories too, since they can be recovered from $\THH$.  

\begin{proposition}[The fiber square relative to $\mathcal{O}_K$] 
\label{BfibOKprop}
Let $\X$ 
be a smooth formal $\mathcal{O}_K$-scheme. Then there are natural fiber
squares
 \begin{equation} \label{BfibOK}\begin{gathered}  \xymatrix{
\K^{\mathrm{cts}}(\X; \mathbb{Q}_p) \ar[d]  \ar[r] &  \K(\X_k; \mathbb{Q}_p) \ar[d]
\\
\TC^{\mathrm{cts}}(\X; \mathbb{Q}_p) \ar[d]  \ar[r] &  \TC(\X_k; \mathbb{Q}_p)
\ar[d]  \\
\HC^{-, \mathrm{cts}}(\X/\mathcal{O}_K; \mathbb{Q}_p) \ar[r] &
\HP^{\mathrm{cts}}(\X/\mathcal{O}_K; \mathbb{Q}_p).
}\end{gathered} \end{equation}
\end{proposition}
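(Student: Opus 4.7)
The plan is to derive both squares of \eqref{BfibOK} from the basic Beilinson fiber square \Cref{cons:basicfibersquare} applied to each thickening $\X_n$ and then passed to the inverse limit. For the top square, I apply the Dundas--Goodwillie--McCarthy theorem \Cref{thm:dgm} to the nilpotent thickening $\X_n \to \X_1$, giving a cartesian square with vertices $\K(\X_n), \K(\X_1), \TC(\X_n; \ZZ_p), \TC(\X_1; \ZZ_p)$ for each $n$; inverse limits preserve cartesian squares, so this yields the analogous square with $\K^{\mathrm{cts}}(\X)$ and $\TC^{\mathrm{cts}}(\X; \ZZ_p)$ on the left, the right side being constant in $n$. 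To replace $\X_1$ by $\X_k$, note that $\X_1 \to \X_k$ is a nilimmersion (any uniformizer of $\Oscr_K$ is nilpotent modulo $p$), so by \Cref{GHfiniteresult} the relative $\K$- and $\TC$-spectra are quasi-isogenous to zero and vanish after inverting $p$. Rationalizing gives the top square of \eqref{BfibOK}.

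For the bottom square, I apply \Cref{cons:basicfibersquare} to each $\X_n$. Although that construction is stated for ring spectra, the invariants $\K, \TC, \HC^-, \HP$ all satisfy Zariski descent on qcqs schemes, so the corresponding fiber square holds scheme-theoretically and is cartesian after inverting $p$. Using \Cref{isogenyTHH} the natural map $\TC(\X_n \otimes_\SS \FF_p; \QQ_p) \to \TC(\X_n / p; \QQ_p) = \TC(\X_1; \QQ_p)$ is an equivalence, and since $\X_n \otimes_\ZZ \FF_p = \X_1$ for every $n \geq 1$, inverse limits over $n$ produce a cartesian square of $\ZZ$-relative invariants with $\TC^{\mathrm{cts}}(\X; \QQ_p) \to \TC(\X_1; \QQ_p)$ on top and $\HC^{-,\mathrm{cts}}(\X/\ZZ; \QQ_p) \to \HP^{\mathrm{cts}}(\X/\ZZ; \QQ_p)$ below. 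Combined with the identification $\TC(\X_1; \QQ_p) \simeq \TC(\X_k; \QQ_p)$ from the previous paragraph, this gives the bottom square of \eqref{BfibOK} but with cyclic homologies taken over $\ZZ$ rather than $\Oscr_K$.

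The final step is to exchange the $\ZZ$-relative Hochschild-type invariants for $\Oscr_K$-relative ones. The base-change formula $\HH(R/\Oscr_K) \simeq \HH(R/\ZZ) \otimes_{\HH(\Oscr_K/\ZZ)} \Oscr_K$, together with the facts that $\LL_{W(k)/\ZZ} = 0$ (since $W(k)$ is ind-\'etale over $\ZZ$) and that $\LL_{\Oscr_K/W(k)}$ is a bounded $p$-power torsion $\Oscr_K$-module arising from ramification, implies that $\HH(\Oscr_K/\ZZ)_{\QQ_p} \simeq K$. Hence $\HH(\X_n/\ZZ; \QQ_p) \simeq \HH(\X_n/\Oscr_K; \QQ_p)$, and applying $(-)^{hS^1}$ and $(-)^{tS^1}$ yields corresponding equivalences on $\HC^-$ and $\HP$; these survive the inverse limit over $n$. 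Stacking the two cartesian squares then produces \eqref{BfibOK}. The main obstacle will be the naturality bookkeeping: making sure the identification $\TC(\X_1; \QQ_p) \simeq \TC(\X_k; \QQ_p)$ is inserted compatibly in both squares and that the base-change equivalence for $\HH$ respects the $S^1$-actions entering the $p$-adic Chern character on the right-hand vertical of the bottom square. This should be formal once the base-change is carried out at the $S^1$-equivariant level, but it warrants care.
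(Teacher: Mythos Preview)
Your overall strategy matches the paper's, and the top square together with the passage from $\X_1$ to $\X_k$ is handled correctly. The gap is in your last paragraph. From the $S^1$-equivariant equivalence $\HH(\X_n/\ZZ;\QQ_p)\simeq\HH(\X_n/\Oscr_K;\QQ_p)$ you cannot deduce equivalences on $\HC^-(-;\QQ_p)$ and $\HP(-;\QQ_p)$ by ``applying $(-)^{hS^1}$ and $(-)^{tS^1}$'': in the paper's conventions $\HC^-(-;\QQ_p)$ means $\big(\HH(-;\ZZ_p)^{hS^1}\big)\otimes\QQ$, and neither $(-)^{hS^1}$ nor $(-)^{tS^1}$ commutes with rationalization in general. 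The fiber of $\HH(R/\ZZ;\ZZ_p)\to\HH(R/\Oscr_K;\ZZ_p)$ has torsion growing with degree (coming from wedge powers of the torsion module $L_{\Oscr_K/W(k)}$), which is exactly the situation in which an inverse limit over the Postnikov/$S^1$-skeleton can produce nonzero rational contributions.

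The fix, and what the paper actually does, is to pass through $S^1$-\emph{orbits} instead: $(-)_{hS^1}$ is a colimit and therefore commutes with $-\otimes\QQ$, so the $\HH$-equivalence gives $\HC(R;\QQ_p)\simeq\HC(R/\Oscr_K;\QQ_p)$. Feeding this into the fiber sequence $\Sigma\HC\to\HC^-\to\HP$ shows that the square
\[
\xymatrix{
\HC^-(R;\QQ_p)\ar[r]\ar[d] & \HP(R;\QQ_p)\ar[d]\\
\HC^-(R/\Oscr_K;\QQ_p)\ar[r] & \HP(R/\Oscr_K;\QQ_p)
}
\]
is cartesian, which is all you need to stack with the Beilinson fiber square and obtain \eqref{BfibOK}. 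You do not need (and the paper does not claim) that the individual vertical maps are equivalences.
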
 
\begin{proof} 
This will follow from the Beilinson fiber square. 
By Zariski descent of all terms in 
the formal scheme $\X$, we can assume that $\X = \spf(R)$ for $R$ a formally
smooth, $p$-complete $\mathcal{O}_K$-algebra. 
First, 
$ \HH( \mathcal{O}_K; \mathbb{Z}_p) \simeq \HH( \mathcal{O}_K/W(k);
\mathbb{Z}_p)$. Since
$L_{\mathcal{O}_K/W(k)}$ is quasi-isogenous to zero, we find that  
the map 
$\HH( \mathcal{O}_K; \mathbb{Q}_p) \to K$ given by truncation
is an equivalence. 
We thus conclude (via Hochschild--Kostant--Rosenberg) $\HH^{\mathrm{cts}}(R; \mathbb{Q}_p) \to \HH^{\mathrm{cts}}(R/\mathcal{O}_K; \mathbb{Q}_p)$ 
is an equivalence, whence $\HC(R; \mathbb{Q}_p) \to \HC(R/\mathcal{O}_K;
\mathbb{Q}_p)$ is an equivalence too by taking $S^1$-coinvariants. 
Here we use that 
$p$-adic completion commutes with taking $S^1$-coinvariants on connective
spectra, since taking $S^1$-coinvariants behaves as a finite colimit in any
ranges of degrees. 
Therefore, the diagram
\[ \xymatrix{
\HC^-(R; \mathbb{Q}_p) \ar[d]  \ar[r] &  \HP(R; \mathbb{Q}_p) \ar[d]  \\
\HC^-(R/\mathcal{O}_K ; \mathbb{Q}_p) \ar[r] &
\HP(R/\mathcal{O}_K; \mathbb{Q}_p) 
}\]
is homotopy cartesian. Combining with the Beilinson fiber square, the
result now follows. 
\end{proof} 

\begin{construction}[The $p$-adic Chern character map] 
Since $\X/\mathcal{O}_K$ is smooth, we obtain from 
Hochschild--Kostant--Rosenberg type filtrations (as in \cite{antieauperiodic},
using Adams operations as in \cite[Sec.~9.4]{BMS2} to split the filtration)
natural decompositions
$$\HP^{\mathrm{cts}}(\X/\mathcal{O}_K; \mathbb{Q}_p) \simeq \prod_{i \in \mathbb{Z}} 
R \Gamma_{\dR}(\X_K/K)[2i]$$ 
and $$\HC^{-,\mathrm{cts}}(\X/\mathcal{O}_K; \mathbb{Q}_p) \simeq \prod_{i \in \mathbb{Z}}
\mathrm{Fil}^{\geq i} R \Gamma_{\mathrm{dR}}(\X_K/K)[2i].
$$
It follows that we obtain from \eqref{BfibOK} a natural map 
\begin{equation} \label{XChmapOK}  \K(\X_k; \mathbb{Q}_p) \to \TC(\X_k;
\mathbb{Q}_p) \to  \prod_{i \in \mathbb{Z}} R
\Gamma_{\dR}(\X_K/K)[2i]  \end{equation}
for every smooth $p$-adic formal scheme $\X \to \spf(\mathcal{O}_K)$. 
We observe that both the source and target actually depend only on the special
fiber $\X_k$ of $\X$, thanks to \Cref{comparisondRcrys}. 
Furthermore, to construct 
\eqref{XChmapOK}, it suffices to work with affine formal schemes over
$\mathcal{O}_K$, by Zariski descent of the target, so we can assume 
$\X = \mathrm{Spf}(R)$ for $R$ a formally smooth $\mathcal{O}_K$-algebra. 
That is, we have a natural  map $\TC(R \otimes_{\mathcal{O}_K} k; \mathbb{Q}_p) \to \prod_{i \in \mathbb{Z}} (R
\Gamma_{\mathrm{crys}}( \mathrm{Spec}(R \otimes_{\mathcal{O}_K}
k);{\mathbb{Q}_p}) \otimes_{K_0} K)[2i]$.

Consider the functors
on smooth $k$-algebras 
\begin{equation} \label{functorsa} A \mapsto \TC(A; \mathbb{Q}_p)\quad\text{and}\quad
A\mapsto\prod_{i \in \mathbb{Z}}  \left( R\Gamma_{\mathrm{crys}}( \spec(A);
\mathbb{Q}_p) \otimes_{K_0} K \right)[2i]. \end{equation}
The left Kan extension of $\TC(-; \mathbb{Q}_p)$ to 
almost finitely presented objects of $\mathrm{SCR}_{k}$ (as in \Cref{almostfp})
is $\TC(-; \mathbb{Q}_p)$ again, since this functor commutes with 
geometric realizations in $\mathrm{SCR}_k$. By 
\Cref{isogenyTHH}, $\TC(-; \mathbb{Q}_p) = \TC( \pi_0(-); \mathbb{Q}_p)$ on
    $\mathrm{SCR}_k$, so hypotheses (1) and (2) of
\Cref{TCKanextlemma} are satisfied when applied to the left Kan extensions
of the functors \eqref{functorsa} on smooth $k$-algebras
(and Zariski sheafifying again). It follows that \eqref{XChmapOK} actually upgrades to a natural
transformation of functors in the special fiber alone. That is, for 
every smooth $k$-scheme $Z$, we obtain a natural map 
\begin{equation} 
\label{ZChmapk}
\K(Z; \mathbb{Q}_p) \xrightarrow{\tr} \TC(Z; \mathbb{Q}_p) \to \prod_{i \in \mathbb{Z}}(  R
\Gamma_{\crys}(Z; \mathbb{Q}_p) \otimes_{K_0} K)[2i],
\end{equation} 
such that \eqref{XChmapOK} is obtained by taking $Z = X_k$.

\end{construction} 
Next, we identify (up to scaling factors) the map \eqref{ZChmapk} on $\pi_0$ with
the crystalline Chern character.

\begin{proposition} 
\label{chpiscrysch}
There exists a scalar $\lambda \in K^{\times}$ such that 
for every smooth separated $k$-scheme $Z$, the map 
$\K_0(Z; \mathbb{Q}_p) \to \prod_{i \geq 0} H^{2i}_{\crys}(Z;\QQ_p)
\otimes_{K_0} K$ of
\eqref{ZChmapk} is given by 
the crystalline Chern character composed with the automorphism that multiplies
the $i$th factor by $\lambda^i$. 
\end{proposition}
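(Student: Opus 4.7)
The plan is to use the classical strategy of reducing to line bundles via additivity, multiplicativity, and the splitting principle, and then verifying the identification on line bundles by comparing the cyclotomic trace with the dlog map used to define $c_1^{\crys}$.

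First, I would establish that both maps in question define natural ring homomorphisms $\K_0(Z;\mathbb{Q}_p)\to\bigoplus_{i\geq 0}H^{2i}_{\crys}(Z;\mathbb{Q}_p)\otimes_{K_0}K$. The crystalline Chern character is multiplicative by construction. For the map \eqref{ZChmapk}, the source is $E_\infty$, the cyclotomic trace $\K\to\TC$ is an $E_\infty$-map, and the HKR decomposition of $\HP^{\mathrm{cts}}$ used to produce the target respects the multiplicative structure (the projection to the $2i$th summand is multiplicative after the Adams splitting, as in \cite[\S 9.4]{BMS2}). Since rescaling the $i$th summand by $\lambda^i$ preserves multiplicativity, it is enough to compare two ring homomorphisms.

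Next, by the splitting principle for $\K_0$ of a smooth $k$-scheme (applied after pulling back to suitable flag bundles, which injects rational $\K_0$ by the projective bundle formula), and by naturality of both sides, it suffices to verify the identity on classes of line bundles. Passing to the universal case via projective spaces $\mathbb{P}^n_k$ (whose rational $\K_0$ is generated by powers of $[\mathcal{O}(1)]$ and whose rational crystalline cohomology is a truncated polynomial ring on $c_1^{\crys}(\mathcal{O}(1))$), one reduces to a single identification: for a line bundle $\mathcal{L}$ on $Z$, the degree-$2$ component of the image of $[\mathcal{L}]$ under \eqref{ZChmapk} equals $\lambda\cdot c_1^{\crys}(\mathcal{L})$ for some universal scalar $\lambda\in K^{\times}$. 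Multiplicativity then forces the degree-$2i$ component to equal $\lambda^i\cdot c_1^{\crys}(\mathcal{L})^i/i!=\lambda^i\cdot \bigl(\mathrm{ch}_{\crys}(\mathcal{L})\bigr)_i$, which is the claim.

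To identify the trace of $[\mathcal{L}]$ with a scalar multiple of $c_1^{\crys}(\mathcal{L})$, I would use that both arise, via HKR, from the universal dlog map $\mathcal{O}^{\times}[-1]\to \L\Omega^{\geq 1}$. Concretely, under the HKR-style filtration on $\HH$ and the induced filtration on $\HC^{-}$ and $\HP$, the image of a line bundle class under the trace is computed, \'etale-locally, by $\mathrm{dlog}$ of a trivialization; this is compatible (up to a universal scalar coming from the comparison between the Bott-type periodicity implicit in the identification $\HP^{\mathrm{cts}}\simeq\prod_i R\Gamma_{\dR}[2i]$ and the de Rham-to-crystalline comparison \eqref{dRtocrys}) with the standard construction of $c_1^{\crys}$ from $\mathrm{dlog}$.

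The main obstacle is the last step: pinning down that the trace on line bundle classes really equals a nonzero scalar multiple of $c_1^{\crys}$, rather than some more complicated natural operation. The point is that naturality already forces this image to lie in the one-dimensional space of natural transformations $\mathrm{Pic}(-)_{\mathbb{Q}_p}\to H^{2}_{\crys}(-;\mathbb{Q}_p)\otimes_{K_0}K$, which is spanned by $c_1^{\crys}$; so the only substantive content is nonvanishing of the scalar $\lambda$. This can be checked on a single example, e.g.\ $\mathcal{O}(1)$ on $\mathbb{P}^1_k$, where both $\K_0$ modulo rank and $H^2_{\crys}$ are one-dimensional over $\mathbb{Q}_p$, and where the non-triviality of the trace on $[\mathcal{O}(1)]$ follows from, for instance, the fact that the composite $\K(\mathbb{P}^1_k;\mathbb{Q}_p)\to \HP^{\mathrm{cts}}(\mathbb{P}^1_k;\mathbb{Q}_p)$ recovers the usual rational Chern character in the generic fiber, where $c_1^{\dR}(\mathcal{O}(1))$ is well known to be nonzero.
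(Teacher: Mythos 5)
Your overall strategy is essentially the one the paper uses for \Cref{chpiscrysch}: exploit additivity and multiplicativity, reduce to line bundles by the splitting principle, let multiplicativity force the higher components to be $\lambda^i c_1^i/i!$, and get $\lambda\neq 0$ by comparing with the left-hand column of \eqref{BfibOK} (your ``$c_1^{\dR}(\mathcal{O}(1))\neq 0$ in the generic fiber'' check is exactly the paper's ``comparison with the left-hand side of \eqref{BfibOK}'', applied to a lift of $[\mathcal{O}(1)]$ to $\K_0^{\mathrm{cts}}(\mathbb{P}^1_{\mathcal{O}_K})$; as phrased it borders on assuming the statement, so be careful to say that what you use is commutativity of \eqref{BfibOK} plus the classical computation of the Goodwillie--Jones/HKR Chern character of $\mathcal{O}(1)$).

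The step where you genuinely diverge is the reduction to a universal case, and as written it has a gap. A line bundle on a smooth separated $k$-scheme is pulled back from some $(\mathbb{P}^N,\mathcal{O}(1))$ only if it is globally generated, and the line bundles produced by the splitting principle (tautological subquotients on flag bundles) are typically not globally generated; for the same reason, your assertion that natural transformations $\mathrm{Pic}(-)_{\mathbb{Q}_p}\to H^2_{\crys}(-;\mathbb{Q}_p)\otimes_{K_0}K$ form a one-dimensional space spanned by $c_1^{\crys}$ is precisely the point at issue and cannot be verified by evaluating on projective spaces alone. Note also that you cannot simply localize: a class in $H^{2}_{\crys}(Z;\mathbb{Q}_p)$ is not detected on a Zariski (or \'etale) cover, so the comparison must be made globally. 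The gap is repairable when $Z$ (and hence the flag bundle) is quasi-projective: since the degree-$0$ component of your ring map is the rank, multiplicativity makes the degree-$2$ component additive on $\mathrm{Pic}$, and twisting by a large power of an ample line bundle writes any line bundle as a difference of globally generated ones, so the computation over $\mathbb{P}^N$ does determine it. For general smooth separated $Z$ this patch is not automatic (an ample family does not obviously yield such differences, and Jouanolou-type tricks fail because crystalline cohomology is not $\mathbb{A}^1$-invariant), which is exactly why the paper instead sheafifies both sides for the smooth topology, extends the transformation to smooth Artin stacks, and evaluates on $BGL_n$ with its tautological bundle, using $R\Gamma_{\crys}(BGL_n;\mathbb{Q}_p)\simeq K_0[c_1,\dots,c_n]$; that route gives the universal case directly and avoids any global-generation issue. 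So either restrict your scheme-level argument to the quasi-projective case and add the difference-of-globally-generated step explicitly, or adopt the stacky universal case as in the paper.
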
 
\begin{proof} 
 It suffices (by the
resolution property) to evaluate 
\eqref{ZChmapk} on the class of a vector bundle on $Z$, and for this we will
reduce to the universal case. 
For this it
will be convenient to extend to stacks over $k$ as well.
Given a smooth scheme or stack $Z$ over $k$, 
let $\mathrm{Vect}_n(Z)$ denote the groupoid of $n$-dimensional vector bundles
on $Z$. 
It follows that we obtain from \eqref{ZChmapk} a natural transformation (of spaces)
for all smooth $k$-schemes $Z$,
\[ f_n\colon \mathrm{Vect}_n(Z) \to \Omega^\infty \left( \prod_{i \in \mathbb{Z}} \left(R
\Gamma_{\crys}(Z; \mathbb{Q}_p) \otimes_{K_0} K \right)[2i] \right)  \]
such that the $\{f_n\}$ are additive and  multiplicative. 

Both the source and target of the $f_n$ are sheaves of spaces for the smooth or
\'etale topology on smooth $k$-schemes. 
Sheafifying for the smooth topology, we obtain such a natural transformation for
any smooth Artin stack, which still satisfies the additivity and
multiplicativity properties. 
By naturality, it suffices to show that 
for $Z = BGL_n$ and for $\mathcal{E}$ the tautological $n$-dimensional vector
bundle, $f_n(\mathcal{E})$ is given by (up to scalars) the crystalline Chern
character of $\mathcal{E}$. 
It follows that for each $n$, $f_n(\mathcal{E})$ is given by a power series
(with $K$ coefficients) in the 
crystalline 
Chern classes of $\mathcal{E}$, since 
$R\Gamma_{\mathrm{crys}}(BGL_n; \mathbb{Q}_p)$ (defined via sheafification) is the polynomial ring
$K_0[c_1, \dots, c_n]$, e.g., as in the calculations of de Rham and
Hodge cohomology of $BGL_n$ in \cite{Totaro18}. 
By additivity, multiplicativity, and the splitting principle to reduce to the
case of line bundles, we find easily that $f_n$ must be the Chern
character up to normalization by powers of some constant $\lambda$. 
Moreover, $\lambda \neq 0$ by comparison with the left-hand-side of
\eqref{BfibOK}. 
\end{proof}

\begin{proof}[Proof of \Cref{BEKthm}] 
We use the fiber square of \eqref{BfibOK}. 
As before, we have identifications
$\HP^{\mathrm{cts}}(\mathfrak X/\mathcal{O}_K; \mathbb{Q}_p) \simeq \prod_{i \in \mathbb{Z}} R
\Gamma_{\dR}(\mathfrak X_K/K)[2i]$ and 
$\HC^{-,\mathrm{cts}}(\mathfrak X/\mathcal{O}_K; \mathbb{Q}_p) \simeq \prod_{i \in \mathbb{Z}}
\mathrm{Fil}^{\geq i} R \Gamma_{\dR}(\mathfrak X_K/K)[2i]$. 
Using the crystalline-to-de Rham comparison (\Cref{comparisondRcrys}) and
\Cref{chpiscrysch}, we see that the map 
$$\K_0(\mathfrak X_k; \mathbb{Q}_p) \to \prod_{i \in \mathbb{Z}} H^{2i}_{\dR}(\mathfrak X_K/K) \simeq 
\prod_{i \in \mathbb{Z}} H^{2i}_{\crys}(\mathfrak X_k; \mathbb{Q}_p) \otimes_{K_0} K$$
is given up to scalar factors by the crystalline Chern character. 
The result now follows from 
\Cref{BfibOKprop}. 
\end{proof} 

\subsection{Generalization of Beilinson's obstruction; proof of
Theorem~E}
\label{sec:beilinsonobstruction}

Let $K, \mathcal{O}_K, k$ be as in the preceding subsection. 
Consider a proper 
scheme $X \to \spec(\mathcal{O}_K)$ with smooth generic fiber $X_K$ and
possibly singular special fiber $X_k$. 
In \cite{Beilinson}, Beilinson 
considers more generally the 
deformation problem for classes in higher $\K$-theory, and proves: 

\begin{theorem}[Beilinson \cite{Beilinson}] 
\label{bekrefinement}
Given $x \in \K_i(X_k)_{\mathbb{Q}}$, there is a natural  obstruction class in 
$\bigoplus_{r \geq 0} H^{2r-i}_{\mathrm{dR}}(X_K/K)/ \mathrm{Fil}^r
H^{2r-i}_{\mathrm{dR}}(X_K/K)
$ which vanishes
if and only if $x$ lifts to $(\varprojlim \K_i(X/\pi^n))_{\mathbb{Q}}$. 
More precisely, there is a natural equivalence of spectra
\[ \mathrm{cofib}( \K^{\mathrm{cts}}(X; \mathbb{Q}_p) \to \K(X_k;\mathbb{Q}_p))
\simeq \bigoplus_{r \geq 0} R \Gamma_{\mathrm{dR}}(X_K/K)/\mathrm{Fil}^{\geq r}
R \Gamma_{\mathrm{dR}}(X_K/K) [2r].
\]
\end{theorem}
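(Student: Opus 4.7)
I would deduce this from Theorem \hyperref[thm:a]{A} via the $\Oscr_K$-relative Beilinson fiber square, then identify the cyclic-homology cofiber using the HKR filtration and an Adams-operations splitting, and finally exploit smoothness of the generic fiber $X_K$ to pass from derived to classical de Rham cohomology.

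First, I would assemble the $\K$-theoretic fiber square. Combining the Dundas--Goodwillie--McCarthy cartesian square \eqref{cartsquare1} with the $p$-adic continuity of $\TC$ (Proposition \ref{contcompmapeq}) and the basic Beilinson fiber square (Construction \ref{cons:basicfibersquare}), applied sheafwise on $X$ for the Zariski topology, reduces the problem to identifying
\[
\mathrm{cofib}\bigl(\HC^{-,\mathrm{cts}}(X;\QQ_p)\to\HP^{\mathrm{cts}}(X;\QQ_p)\bigr)
\]
with the asserted direct sum of Hodge quotients. Next, using that $L_{\Oscr_K/W(k)}$ is quasi-isogenous to zero and hence $\HH(\Oscr_K;\QQ_p)\simeq K$, the same argument as in Proposition \ref{BfibOKprop} shows that rationally one can replace $\HC^-$ and $\HP$ over $\mathbb{Z}$ with their $\Oscr_K$-relative versions without changing the outcome.

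Second, I would apply the derived HKR filtration on $\HH(-/\Oscr_K;\QQ_p)$ to $\HC^-$ and $\HP$, and split it rationally via the Adams-operations decomposition of \cite[Sec.~9.4]{BMS2} (as in \cite{antieauperiodic}), obtaining
\begin{align*}
\HP^{\mathrm{cts}}(X/\Oscr_K;\QQ_p) &\simeq \prod_{i\in\ZZ}(\L\Omega_{X/\Oscr_K})_{\QQ_p}[2i],\\
\HC^{-,\mathrm{cts}}(X/\Oscr_K;\QQ_p) &\simeq \prod_{i\in\ZZ}(\L\Omega^{\geq i}_{X/\Oscr_K})_{\QQ_p}[2i],
\end{align*}
where $\L\Omega_{X/\Oscr_K}$ carries its derived Hodge filtration. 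The horizontal cofiber of the natural comparison map is then the direct sum $\bigoplus_{r\geq 0}(\L\Omega_{X/\Oscr_K}/\L\Omega^{\geq r}_{X/\Oscr_K})_{\QQ_p}[2r]$.

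Third, I would collapse the derived terms to classical ones using smoothness of $X_K/K$. Base change of derived de Rham cohomology along $\Oscr_K\to K$ gives $\L\Omega_{X/\Oscr_K}\otimes_{\Oscr_K}^{\LL} K\simeq \L\Omega_{X_K/K}$ as filtered complexes, and derived HKR for smooth algebras identifies $\L\Omega_{X_K/K}$ with the classical Hodge-filtered complex $\Omega^\bullet_{X_K/K}$. Combined with Scholze's Hodge-to-de Rham degeneration for smooth proper rigid analytic varieties \cite{Sc13b}, one obtains
\[
(\L\Omega_{X/\Oscr_K}/\L\Omega^{\geq r}_{X/\Oscr_K})_{\QQ_p}\;\simeq\; R\Gamma_{\dR}(X_K/K)/\Fil^{\geq r}R\Gamma_{\dR}(X_K/K),
\]
completing the identification of cofibers. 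Extracting $\pi_{-i}$ produces the obstruction class $c(x)$, whose vanishing characterizes liftability of $x\in\K_i(X_k)_\QQ$ to $\K^{\mathrm{cts}}_i(X;\QQ)$.

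The main obstacle is the last step: because $X$ need not be smooth over $\Oscr_K$, the object $\L\Omega_{X/\Oscr_K}$ is genuinely derived, and one must verify that after inverting $p$ (equivalently, after base change to $K$) only information from the smooth generic fiber $X_K$ survives. The decisive point is that the derived HKR base-change isomorphism is available, and that smoothness of $X_K$ then forces the derived Hodge filtration to agree with the classical one---the presence of singularities on the special fiber $X_k$ is precisely absorbed into the relative $p$-adic completion and disappears rationally.
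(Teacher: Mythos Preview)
Your approach is correct and essentially the same as the paper's. The paper does not give a separate proof of this statement (it is cited as Beilinson's result) but instead proves the more general Theorem~\ref{thm:bek}, whose proof specializes to yield this one; the argument there is identical in spirit to yours: reduce to affines, use the Beilinson fiber sequence to identify the cofiber, apply the HKR-type filtration of \cite{antieauperiodic}, and split via Adams operations.

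Two minor differences and quibbles. First, the paper works directly with $\HC(R;\QQ_p)[2]$ via Theorem~\ref{thm_a_text} rather than with the pair $\HC^-$, $\HP$ from the square; this is slightly more economical, as you only need the fiber sequence and not the full square for this obstruction-theoretic statement (the paper remarks on this explicitly before Theorem~\ref{thm:bek}). Second, in your step two, $\HP$ produces the \emph{Hodge-completed} derived de Rham cohomology $\widehat{\L\Omega}_{X/\Oscr_K}$, not $\L\Omega_{X/\Oscr_K}$; this does not affect the outcome because after inverting $p$ and using smoothness of $X_K$, the Hodge filtration is bounded and completion is harmless, but you should say so. Finally, your invocation of Scholze's rigid-analytic degeneration \cite{Sc13b} is misplaced: it is not needed for the equivalence of filtered complexes $(\L\Omega_{X/\Oscr_K})_{\QQ_p}\simeq R\Gamma_{\dR}(X_K/K)$, which follows just from smoothness of $X_K$; degeneration is only needed to pass from $H^{2r-i}(R\Gamma_{\dR}/\Fil^{\geq r})$ to $H^{2r-i}_{\dR}/\Fil^r H^{2r-i}_{\dR}$ in the first part of the statement, and since $X_K$ is a smooth proper \emph{scheme} over a field of characteristic zero, classical Hodge theory already suffices.
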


In particular, 
\Cref{bekrefinement} applies for $i = 0$ and overlaps with the results of
\cite{bek1}, although it does not identify the
obstruction class with the crystalline Chern character. 
In this subsection, we observe that \Cref{bekrefinement} can be extended to
essentially arbitrary formal schemes, using comparisons between cyclic and de
Rham cohomology as in \cite{antieauperiodic}. 
This argument will not essentially rely on having a fiber square as in
Theorem~\hyperref[thm:a]{A}
(versus a fiber sequence), and could be deduced from the results of
\cite{Beilinson}. 

\begin{theorem}\label{thm:bek}
    Let $\mathfrak{X}$ be a qcqs $p$-adic formal
	 scheme with bounded $p$-power torsion.
    Given $i\in\mathbb Z$ and a class $x\in\K_i(\X_1;\QQ)$ there is a natural
 class
    $$c(x)\in\bigoplus_{r\ge0} H^{2r-i}\left( 
	L \Omega_{\mathfrak{X}}/ L \Omega_{\mathfrak{X}}^{\geq r} 
	 \right)_{\mathbb{Q}_p}$$ with the property that $x$ lifts to
    $\K_i^{\mathrm{cts}}( \mathfrak{X}; \mathbb{Q})$ if and only if $c(x)=0$.
	 More precisely, there is a natural equivalence
	 of spectra
	\begin{equation}  \mathrm{cofib}\left( \K^{\mathrm{cts}}(\mathfrak{X};
	\mathbb{Q}) \to \K( \mathfrak{X}_1; \mathbb{Q}) \right) \simeq \bigoplus_{r \geq 0} \left(
	L \Omega_{\mathfrak{X}}/L \Omega_{\mathfrak{X}}^{\geq
	r}[2r]\right)_{\mathbb{Q}_p }. 
	\label{naturalequiv2}
	\end{equation} 
\end{theorem}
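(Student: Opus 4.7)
The plan is to combine three ingredients already developed in the paper: the Dundas--Goodwillie--McCarthy cartesian square (\ref{cartsquare1}) to pass from $\K$-theory to $\TC$; a continuous version of the Beilinson fiber square (\Cref{cons:basicfibersquare}, which gives the $\TC$-analog of Theorem~\hyperref[thm:a]{A} for any ring) to pass from $\TC$ to the $\HC^-$/$\HP$ pair; and the HKR-style filtrations, rationally split via Adams operations as in \cite[Sec.~9.4]{BMS2} and in the form used by \cite{antieauperiodic}, to identify $\HP^{\mathrm{cts}}/\HC^{-,\mathrm{cts}}$ with the stated direct sum of derived de~Rham quotients.

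First I would assemble, for each $n$, the Dundas--Goodwillie--McCarthy cartesian square comparing $\K(\mathfrak{X}_n)$ to $\K(\mathfrak{X}_1)$ and $\TC(\mathfrak{X}_n;\ZZ_p)$ to $\TC(\mathfrak{X}_1;\ZZ_p)$, valid because the kernel of $\mathfrak{X}_n\to\mathfrak{X}_1$ is $p$-nilpotent. Taking the limit over $n$ yields precisely the cartesian square (\ref{cartsquare1}), and rationalizing produces a natural equivalence
\[
\mathrm{cofib}\!\bigl(\K^{\mathrm{cts}}(\mathfrak{X};\QQ)\to\K(\mathfrak{X}_1;\QQ)\bigr)\simeq\mathrm{cofib}\!\bigl(\TC^{\mathrm{cts}}(\mathfrak{X};\QQ_p)\to\TC(\mathfrak{X}_1;\QQ_p)\bigr).
\]

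Next I would apply \Cref{cons:basicfibersquare} to each $\mathfrak{X}_n$, which yields a square that is cartesian after inverting $p$. Using \Cref{isogenyTHH} we identify $\TC(\mathfrak{X}_n\otimes_\SS\FF_p;\QQ_p)\simeq\TC(\mathfrak{X}_1;\QQ_p)$. Taking the limit over $n$, and using that limits of cartesian squares are cartesian, yields the cartesian square
\[
\xymatrix{
\TC^{\mathrm{cts}}(\mathfrak{X};\QQ_p)\ar[r]\ar[d] & \TC(\mathfrak{X}_1;\QQ_p)\ar[d]\\
\HC^{-,\mathrm{cts}}(\mathfrak{X};\QQ_p)\ar[r] & \HP^{\mathrm{cts}}(\mathfrak{X};\QQ_p),
}
\]
and hence
\[
\mathrm{cofib}\!\bigl(\TC^{\mathrm{cts}}(\mathfrak{X};\QQ_p)\to\TC(\mathfrak{X}_1;\QQ_p)\bigr)\simeq\mathrm{cofib}\!\bigl(\HC^{-,\mathrm{cts}}(\mathfrak{X};\QQ_p)\to\HP^{\mathrm{cts}}(\mathfrak{X};\QQ_p)\bigr).
\]

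Finally I would invoke the results of \cite{antieauperiodic} together with the Adams-operation splitting of the HKR filtration from \cite[Sec.~9.4]{BMS2}, which after $p$-completion and rationalization produce natural decompositions
\[
\HC^{-,\mathrm{cts}}(\mathfrak{X};\QQ_p)\simeq\prod_{r\in\ZZ}\L\Omega_\mathfrak{X}^{\geq r}[2r]_{\QQ_p},\qquad \HP^{\mathrm{cts}}(\mathfrak{X};\QQ_p)\simeq\prod_{r\in\ZZ}\L\Omega_\mathfrak{X}[2r]_{\QQ_p}.
\]
Taking cofibers and observing that the $r\leq 0$ summands vanish (because $\L\Omega_\mathfrak{X}^{\geq r}=\L\Omega_\mathfrak{X}$ for $r\leq 0$) gives exactly the equivalence (\ref{naturalequiv2}). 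The obstruction class $c(x)$ is then the image of $x\in\K_i(\mathfrak{X}_1;\QQ)$ under the connecting map into the cofiber, and landing in $\pi_{-i}$ of the cofiber places it in the asserted direct sum of cohomology groups.

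The main obstacle is the third step: extending the HKR/Adams-operation splittings of \cite{antieauperiodic} to the continuous derived de~Rham setting for qcqs $p$-adic formal schemes with bounded $p$-power torsion (rather than smooth ones, where they are routine). One must verify that the splittings are compatible with the inverse limit defining $\HC^{-,\mathrm{cts}}$ and $\HP^{\mathrm{cts}}$, and that the $p$-completed derived de~Rham cohomology $\L\Omega_\mathfrak{X}$ and its Hodge filtration appear as the graded pieces. A secondary technical point is the passage from an infinite product to the direct sum in the final statement, which is harmless degreewise since for each fixed $i$ only finitely many $r\geq 0$ satisfy $2r-i$ within the bounded amplitude of $\L\Omega_\mathfrak{X}/\L\Omega^{\geq r}_\mathfrak{X}$ in that degree, but still deserves a careful check.
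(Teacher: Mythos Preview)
Your approach is correct and closely parallels the paper's, but the paper takes a slightly shorter path. After reducing to the affine case $\mathfrak{X}=\spf(R)$ by Zariski descent (this immediately dissolves your ``main obstacle'': by \Cref{contcompmapeq} the continuous invariants are just the usual ones for $R$), the paper does not split $\HC^{-}$ and $\HP$ separately. Instead it invokes \Cref{thm_a_text} to identify the $\TC$-cofiber directly with $\HC(R;\QQ_p)[2]$, and then uses the filtration from \cite{antieauperiodic} on $\HC(R;\ZZ_p)[2]$, whose graded pieces are $L\Omega_R/L\Omega_R^{\geq n}[2n]$ (uncompleted) and which is \emph{exhaustive}; the Adams-operation splitting therefore yields a direct sum immediately, with no product-to-sum argument needed. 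Your route through $\HC^-$ and $\HP$ works too, but the individual decompositions you wrote should involve the \emph{Hodge-completed} $\widehat{L\Omega}_R$ rather than $L\Omega_R$ (this is what \cite{antieauperiodic} gives for $\HC^-$ and $\HP$); the Hodge completion disappears only upon taking the termwise cofiber $\widehat{L\Omega}/\widehat{L\Omega}^{\geq r}\simeq L\Omega/L\Omega^{\geq r}$. The resulting product over $r$ then agrees with the direct sum by your connectivity observation, which is indeed valid once one is in the affine case.
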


\newcommand{\gr}{\mathrm{gr}}

\begin{proof}[Proof of Theorem~\ref{thm:bek}]
It clearly suffices to exhibit the natural equivalence \eqref{naturalequiv2}, and for this we may assume
$\mathfrak{X} = \spf(R)$ is affine, since all terms satisfy Zariski descent. 
Now we have seen 
that the cofiber in \eqref{naturalequiv2} 
can be identified with the 
cofiber of 
\( \TC(R; \mathbb{Q}_p) \to \TC(R/p; \mathbb{Q}_p),  \)
or equivalently with $ \HC(R; \mathbb{Q}_p)[2]$ by \Cref{thm_a_text}. 
We invoke the result of~\cite{antieauperiodic} which constructs on
    $\HC(R;\ZZ_p)[2]$
    a natural exhaustive decreasing filtration
	 $\fil^{\geq \star}\HC(R;\ZZ_p)[2]$ with graded pieces
    $$\gr^n\HC(R;\ZZ_p)[2]\we L \Omega_R/ L \Omega_R^{\geq n}[2n],$$
    where, as before,  $L \Omega_R$ is the $p$-adic derived de Rham cohomology of $R$ and
    $L \Omega^{\geq \star}_R$ is the Hodge filtration on the derived de Rham cohomology
    (see specifically the proof
    of~\cite[Corollary~4.11]{antieauperiodic}).\footnote{The work
    of~\cite{antieauperiodic} was essentially motivated by that
    of~\cite{BMS2} which among many other things established such
    filtrations for quasisyntomic rings by descent.}
   It follows that on $\HC(R;\QQ_p)[2]$ there is a natural exhaustive
    decreasing filtration $\mathrm{Fil}^{\geq \star}\HC(R;\QQ_p)[2]$ with graded pieces
    $$\gr^n\HC(R;\QQ_p)[2] \simeq (L \Omega_R/L \Omega_R^{\geq
	 n})_{\mathbb{Q}_p}[2n].$$
    An argument as in~\cite[Section~9.4]{BMS2} can be
    used to show that there is an action of Adams operations on
    $\HC(R;\ZZ_p)[2]$
    where $\lambda\in\ZZ_p^\times$ acts via $\lambda^n$ on
    $\gr^n\HC(R;\ZZ_p)$. In particular, these split $\HC(R;\QQ_p)[2]$ into
    eigenspaces so that there is a natural decomposition
    $$\HC(R;\QQ_p)[2]\we\bigoplus_n (L \Omega_R/L \Omega_R^{\geq
	 n})_{\mathbb{Q}_p}[2n].$$
    The result now follows from the Beilinson fiber sequence.
\end{proof}

\begin{remark}[Changing the base ring] 
In the above work, $\mathbb{Z}_p$ was used as the base for cyclic and de Rham
cohomology, but often this is not essential. 
Suppose now that $A$ is a commutative $\ZZ$-algebra with
$\widehat{L_{A/\ZZ}}$ quasi-isogenous to zero. Then it is not difficult to see that, for formal schemes over $A$, we can replace all occurrences of derived
de Rham cohomology relative to $\mathbb{Z}_p$ with such occurrences relative to
$A$. 
\end{remark}

\section{The motivic filtration on $\mathrm{TC}$}

In this section (which will not use the Beilinson fiber square), we prove some
general structural results on topological cyclic homology $\mathrm{TC}$ (in
particular, Theorem G from the introduction) and on
the ``motivic'' filtration constructed by Bhatt--Morrow--Scholze \cite{BMS2}.

Recall that, according to \cite{BMS2}, for $R$ a quasisyntomic ring (see
\Cref{def:qsyn} below for a review), 
$\mathrm{TC}(R; \mathbb{Z}_p)$ admits a 
complete descending $\mathbb{Z}_{\geq 0}$-indexed filtration $\mathrm{Fil}^{\geq \star}
\mathrm{TC}(R;\mathbb{Z}_p)$ with associated graded terms given as
$\gr^i\TC(R;\ZZ_p)\we\mathbb{Z}_p(i)(R)[2i]$. In this section, we will prove some
structural properties of this filtration. 
Our main results are as follows.

\begin{theorem}[Connectivity properties] 
\label{Zpibound}
\begin{enumerate}
    \item[{\rm (1)}]
Let $R \in \qsyn$ be a quasisyntomic ring. 
Then $\mathbb{Z}_p(i) (R) \in D^{\leq i+1}(\mathbb{Z}_p)$. 
Consequently, we have $\mathrm{Fil}^{\geq i} \TC(R; \mathbb{Z}_p) \in \sp_{\geq
i-1}$. 
\item[{\rm (2)}]
The functors $R \mapsto \mathbb{Z}_p(i)(R)$ and $R\mapsto\mathrm{Fil}^{\geq i} \TC(R;
\mathbb{Z}_p)$ are left Kan extended from finitely generated
$p$-complete polynomial $\mathbb{Z}_p$-algebras. 
\end{enumerate}
\end{theorem}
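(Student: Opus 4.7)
My plan is to establish (2) first and then deduce (1) by reducing to a computation on polynomial algebras.

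\textbf{Plan for (2).} The motivic filtration on $\mathrm{TC}(-;\mathbb{Z}_p)$ is constructed by Bhatt--Morrow--Scholze via quasisyntomic sheafification of the double-speed Postnikov filtration on quasiregular semiperfectoid rings. To show this is left Kan extended from finitely generated $p$-complete polynomial $\mathbb{Z}_p$-algebras, the plan is to reformulate the filtration using relative topological Hochschild homology. For a polynomial $\mathbb{Z}_p$-algebra $R$, the object $\THH(R/\mathbb{Z}_p)$ has an explicit HKR description, and accordingly $\TC^-(R/\mathbb{Z}_p;\mathbb{Z}_p)$ and $\TP(R/\mathbb{Z}_p;\mathbb{Z}_p)$ carry tautological filtrations that are manifestly left Kan extended from polynomial algebras. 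The Krause--Nikolaus spectral sequence relates the absolute $\THH(R;\mathbb{Z}_p)$ to the relative $\THH(R/\mathbb{Z}_p;\mathbb{Z}_p)$ by filtering against $\THH(\mathbb{Z}_p;\mathbb{Z}_p)$, and this refines to a filtered spectral sequence whose abutment can be identified with the BMS motivic filtration on $\mathrm{TC}(R;\mathbb{Z}_p)$. Since both the relative filtration and the Krause--Nikolaus filtration are constructed from operations (tensor products, $S^1$-fixed points, Postnikov truncations of $\THH(\mathbb{Z}_p)$) that commute with sifted colimits of polynomial algebras, the filtration on the absolute $\mathrm{TC}$ inherits the same property.

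\textbf{Plan for (1).} Using (2), it suffices to verify the cohomological bound $\mathbb{Z}_p(i)(R)\in D^{\leq i+1}(\mathbb{Z}_p)$ on a finitely generated $p$-complete polynomial $\mathbb{Z}_p$-algebra $R=\mathbb{Z}_p[x_1,\dots,x_n]^\wedge_p$; passing to the left Kan extension can only weaken the cohomological bound by at most zero, since $D^{\leq i+1}$ is closed under sifted colimits. For polynomial $R$, Theorem~F (together with its integral refinement via the Nygaard filtration on prismatic cohomology) identifies $\mathbb{Z}_p(i)(R)$ with the fiber of a Frobenius operator $\varphi-p^i$ acting between explicit pieces of derived de~Rham / prismatic cohomology. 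The key point is to exploit the conjugate (Hodge--Tate) filtration on $\overline{\Prism}_R$: its graded pieces $L\wedge^j L_{R/\mathbb{Z}_p}[-j]$ for smooth $R$ are concentrated in cohomological degree exactly $j$, and the Nygaard filtration $\mathrm{Fil}^{\geq i}_{\mathcal{N}}$ intersects the conjugate filtration in a controlled way so that the resulting fiber of $\varphi-p^i$ acquires cohomological amplitude $\leq i+1$. The consequence for $\mathrm{Fil}^{\geq i}\mathrm{TC}(R;\mathbb{Z}_p)\in\Sp_{\geq i-1}$ then follows by descending induction on $i$ using the defining cofiber sequences $\mathrm{Fil}^{\geq i+1}\to\mathrm{Fil}^{\geq i}\to \mathbb{Z}_p(i)[2i]$.

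\textbf{Main obstacle.} The difficult step is (2): bridging quasisyntomic sheafification and left Kan extension from polynomial algebras. These two procedures are a priori distinct, and they agree only when one has sufficient control over how the filtration behaves on Čech nerves of QRSP covers. The Krause--Nikolaus spectral sequence is the essential tool here: it converts the absolute problem into a relative one, where the HKR-type structure makes the left Kan extension property transparent. Verifying that this spectral sequence is compatible with the motivic filtration (rather than merely with the underlying object) is the most delicate computation, and it is at this point that the hypothesis $R\in\qsyn$ enters in an essential way, since it ensures that the relevant cotangent complexes have $p$-complete Tor amplitude in $[-1,0]$, guaranteeing the spectral sequence converges with the desired filtered structure.
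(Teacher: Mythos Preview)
Your ordering---prove (2) first, then reduce (1) to polynomial algebras---matches the paper, and you are right that the Krause--Nikolaus fiber sequence is a key input. But there is a genuine gap in your argument for (2).

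You write that ``$S^1$-fixed points \dots\ commute with sifted colimits of polynomial algebras,'' and from this you want to conclude that the motivic filtration on $\mathrm{TC}$ is left Kan extended. This is not correct: $(-)^{hS^1}$ does not preserve colimits, and neither $\mathcal{N}^{\geq i}\Prism_R\{i\}$ nor $\Prism_R\{i\}$ is left Kan extended from polynomials (they are Nygaard-complete). What the Krause--Nikolaus fiber sequence actually buys, and what the paper uses it for, is that each \emph{Nygaard graded piece} $\mathcal{N}^n\Prism_R$ is left Kan extended (\Cref{filtconnTHHbounds}, \Cref{cofiberbound1}). Passing from this to the statement that the equalizer $\mathbb{Z}_p(i)(R)=\mathrm{fib}(\can-\varphi_i)$ is left Kan extended requires an additional idea: one must show that modulo $p$, the map $\can-\varphi_i$ restricts to an isomorphism of $\mathcal{N}^{\geq i+r}\Prism_R\{i\}/p$ onto itself for $r\gg 0$ (\Cref{Nygmodulop}), so that $\mathbb{F}_p(i)$ can be computed from \emph{finite} Nygaard quotients, which are left Kan extended. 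This Frobenius-nilpotence step (and the criterion \Cref{LKElem} that packages it) is the heart of the argument for (2), and it is absent from your proposal.

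For (1), invoking Theorem~F is circular: its proof (via \Cref{Beilinsonongr}) uses (2) of the present theorem. More importantly, your sketch via the conjugate filtration on $\overline{\Prism}_R$ does not address the correct object: $\mathbb{Z}_p(i)$ is a fiber of $\can-\varphi_i$ on the Nygaard-complete $\Prism_R$, not on $\overline{\Prism}_R$. The paper's argument for (1) again uses Frobenius nilpotence, but now with respect to the $I$-adic filtration (for $I=(\widetilde{\xi})$ the prismatic ideal): modulo $p$, $\varphi_i$ multiplies $I$-degree by $p$ and hence vanishes on $I$-associated gradeds in positive degree (\Cref{Iadicfiltlemma}), while $\can$ has fiber in $D^{\leq i+1}$ on each $I$-filtered piece by \Cref{cofiberbound1}; a general lemma (\Cref{connectivitynilpotencelemma}) then bounds the fiber of $\can-\varphi_i$.
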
 

Part (2) was indicated to us by Scholze. 
In view of it, we can extend the construction of the $\mathbb{Z}_p(i)$ to all ($p$-complete) 
rings. 

\begin{theorem}[Rigidity] 
\label{rigidity}
Let $(R, I)$ be a henselian pair {where $R$ and $R/I$ are $p$-complete}.
Then $\mathrm{fib}( \mathbb{Z}_p(i)(R) \to \mathbb{Z}_p(i)(R/I)) \in D^{\leq
i}(\mathbb{Z}_p)$. 
\end{theorem}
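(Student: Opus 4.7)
The plan is to deduce rigidity for the motivic pieces $\mathbb{Z}_p(i)$ from the Clausen--Mathew--Morrow rigidity theorem applied to the full $\TC$-spectrum, combined with the structural properties of the BMS motivic filtration established in Theorem G.

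First, I would invoke the CMM rigidity theorem in the generality applicable to henselian pairs $(R,I)$ with $R$ and $R/I$ both $p$-complete. This identifies the relative topological cyclic homology with the relative algebraic $K$-theory, yielding
\[
  \TC(R, I; \mathbb{Z}_p) \;\simeq\; \K(R, I; \mathbb{Z}_p) \;\in\; \sp_{\geq 0}.
\]
Thus the relative $\TC$ is connective, gaining one degree of connectivity over the bound $\sp_{\geq -1}$ available in the absolute case.

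Second, the naturality of the BMS motivic filtration $\fil^{\geq \star} \TC(-;\mathbb{Z}_p)$ furnishes a complete descending filtration on $\TC(R,I;\mathbb{Z}_p)$ with graded pieces
\[
  \gr^i \TC(R, I; \mathbb{Z}_p) \;\simeq\; \mathbb{Z}_p(i)(R, I)[2i],
\]
where $\mathbb{Z}_p(i)(R, I)$ denotes the fiber $\mathrm{fib}(\mathbb{Z}_p(i)(R) \to \mathbb{Z}_p(i)(R/I))$. The desired conclusion $\mathbb{Z}_p(i)(R, I) \in D^{\leq i}$ is equivalent to $\fil^{\geq i} \TC(R, I; \mathbb{Z}_p) \in \sp_{\geq i}$, i.e., one extra degree of connectivity over what Theorem G gives absolutely.

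Third, I would propagate the connectivity of $\TC(R, I; \mathbb{Z}_p)$ through the filtration by means of the associated spectral sequence. By Theorem G, the only $\gr^j$ capable of contributing to $\pi_{-1}$ is $\gr^0$, since $\pi_{-1}\gr^j = H^{2j+1}(\mathbb{Z}_p(j))$ vanishes for $j \geq 1$ by the bound $\mathbb{Z}_p(j) \in D^{\leq j+1}$. Hence the vanishing of $\pi_{-1} \TC(R,I;\mathbb{Z}_p)$ forces $H^1(\mathbb{Z}_p(0)(R,I))=0$, which is the $i=0$ case of the theorem. For $i \geq 1$, the argument is more delicate, since connectivity of the relative $\TC$ alone does not isolate $H^{i+1}(\mathbb{Z}_p(i)(R,I))$ in the spectral sequence. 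I would close the gap by exploiting the multiplicative structure on the filtered $E_\infty$-algebra $\fil^{\geq \star}\TC$, using the Adams-type operations of~\cite[Sec.~9.4]{BMS2} (which split off individual weights) together with Bott-type periodicity elements in $\bigoplus_i \mathbb{Z}_p(i)$ to propagate rigidity from weight $0$ to higher weights, in direct analogy with the role of Bott periodicity in the classical Suslin rigidity theorem. An alternative would be to descend along a quasisyntomic cover by a quasiregular semiperfectoid ring, where the filtration becomes the truncation filtration and the graded pieces are concrete homotopy groups amenable to a direct rigidity argument.

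The central obstacle will be the higher-weight propagation: the connectivity of relative $\TC$ yields rigidity cleanly only in weight zero, and one needs either a Bott-element argument or a careful quasisyntomic descent to obtain the uniform statement for all $i$.
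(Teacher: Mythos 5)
Your reduction is set up correctly: by completeness and multiplicativity of the filtration, the theorem is equivalent to the assertion that $\fil^{\geq i}$ of the relative $\TC$ of the pair is $i$-connective, one degree better than what Theorem~\ref{Zpibound} gives absolutely, and your weight-zero argument does work (relative $\TC$ is connective by Clausen--Mathew--Morrow plus surjectivity of $\K_0$ along henselian pairs, and $\fil^{\geq 1}$ of the relative term is $(-1)$-connective, which pins down $H^1$ of the relative $\mathbb{Z}_p(0)$). The genuine gap is exactly where you flag it, and neither of your proposed repairs closes it. The connectivity of relative $\TC$ is a single statement about $\pi_{-1}$ and says nothing weight by weight; to transfer it you invoke Adams operations and a Bott element, but the Adams-operation splitting of \cite[Sec.~9.4]{BMS2} only decomposes the filtration after rationalization (the eigenvalues $\lambda^i$ must be separated, which fails $p$-adically), whereas the theorem is an integral $\mathbb{Z}_p$-statement, and there is no Bott-type class available: for a general $p$-complete $R$ one has $H^0(\mathbb{Z}_p(1)(R))\simeq T_p(R^\times)$, which vanishes already for $R=\mathbb{Z}_p$, and even when such classes exist (perfectoid $\mathcal{O}_C$-algebras) cup product with them is an equivalence only in the special situation of \Cref{syntomicperfectoid}, not for the rings appearing here. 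Your fallback of quasisyntomic descent also does not help: passing to a quasiregular semiperfectoid cover destroys the henselian pair (so the input from CMM is lost), and an upper coconnectivity bound established on a cover does not descend, since the descent limit raises cohomological degree --- this is precisely why Theorem~\ref{Zpibound} only gives $D^{\leq i+1}$ globally and $D^{\leq i}$ pro-\'etale locally. Note also a small inaccuracy in the weight-zero step as you wrote it: for the \emph{relative} graded pieces the absolute bound only gives $D^{\leq j+2}$, so $\pi_{-1}\gr^j$ need not vanish for $j=1$; the correct route is through the connectivity of relative $\fil^{\geq 1}$, as above.

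For comparison, the paper's proof does not use $\K$-theory or CMM at all. After reducing mod $p$ and using \Cref{Zpibound}(2) (commutation with filtered colimits and left Kan extension from $p$-complete polynomial rings), it reduces first to nilpotent ideals, then via a simplicial resolution trick to split square-zero extensions $A\oplus N$ with $A$ a $p$-completed polynomial ring and $N$ finitely generated (\Cref{prop_reduction}); general henselian pairs are then handled by the continuity statement \Cref{prop_continuity} for noetherian $F$-finite complete rings together with N\'eron--Popescu desingularization. The key case is settled by a grading argument: $A\oplus N$ is $\mathbb{Z}_{\geq 0}$-graded, the divided Frobenius multiplies degrees by $p$ (\Cref{autograding}), so on the positively graded part $\can-\varphi_i$ has fiber with the same coconnectivity as the fiber of $\can$ alone (\Cref{grconnlemma2}, \Cref{connectivitynilpotencelemma}), and the latter is controlled by the Nygaard-quotient bound \Cref{cofiberbound1}. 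If you want to salvage your strategy, you would need a weight-isolating mechanism of this prismatic kind rather than Adams or Bott-element arguments.
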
 

In particular, using the known description in characteristic $p$, we obtain that
for any $R$ there is a complete description of the top cohomology $H^{i+1}(
\mathbb{F}_p(i)(R))$ and that this vanishes \'etale locally.

\subsection{Review of \cite{BMS2}}\label{sec:review}
Here we recall some of the major results and techniques of \cite{BMS2}. 
We recall first  the quasisyntomic site $\qsyn$ 
 (a non-noetherian version of the syntomic site used by
Fontaine--Messing~\cite{FM87})
 and the subcategory $\qrsp \subset \qsyn$ of quasiregular semiperfectoid rings. 

\begin{definition}[{$p$-complete (faithful) flatness and
$\mathrm{Tor}$-amplitude, \cite[Def. 4.1]{BMS2}}] 
Let $R$ be a commutative ring. An $R$-module $M$
is called \emph{$p$-completely flat} (resp.~\emph{$p$-completely faithfully
flat}) if 
$M \otimes^{\LL}_R (R/p) \in D(R/p)$ is a flat (resp.~faithfully flat)
$R/p$-module concentrated in degree zero. 
Similarly, an object $N \in D(R)$ has \emph{$p$-complete
$\mathrm{Tor}$-amplitude in $[a,b]$} if $N \otimes^{\LL}_R R/p \in D(R/p)$ has
$\mathrm{Tor}$-amplitude in $[a, b]$. 
\end{definition}

\begin{definition}[{The quasisyntomic site, cf.~\cite[Sec.~4]{BMS2}}] 
\label{def:qsyn}
\begin{enumerate}
\item  
A commutative ring $R$ is called \emph{quasisyntomic}
if it is $p$-complete, has bounded $p$-power torsion, and $\L_{R/\mathbb{Z}_p}$
 has $p$-complete $\mathrm{Tor}$-amplitude in $[-1, 0]$ (indexing conventions for the derived category are
cohomological).
We let $\qsyn$ be the category of quasisyntomic rings, with all ring homomorphisms.
\item
The category $\qsyn$ (or more precisely its opposite) acquires the structure of a site as follows: a map $A\to
B$ in $\qsyn$ is a cover
if $A \to B$ is $p$-completely faithfully flat and if $\L_{B/A} \in D(B)$ has  $p$-complete $\mathrm{Tor}$-amplitude in
$[-1, 0]$. 
We call a map with all of the above properties, except that $A \to B$ only
assumed $p$-completely flat (rather than $p$-completely faithfully flat), a \emph{quasisyntomic map}. 
\item 
An object $R \in \qsyn$ is \emph{quasiregular semiperfectoid}
if $R$ admits a map from a perfectoid ring and the Frobenius 
on $R/p$ is surjective. 
We let $\qrsp \subset \qsyn$ be the full subcategory spanned by quasiregular
semiperfectoid rings. 
If $R$ is additionally an $\mathbb{F}_p$-algebra, then $R$ is called
\emph{quasiregular semiperfect.}
\end{enumerate}
\end{definition} 

\newcommand{\qqsyn}[1]{\mathscr{Q}\mathrm{Syn}_{#1}}
\newcommand{\qqrsp}[1]{\mathscr{Q}\mathrm{RSPerfd}_{#1}}
For future reference, we will also 
need the relative versions
$\mathrm{qSyn}_A$ and $\qqsyn{A}$ of the quasisyntomic sites (of which the first is
considered in \cite{BMS2}). 

\begin{definition}[{Relative quasisyntomic sites, cf.~\cite[Sec.~4.5]{BMS2}}] 
Fix a quasisyntomic ring $A \in \qsyn$. 
We define the sites $\mathrm{qSyn}_A$ and $\qqsyn{A}$ as follows. 
\begin{enumerate}
\item We let $\qqsyn{A}$ denote the category of
$A$-algebras
$B$ which are quasisyntomic as underlying rings and such that $L_{B/A}
\in D(B)$ has
$p$-complete $\mathrm{Tor}$-amplitude in $[-1, 0]$.  
We let $\mathrm{qSyn}_A \subset \qqsyn{A}$ be the full subcategory spanned by
the quasisyntomic $A$-algebras (i.e., those $B$ such that $B$ is additionally
$p$-completely flat over $A$). 
\item 
We make $\qqsyn{A}$ and $\mathrm{qSyn}_A$ into sites by declaring a cover to be a map which is a cover
in $\qsyn$. 
\item We let $\qqrsp{A}$ (resp.~$\qlrsp_A$) denote the subcategory of $\qqsyn{A}$
(resp.~$\mathrm{qSyn}_A$) spanned by
$A$-algebras 
whose underlying ring is quasiregular semiperfectoid. 
Note that if $B \in \qqrsp{A}$, then the $p$-completion of $L_{B/A}[-1]$ is a
$p$-completely flat, discrete $B$-module by \cite[Lem.~4.7(1)]{BMS2}.
\end{enumerate}
\end{definition} 

Note that in the case $A = \mathbb{Z}_p$, 
$\mathrm{qSyn}_{\mathbb{Z}_p}$ is the category of $p$-torsion free quasisyntomic
rings and $\qqsyn{\mathbb{Z}_p} = \qsyn$. 
For $A = \mathbb{F}_p$, 
$\qqsyn{\mathbb{F}_p}$ and $\mathrm{qSyn}_{\mathbb{F}_p}$ are both simply the
subcategory of $\qsyn$ spanned by those quasisyntomic rings which are
$\mathbb{F}_p$-algebras \cite[Lemma 4.34]{BMS2}; more generally $\qqsyn{A}$ is
the category of $A$-algebras which are quasisyntomic 
for any perfectoid ring $A$. 

The site $\qsyn$ has a basis given 
by $\qrsp$, and similarly in the relative cases. All the functors below will be sheaves on $\qsyn$; to describe them,
it therefore suffices to describe them as sheaves on $\qrsp$ \cite[Prop. 4.31]{BMS2}. 

We now review the prismatic sheaves  on $\qsyn$, constructed via topological Hochschild and cyclic homology. A
purely algebraic construction via the prismatic cohomology of Bhatt--Scholze
is given in 
\cite{Prisms} (at least for algebras over a base perfectoid ring), which also produces objects before Nygaard completion. 

\begin{definition}[{Prismatic sheaves on $\qsyn$,
\cite[Sec.~7]{BMS2}}]\label{def_prism_sheaves}
The objects $\Prism_R\left\{i\right\}$ and $\mathcal{N}^{\geq n} \Prism_R$ define
sheaves on $\qsyn$ with values in $D(\mathbb{Z}_p)^{\geq 0}$. Each of these sheaves is constructed  via descent \cite[Prop.
4.31]{BMS2} from $\qrsp \subset
\qsyn$, on which they take discrete values defined via topological Hochschild
homology.

\begin{enumerate}
\item  
For $R \in \qrsp$, $\THH(R;
\mathbb{Z}_p)$ is concentrated in even degrees, so the homotopy fixed
point and Tate spectral sequences for $\TC^-(R; \mathbb{Z}_p)$ and $\TP(R;
\mathbb{Z}_p)$ 
degenerate and $\TP(R; \mathbb{Z}_p)$ is 2-periodic.
For $R \in \qrsp$, we have \begin{gather} 
\Prism_R = \pi_0( \TC^-(R; \mathbb{Z}_p)) = \pi_0( \TP(R; \mathbb{Z}_p)).
\end{gather} 
\item 
For $R \in \qrsp$ and $n\in\mathbb Z$, the ideal $\mathcal{N}^{\geq n} \Prism_R \subset \Prism_R$ is the one
defined by the homotopy fixed point spectral sequence, i.e.,
 \begin{equation} \mathcal{N}^{\geq n} \Prism_R = \mathrm{im}\left( \pi_0(
 (\tau_{\geq 2n} \THH(R;
\mathbb{Z}_p))^{hS^1}) \to \pi_0 (\THH(R; \mathbb{Z}_p)^{hS^1})
\right). \end{equation}
\item 
For $i\in\mathbb Z$ we further have the invertible $\Prism$-modules (as
sheaves on $\qsyn$) $\Prism\left\{i\right\}$, called \emph{Breuil--Kisin twists}. For $R \in \qrsp$,
\begin{gather}
\Prism_R\left\{i\right\} = \pi_{2i} ( \TP(R; \mathbb{Z}_p)),\end{gather}
and by 2-periodicity $\Prism_R\left\{i\right\} =
\Prism_R\left\{1\right\}^{\otimes i}$. 
We have a natural isomorphism 
 \begin{equation}  \mathcal{N}^{\geq i}
\Prism_R \left\{i\right\} \simeq \pi_{2i}( \TC^-(R;\mathbb{Z}_p)).
\end{equation}
More generally,
$\mathcal N^{\ge n + i}\Prism_R\{i\}$
is the image of the injection $\pi_{2i}((\tau_{\ge 2n + 2i}\THH(R;\mathbb
Z_p))^{hS^1}) \to 
\pi_{2i}((\THH(R;\mathbb
Z_p))^{hS^1})
$ for $n\in\mathbb Z$. 
\item 
There are two maps 
of sheaves on $\qsyn$,  \begin{equation} \mathrm{can}, \varphi_i\colon \mathcal{N}^{\geq i} \Prism_R
\left\{i\right\} \rightrightarrows
\Prism_R \left\{i\right\} \end{equation} arising from the canonical and Frobenius maps
$\mathrm{TC}^-(R; \mathbb{Z}_p) \rightrightarrows \mathrm{TP}(R; \mathbb{Z}_p)$;
in particular, we obtain an endomorphism $\varphi = \varphi_0\colon \Prism_R \to
\Prism_R$.
\item 
Finally, the map $\TC^-(R; \mathbb{Z}_p) \to R$ yields a projection map 
$a_R\colon \Prism_R \to R$, a surjection with kernel $\mathcal{N}^{\geq 1}\Prism_R
\subset \Prism_R$. 
\end{enumerate}

\end{definition}

\begin{example}[{Perfectoid rings, \cite[Sec.~6]{BMS2}}] 
Let $R_0$ be a perfectoid ring. 
    In this case, we have Fontaine's ring $\ainf(R_0)=W(R_0^\flat)$ and the surjective map
$\theta\colon \ainf(R_0) \to R_0$, whose kernel is a principal ideal generated by a
nonzerodivisor $\xi \in \ainf(R_0)$; $\theta$ is the universal pro-nilpotent,
$p$-complete thickening of $R_0$. 
We have a canonical isomorphism $\Prism_{R_0} \simeq \ainf(R_0)$
such that the projection map $a_R\colon \Prism_{R_0} \to R_0$ is $\theta$. 
There are also non-canonical isomorphisms $\Prism_{R_0}\left\{i\right\} \simeq
\ainf(R_0)$ for each $i$. 
The map $\varphi = \varphi_0\colon \Prism_{R_0} \to \Prism_{R_0}$ is given by the
Witt vector Frobenius on $\ainf(R_0)$. 
The Nygaard filtration on $\Prism_R = \ainf(R_0)$ is the $\xi$-adic filtration. 
The map $\phi_i$ is injective (and $\phi_0$-semilinear) and its image
is given by $( \widetilde{\xi}^{-\mathrm{min}(i, 0)})$ for $\widetilde{\xi}  =
\varphi(\xi)$. 
\label{BMS2perfectoid}
\end{example}

\begin{definition}[The sheaves $\mathbb{Z}_p(i)$]\label{defBMSZPi}
For $i \geq 0$, 
the sheaf $\mathbb{Z}_p(i)$ is defined as the homotopy equalizer of 
$\can, \phi_i$, i.e., via 
$$\ZZ_p(i)\we\mathrm{fib}\left(\Nscr^{\geq i}\Prism_R\left\{i\right\}\xrightarrow{\can-\varphi_i}\Prism_R\left\{i\right\}\right).$$
Consequently, since $\mathrm{TC}$ is itself a homotopy equalizer,
we also have
for $R \in \qrsp$,
\begin{equation} \mathbb{Z}_p(i)(R) = (\tau_{[2i-1, 2i]} \mathrm{TC}(R;
\mathbb{Z}_p))[-2i]. 
\end{equation}
We also define quasisyntomic sheaves $\mathbb{F}_p(i)$ and
$\mathbb{Q}_p(i)$ by reducing $\mathbb{Z}_p(i)$ modulo $p$ or inverting $p$ on
$\ZZ_p(i)$,
respectively. 
\end{definition}

Using the result \cite[Sec.~3]{BMS2} that $\mathrm{TC}$ defines a sheaf for the fpqc topology on
rings, one sheafifies the Postnikov filtration and obtains the following fundamental result.

\begin{theorem}[{``Motivic filtrations,'' \cite[Theorem 1.12]{BMS2}}] 
\label{BMSfilt}
Let $R \in \qsyn$. Then 
$\THH(R; \mathbb{Z}_p)$, $\TC^-(R; \mathbb{Z}_p), \TP(R; \mathbb{Z}_p)$, and $\TC(R;
\mathbb{Z}_p)$
naturally upgrade to filtered spectra  with complete,
multiplicative descending filtrations
$\fil^{\geq\star}\THH(R; \mathbb{Z}_p)$, $\fil^{\geq\star}\TC^-(R;
\mathbb{Z}_p), \fil^{\geq\star}\TP(R; \mathbb{Z}_p)$, and
$\fil^{\geq\star} \TC(R; \mathbb{Z}_p)$, indexed by $\mathbb Z_{\ge0}, \mathbb
{Z}, \mathbb Z$, and $\mathbb Z_{\ge0}$ respectively,
such that the associated graded pieces are given by
\begin{enumerate}
    \item[{\rm (1)}] 
$\gr^i \THH(R; \mathbb{Z}_p) \simeq \mathcal{N}^i \Prism_R\{i\}[2i]
\stackrel{\mathrm{def}}{=}  \left( \mathcal{N}^{\geq i} \Prism_R\{i\}/\mathcal{N}^{\geq i+1}
\Prism_R\{i\}\right) [2i]$ for all $i\ge0$; moreover, in this case the
Breuil--Kisin twists can be trivialized, so 
also $\gr^i \THH(R; \mathbb{Z}_p)\simeq \mathcal{N}^i \Prism_R[2i]$,
\item[{\rm (2)}] $\gr^i \TC^-(R; \mathbb{Z}_p) = \mathcal{N}^{\geq i}
\Prism_R\left\{i\right\} [2i]$ for all $i \in \mathbb{Z}$,
\item[{\rm (3)}]  $\gr^i \TP(R; \mathbb{Z}_p) = \Prism_R\left\{i\right\}[2i]$ for all $i\in\mathbb Z$,
\item[{\rm (4)}]  $\gr^i \TC(R; \mathbb{Z}_p) = \mathbb{Z}_p(i)(R)[2i]$ for all
$i \geq 0$.
\end{enumerate}

\end{theorem}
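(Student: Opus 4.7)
My plan is to build the filtrations first on the basis $\qrsp \subset \qsyn$, where everything collapses onto a double-speed Postnikov tower, and then to extend to all of $\qsyn$ by quasisyntomic descent. The two key inputs I would import are that $\qrsp$ is a basis of $\qsyn$ (\cite[Prop.~4.31]{BMS2}) and that $\THH(-;\mathbb{Z}_p)$, together with its $S^1$-equivariant refinements $\TC^-(-;\mathbb{Z}_p)$, $\TP(-;\mathbb{Z}_p)$, $\TC(-;\mathbb{Z}_p)$, are sheaves for the fpqc topology on $p$-complete rings (\cite[Sec.~3]{BMS2}).

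For the first step, I would establish that $\THH(R;\mathbb{Z}_p)$ is concentrated in even degrees for $R \in \qrsp$. This rests on identifying $\pi_* \THH(R;\mathbb{Z}_p)$ as a divided-power algebra on a class in degree $2$, using the fact that $L_{R/\mathbb{Z}_p}[-1]$ is $p$-completely flat and discrete over $R$ when $R \in \qrsp$ (\cite[Lem.~4.7]{BMS2}). Once evenness is in hand, the $S^1$-homotopy fixed point and Tate spectral sequences for $\THH(R;\mathbb{Z}_p)$ automatically degenerate at $E_2$, and one simply defines
\begin{align*}
\fil^{\geq i}\THH(R;\mathbb{Z}_p) &:= \tau_{\geq 2i}\THH(R;\mathbb{Z}_p),\\
\fil^{\geq i}\TC^-(R;\mathbb{Z}_p) &:= \bigl(\tau_{\geq 2i}\THH(R;\mathbb{Z}_p)\bigr)^{hS^1},\\
\fil^{\geq i}\TP(R;\mathbb{Z}_p) &:= \bigl(\tau_{\geq 2i}\THH(R;\mathbb{Z}_p)\bigr)^{tS^1},
\end{align*}
with $\fil^{\geq i}\TC(R;\mathbb{Z}_p)$ defined as the equaliser of $\can$ and $\varphi$ on $\fil^{\geq i}\TC^-$ and $\fil^{\geq i}\TP$. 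The identifications (1)--(3) of the graded pieces are then tautological from \Cref{def_prism_sheaves}, and (4) drops out of combining (2) and (3) with the defining fibre sequence for $\TC$.

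To extend to arbitrary $R \in \qsyn$, I would pick a quasisyntomic cover $R \to R^0$ with $R^0 \in \qrsp$ and form its \v{C}ech nerve $R^\bullet$, every term of which again lies in $\qrsp$. Applying the step-one construction levelwise produces a cosimplicial diagram of filtered spectra, and I would define the filtration on $R$ as its totalisation in filtered spectra. Completeness is preserved by the limit thanks to a uniform lower bound on the connectivity of the graded pieces coming from (1), while multiplicativity is inherited from the lax symmetric monoidal structure of the Whitehead filtration together with fpqc descent for the corresponding $E_\infty$-algebra structures. Since the sheaves $\mathcal{N}^{\geq i}\Prism\{i\}$ and $\Prism\{i\}$ were themselves defined by descent from the same $\qrsp$-level formulas (\Cref{def_prism_sheaves}), the resulting graded pieces are precisely those listed in (1)--(4); for (4) one uses in addition that $\ZZ_p(i)$ is defined by fibre-of-Frobenius sheafification.

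The hard part is step one: the evenness of $\THH(R;\mathbb{Z}_p)$ on $\qrsp$ is the technical heart of \cite{BMS2} and requires a derived HKR calculation of $\HH$ combined with B\"okstedt's computation of $\THH(\mathbb{F}_p)$ to pass from $\HH$ to $\THH$, together with a careful tracking of the divided-power structure on $\pi_{2*}\THH$. A secondary and more routine difficulty is showing that the totalisation against the \v{C}ech nerve in step two commutes with passage to associated graded; this uses exactly the connectivity estimates inherited from (1) (of the same flavour as those established in Theorem G), guaranteeing convergence of the descent spectral sequence in each fixed degree.
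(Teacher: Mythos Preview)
The paper does not prove this theorem; it is quoted from \cite[Theorem~1.12]{BMS2}, with only the one-sentence hint preceding the statement (``one sheafifies the Postnikov filtration'') indicating the method. Your overall strategy---define the filtrations on $\qrsp$ via Postnikov towers and then unfold to $\qsyn$ by descent---is exactly the approach of \cite{BMS2}, and your discussion of the hard step (evenness of $\THH(R;\mathbb{Z}_p)$ for $R\in\qrsp$) and of the convergence issues in totalisation is accurate.

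There is, however, a concrete error in how you set up the filtrations on $\TC^-$, $\TP$, and $\TC$ at the $\qrsp$ level. You propose
\[
\fil^{\geq i}\TC^-(R;\mathbb{Z}_p):=\bigl(\tau_{\geq 2i}\THH(R;\mathbb{Z}_p)\bigr)^{hS^1},\qquad
\fil^{\geq i}\TP(R;\mathbb{Z}_p):=\bigl(\tau_{\geq 2i}\THH(R;\mathbb{Z}_p)\bigr)^{tS^1},
\]
but these do not have the graded pieces claimed in (2) and (3). For $\TP$ the problem is immediate: since $\THH(R;\mathbb{Z}_p)$ is connective, $\tau_{\geq 2i}\THH=\THH$ for all $i\leq 0$, so your filtration on $\TP$ is constant in nonpositive degrees and $\gr^i\TP=0$ for $i<0$, contradicting $\gr^i\TP\simeq\Prism_R\{i\}[2i]\neq 0$. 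For $\TC^-$ the associated graded of your filtration is $(\pi_{2i}\THH[2i])^{hS^1}$, which (for a discrete module with trivial $S^1$-action) has homotopy in infinitely many nonpositive even degrees, rather than being the single discrete object $\mathcal{N}^{\geq i}\Prism_R\{i\}=\pi_{2i}\TC^-(R;\mathbb{Z}_p)$ concentrated in degree $2i$.

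The correct construction in \cite{BMS2} is to take the double-speed Postnikov filtrations on $\TC^-(R;\mathbb{Z}_p)$ and $\TP(R;\mathbb{Z}_p)$ \emph{themselves}, i.e.\ $\fil^{\geq i}\TC^-:=\tau_{\geq 2i}\TC^-$ and $\fil^{\geq i}\TP:=\tau_{\geq 2i}\TP$; evenness of these spectra (from degeneration of the homotopy fixed point and Tate spectral sequences) then gives (2) and (3) tautologically, and the filtration on $\TC$ is the induced one on the equaliser (so $\gr^i\TC=\tau_{[2i-1,2i]}\TC$, matching \Cref{defBMSZPi}). With this correction your outline goes through.
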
 

\begin{remark}[Comparison with $\K$-theory] 
Recall that for $p$-adic rings, $\mathrm{TC}$ and $p$-adic \'etale $\K$-theory
agree in nonnegative degrees. Cf.~\cite{GH99} for smooth algebras in characteristic $p$, and in general \cite{CMM, CM}. 
One may thus expect 
the filtration of 
\Cref{BMSfilt} to be the \'etale sheafification of the filtration on algebraic
$\K$-theory 
 with associated graded motivic cohomology,
cf.~\cite{FS02, Lev08} (for smooth schemes over fields). 
In particular, one expects the $\mathbb{Z}_p(i)$ to be some form 
of $p$-adic \'etale motivic cohomology. This is essentially understood in equal
characteristic (already by \cite{BMS2}), as we review below, but has not yet
appeared in mixed characteristic. In mixed characteristic and under
finiteness assumptions (e.g., smooth schemes over a DVR), many
authors have studied \'etale motivic cohomology \cite{Ge04} and similar ``$p$-adic \'etale Tate twists,'' e.g.,
those of 
\cite{FM87, Sch94, Sa07}, though the construction is very different from that
of \cite{BMS2}; one ultimately hopes to compare all of them, and we will at least
offer some information in this and the next section.  
\end{remark} 

We review the discreteness property of the $\mathbb{Z}_p(i)$. By construction, 
the objects $\mathcal{N}^{\geq i} \Prism_R\left\{i\right\}$ are sheaves on
$\qsyn$ with values in $D(\mathbb{Z}_p)^{\geq 0}$ (recall \cite[Cor.~2.1.2.3]{SAG} that such sheaves form the coconnective
part of the derived $\infty$-category of the category of abelian sheaves on $\qsyn$);
as objects of this category, they are in fact
\emph{discrete}, since they take discrete values on the basis $\qrsp$.  
A deep result of Bhatt--Scholze (conjectured in \cite{BMS2} and proved in the
characteristic $p$ case there) is that this
discreteness also holds for the $\mathbb{Z}_p(i)$,
although they in general take values in cohomological degrees $[0, 1]$ for rings in $\qrsp$.

\begin{theorem}[Bhatt--Scholze, {\cite[Theorem 14.1]{Prisms}}] 
\label{oddvanishingconj}
The $D(\mathbb{Z}_p)^{\geq 0}$-valued sheaf $\mathbb{Z}_p(i)$ on $\qsyn$
is discrete and torsion free. More precisely, given $R \in \qsyn$, there is a
cover $R \to R'$ in $\qsyn$ such that $\mathbb{Z}_p(i)(R')$ is discrete and
torsion free. 
\end{theorem}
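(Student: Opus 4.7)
The plan is to establish the more precise local claim---that every $R \in \qsyn$ admits a cover $R \to R'$ in $\qsyn$ with $\mathbb{Z}_p(i)(R')$ discrete and $p$-torsion-free---after which discreteness and torsion-freeness of the sheaf $\mathbb{Z}_p(i)$ follow by sheafification. Since $\qrsp$ forms a basis of $\qsyn$, I may replace $R$ by a quasiregular semiperfectoid cover at the outset. For such $R$, both $\mathcal{N}^{\geq i}\Prism_R\{i\}$ and $\Prism_R\{i\}$ are discrete by \Cref{def_prism_sheaves}, so the defining fiber sequence
\[ \mathbb{Z}_p(i)(R) \to \mathcal{N}^{\geq i} \Prism_R\{i\} \xrightarrow{\can - \varphi_i} \Prism_R\{i\} \]
places $\mathbb{Z}_p(i)(R)$ in cohomological degrees $[0,1]$. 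Discreteness is then equivalent to surjectivity of $\can - \varphi_i$, and torsion-freeness of $H^0$ follows from $p$-torsion-freeness of $\Prism_R\{i\}$.

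First, I would arrange $p$-torsion-freeness by passing to a perfectoid cover. Any $R \in \qrsp$ admits a map from a perfectoid ring by definition, and by freely adjoining $p$-power roots to a chosen generating set of $R$ over that perfectoid subring and then $p$-adically completing, one produces a perfectoid $S$ equipped with a quasisyntomic cover $R \to S$ (the same recipe used in \cite{BMS2} to exhibit perfectoid covers of quasiregular semiperfectoid rings). By \Cref{BMS2perfectoid}, $\Prism_S\{i\} \simeq \ainf(S) = W(S^\flat)$ is $p$-torsion-free, as is its Nygaard submodule. Hence $H^0(\mathbb{Z}_p(i)(S))$ is automatically $p$-torsion-free, and it remains to produce a further cover $S \to S'$ in $\qsyn$ over which $\can - \varphi_i$ becomes surjective.

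For this surjectivity, which is the crux, I would use an Artin--Schreier-type argument exploiting the perfectoid structure. Solving $(\can - \varphi_i)(x) = y$ on $\ainf(S)$ reduces, after trivializing Breuil--Kisin twists and changing variables via $x = \xi^i z$, to an equation of the form $\xi^i z - \phi(z) = y$ with $\phi$ the Witt-vector Frobenius; modulo $p$ this becomes an Artin--Schreier equation $\xi^i z - z^p = \bar{y}$ over the perfect ring $S^\flat$, which is soluble in a pro-\'etale (hence perfect) cover of $S^\flat$. Via the tilting equivalence this lifts to a perfectoid cover $S \to S''$, and successive $p$-adic approximation in the $p$-adically complete ring $\ainf(S'')$ produces an honest solution. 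Iterating over a generating set of $\Prism_S\{i\}$ and taking the $p$-completed colimit yields a single perfectoid extension $S'$ in $\qsyn$ over which $\can - \varphi_i$ is surjective. The main obstacle is bookkeeping: one must verify that this iterated construction stays inside $\qsyn$ (using closure of quasisyntomic covers under transitivity and filtered colimits, together with the fact that maps between perfectoid rings are automatically quasisyntomic) and that the inductive solution of approximate equations converges in the limit.
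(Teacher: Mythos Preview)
The paper does not prove this theorem; it is stated with attribution to Bhatt--Scholze and the reference \cite[Theorem~14.1]{Prisms}, and used as a black box. The paper does, however, prove the analogous statement for the Fontaine--Messing variant $\zps{i}$ (\Cref{discretenessFMsheaf}), explicitly following the strategy of \cite[Sec.~14]{Prisms}, so one can compare your proposal against that.

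Your approach has a genuine gap at the key step. For a perfectoid $S$, after trivializing twists one has $\mathcal{N}^{\geq i}\Prism_S\{i\} \simeq \ainf(S)$, with $\can$ acting as multiplication by $\xi^i$ and $\varphi_i$ as the Witt Frobenius $\varphi$; so the equation is $\xi^i z - \varphi(z) = y$, and modulo $p$ it becomes $\bar\xi^{\,i} z - z^p = \bar y$ in $S^\flat$, as you say. But this is \emph{not} an Artin--Schreier equation: the derivative of $T^p - \bar\xi^{\,i} T$ is $-\bar\xi^{\,i}$, and $\bar\xi$ is never a unit in $S^\flat$ (its image under $S^\flat \to S/p$ is zero since $\xi$ generates $\ker\theta$). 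Thus the extension you would need is not \'etale, and the equation cannot be solved in a pro-\'etale cover of $S^\flat$. The tilting step therefore does not produce a quasisyntomic perfectoid cover over which $\can - \varphi_i$ is surjective.

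The method actually used in \cite{Prisms}, and mirrored in this paper for $\zps{i}$, is more indirect. One first invokes Andr\'e's lemma \cite[Theorem~7.12]{Prisms} to pass to an absolutely integrally closed perfectoid cover, and then uses the comparison $\mathbb{Z}_p(i)(S) \simeq R\Gamma_{\mathrm{proet}}(\spec S[1/p], \mathbb{Z}_p(i))$ of \cite[Theorem~9.4]{Prisms}; discreteness for $i=0$ follows from Artin--Schreier, and for $i=1$ from the Kummer sequence, divisibility of $\mathrm{Pic}(S[1/p])$, and the cohomological dimension bound $\leq 1$ of \cite[Theorem~11.1]{Prisms}. (In the paper's FM analog, the reduction to $i=0,1$ is \Cref{syntomicperfectoid} and the perfectoid case is \Cref{discreteness}.) For general $R \in \qrsp$ one needs a further surjectivity argument---the analog of \Cref{surjH1perf} here, or the $q$-de Rham argument in \cite{Prisms}---to reduce to the perfectoid case; your proposal does not address this step at all, since passing from $R \in \qrsp$ to a perfectoid cover already kills the interesting cohomology but the difficulty is showing that $H^1$ of $R$ itself can be killed by such a cover.
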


Finally, we review the prism
structure on $\Prism_R$, for $R$ quasiregular semiperfectoid. For simplicity, we
will assume $R$ to be $p$-torsion free.

\begin{proposition} 
\label{TPTHHtcp}
Let $R \in \qlrsp_{\mathbb{Z}_p}$. Suppose $R$ is an algebra over the perfectoid
ring $R_0$, with notation as in 
\Cref{BMS2perfectoid}. 
Then $\TP(R; \mathbb{Z}_p)/\widetilde{\xi} \simeq
\THH(R; \mathbb{Z}_p)^{tC_p}$, and this is concentrated in even
degrees and $p$-torsion free. 
\end{proposition}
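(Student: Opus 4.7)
The plan is to realize $\TP(R;\ZZ_p)/\widetilde{\xi}$ and $\THH(R;\ZZ_p)^{tC_p}$ as fitting into the same cofiber sequence, using the Tate orbit lemma together with a Bott-element cofiber sequence, and then to identify the connecting Bott element with $\widetilde{\xi}$.

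First I would observe that $\THH(R;\ZZ_p)^{tC_p}$ is concentrated in even degrees. Indeed, $\THH(R;\ZZ_p)$ is concentrated in even degrees and $p$-torsion free by the defining properties recalled in Definition~\ref{def_prism_sheaves}. The $C_p$-action on its homotopy groups is trivial, since the $C_p$-action is the restriction of an $S^1$-action and any action of a connected group on a spectrum induces the trivial action on homotopy groups (the action factors through $\pi_0\SS[S^1]=\ZZ$). Hence the $E_2$-page of the Tate spectral sequence $\hat{H}^*(C_p;\pi_*\THH(R;\ZZ_p))\Rightarrow\pi_*\THH(R;\ZZ_p)^{tC_p}$ sits only in even total degrees, since $\hat{H}^{\mathrm{odd}}(C_p;M)=M[p]=0$ for $p$-torsion-free $M$ with trivial $C_p$-action.

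Next, I would invoke the Tate orbit lemma \cite[Lem.~I.2.1]{NS18}, which identifies $\TP(R;\ZZ_p)\simeq(\THH(R;\ZZ_p)^{tC_p})^{hS^1/C_p}$ after $p$-completion. Since $\THH(R;\ZZ_p)^{tC_p}$ is even, the corresponding HFPSS collapses, and the standard ``mod Euler class'' cofiber sequence (coming from the cell structure of $BS^1/C_p$) takes the form
\[
\Sigma^{-2}\TP(R;\ZZ_p)\xrightarrow{v}\TP(R;\ZZ_p)\longrightarrow\THH(R;\ZZ_p)^{tC_p},
\]
where $v\in\pi_{-2}\TP(R;\ZZ_p)=\Prism_R\{-1\}$ is the $BS^1/C_p$-Bott element and the right-hand map is the ``forget to underlying spectrum'' map $(\THH^{tC_p})^{hS^1/C_p}\to\THH^{tC_p}$.

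The remaining step, which is where the main difficulty lies, is to identify the Bott element $v$ with $\widetilde{\xi}$ under a trivialization of the Breuil--Kisin twist $\Prism_R\{-1\}\simeq\Prism_R$. In the perfectoid base case $R=R_0$, the $BS^1/C_p$-Bott element corresponds under the $p$-th power identification $S^1\xrightarrow{\sim}S^1/C_p$ to the image of the $BS^1$-Bott element of $\TC^-(R_0;\ZZ_p)$ under the cyclotomic Frobenius $\varphi\colon\TC^-\to\TP$, which carries the Nygaard generator $\xi$ to $\widetilde{\xi}=\varphi(\xi)$, as recalled in Example~\ref{BMS2perfectoid}. The case of general $R\in\qlrsp_{\ZZ_p}$ then follows by naturality along the structure map $\Prism_{R_0}\to\Prism_R$. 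Once $v=\widetilde{\xi}$ is established, the equivalence $\TP(R;\ZZ_p)/\widetilde{\xi}\simeq\THH(R;\ZZ_p)^{tC_p}$ follows immediately, and the quotient is $p$-torsion free and concentrated in even degrees because $\TP(R;\ZZ_p)$ has these properties and $\widetilde{\xi}$ acts as a non-zero-divisor on each $\Prism_R\{i\}$, a property inherited from its being a distinguished element of $A_{\inf}(R_0)$.
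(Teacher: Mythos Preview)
Your identification $\TP(R;\ZZ_p)/\widetilde{\xi}\simeq\THH(R;\ZZ_p)^{tC_p}$ via the Tate orbit lemma and the Bott cofiber sequence is fine; this is essentially the content of \cite[Prop.~6.4]{BMS2}, which the paper simply cites. (A minor point: the identification $X^{tS^1}\simeq(X^{tC_p})^{h(S^1/C_p)}$ is \cite[Lem.~II.4.2]{NS18}, not I.2.1.) Your Tate spectral sequence argument for evenness of $\THH(R;\ZZ_p)^{tC_p}$ is also correct.

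The gap is in $p$-torsion freeness. Your final sentence claims that the quotient is $p$-torsion free ``because $\TP(R;\ZZ_p)$ has these properties and $\widetilde{\xi}$ acts as a non-zero-divisor.'' But this implication is false: if $A$ is $p$-torsion free and $f\in A$ is a non-zero-divisor, $A/f$ need not be $p$-torsion free (take $A=\ZZ_p$, $f=p$). What you need is that $(\widetilde{\xi},p)$ is a \emph{regular sequence} on $\Prism_R$, i.e.\ that $p$ remains a non-zero-divisor on $\Prism_R/\widetilde{\xi}$, and this is exactly the statement to be proved. Your Tate spectral sequence does not supply it either: its $E_2$-page consists of groups $\pi_*\THH(R;\ZZ_p)/p$, which are $p$-torsion, so degeneration gives no information about $p$-torsion freeness of the abutment. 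Also, the justification ``inherited from its being a distinguished element of $A_{\inf}(R_0)$'' does not work as stated, since $\Prism_R$ is not known a priori to be flat over $A_{\inf}(R_0)$.

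The paper handles precisely this point by a different route: it first uses $\TP(R;\ZZ_p)/\xi\simeq\HP(R/R_0;\ZZ_p)$ \cite[Th.~6.7]{BMS2} together with the evenness and $p$-torsion freeness of $\HP(R/R_0;\ZZ_p)$ (via Hodge-completed derived de~Rham cohomology) to see that $(\xi,p)$ is regular on $\Prism_R$. Completeness lets one permute to $(p,\xi)$; the congruence $\widetilde{\xi}\equiv\xi^p\pmod p$ then gives $(p,\widetilde{\xi})$ regular, and one permutes back to $(\widetilde{\xi},p)$. This regular-sequence argument is the genuine content of the paper's proof, and it is what your sketch is missing.
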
 
\begin{proof} 
We have that 
$$\TP(R; \mathbb{Z}_p)/{\xi} \simeq \HP(R / R_0 ; \mathbb{Z}_p) $$ by \cite[Theorem 6.7]{BMS2}. 
Since $R$ is $p$-torsion free and quasiregular semiperfectoid, we find that
$\HP(R/R_0; \mathbb{Z}_p)$ is concentrated in even degrees and is $p$-torsion free, where it
is given by Hodge-complete $p$-adic derived de Rham cohomology by
\cite[Prop. 5.15]{BMS2}. 
In particular, it follows that $({\xi}, p)$ defines a regular sequence
on $\Prism_R$. 
Since $\Prism_R$ is complete with respect to this ideal, it follows that $(p,
{\xi})$ is a regular sequence; since $\xi^p \equiv \widetilde{\xi}
(\mathrm{mod} \ p)$, we get that $(p, \widetilde{\xi})$ is a regular sequence,
and hence so is $(\widetilde{\xi}, p)$. 
Now, the equivalence 
$\TP(R; \mathbb{Z}_p)/\widetilde{\xi} \simeq
\THH(R; \mathbb{Z}_p)^{tC_p}$ is \cite[Prop. 6.4]{BMS2}, from which the
remainder now follows. 
\end{proof} 

\begin{construction}[{The prismatic structure on $\Prism_R$, cf.~\cite[Sec.~13]{Prisms}}] \label{consprism}
Let $R \in \qlrsp_{\mathbb{Z}_p}$. Suppose $R$ is 
an algebra over the perfectoid ring $R_0$. 
Then the  ring $\Prism_R = \pi_0( \TP(R; \mathbb{Z}_p))$ has the structure 
of a prism (in the sense of \cite{Prisms}).

\begin{enumerate}
\item  
We have the endomorphism $\varphi  = \varphi_0$, which is
congruent to the Frobenius modulo $p$, by \cite[Sec.~13]{Prisms}, and thus defines a $\delta$-structure on
$\Prism_R$ by $p$-torsionfreeness. 

\item
We have the ideal $I \subset \Prism_R$ given by $I = (\widetilde{ \xi})$; $I$ is the kernel of
$\pi_0( \TP(R; \mathbb{Z}_p)) \to \pi_0 ( \THH(R; \mathbb{Z}_p)^{tC_p})$ and
therefore does not depend on the choice of $R_0$. 

\end{enumerate}
 Finally, there is a natural map 
$$\eta_R\colon  R \to \Prism_R/I,$$ 
given via the cyclotomic Frobenius $\THH(R; \mathbb{Z}_p) \to \THH(R;
\mathbb{Z}_p)^{tC_p} = \TP(R; \mathbb{Z}_p)/\widetilde{\xi}$ upon applying $\pi_0$. 
\end{construction} 
\begin{remark} 
In fact, by \cite[Theorem 13.1]{Prisms}, $\Prism_R$ is the Nygaard completion of
the absolute 
prismatic cohomology of $R$, although we will not need this fact. 
\end{remark}

\subsection{Relative $\THH$ and its filtration}
\label{sec:relativeTHH}
In this subsection and the next, we will prove connectivity bounds for the 
motivic filtration on $\THH$. 
We will prove that for any $R \in \qsyn$, we have $\fil^{\geq n} \THH(R;
\mathbb{Z}_p) \in \sp_{\geq n}$ and  
$\mathcal{N}^{n} \Prism_R \in D^{\leq n}(\mathbb{Z})$.
It is not difficult to deduce the above connectivity bound in the case $R$ is
an algebra over a fixed perfectoid ring, using methods as in \cite[Sec.~6--7]{BMS2}; see in particular \cite[Const.~7.4]{BMS2}. To verify the connectivity bound in the general case, we will use additionally a
fiber sequence which arises from the work of 
Krause--Nikolaus~\cite{KN}, which gives a comparison between relative and
absolute $\THH$. 

Let $\mathcal{O}_K$ denote a complete discrete valuation ring of mixed characteristic
$(0, p)$ with perfect residue field $k$; let $\pi \in \mathcal{O}_K$ be a
uniformizer. 
The primary case of interest is $\mathcal{O}_K = \mathbb{Z}_p$ and $\pi = p$. 

\begin{construction}[Relative topological Hochschild homology] 
Let $R \in \qqsyn{\mathcal{O}_K}$. 
We consider the $\mathbb{E}_\infty$-ring $\mathbb{S}[z]$ and consider $R$ as an
$\mathbb{S}[z]$-algebra
via $z \mapsto \pi$. Using this, we can form 
the relative topological Hochschild homology (with $p$-adic coefficients)
$\THH(R/\mathbb{S}[z]; \mathbb{Z}_p)$. 
The construction $R \mapsto 
\THH(R/\mathbb{S}[z]; \mathbb{Z}_p)$ defines a sheaf of spectra on
$\qqsyn{\mathcal{O}_K}$, thanks to \cite[Sec.~3]{BMS2} (which gives that 
$R \mapsto \HH(R/\mathcal{O}_K; \mathbb{Z}_p)$ is a sheaf)
and
\eqref{relTHHdeformsHH} below. 
We observe the following two comparisons for relative $\THH$.
\begin{enumerate}
\item  
When we base-change along the map $\mathbb{S}[z] \to \mathbb{S}$ where $z\mapsto 0$, we find that 
 \begin{equation} \THH(R/\mathbb{S}[z]; \mathbb{Z}_p) \otimes_{\mathcal{O}_K} k \simeq
 \THH(R \otimes^{{L}}_{\mathcal{O}_K} k;
\mathbb{Z}_p).\end{equation}  
\item 
We have an equivalence
 \begin{equation}  \label{relTHHdeformsHH} \THH(R/\mathbb{S}[z]; \mathbb{Z}_p)
     \otimes_{\THH( \mathcal{O}_K/\mathbb{S}[z]; \mathbb{Z}_p)}
\mathcal{O}_K \simeq \HH(R/\mathcal{O}_K; \mathbb{Z}_p)  . \end{equation} 
Thus, 
$\THH(R/\mathbb{S}[z]; \mathbb{Z}_p)$ is a deformation of Hochschild homology relative
to $\mathcal{O}_K$. 
\end{enumerate}

\end{construction}

Next, we need an analog of  the
Hochschild--Kostant--Rosenberg theorem for relative $\THH$ (in the absolute case
for algebras over a perfectoid ring,
this is \cite[Theorem B]{He96} and \cite[Cor.~6.9]{BMS2}). 

\begin{proposition} 
Let $R$ be a formally smooth $\mathcal{O}_K$-algebra. 
Then we have a natural isomorphism of graded rings
\[ \THH_*(R/\mathbb{S}[z];\mathbb{Z}_p) \simeq
\widehat{\Omega^{\ast}_{R/\mathcal{O}_K}}
[\sigma] , \quad|\sigma| = 2, \]
    where $\widehat{\Omega^{\ast}_{R/\mathcal{O}_K}}$ denotes the
    $p$-completion of the de Rham complex of $R$ over $\Oscr_K$.
\end{proposition}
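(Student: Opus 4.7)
The plan is to reduce the computation to the classical Hochschild--Kostant--Rosenberg theorem for $\HH(R/\mathcal{O}_K; \mathbb{Z}_p)$ via derived base change, using the key input
\[\THH(\mathcal{O}_K/S^0[z]; \mathbb{Z}_p) \simeq \mathcal{O}_K[\sigma],\quad |\sigma|=2.\]

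For the input calculation, the $\mathbb{E}_\infty$-map $S^0[z] \to \mathcal{O}_K$ sending $z \mapsto \pi$ presents $\mathcal{O}_K$ $p$-adically as a derived quotient of $S^0[z]$ by a nonzerodivisor ($z-\pi$ in the unramified case, and the corresponding Eisenstein polynomial after Witt-vector base change in the ramified case). Base-changing the resulting two-term Koszul resolution along $\mathcal{O}_K$ yields an equivalence of $\mathbb{E}_\infty$-$\mathcal{O}_K$-algebras
\[\mathcal{O}_K \otimes^L_{S^0[z]} \mathcal{O}_K \simeq \Lambda_{\mathcal{O}_K}(d),\qquad |d|=1,\]
where $\Lambda_{\mathcal{O}_K}(d)$ denotes the exterior algebra on a degree $1$ class coming from $z-\pi$. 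By the classical Koszul duality between exterior and polynomial algebras, $\mathcal{O}_K \otimes^L_{\Lambda_{\mathcal{O}_K}(d)} \mathcal{O}_K \simeq \mathcal{O}_K[\sigma]$, and unpacking the bar definition of relative $\THH$ gives the displayed input identity.

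For a formally smooth $\mathcal{O}_K$-algebra $R$, I would base-change the universal identity along $\mathcal{O}_K \to R$ on both factors, obtaining
\[R \otimes^L_{S^0[z]} R \simeq (R\otimes^L_{\mathcal{O}_K} R) \otimes^L_{\mathcal{O}_K} \Lambda_{\mathcal{O}_K}(d).\]
The $R$-bimodule structure on $R$ and the $\Lambda_{\mathcal{O}_K}(d)$-module structure (via the augmentation $d\mapsto 0$) act independently, so the relative tensor product defining $\THH$ factors as
\[\THH(R/S^0[z]; \mathbb{Z}_p) \simeq \HH(R/\mathcal{O}_K; \mathbb{Z}_p) \otimes^L_{\mathcal{O}_K} \mathcal{O}_K[\sigma].\]
Applying the classical HKR theorem $\HH(R/\mathcal{O}_K; \mathbb{Z}_p) \simeq \widehat{\Omega^{\ast}_{R/\mathcal{O}_K}}$ for formally smooth $R$ yields the asserted computation.

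The main technical point is the splitting of the bar complex used to factor the relative tensor product in the second step---a Künneth-type decomposition relying on the independence of the two module structures on $R$. This should follow from $\mathbb{E}_\infty$-coherence but requires careful $\infty$-categorical verification; in the ramified case, identifying the correct Koszul data for the input calculation also needs attention, and one expects this to be where the general quasisyntomic-descent machinery of \cite{BMS2} or the approach of Krause--Nikolaus \cite{KN} becomes convenient. The remaining ingredients---classical HKR, flatness of $\mathcal{O}_K \to R$ from formal smoothness, and Koszul duality for exterior algebras---are well-established.
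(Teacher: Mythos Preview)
Your K\"unneth decomposition step has a genuine gap. The asserted identification
\[
R \otimes^L_{S^0[z]} R \simeq (R\otimes^L_{\mathcal{O}_K} R) \otimes^L_{\mathcal{O}_K} \Lambda_{\mathcal{O}_K}(d)
\]
does not hold in the form you need. The ring $\Lambda=\mathcal{O}_K\otimes_{S^0[z]}\mathcal{O}_K$ carries \emph{two} distinct $\mathcal{O}_K$-algebra structures (the two coprojections), and the honest rewriting is $R\otimes_{S^0[z]}R = R\otimes_{\mathcal{O}_K,\mathrm{left}}\Lambda\otimes_{\mathcal{O}_K,\mathrm{right}}R$. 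Collapsing this to a tensor product over a single copy of $\mathcal{O}_K$ forces the two maps $\mathcal{O}_K\to R\otimes_{S^0[z]}R$ coming from the two $R$-factors to coincide, which they do not. Consequently your ``independence'' claim fails: the $R\otimes_{\mathcal{O}_K}R$-module and $\Lambda$-module structures on $R$ are entangled through these two $\mathcal{O}_K$-structures, and the bar construction does not split as you describe. (A related symptom: if $\Lambda$ were the genuine $\mathbb{E}_\infty$-exterior algebra over $\mathcal{O}_K$, then $\mathcal{O}_K\otimes^L_{\Lambda}\mathcal{O}_K$ would be the \emph{divided power} algebra $\Gamma_{\mathcal{O}_K}[\sigma]$, not the polynomial ring---the polynomiality is exactly B\"okstedt's theorem and cannot be recovered from naive Koszul duality.)

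The paper takes a different route. It uses only the base-change identity
\[
\THH(R/S^0[z];\mathbb{Z}_p)\otimes_{\THH(\mathcal{O}_K/S^0[z];\mathbb{Z}_p)}\mathcal{O}_K \simeq \HH(R/\mathcal{O}_K;\mathbb{Z}_p),
\]
i.e.\ $\THH(R/S^0[z];\mathbb{Z}_p)/\sigma\simeq\HH(R/\mathcal{O}_K;\mathbb{Z}_p)$, which does follow from general principles. The content of the proposition is then that the resulting $\sigma$-Bockstein spectral sequence degenerates. The paper proves this by exploiting the $S^1$-action: it makes $\THH_*(R/S^0[z];\mathbb{Z}_p)$ into a commutative differential graded algebra under $\mathcal{O}_K[\sigma]$ (with trivial differential on $\sigma$), and the universal property of the de Rham complex then produces a cdga map $\widehat{\Omega^{\ast}_{R/\mathcal{O}_K}}\to\THH_*(R/S^0[z];\mathbb{Z}_p)$ lifting the HKR isomorphism. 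This splitting of the quotient map is what forces degeneration---and it is exactly the missing ingredient that your K\"unneth shortcut was trying to supply for free.
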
 
\begin{proof} 
In the case $R = \mathcal{O}_K$, this follows from B\"okstedt's
calculation of $\THH(\mathbb{F}_p)$, cf.~\cite[Prop. 11.10]{BMS2},
\cite[Theorem 3.5]{amn1},  or \cite[Theorem 3.1]{KN}. 
Now \eqref{relTHHdeformsHH} shows that 
$\THH(R/\mathbb{S}[z]; \mathbb{Z}_p)/\sigma \simeq \HH(R/\mathcal{O}_K;
\mathbb{Z}_p)$, and the Hochschild--Kostant--Rosenberg theorem yields $\HH_*(R/\mathcal{O}_K;
\mathbb{Z}_p) \simeq  \widehat{\Omega^{\ast}_{R/\mathcal{O}_K}}$. 
It remains to show that the induced Bockstein spectral sequence  for 
$\THH(R/\mathbb{S}[z]; \mathbb{Z}_p)$
(with respect to
taking the cofiber of $\sigma$) degenerates, or equivalently that the map of the
HKR isomorphism lifts to a map $\widehat{\Omega_{R/\mathcal{O}_K}^{\ast}} \to
\THH_*(R/\mathbb{S}[z]; \mathbb{Z}_p)$.  
Indeed, since $\THH(R/\mathbb{S}[z]; \mathbb{Z}_p)$ is an $\mathbb{E}_\infty$-algebra with an
$S^1$-action receiving a map from $\THH(\mathcal{O}_K/\mathbb{S}[z]; \mathbb{Z}_p)$, we obtain the structure of a commutative differential graded
algebra on 
$\THH_*(R/\mathbb{S}[z]; \mathbb{Z}_p)$, and it receives a map (of cdgas) from
$\mathcal{O}_K[\sigma]$ with trivial differential. The universal property of the
de Rham complex now produces the desired map 
$\widehat{\Omega^{\ast}_{R/\mathcal{O}_K}} \to \THH(R/\mathbb{S}[z]; \mathbb{Z}_p)$. 
\end{proof} 

Left Kan extending from finitely generated $p$-complete polynomial $\mathcal{O}_K$-algebras, we obtain the following result, which is proved exactly as in
\cite[Prop. 7.5]{BMS2}; the key point is that any $R \in \qqrsp{\mathcal{O}_K}$
has the property that the $p$-completion of $L_{R/\mathcal{O}_K}$ is the
shift of a $p$-completely flat $R$-module.

\begin{corollary} 
\label{grrelativeTHH}
If $R \in \qqrsp{\mathcal{O}_K}$,
then $\THH(R/\mathbb{S}[z]; \mathbb{Z}_p)$ is concentrated in even degrees, and
each $\pi_{2n}
\THH(R/\mathbb{S}[z]; \mathbb{Z}_p)$ is a $p$-completely flat $R$-module. 
Furthermore, the $R$-module
$\pi_{2n}\THH(R/\mathbb{S}[z]; \mathbb{Z}_p)$
admits a natural finite increasing filtration with graded pieces
the (discrete and $p$-completely flat) $R$-modules
$\widehat{(\bigwedge^j L_{R/\mathcal{O}_K})} [-j] $ for $j \leq n$,
where $\widehat{\bigwedge^j L_{R/\mathcal{O}_K}}$ denotes the $p$-completion
    of $\bigwedge^j L_{R/\mathcal{O}_K}$.
\end{corollary}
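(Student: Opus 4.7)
The strategy is to left Kan extend the preceding HKR-type calculation from finitely generated $p$-complete polynomial $\Oscr_K$-algebras to all simplicial commutative $\Oscr_K$-algebras, following the template of the proof of \cite[Prop.~7.5]{BMS2}. For such a polynomial $R$, the preceding proposition gives $\THH_*(R/S^0[z];\ZZ_p)\cong\widehat{\Omega^*_{R/\Oscr_K}}[\sigma]$ with $|\sigma|=2$. The exterior-power grading on $\widehat{\Omega^*_{R/\Oscr_K}}$ equips $\THH(R/S^0[z];\ZZ_p)$ with a natural complete descending filtration whose $j$-th associated graded piece is a shift of $\widehat{\Omega^j_{R/\Oscr_K}}$, tensored with the $\sigma$-polynomial piece coming from $\THH(\Oscr_K/S^0[z];\ZZ_p)$.

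Both the functor $R\mapsto\THH(R/S^0[z];\ZZ_p)$ and the functor $R\mapsto\widehat{\bigwedge^j L_{R/\Oscr_K}}$ commute with sifted colimits of simplicial commutative $\Oscr_K$-algebras: the former because $\THH$ and bounded-below $p$-completion do, the latter by the definition of derived exterior powers as the left Kan extensions of $\Omega^j$. Left Kan-extending from finitely generated polynomial $\Oscr_K$-algebras therefore upgrades the above filtration to a natural complete filtration on $\THH(R/S^0[z];\ZZ_p)$, for every simplicial commutative $\Oscr_K$-algebra $R$, with graded pieces the derived $p$-completed exterior powers $\widehat{\bigwedge^j L_{R/\Oscr_K}}$ shifted exactly as in the formally smooth case.

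Specialising to $R\in\qqrsp{\Oscr_K}$, we invoke the key structural fact recorded just before the statement: $N := \widehat{L_{R/\Oscr_K}}[-1]$ is a $p$-completely flat discrete $R$-module, by \cite[Lem.~4.7(1)]{BMS2}. The d\'ecalage identification $\bigwedge^j(N[1])\simeq\Gamma^j(N)[j]$ then gives $\widehat{\bigwedge^j L_{R/\Oscr_K}}\simeq\Gamma^j(N)[j]$, and since $\Gamma^j$ preserves $p$-complete flatness of discrete modules, $\widehat{\bigwedge^j L_{R/\Oscr_K}}[-j]\simeq\Gamma^j(N)$ is discrete and $p$-completely flat, as asserted. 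Consequently every graded piece of the filtration sits in even homological degrees (combining the intrinsic shift $[-j]$ on $\widehat{\bigwedge^j L_{R/\Oscr_K}}$ with $|\sigma|=2$), so the associated spectral sequence degenerates: $\THH(R/S^0[z];\ZZ_p)$ is concentrated in even degrees, and $\pi_{2n}$ acquires the claimed finite increasing filtration with graded pieces $\widehat{\bigwedge^j L_{R/\Oscr_K}}[-j]$ for $j\leq n$, each paired with the $\sigma^{n-j}$-Bott factor.

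The main obstacle is the derived-exterior-power calculation in the last step: verifying that $p$-completion commutes sufficiently well with left Kan extension and with the d\'ecalage formula, so that $\widehat{\bigwedge^j L_{R/\Oscr_K}}$ admits the desired identification with $\Gamma^j(N)[j]$ for $N$ discrete and $p$-completely flat. These points are standard consequences of simplicial methods once set up carefully, but they constitute the core analytical content of the proof; everything else is a bookkeeping exercise using the formally smooth base case and the sifted-colimit preservation of the relevant functors.
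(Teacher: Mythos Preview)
Your proposal is correct and follows essentially the same approach as the paper: left Kan extend the HKR calculation from polynomial $\mathcal{O}_K$-algebras, then invoke the fact that $\widehat{L_{R/\mathcal{O}_K}}[-1]$ is discrete and $p$-completely flat for $R\in\qqrsp{\mathcal{O}_K}$, exactly as in \cite[Prop.~7.5]{BMS2}. You have simply unpacked the d\'ecalage step $\bigwedge^j(N[1])\simeq\Gamma^j(N)[j]$ that is implicit in the reference to \cite{BMS2}.
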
 

\begin{construction}[The filtration on relative $\THH$] 
Let $R \in \qqsyn{\mathcal{O}_K}$. 
In \cite[Sec.~11]{BMS2}, a multiplicative, convergent $\mathbb{Z}_{\geq 0}$-indexed filtration 
$\fil^{\geq \star} \THH(R/\mathbb{S}[z]; \mathbb{Z}_p)$
on $\THH(R/\mathbb{S}[z]; \mathbb{Z}_p)$
in sheaves of spectra on $\qqsyn{\mathcal{O}_K}$ is defined.\footnote{Actually,
in \emph{loc.~cit}, the filtration is defined only on those objects which are flat
over $\mathcal{O}_K$, but the arguments do not require this.} This filtration 
is defined such that it restricts to the double speed Postnikov filtration for $R \in
\qqrsp{\mathcal{O}_K}$, i.e., $\fil^{\geq n} \THH(R/\mathbb{S}[z]; \mathbb{Z}_p) =
\tau_{\geq 2n} \THH(R/\mathbb{S}[z]; \mathbb{Z}_p)$ for such $R$. 
By \Cref{grrelativeTHH} and \cite[Theorem 3.1]{BMS2}, the associated graded
    pieces of the Postnikov filtration on 
$\THH(-/\mathbb{S}[z]; \mathbb{Z}_p)$ on $\qqrsp{\mathcal{O}_K}$ are
sheaves; thus, one unfolds and  obtains the filtration for all 
$R \in \qqsyn{\mathcal{O}_K}$. 
\end{construction}

\begin{corollary} 
\label{filtongrTHH}
Let $R \in \qqsyn{\mathcal{O}_K}$.
Then $\mathrm{gr}^n \THH(R/\mathbb{S}[z]; \mathbb{Z}_p)$ admits a natural finite
increasing filtration
with associated graded 
$( \widehat{\bigwedge^j L_{R/\mathcal{O}_K})}[2n-j]$ for $j \leq n$. 
In particular, we find that $\mathrm{gr}^n\THH(R/\mathbb{S}[z]; \mathbb{Z}_p) \in
\sp_{\geq n}$ and $\fil^{\geq n} \THH(R/\mathbb{S}[z]; \mathbb{Z}_p) \in \sp_{\geq n}$. 
Furthermore, the constructions $$R \mapsto \mathrm{gr}^n \THH(R/\mathbb{S}[z];
    \mathbb{Z}_p)\quad\text{and}\quad
R\mapsto\fil^{\geq n} \THH(R/\mathbb{S}[z]; \mathbb{Z}_p)$$ (as functors on
$\qqsyn{\mathcal{O}_K}$ to
$p$-complete spectra) are left Kan extended from
finitely generated  $p$-complete polynomial $\mathcal{O}_K$-algebras. 
\label{2grrelativeTHH}
\end{corollary}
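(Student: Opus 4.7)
The plan is to deduce all three claims by reduction to the semiperfectoid case treated in Corollary~\ref{grrelativeTHH}, using that the motivic filtration on $\THH(-/S^0[z];\ZZ_p)$ over $\qqsyn{\mathcal{O}_K}$ is defined by quasisyntomic descent (unfolding) from its restriction to the basis $\qqrsp{\mathcal{O}_K}$, where it coincides with the double-speed Postnikov filtration.

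\textbf{Filtration on $\mathrm{gr}^n$.} For $R\in\qqrsp{\mathcal{O}_K}$ one has the identification $\mathrm{gr}^n\THH(R/S^0[z];\ZZ_p)\simeq \pi_{2n}\THH(R/S^0[z];\ZZ_p)[2n]$, so Corollary~\ref{grrelativeTHH} provides a natural finite increasing filtration with graded pieces $\widehat{\bigwedge^j L_{R/\mathcal{O}_K}}[2n-j]$ for $0\leq j\leq n$ (the $[-j]$ shift of the filtration on the homotopy group is translated by $[2n]$). Each functor $R\mapsto\widehat{\bigwedge^j L_{R/\mathcal{O}_K}}$ is already a sheaf on $\qqsyn{\mathcal{O}_K}$, so the filtration extends by descent from the basis $\qqrsp{\mathcal{O}_K}$ to a natural filtration on $\mathrm{gr}^n\THH(R/S^0[z];\ZZ_p)$ for every $R\in\qqsyn{\mathcal{O}_K}$, with the same graded pieces.

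\textbf{Connectivity bounds.} For $R\in\qqsyn{\mathcal{O}_K}$, the cotangent complex $L_{R/\mathcal{O}_K}$ has $p$-complete Tor-amplitude in $[-1,0]$, so $\widehat{\bigwedge^j L_{R/\mathcal{O}_K}}$ has $p$-complete Tor-amplitude in $[-j,0]$; in homological indexing this places it in $\sp_{\geq 0}$. Hence $\widehat{\bigwedge^j L_{R/\mathcal{O}_K}}[2n-j]\in\sp_{\geq 2n-j}\subseteq\sp_{\geq n}$ for $j\leq n$, and the finite filtration forces $\mathrm{gr}^n\THH(R/S^0[z];\ZZ_p)\in\sp_{\geq n}$. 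Since $\mathrm{gr}^k\THH(R/S^0[z];\ZZ_p)\in\sp_{\geq k}\subseteq\sp_{\geq n}$ for all $k\geq n$ and the motivic filtration is complete, it follows that $\fil^{\geq n}\THH(R/S^0[z];\ZZ_p)\in\sp_{\geq n}$.

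\textbf{Left Kan extension.} The functors $R\mapsto\widehat{\bigwedge^j L_{R/\mathcal{O}_K}}$ are by construction left Kan extended from finitely generated $p$-complete polynomial $\mathcal{O}_K$-algebras, so each graded piece $\widehat{\bigwedge^j L_{R/\mathcal{O}_K}}[2n-j]$ is, and hence so is $\mathrm{gr}^n\THH(R/S^0[z];\ZZ_p)$ as a finite extension of such. For $\fil^{\geq n}\THH(R/S^0[z];\ZZ_p)$, the connectivity bound from the previous step implies that the fiber of the map $\fil^{\geq n}\THH\to\fil^{\geq n}\THH/\fil^{\geq m}\THH$ equals $\fil^{\geq m}\THH\in\sp_{\geq m}$. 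Consequently, for any homological degree $d$, the truncation $\tau_{\leq d}\fil^{\geq n}\THH$ agrees with $\tau_{\leq d}$ of a finite extension of the graded pieces $\mathrm{gr}^k\THH$ for $n\leq k\leq d+1$, and is therefore left Kan extended. Left completeness of the $t$-structure on $p$-complete spectra then allows one to assemble these truncations to conclude that $\fil^{\geq n}\THH(R/S^0[z];\ZZ_p)$ is itself left Kan extended. The main technical obstacle is precisely this last step, where the interaction between left Kan extension, $p$-completion, and the inverse limit defining the complete filtration must be controlled; the connectivity estimates of the previous step are what make each truncation a genuinely finite computation and thereby resolve the issue.
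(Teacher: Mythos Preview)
Your proposal is correct and follows exactly the paper's approach---unfold the filtration of Corollary~\ref{grrelativeTHH} from $\qqrsp{\mathcal{O}_K}$, read off connectivity from the Tor-amplitude of the wedge powers, and deduce left Kan extension from that of the cotangent complex---indeed supplying more detail than the paper's two-line proof.

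The only place to tighten is the last paragraph: the assertion that $\tau_{\leq d}$ of a left Kan extended functor is again left Kan extended is not true in general, and reassembling from such truncations via left completeness does not directly yield the conclusion. The clean version of the argument you are reaching for is to left Kan extend each $\fil^{\geq m}$ (for $m\geq n$) separately; since sifted colimits and $p$-completion preserve connectivity, the result remains in $\sp_{\geq m}$, so the left Kan extended tower is still complete and therefore coincides with $\fil^{\geq n}$. This is the content of your final sentence, just phrased so that the connectivity bound is applied to the Kan-extended filtration stages rather than to Postnikov truncations.
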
 
\begin{proof} 
The first assertion follows from \Cref{grrelativeTHH} by unfolding; the
connectivity assertions then follow in turn. 
Since the cotangent complex and its wedge powers are left Kan extended from
finitely generated polynomial algebras, the last assertion follows too. 
\end{proof}

\subsection{Preliminary connectivity bounds}

We use the spectral sequence of Krause--Nikolaus~\cite{KN} to obtain a
relationship between the 
relative and absolute $\THH$. 

\begin{proposition}[Relative versus absolute $\THH$] 
\label{relativevsabsolutegr}
If $R \in \qqrsp{\mathcal{O}_K}$,
then there exist natural surjective maps 
$f_n \colon \pi_{2n} \THH(R/\mathbb{S}[z];\mathbb{Z}_p) \to \pi_{2n-2}
\THH(R/\mathbb{S}[z];\mathbb{Z}_p)$
and natural isomorphisms 
$\pi_{2n} \THH(R; \mathbb{Z}_p) \simeq \mathrm{ker}(f_n)$. 
\end{proposition}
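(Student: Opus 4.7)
The plan is to produce a natural cofiber sequence
$$
\Sigma\,\THH(R/S^0[z];\mathbb{Z}_p) \to \THH(R;\mathbb{Z}_p) \to \THH(R/S^0[z];\mathbb{Z}_p),
$$
and read off the proposition from the resulting long exact sequence, using that both terms live in even degrees.

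To construct the sequence, I would invoke the base-change identification that underlies the Krause--Nikolaus spectral sequence~\cite{KN}: for any $\mathbb{E}_\infty$-base $A$ and any $A$-algebra $R$, one has a natural equivalence
$$
\THH(R/A) \simeq \THH(R)\otimes_{\THH(A)} A,
$$
where $A$ is viewed as a $\THH(A)$-module via the augmentation $\THH(A)\to A$. Apply this with $A = S^0[z]$. Since $S^0[z]=\Sigma^\infty_+\mathbb{N}$, one computes $\pi_*\THH(S^0[z])=\mathbb{Z}[z]\otimes\Lambda(\epsilon)$ with $|\epsilon|=1$, so the augmentation fits into a cofiber sequence of $\THH(S^0[z])$-modules
$$
\Sigma\, S^0[z] \xrightarrow{\,\cdot\,\epsilon\,} \THH(S^0[z]) \to S^0[z].
$$
Tensoring with $\THH(R)$ over $\THH(S^0[z])$ and then $p$-completing yields the claimed cofiber sequence, naturally in $R\in\qqrsp{\mathcal{O}_K}$.

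To extract the conclusion, combine two even-concentration statements: $\THH(R/S^0[z];\mathbb{Z}_p)$ lies in even degrees by \Cref{2grrelativeTHH}, and because $R\in\qqrsp{\mathcal{O}_K}\subset\qrsp$, the absolute $\THH(R;\mathbb{Z}_p)$ is concentrated in even degrees by the result of Bhatt--Morrow--Scholze recalled in \Cref{def_prism_sheaves}. The long exact sequence associated to the cofiber sequence above thus collapses, for each $n$, to a short exact sequence
$$
0 \to \pi_{2n}\THH(R;\mathbb{Z}_p) \to \pi_{2n}\THH(R/S^0[z];\mathbb{Z}_p) \xrightarrow{f_n} \pi_{2n-2}\THH(R/S^0[z];\mathbb{Z}_p) \to 0,
$$
where $f_n$ is the boundary map. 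Its naturality in $R$ is inherited from that of the cofiber sequence. Surjectivity of $f_n$ is forced by $\pi_{2n-1}\THH(R;\mathbb{Z}_p)=0$ (the next term in the long exact sequence), and the identification $\pi_{2n}\THH(R;\mathbb{Z}_p)\cong\ker(f_n)$ is the exactness of the left-hand end.

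The one point that requires genuine care is the Krause--Nikolaus base-change formula in the form needed here, together with its interaction with $p$-completion. Once this is in place the rest is bookkeeping with the long exact sequence and an appeal to even-concentration on both sides.
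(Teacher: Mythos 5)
Your endgame (the long exact sequence plus the two even-concentration inputs) is exactly the mechanism the paper uses, but the construction of your cofiber sequence has a genuine gap, and it sits precisely at the point you dismiss as routine. The base-change formula $\THH(R/S^0[z])\simeq\THH(R)\otimes_{\THH(S^0[z])}S^0[z]$ is indeed standard; what is not correct is the two-term resolution of $S^0[z]$ over $\THH(S^0[z])$. First, $\pi_*\THH(S^0[z])$ is not $\mathbb{Z}[z]\otimes\Lambda(\epsilon)$ — that is the integral homology (equivalently $\pi_*\THH(\mathbb{Z}[z])$); over the sphere the homotopy contains stable stems. More seriously, the augmentation ideal $I=\mathrm{fib}(\THH(S^0[z])\to S^0[z])$ is \emph{not} equivalent as a $\THH(S^0[z])$-module to $\Sigma S^0[z]$ with action restricted along the augmentation: one has $(\sigma z)^2=\eta\, z\,\sigma z$ in $\pi_2\THH(S^0[z])$ (the top-cell component of the multiplication $\Sigma^\infty_+(S^1\times S^1)\to\Sigma^\infty_+S^1$ on the weight-$(1,1)$ part of the cyclic bar construction is the Hopf map, by the Hopf construction), and $\eta\,z\,\sigma z$ is a nonzero element of $\pi_2(I)$. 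Hence $\epsilon$ does not act by zero on $I$, there is no $\THH(S^0[z])$-module map $\Sigma S^0[z]\to\THH(S^0[z])$ (source given the augmentation module structure) hitting $\epsilon$, and your claimed cofiber sequence does not exist integrally, nor $2$-adically. Consequently the identification $\THH(R)\otimes_{\THH(S^0[z])}I\simeq\Sigma\,\THH(R/S^0[z])$, which is what produces your sequence, is unjustified. (If instead you meant the free module $\Sigma\THH(S^0[z])$ as the source, the cofiber of multiplication by $\epsilon$ is not $S^0[z]$, so that reading fails too.)

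At odd primes $\eta$ vanishes $p$-adically and one can hope to prove the $p$-complete module identification, but that is an actual assertion requiring proof (obstruction-theoretic or via an explicit construction), and the proposition is needed at $p=2$ as well. This is exactly why the paper does not argue via a two-term resolution but instead invokes the spectral sequence of Krause--Nikolaus \cite[Prop.~4.1]{KN} comparing relative and absolute $\THH$; the even concentration of $\THH(R/S^0[z];\mathbb{Z}_p)$ (which, by the way, is \Cref{grrelativeTHH}, not \Cref{filtongrTHH}) and of $\THH(R;\mathbb{Z}_p)$ (BMS2, Theorem 7.1) then forces degeneration after the first differential and surjectivity of $f_n$ — the same bookkeeping you perform with your long exact sequence. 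To salvage your route you would need to construct the fiber sequence $\THH(R;\mathbb{Z}_p)\to\THH(R/S^0[z];\mathbb{Z}_p)\to\Sigma^2\THH(R/S^0[z];\mathbb{Z}_p)$ by a method that does not presuppose the module splitting of the augmentation ideal, which is essentially what \cite{KN} supply.
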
 
\begin{proof} 
Recall that both $\THH(R; \mathbb{Z}_p)$ and $\THH(R/\mathbb{S}[z]; \mathbb{Z}_p)$ are
concentrated in even degrees since $R \in \qqrsp{\mathcal{O}_K}$ (see
\cite[Theorem 7.1]{BMS2} and \Cref{grrelativeTHH}). 
Therefore, the result follows directly from \cite[Prop.~4.1]{KN}; the spectral
sequence of \emph{loc.~cit.} must degenerate after the first differential, and the
maps of the first differential must be surjective or one would have odd degree
contributions to $\THH(R; \mathbb{Z}_p)$. 
\end{proof} 

The following fiber sequence \eqref{grfibseq} will be the basic tool in obtaining connectivity bounds
on the filtration on $\THH$ and its variants.

\begin{corollary}[Connectivity of the filtration on $\THH$] 
\label{filtconnTHHbounds}
If $R \in \qqsyn{\mathcal{O}_K}$, then for each $n$ there is a natural fiber sequence
\begin{equation} \label{grfibseq}
\mathrm{gr}^n \THH(R; \mathbb{Z}_p) \to 
\mathrm{gr}^n\THH(R/\mathbb{S}[z]; \mathbb{Z}_p) \to \mathrm{gr}^{n-1} \THH(R/\mathbb{S}[z];
\mathbb{Z}_p)[2].\end{equation}
In particular, we have $\mathrm{gr}^n \THH(R; \mathbb{Z}_p)$,
$\mathrm{Fil}^{\geq n} \THH(R; \mathbb{Z}_p) 
\in \sp_{\geq n}$ for any $R \in \qqsyn{\mathcal{O}_K}$.  
Finally, the functors $R \mapsto 
\mathrm{gr}^n \THH(R; \mathbb{Z}_p)$ and
$R\mapsto\mathrm{Fil}^{\geq n} \THH(R; \mathbb{Z}_p) $ on $\qsyn$ are left Kan extended 
from finitely generated $p$-complete polynomial $\mathbb{Z}_p$-algebras, as functors to $p$-complete spectra. 
\end{corollary}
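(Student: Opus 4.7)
The plan is to work on the basis $\qqrsp{\mathcal{O}_K}$ of $\qqsyn{\mathcal{O}_K}$, construct the fiber sequence there via Proposition~\ref{relativevsabsolutegr}, sheafify to $\qqsyn{\mathcal{O}_K}$, deduce the connectivity bound from the fiber sequence and Corollary~\ref{filtongrTHH}, and finally handle the left Kan extension claim. For $R\in\qqrsp{\mathcal{O}_K}$, both $\THH(R;\mathbb{Z}_p)$ and $\THH(R/S^0[z];\mathbb{Z}_p)$ are concentrated in even degrees (the latter by Corollary~\ref{grrelativeTHH}), and their motivic filtrations coincide with the respective double-speed Postnikov filtrations by construction; thus each $n$th associated graded is an Eilenberg--MacLane spectrum in homological degree $2n$. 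Proposition~\ref{relativevsabsolutegr} realizes $\pi_{2n}\THH(R;\mathbb{Z}_p)$ as the kernel of a natural surjection $f_n\colon\pi_{2n}\THH(R/S^0[z];\mathbb{Z}_p)\twoheadrightarrow\pi_{2n-2}\THH(R/S^0[z];\mathbb{Z}_p)$; interpreting the resulting short exact sequence as a fiber sequence of Eilenberg--MacLane spectra in degree $2n$, and recognizing $\pi_{2n-2}\THH(R/S^0[z];\mathbb{Z}_p)$ placed in degree $2n$ as $\gr^{n-1}\THH(R/S^0[z];\mathbb{Z}_p)[2]$, yields the desired fiber sequence on $\qqrsp{\mathcal{O}_K}$. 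Since all three functors are quasisyntomic sheaves and $\qqrsp{\mathcal{O}_K}$ is a basis, unfolding extends the fiber sequence to arbitrary $R\in\qqsyn{\mathcal{O}_K}$.

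The connectivity bound is then immediate: by Corollary~\ref{filtongrTHH}, $\gr^n\THH(R/S^0[z];\mathbb{Z}_p)\in\sp_{\geq n}$ and $\gr^{n-1}\THH(R/S^0[z];\mathbb{Z}_p)[2]\in\sp_{\geq n+1}$, so the fiber $\gr^n\THH(R;\mathbb{Z}_p)$ lies in $\sp_{\geq n}$. For the filtered piece $\mathrm{Fil}^{\geq n}\THH(R;\mathbb{Z}_p)$, completeness of the filtration together with the fact that $\gr^m\THH(R;\mathbb{Z}_p)\in\sp_{\geq m}\subseteq\sp_{\geq n}$ for $m\geq n$, combined with the unbounded growth of connectivity of the $\gr^m$, forces $\mathrm{Fil}^{\geq n}\THH(R;\mathbb{Z}_p)\in\sp_{\geq n}$.

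The main obstacle will be the left Kan extension property. Specializing $\mathcal{O}_K=\mathbb{Z}_p$ identifies $\qqsyn{\mathbb{Z}_p}$ with $\qsyn$, and Corollary~\ref{filtongrTHH} tells us that both relative terms in the fiber sequence are left Kan extended from finitely generated $p$-complete polynomial $\mathbb{Z}_p$-algebras. The difficulty is that fibers do not commute with the sifted colimits defining left Kan extensions in general, so one cannot simply inherit the property from the relative case. My plan is to introduce $\widetilde{F}$, the left Kan extension of $\gr^n\THH(-;\mathbb{Z}_p)$ restricted to polynomial algebras, and compare it to $\gr^n\THH(-;\mathbb{Z}_p)$ itself via the natural counit map. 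Both functors land in $\sp_{\geq n}$, both agree tautologically on polynomial algebras, and both fit into compatible fiber sequences with the (Kan extended, hence common) relative terms. A quasisyntomic descent diagram chase using these compatibilities, together with the uniform connectivity bound from the previous paragraph and the explicit HKR identification of $\gr^n\THH(P;\mathbb{Z}_p)$ for polynomial $P$, should upgrade the comparison map to an equivalence on all of $\qsyn$. The analogous claim for $\mathrm{Fil}^{\geq n}\THH(-;\mathbb{Z}_p)$ follows either by the same argument or by an induction on $n$ using the triangles $\mathrm{Fil}^{\geq n+1}\to\mathrm{Fil}^{\geq n}\to\gr^n$.
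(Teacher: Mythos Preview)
Your construction of the fiber sequence and the connectivity bounds are correct and match the paper's argument.

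The perceived obstacle in the left Kan extension claim, however, is illusory. You write that ``fibers do not commute with the sifted colimits defining left Kan extensions in general,'' but this is false in the stable setting: in any stable $\infty$-category a fiber sequence is also a cofiber sequence, so $\mathrm{fib}(B \to C) \simeq \mathrm{cofib}(B \to C)[-1]$ is a colimit construction and therefore commutes with all colimits, in particular with the geometric realizations implicit in left Kan extension (taken here in $p$-complete spectra). Since both $\gr^n \THH(-/S^0[z]; \ZZ_p)$ and $\gr^{n-1} \THH(-/S^0[z]; \ZZ_p)[2]$ are left Kan extended from finitely generated $p$-complete polynomial $\ZZ_p$-algebras by Corollary~\ref{filtongrTHH} (with $\mathcal{O}_K = \ZZ_p$), so is their fiber $\gr^n \THH(-; \ZZ_p)$. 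The workaround you sketch is therefore unnecessary, and ``a quasisyntomic descent diagram chase \dots\ should upgrade the comparison'' is not a proof. In fact, once one has the compatible fiber sequences you describe, the comparison $\widetilde{F}\to F$ is an equivalence \emph{immediately} by uniqueness of fibers --- and knowing that $\widetilde{F}$ sits in such a fiber sequence is precisely the stability fact above.

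For $\mathrm{Fil}^{\geq n} \THH(-; \ZZ_p)$ one argues in the same way: $\THH(-; \ZZ_p)$ itself is left Kan extended (it commutes with geometric realizations of simplicial commutative rings), the quotient $\THH/\mathrm{Fil}^{\geq n}$ is a finite iterated extension of $\gr^0, \dots, \gr^{n-1}$ and hence left Kan extended, and then $\mathrm{Fil}^{\geq n} = \mathrm{fib}(\THH \to \THH/\mathrm{Fil}^{\geq n})$ inherits the property by stability.
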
 

\begin{proof} 
The fiber sequence follows from 
\Cref{relativevsabsolutegr} by unfolding in $R$. 
The connectivity assertion 
for $\mathrm{gr}^n \THH(R; \mathbb{Z}_p)$
then follows from \Cref{2grrelativeTHH}; the assertion for $\mathrm{Fil}^{\geq
n} \mathrm{THH}(R; \mathbb{Z}_p)$ then follows since the filtration is complete. 
The Kan extension assertion for $\mathrm{Fil}^{\geq n} \THH(R;
\mathbb{Z}_p)$ also follows from 
the one for $\mathrm{Fil}^{\geq n} \THH(R/\mathbb{S}[z]; \mathbb{Z}_p)$
as in 
\Cref{filtongrTHH} (taking $\mathcal{O}_K = \mathbb{Z}_p$). 
\end{proof} 

\begin{corollary}[Connectivity bounds for $\mathcal{N}^i \Prism_R$] 
\label{connectivityNi}
\begin{enumerate}
    \item[{\rm (1)}] 
If $R \in \qsyn$,
then $\mathcal{N}^n \Prism_R \in D^{\leq n}(\mathbb{Z}_p)$. 
\item[{\rm (2)}]
If $R \to R'$ is a surjective map in $\qsyn$, then 
$\mathrm{fib}( \mathcal{N}^n \Prism_R \to \mathcal{N}^n \Prism_{R'}) \in D^{\leq
n}(\mathbb{Z}_p)$. 
\end{enumerate}
\end{corollary}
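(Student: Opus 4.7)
The plan is to deduce both parts from the connectivity bound on the motivic filtration on $\THH$ established in \Cref{filtconnTHHbounds}, combined with the Krause--Nikolaus fiber sequence \eqref{grfibseq}.

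Part (1) will be a direct translation: by \Cref{BMSfilt}, $\gr^n \THH(R;\ZZ_p) \simeq \mathcal{N}^n \Prism_R[2n]$, and by \Cref{filtconnTHHbounds} this spectrum lies in $\sp_{\geq n}$. Unshifting and converting from the homological indexing convention on spectra to the cohomological indexing convention on $D(\ZZ_p)$ yields $\mathcal{N}^n \Prism_R \in D^{\leq n}(\ZZ_p)$.

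For part (2), the claim is equivalent to showing that $\mathrm{fib}\bigl(\gr^n \THH(R;\ZZ_p) \to \gr^n \THH(R';\ZZ_p)\bigr) \in \sp_{\geq n}$; since both endpoints lie in $\sp_{\geq n}$ by part (1), it suffices to prove surjectivity of the induced map on $\pi_n$. I will apply \eqref{grfibseq} naturally in $R$: since $\gr^{n-1} \THH(R/S^0[z];\ZZ_p)$ lies in $\sp_{\geq n-1}$ and therefore has vanishing $\pi_{n-2}$, the long exact sequence of homotopy groups yields natural isomorphisms
\[
\pi_n \gr^n \THH(R;\ZZ_p) \xrightarrow{\sim} \pi_n \gr^n \THH(R/S^0[z];\ZZ_p),
\]
and similarly for $R'$. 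The problem thus reduces to the analogous surjectivity for relative $\THH$.

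To handle this, I will use the natural filtration from \Cref{filtongrTHH}: its graded pieces $\widehat{\bigwedge^j L_{R/\ZZ_p}}[2n-j]$ are concentrated in homological degrees $[2n-j,\, 2n]$ for $j \leq n$, so only the $j = n$ piece contributes in degree $n$, yielding $\pi_n \gr^n \THH(R/S^0[z];\ZZ_p) \simeq H^0(\widehat{\bigwedge^n L_{R/\ZZ_p}}) \simeq \widehat{\bigwedge^n_R \Omega^1_{R/\ZZ_p}}$. A surjection $R \twoheadrightarrow R'$ induces a surjection $\Omega^1_{R/\ZZ_p} \otimes_R R' \twoheadrightarrow \Omega^1_{R'/\ZZ_p}$ of discrete modules over the $p$-complete ring $R'$, which passes to exterior powers and is preserved on $H^0$ of the derived $p$-completion. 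The main technical point will be verifying this identification of $\pi_n$ via the filtration of \Cref{filtongrTHH}---specifically, that the higher-$j$ graded pieces do not contribute in degree $n$; once this is in hand, the rest follows from the elementary surjectivity of K\"ahler differentials under ring surjections.
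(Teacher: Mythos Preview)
Your argument for part~(1) is correct and matches the paper's. The issue is in part~(2): your claimed isomorphism
\[
\pi_n \gr^n \THH(R;\ZZ_p) \xrightarrow{\sim} \pi_n \gr^n \THH(R/S^0[z];\ZZ_p)
\]
is only a \emph{surjection}. You checked that $\pi_{n-2}\bigl(\gr^{n-1}\THH(R/S^0[z];\ZZ_p)\bigr)=0$, which gives $\pi_n$ of the third term in~\eqref{grfibseq} vanishes and hence the map above is onto. But for injectivity you would need $\pi_{n+1}$ of the third term to vanish, i.e.\ $\pi_{n-1}\bigl(\gr^{n-1}\THH(R/S^0[z];\ZZ_p)\bigr)=0$; by your own filtration analysis this group is $H^0\bigl(\widehat{\textstyle\bigwedge^{n-1}L_{R/\ZZ_p}}\bigr)\simeq\widehat{\Omega^{n-1}_{R/\ZZ_p}}$, which is typically nonzero. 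With only a surjection in the vertical maps of your square, surjectivity of the bottom row does not force surjectivity of the top row.

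Your approach is salvageable: the long exact sequence gives a natural exact sequence
\[
\pi_{n-1}\gr^{n-1}\THH(R/S^0[z];\ZZ_p)\to\pi_n\gr^n\THH(R;\ZZ_p)\to\pi_n\gr^n\THH(R/S^0[z];\ZZ_p)\to 0,
\]
and the same K\"ahler-differential argument shows the left-hand term also surjects onto its counterpart for $R'$; then a diagram chase gives the desired surjectivity. The paper sidesteps this by working directly with fibers rather than isolating a single homotopy group: one shows that each map $\widehat{\bigwedge^j L_{R/\ZZ_p}}[-j]\to\widehat{\bigwedge^j L_{R'/\ZZ_p}}[-j]$ has fiber in $D^{\leq j}\subseteq D^{\leq n}$ (exactly the $H^0$-surjectivity you use, but applied for every $j\leq n$), d\'evisses through the filtration of \Cref{filtongrTHH} to get $\mathrm{fib}\bigl(\gr^n\THH(R/S^0[z])[-2n]\to\gr^n\THH(R'/S^0[z])[-2n]\bigr)\in D^{\leq n}$, and then concludes via~\eqref{grfibseq} since the analogous fiber for $\gr^{n-1}$ lies in $D^{\leq n-1}$. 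This avoids the bookkeeping with long exact sequences entirely.
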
 
\begin{proof} 
Part (1) is a special case of \Cref{filtconnTHHbounds} (take $\mathcal{O}_K = \mathbb{Z}_p$ and $\pi = p$). 

For part (2), note that the hypothesis implies that $R\to R'$ induces a surjection on $H^0$ of $p$-completed cotangent complexes over $\mathbb Z_p$, and similarly on any wedge power. It then follows from \Cref{filtongrTHH} that
$\mathrm{fib}( \gr^n \THH(R/\mathbb{S}[z]; \mathbb{Z}_p)[-2n] \to 
\gr^n \THH(R'/\mathbb{S}[z]; \mathbb{Z}_p)[-2n]) \in D^{\leq n}(\mathbb{Z}_p)$, whence we conclude by \eqref{grfibseq}.
\end{proof} 

Our main general connectivity bound is \Cref{cofiberbound1} below. 
To formulate it, we need to be able to twist the ideal 
$I \subset \Prism_R$ from \Cref{consprism}.
First, we observe that this ideal is also trivialized after base-change along
$a_R$. 

\begin{lemma} \label{lemmatrivialised}
Let $R \in \qlrsp_{\mathbb{Z}_p}$, and let $I \subset \Prism_R$ denote the ideal
defining the prism structure. 
Then there is a natural isomorphism $I \otimes_{\Prism_R} R \simeq R$, i.e.,
the ideal is naturally trivialized after base-change along $a_R\colon \Prism_R \to R$. 
\end{lemma}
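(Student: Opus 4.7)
\noindent The plan is to exhibit a natural $R$-linear trivialization of $I$ after base change along $a_R$, proceeding in three stages.

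\noindent First, by \Cref{TPTHHtcp}, the element $\widetilde{\xi}$ is a nonzerodivisor on $\Prism_R$, so the ideal $I = (\widetilde{\xi})$ is a free $\Prism_R$-module of rank one. It follows that $I \otimes_{\Prism_R}^L R$ is concentrated in degree zero and is an invertible $R$-module. Consequently, to prove the lemma it suffices to exhibit a canonical generator of $I \otimes_{\Prism_R} R$.

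\noindent Second, I would construct the natural generator using the $\delta$-ring structure on $\Prism_R$. Choose a distinguished element $\xi \in \mathcal{N}^{\geq 1}\Prism_R = \ker(a_R)$ generating the Nygaard ideal, so that $\widetilde{\xi} = \varphi(\xi)$ generates $I$. The formula $\varphi(\xi) = \xi^p + p\,\delta(\xi)$, together with $a_R(\xi)=0$ and $\delta(\xi) \in \Prism_R^\times$, shows that $a_R(\widetilde{\xi}) = p \cdot a_R(\delta(\xi))$. This provides the candidate trivialization $I \otimes_{\Prism_R} R \xrightarrow{\sim} R$ sending $\widetilde{\xi} \otimes 1 \mapsto 1$.

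\noindent Third, and this is the heart of the argument, one must verify that this trivialization is independent of the choice of distinguished $\xi$ and is functorial in $R$. Two distinguished generators $\xi_1, \xi_2$ of $\mathcal{N}^{\ge 1}\Prism_R$ differ by a unit $u \in \Prism_R^\times$, and the corresponding generators of $I$ are $\widetilde{\xi}_i = \varphi(\xi_i)$, which differ by $\varphi(u)$. Using the derivation identity $\delta(u\xi) = u^p \delta(\xi) + \xi\, \delta(u)$ one reduces modulo $\mathcal N^{\ge 1}\Prism_R = \ker a_R$ and checks that the two trivializations agree after applying $a_R$, since the dependence on $u$ cancels between the scaling of $\widetilde{\xi}$ and the normalization involving $\delta(\xi)$.

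\noindent The main obstacle is carrying out this last independence check cleanly. The approach I would actually pursue is to reduce by faithfully flat descent to the perfectoid case: every $R \in \qlrsp_{\mathbb{Z}_p}$ admits a $p$-completely faithfully flat map $R_0 \to R$ in $\qlrsp_{\mathbb{Z}_p}$ from a perfectoid ring $R_0$, and both the formation of $\Prism_R$ and the map $a_R$ are compatible with this reduction. For $R_0$ perfectoid, $\Prism_{R_0} = \ainf(R_0)$, $a_{R_0} = \theta$, and the trivialization $\widetilde{\xi} \otimes 1 \mapsto 1$ matches the canonical identification inherited from Fontaine's theory and does not depend on the choice of distinguished element. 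Descending back to $R$ and using that the $\delta$-structure and the natural map $a_R$ are preserved under base change will then give the desired natural isomorphism.
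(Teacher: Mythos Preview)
Your approach attempts an explicit construction via the $\delta$-structure, while the paper's proof is much shorter and purely abstract: it observes that $R \mapsto I \otimes_{\Prism_R} R$ defines a \emph{functorial} invertible $R$-module on $\qlrsp_{\mathbb{Z}_p}$, unfolds by descent to all of $\mathrm{qSyn}_{\mathbb{Z}_p}$, and then trivializes once and for all over the initial object $\mathbb{Z}_p$ (where $\mathrm{Pic}(\mathbb{Z}_p)=0$), propagating the trivialization by functoriality. No explicit generator or $\delta$-identity is needed.

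Your proposal, as written, has two genuine gaps. First, the sentence ``Choose a distinguished element $\xi \in \mathcal{N}^{\geq 1}\Prism_R = \ker(a_R)$ generating the Nygaard ideal'' is incorrect: for $R \in \qlrsp_{\mathbb{Z}_p}$ that is not perfectoid, the Nygaard ideal $\mathcal{N}^{\geq 1}\Prism_R$ is \emph{not} principal. The element $\xi$ lives in $\ainf(R_0)$ for a chosen perfectoid base $R_0 \to R$, and only its image in $\Prism_R$ is being used; it does not generate $\ker(a_R)$. Second, your fallback plan in the last paragraph does not work as descent: the structure map $R_0 \to R$ from a perfectoid ring to a general quasiregular semiperfectoid ring is typically \emph{not} $p$-completely faithfully flat (e.g., $\mathcal{O}_C\langle x^{1/p^\infty}\rangle \to \mathcal{O}_C\langle x^{1/p^\infty}\rangle/(x)$), so you cannot ``descend'' from $R_0$ to $R$ in the way you suggest.

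That said, a corrected version of your explicit approach does go through: the \emph{canonical} map $I \otimes_{\Prism_R} R \to R$ induced by $I \hookrightarrow \Prism_R \xrightarrow{a_R} R$ is independent of all choices, and your computation $a_R(\widetilde{\xi}) = p \cdot a_R(\delta(\xi))$ with $a_R(\delta(\xi)) \in R^\times$ shows its image is exactly $pR$; since $R$ is $p$-torsion-free, dividing by $p$ yields a natural isomorphism $I \otimes_{\Prism_R} R \simeq R$. This is more hands-on than the paper's argument but gives an explicit formula, whereas the paper's method immediately generalizes to any functorial line bundle on the site.
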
 
\begin{proof} 
Observe that the base-change $I \otimes_{\Prism_R} R $ defines a functorial
choice of invertible $R$-module, for any $R \in \qlrsp_{\mathbb{Z}_p}$. 
By faithfully flat descent, we obtain for any $R \in
\mathrm{qSyn}_{\mathbb{Z}_p}$ a 
choice of invertible $R$-module, which is functorial in $R$. 
Choosing a trivialization over $R = \mathbb{Z}_p$, we obtain a functorial
trivialization everywhere. 
\end{proof} 

\begin{definition}[Twisting by $I$] \label{defItwist}
For $s, i , n \geq 0$, 
we let $R \mapsto I^s \mathcal{N}^{\geq n} \Prism_R \left\{i\right\}$ denote
the $D(\mathbb{Z}_p)^{\geq 0}$-valued sheaf on 
$\mathrm{qSyn}_{\mathbb{Z}_p}$
defined by unfolding 
the discrete sheaf on $\qlrsp_{\mathbb{Z}_p}$ defined by the aforementioned formula, for
$I \subset \Prism_R$ the ideal defining the prismatic structure. 
For $R \in \qlrsp_{\mathbb{Z}_p}$, 
since $I$ defines a Cartier divisor in $\Prism_R$, we have
$I^s \mathcal{N}^{\geq n} \Prism_R \left\{i\right\} \simeq 
I^s \otimes_{\Prism_R }\mathcal{N}^{\geq n} \Prism_R \left\{i\right\}$. 
\end{definition} 

\begin{proposition}[Connectivity of Nygaard quotients] 
\label{cofiberbound1}
Let $R \in \mathrm{qSyn}_{\mathbb{Z}_p}$ and $i, n, s \geq 0$. Then the cofiber
$I^s\Prism_R\left\{i\right\}/ I^s \mathcal{N}^{\geq n} \Prism_R \left\{i\right\}$
belongs to $D^{\leq n-1}( \mathbb{Z}_p)$. 
Moreover, this cofiber is left Kan extended from finitely generated
$p$-complete polynomial
$\mathbb{Z}_p$-algebras. 
\end{proposition}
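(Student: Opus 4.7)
The strategy is to exploit the Nygaard filtration on $\Prism_R\{i\}/\mathcal{N}^{\ge n}\Prism_R\{i\}$ in order to reduce both claims to the already-established estimates on its graded pieces.

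First, I would work on the basis $\qlrsp_{\mathbb{Z}_p}$ where, by the discussion preceding \Cref{defItwist}, the ideal $I \subset \Prism_R$ is an invertible Cartier divisor in the discrete ring $\Prism_R$. Tensoring with $I^s$ is then exact, and the cofiber $I^s\Prism_R\{i\}/I^s\mathcal{N}^{\ge n}\Prism_R\{i\}$ identifies canonically with $I^s \otimes_{\Prism_R} \bigl(\Prism_R\{i\}/\mathcal{N}^{\ge n}\Prism_R\{i\}\bigr)$. The Nygaard filtration equips this with a finite filtration of length $n$ whose associated graded pieces are $I^s \otimes_{\Prism_R} \mathrm{gr}^j\Prism_R\{i\}$ for $0 \le j \le n-1$, and this filtration unfolds to a corresponding finite filtration of the sheaves appearing in the statement.

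For the connectivity bound I would invoke \Cref{connectivityNi}(1), which gives $\mathcal{N}^j\Prism_R \in D^{\le j}(\mathbb{Z}_p)$ as a sheaf on $\mathrm{qSyn}$. Neither the invertible Breuil--Kisin module $\{i\}$ nor the invertible ideal $I^s$ alters cohomological amplitude, since both are locally trivial on the qrsp basis (and discrete and $p$-completely flat there). Hence every graded piece of the above filtration lies in $D^{\le j}(\mathbb{Z}_p) \subseteq D^{\le n-1}(\mathbb{Z}_p)$, and a finite filtration of length $n$ with graded pieces in $D^{\le n-1}$ lies in $D^{\le n-1}$. For the left Kan extension claim, since the class of functors left Kan extended from finitely generated $p$-complete polynomial $\mathbb{Z}_p$-algebras is closed under finite extensions, it is enough to treat each graded piece of the filtration separately; the untwisted $R\mapsto \mathrm{gr}^j\Prism_R \simeq \mathrm{gr}^j\THH(R;\mathbb{Z}_p)[-2j]$ is left Kan extended by the last assertion of \Cref{filtconnTHHbounds}, and since $\{i\}$ and $I^s$ become trivializable on finitely generated polynomial $\mathbb{Z}_p$-algebras, the twisted and untwisted functors agree there, which suffices.

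The principal technical obstacle is organizing the line-bundle twists cleanly enough that the Kan extension property propagates through the unfolding procedure from $\qlrsp_{\mathbb{Z}_p}$ to $\mathrm{qSyn}_{\mathbb{Z}_p}$. This becomes routine once one observes that $\{i\}$ and $I^s$ are locally trivial invertible $\Prism$-modules on the qrsp basis, so that both the connectivity bound and the Kan extension property are transparent under twisting.
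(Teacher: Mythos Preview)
Your overall strategy matches the paper's: d\'evissage along the Nygaard filtration to the graded pieces $I^s \otimes_{\Prism_R}\mathcal{N}^j\Prism_R\{i\}$, then strip off the twists and apply \Cref{connectivityNi} and \Cref{filtconnTHHbounds}. The gap is in how you strip off the twists. ``Locally trivial on the qrsp basis'' is not sufficient for either claim. For connectivity: unfolding is a totalization, which does not preserve membership in $D^{\le j}$, so knowing the twisted graded piece lies in $D^{\le j}$ on each qrsp ring does not yield the bound on a general $R\in\mathrm{qSyn}_{\mathbb{Z}_p}$. For left Kan extension: even if the twisted and untwisted functors happen to take isomorphic values on polynomial algebras, that says nothing about whether the twisted functor (which is \emph{defined} by unfolding from qrsp, not by Kan extension) agrees with the left Kan extension of its restriction. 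In both cases what you actually need is a \emph{natural} isomorphism between the twisted and untwisted functors on the qrsp basis, so that after unfolding they coincide as sheaves on $\mathrm{qSyn}_{\mathbb{Z}_p}$.

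The paper obtains this naturality from a structural observation you do not use: $\mathcal{N}^j\Prism_R$ is an $R$-module (it is $\pi_{2j}\THH(R;\mathbb{Z}_p)$), so tensoring it over $\Prism_R$ with $I^s$ or with $\Prism_R\{i\}$ factors through the base change of these line bundles along $a_R\colon\Prism_R\to R$. \Cref{lemmatrivialised} shows $I\otimes_{\Prism_R}R\simeq R$ \emph{naturally} (trivialize once over $\mathbb{Z}_p$ and descend), and the Breuil--Kisin twist on $\THH$-graded pieces is likewise naturally trivial (\Cref{BMSfilt}(1)). Hence $I^s\mathcal{N}^j\Prism_R\{i\}\simeq\mathcal{N}^j\Prism_R$ as functors on qrsp, and therefore on all of $\mathrm{qSyn}_{\mathbb{Z}_p}$ after unfolding; both assertions then follow at once from the untwisted case. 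Once you insert this step, your argument and the paper's are the same.
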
 
\begin{proof}
By d\'evissage it suffices to show that 
    $I^s\mathcal{N}^n\Prism_R\left\{i\right\} \in D^{\leq n}(R)$ for each $n\ge0$ and
that this is left Kan extended from finitely generated $p$-complete polynomial $\mathbb{Z}_p$-algebras. Here we write $I^s\mathcal{N}^n\Prism_R\left\{i\right\}$ for the unfolding from $\qlrsp_{\mathbb{Z}_p}$ of $I^s \otimes_{\Prism_R }\mathcal{N}^{n} \Prism_R \left\{i\right\}$.
However, the twists here are trivialized by \Cref{lemmatrivialised} since
$\mathcal{N}^n \Prism_R$ is an $R$-module, 
so that
$I^s\mathcal{N}^n\Prism_R\left\{i\right\} \simeq \mathcal{N}^{n}
\Prism_R$.
Thus, the result follows from \Cref{connectivityNi} as well as
\Cref{filtconnTHHbounds} (for the left Kan extension assertion). 
\end{proof}

We finish this subsection by recording a connectivity bound that depends on the number of generators of the cotangent complex (we will not use this result in the paper, but note that it implies in particular that $\Prism_{\mathcal O_K}\in D^{\le 1}(\mathbb Z_p)$).

\begin{lemma} 
Let $R$ be a commutative ring, and let $M \in D^{\leq 0}(R)$. 
Suppose $H^0(M)$ is generated by $d$ elements. 
Then for all $j$, $(\bigwedge^j M) [-j] \in D^{\leq d}(R)$. 
\label{Easyconnestimate}
\end{lemma}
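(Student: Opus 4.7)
The plan is to bound $\bigwedge^j M$ (understood in the derived/animated sense) by a Koszul-type filtration obtained from a presentation of $M$ by a free module of rank $d$. First, since $H^0(M)$ is generated by $d$ elements, I would choose a surjection $R^d \twoheadrightarrow H^0(M)$ and lift it (using that $R^d$ is projective in $D^{\leq 0}(R)$) to a morphism $\phi\colon R^d \to M$. Setting $N = \mathrm{fib}(\phi)$, a short chase of the long exact sequence of cohomology, using that $H^0(\phi)$ is surjective, shows $N \in D^{\leq 0}(R)$. Thus we obtain a cofiber sequence $N \to R^d \to M$ in $D^{\leq 0}(R)$.

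Next I would invoke the derived Koszul filtration (see, e.g., Illusie's \emph{Complexe cotangent et d\'eformations}, Chap.~I, \S4.3): for any cofiber sequence $A \to B \to C$ in $D^{\leq 0}(R)$, the derived exterior power $\bigwedge^j C$ carries a natural finite increasing filtration whose associated graded pieces are
\[
\bigwedge\nolimits^{j-i}(B) \otimes \Gamma^i(A)[i], \qquad i = 0, 1, \ldots, j,
\]
where $\Gamma^i$ denotes the (non-abelian) derived divided-power functor. This globalizes the Koszul shifting identity $\bigwedge^i(A[1]) \simeq \Gamma^i(A)[i]$, which handles the split case $C \simeq B \oplus A[1]$.

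Applying this to $(A,B,C) = (N, R^d, M)$, the graded piece $\bigwedge^{j-i}(R^d) \otimes \Gamma^i(N)[i]$ vanishes once $j - i > d$, since $\bigwedge^{j-i}$ of a free module of rank $d$ is zero in that range. In the remaining range $i \geq j-d$, the functor $\Gamma^i$ preserves connectivity (it is left Kan extended from the flat functor $\Gamma^i$ on finite free modules), hence $\Gamma^i(N) \in D^{\leq 0}(R)$ and $\Gamma^i(N)[i] \in D^{\leq -i} \subseteq D^{\leq d-j}(R)$; tensoring with the projective $\bigwedge^{j-i}(R^d)$ keeps us within this range. Iterating extensions along the filtration yields $\bigwedge^j M \in D^{\leq d-j}(R)$, equivalently $(\bigwedge^j M)[-j] \in D^{\leq d}(R)$, as desired.

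The main obstacle will be verifying the stated Koszul filtration in the requisite generality (over an arbitrary commutative ring and for cofibers that need not split); once this filtration and the Koszul shifting identity are in hand, the connectivity estimate is essentially formal.
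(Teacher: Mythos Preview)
Your proposal is correct and is essentially the same argument as the paper's, just presented from the other side of the cofiber sequence. The paper takes the \emph{cofiber} $F$ of $R^d \to M$ (so $F \in D^{\leq -1}$, and in fact $F = N[1]$ in your notation) and uses the filtration on $\bigwedge^j M$ with graded pieces $\bigwedge^{j'} F \otimes_R \bigwedge^{j-j'} R^d$, together with the standard connectivity estimate $\bigwedge^{j'} F[-j'] \in D^{\leq 0}$. Via the d\'ecalage identity $\bigwedge^{j'}(N[1]) \simeq \Gamma^{j'}(N)[j']$ that you invoke, this is literally your filtration with graded pieces $\bigwedge^{j-i}(R^d)\otimes \Gamma^i(N)[i]$; the vanishing for $j-i>d$ and the connectivity bound for the surviving terms match up exactly. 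The paper's formulation is marginally slicker in that it cites the connectivity estimate for $\bigwedge^{j'}$ of an object in $D^{\leq -1}$ directly (referring to \cite[Sec.~25.2.4]{SAG}) rather than unwinding it through divided powers, but the content is the same.
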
 

\begin{proof} 
The result is clear if $M = R^d$ itself.
In general, we have a map $R^d \to M$ inducing a surjection on $H^0$, so the cofiber $F$ of
the map satisfies $F \in D(R)^{\leq -1}$. 
It follows that $\bigwedge^{j'} F [-j'] \in D(R)^{\leq 0}$ for all $j'$ by
standard connectivity estimates (see \cite[Sec.~25.2.4]{SAG} for an account).  
Using the natural filtration on $\bigwedge^j  M [-j]$ with associated graded
terms
$\bigwedge^{j'} F[-j'] \otimes_R \bigwedge^{j-j'} R^d [-(j-j')]$, 
the result easily follows. 
\end{proof} 

\begin{proposition} 
\label{connboundgenerators}
Let $R  \in \qqsyn{\mathcal{O}_K}$, let $n,i\ge0$, and suppose that $H^0( \widehat{L_{R/\mathcal{O}_K}})$ is generated by $d$ elements. Then $\mathcal{N}^n \Prism_R$ and  $\mathcal{N}^{\geq n} \Prism_R \left\{i\right\}$ lie in $D^{\leq d+1}(R)$. 
\end{proposition}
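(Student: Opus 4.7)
The plan is to refine the proof of \Cref{connectivityNi} by exploiting the hypothesis on the number of generators of $H^0(\widehat{L_{R/\mathcal{O}_K}})$. The earlier proof used only the coarse bound $\widehat{\bigwedge^j L_{R/\mathcal{O}_K}} \in D^{[-j,0]}(R)$, which leads to $\mathcal{N}^n\Prism_R \in D^{\leq n}(R)$; I would instead invoke \Cref{Easyconnestimate}, which gives the sharper estimate $\widehat{\bigwedge^j L_{R/\mathcal{O}_K}}[-j] \in D^{\leq d}(R)$ uniformly in $j$.

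For the bound $\mathcal{N}^n\Prism_R \in D^{\leq d+1}(R)$, the identification $\mathcal{N}^n\Prism_R \simeq \gr^n\THH(R;\mathbb{Z}_p)[-2n]$ combined with the fiber sequence \eqref{grfibseq} exhibits $\mathcal{N}^n\Prism_R$ as the fiber of the map
\[
\gr^n\THH(R/S^0[z];\mathbb{Z}_p)[-2n] \longrightarrow \gr^{n-1}\THH(R/S^0[z];\mathbb{Z}_p)[-2(n-1)].
\]
By \Cref{filtongrTHH} the source admits a finite filtration whose associated gradeds are the $\widehat{\bigwedge^j L_{R/\mathcal{O}_K}}[-j]$ for $0 \le j \le n$, each of which lies in $D^{\leq d}(R)$ by \Cref{Easyconnestimate}; the target admits an analogous filtration with $j \le n-1$ and the same bound. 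Hence both endpoints of the map lie in $D^{\leq d}(R)$, and so their fiber lies in $D^{\leq d+1}(R)$.

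For $\mathcal{N}^{\geq n}\Prism_R\{i\}$, I would reduce to the previous case by d\'evissage along the Nygaard filtration. Since $\Prism_R\{i\}$ is Nygaard-complete (inherited from $\Prism_R$ through the invertible twist), one has
\[
\mathcal{N}^{\geq n}\Prism_R\{i\} \simeq \lim_{N\to\infty}\, \mathcal{N}^{\geq n}\Prism_R\{i\}\big/\mathcal{N}^{\geq N}\Prism_R\{i\},
\]
and each finite quotient admits a filtration with associated gradeds $\mathcal{N}^m\Prism_R\{i\}$ for $n \le m < N$. Since the Breuil--Kisin twist $\Prism_R\{i\}$ is an invertible $\Prism_R$-module, hence locally trivial in the quasisyntomic topology, a descent argument combined with the first part places each $\mathcal{N}^m\Prism_R\{i\}$ in $D^{\leq d+1}(R)$. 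The connectivity estimate of \Cref{filtconnTHHbounds} then ensures the tower is eventually constant in each cohomological degree, so the limit lies in $D^{\leq d+1}(R)$.

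The principal obstacle is in part (b): one must verify both that tensoring with the invertible Breuil--Kisin twist $\Prism_R\{i\}$ preserves the $R$-module amplitude bound from part (a), and that passing to the Nygaard limit does not contribute an additional cohomological degree. Both rely on a careful interplay between the local triviality of $\{i\}$ in the quasisyntomic topology and the quantitative connectivity bounds on $\mathrm{Fil}^{\geq N}\THH$ established in \Cref{filtconnTHHbounds}.
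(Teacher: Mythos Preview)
Your argument for $\mathcal{N}^n\Prism_R$ is essentially the paper's: both use \Cref{filtongrTHH} together with \Cref{Easyconnestimate} to bound $\gr^n\THH(R/S^0[z];\mathbb{Z}_p)[-2n]$ in $D^{\leq d}(R)$, and then the fiber sequence \eqref{grfibseq} gives the extra degree.

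For $\mathcal{N}^{\geq n}\Prism_R\{i\}$, your d\'evissage-plus-limit strategy is also the paper's, but your handling of the Breuil--Kisin twist has a genuine gap. You propose to trivialize $\{i\}$ by passing to a quasisyntomic cover $R \to R'$ and then invoking part (a) on $R'$; but the hypothesis that $H^0(\widehat{L_{R/\mathcal{O}_K}})$ is generated by $d$ elements is \emph{not} stable under quasisyntomic covers (indeed, the covers used to reach quasiregular semiperfectoids typically adjoin infinitely many $p$-power roots), so you cannot conclude $\mathcal{N}^m\Prism_{R'} \in D^{\leq d+1}(R')$. The paper sidesteps this entirely: since each $\mathcal{N}^m\Prism_R$ is an $R$-module (via $a_R\colon\Prism_R \to R$), the Breuil--Kisin twist becomes the tensor with the invertible $R$-module $\Prism_R\{i\}\otimes_{\Prism_R} R$, which is globally trivial by the same argument as \Cref{lemmatrivialised} (and is stated in \Cref{BMSfilt}(1) and used in the proof of \Cref{cofiberbound1}). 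Thus $\mathcal{N}^m\Prism_R\{i\} \simeq \mathcal{N}^m\Prism_R$ on the nose, and no descent is needed.

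Your concern about the limit step is unwarranted: the subcategory $D^{\leq d+1}(R)$ is closed under all limits (it is the shift of a coconnective part of a $t$-structure), so no extra cohomological degree appears. The paper simply says ``by passing to the limit'' for this reason.
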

\begin{proof}
By \Cref{filtongrTHH},  $\gr^n \THH(R/\mathbb{S}[z]; \mathbb{Z}_p)$
has a finite filtration with graded pieces
$ \widehat{\bigwedge^j L_{R/\mathcal{O}_K}} [2n-j]$ for $0 \leq j \leq n$. 
By \Cref{Easyconnestimate}, we find that 
$\gr^n \THH(R/\mathbb{S}[z]; \mathbb{Z}_p) \in D^{\leq d-2n}(R)$. 
Using the fiber sequence \eqref{grfibseq}, we find now that $\gr^n \THH(R;
\mathbb{Z}_p) \in D^{\leq d - 2n+1}(R)$. Shifting by $2n$ the result now
follows for $\mathcal N^n\Prism_R$. The same connectivity bound then follows for each $\mathcal{N}^{\geq n} \Prism_R \left\{i\right\}/\mathcal{N}^{\geq n+r} \Prism_R \left\{i\right\}$ by d\'evissage, and then for $\mathcal{N}^{\geq n} \Prism_R \left\{i\right\}$ by passing to the limit.
\end{proof}

\subsection{Frobenius nilpotence on $\Prism_R/p$, and proof of \Cref{Zpibound}
(2)}
In this subsection, we record some results about the contracting property of Frobenius on
$\Prism_R/p$ and use it to prove part of \Cref{Zpibound}. 
If $R \in \qlrsp_{\mathbb{Z}_p}$ is a $p$-torsion free quasiregular
semiperfectoid ring, then both $\Prism_R$ and all graded steps $\mathcal N^n\Prism_R$ of the Nygaard filtration are $p$-torsion free (e.g., because $\THH_*(R; \mathbb{Z}_p)$ is $p$-torsion free and concentrated in even degrees). 
 For $i, r \geq 0$, we will consider the maps 
\begin{equation} \label{canphiIadic} \can, \phi_i\colon \mathcal{N}^{\geq i+r} \Prism_R\left\{i\right\}/p  \to
\Prism_R\left\{i\right\}/p . \end{equation}
and show that both maps respect the $I$-adic filtration from \Cref{defItwist}, with
$\phi_i$ inducing the zero map on associated graded pieces in positive degrees
(\Cref{Iadicfiltlemma}). We will show in addition that
$\mathrm{can} - \phi_i$ induces an automorphism of $\mathcal{N}^{\geq i + r}
\Prism_R\left\{i\right\}/p$ for $r \gg 0$ (\Cref{Nygmodulop}). 

\begin{proposition} 
\label{contractingFrob}
Let $R \in \qlrsp_{\mathbb{Z}_p}$ and $i,r\ge0$. Then
the map 
\( \phi_i\colon  \mathcal{N}^{\geq i} \Prism_R\left\{i\right\} \to
\Prism_R\left\{i\right\}
\)
 carries 
$\mathcal{N}^{\geq i + r} \Prism_R\left\{i\right\}  $ into 
$I^{r}\Prism_R\left\{i\right\}$. 
\end{proposition}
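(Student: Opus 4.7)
The plan is to reduce the statement to the contracting property of the unnormalized Frobenius $\varphi = \varphi_0 \colon \Prism_R \to \Prism_R$ on the Nygaard filtration, namely the inclusion $\varphi(\mathcal{N}^{\geq n}\Prism_R) \subseteq I^n\Prism_R$ for all $n \geq 0$, and then to bootstrap to the divided Frobenius $\phi_i$ by using the Breuil--Kisin twist, which is designed precisely so that $\phi_i$ plays the role of ``$\varphi$ divided by $\tilde\xi^{i}$''.

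More precisely, fix a perfectoid $R_0 \to R$ as in \Cref{consprism}, so that $I = (\tilde\xi)$ with $\tilde\xi = \varphi(\xi)$ a nonzerodivisor in $\Prism_R$ and $\Prism_R$ itself $\tilde\xi$-torsion free (by \Cref{TPTHHtcp}). The Breuil--Kisin line bundle $\Prism_R\{i\}$ of \Cref{def_prism_sheaves} comes from the $2$-periodicity of $\TP(R;\ZZ_p)$, and tracing through the construction of $\can$ and $\phi_i$ from the cyclotomic Frobenius $\TC^-(R;\ZZ_p) \to \TP(R;\ZZ_p)$, one finds that after any local trivialisation of $\Prism_R\{i\}$ the map $\phi_i$ becomes identified with $\tilde\xi^{-i}\cdot \varphi|_{\mathcal{N}^{\geq i}\Prism_R}$. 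Granting this, the proposition reduces to showing $\varphi(\mathcal{N}^{\geq i+r}\Prism_R) \subseteq \tilde\xi^{i+r}\Prism_R$; dividing by $\tilde\xi^{i}$ (permissible by the $\tilde\xi$-torsion freeness of $\Prism_R$) then yields $\phi_i(\mathcal{N}^{\geq i+r}\Prism_R\{i\}) \subseteq \tilde\xi^{r}\Prism_R\{i\} = I^{r}\Prism_R\{i\}$, as required.

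The contracting property $\varphi(\mathcal{N}^{\geq n}\Prism_R) \subseteq \tilde\xi^{n}\Prism_R$ itself is tautological in the perfectoid case: for $R = R_0$ perfectoid, \Cref{BMS2perfectoid} gives $\mathcal{N}^{\geq n}\Prism_{R_0} = \xi^{n}\ainf(R_0)$ and $\varphi(\xi^{n}) = \tilde\xi^{n}$. For a general $R \in \qlrsp_{\ZZ_p}$, the inclusion extends either by quasisyntomic descent along the chosen perfectoid cover (both the Nygaard filtration and the Frobenius are compatible with such descent by the construction of \cite[Sec.~7]{BMS2}) or directly from the intrinsic description of the Nygaard filtration on absolute prismatic cohomology in \cite{Prisms}. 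The principal obstacle to making this into a fully rigorous argument is the bookkeeping of the Breuil--Kisin twist in the first step: one must upgrade the informal equation $\phi_i = \tilde\xi^{-i}\varphi$ into a canonical statement on the twisted modules $\Prism_R\{i\}$, so that the containment of $\Prism_R$-submodules of $\Prism_R\{i\}$ in the conclusion is unambiguous. Once this twist-level compatibility is in hand, the proof collapses to the explicit calculation with $\xi^n$ in the perfectoid case.
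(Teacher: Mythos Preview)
Your strategy has a genuine gap in the final paragraph. The map $R_0 \to R$ from a perfectoid ring to a quasiregular semiperfectoid ring is not a quasisyntomic cover---it is typically a surjection, not faithfully flat---so you cannot ``descend'' the inclusion $\varphi(\mathcal{N}^{\geq n}\Prism_R) \subseteq I^n\Prism_R$ from the perfectoid case. The objects of $\qlrsp_{\ZZ_p}$ are themselves the basis of the quasisyntomic site; there is nothing simpler to descend from. Your alternative, appealing to the intrinsic description of the Nygaard filtration in \cite{Prisms}, would indeed give the statement, but this is external to the paper's $\THH$-based definition of the Nygaard filtration and presupposes the comparison theorem you are implicitly trying to work towards.

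The paper's proof avoids both this gap and the Breuil--Kisin bookkeeping you flagged by working directly with the $\TC^-(R_0;\ZZ_p)$-module structure on $\TC^-(R;\ZZ_p)$. Concretely, one has $\TC^-_*(R_0;\ZZ_p) \simeq \ainf(R_0)[u,v]/(uv-\xi)$ with $|v|=-2$, and the Nygaard filtration on $\pi_{2i}\TC^-(R;\ZZ_p) = \mathcal{N}^{\geq i}\Prism_R\{i\}$ is the $v$-adic one: $\mathcal{N}^{\geq i+r}\Prism_R\{i\} = v^r \cdot \pi_{2(i+r)}\TC^-(R;\ZZ_p)$. Since the cyclotomic Frobenius $\TC^- \to \TP$ is a ring map and carries $v$ to a unit multiple of $\tilde\xi$ in $\pi_{-2}\TP(R_0;\ZZ_p)$---a computation entirely in the perfectoid base---the image of $v^r \cdot x$ under $\phi_i$ lands in $\tilde\xi^r\cdot \Prism_R\{i\} = I^r\Prism_R\{i\}$. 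This is the same underlying idea as yours (transport the calculation $\varphi(\xi)=\tilde\xi$ from $R_0$ to $R$), but it uses the module structure rather than descent, and it handles all $i$ at once without having to untwist.
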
 
\begin{proof} 
Let $R_0$ be a perfectoid ring mapping to $R$ and fix $\xi, \widetilde{\xi} \in \ainf(R_0)$ as usual. 
Then \cite[Sec.~6]{BMS2} we have an isomorphism
$\TC^-_*(R_0; \mathbb{Z}_p) \simeq \ainf(R_0)[u, v]/(uv - \xi)$ for $|u| = 2, |v|
= -2$. 
In this case, the filtration on $\mathcal{N}^{\geq i} \Prism_R\left\{n\right\}
\simeq \pi_{2i}( \TC^-(R; \mathbb{Z}_p))$
is the filtration by powers of $v$:
$$\mathcal{N}^{\geq i+r} \Prism_R \left\{i\right\} = v^r  \pi_{2i+2r} \TC^-(R;
\mathbb{Z}_p) \subset \pi_{2i} \TC^-(R; \mathbb{Z}_p).$$
But (as in \emph{loc.~cit.}) the cyclotomic Frobenius carries $v$ to a multiple of $\phi(\xi) =
\widetilde{\xi}$ in
$\pi_{-2} \TP(R_0; \mathbb{Z}_p)$; recalling that $I=(\tilde\xi)$, the result follows.  
\end{proof} 

\begin{construction}[The $I$-adic filtrations modulo $p$] 
\label{Iadicmodulop}
Let $R \in \qlrsp_{\mathbb{Z}_p}$. For each $i,s,r\ge0$, the map $\phi_i\colon\mathcal N^{\ge i+r}\Prism_R\{i\}/p\to\Prism_R\{i\}/p$ is Frobenius semi-linear by \Cref{consprism}(1), and so carries $I^s(\mathcal{N}^{\geq i+r}
 \Prism_R \left\{i\right\}/p)$ to $I^{ps + r}(\Prism_R\left\{i\right\}/p)$ by
 \Cref{contractingFrob}. But we have seen in \Cref{TPTHHtcp} that
 $(p,\tilde\xi)$ and $(\tilde\xi,p)$ are regular sequences on $\Prism_R$, whence
 the canonical maps are isomorphisms $I\otimes_{\Prism_R}\Prism_R/p\simeq
 I(\Prism_R/p)\simeq I/p$, and similarly for any power of $I$ and Breuil--Kisin
 twist of $\Prism_R$.  We thus get maps 
\begin{equation} \label{canphinfilt} \can, \phi_i\colon I^s \otimes_{\Prism_R}   \mathcal{N}^{\geq i+r} \Prism_R
\left\{i\right\}/p \to
I^s \otimes_{\Prism_R} 
\Prism_R \left\{i\right\}/p.
\end{equation}
\end{construction} 

For convenience, we record what we have proved about the interaction of the
Frobenius and the $I$-adic filtration, as it will be used to prove 
\Cref{connboundtransferfree}:

\begin{proposition}[The canonical and Frobenius map are $I$-adically filtered modulo
$p$] 
\label{Iadicfiltlemma}
Let $R \in \qlrsp_{\mathbb{Z}_p}$, and let $i,r\ge0$. 
The maps 
\eqref{canphiIadic} upgrade to the structure of filtered maps with respect to the
$I$-adic filtrations on both sides, i.e., there are compatible 
maps for each $s \geq 0$, 
\[\can, \phi_i\colon I^s \otimes_{\Prism_R} \mathcal{N}^{\geq i+r}
\Prism_R\left\{i\right\}/p \to 
I^s \otimes_{\Prism_R}\Prism_R\left\{i\right\}/p.\] 
Furthermore, the map  $\phi_i$ induces the zero 
map on associated graded pieces unless $s = r = 0$. 
\end{proposition}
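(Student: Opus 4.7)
The plan is to derive both conclusions purely formally from two already-available inputs: the $\phi_0$-semilinearity of $\phi_i$ built into the prismatic structure of \Cref{consprism}, and the contracting estimate $\phi_i(\mathcal{N}^{\geq i+r}\Prism_R\{i\}) \subseteq I^r \Prism_R\{i\}$ of \Cref{contractingFrob}.

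First I would dispense with the canonical map, which is essentially tautological: $\can$ is the $\Prism_R$-linear inclusion $\mathcal{N}^{\geq i+r}\Prism_R\{i\} \hookrightarrow \Prism_R\{i\}$ reduced modulo $p$, so tensoring with $I^s$ over $\Prism_R$ automatically produces a compatible map between the $s$-th filtered pieces, and on gradeds it is just the inclusion tensored with $I^s/I^{s+1}$.

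The substantive content concerns $\phi_i$. The additional observation to record is that, modulo $p$, the Frobenius $\phi_0$ on $\Prism_R$ raises the $I$-adic filtration by a factor of $p$: explicitly $\phi_0(I^s\Prism_R/p) \subseteq I^{ps}\Prism_R/p$. This is immediate from the $\delta$-ring identity $\phi_0(x) \equiv x^p \pmod{p}$ applied to the generator $\tilde{\xi}$ of $I$. Combining this with $\phi_0$-semilinearity and \Cref{contractingFrob}, for $a \in I^s$ and $x \in \mathcal{N}^{\geq i+r}\Prism_R\{i\}$ one has $\phi_i(a\cdot x) = \phi_0(a)\cdot\phi_i(x) \in I^{ps}\cdot I^r\Prism_R\{i\} = I^{ps+r}\Prism_R\{i\}$ modulo $p$. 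This yields
\[
\phi_i\bigl(I^s \otimes_{\Prism_R} \mathcal{N}^{\geq i+r}\Prism_R\{i\}/p\bigr) \subseteq I^{ps+r}\otimes_{\Prism_R}\Prism_R\{i\}/p.
\]
Since $ps+r \geq s$ for all $s,r \geq 0$, this is the desired filtered map; since $ps+r > s$ whenever $s > 0$ or $r > 0$, the induced map on $s$-th associated gradeds is zero unless $s = r = 0$.

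I do not anticipate any serious obstacle: the only point requiring care is that the successive $I$-adic pieces remain well-behaved after reducing modulo $p$ (so that it makes sense to speak of $I^s\otimes_{\Prism_R}(-)/p$ as a subobject of $\Prism_R\{i\}/p$), but this has already been secured in \Cref{Iadicmodulop} by the regularity of the sequence $(p,\tilde{\xi})$ on $\Prism_R$ recorded in \Cref{TPTHHtcp}.
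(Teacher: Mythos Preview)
Your proposal is correct and follows essentially the same route as the paper: the paper's proof simply invokes \Cref{Iadicmodulop}, whose content is exactly the Frobenius-semilinearity from \Cref{consprism}(1) combined with \Cref{contractingFrob} to land $\phi_i$ in $I^{ps+r}\otimes_{\Prism_R}\Prism_R\{i\}/p$, together with the regularity of $(p,\tilde{\xi})$ from \Cref{TPTHHtcp}. You have unpacked this with slightly more explicit detail (the $\delta$-ring congruence and the numerical inequality $ps+r>s$ unless $s=r=0$), but the argument is the same.
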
 
\begin{proof} 
In \Cref{Iadicmodulop} we constructed the maps and showed that in fact $\phi_i$  has image in $I^{ps+r}\otimes_{\Prism_R}\Prism_R\left\{i\right\}/p$.
\end{proof} 

For the moment we need the following consequence of our arguments.

\begin{corollary}[The Nygaard filtrations modulo $p$]
\label{Nygmodulop}
Let $R \in \qlrsp_{\mathbb{Z}_p}$ and $i\ge0$. For $r\gg0$ (independent of $R$), the map
$\can - \phi_i\colon 
\mathcal{N}^{\geq i} \Prism_R\left\{i\right\}/p \to \Prism_R\left\{i\right\}/p$ 
 carries $\mathcal{N}^{\geq i+r} \Prism_R\left\{i\right\}/p $
 isomorphically onto itself. Consequently, for such $r$, one has a natural isomorphism
 \begin{equation} \label{nilpFpn} \mathbb{F}_p(i)(R) \simeq 
\mathrm{fib}\left( \can - \phi_i\colon (\mathcal{N}^{\geq i} \Prism_R\left\{i\right\}/
\mathcal{N}^{\geq i+r} \Prism_R \left\{i\right\})/p \to 
( \Prism_R\left\{i\right\}/
\mathcal{N}^{\geq i+r} \Prism_R \left\{i\right\}) /p\right)
 .\end{equation}
\end{corollary}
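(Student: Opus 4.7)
The plan is to exploit the contractive behavior of $\phi_i$ modulo $p$ with respect to the $I$-adic filtration, as encoded in \Cref{Iadicfiltlemma}, to promote $\can - \phi_i$ to an isomorphism on the Nygaard tail $\mathcal{N}^{\geq i+r}\Prism_R\{i\}/p$. The displayed formula for $\mathbb{F}_p(i)(R)$ then follows by a short d\'evissage.

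The heart of the argument is a self-containment assertion. By \Cref{contractingFrob}, $\phi_i$ carries $\mathcal{N}^{\geq i+r}\Prism_R\{i\}$ into $I^r\Prism_R\{i\}$, so in order that $\phi_i$ restrict to an endomorphism of $M:=\mathcal{N}^{\geq i+r}\Prism_R\{i\}/p$ inside $\Prism_R\{i\}/p$ it suffices to know that $I^r\Prism_R\{i\}/p\subseteq \mathcal{N}^{\geq i+r}\Prism_R\{i\}/p$. Quasisyntomic descent from the perfectoid case \Cref{BMS2perfectoid} makes this transparent: there the Nygaard filtration is $\xi$-adic, so $\mathcal{N}^{\geq i+r}\Prism_{R_0}\{i\}=\xi^r\Prism_{R_0}\{i\}$; because the Witt vector Frobenius reduces to the $p$-th power modulo $p$, one has $\widetilde{\xi}=\varphi(\xi)\equiv \xi^p\pmod p$, and hence $I^r\Prism_{R_0}\{i\}/p=\xi^{pr}\Prism_{R_0}\{i\}/p\subseteq \xi^r\Prism_{R_0}\{i\}/p$ for every $r\ge 0$. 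Faithfully flat descent along a perfectoid cover $R_0\to R$ propagates the nesting to arbitrary $R\in\qlrsp_{\mathbb{Z}_p}$, using that $\xi$ has image in $\mathcal{N}^{\geq 1}\Prism_R$ and $\widetilde{\xi}$ generates $I\subset\Prism_R$. The same bookkeeping, done at each $I$-adic level, yields $I^{ps+r}\Prism_R\{i\}/p\subseteq I^{s+1}M$ provided $(p-1)(ps+r)\ge p$, and in particular for any $r\ge 2$ uniformly in $R,i,s$.

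Combining this with the filtered statement $\phi_i(I^sM)\subseteq I^{ps+r}\Prism_R\{i\}/p$ from \Cref{Iadicmodulop}, the Frobenius $\phi_i|_M$ is a self-map of $M$ that strictly raises the $I$-adic filtration for $r\ge 2$. Since $\Prism_R$ is $(p,I)$-complete from its prism structure, $M$ inherits $I$-adic completeness, and the geometric series $\sum_{k\ge 0}(\phi_i|_M)^k$ converges to an inverse of $\mathrm{id}_M-\phi_i|_M$. But $\can|_M$ is tautologically $\mathrm{id}_M$, so $(\can-\phi_i)|_M=\mathrm{id}_M-\phi_i|_M$ is an isomorphism, which is the first claim.

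For the displayed formula \eqref{nilpFpn}, reduce the defining cofiber sequence of $\mathbb{Z}_p(i)(R)$ from \Cref{defBMSZPi} modulo $p$ and compare with the short exact sequences
\[
0\to \mathcal{N}^{\geq i+r}\Prism_R\{i\}/p\to \mathcal{N}^{\geq i}\Prism_R\{i\}/p\to (\mathcal{N}^{\geq i}/\mathcal{N}^{\geq i+r})\Prism_R\{i\}/p\to 0,
\]
\[
0\to \mathcal{N}^{\geq i+r}\Prism_R\{i\}/p\to \Prism_R\{i\}/p\to (\Prism_R\{i\}/\mathcal{N}^{\geq i+r}\Prism_R\{i\})/p\to 0,
\]
linked vertically by $\can-\phi_i$. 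Since the leftmost column is an isomorphism by the previous step, passing to (derived) fibers shows that the fibers of the middle and right columns agree, and the fiber of the middle column is $\mathbb{F}_p(i)(R)$. The main obstacle is the descent step in the self-containment argument: making precise that the relationship $I^r\subseteq \mathcal{N}^{\geq i+r}$ evident in the perfectoid case transports to every $R\in\qlrsp_{\mathbb{Z}_p}$ requires care with the definition of the Nygaard filtration as a quasisyntomic sheaf, and this is where the uniformity hypothesis $r\gg 0$ enters.
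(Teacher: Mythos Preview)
Your argument is essentially correct and follows the same strategy as the paper: show that $\phi_i$ is a topologically nilpotent self-map of $M=\mathcal{N}^{\geq i+r}\Prism_R\{i\}/p$ by combining \Cref{contractingFrob} with the congruence $\widetilde{\xi}\equiv\xi^p\pmod p$ and the fact that $\xi\in\mathcal{N}^{\geq 1}\Prism_R$, then invert $\mathrm{id}-\phi_i|_M$ by a geometric series. Two points deserve comment.

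First, the descent framing is a red herring. A map $R_0\to R$ from a perfectoid ring to a quasiregular semiperfectoid ring is not a cover, and no faithfully flat descent is needed: the inclusion $\widetilde{\xi}^r\Prism_R\{i\}/p=\xi^{pr}\Prism_R\{i\}/p\subseteq\mathcal{N}^{\geq pr}\Prism_R\{i\}/p$ holds directly for any such $R$ because $\xi$ maps into $\mathcal{N}^{\geq 1}\Prism_R$ by functoriality of the Nygaard filtration under $\Prism_{R_0}\to\Prism_R$. You correctly state this key fact, so your argument goes through once the descent language is removed.

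Second, your claimed bound ``$r\ge 2$ uniformly in $i$'' is wrong: the containment $\mathcal{N}^{\geq pr}\subseteq\mathcal{N}^{\geq i+r}$ requires $r(p-1)\ge i$, so $r$ must grow with $i$. This is harmless, since the statement only asks for $r$ independent of $R$. The paper's proof differs slightly in that it uses the Nygaard filtration itself (rather than the $I$-adic filtration) to witness topological nilpotence, showing directly that $\phi_i(\mathcal{N}^{\geq i+r}/p)\subseteq\mathcal{N}^{\geq i+r+1}/p$ once $r(p-1)\ge i+1$; this avoids the extra bookkeeping in your inclusion $I^{ps+r}\Prism_R\{i\}/p\subseteq I^{s+1}M$.
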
 
\begin{proof} 
This is \cite[Lemma 7.22]{BMS2}. 
By \Cref{contractingFrob} above (and choosing as usual a
perfectoid ring $R_0$ mapping to $R$), 
we find that $\varphi_i$ carries $\mathcal{N}^{\geq i +
r } \Prism_R \left\{i\right\}/p$ 
(equivalently, $v^r 
\mathcal{N}^{\geq i+r} \Prism_R\left\{i+r\right\}/p $)
into multiples of $\widetilde{\xi}^{r} \Prism_R
\left\{i\right\}/p$. Since we are working modulo $p$, we have 
$\widetilde{\xi}^{r} \Prism_R
\left\{i\right\}/p = 
\xi^{rp} \Prism_R\left\{i\right\}/p
 \subset \mathcal{N}^{\geq rp} \Prism_R\left\{i\right\}/p$.  For $r \gg 0$, this
 is contained in $\mathcal{N}^{\geq i  + r + 1} \Prism_R\left\{i\right\}/p$, whence we have shown that 
$\phi_i$ carries $\mathcal{N}^{\geq i+r} \Prism_R\left\{i\right\}/p$ to 
$\mathcal{N}^{\geq i+r+1} \Prism_R\left\{i\right\}/p$. 

It follows that $\can - \phi_i$ carries 
$\mathcal{N}^{\geq i+r} \Prism_R\left\{i\right\}/p$ into itself, and it differs from the identity by a topologically nilpotent endomorphism
of 
$\mathcal{N}^{\geq i+r}\Prism_R\left\{i\right\}/p$ with respect to the Nygaard
filtration. Therefore it is an isomorphism and the result follows.
\end{proof} 

\begin{proposition}[A criterion for being left Kan extended] 
\label{LKElem}
Let $F, G\colon \qsyn \to D(\mathbb{Z}_p)$ be $p$-complete quasisyntomic sheaves 
equipped with complete descending $\mathbb{Z}_{\geq 0}$-indexed filtrations $\mathrm{Fil}^{\geq \star} F$ and
$\mathrm{Fil}^{\geq \star} G$. 
Let $F \to G$ be  a map of functors (not necessarily filtration-preserving). If
\begin{enumerate}
    \item[{\rm (1)}] for $R \in \qlrsp_{\mathbb{Z}_p}$, the objects
$\gr^r F(R)$ and $\gr^r G(R)$ are discrete, $p$-complete, and $p$-torsion free (and therefore so
are $F(R), G(R)$),
\item[{\rm (2)}]
each of the associated graded terms $\mathrm{gr}^r F$ and $\mathrm{gr}^r G$ is
 left
Kan extended from finitely generated $p$-complete polynomial $\mathbb{Z}_p$-algebras to the
$p$-complete derived category, and
\item[{\rm (3)}] 
there exists $N$ such that for $r \geq N$ and 
for $R \in \qlrsp_{\mathbb{Z}_p}$, the map $F(R)/p \to G(R)/p$ 
carries the submodule $\mathrm{Fil}^{\geq r} F(R)/p $ isomorphically to $\mathrm{Fil}^{\geq
r} G(R)/p$ (so that $\mathrm{Fil}^{\geq N}F(R)/p \to \mathrm{Fil}^{\geq
N}G(R)/p$ is an equivalence of filtered
abelian groups),
\end{enumerate}
then $\mathrm{fib}( F \to G)$  is $p$-completely left Kan extended
from finitely generated $p$-complete polynomial $\mathbb{Z}_p$-algebras. 
\end{proposition}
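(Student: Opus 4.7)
The plan is to reduce to studying the mod-$p$ fiber, use hypothesis~(3) to truncate the filtration at level $N$, and then conclude from hypothesis~(2) together with the fact that $p$-complete left Kan extension commutes with finite (co)limits in the stable setting.

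First, since $F$ and $G$ (and hence $\mathrm{fib}(F\to G)$) take values in $p$-complete objects, being $p$-completely left Kan extended from finitely generated $p$-complete polynomial $\mathbb{Z}_p$-algebras is detected by the mod-$p$ reduction. So it suffices to show that $\mathrm{fib}(F\to G)/p \simeq \mathrm{fib}(F/p\to G/p)$ is left Kan extended from finitely generated polynomial $\mathbb{F}_p$-algebras. Second, I would contemplate the fiber sequence
\[
\mathrm{fib}\bigl(\mathrm{Fil}^{\geq N}F \to \mathrm{Fil}^{\geq N}G\bigr)\ \longrightarrow\ \mathrm{fib}(F\to G)\ \longrightarrow\ \mathrm{fib}\bigl(F/\mathrm{Fil}^{\geq N}F \to G/\mathrm{Fil}^{\geq N}G\bigr).
\]
Reducing mod $p$, hypothesis~(3) combined with completeness of the filtrations says exactly that the first term vanishes on every $R\in\qlrsp_{\mathbb{Z}_p}$. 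Using that $\qlrsp_{\mathbb{Z}_p}$ is a basis for the quasisyntomic topology and that the subobjects $\mathrm{Fil}^{\geq N}F$ and $\mathrm{Fil}^{\geq N}G$ are themselves quasisyntomic sheaves, the vanishing propagates to all of $\qsyn$, yielding
\[
\mathrm{fib}(F\to G)/p\ \simeq\ \mathrm{fib}\bigl(F/\mathrm{Fil}^{\geq N}F \to G/\mathrm{Fil}^{\geq N}G\bigr)/p.
\]

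To finish, each of $F/\mathrm{Fil}^{\geq N}F$ and $G/\mathrm{Fil}^{\geq N}G$ is a finite iterated extension of the graded pieces $\gr^r F$ and $\gr^r G$ for $r<N$, which are $p$-completely left Kan extended by hypothesis~(2). In the stable setting, the class of $p$-completely left Kan extended functors is closed under finite (co)limits, so both quotients are $p$-completely left Kan extended, and then so is the fiber of their comparison map. Combined with the first reduction step, this yields the conclusion.

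The hard part I anticipate is the sheaf-theoretic propagation in the middle step: one must verify that $\qlrsp_{\mathbb{Z}_p}$ really forms a basis for the quasisyntomic topology (which amounts to showing that any quasiregular semiperfectoid admits a quasisyntomic cover by a $p$-torsion free one), and that the $\mathrm{Fil}^{\geq N}$ themselves---not merely their graded quotients---define quasisyntomic sheaves. Both of these should be traceable to the unfolding construction of the filtration from \cite{BMS2}, but they are the places where care is required. Everything else is formal manipulation of fiber sequences and the standard fact that $p$-completion of left Kan extensions is detected modulo $p$.
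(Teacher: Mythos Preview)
Your strategy is close in spirit to the paper's, but there is a genuine gap, and it is not the one you flagged. The fiber sequence you write down,
\[
\mathrm{fib}\bigl(\mathrm{Fil}^{\geq N}F \to \mathrm{Fil}^{\geq N}G\bigr)\ \longrightarrow\ \mathrm{fib}(F\to G)\ \longrightarrow\ \mathrm{fib}\bigl(F/\mathrm{Fil}^{\geq N}F \to G/\mathrm{Fil}^{\geq N}G\bigr),
\]
presupposes maps $\mathrm{Fil}^{\geq N}F \to \mathrm{Fil}^{\geq N}G$ and $F/\mathrm{Fil}^{\geq N}F \to G/\mathrm{Fil}^{\geq N}G$. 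But the proposition explicitly says $F\to G$ is \emph{not} assumed filtration-preserving, so neither of these maps exists as a natural transformation on $\qsyn$; hypothesis~(3) only gives you a pointwise factorization on $\qlrsp_{\mathbb{Z}_p}$, after reducing mod~$p$. Promoting this to a map of sheaves is exactly the nontrivial content, and you have assumed it away. Moreover, your proposed propagation step fails outright: $\qlrsp_{\mathbb{Z}_p}$ is \emph{not} a basis for $\qsyn$ (for instance, $\mathbb{F}_p\in\qsyn$ admits no quasisyntomic cover by a $p$-torsion free ring), so even granting the fiber sequence you could not push the vanishing from $\qlrsp_{\mathbb{Z}_p}$ to all of $\qsyn$.

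The paper's proof sidesteps both problems by introducing $F',G'$, the left Kan extensions of $F,G$ from finitely generated $p$-complete polynomial algebras, with their left Kan extended filtrations. The commutative squares
\[
\begin{gathered}
\xymatrix{
\mathrm{Fil}^{\geq r} F'(R)/p \ar[d] \ar[r]^{\simeq} & \mathrm{Fil}^{\geq r} G'(R)/p \ar[d] \\
F'(R)/p \ar[r] & G'(R)/p
}
\end{gathered}
\]
are \emph{constructed}: first for polynomial $R$ (where $F'=F$, $G'=G$) by descent to $\qlrsp_{\mathbb{Z}_p}$---this is valid since polynomial $\mathbb{Z}_p$-algebras lie in $\mathrm{qSyn}_{\mathbb{Z}_p}$, for which $\qlrsp_{\mathbb{Z}_p}$ \emph{is} a basis---and crucially using hypothesis~(1) so that on $\qlrsp_{\mathbb{Z}_p}$ everything is discrete and the factorization is a property rather than data. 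Then one left Kan extends the diagrams to all of $\qsyn$. Taking the inverse limit over $r$ shows the square with $F'/p,G'/p,F/p,G/p$ is cartesian, hence $\mathrm{fib}(F\to G)/p\simeq\mathrm{fib}(F'\to G')/p$, and the right side is manifestly left Kan extended. The passage through $F',G'$ and polynomial algebras is what makes the argument work; your direct route via the filtration on $F,G$ themselves does not.
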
 
\begin{proof} 
It suffices to check that 
$\mathrm{fib}( F \to G)/p$ is left Kan extended from finitely generated
$p$-complete polynomial algebras.
Let $F', G'$ denote the functors on $\qsyn$ obtained by restricting $F, G$ to
finitely generated $p$-complete polynomial algebras and then left Kan extending to
$\qsyn$, with their left Kan extended filtrations.  
Our assumptions imply that $F, G$ are the respective completions of $F', G'$ with respect
to their filtrations. 

It suffices to check that 
the natural map induces an equivalence $\mathrm{fib}(F' \to G')/p \simeq
\mathrm{fib}(F \to G)/p$. 
We claim that for any $R \in \qsyn$, there are natural commutative diagrams,
compatible in $r \geq N$,
    \begin{equation} \label{diags}\begin{gathered} \xymatrix{
\mathrm{Fil}^{\geq r} F'(R)/p \ar[d]  \ar[r]^{\simeq} & \mathrm{Fil}^{\geq r}
G'(R)/p \ar[d]  \\
F'(R)/p \ar[r] &  G'(R)/p.
    }\end{gathered}\end{equation}
In fact, it suffices to prove this by left Kan extension for $R$
finitely generated $p$-complete polynomial
over $\mathbb{Z}_p$, and then  we can replace $F', G'$ by $F, G$. By descent for
$F, G$,
we can then reduce to $R \in \qlrsp_{\mathbb{Z}_p}$,  whence we have the desired
diagrams by hypothesis. 

Using the diagrams \eqref{diags}, we find that 
there is a natural commutative diagram
\[ \xymatrix{
F'(R)/p \ar[d]  \ar[r] &  G'(R)/p \ar[d]  \\
F(R)/p \ar[r] &  G(R)/p 
}\]
which is homotopy cartesian (taking the inverse limit over $r$). 
We finally find that $\mathrm{fib}(F \to G) = \mathrm{fib}(F' \to G')$, which
is left Kan extended from finitely generated $p$-complete polynomial algebras as desired. 
\end{proof} 

\begin{proof}[Proof of \Cref{Zpibound}(2)]
We show that  $R \mapsto
\mathbb{Z}_p(i)(R)$, as a functor on $\qsyn$, is left Kan extended from
finitely generated 
$p$-complete polynomial $\mathbb{Z}_p$-algebras. 
Since $\mathbb{Z}_p(i)(R) = \mathrm{fib}( \can  - \phi_i\colon \mathcal{N}^{\geq i}
    \Prism_R\left\{i\right\} \to \Prism_R\left\{i\right\})$, we will apply
    \Cref{LKElem}
with $F = \mathcal{N}^{\geq i}\Prism\left\{i\right\}, G =
\Prism\left\{i\right\}$
using the Nygaard filtrations
$R \mapsto \mathcal{N}^{\geq i+r}\Prism_{R}\left\{i\right\}$ on $\qsyn$.
Indeed, the associated graded terms for the Nygaard filtration are left Kan extended from
finitely generated $p$-complete polynomial algebras (\Cref{cofiberbound1}), and they are
torsion free on $R \in \qlrsp_{\mathbb{Z}_p}$. 
The last hypothesis follows from 
\Cref{Nygmodulop}. 
Then \Cref{LKElem} gives that the $\mathbb{Z}_p(i)$ are left Kan extended from
finitely generated $p$-complete polynomial algebras as desired. 
Since $R \mapsto \TC(R; \mathbb{Z}_p)$ is left Kan extended from finitely
generated $p$-complete polynomial rings (by \cite[Theorem G]{CMM} and since
$\mathrm{TC}(-; \mathbb{Z}_p)$ commutes with geometric realizations on
simplicial commutative rings), it follows inductively
that the constructions $R \mapsto \fil^{\geq i} \TC(R; \mathbb{Z}_p)$ are also left
Kan extended from finitely generated $p$-complete $\mathbb{Z}_p$-algebras. 
\end{proof}

For future reference, we observe that we can obtain a motivic filtration on
$\mathrm{TC}$ for any simplicial commutative ring, by left Kan extension. 
We let $\mathrm{SCR}$ denote the $\infty$-category of simplicial commutative
rings. 

\begin{construction}[{Left Kan extending to $\mathrm{SCR}$}]
\label{LKEmotivicfilt}
We have seen that the functor which sends $R \in \qsyn$ to the filtration
$\mathrm{Fil}^{\geq \star} \TC(R; \mathbb{Z}_p)$ is left Kan extended
from finitely generated $p$-complete polynomial algebras. 
Thus, we can left Kan extend to all $p$-complete simplicial commutative rings to obtain a functor
\( R \mapsto  \mathrm{Fil}^{\geq \star} \TC(R; \mathbb{Z}_p), \)
from $\mathrm{SCR}$ to $p$-complete filtered spectra, which commutes with
sifted colimits. We define functors $\mathbb{Z}_p(i)$ on $\mathrm{SCR}$ as
$\mathbb{Z}_p(i)(R) = \mathrm{gr}^i \mathrm{TC}(R; \mathbb{Z}_p)[-2i]$, or in other words by left Kan extending $\mathbb Z_p(i)$ from finitely generated $p$-complete polynomial algebras.

Once we complete the proof of \Cref{Zpibound} in the next subsection, it will
follow that $\mathrm{Fil}^{\geq i} \mathrm{TC}(R; \mathbb{Z}_p)$ (resp.~$\mathbb
Z_p(i)$) belongs to $\sp_{\geq
i-1}$ (resp.~$D^{\leq i+1}(\mathbb{Z}_p)$) for each $i$, by left Kan extending the connectivity estimate from the quasisyntomic case.

We also emphasize that the above proof of \Cref{Zpibound}(2) shows the following. Given $i\ge 0$, there is $r\gg0$ such that for all $p$-complete rings $R$ there is a natural expression  \begin{equation} \label{nilpFpngeneral} \mathbb{F}_p(i)(R) \simeq 
\mathrm{fib}\left( \can - \phi_i\colon (\mathcal{N}^{\geq i} \Prism_R\left\{i\right\}/
\mathcal{N}^{\geq i+r} \Prism_R \left\{i\right\}) \otimes_{\mathbb{Z}}^L
\mathbb{F}_p \to 
( \Prism_R\left\{i\right\}/
\mathcal{N}^{\geq i+r} \Prism_R \left\{i\right\}) \otimes_{\mathbb{Z}}^L
\mathbb{F}_p\right),
\end{equation}
where the two Nygaard quotients on the right side are defined by left Kan extension from finitely generated $p$-complete polynomial algebras.
\end{construction}

\subsection{Proofs of the connectivity bounds (\Cref{Zpibound}(1)) for the $\mathbb{Z}_p(i)$}
In this subsection, we complete the proof of 
\Cref{Zpibound}. 

\begin{lemma}[Connectivity lemma]
\label{connectivitynilpotencelemma}
Let $\mathrm{can}, \phi\colon \mathrm{Fil}^{\geq \star} M\to
\mathrm{Fil}^{\geq \star } N$ be maps of filtered objects in
$D(\mathbb{Z})$ (with underlying objects $M, N$). Suppose that
\begin{enumerate}
    \item[{\rm (1)}]  both filtrations are complete,
    \item[{\rm (2)}] $\phi$ induces the zero map on associated graded pieces, and
    \item[{\rm (3)}] there is a fixed $r$ such that, for each $s$,
the induced map $\mathrm{can}\colon \mathrm{Fil}^{\geq s} M \to
\mathrm{Fil}^{\geq s} N$ has fiber in
$D(\mathbb{Z})^{\leq r}$.
\end{enumerate}
Then $\mathrm{can} - \phi\colon M \to N$ has fiber in $D(\mathbb{Z})^{\leq r}$. \end{lemma}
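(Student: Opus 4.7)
The plan is to bootstrap up from the associated graded pieces to finite truncations of the filtration, and then pass to the limit via a Mittag--Leffler argument.

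First, I establish that $\mathrm{fib}(\can \colon \gr^s M \to \gr^s N) \in D(\mathbb{Z})^{\leq r}$ for each $s \geq 0$. The $3 \times 3$-lemma, applied to the morphism of fiber sequences $\fil^{\geq s+1}(-) \to \fil^{\geq s}(-) \to \gr^s(-)$ induced by $\can\colon M \to N$, produces a fiber sequence
\[ \mathrm{fib}(\can|_{\fil^{\geq s+1}}) \to \mathrm{fib}(\can|_{\fil^{\geq s}}) \to \mathrm{fib}(\can|_{\gr^s}). \]
Both of the leftmost terms lie in $D^{\leq r}$ by hypothesis (3), and the long exact cohomology sequence then puts the cofiber $\mathrm{fib}(\can|_{\gr^s})$ in $D^{\leq r}$ as well.

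Next, setting $\tilde M_n := \fil^{\geq s} M/\fil^{\geq s+n} M$ and $\tilde N_n := \fil^{\geq s} N/\fil^{\geq s+n} N$, I prove by induction on $n \geq 1$ that $X_n := \mathrm{fib}(\can - \phi \colon \tilde M_n \to \tilde N_n)$ lies in $D^{\leq r}$. For $n = 1$, hypothesis (2) gives that $\can - \phi$ agrees with $\can$ on $\gr^s$, so this is the conclusion of Step 1. For the inductive step, the fiber sequence $\gr^{s+n-1} \to \tilde M_n \to \tilde M_{n-1}$ and its analog for $N$, combined with the $3 \times 3$-lemma and the vanishing of $\phi$ on $\gr^{s+n-1}$, produces a fiber sequence
\[ \mathrm{fib}(\can|_{\gr^{s+n-1}}) \to X_n \to X_{n-1} \]
whose outer terms lie in $D^{\leq r}$, forcing $X_n \in D^{\leq r}$.

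Finally, I pass to the limit over $n$. Completeness of the filtrations gives $\fil^{\geq s} M \simeq \lim_n \tilde M_n$ and likewise for $N$, hence $\mathrm{fib}(\can - \phi|_{\fil^{\geq s}}) \simeq \lim_n X_n$. The Milnor exact sequence
\[ 0 \to R^1\lim_n H^{i-1}(X_n) \to H^i(\lim_n X_n) \to \lim_n H^i(X_n) \to 0 \]
then concludes the argument once both flanking terms vanish for $i > r$. The $\lim$ term vanishes since each $X_n \in D^{\leq r}$. The main obstacle is the $R^1\lim$ term, because a naive limit argument only yields $D^{\leq r+1}$; however, the transition maps $X_n \to X_{n-1}$ have fiber $\mathrm{fib}(\can|_{\gr^{s+n-1}}) \in D^{\leq r}$ by Step 1, so $H^r(X_n) \to H^r(X_{n-1})$ is surjective, the tower $\{H^r(X_n)\}_n$ is Mittag--Leffler, and therefore $R^1\lim H^r(X_n) = 0$. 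Taking $s = 0$ completes the proof.
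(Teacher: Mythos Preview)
Your proof is correct and follows essentially the same approach as the paper: both observe that the fiber of $\can - \phi$ inherits a complete filtration whose associated graded pieces agree with those of $\mathrm{fib}(\can)$ (since $\phi$ vanishes on graded pieces) and hence lie in $D^{\leq r}$. The only difference is that the paper invokes the general principle that a complete filtered object with graded pieces in $D^{\leq r}$ lies in $D^{\leq r}$, whereas you unwind this explicitly via the Mittag--Leffler argument.
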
 
\begin{proof} 
The fiber $\mathrm{fib}( \can - \phi\colon M \to N)$ acquires the natural structure
of a filtered spectrum, since $\can, \phi$ are filtered maps. 
On graded pieces, we find 
$\mathrm{gr}^s \mathrm{fib}( \can - \phi\colon M \to N) \simeq \mathrm{gr}^s
\mathrm{fib}( \can\colon M \to N)$ since $\phi$ vanishes on associated graded
terms. In
particular, the associated graded terms belong to $D(\mathbb{Z})^{\leq r} $. 
Since the filtration on $\mathrm{fib}( \can - \phi)$ is complete, the
connectivity assertion on the fiber now follows from the analogous assertion on
associated graded terms. 
\end{proof}

\begin{proposition}[The $\mathbb{Z}_p(i)$ connectivity bound for $
\mathrm{qSyn}_{\mathbb{Z}_p}$]
\label{connboundtransferfree}
Let $R \in
\mathrm{qSyn}_{\mathbb{Z}_p}$. Then $\mathbb{Z}_p(i)(R) \in D^{\leq i+1}(
\mathbb{Z}_p)$ for each $i \geq 0$. 
\end{proposition}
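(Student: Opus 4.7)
The strategy is to reduce modulo $p$, apply the reformulation of $\mathbb{F}_p(i)$ from \Cref{Nygmodulop}, and then exploit the contracting property of the divided Frobenius via \Cref{connectivitynilpotencelemma} applied to the $I$-adic filtration, with a secondary application handling the bottom filtration level.

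First, I would reduce to a mod-$p$ computation. Since $R\in\mathrm{qSyn}_{\mathbb{Z}_p}$ is $p$-torsion free, each $\mathcal{N}^{\geq n}\Prism_R\{i\}$ and $\Prism_R\{i\}$ is $p$-torsion free (being an even homotopy group of a $p$-complete spectrum), and $\mathbb{Z}_p(i)(R)$ is derived $p$-complete. By derived Nakayama, it then suffices to prove $\mathbb{F}_p(i)(R):=\mathbb{Z}_p(i)(R)\otimes^{L}_{\mathbb{Z}}\mathbb{F}_p$ lies in $D^{\leq i+1}(\mathbb{F}_p)$.

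Second, I would apply \Cref{Nygmodulop} to fix $r\gg 0$ for which
\[ \mathbb{F}_p(i)(R) \simeq \mathrm{fib}\bigl(\can - \phi_i\colon \overline{A}\to\overline{B}\bigr),\]
with $\overline{A}:=(\mathcal{N}^{\geq i}\Prism_R\{i\}/\mathcal{N}^{\geq i+r}\Prism_R\{i\})/p$ and $\overline{B}:=(\Prism_R\{i\}/\mathcal{N}^{\geq i+r}\Prism_R\{i\})/p$; this truncates the (potentially unbounded) Nygaard tail without altering the fiber.

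Third, I equip both $\overline{A}$ and $\overline{B}$ with the $I$-adic filtration, where $I=(\widetilde{\xi})\subset\Prism_R$ is the prism ideal of \Cref{consprism}. This filtration is complete and eventually zero, because modulo $p$ one has $\widetilde{\xi}\equiv\xi^p$ with $\xi\in\mathcal{N}^{\geq 1}$, so a sufficiently large power of $I$ lies inside $\mathcal{N}^{\geq i+r}$. By \Cref{Iadicfiltlemma}, $\can$ and $\phi_i$ both preserve this filtration, and $\phi_i$ vanishes on the associated graded at every level $s\geq 1$. On such a level, using \Cref{lemmatrivialised} to trivialise the $I$-twist, the fiber of $\can$ is identified (up to shift) with the mod-$p$ reduction of $\Prism_R\{i\}/\mathcal{N}^{\geq i}\Prism_R\{i\}$, which sits in $D^{\leq i-1}$ by \Cref{cofiberbound1}, hence in $D^{\leq i}$ after reducing modulo $p$. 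An application of \Cref{connectivitynilpotencelemma} to the sub-filtration $F^{\geq 1}$ controls the contribution of these positive levels by $D^{\leq i}\subseteq D^{\leq i+1}$.

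Fourth, I would treat the bottom $I$-adic level $s=0$, where $\phi_i$ need not vanish on the associated graded. Here the idea is to apply \Cref{connectivitynilpotencelemma} again, now with the Nygaard filtration on $\overline{A}/I\overline{A}$ and $\overline{B}/I\overline{B}$: by the contracting property \Cref{contractingFrob}, $\phi_i(\mathcal{N}^{\geq i+s}\Prism_R\{i\})\subseteq I^s\Prism_R\{i\}$, so modulo $I$ the Frobenius $\phi_i$ vanishes on every Nygaard level $\geq i+1$ of $\overline{A}/I$. The remaining piece at Nygaard level $i$ modulo $(p,I)$ is again controlled by \Cref{cofiberbound1}, producing the required bound $D^{\leq i+1}$, and assembling the two applications of \Cref{connectivitynilpotencelemma} completes the proof.

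The main obstacle is the bookkeeping in the last step: one must verify that the two successive applications of \Cref{connectivitynilpotencelemma} (first with the $I$-adic filtration, then with the Nygaard filtration modulo $I$) cooperate to give the sharp bound $D^{\leq i+1}$ rather than a weaker bound. This rests on the congruence $\widetilde{\xi}\equiv\xi^p\pmod p$ and on the careful interplay between the Nygaard and $I$-adic filtrations on $\Prism_R$ modulo $p$.
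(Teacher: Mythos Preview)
Your strategy is essentially correct and uses the same ingredients as the paper (mod~$p$ reduction, the $I$-adic filtration, \Cref{Iadicfiltlemma}, \Cref{cofiberbound1}, and \Cref{connectivitynilpotencelemma}), but you are missing the single simplification that makes your ``main obstacle'' disappear entirely.

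The paper begins by pulling off the graded piece $\mathcal{N}^{i}\Prism_R\{i\}$ \emph{before} doing anything else: since the inclusion $\mathcal{N}^{\geq i+1}\Prism_R\{i\}\hookrightarrow\mathcal{N}^{\geq i}\Prism_R\{i\}$ has cofiber $\mathcal{N}^{i}\Prism_R\{i\}\in D^{\leq i}$ (by \Cref{cofiberbound1}), it suffices to bound the fiber of
\[
\can-\phi_i\colon\mathcal{N}^{\geq i+1}\Prism_R\{i\}/p\longrightarrow\Prism_R\{i\}/p
\]
by $D^{\leq i+1}$. The point is that now $r\geq 1$ everywhere in \Cref{Iadicfiltlemma}, so $\phi_i$ vanishes on \emph{every} associated graded piece of the $I$-adic filtration, including $s=0$. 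A single application of \Cref{connectivitynilpotencelemma}, with \Cref{cofiberbound1} supplying the bound on the fiber of $\can$ at each $I$-adic level, then finishes the proof. Your Step~4 and the second application of the connectivity lemma simply never arise.

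Two further remarks. First, your detour through \Cref{Nygmodulop} to truncate the Nygaard tail is unnecessary: the paper works directly with the untruncated objects, the $I$-adic filtration being complete by $\widetilde{\xi}$-adic completeness on $\qlrsp_{\mathbb{Z}_p}$. Second, your Step~4 as written is genuinely problematic, not just bookkeeping: to apply \Cref{connectivitynilpotencelemma} you need $\phi_i$ to vanish on \emph{all} graded pieces, but at the bottom Nygaard level $i$ modulo $I$ it does not, and you do not explain how to handle the residual map $\can-\phi_i$ on that piece (\Cref{cofiberbound1} controls only $\can$). The paper's trick of shifting to $\mathcal{N}^{\geq i+1}$ trades exactly one cohomological degree for the uniform vanishing of $\phi_i$, which is precisely the budget allowed by the target bound $D^{\leq i+1}$.
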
 
\begin{proof} 
First, we recall from the proof of \Cref{cofiberbound1} that 
the inclusion $\mathcal{N}^{ \geq i+1} \Prism_R\left\{i\right\}  \to \mathcal{N}^{\geq
i}\Prism_R \left\{i\right\}$ has cofiber in $D^{\leq i}( \mathbb{Z}_p)$. 
Using the resulting cofiber sequence, it thus suffices to show that the fiber of 
$\mathrm{can} - \phi_i\colon\mathcal{N}^{\geq i+1} \Prism_R\left\{i\right\}\to \Prism_R\left\{i\right\}$
belongs to $D^{\leq i+1}( \mathbb{Z}_p)$. 
Since everything is $p$-complete, it suffices to check this with mod $p$
coefficients.

We consider the two maps
\[ \mathrm{can}, \phi_i\colon \mathcal{N}^{\geq i+1} \Prism_R\left\{i\right\}/p \to
\Prism_R\left\{i\right\}/p.  \]
Unfolding \Cref{Iadicfiltlemma} shows that these upgrade to maps $\mathrm{can}, \phi_i\colon I^s\mathcal{N}^{\geq i+1} \Prism_R\left\{i\right\}/p \to
I^s\Prism_R\left\{i\right\}/p$ for all $s\ge0$, i.e., of $I$-adically filtered
    objects, and that the map $\phi_i$ acts trivially on associated graded
    pieces.
Furthermore, 
for each $s\ge0$,
    the fiber of $\mathrm{can}\colon I^s 
\mathcal{N}^{\geq i+1} \Prism_R\left\{i\right\}/p \to
    I^s \Prism_R\left\{i\right\}/p$
	 belongs to $D^{\leq i+1}( \mathbb{Z}_p)$ by
\Cref{cofiberbound1}.  
\Cref{connectivitynilpotencelemma} (whose hypothesis (1) is satisfied by $\tilde\xi$-adic completeness in the case of $R\in \qlrsp_{\mathbb{Z}_p}$) now shows that 
$\mathrm{can} - \phi_i\colon\mathcal{N}^{\geq i+1} \Prism_R\left\{i\right\}\to \Prism_R\left\{i\right\}$
belongs to $D^{\leq i+1}( \mathbb{Z}_p)$, as desired.
\end{proof}

\begin{proof}[Proof of \Cref{Zpibound}(1)]
We wish to show that $\mathbb{Z}_p(i) \in D^{\leq
i+1}(\mathbb{Z}_p)$. 
But we have already proved part (2) of \Cref{Zpibound}, so the problem reduces to the case of finitely generated $p$-completely polynomial
rings over $\mathbb{Z}_p$, which is covered by \Cref{connboundtransferfree}. 
\end{proof}

\subsection{Rigidity, and proof of \Cref{rigidity}}

In this subsection, we give a proof of \Cref{rigidity}. Our strategy is to first
prove a continuity statement, after which N\'eron--Popescu and left Kan
extension arguments reduce the general case to that of a square-zero extension. In that case we use the automatic gradings that exist and argue with the pro-nilpotence of
Frobenius.
Recall that a ring $R$ is said to be {\em F-finite} if $R/p$ is finitely generated over its subring of $p^{\textrm{th}}$-powers.
The next result is an analog on graded pieces of 
\cite[Th.~4.5]{Morrow_Dundas}. 

\begin{proposition}[Continuity]\label{prop_continuity}
Let $R$ be a noetherian, F-finite, $p$-complete ring, and $I\subseteq R$ an ideal such that $R$ is $I$-adically complete. Then the natural map $\mathbb Z_p(i)(R)\to\varprojlim_s \mathbb Z_p(i)(R/I^s)$ is an equivalence for any $i\ge0$.
\end{proposition}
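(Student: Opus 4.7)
My plan is to derive this from Morrow's $\TC$-continuity theorem \cite[Th.~4.5]{Morrow_Dundas} by upgrading it to the full BMS motivic filtration. Via Construction~\ref{LKEmotivicfilt} we have $\mathbb{Z}_p(i)(R)[2i]\we\mathrm{gr}^i\TC(R;\ZZ_p)$ for the left-Kan-extended motivic filtration, and since taking graded pieces of a filtered spectrum commutes with inverse limits, it suffices to establish an equivalence of filtered spectra
\[
\mathrm{Fil}^{\geq\star}\TC(R;\ZZ_p)\xrightarrow{\sim}\varprojlim_s\mathrm{Fil}^{\geq\star}\TC(R/I^s;\ZZ_p),
\]
which via the fiber sequences $\mathrm{Fil}^{\geq i+1}\to\mathrm{Fil}^{\geq i}\to\mathrm{gr}^i$ reduces to proving continuity separately for each $\mathrm{Fil}^{\geq i}\TC(R;\ZZ_p)$.

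Two key inputs will drive the proof. First, Morrow's theorem yields $\TC(R;\ZZ_p)\we\varprojlim_s\TC(R/I^s;\ZZ_p)$; the underlying cyclic-bar-construction argument actually produces this equivalence already at the level of $\THH$ as a cyclotomic spectrum, and hence at the level of $\TC^-$, $\TP$, and therefore of the prismatic sheaves $\Prism_R\{i\}$ and $\mathcal{N}^{\geq i}\Prism_R\{i\}$ that build the motivic filtration on $\TC$ via \Cref{BMSfilt}. Second, \Cref{Zpibound}(1) provides the uniform connectivity bound $\mathrm{Fil}^{\geq i}\TC(-;\ZZ_p)\in\sp_{\geq i-1}$, so in any fixed homological degree $n$ the motivic filtration has length at most $n+2$, independently of the ring.

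I would then assemble these ingredients as follows. For $R\in\qrsp$, the motivic filtrations on $\TC^-$ and $\TP$ are the double-speed Postnikov filtrations, and the uniform even-concentration of $\THH(R;\ZZ_p)$ for such rings makes the pro-system $\{\THH(R/I^s;\ZZ_p)\}$ sufficiently Mittag-Leffler that Postnikov truncation commutes with $\varprojlim_s$; this yields continuity for each $\mathrm{Fil}^{\geq i}\TC^-$ and $\mathrm{Fil}^{\geq i}\TP$, and hence (via the defining fiber sequence $\TC=\mathrm{fib}(\can-\varphi)$) for each $\mathrm{Fil}^{\geq i}\TC(R;\ZZ_p)$. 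Descent along the quasisyntomic topology extends the result to all quasisyntomic $R$, and the left Kan extension of Construction~\ref{LKEmotivicfilt} would carry it to arbitrary $p$-complete rings.

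The expected main obstacle is this last step: the passage from quasisyntomic $R$ (where the filtration has an intrinsic description via descent from $\qrsp$) to $R$ merely noetherian and $F$-finite, since the motivic filtration on a general $p$-complete ring is defined only by left Kan extension from finitely generated $p$-complete polynomial algebras. The connectivity bound $\mathrm{Fil}^{\geq i}\TC\in\sp_{\geq i-1}$ will be essential here: it ensures that on any fixed $\pi_n$ only finitely many filtration pieces contribute, so the left Kan extension interacts well with $\varprojlim_s$ in the relevant range of degrees. Combined with Morrow's continuity for $\TC$ itself, this uniform finiteness is exactly what is needed to close the argument and extract the continuity statement for each $\mathbb{Z}_p(i)$.
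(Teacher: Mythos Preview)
Your proposal has a genuine gap at the step where you claim continuity for $\TC^-$ and $\TP$ ``therefore'' yields continuity for the prismatic sheaves $\Prism_R\{i\}$ and $\mathcal{N}^{\geq i}\Prism_R\{i\}$. For a ring $R$ that is not in $\qrsp$, these objects are \emph{not} homotopy groups of $\TC^-(R;\ZZ_p)$ or $\TP(R;\ZZ_p)$; they are defined by unfolding the Postnikov filtration from $\qrsp$ covers. So an equivalence $\TC^-(R;\ZZ_p)\simeq\varprojlim_s\TC^-(R/I^s;\ZZ_p)$ of spectra says nothing about the motivic filtration pieces. Your attempt to repair this via the chain ``qrsp $\to$ descent $\to$ LKE'' fails at each link: the rings in the proposition are noetherian and $F$-finite, hence essentially never in $\qrsp$; even if $R'\in\qrsp$ covers $R$, the quotients $R'/I^sR'$ need not be in $\qrsp$ (so even-concentration of their $\THH$ is unavailable); quasisyntomic descent does not obviously commute with $\varprojlim_s$; and left Kan extension certainly does not commute with inverse limits. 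The connectivity bound only tells you the filtration on $\varprojlim_s\TC(R/I^s;\ZZ_p)$ is complete, not that its associated graded is computed termwise --- that is precisely the statement you are trying to prove.

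The paper's proof avoids this circularity by never touching the full spectra $\TC^-$, $\TP$, or $\TC$. It works directly with the associated graded of the Nygaard filtration: using the fiber sequence of \Cref{filtconnTHHbounds} and the cotangent-complex filtration of \Cref{filtongrTHH}, continuity for each $\mathcal{N}^n\Prism_R$ is reduced to continuity for each $\widehat{\bigwedge^n L_{-/\mathbb{Z}_p}}$. This is where the noetherian and $F$-finite hypotheses actually enter: an Andr\'e--Quillen pro-vanishing result gives that $\{\bigwedge^n L_{(R/I^s)/R}\}_s$ is pro-zero, and then finite generation of the cohomology of $\bigwedge^n L_{R/\mathbb{Z}_p}$ (which uses $F$-finiteness) handles the derived $I$-completion. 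Your outline invokes Morrow's theorem as a black box, but the actual content needed is this cotangent-complex continuity, and it must be fed into the filtration one graded piece at a time.
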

\begin{proof}
From \Cref{defBMSZPi} and completeness of the Nygaard filtration, it is enough
to prove the analogous continuity for each $\mathcal
N^n\Prism\{i\}\simeq\mathrm{gr}^n \THH(-; \mathbb{Z}_p)[-2n]$. Then \Cref{filtconnTHHbounds} reduces us further to continuity for each $\mathrm{gr}^n\THH(-/\mathbb{S}[z]; \mathbb{Z}_p)$, and finally the filtration of \Cref{filtongrTHH} reduces the problem to continuity for each $\widehat{\bigwedge^n L_{-/\mathbb Z_p}}$.

A cell attachment lemma of Andr\'e and Quillen (see
\cite[Th.~4.4(i)]{Morrow_pro_H_unitality} for a presentation in this context)
shows that all cohomology groups of all wedge powers of $L_{(R/I^s)/R}$ are pro
zero in $s$ (except $H^0$ of $\bigwedge^0 L$). So the transitivity fiber
sequence 
shows that $\bigwedge^nL_{R/\mathbb
Z_p}\otimes^L_RR/I^s\to\bigwedge^nL_{(R/I^s)/\mathbb Z_p}$ is a pro isomorphism on
all cohomology groups. This reduces the problem to showing that
$\bigwedge^nL_{R/\mathbb Z_p}\to \varprojlim_s\bigwedge^nL_{R/\mathbb
Z_p}\otimes_R^LR/I^s$ is an equivalence after $p$-adic completion. Since $R$ has
bounded $p$-power torsion, this derived $p$-adic completion may be equivalently
computed as $\varprojlim_r (-\otimes^L_R R/p^r)$. Exchanging the limits, it is
enough to show that $M\simeq \varprojlim_sM\otimes^L_RR/I^s$, where
$M=\bigwedge^nL_{R/\mathbb Z_p}\otimes^L_RR/p^r$ for any $n\ge0$, $r\ge1$. But
this follows from the facts that $R$ is noetherian, that $M$ is bounded above
(cohomologically), and that its cohomology groups are finitely generated
$R$-modules, cf.~\cite[Th.~3.6]{Morrow_Dundas} (which uses $F$-finiteness).
\end{proof}

\begin{remark}
As usual, one can prove stronger continuity statements when $I=(p)$. For example,
given a $p$-complete ring $R$ with bounded $p$-power torsion, we claim that
$\mathbb Z_p(i)(R)/p^r\to\{\mathbb Z_p(i)(R/p^s)/p^r\}_s$ induces an isomorphism of pro groups on all cohomology groups for any
fixed $r\ge1$. To prove this we reduce to the case $r=1$ and appeal to
(\ref{nilpFpngeneral}) (instead of the completeness of the Nygaard filtration
used at beginning of the proof of \Cref{prop_continuity}) to again reduce to the
analogous assertion for graded pieces of the Nygaard filtration. Then argue as
in the proof of \Cref{prop_continuity} (the assumption that $R$ has bounded
$p$-power torsion implies that the ideal $(p)$ is pro Tor-unital in the sense of
\cite{Morrow_pro_H_unitality}, which is needed to verify pro vanishing of
$\bigwedge^nL_{(R/p^s)/R}$) to reduce to the analogous statement about
$(\bigwedge^nL_{R/\mathbb Z_p})/p\to \{(\bigwedge^nL_{R/\mathbb
Z_p})/p\otimes_R^L R/p^s\}_s$, which follows from boundedness of the $p$-power torsion in $R$.
\end{remark}

\begin{proposition}\label{prop_reduction}
\Cref{rigidity} holds in the  special case of henselian pairs of the form $R=A
\oplus N$, $I=N$, where $A$ is the $p$-completion of a finitely generated
$\mathbb Z_p$-polynomial algebra, $N$ is a finitely generated $A$-module, and
$R$ is the trivial square-zero extension of $A$ by $N$.  
\end{proposition}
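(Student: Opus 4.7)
The plan is to exploit the $\mathbb{G}_m$-grading on $R = A \oplus N$ arising from the trivial square-zero structure, combined with the fact that Frobenius is contracting on positive weights.

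Since $N^2 = 0$ and the projection $R \to A$ admits a section, $R$ carries a natural $\mathbb{Z}_{\geq 0}$-grading with $R_0 = A$ and $R_1 = N$, equivalently a $\mathbb{G}_m$-action. By \Cref{Zpibound}(2) and \Cref{LKEmotivicfilt}, the functors $\Prism$, $\mathcal{N}^{\geq i}\Prism\{i\}$, and $\mathbb{Z}_p(i)$ are defined on arbitrary $p$-complete simplicial commutative rings via left Kan extension from finitely generated $p$-complete polynomial $\mathbb{Z}_p$-algebras; hence each inherits an internal weight grading compatible with the section $R \to A$, whose weight-$0$ component recovers the invariant of $A$. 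We thus obtain
\[
\mathrm{fib}\bigl(\mathbb{Z}_p(i)(R) \to \mathbb{Z}_p(i)(A)\bigr) \simeq \mathbb{Z}_p(i)(R)^{>0},
\]
and the goal reduces to showing $\mathbb{Z}_p(i)(R)^{>0} \in D^{\leq i}(\mathbb{Z}_p)$.

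The key dynamical point is that $\can$ strictly preserves the weight grading, while $\phi_i$ is Frobenius-semilinear and thus carries weight $n$ to weight $pn$. Both maps therefore restrict to the positive-weight subobjects, so
\[
\mathbb{Z}_p(i)(R)^{>0} \simeq \mathrm{fib}\bigl(\can - \phi_i : \mathcal{N}^{\geq i}\Prism_R\{i\}^{>0} \to \Prism_R\{i\}^{>0}\bigr).
\]
Endow both sides with the decreasing weight filtration $F^{\geq s} := \bigoplus_{n \geq s}(-)^{=n}$; then $\can$ preserves each $F^{\geq s}$, while $\phi_i$ carries $F^{\geq s}$ into $F^{\geq ps} \subseteq F^{\geq s+1}$ for $s \geq 1$. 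Consequently $\phi_i$ vanishes on the associated graded pieces in positive weights. A connectivity estimate, deduced from the fiber sequence \eqref{grfibseq} together with the HKR-type filtration of \Cref{filtongrTHH}, shows that the weight-$n$ piece of $\THH(R;\mathbb{Z}_p)$ (and thus of its Nygaard subquotients) has homological connectivity growing with $n$, since it is controlled by $n$-fold wedge products of the cotangent complex of a polynomial resolution of $R$ over $A$. This makes the filtration $F^{\geq s}$ degree-wise finite, hence complete. \Cref{connectivitynilpotencelemma} then identifies the cohomological amplitude of $\mathrm{fib}(\can - \phi_i)$ with that of $\mathrm{fib}(\can)$; the latter equals $\Sigma^{-1}(\Prism_R\{i\}^{>0}/\mathcal{N}^{\geq i}\Prism_R\{i\}^{>0})$, which lies in $D^{\leq i}(\mathbb{Z}_p)$ by \Cref{cofiberbound1}, completing the proof.

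The main obstacle will be the careful verification of degree-wise finiteness of the weight grading in the left-Kan-extended setup where $R$ need not itself be quasisyntomic. A robust fallback is to argue modulo $p$ using the bounded presentation \eqref{nilpFpngeneral}, where only a finite Nygaard truncation enters --- automatically making the weight filtration finite --- and then recover the integral bound via a $p$-Bockstein argument, using $p$-completeness of $\mathbb{Z}_p(i)(R)$. A secondary check is that the $\mathbb{G}_m$-action interacts with Frobenius by multiplying weights by $p$; this follows from the congruence $\varphi \equiv (-)^p \pmod p$ on $\Prism_R$ (see \Cref{consprism}) together with the fact that the $p$-th power map on a graded ring multiplies weights by $p$.
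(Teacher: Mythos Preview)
Your overall strategy --- grade $R=A\oplus N$ with $A$ in weight $0$ and $N$ in weight $1$, use that $\can$ preserves weights while $\phi_i$ multiplies them by $p$, and thereby reduce to the cohomological bound on $\mathrm{fib}(\can)$ from \Cref{cofiberbound1} --- is exactly the paper's approach. However, the execution has two genuine gaps.

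First, your integral argument invokes $\Prism_R\{i\}$ and $\mathcal{N}^{\geq i}\Prism_R\{i\}$ as if they were left Kan extended from polynomial algebras. They are not: \Cref{Zpibound}(2) and \Cref{LKEmotivicfilt} only establish this for $\mathbb{Z}_p(i)$ and $\fil^{\geq i}\TC$, while \Cref{cofiberbound1} gives it for the \emph{finite Nygaard quotients} $\Prism_R\{i\}/\mathcal{N}^{\geq j}$. The Nygaard-complete $\Prism_R\{i\}$ is not of this form, and since $R=A\oplus N$ need not be quasisyntomic (the cotangent complex can fail the Tor-amplitude condition when $N$ has torsion), you cannot directly work with $\Prism_R\{i\}$ here. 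Relatedly, your claim that the weight-$n$ piece of $\THH(R;\mathbb{Z}_p)$ has connectivity tending to $\infty$ is not justified by the sketch you give (the ``$n$-fold wedge products of the cotangent complex of a polynomial resolution of $R$ over $A$'' reasoning does not apply to absolute $\THH$), and even if true would not obviously survive the Nygaard completion.

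Second, your fallback via \eqref{nilpFpngeneral} is the right move --- this is precisely what the paper does --- but the assertion that a finite Nygaard truncation ``automatically makes the weight filtration finite'' is not correct as stated and misses the key input. What is needed, and what the paper verifies, is condition (2) of \Cref{grconnlemma2}: for each fixed cohomological degree $n$, the group $H^n$ of $(\Prism_R\{i\}/\mathcal{N}^{\geq i+r})\otimes^L_{\mathbb{Z}}\mathbb{F}_p$ vanishes in all but finitely many grading weights. This uses that $A$ is noetherian and $N$ is finitely generated: by d\'evissage through \Cref{filtongrTHH} and \Cref{filtconnTHHbounds}, the Nygaard quotient is built from finitely many $\widehat{\bigwedge^l L_{R/\mathbb{Z}_p}}$, each of whose cohomology groups is a finitely generated graded $R$-module; since $R$ itself lives only in weights $0$ and $1$, any such module is weight-bounded. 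The paper then applies \Cref{grconnlemma2} (rather than \Cref{connectivitynilpotencelemma} directly) to conclude. So your fallback outline is correct in spirit, but you must replace ``finite weight filtration'' with this cohomology-by-cohomology boundedness, and explicitly invoke the finite generation hypothesis on $N$.
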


The proof of 
\Cref{prop_reduction} will be given below. 
Using 
\Cref{prop_reduction}, we explain how to deduce 
\Cref{rigidity}. 

\begin{proof}[Proof of \Cref{rigidity}]
First, note that it is equivalent to prove that the homotopy fiber of \Cref{rigidity} mod $p$ belongs to $D^{\le i}(\mathbb Z_p)$, i.e., we may replace $\mathbb Z_p(i)$ by $\mathbb F_p(i)=\mathbb Z_p(i)/p$.

We consider the functor on 
$\mathbb{Z}$-algebras, 
$R \mapsto F(R) \stackrel{\mathrm{def}}{=}\mathbb{F}_p(i)( \widehat{R}_p)$ (where $\hat{R}_p$ denotes the
derived $p$-completion of $R$). By \Cref{Zpibound}, the functor $F$ commutes with filtered colimits
in $R$. 
It suffices to show that if $(R, I)$ is a henselian pair, then \begin{equation}
\label{cohbnd} \mathrm{fib}(
F(R) \to F(R/I)) \in D^{\leq i}(\mathbb{F}_p) .\end{equation}

First, we prove \eqref{cohbnd} in case $I \subset R$ is a nilpotent ideal. 
By transitivity and an easy induction, it suffices to assume $I^2 =0$. 
Next we apply a standard trick to reduce to the case that $R\to R/I$ is split:
choose a simplicial resolution $P_\bullet \to R/I$ by polynomial $\mathbb
{Z}$-algebras (possibly in infinitely many variables), and let $Q_\bullet$ be
the fiber product along $R\to R/I$. Then each $Q_j\to P_j$ has kernel $I$ and
admits a section, since $P_j$ is a polynomial algebra. Taking the geometric
realization using \Cref{Zpibound}(2), we thus reduce to proving
\eqref{cohbnd} for each pair $(Q_j=P_j\oplus I,I)$.
But we can write $P_j$ as a filtered colimit of polynomial $\mathbb{Z}$-algebras
on finitely many variables and $I$ as a filtered colimit of finitely generated
modules. 
Using \Cref{Zpibound}(2) again (which shows that $F$ commutes with filtered
colimits), 
and \Cref{prop_reduction}, we conclude \eqref{cohbnd} for $I \subset R$
nilpotent.

Second, we prove \eqref{cohbnd} in case $R$ is noetherian and  $F$-finite, and 
$R$ is $I$-adically complete. 
In this case, 
\Cref{prop_continuity} shows that 
$F(R) \simeq \varprojlim F(R/I^n)$. 
We consider the tower (in $n$), 
$T_n = \mathrm{fib}( F(R/I^n) \to F(R/I))$; the fiber of each successive map
$T_n \to T_{n-1}$ belongs to $D^{\leq i}( \mathbb{F}_p)$, and thus we get that
$\varprojlim_n T_n = \mathrm{fib}( F(R) \to F(R/I)) \in D^{\leq
i}(\mathbb{F}_p)$ as desired.

Finally, suppose $(R, I)$ is a general henselian pair; we prove \eqref{cohbnd}.
Since $F$ commutes with filtered colimits, it suffices to assume that the
pair $(R, I)$ is the
henselization of a finitely generated $\mathbb{Z}$-algebra $R_0$ along an ideal
$I_0 \subset R_0$. 
By the previous paragraph, we have
$\mathrm{fib}( F( \hat{R}_I) \to F(R/I)) \in D^{\leq i}(\mathbb{F}_p)$, since
$\hat{R}_I$ is an $F$-finite, noetherian ring. 
Now $R_0$ is an excellent ring as a finitely generated $\mathbb{Z}$-algebra; since $R_0 \to R$ is ind-\'etale, $R$ is also excellent
\cite{Greco1976}. 
It follows that $R \to \widehat{R}_I$ is geometrically regular \cite[7.8.4(v)]{EGA_IV}
and is therefore a filtered colimit of smooth maps 
by N\'eron--Popescu desingularisation \cite{Popescu1, Popescu2} \cite[Tag
07BW]{stacks-project}); each of these maps necessarily admits a section. 
In particular, 
the map 
$\mathrm{fib}( F(R) \to F(R/I)) \to \mathrm{fib}( F(\widehat{R}_I) \to F(R/I))$
is a filtered colimit of maps, each of which admits a section. 
Since we have just seen that the target of this map belongs to $D^{\leq
i}(\mathbb{F}_p)$, the source does 
as well. 
\end{proof}

To complete the proof of \Cref{rigidity} by treating the pairs in the statement of  \Cref{prop_reduction}, we will exploit the presence of the grading induced by the variables.

\begin{remark}[$\THH$ for graded rings] \label{rem_THHofgraded}
In the remainder of this subsection we will systematically use graded objects
indexed over a commutative monoid $M$ (which will be $\mathbb Z[1/p]_{\ge0}$ or
$\mathbb Z_{\ge0}$): an {\em $M$-graded object} in a ($\infty$-)category
$\mathcal{C}$ is
by definition a functor $R_\star\colon  M\to \mathcal{C}$. When $\mathcal{C}$ is
symmetric monoidal, then so is the resulting category
$\mathrm{Fun}(M,\mathcal{C})$ of $M$-graded objects, under the Day convolution product, and an {\em $M$-graded ring} is then a ($\mathbb E_\infty$-, etc.) monoid object in $M$-graded objects. 

The {\em underlying object} $R$ of a graded object $R_\star$ is by definition
$\bigoplus_{m\in M}R_m$, when it exists. When all direct sums exist, the functor
$R_\star \mapsto R$ is conservative (and faithful when $\mathcal{C}$ is
an ordinary category).
We will be particularly interested in {\em $p$-complete graded rings}, by which
we mean a graded ring $R_\star$ in the category of $p$-complete abelian groups;
the underlying object $R$ is then the $p$-completed direct sum
$\widehat{\bigoplus}_{m\in M}R_m$, which is itself a $p$-complete ring. We
sometimes abusively identify $\widehat{\bigoplus}_{m\in M}R_m$ with $R_\star$
itself; this is a mild abuse of notation given that the functor
$R_\star \mapsto\widehat{\bigoplus}_{m\in M}R_m$ is conservative and faithful.

An $M$-graded commutative ring $R_\star$ may of course be viewed  as an $M$-graded
$\mathbb E_\infty$-ring in spectra, i.e., an $\mathbb E_\infty$-monoid in the
    symmetric monoidal stable $\infty$-category $\mathrm{Fun}(M,\Sp)$. By
    \Cref{app:gradedcyc}, we may then
form the $S^1$-equivariant object $\THH(R_\star)\in \mathrm{Fun}(M,\Sp)^{B S^1}$ and
the associated homotopy fixed points $\TC^-(R_\star)$, homotopy orbits
$\THH(R_\star)_{hS^1}$, and Tate construction $\TP(R_\star)$, all of which are
$M$-graded spectra. Note that the underlying spectrum of $\THH(R_{\star})$
is the $\THH$ of
the underlying ring spectrum of $R_\star$ 
because the underlying spectrum functor preserves tensor products and colimits;
this is not true for $\TC^-, \TP$ because the underlying spectrum functor need
not preserve limits (e.g., $S^1$-homotopy fixed points). 
\end{remark}

\begin{construction}[\cite{BMS2} for graded rings] \label{consBMS2graded}
Assume that the monoid $M$ is uniquely $p$-divisible, such as $\mathbb Z[1/p]_{\ge0}$. Then the main constructions and results of \cite{BMS2} extend to $M$-graded rings.

We will say that a $p$-complete $M$-graded ring $R_\star$ is \emph{quasisyntomic} (resp.~\emph{quasiregular semiperfectoid}) if the underlying ring $R=\widehat{\bigoplus}_{m\in M}R_m$ is quasisyntomic (resp.~quasiregular semiperfectoid). One has a natural graded analog of the quasisyntomic site, and similarly quasiregular semiperfectoids form a basis (for example, by extracting $p$-power roots of homogeneous elements as in \cite[Lem.~4.27]{BMS2}; this is why $p$-divisibility of $M$ is required); one obtains an analog of unfolding in
this context. 

For such $R_\star$, we have seen in \Cref{rem_THHofgraded} that we have
natural $M$-graded spectra,
$\THH(R_{\star}; \mathbb{Z}_p)$,  $\TC^-(R_{\star};
\mathbb{Z}_p)$, $\THH(R_{\star};
\mathbb{Z}_p)_{hS^1}$, and $\TP(R_{\star};
\mathbb{Z}_p)$. Moreover, the latter are naturally filtered objects in $M$-graded $p$-complete spectra, by carrying over the construction of the motivic filtration of \cite{BMS2} to the graded context. 
It follows that we get $M$-graded $p$-complete objects
$ \Prism_{R_{\star}}\left\{i\right\}, \mathcal{N}^{\geq i}
\Prism_{R_{\star}}\left\{i\right\}$, etc. 
In general, 
the underlying ($p$-complete) objects of  $
 \Prism_{R_{\star}}\left\{i\right\}, \mathcal{N}^{\geq i}
\Prism_{R_{\star}}\left\{i\right\}$ 
do \emph{not} agree with those of $\Prism_R\left\{i\right\}, \mathcal{N}^{\geq i}
\Prism_R\left\{i\right\}$, because the underlying object of $\TC^-(R_{\star};
\mathbb{Z}_p)$ is not $\TC^-(R; \mathbb{Z}_p)$. 
However, 
for each $j \geq i$, the underlying $p$-complete object of 
$\mathcal{N}^{\geq i} \Prism_{R_{\star}}\left\{i\right\} / \mathcal{N}^{\geq j}
\Prism_{R_{\star}}
\left\{i\right\}$ is $\mathcal{N}^{\geq i} \Prism_{R}\left\{i\right\} / \mathcal{N}^{\geq j}
\Prism_{R}
\left\{i\right\}$. 
This follows because the underlying object of $\THH(R_{\star}; \mathbb{Z}_p)$
is $\THH(R; \mathbb{Z}_p)$ and the forgetful functor from $M$-graded spectra to
spectra commutes with \emph{finite} homotopy limits. 

Throughout \cite{BMS2}, a basic tool is the cotangent complex and its 
wedge powers; here we implicitly use that if $R_{\star}$ is a $p$-complete $M$-graded ring,
then we have natural $p$-complete graded objects $\bigwedge^i
L_{R_{\star}/\mathbb{Z}_p}$ (defined in the usual manner as a left derived
functor of differential forms). The Hochschild--Kostant--Rosenberg theorem
remains valid in the graded context, and from there the results of
section~\ref{sec:relativeTHH} carry over to this context as well. 
\end{construction} 

We now turn to the interaction of the Frobenius with the grading, in the case which interests us. 
\begin{proposition}[Frobenius multiplies grading by $p$] 
\label{autograding}
Let $R_{\star}$ be a quasisyntomic $\mathbb{Z}[1/p]_{\geq 0}$-
(resp.~$\mathbb{Z}_{\geq 0}$-) graded ring (with underlying quasisyntomic ring $R$). 
Then we claim that
\begin{enumerate}
\item  $\Prism_R\left\{i\right\}/ \mathcal{N}^{\geq n} \Prism_R\{i\}$ naturally
upgrades to have the structure of a $\mathbb{Z}[1/p]_{\geq 0}$- (resp.~$\mathbb{Z}_{\geq 0}$-) graded object in the $p$-complete derived $\infty$-category $\widehat{D(\mathbb{Z}_p)}$
\item
the 
Frobenius map modulo $p$, as in (\ref{nilpFpngeneral})
$$\varphi_i\colon \mathcal{N}^{\geq i}\Prism_R\left\{i\right\}/
\mathcal{N}^{i+r} \Prism_R\left\{i\right\}  \otimes_{\mathbb{Z}}^{{L}} \mathbb{F}_p \to
\Prism_R\left\{i\right\}/\mathcal{N}^{\geq i+r} \Prism_R\left\{i\right\}
\otimes_{\mathbb{Z}}^{{L}} \mathbb{F}_p$$ 
 multiplies degrees by $p$. 
\end{enumerate}
\end{proposition}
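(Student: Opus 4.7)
The plan is to pass to the graded setting via \Cref{consBMS2graded} and then track how the cyclotomic Frobenius interacts with the grading. First, using \Cref{consBMS2graded} (and in the case $M=\mathbb{Z}_{\geq 0}$, by embedding $M \hookrightarrow \mathbb{Z}[1/p]_{\geq 0}$ to access the $p$-divisible framework), the graded ring $R_\star$ produces an $M$-graded $p$-complete $S^1$-equivariant spectrum $\THH(R_\star;\mathbb{Z}_p)$ with a cyclotomic Frobenius, along with motivic filtrations on $\TC^-(R_\star;\mathbb{Z}_p)$ and $\TP(R_\star;\mathbb{Z}_p)$. This produces $M$-graded $p$-complete objects $\Prism_{R_\star}\{i\}$ and $\mathcal{N}^{\geq j}\Prism_{R_\star}\{i\}$ in $\widehat{D(\mathbb{Z}_p)}$, together with Frobenius and canonical maps between them.

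For (1), define $\Prism_{R_\star}\{i\}/\mathcal{N}^{\geq n}\Prism_{R_\star}\{i\}$ as the cofiber of graded objects; this is the natural candidate graded lift. To identify its underlying $p$-complete object with $\Prism_R\{i\}/\mathcal{N}^{\geq n}\Prism_R\{i\}$, use the cofiber sequence
\[
\mathcal{N}^{\geq i}\Prism_R\{i\}/\mathcal{N}^{\geq n}\Prism_R\{i\} \to \Prism_R\{i\}/\mathcal{N}^{\geq n}\Prism_R\{i\} \to \Prism_R\{i\}/\mathcal{N}^{\geq i}\Prism_R\{i\}
\]
together with its graded analog. The underlying object of the left-hand term is identified via the statement in \Cref{consBMS2graded}, and the right-hand term $\Prism_R\{i\}/\mathcal{N}^{\geq i}\Prism_R\{i\}$ is handled by the same kind of argument applied to $\TP(R_\star;\mathbb{Z}_p)$ in place of $\TC^-$, since this quotient involves only a finite number of non-trivial pieces of the motivic filtration on $\TP$ (those in Nygaard degrees $<i$), for which the underlying-object identification goes through because finite homotopy limits commute with the forgetful functor from $M$-graded to ungraded $p$-complete spectra.

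For (2), we track the degree behavior of the cyclotomic Frobenius. The key point is that the graded cyclotomic Frobenius on $\THH$ of an $M$-graded $\mathbb{E}_\infty$-ring, arising from the $p$-fold tensor iteration in the cyclic bar construction (see \Cref{app:gradedcyc}), sends an element of grading degree $m$ to an element of grading degree $pm$: concretely, a cyclic tensor $r_1\otimes\cdots\otimes r_k$ with $\sum|r_j|=m$ maps under $\varphi$ essentially to its $p$-fold iterate, of total degree $pm$. This degree-by-$p$ behavior is inherited by the induced Frobenius $\varphi_i$ on the Nygaard quotients modulo $p$, via the motivic filtration (which is cut out compatibly with the graded structure), giving precisely the assertion of (2) by inspecting the formula \eqref{nilpFpngeneral}.

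The hard part will be the careful verification of the graded cyclotomic formalism — in particular, confirming that the graded cyclotomic Frobenius extends naturally from $\THH(R_\star)$ through the motivic filtration to all of $\mathcal{N}^{\geq j}\Prism_{R_\star}\{i\}$ and $\Prism_{R_\star}\{i\}$ as a degree-$p$-multiplying map of graded objects, rather than merely as an ungraded map. This rests on the $M$-graded variant of the Nikolaus--Scholze formalism developed in \Cref{app:gradedcyc}; once that is in hand, both (1) and (2) follow by a direct inspection of the constructions in \Cref{consBMS2graded}.
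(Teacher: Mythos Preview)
Your approach to the $\mathbb{Z}[1/p]_{\geq 0}$-graded case of (1) and to (2) is essentially the paper's, but there is a genuine gap in your treatment of the $\mathbb{Z}_{\geq 0}$-graded case of (1).

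Embedding $\mathbb{Z}_{\geq 0} \hookrightarrow \mathbb{Z}[1/p]_{\geq 0}$ gives you access to \Cref{consBMS2graded}, but the output is only a $\mathbb{Z}[1/p]_{\geq 0}$-graded structure on $\Prism_R\{i\}/\mathcal{N}^{\geq n}\Prism_R\{i\}$, not a $\mathbb{Z}_{\geq 0}$-graded one. The covers by graded quasiregular semiperfectoids in the graded quasisyntomic site genuinely live in fractional degrees (they are built by extracting $p$-power roots of homogeneous elements), so unfolding from them does not a priori keep you in integer degrees. You never explain why the result is concentrated in $\mathbb{Z}_{\geq 0}$; this is precisely the content of the $\mathbb{Z}_{\geq 0}$ half of (1), and it matters for the application (\Cref{prop_reduction} and \Cref{grconnlemma2} need integer grading with only finitely many nonzero degrees).

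The paper fills this gap by a different route: rather than descending from graded qrsp, it argues by d\'evissage on the Nygaard graded pieces $\mathcal{N}^t\Prism_R$, using the filtrations of \Cref{filtongrTHH} and \Cref{filtconnTHHbounds} whose associated gradeds are wedge powers of $\widehat{L_{R/\mathbb{Z}_p}}$. The point is that the cotangent complex of a $\mathbb{Z}_{\geq 0}$-graded ring, and its wedge powers, are manifestly $\mathbb{Z}_{\geq 0}$-graded, and those filtrations respect this. For (2), the paper also streamlines your argument by first left Kan extending to reduce to the $p$-torsion-free case and then localizing to graded qrsp, where both sides are discrete and the claim becomes a property rather than extra structure; you gesture at this but do not make the reduction explicit.
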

\begin{proof}
In the case in which $R_{\star}$ is $\mathbb Z[1/p]_{\ge0}$-graded, part (1) is covered by the general \Cref{consBMS2graded}: arguing locally on the graded version of the quasisyntomic site, we require a natural graded version of \Cref{def_prism_sheaves} for any graded quasiregular semiperfectoid ring. But this follows from $\THH$ of a graded ring being a graded spectrum with $S^1$-action, as explained in \Cref{rem_THHofgraded}.
 
When $R$ is actually $\mathbb Z_{\ge 0}$-graded, then we claim that the same is
true of $\Prism_R\left\{i\right\}/ \mathcal{N}^{\geq n} \Prism_R\{i\}$. By
d\'evissage, it suffices to prove this for each $\mathcal{N}^t \Prism_R$, which
in turn follows from the filtrations of \Cref{filtongrTHH} and
\Cref{filtconnTHHbounds}. Here we use that the $p$-completed cotangent complex
$\widehat{L_{R/\mathbb{Z}_p}}$ and its wedge powers are naturally $\mathbb
Z_{\ge 0}$-graded, and the filtrations of the aforementioned corollaries respect
these gradings. 

For part (2) we apply a left Kan extension argument to assume that $R$ is $p$-torsion-free, and then argue locally as above to reduce to the case that $R$ is a $p$-torsion-free, graded, quasiregular semiperfectoid. Then both sides of $\phi_i$ are discrete, and so the problem reduces to verifying the {\em property} that it multiplies degrees by $p$. 
But this follows from the treatment of graded cyclotomic spectra
in \Cref{app:gradedcyc}. 
\end{proof} 

\begin{proof}[Proof of \Cref{prop_reduction}] 
Let $R=A \oplus N$ and $I=N$ be a henselian pair of the form of
\Cref{prop_reduction}. We view $R$ as being $\mathbb Z_{\ge0}$-graded, with $A$
in degree $0$ and $N$ in degree one. We must prove that $\mathrm{fib}(\mathbb F_p(i)(R)\to\mathbb F_p(i)(A))\in D^{\le i}(\mathbb F_p)$. Choose $r\gg0$ so that expression (\ref{nilpFpngeneral}) is valid.

The degree $0$ part of $ \widehat{\bigwedge^i L_{R/\mathbb{Z}_p}}$ identifies with $\widehat{\bigwedge^i L_{A/\mathbb{Z}_p}}$, so by d\'evissage using \Cref{filtongrTHH} and \Cref{filtconnTHHbounds} we see that the same is true of the $\mathbb{Z}_{\geq 0}$-graded object $\Prism_{R}\left\{i\right\}/\mathcal{N}^{\geq i+r}
\Prism_{R}\left\{i\right\}$. Namely, its degree $0$ part is $\Prism_{A}\left\{i\right\}/\mathcal{N}^{\geq i+r}
\Prism_{A}\left\{i\right\}$. The same is true modulo $p$, and so $\mathrm{fib}(\mathbb F_p(i)(R)\to\mathbb F_p(i)(A))$ identifies with the fiber of \[
\can - \phi_i\colon (\mathcal{N}^{\geq i} \Prism_R\left\{i\right\}/
\mathcal{N}^{\geq i+r} \Prism_R \left\{i\right\})_{>0} \otimes_{\mathbb{Z}}^L
\mathbb{F}_p \to 
( \Prism_R\left\{i\right\}/
\mathcal{N}^{\geq i+r} \Prism_R \left\{i\right\})_{>0} \otimes_{\mathbb{Z}}^L
\mathbb{F}_p,
\]
where the subscript $>0$ denotes the $\mathbb Z_{>0}$-subobject of a $\mathbb Z_{\ge0}$-graded object.

To complete the proof we must verify the conditions of \Cref{grconnlemma2} below. Firstly, the Frobenius multiplies degrees by $p$ by \Cref{autograding}(2). Next, the fiber of can is $(\Prism_R\left\{i\right\}/
\mathcal{N}^{\geq i} \Prism_R \left\{i\right\})_{>0}[-1]$, which lies in $D^{\le i}(\mathbb Z_p)$ by \Cref{cofiberbound1}. Finally, to verify condition (2), observe that each cohomology group of $\widehat{\bigwedge^i
L_{R/\mathbb{Z}_p}}$ is a $\mathbb Z_{\ge0}$-graded, finitely generated $R$-module, so necessarily zero except in finitely many degrees of the grading. The same then holds for $\Prism_{R}\left\{i\right\}/\mathcal{N}^{\geq i+r}
\Prism_{R}\left\{i\right\}$ by another d\'evissage through \Cref{filtongrTHH} and \Cref{filtconnTHHbounds}.
\end{proof}

\begin{lemma} 
\label{grconnlemma2}
Let $M, N$ be $\mathbb{Z}_{>0}$-graded objects of $D(
\mathbb{F}_p)$, and $i\ge0$.
Let $\mathrm{can}\colon M \to N$ be a map of graded objects and let
$\varphi\colon M \to N$ be a map which multiplies degrees by $p$. 
Suppose that
\begin{enumerate}
    \item[{\rm (1)}]  the fiber of $\mathrm{can}$ belongs to $D(\mathbb{F}_p)^{\leq i}$,
\item[{\rm (2)}] 
for any fixed $n$, the cohomologies $H^n(M), H^n(N)$ vanish except in finitely many degrees of the grading.
\end{enumerate}
Then $\mathrm{fib}( \can - \phi\colon M \to N)$ belongs to
    $D(\mathbb{F}_p)^{\leq i}$.
\end{lemma}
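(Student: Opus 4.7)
The approach is to introduce a decreasing degree filtration on $M$ and $N$ on which both $\can$ and $\phi$ are filtered, but $\phi$ vanishes on associated graded pieces. For each $k \geq 1$, set $F^{\geq k}M := \bigoplus_{d \geq k} M_d$ and $F^{\geq k}N := \bigoplus_{d \geq k} N_d$. Since $pd \geq d$ for all $d \geq 1$, both $\can$ and $\phi$ preserve this filtration; since $pd \geq d+1$ for $d \geq 1$, the Frobenius $\phi$ induces the zero map on each associated graded piece. Thus $\can - \phi$ induces $\can_k\colon M_k \to N_k$ on $\gr^k$, whose fiber is a direct summand of $\mathrm{fib}(\can)$ and hence lies in $D(\mathbb{F}_p)^{\leq i}$ by hypothesis (1).

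Consider the truncations $Q_kM := M/F^{\geq k+1}M = \bigoplus_{d=1}^k M_d$ and $Q_kN$; both $\can$ and $\phi$ descend to these, yielding a map $\can - \phi\colon Q_kM \to Q_kN$ filtered with respect to the finite induced filtrations. A downward induction on the filtration step, using the cofiber sequence of vertical fibers associated to $F^{\geq j+1}(Q_k) \hookrightarrow F^{\geq j}(Q_k) \twoheadrightarrow M_j$ (resp.~$N_j$) at each step together with the previously-observed fact that each $\mathrm{fib}(\can_j) \in D(\mathbb{F}_p)^{\leq i}$, yields $\mathrm{fib}(\can - \phi\colon Q_kM \to Q_kN) \in D(\mathbb{F}_p)^{\leq i}$.

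To conclude, fix $n > i$; we must show $H^n(\mathrm{fib}(\can - \phi\colon M \to N)) = 0$. By hypothesis (2), choose $k$ large enough that $H^{n-1}$ and $H^n$ of both $F^{\geq k+1}M$ and $F^{\geq k+1}N$ vanish; the long exact sequence associated to the fiber sequence $\mathrm{fib}(\can - \phi\colon F^{\geq k+1}M \to F^{\geq k+1}N) \to F^{\geq k+1}M \to F^{\geq k+1}N$ then forces $H^n(\mathrm{fib}(\can-\phi\colon F^{\geq k+1}M \to F^{\geq k+1}N)) = 0$. Combining this with the cofiber sequence of vertical fibers coming from $F^{\geq k+1} \hookrightarrow (\cdot) \twoheadrightarrow Q_k$ (applied to both $M$ and $N$) and the vanishing on $Q_k$ just proven, we obtain the desired vanishing for $M \to N$. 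The argument is essentially formal once the degree filtration is in place; the only content beyond bookkeeping is the use of hypothesis (2) to truncate the infinite direct sum in each fixed cohomological degree.
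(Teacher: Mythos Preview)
Your proof is correct and uses the same core idea as the paper: filter by degree, observe that $\phi$ vanishes on associated graded pieces, and reduce to the fiber of $\can$. The paper's execution is slightly different: rather than arguing via finite truncations $Q_k$ and then handling the tail by hand, it uses hypothesis~(2) to observe that the natural map $\bigoplus_d M_d \to \prod_d M_d$ (and likewise for $N$) is an equivalence, so that the degree filtration becomes complete; it then simply invokes the abstract connectivity lemma (Lemma~\ref{connectivitynilpotencelemma}) for complete filtrations. Your direct argument avoids that lemma and the sum--product switch, at the cost of the explicit tail estimate in the last paragraph; both routes are short and essentially equivalent.
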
 
\begin{proof} 
By (2), we can replace the direct sums $\bigoplus M_i$ and $\bigoplus N_i$ with the
    corresponding infinite
products. Therefore, the result follows from \Cref{connectivitynilpotencelemma}. 
\end{proof}

Finally, we use the rigidity theorem to give a description of the top cohomology
of the $\mathbb{F}_p(i)$. 
For $B$ an $\mathbb{F}_p$-algebra, let $\Omega^i_{B}$ be the (underived) module of $i$-forms on $B$ (relative to
$\mathbb{Z}_p$, or $\mathbb{F}_p$), and let \[ C^{-1}\colon \Omega^{i}_{B} \to \Omega^i_{B}/d \Omega^{i-1}_{B}\] be the inverse Cartier operator.

\begin{corollary}[Top cohomology of $\mathbb{F}_p(i)$] 
\label{topcohexpr}
Let $R$ be a $p$-complete ring. 
Then there is a natural isomorphism
 \begin{equation} H^{i+1}( \mathbb{F}_p(i)(R)) \simeq
\mathrm{coker}(1 -C^{-1}\colon \Omega^{i}_{R/p} \to
\Omega^i_{R/p}/d\Omega^{i-1}_{R/p}). 
\label{topcohFpirigid}
\end{equation}
In particular, if $R$ is $w$-strictly local, then $H^{i+1}( \mathbb{F}_p(i)(R)) =
H^{i+1}( \mathbb{Z}_p(i)(R)) =0 $. 
\end{corollary}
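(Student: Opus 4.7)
The plan is to reduce to the case of an $\mathbb{F}_p$-algebra via rigidity, identify $\mathbb{F}_p(i)(B)$ for such $B$ with the classical two-term complex built from the inverse Cartier operator, and finally deduce the $w$-strictly local vanishing from Artin--Schreier type surjectivity.

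Applying \Cref{rigidity} to the henselian pair $(R,(p))$ shows that the fiber of $\mathbb{F}_p(i)(R)\to\mathbb{F}_p(i)(R/p)$ lies in $D^{\leq i}(\mathbb{F}_p)$; combined with the connectivity bound $\mathbb{F}_p(i)(-)\in D^{\leq i+1}$ from \Cref{Zpibound}(1), the induced map on $H^{i+1}$ is an isomorphism, so we may assume $R$ is an $\mathbb{F}_p$-algebra. For a smooth $\mathbb{F}_p$-algebra $B$ the identification
\[
\mathbb{F}_p(i)(B)\;\simeq\;\mathrm{fib}\bigl(1-C^{-1}\colon\Omega^i_B\to\Omega^i_B/d\Omega^{i-1}_B\bigr)[-i]
\]
is essentially the Illusie--Milne--Bloch--Kato identification of $\mathbb{F}_p(i)$ with the logarithmic de Rham sheaf $\nu_1(i)$; it can be extracted from the Nygaard-filtered prismatic (equivalently, crystalline) cohomology using the Cartier isomorphism, which shows that the divided Frobenius $\varphi/p^i$ on $\Omega^i_B$ reduces mod $p$ to $C^{-1}$. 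Taking $H^{i+1}$ yields $\mathrm{coker}(1-C^{-1})$ for smooth $B$.

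To extend to an arbitrary $\mathbb{F}_p$-algebra $B$, apply the Kan extension statement \Cref{Zpibound}(2): choose a simplicial resolution $P_\bullet\to B$ by polynomial $\mathbb{F}_p$-algebras, so $\mathbb{F}_p(i)(B)\simeq|\mathbb{F}_p(i)(P_\bullet)|$. Since every $\mathbb{F}_p(i)(P_n)$ lies in $D^{\leq i+1}$, the functor $H^{i+1}$ commutes with this geometric realization and computes the coequalizer of $H^{i+1}(\mathbb{F}_p(i)(P_1))\rightrightarrows H^{i+1}(\mathbb{F}_p(i)(P_0))$. The underived functor $B\mapsto\mathrm{coker}(1-C^{-1}\colon\Omega^i_B\to\Omega^i_B/d\Omega^{i-1}_B)$ commutes with sifted colimits of $\mathbb{F}_p$-algebras (as $\Omega^*_{-/\mathbb{F}_p}$ is a left adjoint, and $d$, $C^{-1}$ and $\mathrm{coker}$ are natural/colimit-preserving), so the smooth formula propagates to arbitrary $B$.

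Finally, for $w$-strictly local $R$: the ring $R/p$ is a $w$-strictly local $\mathbb{F}_p$-algebra, and the Artin--Schreier-type short exact sequence $0\to\nu_1(i)\to\Omega^i\xrightarrow{1-C^{-1}}\Omega^i/d\Omega^{i-1}\to 0$ of pro-\'etale sheaves is acyclic on global sections of such a ring, so $1-C^{-1}$ is surjective and $H^{i+1}(\mathbb{F}_p(i)(R))=0$. For the $\mathbb{Z}_p$ version, the cofiber sequence $\mathbb{Z}_p(i)\xrightarrow{p}\mathbb{Z}_p(i)\to\mathbb{F}_p(i)$ combined with $H^{i+2}(\mathbb{Z}_p(i)(R))=0$ from \Cref{Zpibound}(1) forces $p$ to act surjectively on $H^{i+1}(\mathbb{Z}_p(i)(R))$, which is derived $p$-complete and therefore vanishes by derived Nakayama. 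The main obstacle is verifying the identification in the smooth $\mathbb{F}_p$-case and legitimately descending from derived to underived K\"ahler differentials in the general case, since $\mathbb{F}_p(i)$ is a priori defined via derived de Rham objects whereas the stated cokernel involves only the underived ones.
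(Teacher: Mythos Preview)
Your proof is correct and closely parallels the paper's. Both begin by reducing to $\mathbb{F}_p$-algebras via \Cref{rigidity} and invoke the identification of $\mathbb{F}_p(i)$ with $\mathrm{fib}(1-C^{-1})[-i]$ in the (ind-)smooth case, which the paper extracts from \cite[Cor.~8.21]{BMS2} together with the short exact sequence of logarithmic forms from \cite{Ill79, MorrowHW}.

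The one genuine difference is in the passage from smooth to arbitrary $\mathbb{F}_p$-algebras. The paper applies rigidity a \emph{second} time: it chooses a polynomial surjection $P\twoheadrightarrow R$, henselizes to $P^h$, and uses \Cref{rigidity} for the pair $(P^h,\ker)$ to reduce to the ind-smooth ring $P^h$. This is slick but tacitly requires that the right-hand side $\mathrm{coker}(1-C^{-1})$ is also invariant under this henselian surjection---an Artin--Schreier-type surjectivity on relative forms that is true but not spelled out. Your route via \Cref{Zpibound}(2) and left Kan extension from polynomial rings sidesteps this: the bound $\mathbb{F}_p(i)\in D^{\leq i+1}$ makes $H^{i+1}$ of the geometric realization into the reflexive coequalizer of $H^{i+1}(\mathbb{F}_p(i)(P_1))\rightrightarrows H^{i+1}(\mathbb{F}_p(i)(P_0))$, and the underived functors $\Omega^i$, $d$, $C^{-1}$, and $\mathrm{coker}$ all commute with reflexive coequalizers of rings. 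This is arguably more transparent here, and it directly explains why the stated formula involves underived rather than derived differentials---the point you flag in your final sentence. For the $w$-strictly local vanishing (\'etale-local surjectivity of $1-C^{-1}$, which for general $\mathbb{F}_p$-algebras reduces to the smooth case by lifting along a polynomial surjection) and the passage from $\mathbb{F}_p(i)$ to $\mathbb{Z}_p(i)$ via derived Nakayama, your argument matches the paper's.
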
 
\begin{proof} 
Without loss of generality, we can assume that $R$ is an $\mathbb{F}_p$-algebra 
via \Cref{rigidity}. 
We now use a standard argument (cf., e.g.,~\cite{Hoobler06}) to reduce to the case where $R$ is ind-smooth
over $\mathbb{F}_p$. 
Choose a polynomial $\mathbb F_p$-algebra $P$ surjecting onto $R$ and henselize
$P$ along the kernel of $P \twoheadrightarrow R$, forming a $P$-algebra $P'$
augmented over $R$; another use of \Cref{rigidity} 
gives $H^{i+1}(\mathbb{F}_p(i)(R)) = H^{i+1}( \mathbb{F}_p(i)(P'))$. Moreover, 
\cite[Prop.~4.31]{CMM} gives that 
the right-hand-side of 
\Cref{topcohFpirigid} is also invariant under passage from $P'$ to $R$. 
Replacing $R$ by $P'$, 
we thus reduce the case that $R$ is ind-smooth over $\mathbb F_p$. But then
\begin{equation} 
\label{FpiR}
\mathbb{F}_p(i)(R) \simeq \mathrm{fib}( \Omega^i_R \xrightarrow{1 - C^{-1}}
\Omega^i_R/d \Omega^{i-1}_R ) [-i].
\end{equation} 
This follows because of the expression of 
$\mathbb{F}_p(i)$ as the shifted \'etale cohomology of 
logarithmic forms
$\Omega^i_{\mathrm{log}}$
(cf.~\cite[Cor.~8.21]{BMS2} and reduction modulo $p$; we also review this in the
next section) and the short exact
sequence of \'etale sheaves (cf.~\cite[Sec.~2.4]{Ill79} and \cite[Cor.~4.2]{MorrowHW})
\[ 0 \to \Omega^i_{\mathrm{log}} \to \Omega^i \xrightarrow{1 - C^{-1}}
\Omega^i/d \Omega^{i-1} \to 0 .  \]
Expression \eqref{FpiR} implies the claim. 
Note that this short exact sequence (in the \'etale topology) implies also that
for any ind-smooth $\mathbb{F}_p$-algebra which has no non-split \'etale covers
(e.g., $R$ could be $w$-strictly local), $H^{i+1}( \mathbb{F}_p(i)(R)) = 0$. 
\end{proof}

\Cref{Zpibound},  \Cref{rigidity}, and \Cref{topcohexpr} imply Theorem G from
the introduction. 

\section{The comparison with syntomic cohomology}

\newcommand{\ri}{\mathrm{Ring}_{\mathrm{ideal}}}
\newcommand{\fib}{\mathrm{fib}}
\newcommand{\zps}[1]{\mathbb{Z}_p(#1)^{\mathrm{FM}}}
\newcommand{\qps}[1]{\mathbb{Q}_p(#1)^{\mathrm{FM}}}

\newcommand{\SCR}{\mathrm{SCR}}

In this section, we show that the $\mathbb{Z}_p(i)$ for $i \leq p-2$ and the
$\mathbb{Q}_p(i)$ for all $i$ can be described purely in terms of derived de
Rham (instead of prismatic) cohomology, using a form of syntomic cohomology
\cite{FM87, Ka87}. 
The strategy is to use the description of the $\mathbb{Z}_p(i)$ in equal
characteristic $p$ from \cite{BMS2} together with the Beilinson fiber square
to relate the $\mathbb{Z}_p(i)$ in mixed and equal characteristic. 
In particular, we prove Theorem~\hyperref[thm:f]{F}.

\subsection{Syntomic cohomology}

To begin with, we define another form of syntomic cohomology 
via the quasisyntomic site, by descent from
quasiregular semiperfectoids. 

\begin{definition}[$p$-adic derived de Rham cohomology] 
For a map of  rings $ A \to R$, we let $L \Omega_{R/A} \in D(A)$ denote the
\emph{$p$-adic derived de Rham cohomology} of $R$ relative to $A$, see
\cite{Bhattpadic}. 
By definition, when $R$ is a finitely generated polynomial $A$-algebra, $L
\Omega_{R/A}$ is given by the $p$-completed relative de Rham complex
$\Omega_{R/A}^{\bullet}$, and in general $L \Omega_{R/A} $ is defined via
$p$-complete left Kan extension. 
One can show \cite[Cor.~3.10]{Bhattpadic} that $L \Omega_{R/A}$ more generally  agrees with
the $p$-completed (underived) relative de Rham complex when $R$ is smooth
over $A$. When $A = \mathbb{Z}$, we omit $A$ from the notation. 

The $p$-adic derived de Rham cohomology $L \Omega_{R/A}$ is equipped with the derived Hodge filtration
$L \Omega_{R/A}^{\geq \star}$, obtained by left Kan extending the
naive filtration in the polynomial (or more generally smooth) case. 
\label{padicddR}
\end{definition} 

\begin{example}[Derived de Rham cohomology and divided powers] 
\label{dividedpowers}
We recall the following basic calculation: for the map $\mathbb{Z}[x] \to
\mathbb{Z}$, we have that 
$L \Omega_{\mathbb{Z}/\mathbb{Z}[x]} \simeq \widehat{ \Gamma(x)}$ is the
$p$-complete divided power algebra on the class $x$, and the derived Hodge filtration
is the divided power filtration. 
This is a special case of \cite[Cor.~3.40]{Bhattpadic}. 
See also \cite[Prop.~3.16]{SzZa18} for an account. 
\end{example} 

\begin{definition}[Derived de Rham--Witt cohomology] 
For an $\mathbb{F}_p$-algebra $S$, we let $LW\Omega_S \in D(
\mathbb{Z}_p)$ denote the $p$-adic derived de Rham--Witt cohomology or
derived crystalline cohomology of $S$ (defined via $p$-complete left Kan extension from
finitely generated polynomial $\mathbb{F}_p$-algebras) \cite[Sec.~8]{BMS2}; for ind-smooth $\mathbb{F}_p$-algebras $S$ this agrees with Illusie's usual  $W\Omega_S$. 
It comes equipped with the derived Nygaard filtration $\mathcal{N}^{\geq\star}LW\Omega_S$ obtained via left Kan extension of the usual Nygaard
filtration in the finitely generated polynomial case (see \cite[Sec.~8]{BLM} for an account);
we write $\widehat{LW\Omega}_S$ for the completion of $LW\Omega_S$ with respect to the Nygaard
filtration.
We have by~\cite[Lemma~8.2]{BMS2} an identification of the associated graded terms 
of the Nygaard filtration 
$\mathcal{N}^{\geq i}LW\Omega_S/\mathcal{N}^{\geq i+ 1}LW \Omega_S  \simeq
L \left(\tau^{\le i} \Omega_{S/\mathbb{F}_p}\right)$ with the derived functor of $S
\mapsto \tau^{\le i} \Omega_{S/\mathbb{F}_p}$. 
The Frobenius $\varphi\colon S \to S$ induces an endomorphism of $LW \Omega_S$; on
$\mathcal{N}^{\geq i} L W \Omega_S$ it becomes divisible by $p^i$, and indeed we
have divided Frobenius maps \begin{equation}\label{dividedFrob} \varphi_i: \mathcal{N}^{\geq i} L W \Omega_S \to L W
\Omega_S. \end{equation}
Finally, using the de Rham-to-crystalline comparison, we find that if $R$ is a
$p$-torsion free ring, then there is a natural equivalence
\begin{equation} 
L \Omega_R \simeq L W \Omega_{R/p};
\label{eqn:de_rham_crys}
\end{equation} 
in particular, $L \Omega_R$ naturally carries a Frobenius operator $\varphi$. 
\end{definition}

\begin{remark}[Sheaf properties] 
\label{drwisqsynsheaf}
The functors $S \mapsto L W \Omega_S$ and $S\mapsto\widehat{LW \Omega_S}$ are  
sheaves on $\mathrm{qSyn}_{\mathbb{F}_p}$. 
For $LW \Omega_S$, it suffices to work modulo $p$, and then use the conjugate
filtration on derived de Rham cohomology \cite{Bhattpadic} and the flat descent
for the wedge powers of the cotangent complex \cite[Sec.~3]{BMS2}. For the
Nygaard-completed  
$\widehat{LW \Omega_S}$, this follows since the associated graded terms have
this property, by a similar argument. 
Similarly, the filtration pieces $S\mapsto \mathcal{N}^{\geq i} L W \Omega_S$
    and $S\mapsto \mathcal{N}^{\geq i} \widehat{LW \Omega_S}$ are sheaves on
$\mathrm{qSyn}_{\mathbb{F}_p}$. 
\end{remark}

\begin{construction}[{Derived de Rham--Witt cohomology of quasiregular semiperfect
rings, cf.~\cite[Sec.~8.2]{BMS2}}] 
\label{drwofqrp}
Let $S \in \qlrsp_{\mathbb{F}_p}$ be a quasiregular semiperfect
$\mathbb{F}_p$-algebra. In this case, one forms the ring $\acrys(S)$ (defined by Fontaine \cite{Fo94}),
which is the universal $p$-adically complete divided power thickening of $S$,
with divided powers compatible with those on $(p) \subset \mathbb{Z}_p$;
quasiregularity ensures that it is $p$-torsion free. 
Then, one has a natural identification $$LW \Omega_S = \acrys(S),$$ and the
Nygaard filtration becomes the filtration 
$$\mathcal{N}^{\geq i} \acrys(S) = \left\{x \in \acrys(S)\colon \varphi(x)\in p^i\acrys(S)
\right\}.$$
Here $\varphi\colon \acrys(S) \to \acrys(S)$ denotes the endomorphism induced by the
Frobenius on $S$; it has the further property that $\varphi(x) \equiv x^p \pmod{p}$ for $x \in \acrys(S)$, i.e., $\varphi$ defines the structure of a
$\delta$-ring on the $p$-torsion free ring $\acrys(S)$. 
\end{construction}

\begin{construction}[Derived de Rham and de Rham--Witt cohomology of qrsp rings] 
%


Let $R \in \qlrsp_{\mathbb{Z}_p}$. 
We consider the following rings.
\begin{enumerate}
\item 
The derived de Rham--Witt cohomology $LW \Omega_{R/p}$ of $R/p$. 
Since the ring $R/p$ is a quasiregular semiperfect $\mathbb{F}_p$-algebra, 
it follows from Construction \ref{drwofqrp} that 
there is an isomorphism
\[ LW \Omega_{R/p} \simeq \acrys(R/p).  \]
\item The ($p$-adic) derived de Rham cohomology $L \Omega_R$. 
Here $L \Omega_R$ is a discrete, $p$-complete and $p$-torsion free ring, as follows from the previous point and (\ref{eqn:de_rham_crys}). The ring $L \Omega_R$ is also equipped with the multiplicative, descending
Hodge filtration $L \Omega_R^{\geq \star}$. 
\end{enumerate}
Via the de Rham-to-crystalline comparison, we have equivalences
$L \Omega_R \simeq LW\Omega_{R/p} \simeq \acrys(R/p).$ 
In particular, we find via (1) and (2) above that the $p$-complete,
$p$-torsion free ring $L \Omega_R$ is equipped with both a Frobenius
operator and a Hodge filtration. 

\end{construction}

\begin{lemma} 
\label{padicvallemma}
Let $r \geq 0$ be an integer. 
\begin{enumerate}
\item[{\rm (1)}] The $p$-adic valuation of $\frac{(pr)!}{r!}$ is equal to $r$.  
\item[{\rm (2)}] The $p$-adic valuation of $\frac{p^r}{r!}$ is at least $\min(r, p-1)$. 
\end{enumerate}
\end{lemma}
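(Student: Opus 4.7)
Both parts will follow from Legendre's formula for the $p$-adic valuation of a factorial, namely $v_p(n!) = (n - s_p(n))/(p-1)$, where $s_p(n)$ denotes the sum of the base-$p$ digits of $n$. The plan is to apply this formula mechanically in each case; there is no real conceptual obstacle, only a small amount of bookkeeping.

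For part (1), the key observation is that multiplying a nonnegative integer by $p$ shifts its base-$p$ expansion by one position, so that $s_p(pr) = s_p(r)$. Applying Legendre's formula to both $(pr)!$ and $r!$ and subtracting, the two digit-sum terms cancel, leaving $v_p((pr)!/r!) = (pr - r)/(p-1) = r$, as required.

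For part (2), I would split into two cases according to the size of $r$. When $r \leq p-1$, every factor in $r!$ is coprime to $p$, so $v_p(r!) = 0$ and hence $v_p(p^r/r!) = r = \min(r, p-1)$. When $r \geq p-1$, I apply Legendre's formula to obtain
\[
v_p(p^r/r!) = r - v_p(r!) = \frac{(p-2)r + s_p(r)}{p-1}.
\]
Using the trivial bound $s_p(r) \geq 1$ for $r \geq 1$ together with the assumption $r \geq p-1$, the right-hand side exceeds $p-2$; since $v_p(p^r/r!)$ is a nonnegative integer, this forces $v_p(p^r/r!) \geq p-1$, completing the proof.

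The only minor subtlety is making sure the inequality in the second case of part (2) is strict (giving $> p-2$ and hence $\geq p-1$ by integrality), which is where the estimate $s_p(r) \geq 1$ is used rather than discarded.
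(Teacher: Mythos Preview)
Your proof is correct and follows the same approach as the paper: both deduce the result from Legendre's formula. The paper's own proof is a one-liner (``Both assertions follow from Legendre's formula, $v_p(n!) = \sum_{j > 0} \lfloor n/p^j \rfloor$'') and leaves the verification to the reader, whereas you use the equivalent digit-sum form $v_p(n!) = (n - s_p(n))/(p-1)$ and spell out the details, including the integrality trick in part~(2).
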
 
\begin{proof} 
Both assertions follow from Legendre's formula, $v_p(n!) = \sum_{j > 0} \lfloor n/p^j
\rfloor$ for $v_p$ the $p$-adic valuation. 
\end{proof} 
\begin{proposition}[{Divisibility of Frobenius, cf.~also
\cite[Lem.~A1.4]{Ts99}}] 
Let $R \in \qlrsp_{\mathbb{Z}_p}$. 
Then for $i \leq p-1$, the Frobenius $\varphi\colon L \Omega_R \to L \Omega_R$ carries $L
\Omega_R^{\geq i}$ into $p^i L \Omega_R$, or in other words the de
Rham-to-crystalline comparison carries $L\Omega_R^{\ge i}$ into $\mathcal N^{\ge
i}\acrys(R/p)$.
\end{proposition}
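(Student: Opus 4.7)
The plan is to use the identification $L\Omega_R \simeq \acrys(R/p)$ for $R \in \qlrsp_{\mathbb{Z}_p}$ via the de Rham-to-crystalline comparison reviewed just above the statement, under which the derived Hodge filtration $L\Omega_R^{\geq i}$ corresponds to the $i$-th divided-power ideal $J^{[i]} \subset \acrys(R/p)$, where $J$ denotes the canonical PD ideal. This filtered identification reduces by $p$-complete left Kan extension to the case of a finitely generated polynomial $\mathbb{Z}_p$-algebra, where one matches the Hodge filtration on the de Rham complex with the PD filtration on the PD envelope; equivalently, the associated graded pieces of both sides are $p$-completed divided powers of $L_{R/\mathbb{Z}_p}[-1]$. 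With this identification the claim becomes $\varphi(J^{[i]}) \subseteq p^i\acrys(R/p)$.

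First I would check that $\varphi(J) \subseteq p\acrys$. Indeed, for $x \in J$ we have $\varphi(x) \equiv x^p \pmod{p}$ and $x^p = p!\, x^{[p]} \in p\acrys$, so $\varphi(x) \in p\acrys$. The Frobenius $\varphi$ is a PD endomorphism of $\acrys$ (being the unique PD lift of the Frobenius on $R/p$), and the ideal $p\acrys$ carries its own divided-power structure extending the canonical PD on $p\mathbb{Z}_p$ and compatible with the PD on $J$, since the thickening $\acrys(R/p) \to R/p$ is by construction compatible with the canonical PD on $p\mathbb{Z}_p$. Functoriality of divided powers then yields $\varphi(J^{[i]}) \subseteq (p\acrys)^{[i]}$.

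It remains to show $(p\acrys)^{[i]} \subseteq p^i\acrys$ for $i \leq p-1$. A generator of $(p\acrys)^{[i]}$ has the form
\[
\prod_j (pa_j)^{[n_j]} = \frac{p^{\sum_j n_j}}{\prod_j n_j!}\, \prod_j a_j^{n_j}
\]
with $a_j \in \acrys$ and $\sum_j n_j \geq i$, so it suffices to show that the scalar coefficient has $p$-adic valuation at least $i$, i.e.\ that $\sum_j (n_j - v_p(n_j!)) \geq i$. Lemma~\ref{padicvallemma} gives $n - v_p(n!) \geq \min(n, p-1)$ for every $n \geq 0$, whence
\[
\sum_j (n_j - v_p(n_j!)) \;\geq\; \sum_j \min(n_j, p-1) \;\geq\; \min\!\Bigl(\sum_j n_j,\; p-1\Bigr) \;\geq\; \min(i, p-1) \;=\; i,
\]
using the hypothesis $i \leq p-1$ for the last equality.

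The only real bookkeeping is tracking that the two PD structures (from $J$ and from $p$) are compatible and that $\varphi$ respects the combined structure; the combinatorial inequality is immediate from Legendre's formula and already packaged in Lemma~\ref{padicvallemma}, so I expect no substantial obstacle once the PD formalism is in place.
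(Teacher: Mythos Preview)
Your argument has a genuine gap in the first step. The claimed identification $L\Omega_R^{\geq i} = J^{[i]}$, where $J = \ker(\acrys(R/p) \to R/p)$ is the canonical PD ideal, is \emph{false}. Take $R = \mathbb{Z}_p$: then $\widehat{L_{\mathbb{Z}_p/\mathbb{Z}}} = 0$, so $L\Omega_{\mathbb{Z}_p}^{\geq 1} = 0$, whereas $\acrys(\mathbb{F}_p) = \mathbb{Z}_p$ with $J = (p) \neq 0$. The point is that $J$ contains $p$, while the Hodge filtration only sees the kernel of the map to $R$, not to $R/p$. Your justification via associated graded pieces is accordingly wrong: $J/J^{[2]}$ is related to $\widehat{L_{(R/p)/\mathbb{Z}_p}}[-1]$, which has an extra ``$p$-direction'' compared to $\widehat{L_{R/\mathbb{Z}_p}}[-1]$. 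Your other justification, reducing by left Kan extension to polynomial $\mathbb{Z}_p$-algebras, also fails: for such algebras $L\Omega$ is the de Rham complex, which is not discrete, so there is no PD ideal to speak of.

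What you actually need is only the \emph{containment} $L\Omega_R^{\geq i} \subseteq J^{[i]}$, and for this it would suffice to know that the Hodge filtration agrees with the PD filtration on the smaller sub-PD-ideal $J' = \ker(\acrys(R/p) \to R)$. This is true, but for general $R \in \qlrsp_{\mathbb{Z}_p}$ it is not immediate from the definitions; one needs a concrete presentation of $R$ as a quotient of a perfectoid ring by something like a regular sequence, where \Cref{dividedpowers} applies. The paper handles exactly this issue: it first treats the case $R = W(A)/(f)$ with $f$ regular (where $L\Omega_R^{\geq i}$ is explicitly the PD filtration on $(f)$), then tensor products of such, and finally surjects from such an $R'$ onto general $R$ so that $L\Omega_{R'}^{\geq i} \to L\Omega_R^{\geq i}$ is surjective.

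That said, once the containment is in hand, your Steps 2--4 are correct and give a cleaner endgame than the paper's explicit expansion of $\varphi(f^j/j!)$: the observations that $\varphi(J) \subseteq p\acrys$, that $\varphi$ is a PD map, that PD structures are unique on the $p$-torsion-free ring $\acrys$, and the estimate $(p\acrys)^{[i]} \subseteq p^i\acrys$ via \Cref{padicvallemma} all go through. So your approach is salvageable, but you cannot avoid something like the paper's reduction to the principal-ideal case to pin down the Hodge filtration concretely.
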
 
\begin{proof} 
For any $R \in \qlrsp_{\mathbb{Z}_p}$, we can write $R = W(A)/I$, where $A$ is a perfect
$\mathbb{F}_p$-algebra
and $I \subset W(A)$ is an ideal. 
We have an identification of the $p$-complete cotangent complex,
$\widehat{L_{R/\mathbb{Z}_p}} \simeq \widehat{I/I^2}[1]$.

We first verify the 
assertion when the ideal $I$ as above can be written as $I = (f)$, for $f$ a
nonzerodivisor, so $R = W(A)/(f)$. 
In this case, 
in view of 
\Cref{dividedpowers} and base change, 
we find that 
\( L\Omega_R = L \Omega_{R/W(A)}   \) is the $p$-completion of 
the divided power envelope of the regular ideal $(f)$, i.e., the ring
$W(A)[ f^n/n!]_{n \geq 1}$; furthermore, for each $i$, the Hodge filtered piece $L
\Omega_R^{\geq i}$ identifies with the corresponding divided power filtration,
i.e., the ideal $( f^j/j!)_{j
\geq i}$. 
Now the Frobenius $\varphi$ on $L\Omega_R \simeq LW \Omega_{R/p} \simeq
\acrys(R/p)$ is a Frobenius lift coming from a $\delta$-structure, so 
\begin{equation} \varphi\left( \frac{f^j}{j!} \right) =
\frac{(f^p + p \delta(f))^j}{j!} = \frac{\sum_{0 \leq l \leq j}
\binom{j}{l} f^{pl} p^{j-l} \delta(f)^{j-l}
}{j!}.
\end{equation}
The $l$th term in the sum above is divisible (in the ring $L \Omega_R$) by 
$\frac{ f^{pl}
p^{j-l}}{l! (j-l)!} =  \frac{f^{pl}}{(pl)!}  \frac{(pl)!
p^{j-l}}{l! (j-l)!}$, where we use the divided powers on $(f)$ to see
$\frac{f^{pl}}{(pl)!} \in L \Omega_R$.
Now  the $p$-adic valuation of 
$ \frac{(pl)!
p^{j-l}}{l! (j-l)!}$
is at least $l + \min(j-l, p-1)$ thanks to \Cref{padicvallemma}. 
So if $i \leq p-1$, 
then it follows that $\varphi$ carries $L \Omega^{\geq i}_R$ into 
$p^i L \Omega_R$. 

Now suppose $R$ is a $p$-complete tensor product over $\alpha \in \mathcal{A}$ of 
rings of the form $W(A_\alpha)/(f_\alpha)$, for $A_\alpha$ perfect
$\mathbb{F}_p$-algebras and $f_\alpha \in W(A_\alpha)$ regular elements. 
In this case, we have an isomorphism (after $p$-completion) of filtered rings
$L \Omega_R^{\geq\star}\simeq \bigotimes_{\alpha \in
\mathcal{A}}L
\Omega_{W(A_\alpha)/f_\alpha}^{\geq\star}$ by the K\"unneth formula, which is compatible with the
Frobenius operators. The assertion 
$\varphi( L \Omega_R^{\geq i}) \subset p^i L \Omega_R$ for $i \leq p-1$
for such $R$ thus follows by taking tensor products. 

Finally, let $R \in \qlrsp_{\mathbb{Z}_p}$ be arbitrary and write $R =
W(A)/I$ for $A$ a perfect $\mathbb{F}_p$-algebra. 
To prove the claim $\varphi(L\Omega_R^{\geq i}) \subset p^i L \Omega_R$ for $i
\leq p-1$, we will reduce to the previous cases, following the strategy of
\cite[Theorem 8.14]{BMS2}. 
Let $\left\{x_t\right\}_{t \in T}$ be a system of generators for the ideal $I$
and for each $t$, we write
$x_t = \sum_{i \geq 0} p^i [y_{t, i}]$ for some $y_{t, i} \in A$. 
For each $t \in T$, we have a map 
\begin{equation} \label{Wpolymap} W(\mathbb{F}_p[u_1, u_2, \dots, ]_{\mathrm{perf}})/( [u_1] + p[u_2] +
\dots ) \to W(A)/I =  R  
\end{equation}
sending $[u_i] \mapsto [y_{t, i}]$; note that the source belongs to
$\qlrsp_{\mathbb{Z}_p}$, and its cotangent complex is the shift of a free of
rank $1$ module. 
The map \eqref{Wpolymap} has image
on $p$-completed cotangent complexes given by the class of $x_t$. 

We consider the $p$-completed tensor product $$R' \stackrel{\mathrm{def}}{=}W(A) \hat{\otimes} \bigotimes_{t
\in T} ( W(\mathbb{F}_p[u_1, u_2, \dots, ]_{\mathrm{perf}})/ 
[u_1] + p [u_2] + \dots 
),$$ which maps surjectively to $R$ (via the above maps) and induces a
surjection on $H^0(\widehat{L_{-/\mathbb{Z}_p}}[-1])$.  
Comparing with the reduction mod $p$ and using the Hodge and conjugate
filtrations on
derived de Rham cohomology, we find that 
$L\Omega_{R'}^{\geq i} \to L \Omega_R^{\geq i}$ is a surjection for each
$i$. 
Since the previous discussion shows that $\phi( L \Omega_{R'}^{\geq i}) \subset
p^i L \Omega_{R'}$ for $i \leq p-1$, we can now conclude the claim for $R$ by
naturality, as desired. 
\end{proof} 

Using the divisibility property of Frobenius, 
we can define, for $R \in \qlrsp_{\mathbb{Z}_p}$ and ${i \leq p-1}$, 
a \emph{divided Frobenius} $\varphi/p^i \colon L \Omega_R^{\geq i} \to L \Omega_R$
(of discrete, $p$-torsion free abelian groups). 
Using the divided Frobenius, we now define syntomic cohomology; this definition 
is based on the ideas of \cite{FM87, Ka87} (and can be compared with it using
the comparison between derived de Rham and crystalline cohomology in the lci
case, cf.~\cite[Sec.~3]{Bhattpadic}). 
\begin{definition}[Syntomic cohomology] 
We define sheaves 
$\zps{i}$ for $0 \leq i \leq p-2$, and $\qps{i}$ for $i\ge0$, 
on $\qlrsp_{\mathbb{Z}_p}$ via 
\begin{gather} \zps{i}(R) = \mathrm{fib}( \phi/p^i - 1\colon L \Omega_R^{\geq i} \to
    L \Omega_R),\label{eq:zpfm} \\
\qps{i}(R) = \mathrm{fib}( \phi - p^i \colon L \Omega_R^{\geq i} \to L \Omega_R)_{\QQ_p}. \end{gather}
These are sheaves on 
$\qlrsp_{\mathbb{Z}_p}$ because $R \mapsto L \Omega_R^{\geq i}$
is a sheaf. 
Unfolding, we obtain sheaves $\zps{i}$ for $0 \leq i \leq p-2$ and $\qps{i} $ for all
$i\ge0$ on $\mathrm{qSyn}_{\mathbb{Z}_p}$. 
\end{definition} 

\begin{remark}
    While one could define $\ZZ_p(p-1)^{\mathrm{FM}}(R)$ via the same formula, this does not
    give the correct integral theory in weight $(p-1)$.
	 \end{remark}

\subsection{The $\mathbb{Z}_p(i)$ in  equal characteristic $p$}

In equal characteristic $p$, the $\mathbb{Z}_p(i)$ can be determined via
the theory of the de Rham--Witt complex and its derived versions, cf.
\cite[Sec.~VIII.2]{Ill72}, \cite{Bhattpadic}, and in particular
\cite[Sec.~8]{BMS2}.\footnote{See also \cite{GL00, GH99} for the identification with  
with $p$-adic \'etale motivic cohomology.} 
We next review this. 

\begin{theorem}[{$\mathbb{Z}_p(i)$ in equal characteristic $p$,
cf.~\cite[Sec.~8]{BMS2}}] 
\label{charpdecomplete}
Suppose $S$ is a quasisyntomic $\mathbb{F}_p$-algebra. 
Then, for each $i$,
\begin{enumerate}
\item[{\rm (1)}] 
${\Prism_S}\left\{i\right\}$ is the Nygaard-completed derived de Rham--Witt cohomology
$\widehat{LW\Omega}_S$
of $S$ and
\item[{\rm (2)}]
the Nygaard filtration $\mathcal{N}^{\geq i} {\Prism}_S\left\{i\right\} $
identifies with the de Rham--Witt Nygaard filtration 
$\mathcal{N}^{\geq i} \widehat{LW\Omega}_S$, and the prismatic Frobenius
$\varphi_i$ identifies with the divided Frobenius \eqref{dividedFrob}. 
\end{enumerate}
Consequently,
 \begin{equation} \label{Zpincharp}\mathbb{Z}_p(i)(S) = \mathrm{fib}( \varphi_i - \mathrm{can}\colon
\mathcal{N}^{\geq i} \widehat{LW\Omega}_S \to \widehat{LW\Omega}_S),
\end{equation}
where $\varphi_i: \mathcal{N}^{\geq i} 
\widehat{LW\Omega}_S \to \widehat{LW\Omega}_S$ is the divided Frobenius operator, so that
$p^i\varphi_i$ is the Frobenius. 
\end{theorem}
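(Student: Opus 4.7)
The plan is to deduce the identifications on the quasisyntomic site from the corresponding identifications on the basis $\qlrsp_{\mathbb{F}_p}$ of quasiregular semiperfect $\mathbb{F}_p$-algebras, and then use the comparison of prismatic cohomology with the Fontaine ring $\acrys$ together with the fact that, in equal characteristic $p$, the prismatic ideal is simply $(p)$. By the sheaf property for both the prismatic sheaves $\Prism_S\{i\}$, $\mathcal{N}^{\geq i}\Prism_S\{i\}$ (as in \Cref{def_prism_sheaves}) and the derived de Rham--Witt sheaves $\widehat{LW\Omega}_S$, $\mathcal{N}^{\geq i}\widehat{LW\Omega}_S$ (\Cref{drwisqsynsheaf}), along with compatibility of the Frobenius maps with unfolding, it is enough to treat the case $S \in \qlrsp_{\mathbb{F}_p}$.

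First I would construct the comparison isomorphism $\acrys(S)\simeq \Prism_S$ for such $S$. The point is that $\mathbb{F}_p$ is itself a perfectoid ring, so \Cref{consprism} applies with perfectoid base $\mathbb{F}_p$; in this case $\ainf(\mathbb{F}_p) = \mathbb{Z}_p$ with $\xi = p$, hence also $\widetilde{\xi} = \varphi(\xi) = p$, and the prismatic ideal $I \subset \Prism_S$ is just $(p)$. Thus $\Prism_S$ is a $p$-torsion free, $p$-complete $\delta$-ring equipped with a surjection $a_S\colon\Prism_S\twoheadrightarrow S$ whose kernel $\mathcal{N}^{\geq 1}\Prism_S$ admits divided powers compatible with those on $(p)\subset\mathbb{Z}_p$ (the divided powers come automatically from the $\delta$-structure on a $p$-torsion free $p$-complete ring with $\phi\equiv\mathrm{id}\pmod p$, using the Frobenius-divisibility of \Cref{contractingFrob}). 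The universal property of $\acrys(S)$ then produces a map $\acrys(S)\to\Prism_S$ lifting the identity on $S$, and this map preserves Frobenii.

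To show this map is an isomorphism and compatible with Nygaard filtrations, I would argue on associated graded pieces. By \Cref{BMSfilt}, the associated graded of $\mathcal{N}^{\geq\star}\Prism_S\{i\}$ is (up to Breuil--Kisin twist, which is trivial in equal characteristic) $L\tau^{\leq i}\Omega_{S/\mathbb{F}_p}[{-}i]$, and the same description holds for the Nygaard filtration on $\widehat{LW\Omega}_S$ by the conjugate filtration computation \cite[Lem.~8.2]{BMS2}. The comparison map above, being compatible with the structural maps to $S$ and with Frobenii, induces the identity on these associated graded terms via the Cartier isomorphism; hence it is an isomorphism after completion. Since $\acrys(S)$ is already Nygaard-complete (its Nygaard filtration is $\{x : \varphi(x)\in p^i\acrys(S)\}$, which is $p$-adically separated), we conclude (1) and (2) simultaneously.

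Finally, the identification of $\varphi_i$ with the divided Frobenius $\varphi/p^i$ is exactly the content of \Cref{contractingFrob} combined with $p$-torsion freeness: the inclusion $\phi_i(\mathcal{N}^{\geq i}\Prism_S\{i\})\subset I^i\Prism_S\{i\} = p^i\Prism_S\{i\}$ and the $p$-torsion freeness of $\Prism_S$ allow us to define $\phi_i = \phi/p^i$ unambiguously on $\mathcal{N}^{\geq i}\Prism_S\{i\}$, matching the divided Frobenius \eqref{dividedFrob} on $\widehat{LW\Omega}_S$ under our comparison. The formula \eqref{Zpincharp} is then immediate from \Cref{defBMSZPi}. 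The main obstacle I anticipate is checking that the comparison $\acrys(S)\to\Prism_S$ is a filtered isomorphism, as opposed to merely an isomorphism of underlying rings; the argument via associated graded pieces and the Cartier isomorphism is where the substantive content sits and where one must use the full force of the HKR-type calculations underlying \Cref{BMSfilt}.
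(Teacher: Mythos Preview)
The paper does not supply its own proof of this theorem; it is simply attributed to \cite[Sec.~8]{BMS2}. Your sketch correctly reproduces the overall architecture of the argument there: reduce via unfolding to $S \in \qlrsp_{\mathbb{F}_p}$, use that the perfectoid base $\mathbb{F}_p$ has $\xi=\widetilde{\xi}=p$ so that $I=(p)$, build a comparison $\acrys(S)\to\Prism_S$ from the universal property of $\acrys$ as a PD-thickening, and verify it is a filtered isomorphism by matching associated graded pieces of the two Nygaard filtrations.

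Two points deserve tightening. First, your appeal to \Cref{consprism} for the $\delta$-structure on $\Prism_S$ is formally out of scope: that construction is stated only for $R\in\qlrsp_{\mathbb{Z}_p}$, i.e., $p$-torsion free rings, whereas here $S$ is an $\mathbb{F}_p$-algebra. The $\delta$-structure still exists (the ring $\Prism_S$ itself is $p$-torsion free), but you should invoke \cite[Sec.~8]{BMS2} directly rather than \Cref{consprism}; the divided powers on $\ker(\Prism_S\to S)$ then follow from the standard fact that in a $p$-torsion free $\delta$-ring, any element $x$ with $\varphi(x)\in(p)$ satisfies $x^p=\varphi(x)-p\delta(x)\in(p)$, giving $\gamma_p(x)$ and, inductively, all divided powers. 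Second, your parenthetical justification that $\acrys(S)$ is Nygaard-complete (``$p$-adically separated'') is not an argument: Nygaard-completeness of $\acrys(S)$ for quasiregular semiperfect $S$ is a genuine theorem of \cite[Sec.~8]{BMS2}, proved precisely by the associated-graded comparison you describe, and should be cited as such rather than asserted. With these adjustments the sketch is sound.
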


\begin{remark} 
\label{dontneednygaardcompletion}
The Nygaard completion is 
redundant in the formula \eqref{Zpincharp} for  $\mathbb{Z}_p(i)(S)$. 
This follows easily from the fact that 
$\varphi_i$ acts by zero  on $\mathcal{N}^{\geq i+1} \widehat{LW\Omega}_S/p $. 
In particular, we can write
\[ \mathbb{F}_p(i)(S) = \mathrm{fib}( \varphi_i - \mathrm{can}\colon  
(\mathcal{N}^{\geq i}LW\Omega_S / 
\mathcal{N}^{\geq i+1}LW\Omega_S) \otimes_{\mathbb{Z}_p}^L \mathbb{F}_p  \to (LW \Omega_S/\mathcal{N}^{\geq
i+1}LW\Omega_S) \otimes_{\mathbb{Z}_p}^L \mathbb{F}_p).
\]
\end{remark}
\begin{example}\label{Zpiforqrsperfs} 
If $S $ is
a quasiregular semiperfect $\mathbb{F}_p$-algebra, then  for $i  > 0$,
$\mathbb{Z}_p(i)(S)$ is discrete, $p$-torsion free, and 
there is a natural  isomorphism of abelian groups
\begin{equation} \label{QpofFpalgebra} \mathbb{Z}_p(i)(S) \simeq
\mathrm{ker}( \phi  - p^i\colon \acrys(S) \to \acrys(S)),   \end{equation} 
where $\varphi$ is induced by the Frobenius. For $i = 0$, we should instead take the homotopy fiber of $\varphi - 1$ on
$\acrys(S)$, so it may have terms in cohomological degree $1$. 
\end{example} 

In the ind-smooth case, one has an 
identification with logarithmic de Rham--Witt forms. 

\begin{definition}[{Logarithmic de Rham--Witt forms}] 
For $S$ an ind-smooth $\mathbb{F}_p$-algebra, we  let $W \Omega_{S, \mathrm{log}}^\bullet$  denote the graded subring 
of the de Rham-Witt complex $W \Omega_S^\bullet$ consisting of fixed points for $F$. When $S$ is local, one knows that $W
\Omega_{S, \mathrm{log}}^\bullet$ 
is generated, modulo any power of $p$, in degree $1$ by elements of the form $d[x]/[x]$, for $x \in
S^{\times}$ and $[x] \in W(S)$ the Teichm\"uller representative, cf.~\cite[Th. 5.7.2]{Ill79} which proves this \'etale locally and \cite[Theorem
0.10]{MorrowHW} for a very general Zariski local result. 
Note that for each $i$, $W \Omega_{-, \mathrm{log}}^i$ defines a pro-\'etale
sheaf on $\spec(S)$.
\end{definition}

\begin{theorem}[{Cf.~\cite[Cor.~8.21]{BMS2}}] 
\label{Zpindsmooth}
Let $S$ be an ind-smooth $\mathbb{F}_p$-algebra. 
Then there are natural identifications
\[ \mathbb{Z}_p(i)(S) \simeq R \Gamma_{\mathrm{proet}} ( \spec(S), W \Omega_{-,
\mathrm{log}}^i)[-i]. \]
\end{theorem}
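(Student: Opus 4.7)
The plan is to use Theorem \ref{charpdecomplete} to bridge between the prismatic definition of $\mathbb{Z}_p(i)$ and Illusie's classical de Rham--Witt theory, where the answer is known. For an ind-smooth $\mathbb{F}_p$-algebra $S$, the derived de Rham--Witt cohomology $LW\Omega_S$ coincides with Illusie's usual $W\Omega^\bullet_S$, which is already Nygaard-complete, and the derived Nygaard filtration agrees with the classical Nygaard filtration (both are defined by left Kan extension from the polynomial case). Applying the expression \eqref{Zpincharp} from Theorem \ref{charpdecomplete} then yields a natural equivalence
\[ \mathbb{Z}_p(i)(S) \simeq \mathrm{fib}\bigl(\varphi_i - \can\colon \mathcal{N}^{\geq i}W\Omega_S \to W\Omega_S\bigr), \]
with $\varphi_i$ the classical divided Frobenius $F/p^i$.

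To finish, I would identify the right-hand side with $R\Gamma_{\mathrm{proet}}(\spec(S), W\Omega^i_{-,\log})[-i]$ via the classical Artin--Schreier--Milne theory of the logarithmic de Rham--Witt sheaves. The key inputs are: (a) the identification of the associated graded pieces of the Nygaard filtration with the truncated de Rham complex, $\mathcal{N}^{\geq i}W\Omega_S/\mathcal{N}^{\geq i+1}W\Omega_S \simeq \tau^{\leq i}\Omega^\bullet_{S/\mathbb{F}_p}$, on which $\varphi_i$ acts through the inverse Cartier operator $C^{-1}$; and (b) the classical short exact sequences of pro-\'etale sheaves on $\spec(S)$,
\[ 0 \to W_n\Omega^i_{S,\log} \to W_n\Omega^i_S \xrightarrow{1 - F} W_n\Omega^i_S/dV^{n-1}W_n\Omega^{i-1}_S \to 0, \]
due to Bloch--Kato, Illusie, and Milne, whose mod-$p$ shadow is the Cartier sequence $0 \to \Omega^i_{S,\log} \to \Omega^i_S \xrightarrow{1-C^{-1}} \Omega^i_S/d\Omega^{i-1}_S \to 0$ of \'etale sheaves. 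A Bockstein argument up the Witt tower, combined with the description of graded pieces in (a), then assembles these into a quasi-isomorphism
\[ W\Omega^i_{S,\log}[-i] \simeq \mathrm{fib}\bigl(\varphi_i - \can\colon \mathcal{N}^{\geq i}W\Omega_S \to W\Omega_S\bigr) \]
of pro-\'etale sheaves on $\spec(S)$, after which applying $R\Gamma_{\mathrm{proet}}(\spec(S), -)$ gives the theorem.

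The main obstacle is purely bookkeeping: one must verify that the prismatic divided Frobenius $\varphi_i$ arising from the $\TC^-/\TP$ picture agrees, under the identification $\Prism_S\{i\} \simeq \widehat{LW\Omega}_S$ of Theorem \ref{charpdecomplete}, with the classical divided Frobenius $F/p^i$ on the Nygaard filtration of Illusie's de Rham--Witt complex. This compatibility is already the content of the identification used to prove Theorem \ref{charpdecomplete} itself (following Bhatt--Morrow--Scholze), so the proof of Theorem \ref{Zpindsmooth} reduces to a careful but essentially formal repackaging of classical results.
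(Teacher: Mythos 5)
The paper gives no proof of this statement at all---it is quoted as part of a review, with the proof deferred to \cite[Cor.~8.21]{BMS2}---and your sketch is essentially the argument of that reference: reduce via Theorem~\ref{charpdecomplete} to the fiber of $\varphi_i-\mathrm{can}$ on the Nygaard-filtered de Rham--Witt complex, then identify that fiber with $W\Omega^i_{\log}[-i]$ pro-\'etale locally using the classical exact sequences of Illusie, Bloch--Kato, et al. The one small inaccuracy is your claim that $W\Omega_S$ is already Nygaard-complete for \emph{ind}-smooth $S$ (a filtered colimit of complete objects need not be complete); this is harmless, since Remark~\ref{dontneednygaardcompletion} shows the Nygaard completion is redundant in \eqref{Zpincharp} in any case.
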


\subsection{The Beilinson fiber square on graded pieces}

Our goal is to relate the $\mathbb{Z}_p(i)$ in mixed and in equal
characteristic. 
We use the Beilinson fiber sequence to prove a basic fiber square which gives a version of Theorem~\hyperref[thm:a]{A} on associated
graded terms for the motivic filtrations.
\begin{construction}[The trace on graded pieces] 
Let $R$ be any commutative ring. Then we have the trace maps 
$$\K(R ; \mathbb{Z}_p) \to \mathrm{TC}(R; \mathbb{Z}_p) \to \HC^-(R;
\mathbb{Z}_p).$$
 When $R \in \qlrsp_{\mathbb{Z}_p}$, 
then $\HC^-(R; \mathbb{Z}_p)$ is concentrated in even degrees and 
$\pi_{2i}$ is given by $\widehat{L \Omega}_R^{\geq i}$, cf.~\cite[Sec.~5]{BMS2}
and \cite{antieauperiodic}. 
Unfolding we conclude that, on graded pieces, we obtain a natural map
\(  \mathbb{Z}_p(i)(R) \to  
\widehat{L \Omega}_R^{\geq i}
\)
for $R\in\mathrm{qSyn}_{\mathbb{Z}_p}$. This naturally factors through $L \Omega_R^{\geq i}$ since $R \mapsto
\mathbb{Z}_p(i)(R)$ is left Kan extended from $p$-complete polynomial algebras
(\Cref{Zpibound}). 
\end{construction}

\begin{theorem}[The Beilinson fiber square on graded terms] 
\label{Beilinsonongr}
Let $R \in \mathrm{qSyn}_{\mathbb{Z}_p}$.  Then, for each $i \geq 0$, there exists a natural 
map $\chi_i\colon \mathbb{Q}_p(i)(R/p) \to ({L\Omega}_R)_{\mathbb{Q}_p}$
and a functorial pullback square
\begin{equation}\label{Bfibgr}\begin{gathered} 
\xymatrix{
\mathbb{Q}_p(i)(R)  \ar[d]  \ar[r] &  \mathbb{Q}_p(i)(R/p) \ar[d]^{\chi_i}  \\
({L\Omega}_R^{\geq i})_{\mathbb{Q}_p} \ar[r] & ({L
\Omega}_R)_{\mathbb{Q}_p}
}  \end{gathered}
\end{equation}
in the  derived $\infty$-category $D(\mathbb{Q}_p)$.
The map $\chi_i$ arises from a natural map $\mathbb{Z}_p(i)(R/p) \to p^{-N} L
\Omega_R$ for some $N \gg 0$ (depending only on $i$), fitting into an analogous commutative diagram.

Furthermore, the associated fiber sequence holds up to isogeny: 
$\mathrm{cofib}(\mathbb{Z}_p(i)(R) \to \mathbb{Z}_p(i)(R/p))$ 
and $L \Omega_R/L \Omega_R^{\geq i}$ are naturally isogenous to each other. 
Finally, 
for $i \leq  p-2$, we have natural equivalences for
$\mathrm{qSyn}_{\mathbb{Z}_p}$,
\begin{equation}  \label{integralBfibgr}\mathrm{fib}(\mathbb{Z}_p(i)(R) \to  \mathbb{Z}_p(i)(R/p)) 
\simeq \mathrm{fib} \left(
L \Omega_R/L \Omega_R^{\geq i} \to 
L \Omega_{R/p}/L \Omega_{R/p}^{\geq i}
\right)[-1]
.\end{equation}\end{theorem}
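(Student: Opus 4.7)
The strategy is to derive \Cref{Beilinsonongr} from the Beilinson fiber square of \Cref{cons:basicfibersquare} by passing to graded pieces for suitable motivic filtrations. First, I would equip each of the four corners of the square with a natural filtration: the BMS motivic filtration on $\TC(R;\mathbb Z_p)$ and $\TC(R\otimes_\SS\FF_p)$, with graded pieces $\mathbb{Z}_p(i)(R)[2i]$ and $\mathbb{Z}_p(i)(R/p)[2i]$, and the HKR-type filtrations on $\HC^-(R;\mathbb Z_p)$ and $\HP(R;\mathbb Z_p)$ from \cite{antieauperiodic}, with graded pieces built from $\L\Omega_R^{\geq i}[2i]$ and $\L\Omega_R[2i]$. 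All four functors are quasisyntomic sheaves, so by descent it suffices to check that the maps in the square are filtered on the basis $\qlrsp_{\mathbb Z_p}$ of quasiregular semiperfectoid rings. There all four spectra are concentrated in even degrees and the motivic filtrations reduce to the double-speed Postnikov filtration, so the filteredness of the horizontal and vertical maps in the Beilinson fiber square is automatic.

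Second, I would pass to graded pieces of the resulting filtered square after rationalization. Since \Cref{cons:basicfibersquare} is cartesian after inverting $p$, so is each graded piece of its filtered refinement. Shifting down by $[2i]$ and rationalizing yields the pullback square \eqref{Bfibgr}, with $\chi_i$ induced by the graded-piece map of the rational $p$-adic Chern character $\TC(R/p;\mathbb Q_p)\to\HP(R;\mathbb Q_p)$ from Section~\ref{sec:square}. The integral cofiber isogeny statement then follows, with denominators bounded via the effective estimates of \Cref{remark:effbound1} and \Cref{effectiveII}: these imply that the total cofiber of the integral filtered square at filtration level $i$ is annihilated by some $p^{N(i)}$, which simultaneously produces the integral lift $\mathbb Z_p(i)(R/p)\to p^{-N}\L\Omega_R$ claimed in the statement.

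Third, for the clean integral equivalence \eqref{integralBfibgr} when $i\leq p-2$, I would translate both sides into purely de Rham-theoretic terms. Via the derived de Rham-to-crystalline comparison $\L\Omega_R\simeq LW\Omega_{R/p}$ and the description of $\mathbb Z_p(i)(R/p)$ from \Cref{charpdecomplete} (combined with \Cref{dontneednygaardcompletion} to eliminate Nygaard completion issues modulo $p$), the fiber $\mathrm{fib}(\mathbb Z_p(i)(R)\to\mathbb Z_p(i)(R/p))$ may be compared with the fiber of the integral de Rham map $\L\Omega_R/\L\Omega_R^{\geq i}\to\L\Omega_{R/p}/\L\Omega_{R/p}^{\geq i}$. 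The key input is that for $i\leq p-1$ the Frobenius on $\L\Omega_R$ carries $\L\Omega_R^{\geq i}$ into $p^i\L\Omega_R$, so the divided Frobenius $\varphi/p^i$ is defined integrally and corresponds under the crystalline comparison to $\varphi_i$ on Nygaard-filtered $LW\Omega_{R/p}$; the slightly stronger bound $i\leq p-2$ provides the extra slack needed to promote this integral matching from an isogeny to an honest equivalence.

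The main obstacle is the interplay between the Nygaard filtration on prismatic cohomology, which governs the motivic filtration on $\TC$, and the Hodge filtration on derived de Rham cohomology, which controls $\HC^-$ and $\HP$. The two filtrations match only after dividing by $p^i$ at filtration level $i$ via the Frobenius, which accounts for the uniform $p^N$ denominators in the integral lift of $\chi_i$. For the integral equivalence when $i\leq p-2$, one must carefully verify that the integral divided Frobenius $\varphi/p^i$ on $\L\Omega_R^{\geq i}$ corresponds precisely—and not just after inverting bounded denominators—to the integral Frobenius structure on the prismatic side, which is where the bound $i\leq p-2$ is crucial.
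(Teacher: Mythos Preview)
Your first two steps are close to the paper's argument, which simply restricts to $R\in\qlrsp_{\mathbb{Z}_p}$ and applies $\tau_{[2i-1,2i]}$ to the rationalized Beilinson fiber square. Two corrections: first, it is not true that all four corners are even on $\qlrsp_{\mathbb{Z}_p}$ --- only the bottom row and $\TC(R/p;\mathbb{Z}_p)$ are (the odd vanishing for $\TC(R;\mathbb{Z}_p)$ is a deep theorem of Bhatt--Scholze and only holds locally). The paper instead observes that evenness of three of the four corners is enough for $\tau_{[2i-1,2i]}$ to preserve cartesianness. Second, you omit the Hodge-decompletion step: the truncation of $\HC^-$ and $\HP$ gives $\widehat{L\Omega}_R^{\geq i}$ and $\widehat{L\Omega}_R$, not the uncompleted versions in the statement. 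The paper handles this by noting that $\mathbb{Z}_p(i)(R/p)$ is left Kan extended from finitely generated $p$-complete polynomial $\mathbb{Z}_p$-algebras (\Cref{Zpibound}(2)), so the integral lift of $\hat\chi_i$ can be restricted there and left Kan extended back, landing in the uncompleted $L\Omega_R$.

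Your third step has a genuine gap. You propose to deduce \eqref{integralBfibgr} by rewriting $\mathbb{Z}_p(i)(R/p)$ via \Cref{charpdecomplete} and the crystalline comparison, and then invoking the divisibility of Frobenius on $L\Omega_R^{\geq i}$ for $i\leq p-1$. But this does not compute the fiber of $\mathbb{Z}_p(i)(R)\to\mathbb{Z}_p(i)(R/p)$: you have no independent description of $\mathbb{Z}_p(i)(R)$ in de Rham terms at this point, and supplying one is exactly the content of \Cref{FMcomparedtoBMS2}, which is proved \emph{later} using \eqref{integralBfibgr} as input. The paper's argument is entirely different and much more direct: working pro-\'etale locally on $w$-strictly local $R\in\qlrsp_{\mathbb{Z}_p}$ (so that $\pi_{-1}\TC(R;\mathbb{Z}_p)=0$), \Cref{thm_a_text} gives the integral equivalence $\tau_{\leq 2p-4}\TC(R,(p);\mathbb{Z}_p)\simeq\tau_{\leq 2p-4}\Sigma\HC(R,(p);\mathbb{Z}_p)$. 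For $i\leq p-2$ one then applies $\tau_{[2i-1,2i]}$ to both sides; since $\TC(R/p;\mathbb{Z}_p)$, $\HC(R;\mathbb{Z}_p)$, and $\HC(R/p;\mathbb{Z}_p)$ are all even, these truncations are identified respectively with $\mathrm{fib}(\mathbb{Z}_p(i)(R)\to\mathbb{Z}_p(i)(R/p))[2i]$ and $\mathrm{fib}(L\Omega_R/L\Omega_R^{\geq i}\to L\Omega_{R/p}/L\Omega_{R/p}^{\geq i})[2i-1]$. No Frobenius divisibility enters.
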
 

\begin{proof} 
Note that the hypothesis that $R \in \mathrm{qSyn}_{\mathbb{Z}_p}$ ensures that $R/p \in
\qsyn$. 
The square \eqref{Bfibgr} will be constructed in 
the $\infty$-category of $D(\mathbb{Q}_p)^{\geq 0}$-valued sheaves on
$\mathrm{qSyn}_{\mathbb{Z}_p}$. 
It suffices to construct the above pullback square for  $R \in
\qlrsp_{\mathbb{Z}_p}$, by
unfolding. 
For $R \in \qlrsp_{\mathbb{Z}_p}$, we have a pullback square by
\Cref{TCformThmA},
\begin{equation} \label{tcfibseqa}  \begin{gathered}
 \xymatrix{
\TC(R; \mathbb{Q}_p) \ar[d]  \ar[r] &  \TC(R/p; \mathbb{Q}_p) \ar[d]  \\
 \HC^-(R; \mathbb{Q}_p) \ar[r] &  \HP(R; \mathbb{Q}_p). }\end{gathered}
 \end{equation}
Note  since $R
\in \qlrsp_{\mathbb{Z}_p}$, 
the terms in the bottom row of the  above fiber square  are 
 concentrated in even degrees
\cite[Lem.~5.14]{BMS2}. 
Consequently, for each $i$, we can apply 
$\tau_{[2i-1, 2i]}$ and still obtain a fiber square. 
By definition of the $\mathbb{Q}_p(i)$ and by 
the corresponding description  of derived de Rham cohomology, as in \cite[Th.~1.17]{BMS2} (or using the filtration of \cite{antieauperiodic}), we obtain
\eqref{Bfibgr} for $R$ (after a shift), albeit with a Hodge completion. 
In particular, instead of $\chi_i$, we obtain a completed version
\[ \hat{\chi_i}\colon \mathbb{Q}_p(i)(R/p) \to (\widehat{L
\Omega}_R)_{\mathbb{Q}_p},  \]
as well as a Hodge-completed version of the fiber square \eqref{Bfibgr}. 

We can refine $\hat{\chi_i}$ to $\chi_i$ (and obtain \eqref{Bfibgr}) as follows.  
First, by construction of these maps via the Beilinson fiber square, a multiple of $\hat{\chi_i}$ lifts to a map 
$\mathbb{Z}_p(i)(R/p) \to \widehat{L \Omega}_R$. Since the source is left Kan
extended (as a functor to the $p$-complete derived $\infty$-category) from
finitely generated $p$-complete polynomial algebras, we can restrict and left Kan extend to obtain that a
multiple of $\hat{\chi_i}$ lifts to $\mathbb{Z}_p(i)(R/p) \to L \Omega_R$. 
Inverting $p$, we obtain that $\hat{\chi_i}$ factors through a map 
$\chi_i\colon \mathbb{Q}_p(i)(R/p) \to (L \Omega_R)_{\mathbb{Q}_p}$. 

From the quasi-isogeny between $\TC(R, (p); \mathbb{Z}_p)$ and 
$\Sigma \HC(R, (p); \mathbb{Z}_p)$ as in \Cref{thm_a_text}, 
 we
obtain the isogeny claim.

Finally, we verify \eqref{integralBfibgr}; again, we can assume that $R \in
\qlrsp_{\mathbb{Z}_p}$ by unfolding. 
Since everything is a pro-\'etale sheaf, we can even assume that $R$ is
$w$-strictly local in the sense of \cite{BS15}, so that $\pi_{-1} \TC(R;
\mathbb{Z}_p) = \mathrm{coker}(F - 1\colon W(R) \to W(R))$ 
(by \cite[Theorem F]{HM})
vanishes. 
Recall we have an equivalence
\( \tau_{\leq 2p-4} \mathrm{TC}(R, (p); \mathbb{Z}_p) 
\simeq \tau_{\leq 2p-4}\Sigma \mathrm{HC}(R, (p); \mathbb{Z}_p)
\) by \Cref{thm_a_text}. 
It follows that for $i \leq p-2$, we have an equivalence
$$ 
\tau_{[2i-1, 2i]} \mathrm{TC}(R, (p); \mathbb{Z}_p) \simeq \tau_{[2i-1, 2i]}
\Sigma \mathrm{HC}(R, (p); \mathbb{Z}_p).$$
Now $\mathrm{TC}(R/p; \mathbb{Z}_p)$, $\mathrm{HC}(R; \mathbb{Z}_p)$, and
$\mathrm{HC}(R/p; \mathbb{Z}_p)$
are concentrated in even degrees since $R \in \qlrsp_{\mathbb{Z}_p}$. For the first claim see
\cite[Proposition 8.20]{BMS2}. 
The second and third follow from the filtrations
constructed in \cite{antieauperiodic} and \cite[Sec.~5]{BMS2}. 

It follows from the above definitions that 
$$\tau_{[2i-1, 2i]}
 \mathrm{TC}(R, (p); \mathbb{Z}_p) \simeq \mathrm{fib}( \mathbb{Z}_p(i)(R) \to
 \mathbb{Z}_p(i)(R/p))[2i],$$
 and from \cite[Sec.~5]{BMS2} 
and \cite{antieauperiodic} 
 that 
$$
\tau_{[2i-1, 2i]}
\Sigma \mathrm{HC}(R, (p); \mathbb{Z}_p) 
\simeq 
\mathrm{fib}(L \Omega_{R}/L \Omega_R^{\geq i} \to L
    \Omega_{R/p}/L \Omega_{R/p}^{\geq i})[2i-1].$$
Using these identifications, 
we deduce \eqref{integralBfibgr}. 
\end{proof}

We next identify the $p$-adic Chern character 
$\chi_i\colon \mathbb{Q}_p(i)(R/p) \to (L \Omega_R)_{\mathbb{Q}_p}$ on graded pieces
(from \Cref{Beilinsonongr})
more
explicitly. 
To this end, we prove the following basic result: 

\begin{proposition}[The image of $\chi_i$] 
\label{imagechern}
Let $R \in \qlrsp_{\mathbb{Z}_p}$. 
Then for each $i > 0$, the map (of $\mathbb{Q}_p$-vector
spaces) $\chi_i\colon \mathbb{Q}_p(i)(R/p) \to (L
\Omega_R)_{\mathbb{Q}_p} = \acrys(R/p)_{\mathbb{Q}_p}$ is injective, and has
image given by the $\phi = p^i$ eigenspace. 
\end{proposition}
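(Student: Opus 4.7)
Under the identifications $\mathbb{Q}_p(i)(R/p) \simeq \acrys(R/p)^{\varphi = p^i}_{\mathbb{Q}_p}$ from Example~\ref{Zpiforqrsperfs} (using $i > 0$) and $(L\Omega_R)_{\mathbb{Q}_p} \simeq \acrys(R/p)_{\mathbb{Q}_p}$ from the de Rham-to-crystalline comparison (using that $R$ is $p$-torsion free), the map $\chi_i$ becomes a $\mathbb{Q}_p$-linear map
\[
\acrys(R/p)^{\varphi = p^i}_{\mathbb{Q}_p} \longrightarrow \acrys(R/p)_{\mathbb{Q}_p}.
\]
Since both sides are discrete, the proposition reduces to showing this is (a unit scalar multiple of) the natural inclusion of the eigenspace.

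The proof splits into two steps. First, the image of $\chi_i$ lies in the $\varphi = p^i$ eigenspace of $\acrys(R/p)_{\mathbb{Q}_p}$. This is a Frobenius-equivariance statement: the source carries a $\varphi$-action equal to multiplication by $p^i$ (tautologically), and $\chi_i$ is $\varphi$-equivariant because the Beilinson fiber square is compatible with the cyclotomic Frobenius on $\THH(R/p)$, which specializes both to the prismatic $\varphi$ on $\Prism_{R/p}$ underlying $\mathbb{Q}_p(i)(R/p)$ and, via Theorem~\ref{BMSfilt} and Construction~\ref{consprism}, to the crystalline $\varphi$ on $\acrys(R/p) \simeq L\Omega_R$. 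Second, the resulting map $\acrys(R/p)^{\varphi = p^i}_{\mathbb{Q}_p} \to \acrys(R/p)^{\varphi = p^i}_{\mathbb{Q}_p}$ is multiplication by a universal scalar: by naturality in $R$ and the sheaf property on $\qlrsp_{\mathbb{Z}_p}$, the scalar is universal, and its nonvanishing can be verified in a universal perfectoid case $R_0$ (for instance $R_0 = \mathcal{O}_C$ for $C$ algebraically closed perfectoid), where $\chi_i$ recovers the classical inclusion of $\mathbb{Q}_p(i)$ into $\bcrys$ coming from the fundamental exact sequence of $p$-adic Hodge theory; that inclusion is visibly nonzero.

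The main obstacle is the Frobenius-equivariance verification in the first step, which requires carefully tracing through the identifications of $\gr^i\TC^-$ with $\mathcal{N}^{\geq i}\Prism\{i\}$ and $\gr^i\TP$ with $\Prism\{i\}$, the trace map $\TC^-\to\HC^-$ on associated gradeds, and the recognition that the de Rham-to-crystalline comparison matches $\varphi$-structures on $L\Omega_R$ and $\acrys(R/p)$. Once this compatibility is in place, the universal-scalar computation in the perfectoid case is routine and completes the proof.
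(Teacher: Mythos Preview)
Your proposal has a genuine gap at the Frobenius-equivariance step, and it is precisely the gap the paper flags as ``the main issue'' in the paragraph immediately preceding its proof. You claim $\chi_i$ is $\varphi$-equivariant because ``the Beilinson fiber square is compatible with the cyclotomic Frobenius on $\THH(R/p)$,'' but this conflates three distinct Frobenii. The cyclotomic Frobenius $\THH(R/p)\to\THH(R/p)^{tC_p}$ is a structure map already consumed in forming $\TC$; the prismatic $\varphi_i$ is likewise already used in the equalizer defining $\mathbb{Z}_p(i)$. The Frobenius you actually need is the one induced by the \emph{ring endomorphism} of $R/p$: that is what acts by $p^i$ on $\mathbb{Q}_p(i)(R/p)$ (Lemma~\ref{rem:frob}) and what gives the crystalline $\varphi$ on $\acrys(R/p)$. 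But $\chi_i$ is constructed from the map $\TC(R/p;\mathbb{Q}_p)\to\HP(R;\mathbb{Q}_p)$, whose target depends on the lift $R\in\qlrsp_{\mathbb{Z}_p}$, and the Frobenius on $R/p$ does not lift to an endomorphism of $R$. So there is no obvious naturality in $R/p$ to invoke, and your trace-through sketch does not supply one.

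The paper's fix is a genuinely different idea: rather than chase compatibilities through the fiber square, it proves (Corollary~\ref{howtogetnaturalmap}, via the adjunction trick of Corollary~\ref{adjunctionppower} and Lemma~\ref{froblemmaSCR}) that the natural transformation $\chi_i$, a priori defined on $\mathrm{qSyn}_{\mathbb{Z}_p}$, in fact descends to a natural transformation of functors on $\mathrm{qSyn}_{\mathbb{F}_p}$. Once $\chi_i$ is natural in $R/p$ as an $\mathbb{F}_p$-algebra, Frobenius-equivariance is automatic. Your second step is also incomplete: ``naturality and the sheaf property'' do not by themselves force a natural endomorphism of $\mathbb{Q}_p(i)$ to be a scalar; the paper uses the structural Proposition~\ref{endoisscalar} (corepresentability of $\mathbb{Z}_p(1)$ on quasiregular semiperfects, plus multiplicative generation via $\mathrm{dlog}$ on smooth $\mathbb{F}_p$-algebras) to pin this down. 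Finally, appealing to the fundamental exact sequence to show $\lambda\neq 0$ risks circularity in the paper's logical order, since the identification of the maps in that sequence is exactly what is at stake here.
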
 
The main issue is the following: both the source and target of
$\chi_i$ are functors of $R/p$, thanks to de Rham--Witt theory. We have seen that the $p$-adic Chern character $\chi_i$ induces a natural map 
$\mathbb{Z}_p(i)(R/p) \to p^{-N} L \Omega_R $ for $R \in \mathrm{qSyn}_{\mathbb{Z}_p}$
for some $N$. 
However, it is not a priori obvious that the map $\chi_i$ arises from a natural
transformation of functors on $\mathbb{F}_p$-algebras (which would force it to
commute with Frobenius operators, for example). Our first goal is to verify
this.

\begin{lemma}[The Frobenius action on $\mathbb{Z}_p(i)(R)$]
\label{rem:frob}
For any $R \in \mathrm{qSyn}_{\mathbb{F}_p}$, the Frobenius on $R$ acts as multiplication by $p^i$ on
$\mathbb{Z}_p(i)(R)$. 
\end{lemma}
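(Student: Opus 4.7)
My plan is to exploit the explicit description of $\mathbb{Z}_p(i)$ on quasiregular semiperfect rings in terms of Fontaine's ring $\acrys$, after reducing the general quasisyntomic case to this one by sheaf-theoretic descent. First, I observe that the Frobenius $F$ is a natural endomorphism of the identity functor of $\mathrm{qSyn}_{\mathbb{F}_p}$, because every $\mathbb{F}_p$-algebra homomorphism commutes with $p$-th powers; by functoriality it therefore induces a natural endomorphism $F^*$ of the $D(\mathbb{Z}_p)^{\geq 0}$-valued sheaf $\mathbb{Z}_p(i)|_{\mathrm{qSyn}_{\mathbb{F}_p}}$. Since this sheaf is determined by its restriction to the basis $\qlrsp_{\mathbb{F}_p}$ of quasiregular semiperfect $\mathbb{F}_p$-algebras (cf.~\cite[Prop.~4.31]{BMS2}), the same is true of any natural endomorphism of it, and it suffices to verify the identity $F^* = p^i$ on $\qlrsp_{\mathbb{F}_p}$.

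For $R \in \qlrsp_{\mathbb{F}_p}$, \Cref{Zpiforqrsperfs} provides the identification
\[
\mathbb{Z}_p(i)(R) \;\simeq\; \mathrm{fib}\bigl(\varphi - p^i\colon \acrys(R) \to \acrys(R)\bigr),
\]
where $\varphi$ is the canonical Witt vector Frobenius on $\acrys(R)$. The computation is then essentially tautological: on the fiber of $\varphi - p^i$ one has the homotopy $\varphi = p^i + (\varphi - p^i) \simeq p^i \cdot \mathrm{id}$, since $\varphi - p^i$ is nullhomotopic on its own fiber. The same argument applies verbatim for $i = 0$, where the fiber of $\varphi - 1$ may have contributions in both $H^0$ and $H^1$, and one concludes $F^* = 1 = p^0$.

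The main nontrivial point is verifying that the functorial action of $F\colon R \to R$ on $\mathbb{Z}_p(i)(R)$ is, under the above identification, induced by the canonical Witt vector Frobenius $\varphi$ on $\acrys(R)$. This reduces to the observations that the induced map on tilts $F^\flat\colon R^\flat \to R^\flat$ coincides with the tautological Frobenius of the perfect $\mathbb{F}_p$-algebra $R^\flat$, and that $\varphi$ on $\acrys(R)$ is the canonical extension (via the universal property of the $p$-complete divided power envelope) of the associated Witt vector Frobenius on $W(R^\flat)$. With this standard naturality statement in hand, the lemma follows directly.
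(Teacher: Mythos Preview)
Your proposal is correct and follows essentially the same approach as the paper: reduce by descent to quasiregular semiperfect $\mathbb{F}_p$-algebras, then invoke the identification of \Cref{Zpiforqrsperfs} so that the Frobenius on $R$ acts via the Witt vector Frobenius $\varphi$ on $\acrys(R)$, which by construction equals $p^i$ on the $\varphi = p^i$ eigenspace. The paper's proof says exactly this in two sentences; your version spells out the one point left implicit there, namely that the functorial endomorphism induced by $F\colon R \to R$ agrees with the canonical $\varphi$ on $\acrys(R)$ via the tilt and the universal property of the divided power envelope.
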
 
\begin{proof} 
This reduces to the case of a  quasiregular semiperfect
$\mathbb{F}_p$-algebra by descent. But in this case, the identification of \Cref{Zpiforqrsperfs} clearly proves
the claim.
\end{proof} 

\begin{corollary} 
\label{howtogetnaturalmap}
The natural map $\chi_i\colon \mathbb{Z}_p(i)(R/p) \to p^{-N}(L \Omega_R) \to (L
\Omega_R)_{\mathbb{Q}_p}$, for $R
\in \mathrm{qSyn}_{\mathbb{Z}_p}$  arises (by precomposition with reduction mod $p$)
from a unique natural transformation
$\chi_i\colon \mathbb{Z}_p(i)\to p^{-N'} L W \Omega_{(-)}$ on
$\mathrm{qSyn}_{\mathbb{F}_p}$ for some $N' \geq N$. 
\end{corollary}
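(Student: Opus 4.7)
The plan is to exploit that both sides of the desired natural transformation, viewed as functors on $\mathrm{qSyn}_{\mathbb{F}_p}$, are left Kan extended from the subcategory of finitely generated polynomial $\mathbb{F}_p$-algebras, on which every object has a canonical lift to a $p$-completed polynomial $\mathbb{Z}_p$-algebra. This will let us define $\bar\chi_i$ on the polynomial subcategory directly, then extend by left Kan extension, and verify that the result agrees with $\chi_i$ on $\mathrm{qSyn}_{\mathbb{Z}_p}$.

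First I would observe that both $R \mapsto \mathbb{Z}_p(i)(R/p)$ and $R \mapsto p^{-N}L\Omega_R$, as functors on $\mathrm{qSyn}_{\mathbb{Z}_p}$, factor naturally through the mod-$p$ reduction $\mathrm{qSyn}_{\mathbb{Z}_p} \to \mathrm{qSyn}_{\mathbb{F}_p}$: the source does so tautologically, and the target via the natural de Rham-to-crystalline isomorphism $L\Omega_R \simeq LW\Omega_{R/p}$. Hence $\chi_i$ is already the data of a natural transformation of composite functors factoring through mod-$p$ reduction. On finitely generated polynomial $\mathbb{F}_p$-algebras, define $\bar\chi_i(\mathbb{F}_p[x_1, \ldots, x_n]) := \chi_i(\mathbb{Z}_p\langle x_1, \ldots, x_n\rangle)$, using the canonical polynomial lift. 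Functoriality in a morphism $f\colon S_1 \to S_2$ follows from naturality of $\chi_i$ applied to any lift $\tilde f\colon R_1 \to R_2$; the resulting commutative square is independent of the choice of $\tilde f$ because both vertical maps depend only on $\tilde f \bmod p = f$ (the source tautologically, the target by naturality of the de Rham-to-crystalline comparison).

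Next I would extend $\bar\chi_i$ to all of $\mathrm{qSyn}_{\mathbb{F}_p}$ by left Kan extension. This requires two properties: (a) that $\mathbb{Z}_p(i)$ on $\mathrm{qSyn}_{\mathbb{F}_p}$ is $p$-completely left Kan extended from finitely generated polynomial $\mathbb{F}_p$-algebras; (b) the analogous statement for $p^{-N'} LW\Omega_{(-)}$. Property (b) follows from the definition of derived de Rham--Witt cohomology by $p$-complete left Kan extension, with $N'$ taken large enough to absorb any $p$-torsion contributions that arise in the extension process. Property (a) can be proved by running the argument of Theorem~\ref{Zpibound}(2) intrinsically in characteristic $p$, using the formula \eqref{Zpincharp} and the Nygaard-filtered derived de Rham--Witt theory reviewed in \Cref{charpdecomplete} in place of prismatic cohomology, together with \Cref{LKElem}. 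The compatibility of the extended $\bar\chi_i$ with the original $\chi_i$ then follows: on $\mathrm{qSyn}_{\mathbb{Z}_p}$ both are natural transformations which agree on finitely generated $p$-complete polynomial $\mathbb{Z}_p$-algebras and both are left Kan extended from this subcategory (the source by Theorem~\ref{Zpibound}(2) and the target by definition), so they agree. Uniqueness of $\bar\chi_i$ follows formally from the left Kan extension property.

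The main obstacle will be verifying property (a). While Theorem~\ref{Zpibound}(2) establishes the analogous left Kan extension statement on $\qsyn$, restriction to $\mathrm{qSyn}_{\mathbb{F}_p}$ does not automatically preserve the Kan extension, since colimits taken in $\mathrm{qSyn}_{\mathbb{F}_p}$ need not agree with those in $\qsyn$. The verification should proceed by adapting the proof of Theorem~\ref{Zpibound}(2), using the Frobenius-nilpotence and connectivity estimates for $\mathcal{N}^{\geq i} \widehat{LW\Omega}_S$ on $\qlrsp_{\mathbb{F}_p}$ directly, and then applying \Cref{LKElem} in the characteristic-$p$ setting.
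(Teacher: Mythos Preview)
Your approach has a genuine gap at the very first step. You claim that since both $R \mapsto \mathbb{Z}_p(i)(R/p)$ and $R \mapsto p^{-N}L\Omega_R \simeq p^{-N}LW\Omega_{R/p}$ factor through reduction mod $p$, the natural transformation $\chi_i$ between them ``is already the data of a natural transformation of composite functors factoring through mod-$p$ reduction.'' This does not follow: if $r\colon \mathrm{qSyn}_{\mathbb{Z}_p} \to \mathrm{qSyn}_{\mathbb{F}_p}$ is reduction mod $p$ and $G_1, G_2$ are functors on $\mathrm{qSyn}_{\mathbb{F}_p}$, a natural transformation $G_1 \circ r \to G_2 \circ r$ need not arise from one $G_1 \to G_2$, since precomposition by $r$ is not fully faithful. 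Your attempted fix via canonical polynomial lifts runs into an $\infty$-categorical coherence problem: there is no functorial section of $r$ on $\mathrm{Poly}_{\mathbb{F}_p}$ (morphisms do not lift canonically), so while any choice of lift $\tilde f$ gives a homotopy filling the naturality square, different lifts give a priori different homotopies, and you have supplied no mechanism to assemble these into a coherent natural transformation with values in the derived category.

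The paper's proof is fundamentally different and short: it uses the key extra input, established in \Cref{rem:frob}, that the Frobenius on $R$ acts as multiplication by $p^i$ on $\mathbb{Z}_p(i)(R)$. This feeds into the categorical lemma \Cref{adjunctionppower}, which says precisely that for functors $F_1, F_2$ on $\SCR_{\mathbb{F}_p}$ with Frobenius acting as $p^i$ on $F_1$, any natural transformation $F_1(-\otimes^L_{\mathbb{Z}}\mathbb{F}_p) \to F_2(-\otimes^L_{\mathbb{Z}}\mathbb{F}_p)$ becomes, after multiplying by $p^i$, the restriction of a natural transformation $F_1 \to F_2$. The mechanism is the natural map $f\colon R \to R\otimes^L_{\mathbb{Z}}\mathbb{F}_p$ of \Cref{froblemmaSCR}, whose composite with the projection is Frobenius; this is what produces the coherent descent datum you are missing, at the cost of the factor $p^i$ (whence $N' \geq N$). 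Your argument does not invoke the Frobenius property at all, which is the essential ingredient. Uniqueness in the paper comes simply from torsion-freeness of the sheaves, not from Kan extension.
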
 
\begin{proof} 
This follows from 
\Cref{adjunctionppower} and \Cref{rem:frob} (the latter shows that the
hypotheses of the former are satisfied), and then left Kan extension from
finitely generated $p$-complete polynomial rings. 
Uniqueness follows since these sheaves are torsion free. 
\end{proof}

Next, we consider the sheaf of graded $\mathbb{E}_\infty$-rings
$\bigoplus_{i = 0}^\infty \mathbb{Z}_p(i)$ on
$\mathrm{qSyn}_{\mathbb{F}_p}$. 
For each $N \geq 0$, we can also truncate to 
obtain a sheaf of graded $\mathbb{E}_\infty$-rings $\bigoplus_{i = 0}^N
\mathbb{Z}_p(i)$. 

\begin{proposition}\label{endoisscalar}
Let $f\colon \bigoplus_{i = 0}^N \mathbb{Z}_p(i) \to \bigoplus_{i = 0}^N
\mathbb{Z}_p(i)$ be a natural map of sheaves of graded $\mathbb{E}_\infty$ rings on
$\mathrm{qSyn}_{\mathbb{F}_p}$. Then there exists $\lambda \in \mathbb{Z}_p$ such that
in degree $i$, $f$ is given by multiplication by $\lambda^i$. 
\end{proposition}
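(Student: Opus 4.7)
The plan is to pin down $f$ first on the unit $\mathbb{Z}_p(0)$, then on the degree-one piece where a single scalar $\lambda \in \mathbb{Z}_p$ emerges, and finally to extend to all degrees via the multiplicative structure together with the symbol generation theorem for logarithmic de Rham--Witt forms. Throughout, I will combine computations on ind-smooth $\mathbb{F}_p$-algebras with the left Kan extension property of the $\mathbb{Z}_p(i)$ (\Cref{Zpibound}(2)) to reduce to this case.

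First, I would analyze $f$ on the graded unit $\mathbb{Z}_p(0)$. Since $f$ is a map of $\mathbb{E}_\infty$-ring spectra in sheaves, its restriction to $\mathbb{Z}_p(0)$ is a unital endomorphism of a sheaf of $\mathbb{E}_\infty$-rings on $\mathrm{qSyn}_{\mathbb{F}_p}$; by \Cref{Zpindsmooth} on ind-smooth algebras this sheaf is pro-\'etale locally the constant sheaf $\mathbb{Z}_p$, whose only unital ring endomorphism is the identity. By left Kan extension from polynomial $\mathbb{F}_p$-algebras, $f|_{\mathbb{Z}_p(0)} = \mathrm{id}$ on all of $\mathrm{qSyn}_{\mathbb{F}_p}$. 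Consequently, each restriction $f|_{\mathbb{Z}_p(i)}$ is a natural map of sheaves of $\mathbb{Z}_p(0)$-modules.

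Second, I would determine $f$ on $\mathbb{Z}_p(1)$. Applying \Cref{Zpindsmooth} to $S = \mathbb{F}_p[t^{\pm 1}]$ (or a suitable strict henselization), the class $d\log[t] \in H^1(W\Omega^1_{\log})$ provides a natural nonzero element of $\mathbb{Z}_p(1)(S)$, and naturality in the endomorphisms $t \mapsto t^n$ (for $n$ coprime to $p$) together with Galois/pro-\'etale descent forces $f$ to send $d\log[t]$ to $\lambda \cdot d\log[t]$ for some $\lambda \in \mathbb{Z}_p$. The key computation is that natural $\mathbb{Z}_p$-linear self-maps of the sheaf $\mathbb{Z}_p(1)$ on ind-smooth $\mathbb{F}_p$-algebras form a free $\mathbb{Z}_p$-module of rank one generated by the identity; this reduces to the fact that $W\Omega^1_{\log}$ is Zariski-locally generated by dlog symbols (the Bloch--Kato--Gabber theorem, recalled after \Cref{Zpindsmooth}), and that such a symbol $d\log[x]$ is functorially pulled back from $d\log[t]$ on $\mathbb{F}_p[t^{\pm 1}]$ via $t\mapsto x$.

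Third, for arbitrary $i$, I would use multiplicativity. On $R_i = \mathbb{F}_p[t_1^{\pm 1}, \ldots, t_i^{\pm 1}]$, the product
\[ \omega_i := d\log[t_1]\cdots d\log[t_i] \in \mathbb{Z}_p(i)(R_i) \]
satisfies $f(\omega_i) = f(d\log[t_1])\cdots f(d\log[t_i]) = \lambda^i\, \omega_i$, because $f$ is a graded ring map. The symbol generation theorem for $W\Omega^i_{\log}$ then shows that on any ind-smooth local $\mathbb{F}_p$-algebra $S$, every class in $H^i(\mathbb{Z}_p(i)(S)) = W\Omega^i_{\log}(S)$ is a $\mathbb{Z}_p$-linear combination of pullbacks of $\omega_i$, so $f|_{H^i\mathbb{Z}_p(i)}$ equals $\lambda^i$. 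The remaining cohomology is handled by pro-\'etale descent (or by noting the same formula holds on all higher pro-\'etale cohomology by functoriality of the symbol presentation), and a final application of \Cref{Zpibound}(2) promotes the equality $f = \lambda^i\cdot\mathrm{id}$ from ind-smooth algebras to all of $\mathrm{qSyn}_{\mathbb{F}_p}$.

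The main obstacle is the second step: rigorously identifying the endomorphism ring of $\mathbb{Z}_p(1)$ as a $\mathbb{Z}_p(0)$-module sheaf with $\mathbb{Z}_p$, working in the derived $\infty$-category and not just at the level of $H^i$. One must ensure $\lambda$ detects $f$ on the whole complex $\mathbb{Z}_p(1)$, not merely on its top cohomology; this should follow from the fact that the dlog classes generate $\mathbb{Z}_p(1)$ as a quasisyntomic sheaf (via descent from perfectoids) and that, by left Kan extension, a natural transformation between such sheaves is determined by its action on these generators on the generating site of polynomial algebras.
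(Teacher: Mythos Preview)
Your overall architecture matches the paper's: extract a scalar $\lambda$ from $\mathbb{Z}_p(1)$, then propagate to all $i$ via multiplicativity and symbol generation of $W\Omega^i_{\log}$, with the final passage to all of $\mathrm{qSyn}_{\mathbb{F}_p}$ by left Kan extension. Where you and the paper diverge is precisely the step you flag as the main obstacle.

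The paper dissolves this obstacle with a one-line Yoneda argument you are missing. Rather than probing $\mathbb{Z}_p(1)$ via $\mathbb{F}_p[t^{\pm 1}]$ and naturality under $t\mapsto t^n$, the paper works on quasiregular semiperfect $\mathbb{F}_p$-algebras, where $\mathbb{Z}_p(1)$ is discrete and in fact \emph{corepresentable} by $\mathbb{F}_p[x^{1/p^\infty}]/(x-1)$ (this is the functor $R\mapsto T_p(R^\times)$). Yoneda then gives $\mathrm{End}(\mathbb{Z}_p(1))=\mathbb{Z}_p(1)\bigl(\mathbb{F}_p[x^{1/p^\infty}]/(x-1)\bigr)\simeq\mathbb{Z}_p$ directly, with no need to worry about higher cohomology or derived enrichment. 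This is cleaner and sidesteps the issue you raise about controlling $f$ on the whole complex rather than just on $H^1$.

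For the higher degrees, the paper also phrases things slightly differently: it works modulo $p^n$ for each $n$, so that on smooth algebras $\mathbb{Z}/p^n(i)$ is \'etale locally concentrated in the single cohomological degree $i$ (as $W_n\Omega^i_{\log}[-i]$). This reduces the comparison of two maps of complexes to a comparison of maps of ordinary \'etale sheaves, where symbol generation suffices. Your ``remaining cohomology is handled by pro-\'etale descent'' is gesturing at the same thing, but the mod-$p^n$ reduction is the precise mechanism that makes this work.
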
 

\begin{proof} 
We first observe that the only endomorphisms of $\mathbb{Z}_p(1)$ (as a
functor on $\mathrm{qSyn}_{\mathbb{F}_p}$) are given by
scalars. 
It suffices to verify this on quasiregular semiperfect algebras, and there
$\mathbb{Z}_p(1)$ is corepresentable (cf.~\cite[Props.~7.17, 8.20]{BMS2}) by  $\mathbb{F}_p[x^{1/p^\infty}]/(x-1)$,
on which $\mathbb{Z}_p(1)( \mathbb{F}_p[x^{1/p^\infty}]/(x-1)) \simeq
\mathbb{Z}_p$. 
So the endomorphism $f$ is given by a scalar action at least on
$\mathbb{Z}_p(1)$. 

Note that all these functors are left Kan extended from smooth algebras (to
the $p$-complete category), so 
$f$ is determined by the values on smooth $\mathbb{F}_p$-algebras. 
Furthermore, 
the map $f$
is determined by its values modulo $p^n$ for each $n$. 
However, classes in $H^i( \mathbb{Z}/p^n(i))$ are \'etale locally written as
sums of products of classes in $H^1( \mathbb{Z}/p^n(1))$ (thanks to
\Cref{Zpindsmooth}), so the value of $f$
on $\mathbb{Z}_p(1)$ determines the value of $f$ in general. The result now
follows because on smooth algebras, $\mathbb{Z}/p^n(i)$ is concentrated in
cohomological degree $i$ \'etale locally. 
\end{proof} 

\begin{proof}[Proof of \Cref{imagechern}] 
Recall that the map $\chi_i$ is actually a special case of a map 
$\mathbb{Z}_p(i)(R_0) \to p^{-N'}(L W \Omega_{R_0})$
defined on $R_0 \in \mathrm{qSyn}_{\mathbb{F}_p}$, by \Cref{howtogetnaturalmap}. 
For $R_0$ quasiregular semiperfect, 
we know that the Frobenius acts as $p^i$ on $\mathbb{Z}_p(i)(R_0)$, so 
we obtain a natural, multiplicative map 
$\mathbb{Q}_p(i)(R_0) \to (\acrys(R_0)^{\phi = p^i})_{\mathbb{Q}_p}$. 
We wish to see that these maps are isomorphisms. 

Now we know independently that $\mathbb{Q}_p(i)(R_0)$ is identified (for $i
> 0$) with 
$\acrys(R_0)^{\phi = p^i}_{\mathbb{Q}_p}$ via the theory of topological cyclic homology
(\Cref{charpdecomplete}, following \cite[Sec.~8]{BMS2}). 
Thus, we actually obtain natural, multiplicative (in $i$) maps 
$\mathbb{Q}_p(i)(R_0) \to \mathbb{Q}_p(i)(R_0)$
for $R_0 \in \mathrm{qSyn}_{\mathbb{F}_p}$, and we wish to see that these are
isomorphisms. Up to rescaling by a power of $p$, furthermore, they carry
$\mathbb{Z}_p(i)(R_0)$ into $\mathbb{Z}_p(i)(R_0)$. 
As we saw in 
\Cref{endoisscalar}, these maps are necessarily all given by scalar multiplication by
some $\lambda^i$ in degree $i$, for some $\lambda \in \mathbb{Z}_p$;
we know that $\lambda \neq 0$ (by comparing with $i =1 $, say), so the
result now follows. \end{proof} 

\subsection{Comparison of the $\zps{i}$ and $\mathbb Z_p(i)$}
Our main result is the following comparison, which establishes Theorem~\hyperref[thm:f]{F}. 
\begin{theorem} 
\label{FMcomparedtoBMS2}
For $R\in\mathrm{qSyn}_{\mathbb{Z}_p}$, there are natural, multiplicative identifications
$\zps{i}(R) \simeq \mathbb{Z}_p(i)(R)$ for $i \leq p-2$ and
$\qps{i}(R) \simeq \mathbb{Q}_p(i)(R)$ for all $i\ge0$.
\end{theorem}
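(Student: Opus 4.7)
The strategy is to use the Beilinson fiber square on graded pieces (Theorem~\ref{Beilinsonongr}) together with the Chern-character image identification (Proposition~\ref{imagechern}) to express both $\mathbb{Q}_p(i)(R)$ and $\mathbb{Z}_p(i)(R)$ as fibers of $(\phi - p^i)$ acting on Hodge-filtered derived de Rham cohomology. Since both sides of the asserted equivalence are quasisyntomic sheaves on $\mathrm{qSyn}_{\mathbb{Z}_p}$, by unfolding it suffices to work with $R \in \qlrsp_{\mathbb{Z}_p}$.

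For the rational part I apply the pullback square~\eqref{Bfibgr}, which realizes $\mathbb{Q}_p(i)(R)$ as $(L\Omega_R^{\geq i})_{\mathbb{Q}_p} \times_{(L\Omega_R)_{\mathbb{Q}_p}} \mathbb{Q}_p(i)(R/p)$. By Proposition~\ref{imagechern}, $\chi_i$ identifies $\mathbb{Q}_p(i)(R/p)$ with $\mathrm{fib}(\phi - p^i\colon L\Omega_R \to L\Omega_R)_{\mathbb{Q}_p}$, included canonically into $(L\Omega_R)_{\mathbb{Q}_p}$. The standard fact that a pullback along a fiber-inclusion computes the fiber of the composite then identifies the pullback with $\mathrm{fib}(\phi - p^i\colon (L\Omega_R^{\geq i})_{\mathbb{Q}_p} \to (L\Omega_R)_{\mathbb{Q}_p}) = \qps{i}(R)$. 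Multiplicativity follows because both sides carry natural $\mathbb{E}_\infty$-structures and all constructions above are $\mathbb{E}_\infty$.

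For the integral case with $i \leq p-2$, first settle characteristic $p$: for $S \in \mathrm{qSyn}_{\mathbb{F}_p}$, I wish to show $\zps{i}(S) \simeq \mathbb{Z}_p(i)(S)$. By Theorem~\ref{charpdecomplete} and Remark~\ref{dontneednygaardcompletion}, $\mathbb{Z}_p(i)(S)$ is the fiber of $\phi/p^i - 1$ on the Nygaard filtration $\mathcal{N}^{\geq i}\LWO_S \to \LWO_S$, while $\zps{i}(S)$ is the analogous fiber for the Hodge filtration $L\Omega_S^{\geq i}$ on the equivalent complex $L\Omega_S \simeq \LWO_S$. The inclusion $L\Omega_S^{\geq i} \hookrightarrow \mathcal{N}^{\geq i}\LWO_S$ induces an equivalence on these fibers provided $\phi/p^i - 1$ acts invertibly on the quotient; one argues this by the contracting property of Frobenius on the low Nygaard-degree pieces, in direct analogy with Proposition~\ref{Nygmodulop}. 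The bound $i \leq p-2$ enters essentially through the divisibility $\phi(L\Omega^{\geq i}) \subseteq p^i L\Omega$ established earlier. To pass to mixed characteristic, consider the commutative square
\[ \xymatrix{ L\Omega_R^{\geq i} \ar[r]^{\phi/p^i - 1} \ar[d] & L\Omega_R \ar[d] \\ L\Omega_{R/p}^{\geq i} \ar[r]^{\phi/p^i - 1} & L\Omega_{R/p} } \]
whose total fiber equals $\mathrm{fib}(\zps{i}(R) \to \zps{i}(R/p))$. By a parallel analysis of $\phi/p^i$ on the relative fibers $\mathrm{fib}(L\Omega_R^{\geq i} \to L\Omega_{R/p}^{\geq i}) \to \mathrm{fib}(L\Omega_R \to L\Omega_{R/p})$, this total fiber identifies with $\mathrm{fib}(L\Omega_R/L\Omega_R^{\geq i} \to L\Omega_{R/p}/L\Omega_{R/p}^{\geq i})[-1]$, which by \eqref{integralBfibgr} equals $\mathrm{fib}(\mathbb{Z}_p(i)(R) \to \mathbb{Z}_p(i)(R/p))$. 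A map of fiber sequences with matching characteristic-$p$ terms and matching fibers gives $\zps{i}(R) \simeq \mathbb{Z}_p(i)(R)$. The main obstacle throughout will be the fine control over $\phi/p^i$ on the Hodge/Nygaard quotient in characteristic $p$ and on the relative de Rham pieces in the passage from $R/p$ to $R$; the constraint $i \leq p-2$ is used essentially in both places via the Frobenius divisibility bounds.
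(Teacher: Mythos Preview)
Your rational argument has a gap: Proposition~\ref{imagechern} only says that $\chi_i$ is injective with image the $(\varphi=p^i)$-eigenspace of $(L\Omega_R)_{\mathbb{Q}_p}$. It does \emph{not} say that $\varphi - p^i$ is surjective on $(L\Omega_R)_{\mathbb{Q}_p}$, so you cannot identify $\mathbb{Q}_p(i)(R/p)$ with the full homotopy fiber of $\varphi - p^i$; you only get the kernel. Consequently your pullback computes $\ker(\varphi-p^i)\cap (L\Omega_R^{\geq i})_{\mathbb{Q}_p}$, not $\qps{i}(R)$ in the derived sense. The paper closes this gap by first using the odd vanishing conjecture (Theorem~\ref{oddvanishingconj}) to pass to $R$ with $\mathbb{Z}_p(i)(R)$ discrete, so the fiber square~\eqref{Bfibgr} becomes a bicartesian square of abelian groups; then the cocartesian direction, together with Proposition~\ref{imagechern}, shows $(L\Omega_R)_{\mathbb{Q}_p}^{\varphi=p^i}+(L\Omega_R^{\geq i})_{\mathbb{Q}_p}=(L\Omega_R)_{\mathbb{Q}_p}$, from which surjectivity of $\varphi-p^i$ on $(L\Omega_R^{\geq i})_{\mathbb{Q}_p}$ is deduced.

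Your integral argument is more seriously incomplete, and the paper proceeds quite differently. First, your characteristic~$p$ comparison of Hodge and Nygaard filtrations is only sketched; in fact the paper never attempts this route. Second, the ``parallel analysis of $\varphi/p^i$ on the relative fibers'' is unjustified: you need $\varphi/p^i-1$ to act invertibly on certain relative de~Rham pieces, and nothing in the paper provides this. Third, and fatally, your concluding step is a non-sequitur: knowing $\mathrm{fib}(X\to Y)\simeq\mathrm{fib}(X'\to Y')$ and $Y\simeq Y'$ does \emph{not} yield $X\simeq X'$ without a compatible map of fiber sequences, which you have not constructed. The paper's approach is to first prove an analog of the odd vanishing conjecture for $\zps{i}$ itself (Proposition~\ref{discretenessFMsheaf}): this requires computing $\zps{i}$ on absolutely integrally closed valuation rings and perfect $\mathbb{F}_p$-algebras, $\arcp$-descent on perfectoids, a reduction to weight one (Propositions~\ref{syntomicperfectoid} and~\ref{ZP1}), and a crucial surjectivity lemma (Proposition~\ref{surjH1perf}). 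Only with discreteness and torsion-freeness of both $\mathbb{Z}_p(i)$ and $\zps{i}$ in hand does the identification drop out of~\eqref{integralBfibgr} and Proposition~\ref{imagechern}: one sees $\mathbb{Z}_p(i)(R)\hookrightarrow\mathbb{Z}_p(i)(R/p)=(L\Omega_R)^{\varphi=p^i}$ has image exactly those classes landing in $L\Omega_R^{\geq i}$, which is $\ker(\varphi/p^i-1)$, and surjectivity of $\varphi/p^i-1$ follows from discreteness of $\zps{i}$.
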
 

By \cite[Theorem 1.3]{Ge04}, for $i \leq p-2$ and for formally smooth schemes
over DVRs, syntomic cohomology in the above form (see also \cite{Ka87, Ku87}) is $p$-adic \'etale motivic cohomology. 

\begin{proof}[Proof of the rational case of \Cref{FMcomparedtoBMS2}]
Fix $i\ge0$. It is enough to prove the equivalences for all $R \in
\qlrsp_{\mathbb{Z}_p}$. Thanks to the odd vanishing conjecture proved in
\cite[Sec.~14]{Prisms}, we may
moreover assume that $R \in \qlrsp_{\mathbb{Z}_p}$ is such that
$\mathbb{Z}_p(i)(R)$ is concentrated in degree zero. In the homotopy cartesian
    square of \Cref{Beilinsonongr}, the terms $\mathbb{Q}_p(i)(R)$, $(L \Omega^{\geq
    i}_R)_{\mathbb{Q}_p}$, and $(L \Omega_R)_{\mathbb{Q}_p}$ are all
    concentrated in degree $0$, whence the same is true of the remaining term
    $\mathbb{Q}_p(i)(R/p)$ (that is, $\varphi - p^i\colon (L
    \Omega_R)_{\mathbb{Q}_p} \to (L \Omega_R)_{\mathbb{Q}_p}$ is surjective)
    and the fiber square is simply a cartesian and cocartesian square of
    abelian groups
\[ \xymatrix{
\mathbb{Q}_p(i)(R) \ar[d]  \ar[r] & \mathbb{Q}_p(i)(R/p) \ar[d]  \\
(L \Omega^{\geq i}_R)_{\mathbb{Q}_p} \ar[r] &  (L \Omega_R)_{\mathbb{Q}_p}.
}\]
Note that all the arrows are injections: the bottom since it is the inclusion of the Hodge filtration, the right by \Cref{imagechern}, and the others since the diagram is cartesian.

We claim that the map $\phi - p^i\colon (L \Omega_R^{\geq i})_{\mathbb{Q}_p} \to (L
\Omega_R)_{\mathbb{Q}_p}$ is surjective. 
Indeed, 
given $x \in (L \Omega_R)_{\mathbb{Q}_p}$, we can write $x = (\phi - p^i)(x')$
for some $x' \in (L \Omega_R)_{\mathbb{Q}_p}$; as we noted above, $\phi - p^i\colon (L \Omega_R)_{\mathbb{Q}_p} \to (L \Omega_R)_{\mathbb{Q}_p}$ is surjective. Using \Cref{imagechern} to identify the image of the vertical map, the diagram being cocartesian means that
$(L \Omega_R)_{\mathbb{Q}_p}^{\phi = p^i} \oplus (L \Omega_R^{\geq
i})_{\mathbb{Q}_p} \to (L \Omega_R)_{\mathbb{Q}_p} = \acrys(R/p)_{\mathbb{Q}_p}$
is surjective. So we can write $x' = y' + z'$ for $y' \in \ker(
\varphi - p^i)$ and $z' \in (L \Omega_R)_{\mathbb{Q}_p}^{\geq i}$. 
Applying $\varphi - p^i$, we get that $x = (\varphi - p^i)(z')$, proving the
claim as desired. 

Combining these observations, we have established a natural  identification
\[ \mathbb{Q}_p(i)(R) = (L \Omega_R)_{\mathbb{Q}_p}^{\phi = p^i}\cap (L \Omega_R^{\geq
i})_{\mathbb{Q}_p}\simeq \mathrm{fib}( \varphi - p^i\colon (L \Omega_R^{\geq
i})_{\mathbb{Q}_p} \to (L \Omega_R)_{\mathbb{Q}_p}),  \]
as desired.
\end{proof}

\begin{corollary}[A description of $\mathrm{TC}(R; \mathbb{Q}_p)$] 
Let $R$ be any simplicial commutative ring. Then there is a natural equivalence
\[ \TC(R; \mathbb{Q}_p) \simeq \bigoplus_{i \geq 0} \mathrm{fib}( \varphi -
p^i\colon L \Omega_R^{\geq i} \to L \Omega_R )_{\mathbb{Q}_p}.  \]
\end{corollary}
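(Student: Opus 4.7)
The plan is to combine three ingredients: the motivic filtration on $\TC(-;\ZZ_p)$ extended to all simplicial commutative rings (Construction \ref{LKEmotivicfilt}), a splitting of this filtration after inverting $p$ via Adams operations, and the rational identification of the graded pieces established in Theorem \hyperref[thm:f]{F}. Since both sides of the desired equivalence are $p$-complete and commute with rationalization, one works with $p$-complete simplicial commutative rings throughout; the special case of ordinary commutative rings then follows at once.

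First, invoke Construction \ref{LKEmotivicfilt} to endow $\TC(R;\ZZ_p)$, for any $p$-complete simplicial commutative ring $R$, with a complete $\ZZ_{\geq 0}$-indexed decreasing filtration $\fil^{\geq \star}\TC(R;\ZZ_p)$ satisfying $\gr^i \TC(R;\ZZ_p) \simeq \ZZ_p(i)(R)[2i]$. The connectivity estimate $\fil^{\geq i}\TC(R;\ZZ_p) \in \Sp_{\geq i-1}$ from Theorem \hyperref[thm:f]{F}/(\Cref{Zpibound}) guarantees that in each fixed homotopical degree only finitely many graded pieces contribute, so in particular the natural map $\bigoplus_i \gr^i \TC(R;\ZZ_p) \to \prod_i \gr^i \TC(R;\ZZ_p)$ is an equivalence.

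Next, construct Adams operations $\psi^k$ (for $k\in\ZZ_p^\times$) acting on the filtered object $\fil^{\geq \star} \TC(R;\ZZ_p)$ by adapting the argument of \cite[Sec.~9.4]{BMS2}. These operations should respect the motivic filtration and act on $\gr^i \TC(R;\ZZ_p) \simeq \ZZ_p(i)(R)[2i]$ as multiplication by $k^i$. After inverting $p$, eigenspace decomposition yields a natural equivalence
\[ \TC(R;\QQ_p) \simeq \bigoplus_{i\geq 0} \QQ_p(i)(R)[2i]. \]
Finally, the rational part of Theorem \hyperref[thm:f]{F} identifies $\QQ_p(i)(R) \simeq \fib(\varphi - p^i\colon L\Omega_R^{\geq i} \to L\Omega_R)_{\QQ_p}$ for $R \in \qsyn$, and the identification extends to arbitrary simplicial commutative rings by left Kan extension, since both $\QQ_p(i)(R)$ (by \Cref{Zpibound}(2)) and $\fib(\varphi - p^i\colon L\Omega_R^{\geq i} \to L\Omega_R)_{\QQ_p}$ (by the defining property of $L\Omega_R$ and its derived Hodge filtration, cf.~\Cref{padicddR}) are left Kan extended from finitely generated $p$-complete polynomial $\ZZ_p$-algebras, on which they coincide.

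The main obstacle is the construction and correct weight-by-weight behavior of the Adams operations on $\fil^{\geq \star}\TC(R;\ZZ_p)$, since the $\psi^k$-action in \cite[Sec.~9.4]{BMS2} is set up there for $\HC^-$ and $\HP$ (and is used in this form in the proof of Theorem \hyperref[thm:e]{E}). However, the same arguments apply to $\THH(R;\ZZ_p)$ viewed as a cyclotomic spectrum, since the action comes from the $\lambda$-ring structure on the input, and it passes to $\TC^-, \TP$, and $\TC$ functorially and compatibly with the motivic filtration of \cite[Thm.~1.12]{BMS2}. An alternative route, avoiding a direct construction of $\psi^k$ on $\TC$, is to instead exploit the Beilinson fiber square (\Cref{cons:basicfibersquare}): rationally, its three known corners $\HC^-(R;\QQ_p)$, $\HP(R;\QQ_p)$ and $\TC(R/p;\QQ_p)$ already split by weight (the former two by \cite[Sec.~9.4]{BMS2}, the last by the Adams operations on $\TC$ in characteristic $p$, where they are classical via the de Rham--Witt weight grading), so $\TC(R;\QQ_p)$, as the pullback, splits as a product of the weight-$i$ pullbacks, and one invokes Theorem \hyperref[thm:f]{F} to identify the weight-$i$ piece.
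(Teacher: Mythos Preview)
Your proposal is correct and follows essentially the same approach as the paper: reduce to the $p$-complete case, invoke the motivic filtration of \Cref{LKEmotivicfilt}, split it rationally via Adams operations as in \cite[Sec.~9.4]{BMS2}, and identify the graded pieces using the rational part of \Cref{FMcomparedtoBMS2}. The paper's proof is terser---it simply cites \cite[Sec.~9.4]{BMS2} for the Adams operations on $\TC$ without the caveat you raise, and it does not spell out the left Kan extension step for extending the identification of $\mathbb{Q}_p(i)$ beyond $\qsyn$---but the structure is identical.
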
 
\begin{proof} 
The map from $R$ to its derived $p$-adic completion induces an equivalence on all the terms appearing in the statement: for derived de Rham cohomology and its Hodge filtration this follows from base change, while it holds for $\THH(-;\mathbb Z_p)$ (and hence $\TC(-;\mathbb Q_p)$) by \cite[Lem.~5.2]{CMM}. We may therefore assume $R$ is $p$-complete, at which point we know from \Cref{LKEmotivicfilt} that $\TC(R;\mathbb Q_p)$ admits a complete descending filtration with associated graded given by $\mathbb Q_p(i)(R)[2i]$, for $i\ge0$. Using Adams operations on $\TC$
as in \cite[Sec.~9.4]{BMS2}, we can
split the filtration functorially. 
Combining with the rational part of \Cref{FMcomparedtoBMS2} (or more precisely
its left Kan extension to $p$-complete simplicial commutative rings), the claim follows.
\end{proof}

Next, we will prove the integral case of \Cref{FMcomparedtoBMS2}. 
The main step is to show that the $\zps{i}(-)$ for $i\leq p-2$  are discrete, as sheaves on $\mathrm{qSyn}_{\mathbb{Z}_p}$;
this is the analog of \Cref{oddvanishingconj}, i.e., of the odd vanishing conjecture. 
To see this, we will use the odd vanishing conjecture itself and some cases of
the results of Li--Liu \cite{LL20}. 

\begin{proposition} 
\label{discretenessFMsheaf}
As $D(\mathbb{Z}_p)^{\geq 0}$-valued sheaves on $\mathrm{qSyn}_{\mathbb{Z}_p}$,
\begin{enumerate}
    \item[{\rm (1)}] $\zps{i}(\cdot)$ is discrete and torsion-free for $0\leq
        i\leq p-2$ and
    \item[{\rm (2)}] $\qps{i}(\cdot)$ is discrete. 
\end{enumerate}
\end{proposition}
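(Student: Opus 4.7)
Part (2) is nearly immediate from existing results. The rational case of \Cref{FMcomparedtoBMS2} (already proved) gives a natural isomorphism $\qps{i} \simeq \mathbb{Q}_p(i)$ of sheaves on $\qsyn_{\mathbb{Z}_p}$, while \Cref{oddvanishingconj} asserts that $\mathbb{Z}_p(i)$ is discrete and $p$-torsion-free as a quasisyntomic sheaf; rationalizing, the same is true for $\mathbb{Q}_p(i)$, and hence for $\qps{i}$.

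For part (1), we work on the basis $\qlrsp_{\mathbb{Z}_p}$. On any $R'' \in \qlrsp_{\mathbb{Z}_p}$ the defining fiber sequence
\[
\zps{i}(R'') \to L\Omega_{R''}^{\geq i} \xrightarrow{\phi/p^i - 1} L\Omega_{R''}
\]
has discrete right-hand terms, so $\zps{i}(R'')$ lies in $D^{[0,1]}$ with $H^0 = \ker(\phi/p^i - 1)$ and $H^1 = \mathrm{coker}(\phi/p^i - 1)$. If $R''$ is additionally $p$-torsion-free, then $L\Omega_{R''}$ is too, and hence $H^0$ is automatically torsion-free. It therefore suffices to show that every $R \in \qlrsp_{\mathbb{Z}_p}$ admits a $p$-torsion-free quasisyntomic cover $R \to R''$ in $\qlrsp_{\mathbb{Z}_p}$ on which $\phi/p^i - 1$ is surjective.

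To construct such $R''$, write $R = W(A)/J$ for $A$ a perfect $\mathbb{F}_p$-algebra. Lifting generators of $J$ and expanding $p$-adically as in the proof of \Cref{cofiberbound1}, we produce a quasisyntomic cover $R \to S_\infty = R_\infty/(x_t)_{t \in T}$ in which $R_\infty$ is perfectoid of the form $R_0[x_t^{1/p^\infty}]^\wedge$ for some $p$-torsion-free perfectoid $R_0$, placing us in the setup of \Cref{surjH1perf}. Applying \Cref{discreteness} to $R_0$ allows us to refine to a quasisyntomic cover $R_0 \to R_0^*$ of perfectoids on which $\zps{i}(R_0^*)$ is discrete and $p$-torsion-free; setting $R_\infty^* = R_0^*[x_t^{1/p^\infty}]^\wedge$ and $S_\infty^* = R_\infty^*/(x_t)$, the composite $R \to S_\infty^*$ is a quasisyntomic cover, and \Cref{surjH1perf}(1) yields a surjection $H^1(\zps{i}(R_\infty^*)) \twoheadrightarrow H^1(\zps{i}(S_\infty^*))$. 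Once we verify that $\zps{i}(R_\infty^*)$ is itself discrete, we conclude $H^1(\zps{i}(S_\infty^*)) = 0$ and take $R'' = S_\infty^*$.

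The main obstacle is precisely this inheritance step: proving that discreteness of $\zps{i}(R_0^*)$ propagates to $\zps{i}(R_\infty^*)$ when $R_\infty^* = R_0^*[x_t^{1/p^\infty}]^\wedge$. The strategy is to exploit the natural $\mathbb{Z}[1/p]_{\geq 0}$-grading coming from the variables $x_t$: the Frobenius multiplies this grading by $p$, so on the strictly positive-degree pieces $\phi/p^i - 1$ is a bijection via the same sort of $x$-adic contraction argument used in the proof of \Cref{surjH1perf}, while on the degree-zero component $L\Omega_{R_0^*}$ surjectivity is exactly the discreteness of $\zps{i}(R_0^*)$. Combining these two sources of surjectivity gives surjectivity of $\phi/p^i - 1$ on all of $L\Omega_{R_\infty^*}^{\geq i}$, completing the proof.
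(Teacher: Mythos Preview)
Your proof of part~(2) is fine and agrees with what the paper notes in passing: once the rational case of \Cref{FMcomparedtoBMS2} is established, $\qps{i}\simeq\mathbb{Q}_p(i)$ and discreteness follows from the odd vanishing theorem.

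For part~(1) you have the right ingredients but apply them in the wrong order, and this creates a gap. The paper, following \cite[Sec.~14]{Prisms}, first forms the perfectoid $P=R'\langle x_s^{1/p^\infty}\rangle$ (free variables with $p$-power roots over the perfectoid $R'$) and uses \Cref{surjH1perf} together with the \emph{surjection} of qrsp rings $P/(x_s)\twoheadrightarrow R$ (surjective on $\pi_1\widehat{L}$, hence on $L\Omega$) to obtain a surjection $H^1(\zps{i}(P))\twoheadrightarrow H^1(\zps{i}(R))$. Only then does it invoke \Cref{discreteness} on the perfectoid $P$ itself to pass to a cover $P\to R''$ with $\zps{i}(R'')$ discrete. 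No inheritance step is needed. You instead apply \Cref{discreteness} to the base $R_0$ and are then forced to prove that discreteness of $\zps{i}(R_0^*)$ propagates to $\zps{i}(R_0^*[x_t^{1/p^\infty}]^\wedge)$; this is precisely the step the paper's ordering avoids. Note also that the paper uses a surjection onto $R$, not a quasisyntomic cover $R\to S_\infty$ as you write; this second surjectivity step (from $P/(x_s)$ down to $R$) is essential and absent from your outline.

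Your proposed grading argument for the inheritance step does not go through as stated. The divided Frobenius $\phi/p^i$ is only defined on $L\Omega^{\geq i}$, not on all of $L\Omega$, so one cannot iterate it on an arbitrary target element to run a geometric series. More concretely, the contraction used in the proof of \Cref{surjH1perf} is a \emph{$p$-adic} contraction (on the ideal $I$ there, $\phi/p^i$ lands in $pI$), whereas your Frobenius only increases the $x$-degree; since $L\Omega_{R_\infty^*}$ is $p$-adically but not $x$-adically complete, degree-increase alone does not yield convergence. And because every positive degree $a\in\mathbb{Z}[1/p]_{>0}$ is infinitely $p$-divisible, a recursion of the form $z_a=(\phi/p^i)(z_{a/p})-y_a$ has no base case. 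Reordering the two inputs as in the paper sidesteps all of this.
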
 
\begin{proof} 
Item (2) has already been proved above (in light of the odd vanishing
conjecture \cite[Th.~14.1]{Prisms}), so here we prove (1).  
It suffices to show that $\zps{i}(\cdot)$ is discrete and torsion-free as a
$D(\mathbb{Z}_p)^{\geq 0}$-valued sheaf on $\mathrm{qSyn}_{\mathcal{O}_C}$, for
$\mathcal{O}_C$ the ring of integers in $C = \mathbb{C}_p$. 
In particular, we will show that for any $R \in 
\mathrm{qSyn}_{\mathcal{O}_C}$, there exists a quasisyntomic cover $R \to R'$ such that 
$R'$ is quasiregular semiperfectoid and such that 
$\phi/p^i - 1 \colon L \Omega_{R'}^{\geq i} \to L \Omega_{R'}$ is surjective
for all $i \leq p-2$. 

To this end, on 
$\qlrsp_{\mathcal{O}_C}$, we consider the (non-Nygaard-complete) sheaf
$R \mapsto \Prismnc_R$ (cf.~\cite[Sec.~7]{Prisms}) together with its Nygaard filtration $\mathcal{N}^{\geq
\star} \Prismnc_R$ and divided Frobenius maps $\frac{\phi}{\widetilde{\xi}^i}
\colon \mathcal{N}^{\geq i} \Prismnc_R \to \Prismnc_R$, for $\widetilde{\xi}$
a generator of the ideal defining the prism structure
on $\Prismnc_{\mathcal{O}_C}$. 
For any $R \in \qlrsp_{\mathcal{O}_C}$, we use \cite[Th.~3.5,
Rem.~3.6]{LL20} (applied to the perfect prism $(\ainf(\mathcal{O}_C),
\widetilde{\xi})$)  
or the crystalline comparison 
\cite[Th.~5.2]{Prisms}
to obtain a natural map 
of $\delta$-rings
\begin{equation}\label{prismtoderham} \Prismnc_R \to  \acrys(R/p) \simeq L \Omega_R.
\end{equation}
This map of $\delta$-rings 
carries $\widetilde{\xi}$ to an element of the form $p \phi(u)$, for $u \in (L
\Omega_R)^{\times}$ and $\phi \colon L \Omega_R \to L \Omega_R$ the crystalline Frobenius. 
By \cite[Th.~4.13]{LL20}, 
the map 
\eqref{prismtoderham} canonically refines to a filtered map
$\mathcal{N}^{\geq \star} \Prismnc_{R} \to  L \Omega_R^{\geq \star}$. 
Moreover, the filtered map 
$\mathcal{N}^{\geq \star} \Prismnc_R \to L \Omega_R^{\geq \star}$ induces an
isomorphism on associated graded terms in degrees $\leq p-1$ (\emph{loc.~cit.}). 

By the odd vanishing conjecture \cite[Sec.~14]{Prisms}, there exists a basis of
objects $R \in \qlrsp_{\mathcal{O}_C}$ for which the map
\( \phi/\widetilde{\xi}^i -1 \colon \mathcal{N}^{\geq i} \Prismnc_R \to
\Prismnc_R \) is surjective. Choose any such $R$.  Consider now the evident commutative diagram
\begin{equation} \label{frobderhamprism} \xymatrix{
\mathcal{N}^{\geq i} \Prismnc_R \ar[d]  \ar[r]^{\phi/\widetilde{\xi}^i-1} &
\Prismnc_R \ar[d]  \\
L \Omega_R^{\geq i} \ar[r]^{\phi/\widetilde{\xi}^i -1} & L \Omega_R
}.\end{equation}
For $i \leq p-2$, it follows from 
the surjectivity of the top horizontal arrow in 
\eqref{frobderhamprism} (and that $\widetilde{\xi}$ maps to $p \phi(u)$ in $L
\Omega_R$, for $u \in (L \Omega_R)^{\times}$) 
that the composite 
\begin{equation} \label{secondsurjmap} L \Omega_R^{\geq i} \xrightarrow{\phi/p^i -1}  L \Omega_R \to L \Omega_R/L
\Omega_R^{\geq i+1} \simeq \Prismnc_R/\mathcal{N}^{\geq i+1} \Prismnc_R
\end{equation}
is surjective. 
Now the map 
$$L \Omega_R^{\geq i + 1} \subset L \Omega_R^{\geq i} \xrightarrow{\phi/p^i -1}
L \Omega_R \to L \Omega_R / p $$
is simply the inclusion composed with reduction mod $p$, since $\phi/p^i$ is
divisible by $p$ on  
$L \Omega_R^{\geq i + 1}$. 
This observation combined with the surjectivity of \eqref{secondsurjmap} and
$p$-completeness now gives the surjectivity of $L \Omega_R^{\geq i}
\xrightarrow{\phi/p^i -1} L \Omega_R$ as desired. 
\end{proof}

\begin{proof}[Proof of \Cref{FMcomparedtoBMS2} for $i \leq p-2$]
Fix $i \leq p-2$ and $R \in \qlrsp_{\mathbb{Z}_p}$ such that $\mathbb{Z}_p(i)(R)$ and $\zps{i}(R)$
are concentrated
in degree zero for all $i \leq p-2$; we can do this by the odd vanishing
conjecture and \Cref{discretenessFMsheaf}. 
In this case, the map of discrete abelian groups $\mathbb{Z}_p(i) (R) \to \mathbb{Z}_p(i)(R/p)$
is injective and has torsion free cokernel, thanks to the equivalence
\eqref{integralBfibgr}. So from the Beilinson fiber square on graded pieces (\Cref{Beilinsonongr}) and the description of the image of $\chi_i$ (\Cref{imagechern}), we find that $\mathbb{Z}_p(i)(R) \subset \mathbb{Z}_p(i)(R/p) = (L
\Omega_R)^{\phi = p^i}$ is
the submodule 
consisting of those elements such that the image in $L \Omega_R$ belongs to
$L \Omega_R^{\geq i}$. 
In particular, it is precisely the kernel of $\varphi/p^i -1 \colon L \Omega_R^{\geq
i} 
\to L \Omega_R$. Since this map is surjective, we get the natural 
equivalence $\mathbb{Z}_p(i)(R) \simeq \mathbb{Z}_p(i)(R)^{\mathrm{FM}}$ as
desired. 
\end{proof}

\section{Examples}

\subsection{$\mathrm{K}$-theory of $p$-adic fields}

Let $F$ be a complete discretely valued field of characteristic $0$ with ring
of integers $\Oscr_F \subset F$ and 
perfect residue field $k$ of characteristic $p$. In this subsection, we will use
the Beilinson fiber sequence to recover various calculations of the $p$-adic
$\K$-theory of $F$. All these results are previously known at least in the case of $F$ local; see
\cite[Theorem 61]{weibel-integers} for a detailed survey.

\begin{theorem}\label{thm:Wagoner_revisited}
The homotopy groups of $\K(F;\QQ_p)$ are given (as $\mathbb Q_p$-vector spaces) as follows: 
\begin{enumerate}
\item $\K_{2s}(F;\QQ_p)=0$ for $s>0$;
\item there is a natural isomorphism $\K_{2s-1}(F;\QQ_p)\simeq F$ for each $s>1$;
\item there is a natural short exact sequence $0\to F\to \K_1(F;\QQ_p)\to\QQ_p\to 0$.
\end{enumerate} 
\end{theorem}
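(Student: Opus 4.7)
The strategy is to combine the Beilinson fiber sequence with Quillen's localization sequence for the Dedekind domain $\Oscr_F$. Since $\Oscr_F$ is $\pi$-adically (and hence $(p)$-adically) complete, it is henselian along $(p)$, so Theorem~\hyperref[thm:a]{A} yields a natural equivalence
\[\K(\Oscr_F,(p);\QQ_p)\simeq \Sigma\HC(\Oscr_F;\QQ_p);\]
on the other hand, Quillen's localization together with devissage provides a fiber sequence $\K(k)\to \K(\Oscr_F)\to \K(F)$ of spectra. The proof reduces to computing $\HC(\Oscr_F;\QQ_p)$, threading this through the Beilinson fiber sequence to compute $\K(\Oscr_F;\QQ_p)$, and then reading off $\K(F;\QQ_p)$ from the localization sequence.

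For the first step, the extension $F/\QQ_p$ is finite separable, so $L_{F/\QQ_p}=0$, whence $L_{\Oscr_F/\ZZ}$ is quasi-isogenous to zero ($p$-adically). The HKR filtration on $\HH(\Oscr_F)$ then collapses rationally, giving $\HH(\Oscr_F;\QQ_p)\simeq F$ concentrated in degree zero with trivial $S^1$-action. Taking $S^1$-homotopy orbits produces $\pi_{2i}\HC(\Oscr_F;\QQ_p)=F$ for $i\ge0$ with odd homotopy vanishing, so $\Sigma\HC(\Oscr_F;\QQ_p)$ contributes $F$ in each odd nonnegative degree and zero in even degrees.

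Next I combine this with the fiber sequence $\K(\Oscr_F,(p);\QQ_p)\to \K(\Oscr_F;\QQ_p)\to \K(\Oscr_F/p;\QQ_p)$. Since $(\pi)$ is a nilpotent ideal in the $\FF_p$-algebra $\Oscr_F/p$, by \Cref{GHfiniteresult} the relative theory $\K(\Oscr_F/p,(\pi))$ is quasi-isogenous to zero and so vanishes rationally, yielding $\K(\Oscr_F/p;\QQ_p)\simeq \K(k;\QQ_p)$. Moreover, Theorem~\ref{CMMthm} gives $\K(k;\ZZ_p)\simeq\TC(k;\ZZ_p)$ in nonnegative degrees, and the classical B\"okstedt--Hesselholt--Madsen computation yields $\TC_n(k;\ZZ_p)=0$ for $n>0$, so $\K(k;\QQ_p)\simeq \QQ_p$ concentrated in degree zero. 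The long exact sequence then gives $\K_{2s-1}(\Oscr_F;\QQ_p)\cong F$ for $s\ge 1$, $\K_{2s}(\Oscr_F;\QQ_p)=0$ for $s\ge 1$, and $\K_0(\Oscr_F;\QQ_p)=\QQ_p$.

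Finally, the localization sequence $\K(k;\QQ_p)\to \K(\Oscr_F;\QQ_p)\to \K(F;\QQ_p)$ yields the conclusion. For $n\ge 2$ both $\K_n(k;\QQ_p)$ and $\K_{n-1}(k;\QQ_p)$ vanish, so $\K_n(F;\QQ_p)\cong \K_n(\Oscr_F;\QQ_p)$, proving (1) and (2). For $n=1$, the relevant fragment reads
\[0\to \K_1(\Oscr_F;\QQ_p)\to \K_1(F;\QQ_p)\to \K_0(k;\QQ_p)\xrightarrow{\iota}\K_0(\Oscr_F;\QQ_p),\]
where $\iota$ is the pushforward along $\spec k\to\spec\Oscr_F$; it vanishes because the class $[k]\in\K_0(\Oscr_F)$ is trivial, via the projective resolution $0\to\Oscr_F\xrightarrow{\pi}\Oscr_F\to k\to 0$. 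Substituting $\K_1(\Oscr_F;\QQ_p)=F$ and $\K_0(k;\QQ_p)=\QQ_p$ then gives (3). The only mild subtlety is ensuring that the B\"okstedt--Hesselholt--Madsen $\TC$-computation extends to arbitrary (possibly infinite) perfect residue fields, which is standard.
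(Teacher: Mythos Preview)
Your proof is correct and follows essentially the same approach as the paper: both combine the Beilinson fiber sequence for $\Oscr_F$, the identification $\K(\Oscr_F/p;\QQ_p)\simeq\K(k;\QQ_p)\simeq\QQ_p$, the computation of $\HC(\Oscr_F;\QQ_p)$ via the rational vanishing of the cotangent complex, and Quillen's localization/d\'evissage sequence (the paper merely reverses the order, running localization first and rewriting $\HC(\Oscr_F;\QQ_p)$ as $\HC(F/F_0)$). One small correction: Theorem~\ref{CMMthm} concerns \emph{relative} $\K$-theory of henselian pairs and is vacuous for $k$ since $p=0$ there; the paper instead cites Hiller and Kratzer directly for $\K_i(k;\ZZ_p)=0$ when $i>0$.
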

\begin{proof}
Since $k$ is perfect, we have that $\K_i(k; \mathbb{Z}_p) = \mathbb{Z}_p$ for
$i = 0$ and $0$ otherwise, cf.~\cite[Th.~5.4]{Hi} and \cite[Cor.~5.5]{Kra}. 
Taking the d\'evissage cofiber sequence $\K(k)\rightarrow\K(\Oscr_F)\rightarrow\K(F)$ with $\ZZ_p$-coefficients shows that $\K_i(\Oscr_F;\ZZ_p)\iso\K_i(F;\ZZ_p)$ for $i\neq 1$ and that there is an exact sequence $$0\rightarrow\K_1(\Oscr_F;\ZZ_p)\rightarrow\K_1(F;\ZZ_p)\rightarrow\ZZ_p\rightarrow
    0,$$
where the map $\K_1(F;\ZZ_p)\iso F^\times\otimes_\ZZ \ZZ_p\rightarrow\ZZ_p$ is induced by 
    the $p$-adic valuation.
    
Next, since $\K(\Oscr_F/p;\QQ_p)\we\K(k;\QQ_p)\we\QQ_p$, the Beilinson fiber
square (Theorem~\hyperref[thm:a]{A}) for $\Oscr_F$ yields a fiber sequence
$$\Sigma\HC(\Oscr_F;\QQ_p)\rightarrow\K(\Oscr_F;\QQ_p)\rightarrow\QQ_p.$$ But
note that the cyclic homology term may be equivalently written as $\Sigma\HC(F/F_0)$,
where $F_0:=W(k)[\tfrac1p]$; indeed, the vanishing of the $p$-adic completion of
the cotangent complex $L_{W(k)/\ZZ}$ implies that $\HC(\Oscr_F;\ZZ_p)\simeq
\HC(\Oscr_F/\Oscr_{F_0};\ZZ_p)$, but $\HC(\Oscr_F/\Oscr_{F_0})$ is already
derived $p$-adically complete since its homology groups are all finitely
generated $\Oscr_F$-modules. The Beilinson fiber sequence therefore implies
that \[\tau_{\ge2}\Sigma\HC(F/F_0)\simeq\tau_{\ge2}\K(F;\QQ_p)\] and
\[\HC_0(F/F_0)\simeq \K_1(\Oscr_F;\QQ_p).\] The proof is completed by noting
that, since $F$ is an \'etale $F_0$-algebra, its cyclic homology is given by
$\HC_i(F/F_0)=F$ for $i\ge0$ even and is $0$ otherwise.
\end{proof}

\begin{example}[Local fields]\label{rmk:Wagoner}
If $F$ is a finite extension of $\QQ_p$, of degree $d$, the theorem shows that 
\begin{equation} \label{dimcalcF} \dim_{\QQ_p}\K_{2s-1}(F;\QQ_p)=\begin{cases}d+1&\text{if $s=1$,}\\
d&\text{otherwise.}\end{cases}\end{equation} This dimension calculation is a
classical result, arising from Wagoner's \cite{wagoner} calculation of the
ranks of the continuous $\K$-groups and Panin's \cite{panin} proof of an early
case of the $p$-adic continuity of $\K$-theory.

In addition, the dimension calculation \eqref{dimcalcF} is in accordance with the Beilinson--Lichtenbaum conjecture for
$F$. Recall that the Beilinson--Lichtenbaum conjecture, prior to its
general proof by Rost--Voevodsky for all fields, was proved by Hesselholt--Madsen
\cite{HM03} in this case when $p > 2$ using $\mathrm{TC}$-theoretic methods. 
Since $F$ has cohomological dimension $2$, 
the Beilinson--Lichtenbaum conjecture 
predicts
$\K_{2s-1}(F; \mathbb{Q}_p) \simeq H^1_{\et}(F, \mathbb{Q}_p(s))$ and
$\K_{2s-2}(F; \mathbb{Q}_p) \simeq H^2_{\et}(F, \mathbb{Q}_p(s))$ for $s > 0$. 
Then the dimensions in \eqref{dimcalcF} agree with the dimensions of the
$\mathbb{Q}_p$-cohomology of $F$, as computed via Tate's local duality and Euler characteristic
formula 
(see \cite[VII.3]{Neukirch} for an account). 
\end{example}

\begin{example}[Integral calculation, unramified case]\label{rmk:unramified}
Assume in this example that $p> 3$ so that the results hold in a non-empty
range. We will show that, in the range $1\le i\le 2p-5$, the $p$-adic $\K$-groups of $W(k)$ are given by 
\begin{equation} \K_i(W(k);\ZZ_p)\simeq\begin{cases}W(k) & \text{if }i=2s-1,
\\ 0 & \text{if }i\textrm{ is even.}\end{cases} \end{equation} 
Note that for $k$ finite, the calculation of the entire homotopy type of $\K( W(k); \mathbb{Z}_p)$
is carried out by B\"okstedt--Madsen \cite{BM95} at odd primes and Rognes
\cite{Rog99} at $p= 2$ (for $k = \mathbb{F}_2$); again, see  
\cite[Theorem~61]{weibel-integers} for a survey for all of these results. 
	 
	 The integral form of the Beilinson fiber sequence (Corollary
    \hyperref[quasiisogenythmB]{B}) takes the form of a natural fiber sequence 
\[\tau_{\le 2p-5}\Sigma\HC(W(k),(p);\ZZ_p)\rightarrow\tau_{\le 2p-5}\K(W(k);\ZZ_p)\to \ZZ_p.\] As in the proof of Theorem \ref{thm:Wagoner_revisited}, the cyclic homology term may be replaced by the cyclic homology  $\tau_{\le 2p-5}\Sigma\HC(W(k),(p)/W(k))$ over $W(k)$. 

A standard calculation of derived de Rham cohomology with divided powers (as in
\cite[Prop.~3.16]{SzZa18}) gives $
L \Omega_{k}/L \Omega^{\geq s}_k
\simeq W(k)/p^s$ for $s\le p-1$; in general, $L \Omega_k/L \Omega^{\geq s}_k$ is
discrete for all $s$. 
Using the filtrations of \cite{antieauperiodic}, we conclude
for $i\le 2p-1$,
\begin{equation} \pi_i\HC(k/W(k))\simeq\begin{cases}
        W(k)/p^{s+1}&\text{if $i=2s\geq 0$,}\\
    0&\text{otherwise.}\end{cases} \end{equation}
See also \cite[Prop.~7.2]{Brunfiltered} for this calculation.
Also, $\HC_i(W(k); \mathbb{Z}_p) \iso \HC_i(W(k)/W(k))\iso W(k)$ for $i\ge0$ even and
is
zero otherwise. Thus, in the range $0\le i\le 2p-2$, we deduce that
    \[\pi_i\HC((W(k),(p)/W(k))\simeq\begin{cases}p^{s+1}W(k) & \text{if }i=2s, \\ 0 &
    \text{if }i\textrm{ is odd}\end{cases}\] This completes the proof.
\end{example}

\begin{remark}[Integral calculation, ramified case]
    Assume now that $F$ is ramified so that $\Oscr_F/p\iso k[x]/(x^e)$, where $e$
    is the absolute ramification degree of $F$.
    Corollary~\hyperref[quasiisogenythmB]{B} implies,
    by rewriting the $p$-adic cyclic homology as non-completed cyclic
    homology with respect to $W(k)$, that $\tau_{\le
    2p-5}\Sigma\HC(\Oscr_F,(p)/W(k))\simeq \tau_{\le
    2p-5}\K(\Oscr_F,(p);\ZZ_p)$.

We now appeal to the fact that the algebraic $\K$-theory of truncated
polynomial rings over fields is
known~\cite{hesselholt-madsen-truncated,speirs-revisited}. The positive even
$p$-adic $\K$-groups of $k[x]/(x^e)$ vanish so that we get
$5$-term exact sequences
\begin{gather*}
0\rightarrow
\pi_{2s-1}\Sigma\HC(\Oscr_F,(p)/W(k))\rightarrow\K_{2s-1}(\Oscr_F;\ZZ_p)\\
\rightarrow
\WW_{se}(k)/V_e\WW_s(k)\rightarrow\pi_{2s-2}\Sigma\HC(\Oscr_F,(p)/W(k))\rightarrow\K_{2s-2}(\Oscr_F;\ZZ_p)\rightarrow 0
\end{gather*}
for $2\leq s\leq p-2$. In low degrees, this gives a computation of the integral
$p$-adic $\K$-groups of $\Oscr_F$ which is independent of
 \cite{HM03}; on the other hand, using this calculation and
\cite{HM03}, we can view the computation as giving information about the low-degree
\'etale cohomology of $F$.
\end{remark}

We can also carry out previous types of calculations for the syntomic complexes
$\mathbb{Z}_p(i)$ rather than $\K$-theory. 

\begin{theorem}[Syntomic cohomology of DVRs]
    Let $\Oscr_F$ be a complete discrete valuation ring of mixed characteristic
    $(0,p)$ with perfect residue field $k$.
\begin{enumerate}
    \item[{\rm (1)}] We have natural identifications
\begin{equation} \QQ_p(i)(\Oscr_F)\simeq \begin{cases}R
\Gamma_{\mathrm{proet}}( \spec(k), \QQ_p )& \text{if } i=0,\\
F[-1] & \text{if }i>0.\end{cases} \end{equation}
\item[{\rm (2)}]
    In the unramified case $\Oscr_F = W(k)$,
\begin{equation} 
\ZZ_p(i)(W(k))
\simeq \begin{cases} 
R\Gamma_{\mathrm{proet}}(\spec(k), \ZZ_p) & \text{if } i = 0,\\ 
W(k)[-1] & \text{if } 0 < i \leq p-2.
 \end{cases}
\end{equation} 
\end{enumerate}
\end{theorem}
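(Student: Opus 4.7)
The plan is to apply Theorem~\hyperref[thm:f]{F}, which identifies $\mathbb{Z}_p(i)(R)$ for $0\leq i\leq p-2$ and $\mathbb{Q}_p(i)(R)$ for all $i\geq 0$ as fibers of $\varphi/p^i-\mathrm{id}$, respectively $\varphi-p^i$, on the $p$-adic derived de Rham cohomology $L\Omega_R$ with its Hodge filtration. Everything then reduces to a direct computation of $L\Omega_R$, its Hodge filtration, and its crystalline Frobenius, for $R=W(k)$ (integrally) and for $R=\mathcal{O}_F$ (rationally).

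Since $k$ is perfect, $\mathbb{Z}_p\to W(k)$ is the $p$-completion of an ind-\'etale map, so the $p$-adic cotangent complex $\widehat{L_{W(k)/\mathbb{Z}_p}}$ vanishes. I would therefore conclude that $L\Omega_{W(k)}=W(k)$ is concentrated in degree zero, with $L\Omega_{W(k)}^{\geq j}=0$ for $j\geq 1$, and that the crystalline Frobenius on it is the Witt vector Frobenius. For the ramified case, $\mathcal{O}_F$ is a finite totally ramified extension of $W(k)$, and since $F/F_0$ (with $F_0=W(k)[1/p]$) is a finite separable field extension, the relative cotangent complex $L_{\mathcal{O}_F/W(k)}$ is a $p$-power torsion $\mathcal{O}_F$-module concentrated in degree zero. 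Combining this with the transitivity triangle for $\mathbb{Z}_p\to W(k)\to\mathcal{O}_F$, one obtains $\widehat{L_{\mathcal{O}_F/\mathbb{Z}_p}}[1/p]=0$, whence $(L\Omega_{\mathcal{O}_F})_{\mathbb{Q}_p}=F$ in degree zero with $(L\Omega_{\mathcal{O}_F}^{\geq j})_{\mathbb{Q}_p}=0$ for all $j\geq 1$.

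With these computations in hand, the cases $i>0$ of both parts fall out immediately from Theorem~\hyperref[thm:f]{F}, since the fiber of the Frobenius difference from a vanishing Hodge piece to $W(k)$ (respectively $F$) is just $W(k)[-1]$ (respectively $F[-1]$). For $i=0$, Theorem~\hyperref[thm:f]{F} gives $\mathbb{Z}_p(0)(W(k))\simeq\mathrm{fib}(\varphi-1\colon W(k)\to W(k))$, which I would identify with $R\Gamma_{\mathrm{proet}}(\spec(k),\mathbb{Z}_p)$ via the classical Artin--Schreier--Witt short exact sequence $0\to\mathbb{Z}_p\to W(\bar{k})\xrightarrow{\varphi-1}W(\bar{k})\to 0$ of continuous $G_k$-modules, using the triviality of the higher continuous $G_k$-cohomology of $W(\bar{k})$ (which follows by d\'evissage through the Witt length from Hilbert~90 for $\mathbb{G}_a$ on extensions of $k$). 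The statement for $\mathbb{Q}_p(0)(\mathcal{O}_F)$ follows similarly from $(L\Omega_{\mathcal{O}_F})_{\mathbb{Q}_p}=F$, or alternatively from the henselian invariance of pro-\'etale cohomology applied to the local ring $\mathcal{O}_F$. The hardest part should be the rational derived de Rham computation for the ramified DVR, but it should fall out cleanly from cotangent complex transitivity and the separability of $F/F_0$.
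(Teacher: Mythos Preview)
Your argument for part (2) is correct and is exactly what the paper does: since $\widehat{L_{W(k)/\mathbb{Z}_p}}=0$, one has $L\Omega_{W(k)}=W(k)$ with $L\Omega_{W(k)}^{\geq j}=0$ for $j\geq 1$, and Theorem~F then gives the answer directly.

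However, your treatment of part (1) in the ramified case has a genuine gap. From $\widehat{L_{\mathcal{O}_F/\mathbb{Z}_p}}[1/p]=0$ you correctly deduce that the Hodge \emph{graded pieces} $\widehat{\bigwedge^j L_{\mathcal{O}_F/\mathbb{Z}_p}}$ vanish rationally for $j\geq 1$, and hence that $(L\Omega_{\mathcal{O}_F}/L\Omega_{\mathcal{O}_F}^{\geq j})_{\mathbb{Q}_p}=F$ for all $j\geq 1$. But this does \emph{not} imply $(L\Omega_{\mathcal{O}_F}^{\geq j})_{\mathbb{Q}_p}=0$ or $(L\Omega_{\mathcal{O}_F})_{\mathbb{Q}_p}=F$: the $p$-adic derived de Rham cohomology $L\Omega_{\mathcal{O}_F}$ is not Hodge-complete. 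In fact the conclusion is false. Take $\mathcal{O}_F=\mathbb{Z}_p[p^{1/p}]$: if $(L\Omega_{\mathcal{O}_F})_{\mathbb{Q}_p}$ were $F$ as a ring, the crystalline Frobenius would have to be a $\mathbb{Q}_p$-algebra endomorphism of $F=\mathbb{Q}_p(p^{1/p})$, hence the identity (this extension is not Galois), so $\mathrm{fib}(\varphi-1)$ would be $F\oplus F[-1]$ rather than $R\Gamma_{\mathrm{proet}}(\spec(\mathbb{F}_p),\mathbb{Q}_p)\simeq\mathbb{Q}_p\oplus\mathbb{Q}_p[-1]$. More concretely, $L\Omega_{\mathcal{O}_F}\simeq LW\Omega_{\mathcal{O}_F/p}$ is crystalline cohomology of the non-reduced scheme $\spec(k[t]/(t^e))$, and the relevant PD-envelope is already infinite-dimensional modulo $p$.

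The paper circumvents this by using the Beilinson fiber square on graded pieces (\Cref{Beilinsonongr}) directly rather than Theorem~F. Since $\mathbb{Q}_p(i)(\mathcal{O}_F/p)\simeq\mathbb{Q}_p(i)(k)=0$ for $i>0$ (Frobenius acts both nilpotently and by $p^i$), the fiber square collapses to $\mathbb{Q}_p(i)(\mathcal{O}_F)\simeq (L\Omega_{\mathcal{O}_F}/L\Omega_{\mathcal{O}_F}^{\geq i})_{\mathbb{Q}_p}[-1]$, and this \emph{truncated} de Rham cohomology is indeed $F$ by your cotangent-complex computation. For $i=0$ the paper reads off $\mathbb{Q}_p(0)(\mathcal{O}_F)\simeq\mathbb{Q}_p(0)(k)$ from the same square.
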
 
\begin{proof} 
For (1), 
we have (where the first equivalence follows from \Cref{rem:frob} and taking
powers of Frobenius)
$$\QQ_p(i)(\Oscr_F/p) \simeq 
\mathbb{Q}_p(i)(k) \simeq 
\begin{cases}
       R \Gamma_{\mathrm{proet}}( \spec(k), \mathbb{Q}_p) 
		  &\text{if $i=0$,}\\
        0&\text{otherwise.}
    \end{cases}$$
    Therefore, the fiber sequence from \Cref{Beilinsonongr} yields equivalences
	 \[\QQ_p(i)(\Oscr_F)\simeq \begin{cases}
	R \Gamma_{\mathrm{proet}}(\spec(k), \mathbb{Q}_p) 
	 & \text{if } i=0,\\ \left(L
	 \Omega_{\Oscr_F}/L\Omega^{\ge i}_{\Oscr_F}\right)_{\QQ_p}[-1] & \text{if
	 }i>0.\end{cases}\] As in the proof of Theorem \ref{thm:Wagoner_revisited},
	 the latter truncated $p$-adic derived de Rham cohomologies may be computed
	 as the analogous un-completed derived de Rham cohomologies for $F_0\to F$;
     since $L_{F/F_0}\simeq 0$, we conclude that \(\QQ_p(i)(\Oscr_F)\simeq
     F[-1]\) for $i>0$.

The integral claim follows from 
\Cref{FMcomparedtoBMS2}. 
Indeed, 
we find that $\mathbb{Z}_p(0)(W(k)) = \mathrm{fib}( \varphi-1\colon W(k) \to W(k))
\simeq R \Gamma_{\mathrm{proet}}( \spec(k), \mathbb{Z}_p)$. 
For $i > 0$, 
we get $\mathbb{Z}_p(i)(W(k)) = \mathrm{fib}( \varphi/p^i - 1\colon 0 \to W(k))$, so
the claim follows. 
\end{proof}

\subsection{Perfectoid rings}
In this section, we apply the Beilinson fiber square to a perfectoid ring. 
The main result (which was indicated to us by Scholze) is that 
it recovers the fundamental exact sequence in $p$-adic Hodge theory. 

Let $R$ be a perfectoid ring. We review the period rings associated to $R$ and their interpretation via
derived de Rham theory, cf.~\cite{Bei12, Bhattpadic}.

\begin{construction}[Period rings] 
\label{periodrings}
Let $R$ be a perfectoid ring. 
\begin{enumerate}
\item  
As before, we have Fontaine's ring $\ainf(R)$ equipped with the canonical map
$\theta\colon \ainf(R) \to R$ with kernel $(\xi)$. Here $\ainf(R)$ is also the prismatic cohomology
$\Prism_R$; the Nygaard filtration is the $\xi$-adic filtration. 
\item
We have $\acrys(R) = \acrys(R/p)$, the $p$-adic completion of the divided power envelope of $(\xi) \subset \ainf(R)$; we
have $\acrys(R) \simeq L \Omega_R$ is the derived de Rham cohomology of $R$. 
The Hodge filtration is given by the divided power filtration. 
We let $\bcrys(R) =  \acrys(R)[1/p] = (L \Omega_R)_{\mathbb{Q}_p}$; the ring
$\bcrys(R)$ also inherits a Frobenius operator $\varphi$. 
\item We have $\bdr^+(R) = \varprojlim (\ainf(R)/\xi^n [1/p]) $. 
The ring $\bdr^+(R)$ can also be obtained as the Hodge completion of $(L
\Omega_R)_{\mathbb{Q}_p}$.  
The Hodge filtration yields a filtration 
(the $\xi$-adic filtration) on $\bdr^+(R)$. 
\end{enumerate}
\end{construction} 
Our goal is now to recover the following result in $p$-adic Hodge theory,
cf.~\cite[Theorem 5.3.7]{Fo94} and \cite[Theorem 6.4.1]{FF}.

\begin{theorem}[The fundamental exact sequence] 
\label{fundexactseq}
For any $R \in \pfd$ and $i>0$, there is a natural pullback square
in $D(\mathbb{Q}_p)$,
\begin{equation}\label{fundfibsq}\begin{gathered}
\xymatrix{\R\Gamma_{\mathrm{proet}}(\spec(R[1/p]), \mathbb{Q}_p(i)) 
\ar[d]  \ar[r] & 
 \bcrys(R )^{\varphi = p^i} \ar[d]  \\
 \mathrm{Fil}^{\geq i}\bdr^+(R) \ar[r] &  \bdr^+(R) 
 } \end{gathered}
.\end{equation}
\end{theorem}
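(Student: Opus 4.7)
The plan is to deduce \eqref{fundfibsq} from the Hodge-completed form of the Beilinson fiber square on graded pieces (\Cref{Beilinsonongr}), specialized to the perfectoid ring $R\in\qlrsp_{\mathbb{Z}_p}$. Rather than using the uncompleted square \eqref{Bfibgr}, I will use its completed avatar
\begin{equation*}
\xymatrix{
\mathbb{Q}_p(i)(R)\ar[d]\ar[r] & \mathbb{Q}_p(i)(R/p)\ar[d]^{\hat{\chi}_i}\\
(\widehat{L\Omega}_R^{\geq i})_{\mathbb{Q}_p}\ar[r] & (\widehat{L\Omega}_R)_{\mathbb{Q}_p}
}
\end{equation*}
in $D(\mathbb{Q}_p)$; this cartesian square is precisely what the proof of \Cref{Beilinsonongr} produces before deflating the Hodge completion, namely by applying $\tau_{[2i-1,2i]}$ to the Beilinson fiber square \eqref{tcfibseqa} and invoking the filtrations of \cite{BMS2,antieauperiodic}.

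Next, I will identify each corner with the corresponding entry of \eqref{fundfibsq}. For the bottom row, \Cref{periodrings} gives $(\widehat{L\Omega}_R)_{\mathbb{Q}_p}\simeq\bdr^+(R)$, and since $i!$ is rationally a unit, the divided-power filtration agrees with the $\xi$-adic filtration, so $(\widehat{L\Omega}_R^{\geq i})_{\mathbb{Q}_p}\simeq\mathrm{Fil}^{\geq i}\bdr^+(R)$. For the upper-right corner, $R/p$ is a quasiregular semiperfect $\mathbb{F}_p$-algebra with $\acrys(R/p)=L\Omega_R$, so \Cref{Zpiforqrsperfs} exhibits $\mathbb{Q}_p(i)(R/p)$ as the $\varphi=p^i$ eigenspace of $\bcrys(R)$, concentrated in degree $0$ (using $i>0$); \Cref{imagechern} then identifies $\hat{\chi}_i$ with the composite $\bcrys(R)^{\varphi=p^i}\hookrightarrow\bcrys(R)\to\bdr^+(R)$.

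The main obstacle will be the upper-left corner, where I need a natural isomorphism
\begin{equation*}
\mathbb{Q}_p(i)(R)\;\simeq\;\R\Gamma_{\mathrm{proet}}\bigl(\spec(R[1/p]),\,\mathbb{Q}_p(i)\bigr).
\end{equation*}
My plan is to prove this via $\arcp$-descent on $\pfd$: the left-hand side is an $\arcp$-sheaf by \Cref{arcsheavesonpfd}(4), while the right-hand side satisfies $\arcp$-descent by the standard arc-descent results for pro-\'etale cohomology of affinoid perfectoid generic fibers. By \Cref{discretenesscrit}, it then suffices to check the comparison on a $p$-complete, rank $\leq 1$, absolutely integrally closed valuation ring $\Oscr$, where both sides are free of rank one over $\mathbb{Q}_p$ and concentrated in degree $0$: the left side by \Cref{aicsyntomic} combined with the multiplicative structure of \Cref{syntomicperfectoid}, and the right side because $\spec(\Oscr[1/p])$ is absolutely integrally closed of mod-$p$ cohomological dimension at most one. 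The Fontaine logarithm construction of \Cref{fontainelt} supplies a canonical comparison map; for $i=1$ this recovers the isomorphism already recorded in \Cref{ZP1}, and multiplicativity in the weight propagates this to all $i>0$. Plugging these four identifications into the Hodge-completed Beilinson fiber square yields \eqref{fundfibsq}.
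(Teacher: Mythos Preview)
Your overall architecture matches the paper's exactly: apply the Beilinson fiber square on graded pieces (\Cref{Beilinsonongr}) to a perfectoid ring, Hodge-complete the bottom row, and identify the four corners. Your treatment of the bottom row via \Cref{periodrings} and of the upper-right corner via \Cref{Zpiforqrsperfs} is the same as the paper's.

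The gap is in the upper-left corner. The paper simply invokes \cite[Theorem~9.4]{Prisms}, which already gives the natural identification $\mathbb{Z}_p(i)(R)\simeq R\Gamma_{\mathrm{proet}}(\spec(R[1/p]),\mathbb{Z}_p(i))$ for perfectoid $R$. You instead attempt to reprove this via $\arcp$-descent, and that argument has two problems. First, you never construct the natural transformation you want to test: \Cref{fontainelt} produces a map $T_p(R^\times)\to H^0(\zps{1}(R))$ and \Cref{ZP1} compares $\zps{1}$ with $\mathbb{Z}_p(1)$, but neither furnishes a map between $\mathbb{Q}_p(i)(-)$ and pro-\'etale cohomology of the generic fiber. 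Without a natural transformation in hand, knowing that two $\arcp$-sheaves agree on valuation rings does not show they are isomorphic. Second, your verification on absolutely integrally closed valuation rings rests on \Cref{aicsyntomic} and \Cref{syntomicperfectoid}, both of which are stated and proved only for $0\le i\le p-2$; your appeal to ``multiplicativity in the weight'' does not extend them to all $i>0$ without further input (indeed, extending to all $i$ is essentially the content of the Bhatt--Scholze result you are trying to avoid). The clean fix is to do what the paper does: cite \cite[Theorem~9.4]{Prisms} for the upper-left corner.

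A smaller remark: invoking \Cref{imagechern} to pin down the right vertical map as the standard inclusion $\bcrys(R)^{\varphi=p^i}\hookrightarrow\bcrys(R)\to\bdr^+(R)$ overstates what that proposition gives (it only identifies the image, up to a scalar). The paper is careful about this; see the remark following the proof. For the purpose of producing \emph{a} natural pullback square this does not matter, but it is worth being aware of.
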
 

\begin{example} 
    When $R=\Oscr_C$, where $C$ is a complete algebraically closed
    nonarchimedean field, the fundamental exact sequence is often written as the
    exact sequence
\[ 0 \to \mathbb{Q}_p(i) \to \bcrys(\mathcal{O}_C)^{\varphi = p^i} \to
\bdr^+(\mathcal{O}_C)/\mathrm{Fil}^{\geq i} \bdr^+(\mathcal{O}_C) \to 0\]
of abelian groups.
\end{example} 

\begin{proof}[Proof of \Cref{fundexactseq}] 
We apply \Cref{Beilinsonongr} to the perfectoid ring $R$ and obtain a fiber
square
\begin{equation} \label{fibsq46}\begin{gathered} \xymatrix{\mathbb{Q}_p(i)(R) \ar[d] \ar[r] & \mathbb{Q}_p(i)(R/p) \ar[d] \\
(L \Omega_R^{\geq i})_{\mathbb{Q}_p} \ar[r] & 
(L \Omega_R)_{\mathbb{Q}_p}.
}\end{gathered} \end{equation}
By \cite[Theorem 9.4]{Prisms}, the first term is identified with
$\R\Gamma_{\mathrm{proet}}( \spec(R[1/p]),
\mathbb{Q}_p(i))$. 
The ring $R/p$ is quasiregular semiperfect, so we have
$\mathbb{Q}_p(i)(R/p) \simeq \bcrys(R)^{\varphi = p^i}$
\cite[Sec. 8]{BMS2}. 

Note that we can replace the square \eqref{fibsq46} by Hodge completing the
bottom row and
it will still remain cartesian, since the homotopy fibers do not change. 
This yields a new homotopy cartesian square
where one identifies the rings as in \Cref{periodrings}, and then the result
follows. 
\end{proof}

\begin{remark}[Identifying the maps] 
Unfortunately, in general we do not know a good way of identifying the map
$\K(\mathcal{O}_C/p;\QQ_p) \to
\mathrm{HP}(\mathcal{O}_C; \mathbb{Q}_p)$ with the usual map in the fundamental
exact sequence. However, we can argue that it has to match with the
usual map, at least for $C = \mathbb{C}_p$, by appealing to some general results. 
For simplicity, in this example, we drop the argument of the perfectoid ring,
i.e., we write $\bdr^+$ for $\bdr^+(\mathcal{O}_{\mathbb{C}_p})$, etc. 

Our first goal is to identify the map  
obtained from $\pi_2$ in \eqref{fundfibsq},
\begin{equation} (\bcrys)^{\varphi = p} \to
\bdr^+; \label{map12} \end{equation}
by construction, it is $\mathrm{Gal}(\mathbb{Q}_p)$-equivariant.
Now we have a (Galois-equivariant) short exact sequence $$0 \to
\mathbb{Q}_p(1) \to (\bcrys)^{\varphi = p} \to \mathbb{C}_p \to 0$$ as above. 
Furthermore, the map 
\eqref{map12}
when restricted to the submodule 
$\mathbb{Q}_p(1) \subset ( \bcrys)^{\varphi = p}$  is
essentially determined: it is given by the
$\mathrm{dlog}$ map to derived de Rham cohomology
 as it comes from the usual Chern character $\K( \mathcal{O}_C;
 \mathbb{Q}_p) \to \HC^-(\mathcal{O}_C; \mathbb{Q}_p)$, for $\mathcal{O}_C$. 
 As is proved in~\cite[Prop. 2.17]{fontaine-bt}, the image of a generator of $\mathbb{Q}_p(1)$ gives a uniformizer of
$\bdr^+$ (which is  a DVR).

Recall that $\bdr^+$  has a complete, exhaustive
filtration (via powers of the augmentation ideal) with associated graded given
by $\mathbb{C}_p, \mathbb{C}_p(1), \mathbb{C}_p(2), \dots $. 
Moreover, there are no $\mathrm{Gal}(\mathbb{Q}_p)$-equivariant maps
$\mathbb{C}_p \to \bdr^+$ (see~\cite[Rem.~1.5.8]{Fo94}).
Thus there is at most one (and hence exactly one, by construction)
Galois-equivariant map
$ (\bcrys)^{\varphi = p} \to \bdr^+$ which extends
the $\mathrm{dlog}$ map. 
This shows that the map \eqref{map12} is actually completely determined by its
behavior on $\mathbb{Q}_p(1)$. 

Now by a deep result of Fargues--Fontaine, the graded ring $\bigoplus_{i \geq 0}
\bcrys(\mathcal{O}_C)^{\varphi = p^i}$ is
generated in degree one \cite[Th.~6.2.1]{FF}, so the maps for higher $i$ are determined by their
behavior for $i  = 1$ by multiplicativity. 
In particular, these
observations show that the maps in the fundamental exact sequence, although here
they are produced by topological means, are entirely determined by their value
on $\mathbb{Q}_p(1)$, as long as they are Galois-equivariant. 
\end{remark}

\subsection{Application to $p$-adic nearby cycles}

Let $C$ be an algebraically closed,  complete nonarchimedean field of mixed
characteristic $(0, p)$. 
In \cite[Sec. 10]{BMS2}, an explicit description of the $\mathbb{Z}_p(i)$
sheaves is given for smooth formal schemes over $\mathcal{O}_C$. 
Using this, we can recover some cases of comparison results of Colmez--Nizio\l~
\cite{CN17} and Tsuji \cite{Tsuji1999}, cf.~also Kato \cite{Ka87}.

\begin{definition}[$p$-adic nearby cycles] 
Let $\mathfrak{X}$ be a formal scheme over $\mathcal{O}_C$. 
We consider the pro-\'etale site $\mathfrak{X}_{\mathrm{proet}}$ of $\mathfrak{X}$ (equivalently, of its special
fiber), cf.~\cite{BS15}. 

For each $i$, we consider\footnote{The functor $\psi$ here should refer to the generic
fiber functor, but we do not define it here to avoid technicalities.} the sheaf of \emph{$p$-adic nearby cycles} 
$R \psi_* ( \mathbb{Z}_p(i)) $, which is a
$D(\mathbb{Z}_p)^{\geq 0}$-valued sheaf
 on $\mathfrak{X}_{\mathrm{proet}}$. 
Explicitly, 
given an affine pro-\'etale open $\spf A \to \mathfrak{X}$, 
we have that $R \Gamma(\spf A, R \psi_*(\mathbb{Z}_p(i))) \simeq 
R \Gamma_{\mathrm{proet}}( \spec A[1/p], \mathbb{Z}_p(i))$ is the pro-\'etale cohomology
of $A[1/p]$ with values in the (usual) sheaf $\mathbb{Z}_p(i)$.\footnote{Here we can consider either the
scheme $\spec(A[1/p])$ or the rigid analytic generic fiber by the affinoid
comparison theorem \cite[Cor.~3.2.2]{Hu96}.} 
\end{definition}

\begin{theorem}[Bhatt--Morrow--Scholze \cite{BMS2}] 
\label{Zpinsmoothcase}
Let $R$ be a formally smooth $\mathcal{O}_C$-algebra and let $\mathfrak{X} =
\spf(R)$. Then, as sheaves on $\mathfrak{X}_{\mathrm{proet}}$, we have a natural equivalence
$\mathbb{Z}_p(i) \simeq \tau^{\leq i} R \psi_*( \mathbb{Z}_p(i))$. 
In particular, it follows that
\[ \mathbb{Z}_p(i)(R) \simeq R \Gamma( \mathfrak{X}_{\mathrm{proet}}, 
\tau^{\leq i} R \psi_*( \mathbb{Z}_p(i))). 
\]
\end{theorem}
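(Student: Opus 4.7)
The plan is to prove the equivalence of pro-\'etale sheaves by evaluation on a sufficiently fine basis of pro-\'etale covers. Since $R$ is formally smooth over $\Oscr_C$, I would first argue that $\mathfrak{X}_{\mathrm{proet}}$ admits a basis consisting of affine opens $\spf R_\infty$ in which $R_\infty$ is a perfectoid $\Oscr_C$-algebra whose generic fiber $R_\infty[1/p]$ is absolutely integrally closed: one obtains $R_\infty$ by first adjoining $p$-power roots of an \'etale system of coordinates to produce a perfectoid cover, and then further refining using Andr\'{e}'s lemma (\cite[Th.~7.12]{Prisms}) together with passage to absolute integral closures (\cite[Tag 0DCK]{stacks-project}). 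On such a cover, $R\Gamma_{\mathrm{proet}}(\spec R_\infty[1/p], \mathbb{Z}_p(i))$ is concentrated in degree zero, so the truncation $\tau^{\leq i}$ on the right-hand side has no effect there.

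Second, for perfectoid $R_\infty$ the value $\mathbb{Z}_p(i)(R_\infty)$ is computable via the prism $\Prism_{R_\infty} \simeq \ainf(R_\infty)$ (with Nygaard filtration equal to the $\xi$-adic filtration): it is the fiber of $\can - \varphi_i\colon \xi^i \ainf(R_\infty)\{i\} \to \ainf(R_\infty)\{i\}$. I would identify this with $R\Gamma_{\mathrm{proet}}(\spec R_\infty[1/p], \mathbb{Z}_p(i))$ by constructing a natural multiplicative comparison map, taking the $\dlog$-type identification $\mathbb{Z}_p(1)(R_\infty) \simeq T_p(R_\infty[1/p]^{\times})$ from \cite[Prop.~7.17]{BMS2} as the base case $i = 1$ and extending multiplicatively via the graded ring structure on $\bigoplus_i \mathbb{Z}_p(i)$. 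This is essentially an integral strengthening of the fundamental exact sequence (\Cref{fundexactseq}, \Cref{ZP1}), which is valid once both sides are concentrated in degree zero on our refined cover.

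The main obstacle will be globalizing: constructing a natural map of sheaves $\mathbb{Z}_p(i) \to R\psi_*(\mathbb{Z}_p(i))$ on all of $\mathfrak{X}_{\mathrm{proet}}$, rather than merely establishing a pointwise isomorphism on a basis, and verifying that it agrees with the local identifications produced above. Once this sheaf map is in place, the connectivity bound $\mathbb{Z}_p(i) \in D^{\leq i+1}(\mathbb{Z}_p)$ from \Cref{Zpibound} combined with the pro-\'etale local vanishing on the absolutely integrally closed perfectoid basis forces the map to factor through $\tau^{\leq i} R\psi_*(\mathbb{Z}_p(i))$. That it is then an equivalence of sheaves follows by pro-\'etale descent from the identifications on this basis, where both sides reduce to the same degree zero abelian group.
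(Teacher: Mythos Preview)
The paper does not prove this theorem: it is stated as a citation from \cite[Sec.~10]{BMS2} and is used as a black box in the subsequent application to $p$-adic nearby cycles. So there is no ``paper's own proof'' to compare against here.

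As for your proposal, the broad outline is reasonable and is in the spirit of how \cite{BMS2} handles it, but the actual argument in \cite[Sec.~10]{BMS2} is somewhat different. There, the comparison is not built by extending the $i=1$ case multiplicatively; rather, one constructs a direct symbol map from the nearby cycles sheaf into the $\mathbb{Z}_p(i)$ of \cite{BMS2} (ultimately via the $A\Omega$-theory and the \'etale comparison for $A_{\mathrm{inf}}$-cohomology from \cite{BMS1}), and then checks it is an isomorphism after pro-\'etale localization to a perfectoid cover. Your ``extend multiplicatively from $i=1$'' step hides a nontrivial point: one must know that the cup product $\mathbb{Z}_p(1)^{\otimes i} \to \mathbb{Z}_p(i)$ is an isomorphism on the chosen perfectoid basis, which is exactly the content of \cite[Prop.~7.17]{BMS2} (and its analogue on the nearby cycles side via Kummer theory). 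Once that is granted, your approach and the one in \cite{BMS2} converge.

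You correctly identify the globalization of the comparison map as the real issue. In \cite{BMS2} this is handled by producing the map at the level of the filtered $\THH$/$A\Omega$ theory, where functoriality is automatic; constructing it by hand on a basis and then descending, as you propose, would require checking compatibility under the pro-\'etale hypercovers, which is doable but more delicate than your sketch suggests.
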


To apply this, 
let $K$ be a \emph{discretely} valued field with perfect residue field $k$, ring
of integers $\mathcal{O}_K \subset K$, and uniformizer $\pi \in
\mathcal{O}_K$; suppose  
$K \subset C$ (e.g., we could take $C = \widehat{\overline{K}}$). 
Let $\mathfrak{X}_0$ be a smooth proper formal scheme over $\mathcal{O}_K$ with generic
fiber $X_0$, a smooth proper rigid space over $K$. 

\begin{cons}[de Rham cohomology of formal schemes and rigid spaces]
We let $L \Omega_{\mathfrak{X}_0/\mathcal{O}_K}$ denote the ($p$-adic) derived de Rham cohomology of
$\mathfrak{X}_0$ over $\mathcal{O}_K$ equipped with its Hodge filtration. 
In fact,  
$L \Omega_{\mathfrak{X}_0/\mathcal{O}_K}$ is also the $p$-complete \emph{usual} de Rham complex since
$\mathfrak{X}_0$ is formally smooth over $\mathcal{O}_K$, cf.~\cite{Bhattpadic}, and the Hodge filtration is a finite filtration. 
    We let (by a slight abuse of notation)
    $\Omega_{X_0/K}=(L\Omega_{\mathfrak{X}_0/\Oscr_K})_{\QQ_p}$ denote the rationalization, which we can interpret as
the de Rham cohomology of the rigid generic fiber $X_0$. 
Note that $L \Omega_{\mathfrak{X}_0/\mathcal{O}_K}$ is a perfect 
$\mathcal{O}_K$-module, and $\Omega_{X_0/K}$ is a perfect $K$-module. 
\end{cons}

We also consider the ring $\bdr^+ = \bdr^+(\mathcal{O}_C)$ with its $\xi$-adic
filtration. 
Together, it follows that 
$\Omega_{X_0/K} \otimes_{K} \bdr^+$ admits a filtration in the
derived $\infty$-category $D(K)$. 
Then one has the following result, a special case of results of \cite{CN17, Tsuji1999} in
the case of good reduction; note that \cite{CN17, Tsuji1999} treat the more general
semistable case, which we do not consider here.  
In the following, all references to $\acrys, \bdr^+$, etc.~will implicitly 
be with respect to the perfectoid ring $\mathcal{O}_C$. 

\begin{theorem}[Cf.~Colmez--Nizio\l~\cite{CN17}, Tsuji~\cite{Tsuji1999}] 
Let $\mathfrak{X}_0/\mathcal{O}_K$ be a smooth proper formal scheme. 
Let $\mathfrak{X}$ denote the base change of $\mathfrak{X}_0$ to $\mathcal{O}_C$ and
$\overline{\mathfrak{X}_0}$ its reduction modulo $\pi$. 
For each $i \geq 0$, we have a natural pullback square in $D(\mathbb{Q}_p)$,
\[   
\xymatrix{
R \Gamma( \mathfrak{X}_{\mathrm{proet}}, 
\tau^{\leq i} R \psi_*( \mathbb{Q}_p(i))) 
\ar[d]  \ar[r] & 
(\acrys {\otimes_{W(k)}} R \Gamma_{\mathrm{crys}}(
\overline{\mathfrak{X}}_0/W(k)))^{\varphi = p^i} [1/p] \ar[d]  \\
\mathrm{Fil}^{\geq i}
(\Omega_{X_0/K}{\otimes}_{K} 
\bdr^+)  \ar[r] &   (\Omega_{X_0/K} \otimes_{K} 
\bdr^+).
}\]
\end{theorem}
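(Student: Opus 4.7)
The plan is to derive the square by applying the graded Beilinson fiber square (\Cref{Beilinsonongr}) locally on $\mathfrak{X}$ and then globalising. Smooth $\mathcal{O}_C$-algebras are quasisyntomic, and all of the functors $\mathbb{Q}_p(i)$, $\widehat{L\Omega}_{-/\mathbb{Z}_p}$, and $\widehat{L\Omega}_{-/\mathbb{Z}_p}^{\geq i}$ are quasisyntomic sheaves, so taking derived global sections of the local cartesian squares from \Cref{Beilinsonongr} yields a cartesian square in $D(\mathbb{Q}_p)$
\[
\xymatrix{
R\Gamma(\mathfrak{X},\mathbb{Q}_p(i)) \ar[d] \ar[r] & R\Gamma(\mathfrak{X}_1,\mathbb{Q}_p(i)) \ar[d] \\
(R\Gamma(\mathfrak{X}, L\Omega_{-/\mathbb{Z}_p}^{\geq i}))_{\mathbb{Q}_p} \ar[r] & (R\Gamma(\mathfrak{X}, L\Omega_{-/\mathbb{Z}_p}))_{\mathbb{Q}_p}.
}
\]
Because the fibers of the horizontal maps depend only on the Hodge-filtration quotients $L\Omega/L\Omega^{\geq i}$, which coincide with their Hodge completions, I may replace the bottom row by its Hodge-completed version $\widehat{L\Omega}^{\geq i} \to \widehat{L\Omega}$ without disturbing the pullback property; this completed form will match $B_{\mathrm{dR}}^+$ directly.

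Next I identify the four corners of this cartesian square with those of the target. For the top-left, \Cref{Zpinsmoothcase} gives $\mathbb{Q}_p(i) \simeq \tau^{\leq i} R\psi_*\mathbb{Q}_p(i)$ locally on $\mathfrak{X}$, and taking derived global sections yields $R\Gamma(\mathfrak{X}_{\mathrm{proet}}, \tau^{\leq i} R\psi_*\mathbb{Q}_p(i))$. For the top-right, on quasisyntomic $\mathbb{F}_p$-algebras the sheaf $\mathbb{Q}_p(i)$ is the rationalised $\varphi = p^i$ eigenspace of derived crystalline (i.e., derived de Rham--Witt) cohomology by \Cref{charpdecomplete}, and this description is a quasisyntomic sheaf statement. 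Unfolding and applying flat base change of crystalline cohomology along $W(k) \to \acrys$, combined with the identification of $\mathfrak{X}_1$ as the base change of $\overline{\mathfrak{X}}_0$ (along a suitable PD-thickening of $\mathcal{O}_C/p$ over $W(k)$), yields $R\Gamma(\mathfrak{X}_1,\mathbb{Q}_p(i)) \simeq (\acrys \otimes_{W(k)} R\Gamma_{\mathrm{crys}}(\overline{\mathfrak{X}}_0/W(k)))^{\varphi = p^i}[1/p]$. For the bottom row, I exploit the base-change $\mathfrak{X} \simeq \mathfrak{X}_0 \otimes_{\mathcal{O}_K}^L \mathcal{O}_C$ and smoothness of $\mathfrak{X}_0/\mathcal{O}_K$ to compute the Hodge-completed derived de Rham cohomology relative to $\mathbb{Z}_p$: after $p$-inversion, the transitivity for Hodge-completed dR gives $\widehat{L\Omega}_{\mathfrak{X}/\mathbb{Z}_p,\mathbb{Q}_p} \simeq \Omega^{\bullet}_{\mathfrak{X}_0/\mathcal{O}_K} \widehat{\otimes}_{\mathcal{O}_K} \widehat{L\Omega}_{\mathcal{O}_C/\mathbb{Z}_p,\mathbb{Q}_p}$, which by \Cref{periodrings} equals $\Omega^{\bullet}_{\mathfrak{X}_0/\mathcal{O}_K} \widehat{\otimes}_{\mathcal{O}_K} B_{\mathrm{dR}}^+$. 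Taking derived global sections on the proper $\mathfrak{X}_0$ and converting to the rigid generic fiber yields $\Omega_{X_0/K} \otimes_K B_{\mathrm{dR}}^+$, with the Hodge filtration given by the convolution of the Hodge filtration on $\Omega_{X_0/K}$ with the $\xi$-adic filtration on $B_{\mathrm{dR}}^+$.

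The main obstacle will be the bottom-row identification: verifying that for the (highly non-smooth) tower $\mathbb{Z}_p \to \mathcal{O}_C \to R$ with $R$ smooth over $\mathcal{O}_C$, the Hodge-completed derived de Rham cohomology satisfies the Künneth/transitivity formula $\widehat{L\Omega}_{R/\mathbb{Z}_p} \simeq \widehat{L\Omega}_{\mathcal{O}_C/\mathbb{Z}_p} \widehat{\otimes}_{\mathcal{O}_C} \Omega^{\bullet}_{R/\mathcal{O}_C}$ with convolution filtration. This relies on smoothness of the upper morphism together with the HKR-type filtration on derived de Rham, and matching filtrations requires care. A secondary technical point is the crystalline flat base change along $W(k) \to \acrys$, which requires identifying the appropriate PD-structure on $\acrys/p$ extending the $k$-structure of the special fiber; here the hypothesis that $k$ is perfect and that $\mathcal{O}_C$ is an algebraically closed perfectoid extension of $\mathcal{O}_K$ is essential.
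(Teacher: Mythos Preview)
Your overall strategy matches the paper's: apply \Cref{Beilinsonongr}, identify the top-left via \Cref{Zpinsmoothcase}, the top-right via the crystalline/de Rham--Witt description, and the bottom row via a K\"unneth-type formula after Hodge completion. The paper carries this out with two simplifications that resolve exactly the points you flag as obstacles.

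For the top-right corner, your claim that $\mathfrak{X}_1=\mathfrak{X}/p$ is a base change of $\overline{\mathfrak{X}}_0$ is false when $K/\mathbb{Q}_p$ is ramified: $\mathcal{O}_C/p$ is an $\mathcal{O}_K/p$-algebra, not a $k$-algebra. The paper avoids this by first replacing $\mathfrak{X}/p$ with $\mathfrak{X}/\pi$, which is legitimate because $\mathbb{Q}_p(i)(\mathfrak{X}/\pi)\simeq\mathbb{Q}_p(i)(\mathfrak{X}/p)$ via \Cref{rem:frob} (the Frobenius acts by $p^i$, hence $\mathbb{Q}_p(i)$ is invariant under the nilpotent thickening). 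Now $\mathfrak{X}/\pi=\overline{\mathfrak{X}}_0\otimes_k\mathcal{O}_C/\pi$ on the nose, and one applies the K\"unneth formula for $LW\Omega$ over $\mathbb{F}_p$ to obtain $LW\Omega_{\mathfrak{X}/\pi}\simeq R\Gamma_{\mathrm{crys}}(\overline{\mathfrak{X}}_0/W(k))\otimes_{W(k)}\acrys$, then takes $\varphi=p^i$ and inverts $p$. No PD-thickening bookkeeping is needed.

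For the bottom row, rather than invoking a transitivity statement for the tower $\mathbb{Z}_p\to\mathcal{O}_C\to R$, the paper passes to de Rham cohomology relative to $\mathcal{O}_K$ (legitimate after rationalization, as in the proof of \Cref{BfibOKprop}, since $L_{\mathcal{O}_K/\mathbb{Z}_p}$ vanishes rationally) and then applies the K\"unneth formula for the pushout $\mathfrak{X}=\mathfrak{X}_0\otimes_{\mathcal{O}_K}\mathcal{O}_C$: one gets $L\Omega_{\mathfrak{X}/\mathcal{O}_K}\simeq L\Omega_{\mathfrak{X}_0/\mathcal{O}_K}\otimes_{\mathcal{O}_K}L\Omega_{\mathcal{O}_C/\mathcal{O}_K}$ as filtered objects, with no completion issues since the first factor is perfect with finite Hodge filtration. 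Hodge-completing and rationalizing the second factor gives $\bdr^+$ via \Cref{periodrings}.
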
 
\begin{proof} 
We claim that this follows from 
\Cref{Beilinsonongr}, applied to $\mathfrak{X}$. 
Note that 
\eqref{Bfibgr} gives a fiber square
\begin{equation}
\label{secondfibsq}
    \begin{gathered}
\xymatrix{
\mathbb{Q}_p(i)( \mathfrak{X}) \ar[d]  \ar[r] & 
\mathbb{Q}_p(i)( \mathfrak{X}/\pi) \ar[d] \\
( L \Omega_{\mathfrak{X}/\mathcal{O}_K}^{\geq i})_{\mathbb{Q}_p} \ar[r] &  (
L \Omega_{\mathfrak{X}/\mathcal{O}_K})_{\mathbb{Q}_p}
}
    \end{gathered}
\end{equation}
in fact, $\mathbb{Q}_p(i)( \mathfrak{X}/p) \to \mathbb{Q}_p(i)( \mathfrak{X}/\pi)$ is
an isomorphism thanks to \Cref{rem:frob}.

The top left term in \eqref{secondfibsq} is identified via \Cref{Zpinsmoothcase}. 
For the top right, we observe that there is a natural equivalence
\[ 
\mathfrak{X} \otimes_{\mathcal{O}_C} \mathcal{O}_C/\pi \simeq 
\overline{\mathfrak{X}_0} \otimes_k \mathcal{O}_C/\pi.
\]
For $\mathbb{F}_p$-algebras, the construction  ${L W} \Omega_{(-)}$
satisfies a K\"unneth formula, so we get
$$ 
{L W} \Omega_{\mathfrak{X}/\pi} \simeq L W \Omega_{\overline{\mathfrak{X}_0}} {\otimes}_{W(k)}
\acrys.
$$
Note that we do not need to $p$-complete again,
since $\mathfrak{X}_0$ is smooth and proper. 
Taking Frobenius fixed points, we identify the top right term now rationally,
thanks to \eqref{QpofFpalgebra}. 

We can replace the bottom row of \eqref{secondfibsq} with its completion with
respect to the (rationalized) Hodge filtration. 
Recall  that $p$-adic derived de Rham cohomology together
with its Hodge filtration (so as a filtered $\mathbb{E}_\infty$-algebra)
satisfies a K\"unneth formula.
Therefore, we have
$$L \Omega_{\mathfrak{X}/\mathcal{O}_K} \simeq L
\Omega_{\mathfrak{X}_0/\mathcal{O}_K} {\otimes_{\mathcal{O}_K}} L
\Omega_{\mathcal{O}_C/\mathcal{O}_K},$$ 
since
the filtration on 
$L
\Omega_{\mathfrak{X}_0/\mathcal{O}_K}$ is finite, and it is by perfect
$\mathcal{O}_K$-modules. 
It follows that the Hodge completion of the rationalization of $L
\Omega_{\mathfrak{X}/\mathcal{O}_K}$ is equivalent,
in the filtered derived $\infty$-category of $K$,
to
$$ \Omega_{X_0/K} \otimes_{K}  \bdr^+,$$ 
where we use 
\Cref{periodrings} for the identification with $\bdr^+$. 
\end{proof}

\appendix

\newcommand{\ft}{\mathrm{afp}}
\appendix

\section{Twisted Tate diagonals} 
\label{app:gradedcyc}
In this section we investigate under which conditions Hochschild homology in a general symmetric monoidal $\infty$-category admits a (twisted) cyclotomic structure.
The main result is Corollary \ref{cor_twisted} and the fact that it applies to graded and filtered $\THH$ as recorded in Examples \ref{Exgraded} and \ref{Exfiltered}. \\

As usual, we fix a prime $p$.
Let $\Cscr$ be a presentably symmetric monoidal $\infty$-category and $L\colon \Cscr \to \Cscr$ be a symmetric monoidal, left adjoint functor. 

\begin{definition}\label{TateDiagonal}
An \emph{$L$-twisted diagonal} on $\Cscr$ is a symmetric monoidal natural transformation 
\[
\Delta\colon L(C) \to (C \otimes \dots \otimes C)^{hC_p} 
\]
of lax symmetric monoidal functors $\Cscr \to \Cscr$. Assume $\Cscr$ is
    additionally stable;\footnote{In fact,  semiadditive (so that the Tate
    construction is defined) suffices.} then an $L$-twisted \emph{Tate diagonal} is a
    symmetric monoidal natural transformation
\[
\Delta\colon L(C) \to T_p(C) := (C \otimes \dots \otimes C)^{tC_p}  \ .
\] 
\end{definition}

\begin{example}
\begin{enumerate}
\item
The $\infty$-category of spaces admits a (unique) $\mathrm{id}$-twisted diagonal
 and the $\infty$-category of spectra admits a (unique) $\mathrm{id}$-twisted
 Tate diagonal, see \cite[Section III.1]{NS18}. 
\item More generally, if $\Cscr$ admits the cartesian symmetric monoidal structure then it admits a canonical $\mathrm{id}$-twisted diagonal induced by the actual diagonal. 
 \end{enumerate}
\end{example}

\begin{example}
\label{twistedtatemodule}
 Suppose 	$R$ is an $\mathbb{E}_\infty$-ring and $\Cscr = \mathrm{Mod}_R$,
 considered as a symmetric monoidal $\infty$-category with the $R$-linear
 tensor product. Then every
 left adjoint, symmetric monoidal functor $L$ is given by an induction along an
 $\mathbb{E}_\infty$-map $l\colon R \to R$ and we will prove below that the datum of an $L$-twisted Tate diagonal on $\Cscr$ is 
equivalent to an $\mathbb{E}_\infty$-homotopy between the composition
 \[
 R \xrightarrow{l} R \xto{\triv} R^{tC_p}
 \]
 and the Tate-valued Frobenius $\varphi\colon R \to R^{tC_p}$ of $R$,
 \cite[IV.1]{NS18}. We shall refer to an $\mathbb{E}_\infty$-ring $R$ with such a datum as a cyclotomic base. An example is $R= \mathbb{S}[z]$, see Example \ref{polynomial_tate} below. 
\end{example}
\begin{proof}
 To see that the datum of a twisted Tate diagonal is equivalent to such an
 equivalence, we first note that a symmetric monoidal natural transformation $L
 \to T_p$ of functors $\Mod_R \to \Mod_R$ is determined by its restriction to
 the perfect modules $\Mod_R^\omega \subset \Mod_R$ since $L$ preserves filtered colimits. Since $T_p$ is lax symmetric monoidal we get a factorization
 \[
\Mod_R \xto{T_p} \Mod_{T_p(R)} \xto{\mathrm{res}_{\triv}} \Mod_R 
 \]
as lax symmetric monoidal functors. Upon restriction to $\Mod_R^\omega$ the
first functor is given by base-change along the  Tate-valued Frobenius $\varphi\colon R \to R^{tC_p}$ such that we get a factorization $T_p\mid_{\Mod_R^\omega} = \mathrm{res}_{\triv} \circ \mathrm{ind}_{\varphi}$. Now a symmetric monoidal transformation 
\[
 L\mid_{\Mod_R^\omega} = \mathrm{ind}_l \to \mathrm{res}_{\triv} \circ \mathrm{ind}_{\varphi}
\]
is by adjunction equivalent to a natural transformation
\[
\mathrm{ind}_{\triv \circ l} = \mathrm{ind}_{\triv} \mathrm{ind}_{l} \to 
\mathrm{ind}_{\varphi} \ .
\]
But every object in $\Mod_R^\omega$ is dualizable and both functors $\mathrm{ind}_{\triv \circ l}$ and $\mathrm{ind}_{\varphi}$ are symmetric monoidal. Thus every such symmetric monoidal transformation is necessarily an equivalence and thus induced by an equivalence of maps of $\mathbb{E}_\infty$-rings $R \to R^{tC_p}$. This shows the claim. 
\end{proof}

\begin{remark}
Note that a general  symmetric monoidal, stable $\infty$-category $\Cscr$ does
    not admit $L$-twisted Tate-diagonals for arbitrary $L$. For example if we
	 consider $\Cscr = \mathcal{D}(\mathbb{Z}) \simeq \mathrm{Mod}_{H\mathbb{Z}}$
	 then by \Cref{twistedtatemodule} above a twisted Tate diagonal would be the same as a factorization of the Tate-valued Frobenius
\[
H\mathbb{Z} \to (H\mathbb{Z})^{tC_p} 
\]
through the $\mathrm{triv}$-map $H\mathbb{Z} \to H\mathbb{Z}^{tC_p}$. These two maps however differ by Steenrod operations as shown in \cite[IV.1]{NS18}. But any twist would be the identity. 
\end{remark}

In the following, we use the notation $\HH A$
to denote the Hochschild homology object of an algebra object $A \in
\mathrm{Alg}(\Cscr)$
internal to $\Cscr$; 
e.g., if $\Cscr = \sp$, this recovers $\THH$. 

\begin{proposition}\label{prop_twisted_cyclotomic}
Assume that $\Cscr$ is equipped with an $L$-twisted diagonal resp.~Tate
    diagonal. Then we get for each algebra object $A \in \mathrm{Alg}(\Cscr)$
    an induced $S^1$-equivariant map
\[
L (\HH A) \to (\HH A)^{hC_p} \qquad \text{resp.} \qquad L( \HH A) \to (\HH A)^{tC_p}
\]
    which is functorial and symmetric monoidal in $A$.
\end{proposition}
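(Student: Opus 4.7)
The strategy is to imitate the construction of the cyclotomic Frobenius on $\THH$ from \cite[Sec.~III.3]{NS18}, replacing the identity-twisted Tate diagonal on $\mathrm{Sp}$ by the given $L$-twisted (Tate) diagonal on $\mathcal{C}$ throughout. The two cases (diagonal and Tate diagonal) are formally identical; I describe the Tate case.

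Recall that for $A \in \mathrm{Alg}(\mathcal{C})$, the Hochschild object $\HH A$ is computed as the geometric realization of the cyclic bar construction $B^{\mathrm{cyc}}_\bullet A \in \mathrm{Fun}(\Lambda^{\mathrm{op}},\mathcal{C})$, where $\Lambda$ is Connes' cyclic category; this is precisely what produces the natural $S^1$-action on $\HH A$, and the assignment $A \mapsto B^{\mathrm{cyc}}_\bullet A$ is itself a lax symmetric monoidal functor. The first step of the construction is to apply the lax symmetric monoidal natural transformation $\Delta$ of \Cref{TateDiagonal} to the object $\HH A$, obtaining a natural map
\[
L(\HH A) \longrightarrow T_p(\HH A) = (\HH A \otimes \cdots \otimes \HH A)^{tC_p},
\]
in which the right-hand side carries the $S^1$-action coming from the diagonal of the $p$-fold product $S^1$-action on $(\HH A)^{\otimes p}$, together with the $C_p$-action permuting the tensor factors.

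The second and main step is to provide a natural $S^1$-equivariant map $\bigl((\HH A)^{\otimes p}\bigr)^{tC_p} \to (\HH A)^{tC_p}$. This is produced, exactly as in \cite[Sec.~III.3]{NS18}, by the abstract edgewise subdivision identification: on the level of cyclic objects there is a natural $C_p$-equivariant equivalence
\[
(B^{\mathrm{cyc}}_\bullet A)^{\otimes p} \;\we\; (B^{\mathrm{cyc}}_\bullet A)\circ\mathrm{sd}_p,
\]
where $\mathrm{sd}_p\colon \Lambda \to \Lambda$ is the $p$-fold self-cover of Connes' cyclic category and $C_p$ acts on the left by cyclic permutation of tensor factors and on the right by the subgroup inclusion $C_p \hookrightarrow S^1$. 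Composing with the multiplication map $A^{\otimes p} \to A$ induces a $C_p$-equivariant map of cyclic objects to $(B^{\mathrm{cyc}}_\bullet A)\circ\mathrm{sd}_p$. After geometric realization, this yields $(\HH A)^{\otimes p}$ as a $C_p \times S^1$-object equivalent to $\HH A$ pulled back along the $p$-fold cover $S^1 \xto{\cdot p} S^1$, where the cover factors through $S^1 \to S^1/C_p \we S^1$. Passing to $(-)^{tC_p}$ (or $(-)^{hC_p}$ in the untwisted case) and using this descent along $S^1/C_p \we S^1$ produces the desired $S^1$-equivariant map. Composing the two steps gives the promised natural transformation $L(\HH A) \to (\HH A)^{tC_p}$; its $S^1$-equivariance, lax symmetric monoidality, and functoriality in $A$ are immediate from the corresponding properties of each ingredient ($B^{\mathrm{cyc}}$ is symmetric monoidal, $\Delta$ is lax symmetric monoidal by assumption, and the edgewise subdivision identification together with multiplication of $A$ is natural and monoidal).

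The only real obstacle is making the edgewise subdivision identification precise in the abstract $\infty$-categorical setting. Once this $C_p$-equivariant identification of $(B^{\mathrm{cyc}}_\bullet A)^{\otimes p}$ with $(B^{\mathrm{cyc}}_\bullet A)\circ \mathrm{sd}_p$ is established, the remainder of the argument is formal and in particular does not depend on the specific $\mathcal{C}$; only the abstract existence of $\Delta$ is used. This combinatorial input is exactly what is developed in \cite[Sec.~III.3]{NS18}, and as the constructions there only use the $\infty$-categorical structure of $\Lambda$ and functoriality of geometric realization, they carry over verbatim to any presentably symmetric monoidal stable $\infty$-category $\mathcal{C}$ equipped with an $L$-twisted (Tate) diagonal.
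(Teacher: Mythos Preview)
Your two-step factorization is not the approach of \cite{NS18} or of the paper, and its second step contains a genuine error. The paper, following \cite[Sec.~III.2--III.3]{NS18} closely, applies the $L$-twisted Tate diagonal \emph{levelwise before realization}: one produces maps $L(A^{\otimes(n+1)})\to (A^{\otimes p(n+1)})^{tC_p}$ and verifies that they assemble into a map of cyclic objects. The technical heart is to extend $\Delta\colon L\to T_p$ to a $BC_p$-equivariant lax symmetric monoidal transformation $\tilde L\to\tilde T_p$ of functors out of $N(\mathrm{Free}_{C_p})\times_{N(\mathrm{Fin})}\mathcal{C}^\otimes_{\mathrm{act}}$; the paper does this via the right adjoint to the inclusion of strong into lax symmetric monoidal functors (\cite[Lem.~III.3.3]{NS18}), which reduces the problem to $\Fun(N\mathrm{Tor}_{C_p},\Fun_{\mathrm{lax}}(\mathcal{C},\mathcal{C}))^{BC_p}\simeq\Fun_{\mathrm{lax}}(\mathcal{C},\mathcal{C})$, where it is solved tautologically by the given $\Delta$.

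By contrast, you apply $\Delta$ \emph{after} realization and then assert an equivalence $(B^{\mathrm{cyc}}_\bullet A)^{\otimes p}\simeq (B^{\mathrm{cyc}}_\bullet A)\circ\mathrm{sd}_p$ of cyclic objects. This is false: realizing the left side gives $(\HH A)^{\otimes p}$ while realizing the right side gives $\HH A$, and these have different underlying spectra already for $A=\mathbb{F}_p$. No such global identification of $(\HH A)^{\otimes p}$ (permutation $C_p$) with $\HH A$ (restriction $C_p\subset S^1$) exists, and \cite[Sec.~III.3]{NS18} does not claim one. Your invocation of ``the multiplication map $A^{\otimes p}\to A$'' does not repair this---that map is not $C_p$-equivariant for a merely associative $A$---and its appearance signals that something has gone wrong. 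The levelwise approach is essential precisely because the Tate diagonal on each $A^{\otimes(n+1)}$ lands directly in the correct $C_p$-object $(A^{\otimes p(n+1)})^{tC_p}$, and the work is then purely about compatibility with the cyclic structure maps, not about comparing $(\HH A)^{\otimes p}$ with $\HH A$.
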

\begin{proof}
We closely follow the construction of the cyclotomic structure on $\THH$ given
in \cite[Sec.~III.2]{NS18}. We will mostly indicate the necessary changes and thus recommend that the reader take a look at the construction there first. We treat the case of the Tate diagonal which is the only case that we will need in this paper, but the case of the diagonal works exactly the same. 

We first recall that $\HH A$ is the geometric realization of the cyclic object in $\Cscr$ informally written as
\[\xymatrix{
 \cdots \ar[r]<1.5pt>\ar[r]<-1.5pt>\ar[r]<4.5pt>\ar[r]<-4.5pt> & A\otimes A\otimes A\ar@(ul,ur)^{C_3} \ar[r]<3pt>\ar[r]\ar[r]<-3pt>  & A\otimes A\ar@(ul,ur)^{C_2} \ar[r]<1.5pt>\ar[r]<-1.5pt> & A\ .
}\]
Thus $L( \HH A)$ is the geometric realization of the cyclic object 
\[\xymatrix{
 \cdots \ar[r]<1.5pt>\ar[r]<-1.5pt>\ar[r]<4.5pt>\ar[r]<-4.5pt> & L(A\otimes A\otimes A)\ar@(ul,ur)^{C_3} \ar[r]<3pt>\ar[r]\ar[r]<-3pt>  &L( A\otimes A)\ar@(ul,ur)^{C_2} \ar[r]<1.5pt>\ar[r]<-1.5pt> & L(A)\ .
}\]
For a given $L$-twisted Tate diagonal 
\[
\Delta\colon L(C) \to (C \otimes ... \otimes C)^{tC_p}  = T_p(C)
\]
we want to construct a natural map of cyclic objects
\[\xymatrix{
\cdots \ar[r]<1.2ex> \ar[r]<-1.2ex>\ar[r]<0.4ex> \ar[r]<-0.4ex> & L(A^{\otimes 3})\ar[d]^{\Delta} \ar@(ul,ur)^{C_3}\ar[r]<0.8ex>\ar[r]<0ex>\ar[r]<-0.8ex> \ar[r]& L(A^{\otimes 2}) \ar[d]^{\Delta}\ar@(ul,ur)^{C_2}
\ar[r]<0.4ex> \ar[r]<-0.4ex>& L A \ar[d]^{\Delta} \\
\cdots \ar[r]<1.2ex> \ar[r]<-1.2ex>\ar[r]<0.4ex> \ar[r]<-0.4ex> & \big(A^{\otimes 3p}\big)^{tC_p} \ar@(dl,dr)_{C_3}\ar[r]<0.8ex>\ar[r]<0ex>\ar[r]<-0.8ex> \ar[r]& \big(A^{\otimes 2p}\big)^{tC_p}\ar@(dl,dr)_{C_2}
\ar[r]<0.4ex> \ar[r]<-0.4ex>& \big(A^{\otimes p}\big)^{tC_p}
},\]
and obtain the desired map $L(\HH A) \to (\HH A)^{tC_p}$ as the geometric realization of this map of cyclic objects followed by the canonical interchange map from the realization of the Tate constructions to the Tate construction of the realization. 

In order to construct such a natural transformation of cyclic objects, we
    proceed as in \cite{NS18}: we eventually need to show that we can extend the symmetric monoidal natural transformation $\Delta\colon L \to T_p$ of functors $\Cscr \to \Cscr$ to a $BC_p$-equivariant symmetric monoidal natural transformation of functors  
 from the functor
\[
\tilde{L}\colon N(\mathrm{Free}_{C_p})\times_{N(\mathrm{Fin})} \Cscr^\otimes_\act\xto{\mathrm{pr}} \Cscr^\otimes_\act\xto{\otimes}\Cscr \xto{L} \Cscr
\]
given by
\[
(S,(X_{\overline{s}\in \overline{S}=S/C_p}))\mapsto L\Big(\bigotimes_{\overline{s}\in \overline{S}} X_{\overline{s}}\Big)
\]
to the functor
\[
\tilde{T}_p\colon N(\mathrm{Free}_{C_p})\times_{N(\mathrm{Fin})} \Sp^\otimes_\act\to (\Cscr^\otimes_\act)^{BC_p}\xto{\otimes} \Cscr^{BC_p}\xto{-^{tC_p}} \Cscr
\]
given by
\[
(S,(X_{\overline{s}\in \overline{S}=S/C_p}))\mapsto (\bigotimes_{s\in S} X_{\overline{s}})^{tC_p}\ .
\]
Here $\mathrm{Free}_{C_p}$ is the category of finite free $C_p$-sets equipped
    with the cocartesian symmetric monoidal structure. The group object
    $BC_p$-acts on this category in the obvious way and acts trivially on
    $\Cscr$. For a precise construction of these functors we refer to
    \cite[Section III.3]{NS18}, specifically Proposition III.3.6 and the
    construction around that.

The inclusion
\[
\Fun_{\otimes}\left(N(\mathrm{Free}_{C_p})\times_{N(\mathrm{Fin})} \Cscr^\otimes_\act, \Cscr\right) \subseteq \Fun_{\mathrm{lax}}\left(N(\mathrm{Free}_{C_p})\times_{N(\mathrm{Fin})} \Cscr^\otimes_\act, \Cscr\right)
\]
admits a right adjoint by 
Lemma III.3.3 resp.~Remark III.3.5 in \cite{NS18}. Using the same construction and argument as in the proof of 
\cite[Lemma III.3]{NS18} we see that the $\infty$-category $\Fun_{\otimes}\left(N(\mathrm{Free}_{C_p})\times_{N(\mathrm{Fin})} \Cscr^\otimes_\act, \Cscr\right)$ is equivalent to the $\infty$-category $\Fun(N\mathrm{Tor}_{C_p}, \Fun_{\mathrm{lax}}(\Cscr, \Cscr))$ where $\mathrm{Tor}_{C_p}$ denotes the category of $C_p$-torsors. 
Under this equivalence the right adjoint to the inclusion is given by restricting a functor 
in $ \Fun_{\mathrm{lax}}\left(N(\mathrm{Free}_{C_p})\times_{N(\mathrm{Fin})} \Cscr^\otimes_\act, \Cscr\right)$ to $N\mathrm{Tor}_{C_p} \times \Cscr \subseteq N(\mathrm{Free}_{C_p})\times_{N(\mathrm{Fin})} \Cscr^\otimes_\act$ and forming the adjunct. 

Now the functor $\tilde L$ is symmetric monoidal rather than lax symmetric
monoidal. Thus to construct a map from $\tilde L$ to $\tilde{T}_p$ is by
adjunction equivalent to constructing a transformation in
$\Fun(N\mathrm{Tor}_{C_p}, \Fun_{\mathrm{lax}}(\Cscr, \Cscr))$ between the
respective restrictions. Moreover $BC_p$-acts on all those categories, i.e., to construct a $BC_p$-equivariant transformation between $\tilde L$ and $\tilde{T}_p$ is equivalent to construct a transformation in 
\[
\Fun^{BC_p}(N\mathrm{Tor}_{C_p}, \Fun_{\mathrm{lax}}(\Cscr, \Cscr)).
\]
Now the category $\mathrm{Tor}_{C_p}$ is in fact equivalent to $BC_p$. Since the $BC_p$-action on $\Fun_{\mathrm{lax}}(\Cscr, \Cscr)$ is trivial it follows that the above $\infty$-category of $BC_p$-equivariant functors is equivalent to  $\Fun_{\mathrm{lax}}(\Cscr, \Cscr)$. 

Taking everything together we see that there is a unique symmetric monoidal transformation $\tilde L \to \tilde{T}_p$ extending the transformation $\Delta\colon L \to T_p$. 
Together with the constructions above this finishes the proof. 
\end{proof}

We shall refer to the map $L( \HH A) \to (\HH A)^{tC_p}$ as a twisted cyclotomic structure on $\HH A$. Thus the last result shows that for $\infty$-categories with a twisted Tate diagonal we find that Hochschild homology admits a twisted cyclotomic structure.
\begin{lemma}\label{lem_stabilization}
For a given $L$-twisted diagonal on $\Cscr$, the stabilization $\Sp(\Cscr)$ admits a canonical induced $\Sp(L)$-twisted Tate diagonal.
\end{lemma}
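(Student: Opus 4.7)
The plan is to exploit the identification $\Sp(\Cscr)\simeq \Cscr\otimes\Sp$ as presentably symmetric monoidal $\infty$-categories (under which $\Sp(L)$ corresponds to $L\otimes \mathrm{id}_\Sp$), and to combine the given $L$-twisted diagonal $\Delta_\Cscr$ on $\Cscr$ with the canonical Tate diagonal $\Delta_\Sp\colon X\to (X^{\otimes p})^{tC_p}$ on spectra from \cite[Section III.1]{NS18}.

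First, I would record the relevant structure on $\Sp(\Cscr)$: both $(-)^{\otimes p, hC_p}$ and $(-)^{\otimes p, tC_p}$ are lax symmetric monoidal endofunctors, and in the stable setting the canonical comparison map between them is also lax symmetric monoidal. Combining the lax symmetric monoidal structure of $(-)^{tC_p}$ with this comparison produces a natural lax symmetric monoidal pairing
\begin{equation*}
A^{hC_p}\otimes B^{tC_p}\longrightarrow (A\otimes B)^{tC_p}
\end{equation*}
for $A,B\in\Sp(\Cscr)^{BC_p}$, obtained by sending $A^{hC_p}\otimes B^{tC_p}$ to $A^{tC_p}\otimes B^{tC_p}\to (A\otimes B)^{tC_p}$.

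Second, on pure tensors $C\otimes X\in\Cscr\otimes\Sp=\Sp(\Cscr)$ with $C\in\Cscr$ and $X\in\Sp$, the candidate $\Sp(L)$-twisted Tate diagonal is the composite
\begin{equation*}
\Sp(L)(C\otimes X)=L(C)\otimes X\xrightarrow{\Delta_\Cscr\otimes \Delta_\Sp}(C^{\otimes p})^{hC_p}\otimes (X^{\otimes p})^{tC_p}\longrightarrow ((C\otimes X)^{\otimes p})^{tC_p},
\end{equation*}
where the second arrow is the pairing above. This gives the correct natural transformation at the level of objects generating $\Sp(\Cscr)$ under colimits and tensor products.

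The main step is then to promote this to a coherent lax symmetric monoidal natural transformation on all of $\Sp(\Cscr)$. For this I would run the same formalism used in the proof of \Cref{prop_twisted_cyclotomic}: extend both $\Delta_\Cscr$ and $\Delta_\Sp$ to $BC_p$-equivariant lax symmetric monoidal natural transformations of functors from $N(\mathrm{Free}_{C_p})\times_{N(\mathrm{Fin})}(-)^\otimes_\mathrm{act}$ (using Lemma III.3.3 of \cite{NS18}), and then assemble their Day-convolution tensor product to produce the required extension on the tensor product $\Cscr\otimes \Sp\simeq\Sp(\Cscr)$. The hardest part will be verifying this coherent assembly --- namely, checking that the tensor of two $BC_p$-equivariant lax symmetric monoidal transformations remains such, compatibly with the equivalence $\Sp(\Cscr)\simeq \Cscr\otimes\Sp$ of presentably symmetric monoidal $\infty$-categories --- but the $N(\mathrm{Free}_{C_p})$ formalism is precisely designed to make this compatibility formal.
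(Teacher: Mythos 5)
Your pure-tensor formula is essentially the composite that the paper's proof encodes (unstable twisted diagonal, followed by the comparison $(-)^{hC_p}\to(-)^{tC_p}$ and the lax symmetric monoidal structure of the Tate construction), up to one small correction: for $C\otimes X$ the fixed points $(C^{\otimes p})^{hC_p}$ are formed in the unstable category $\Cscr$, so before your pairing $A^{hC_p}\otimes B^{tC_p}\to (A\otimes B)^{tC_p}$ in $\Sp(\Cscr)$ can be applied you also need the canonical interchange map $\Sigma^\infty_+\bigl((C^{\otimes p})^{hC_p}\bigr)\to \bigl((\Sigma^\infty_+C)^{\otimes p}\bigr)^{hC_p}$; this is the analogue of the map $\gamma$ in the paper's argument. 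That part is fine.

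The genuine gap is the assembly step, which is where the entire content of the lemma lies. A lax symmetric monoidal transformation between endofunctors of $\Sp(\Cscr)$ is neither determined by nor constructible from its values on pure tensors $C\otimes X$: the universal property of $\Cscr\otimes\Sp$ in $\mathrm{Pr}^L$ classifies only functors (and transformations) that preserve colimits separately in each variable, and the targets $((-)^{\otimes p})^{hC_p}$ and $T_p=((-)^{\otimes p})^{tC_p}$ are not colimit-preserving; moreover $\Cscr\otimes\Sp$ is not a functor category $\Fun(I,\mathcal{S})$ or $\Fun(I,\Sp)$, so there is no Day convolution present and no sense in which transformations on the two factors admit a ``Day-convolution tensor product'' acting on it. The $N(\mathrm{Free}_{C_p})$ formalism of \cite[Sec.~III.3]{NS18}, as used in \Cref{prop_twisted_cyclotomic}, solves a different problem --- upgrading an already-given transformation $L\to T_p$ to a $BC_p$-equivariant multiplicative one so it can be fed into the cyclic bar construction --- and provides no mechanism for extending data from generators of $\Cscr\otimes\Sp$. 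What actually makes the lemma work, and what your outline never invokes, is stability: $T_p$ is an exact endofunctor of $\Sp(\Cscr)$, and by \cite{Nik} postcomposition with $\Omega^\infty$ gives an equivalence $\Fun_{\mathrm{lax}}^{\mathrm{Ex}}(\Sp(\Cscr),\Sp(\Cscr))\to\Fun_{\mathrm{lax}}^{\mathrm{Ex}}(\Sp(\Cscr),\Cscr)$; combined with the adjunctions $(\Sp(L),R')$ and $(L,R)$ this reduces the whole construction, by adjunction, to a lax symmetric monoidal transformation $\Omega^\infty\to R\,\Omega^\infty T_p$ of functors $\Sp(\Cscr)\to\Cscr$, which is exactly where the given unstable $L$-twisted diagonal enters. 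Unless you supply a replacement for this reduction (or another argument of equal force), your construction stops at the pure tensors.
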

\begin{proof}
We would like to construct a symmetric monoidal natural transformation
\[
\Sp(L)(C) \to ( C \otimes \dots  \otimes C)^{tC_p} = T_p(C) \ .
\]
Such a transformation is by adjunction  the same as a symmetric monoidal transformation
\[
\mathrm{id} \to R' T_p
\]
where $R'\colon \Sp(\Cscr) \to \Sp(\Cscr)$ is the right adjoint to $\Sp(L)$.
We now use that the functor $\Omega^\infty$ induces an equivalence
\[
\Fun_{\mathrm{lax}}^{\mathrm{Ex}}(\Sp(\Cscr), \Sp(\Cscr)) \to \Fun_{\mathrm{lax}}^{\mathrm{Ex}}(\Sp(\Cscr), \Cscr) 
\]
by \cite{Nik}. It follows that it suffices to  construct a symmetric monoidal transformation
\[
\Omega^\infty \to \Omega^\infty R' T_p
\]
of functors $\Sp(\Cscr) \to \Cscr$. We denote by $R\colon \Cscr \to \Cscr$ the right adjoint to the functor $L\colon \Cscr \to \Cscr$. Then we have an equivalence $\Omega^\infty R' \simeq R \Omega^\infty$ of lax symmetric monoidal functors which follows from the fact that the left adjoint diagram
\[
\xymatrix{
\Cscr \ar[r]^L\ar[d]^{\Sigma^\infty} & \Cscr\ar[d]^{\Sigma^\infty} \\
\Sp(\Cscr) \ar[r]^{\Sp(L)} & \Sp(\Cscr) \ .
}
\]
commutes (up to symmetric monoidal equivalence). As a result we need to construct a symmetric monoidal natural transformation
\begin{equation}\label{trafo_lax}
\Omega^\infty \to R \Omega^\infty T_p \ .
\end{equation}
Now we use that we have  canonical symmetric monoidal transformations
\[
\gamma\colon (\Omega^\infty C \otimes \dots \otimes \Omega^\infty C)^{hC_p}
\to 
\Omega^\infty \left( (C \otimes \dots \otimes C)^{hC_p} \right)
\to 
\Omega^\infty \left( (C \otimes \dots \otimes C)^{tC_p} \right)
\]
where the first one is induced by the lax symmetric monoidal structure of $\Omega^\infty$ together with the fact that it commutes with limits and the second by the canonical map from homotopy fixed points to the Tate construction.

Now we use the unstable diagonal on $\Cscr$ to get as the adjoint a symmetric monoidal natural transformation
\[
\Omega^\infty C \to R (\Omega^\infty C \otimes \dots \otimes \Omega^\infty C)^{hC_p}
\]
and compose it with the map $R(\gamma)$ above to get a symmetric monoidal
    natural transformation as in \eqref{trafo_lax}.
\end{proof}

For every symmetric monoidal $\infty$-category $I$ we consider the symmetric monoidal functor $l_p\colon I \to I$ given by sending $i$ to $i^{\otimes p}$. We let 
\[
L_p\colon  \Fun(I, \mathcal{S}) \to \Fun(I,\mathcal{S})
\]
be left Kan extension along $l_p$. We equip the category $\Fun(I, \mathcal{S})$ with the
Day convolution symmetric monoidal structure. Then the left Kan extension $L_p$ becomes symmetric monoidal.
\begin{lemma}\label{lem_diagonal}
Assume that the $\infty$-category $I$ has the following property:
for every pair of objects $i,j \in I$ we have that the canonical forgetful map
\begin{equation}\label{condition_sym}
\mathrm{Map}_I(i^{\otimes p}, j)^{hC_p} \to \mathrm{Map}_I(i^{\otimes p}, j)
\end{equation}
is an equivalence of spaces.\footnote{Note that an equivalent way of stating this condition is to say that the homotopy orbits $(i^{\otimes p})_{hC_p}$ exist in $I$ and the map $i^{\otimes p} \to (i^{\otimes p})_{hC_p}$ is an equivalence. }
Then the inverse of the map \eqref{condition_sym} induces a canonical $L_p$-twisted diagonal on $\Fun(I, \mathcal{S})$. 
\end{lemma}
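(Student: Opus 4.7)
The plan is to construct the $L_p$-twisted diagonal
$$\Delta_F\colon L_p F \longrightarrow (F^{\otimes p})^{hC_p}$$
by combining the canonical diagonal in $\Sscr$ with left Kan extension along the diagonal $d\colon I \to I^{\times p}$ and the $p$-fold tensor $\mu\colon I^{\times p} \to I$. Both $d$ and $\mu$ are $C_p$-equivariant for the trivial $C_p$-action on $I$ and cyclic permutation on $I^{\times p}$, and they compose to $l_p = \mu\circ d$. Under Day convolution one has $L_p F = \mu_! d_! F$ and $F^{\otimes p} = \mu_!(F^{\boxtimes p})$, where $F^{\boxtimes p}\colon I^{\times p} \to \Sscr$ carries the cyclic $C_p$-action. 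The construction will be checked to coincide with the inverse of \eqref{condition_sym} on corepresentables, which is exactly where the hypothesis enters.

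The construction proceeds in three steps. First, the set-theoretic diagonal $X \to X^{\times p}$ in $\Sscr$ lands in strict (hence homotopy) $C_p$-fixed points, so applied pointwise in $F$ it yields a natural transformation $F \to (d^* F^{\boxtimes p})^{hC_p}$ in $\Fun(I, \Sscr)$. Since $d^*$ is both a left and a right adjoint, it commutes with $(-)^{hC_p}$, rewriting the target as $d^*((F^{\boxtimes p})^{hC_p})$; adjointing across $(d_!, d^*)$ then gives $d_! F \to (F^{\boxtimes p})^{hC_p}$. Second, apply $\mu_!$. Third, compose with the Beck--Chevalley interchange $\mu_!(G^{hC_p}) \to (\mu_! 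G)^{hC_p}$, which exists because $\mu$ is $C_p$-equivariant. The resulting composite
$$L_p F \;=\; \mu_! d_! F \;\longrightarrow\; \mu_!(F^{\boxtimes p})^{hC_p} \;\longrightarrow\; (\mu_! F^{\boxtimes p})^{hC_p} \;=\; (F^{\otimes p})^{hC_p}$$
is lax symmetric monoidal and natural in $F$, inheriting these properties from each ingredient.

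To identify $\Delta_F$ with the claimed map, evaluate on a corepresentable $h^j = \Map_I(j,-)$. Since $L_p$ is left Kan extension along $l_p$, one has $L_p h^j \simeq h^{j^{\otimes p}}$; similarly the Day convolution gives $(h^j)^{\otimes p} \simeq h^{j^{\otimes p}}$ with $C_p$ acting by cyclic permutation of the $p$ tensor factors in $j^{\otimes p}$. Thus $\Delta_{h^j}(k)$ is a map $\Map_I(j^{\otimes p}, k) \to \Map_I(j^{\otimes p}, k)^{hC_p}$. Unwinding the construction, this map is the canonical section of the forgetful map arising from the fact that the ordinary diagonal is strictly $C_p$-fixed. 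By the hypothesis \eqref{condition_sym} the forgetful map is an equivalence, so this section is its inverse, as asserted in the lemma.

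The main obstacle is the coherence: showing that the assembly of the diagonal in $\Sscr$, the $(d_!, d^*)$-adjunction, and the Beck--Chevalley interchange is genuinely a \emph{symmetric monoidal} natural transformation, and not just a pointwise natural transformation. I expect this to be handled by working inside the framework of symmetric monoidal $\infty$-categories equipped with a $C_p$-action, using the universal property that $\Fun(I, \Sscr)$ with Day convolution is the free presentably symmetric monoidal cocompletion of $I$. This reduces the coherence checks to the corepresentable case, where the desired transformation is uniquely determined by the inverse of \eqref{condition_sym} and naturality in $j$.
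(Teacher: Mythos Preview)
Your approach differs from the paper's. You attempt a direct construction via Kan extensions and a Beck--Chevalley interchange, while the paper takes a shorter route: it restricts along the (co)Yoneda embedding $I^{\op} \hookrightarrow \Fun(I,\Sscr)$ (licit since $L_p$ is colimit-preserving and symmetric monoidal), identifies the two restricted functors as $i \mapsto \Map_I(i^{\otimes p},-)$ and $i \mapsto \Map_I(i^{\otimes p},-)^{hC_p}$, and then observes that the forgetful map from $hC_p$-fixed points to the underlying space is already a lax symmetric monoidal transformation---in the \emph{reverse} direction. The hypothesis makes it an equivalence, and the inverse of an invertible lax symmetric monoidal transformation is again lax symmetric monoidal; done.

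Your coherence worry is well-founded, and there is a concrete red flag: your steps produce a natural transformation without ever invoking the hypothesis (you use it only afterwards to \emph{identify} the map on corepresentables). But the remark immediately following this lemma shows that $L_p$-twisted diagonals fail to exist for general $I$, so the hypothesis must enter into establishing the symmetric monoidality, not merely into identifying which map was built. Your final paragraph correctly points toward reducing to corepresentables via the universal property of Day convolution, but stops short of the decisive maneuver: rather than verifying coherence of an assembled forward map, one constructs the backward (forgetful) map, which is automatically lax symmetric monoidal, and inverts it. That single trick dissolves the coherence problem and makes the role of the hypothesis transparent.
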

\begin{proof}
We consider the symmetric monoidal (co)Yoneda embedding 
\[
I^\op \to \Fun(I, \mathcal{S}) \ .
\]
Then symmetric monoidal transformations  
\[
L_p(C) \to (C \otimes \dots \otimes C)^{hC_p}
\]
as functors $\Fun(I, \mathcal{S}) \to  \Fun(I, \mathcal{S})$
are the same as symmetric monoidal transformations between the restrictions of the functors along the Yoneda embedding. The restricted functors $I^\op \to \Fun(I, \mathcal{S})$ are given by the lax symmetric monoidal assignments
\[
    i \mapsto (j \mapsto \Map_I(i^{\otimes p}, j)) \qquad \text{and} \qquad
    i \mapsto (j \mapsto \Map_I(i^{\otimes p}, j)^{hC_p}) \ .
\]
The canonical map $\Map_I(i^{\otimes p}, j)^{hC_p} \to \Map_I(i^{\otimes p},
    j)$ is a symmetric monoidal natural transformation. By assumption it is an equivalence so that the inverse induces the required transformation.
\end{proof}

\begin{remark}
For a general symmetric monoidal $\infty$-category $I$ the category $\Fun(I,
\mathcal{S})$ does not admit an $L_p$-twisted diagonal. As an example consider
any  cocartesian symmetric monoidal $\infty$-category $I$. Then the Day
convolution structure on $\Fun(I, \Sp)$ is cartesian.\footnote{This follows from
the fact that generally Day convolution for a cocartesian source is given by the
pointwise tensor product, which in our case happens to agree with the cartesian
product.} Thus an $L_p$-twisted diagonal would amount to a natural symmetric monoidal transformation
\[
F^{\times p} \to (F^{\times p})^{hC_p}
\]
which does not exist. 

But note that this category admits an $\mathrm{id}$-twisted diagonal. This raises the question if for every symmetric monoidal $\infty$-category $I$ there is a twist on $\Fun(I, \mathcal{S})$ and a twisted diagonal. The answer to this question is also `no' in general but we will not go into the intricacies of concrete counterexamples here.
 \end{remark}

\begin{corollary}\label{cor_twisted}
If $I$ is a symmetric monoidal $\infty$-category satisfying the condition of Lemma \ref{lem_diagonal} then we have for every algebra $A$ in $\Fun(I, \Sp)$ a twisted cyclotomic structure on $\HH A$, i.e.  an $S^1$-equivariant map
\[
L_p(\HH A) \to (\HH A)^{tC_p} \ .
\]
This map is natural and symmetric monoidal in $A$.
\end{corollary}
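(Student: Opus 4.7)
The proof assembles the three previous results of the appendix in a straightforward chain. First, I would apply Lemma~\ref{lem_diagonal} to the hypothesis on $I$ to obtain an $L_p$-twisted diagonal
\[
\Delta\colon L_p(C) \to (C \otimes \cdots \otimes C)^{hC_p}
\]
on the presentably symmetric monoidal $\infty$-category $\Fun(I,\mathcal{S})$ equipped with the Day convolution product; here $L_p$ is the symmetric monoidal left Kan extension along $l_p\colon I \to I$, $i \mapsto i^{\otimes p}$.

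Next, I would pass to stabilizations. The identification $\Sp(\Fun(I,\mathcal{S})) \simeq \Fun(I,\Sp)$ is symmetric monoidal (Day convolution commutes with stabilization, as both sides are characterized by the same universal property among presentably symmetric monoidal stable $\infty$-categories), and under this identification the stabilization $\Sp(L_p)$ of $L_p$ is naturally equivalent to the $L_p$ defined directly on $\Fun(I,\Sp)$ as left Kan extension along $l_p$. Lemma~\ref{lem_stabilization} then produces from the unstable diagonal $\Delta$ a canonical $L_p$-twisted Tate diagonal
\[
\widetilde{\Delta}\colon L_p(X) \to (X \otimes \cdots \otimes X)^{tC_p}
\]
as a symmetric monoidal natural transformation of lax symmetric monoidal functors $\Fun(I,\Sp) \to \Fun(I,\Sp)$.

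Finally, I would invoke Proposition~\ref{prop_twisted_cyclotomic} applied to $\Cscr = \Fun(I,\Sp)$ and the twisted Tate diagonal $\widetilde{\Delta}$: this produces, for every algebra object $A \in \mathrm{Alg}(\Fun(I,\Sp))$, an $S^1$-equivariant map
\[
L_p(\HH A) \to (\HH A)^{tC_p}
\]
that is functorial and lax symmetric monoidal in $A$, precisely as required. The only step requiring care is the compatibility check identifying $\Sp(\Fun(I,\mathcal{S}))$ with $\Fun(I,\Sp)$ as symmetric monoidal $\infty$-categories and matching the two incarnations of $L_p$ under this identification; once this is in place, the conclusion follows by direct citation of the earlier results, with no further obstacle.
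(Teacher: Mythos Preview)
Your proposal is correct and follows exactly the paper's approach: the paper's proof reads simply ``Combine Proposition~\ref{prop_twisted_cyclotomic} with Lemma~\ref{lem_stabilization} and Lemma~\ref{lem_diagonal},'' which is precisely the chain you describe. Your additional remark about the symmetric monoidal identification $\Sp(\Fun(I,\mathcal{S})) \simeq \Fun(I,\Sp)$ and the matching of the two $L_p$'s is a useful clarification that the paper leaves implicit.
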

\begin{proof}
Combine Proposition \ref{prop_twisted_cyclotomic} with Lemma \ref{lem_stabilization} and Lemma \ref{lem_diagonal}.
\end{proof}

\begin{example}\label{Exgraded}
We consider the category $I = \mathbb{Z}_{\geq 0}^{\mathrm{ds}}$. Then 
$\Fun(I, \mathrm{Sp})$ is the $\infty$-category of graded spectra. The category
$I$ obviously satisfies the condition of Lemma \ref{lem_diagonal}. Thus we get
that for a graded ring $R_\bullet$, graded $\THH$ admits an $L_p$-twisted cyclotomic structure or equivalently a sequence of $S^1$-equivariant maps 
\[
\THH(R)_i \to \THH(R)_{pi}^{tC_p} \ .
\]
The same logic applies to spectra graded over any discrete monoid in place of $\mathbb{Z}_{\geq 0}^{\mathrm{ds}}$.
\end{example}

\begin{example}\label{Exfiltered}
Consider the $\infty$-category $I = \mathbb{Z}_{\geq 0}^{\op}$ associated to the poset of positive integers. Then this also satisfies the condition of Lemma \ref{lem_diagonal}. The category of functors $\Fun(I, \Sp)$ is given by filtered spectra and thus filtered $\THH$ of a filtered ring spectrum $R$ admits a filtered cyclotomic structure, i.e. $S^1$-equivariant maps
\[
\mathrm{Fil}^{\geq i} \THH(R) \to  (\mathrm{Fil}^{\geq pi}
    \THH(R))^{tC_p}.
\]
\end{example}

\begin{example}\label{polynomial_tate}
Consider the category $I = B \mathbb{Z}_{\geq 0}$. This category also obviously satisfies the condition of Lemma \ref{lem_diagonal}. Thus the category
\[
\Fun(I, \Sp) \simeq \mathrm{Mod}_{\mathbb{S}[z]}
\]
admits a twisted Tate diagonal and thus relative $\THH$ admits a (twisted)
cyclotomic structure, as is used in \cite[Sec.~11]{BMS2}. The twist $L_p$ corresponds to the map $l\colon \mathbb{S}[z] \to \mathbb{S}[z]$ sending $z$ to $z^p$. 
\end{example}

We want to end this section by remarking some functorialities of the twisted cyclotomic structures.
\begin{definition}
A \emph{symmetric monoidal category with (Tate) diagonals} consists of a triple $(\Cscr, L, \Delta)$ as in Definition \ref{TateDiagonal}. A map of symmetric monoidal categories with (Tate) diagonals 
\[
(\Cscr, L, \Delta) \to (\Cscr', L', \Delta')
\]
is given by a left adjoint symmetric monoidal functor $F\colon \Cscr \to \Cscr'$ together with a symmetric monoidal equivalence $L' \circ F \simeq F \circ L$ and a natural symmetric monoidal equivalence between the two maps
\[
L'(FX) \to (FX\otimes \dots \otimes FX)^{tC_p} 
\]
induced from $\Delta$ and $\Delta'$ (both sides considered as lax symmetric monoidal functors $\Cscr \to \Cscr'$).
\end{definition}

Form the construction of the twisted cyclotomic structure in Proposition \ref{prop_twisted_cyclotomic} we see immediately that for such a map of symmetric monoidal $\infty$-categories with Tate diagonals we get an equivalence of twisted cyclotomic objects
\[
F( \HH A ) \simeq \HH(FA)
\]
for every algebra $A$ in $\Cscr$. Here the first object $F(\HH A)$ is twisted cyclotomic by the composition
\[
LF( \HH A) \xto{\simeq} F L ( \HH A) \xto{F \varphi} F( \HH A^{tC_p}) \to F( \HH A)^{tC_p} \ .
\]
We also have a relative analogue of Lemma \ref{lem_stabilization}: every map of
symmetric monoidal $\infty$-categories with diagonals induces upon
stabilization a map of symmetric monoidal $\infty$-categories with Tate
diagonals. This is straightforward to prove. Finally there is also an analogue
of Lemma \ref{lem_diagonal} which we  will state and prove now.

\begin{lemma}
Assume that $f\colon I \to I'$ is a symmetric monoidal functor such that $I$ and $I'$ satisfy the condition of Lemma \ref{lem_diagonal}. Then left Kan extension along $f$ induces a map of symmetric monoidal $\infty$-categories with diagonals
\[
(\Fun(I, \mathcal{S}), L_p, \Delta) \to (\Fun(I', \mathcal{S}), L'_p, \Delta') \ .
\]
where $L_p, L_p', \Delta$ and $\Delta'$ are as in Lemma \ref{lem_diagonal}.
\end{lemma}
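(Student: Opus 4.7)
The plan is to verify the three pieces of data required by the definition of a map of symmetric monoidal $\infty$-categories with diagonals: that $f_!\colon \Fun(I, \mathcal{S}) \to \Fun(I', \mathcal{S})$ is a left-adjoint symmetric monoidal functor, that there is a symmetric monoidal equivalence $L'_p \circ f_! \simeq f_! \circ L_p$, and that the two induced lax symmetric monoidal transformations (into the $C_p$-homotopy-fixed-point target) agree. The first assertion follows from the universal property of $\Fun(I, \mathcal{S}) = \mathrm{PSh}(I^\op)$ as the free presentably symmetric monoidal cocompletion of $I^\op$, applied to the symmetric monoidal composite $I^\op \xrightarrow{f^\op} (I')^\op \to \Fun(I', \mathcal{S})$. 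For the second, $L_p$ and $L'_p$ are left Kan extensions along the $p$-th power maps $l_p\colon I \to I$ and $l'_p\colon I' \to I'$; since $f$ is symmetric monoidal, one has a canonical equivalence $l'_p \circ f \simeq f \circ l_p$ of symmetric monoidal functors, and passing to left Kan extensions yields the required $L'_p \circ f_! \simeq f_! \circ L_p$.

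The substantive work will be to show that the two lax symmetric monoidal natural transformations
\[
\alpha_1, \alpha_2\colon L'_p(f_! X) \longrightarrow (f_! X \otimes \cdots \otimes f_! X)^{hC_p}
\]
agree, where $\alpha_1$ is obtained by applying $\Delta'$ at $f_!X$ and $\alpha_2$ is obtained via the equivalence $L'_p(f_!X) \simeq f_!(L_p X)$, then applying $f_!(\Delta_X)$, then composing with the canonical interchange $f_!((-)^{hC_p}) \to (f_!(-))^{hC_p}$ (which exists because $f_!$ is a left adjoint, hence admits a lax comparison with the limit $(-)^{hC_p}$). Following the strategy of the proof of Lemma \ref{lem_diagonal}, the source $L'_p \circ f_!$ is colimit-preserving and symmetric monoidal, so any lax symmetric monoidal transformation out of it is determined by its restriction along the coYoneda embedding $y\colon I^\op \to \Fun(I, \mathcal{S})$; thus it suffices to verify $\alpha_1(y_i) \simeq \alpha_2(y_i)$ for each $i \in I$.

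On a representable $y_i$, both $\alpha_1(y_i)$ and $\alpha_2(y_i)$ are natural maps of functors $(I')^\op \to \mathcal{S}$ which, upon evaluation at $j \in I'$, yield maps $\Map_{I'}(f(i)^{\otimes p}, j) \to \Map_{I'}(f(i)^{\otimes p}, j)^{hC_p}$ (using that Day convolution of representables is representable on the tensor product, together with $f$ being symmetric monoidal). There is a canonical forgetful map from target to source which, by the condition on $I'$, is an equivalence, and by construction $\alpha_1(y_i)$ is the inverse of this equivalence. To identify $\alpha_2(y_i)$ as the same inverse, we will check that the post-composition of $\alpha_2(y_i)$ with this forgetful map is the identity. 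Indeed, the interchange map $f_!((y_i^{\otimes p})^{hC_p}) \to (f_!(y_i^{\otimes p}))^{hC_p}$ followed by the forgetful map equals $f_!$ applied to the forgetful map in $\Fun(I, \mathcal{S})$, and by the definition of $\Delta$ at $y_i$ (as the inverse of the forgetful map, using the condition on $I$) this composed with $f_!(\Delta(y_i))$ is the identity on $f_!(y_{i^{\otimes p}})$. Hence $\alpha_2(y_i)$ is also the inverse of the forgetful equivalence, so $\alpha_1(y_i) \simeq \alpha_2(y_i)$, and the resulting equivalence inherits a symmetric monoidal structure from the symmetric monoidal structures on $\alpha_1$ and $\alpha_2$. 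The main obstacle will be the bookkeeping of the various transformations (especially the interchange map between $f_!$ and $(-)^{hC_p}$); conceptually, both diagonals are uniquely characterized by inverting the forgetful-from-fixed-points map, and this characterization is preserved under $f_!$.
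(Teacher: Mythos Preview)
Your proposal is correct and follows essentially the same approach as the paper: both arguments use the commutative square $l'_p \circ f \simeq f \circ l_p$ to obtain the intertwining $L'_p \circ f_! \simeq f_! \circ L_p$, then restrict along the coYoneda embedding $I^\op \hookrightarrow \Fun(I,\mathcal{S})$ and identify both transformations on representables as the inverse of the forgetful map $\Map_{I'}(f(i)^{\otimes p}, j)^{hC_p} \to \Map_{I'}(f(i)^{\otimes p}, j)$. Your treatment is in fact more explicit than the paper's, which simply asserts that ``unravelling the constructions'' shows both transformations are this inverse; you spell out the role of the interchange map $f_!((-)^{hC_p}) \to (f_!(-))^{hC_p}$ and verify $\alpha_2$ is the inverse by post-composing with the forgetful map.
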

\begin{proof}
We have a commutative square
\[
\xymatrix{
I \ar[r]^f\ar[d]^{l_p}  & I'\ar[d]^{l_p'} \\
I \ar[r]^f & I'
}
\]
for the functors $l_p(i) = i^{\otimes p}$ and $l_p'(j) = j^{\otimes p}$. Thus we get an induced square of the left Kan extensions
\[
\xymatrix{
\Fun(I, \mathcal{S}) \ar[r]^F\ar[d]^{L_p}  & \Fun(I', \mathcal{S})\ar[d]^{L_p'} \\
\Fun(I, \mathcal{S}) \ar[r]^F &\Fun(I', \mathcal{S}) \ .
}
\]
This provides the first part of the datum of a map of symmetric monoidal $\infty$-categories with Tate diagonals. We now also have to provide an equivalence of two different natural transformations between two functors
\[
\Fun(I, \mathcal{S}) \to \Fun(I', \mathcal{S}) \ .
\]
Such a transformation is determined by its restriction to $I^\op \subseteq
\Fun(I, \mathcal{S})$ and there the functors are given by
\[
i \mapsto \left(j \mapsto \mathrm{Map}_{I'}(f(i)^{\otimes p}, j)\right)
\]
and 
\[
i \mapsto \left(j \mapsto \mathrm{Map}_{I'}(f(i)^{\otimes p}, j)^{hC_p}\right) \ .
\]
Unravelling the constructions we see that both of the two transformations are given by the inverse of the canonical forgetful map 
\[
 \mathrm{Map}_{I'}(f(i)^{\otimes p}, j)^{hC_p} \to \mathrm{Map}_{I'}(f(i)^{\otimes p}, j) 
\]
and thus are canonically equivalent.
\end{proof}

From these statements together we can deduce the following corollary:

\begin{corollary}\label{cor_leftKan}
Assume that $f\colon I \to I'$ is a symmetric monoidal functor such that $I$ and $I'$ satisfy the condition of Lemma \ref{lem_diagonal}. Then for every algebra $A \in \Fun(I, \mathrm{Sp})$ we have an equivalence of $L'_p$ twisted cyclotomic objects 
\[
F( \HH A ) \simeq \HH(FA)
\]
where $F$ is left Kan extension along $f$.
\end{corollary}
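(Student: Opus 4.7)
The plan is to simply assemble the three pieces of functoriality already developed in this appendix. Concretely, I want to package left Kan extension $F\colon \Fun(I,\Sp) \to \Fun(I',\Sp)$ as a morphism of symmetric monoidal $\infty$-categories with Tate diagonals, and then invoke the general naturality of the twisted cyclotomic structure on $\HH$ built in Proposition~\ref{prop_twisted_cyclotomic}.

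First I would apply the immediately preceding lemma, which shows that left Kan extension along $f$ on the unstable level gives a map
\[
(\Fun(I,\mathcal{S}), L_p, \Delta) \to (\Fun(I',\mathcal{S}), L'_p, \Delta')
\]
of symmetric monoidal $\infty$-categories with (unstable) diagonals. Next I would stabilize: by the relative version of Lemma~\ref{lem_stabilization} mentioned in the paragraph above the statement, a map of symmetric monoidal $\infty$-categories with diagonals induces, upon passing to $\Sp(-)$, a map of symmetric monoidal $\infty$-categories with Tate diagonals. Since $\Sp(\Fun(I,\mathcal{S})) \simeq \Fun(I,\Sp)$ (and similarly for $I'$), and the stabilization of left Kan extension along $f$ in spaces agrees with left Kan extension along $f$ in spectra, we obtain that $F\colon \Fun(I,\Sp) \to \Fun(I',\Sp)$ is a map of symmetric monoidal $\infty$-categories with Tate diagonals compatible with $L_p$ and $L'_p$.

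Finally, as noted in the discussion following the definition of such maps, for any algebra $A \in \Alg(\Fun(I,\Sp))$ the construction of the twisted cyclotomic structure in Proposition~\ref{prop_twisted_cyclotomic} is natural along maps of symmetric monoidal $\infty$-categories with Tate diagonals. Applying this to $F$ and $A$ yields a canonical equivalence $F(\HH A) \simeq \HH(FA)$ of $L'_p$-twisted cyclotomic objects, which is what was to be proved.

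There is no real obstacle here: all of the content has been extracted into the two preceding lemmas and the observation about stabilization. The only point to check carefully is that the equivalence $L'_p \circ F \simeq F \circ L_p$ produced by the unstable lemma is compatible, after stabilization, with the Tate diagonals constructed out of the unstable diagonals via the adjunction argument in Lemma~\ref{lem_stabilization}; but this compatibility is exactly the content of the relative version of that lemma.
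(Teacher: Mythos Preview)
Your proposal is correct and follows essentially the same approach as the paper. The paper does not write out an explicit proof but simply says ``From these statements together we can deduce the following corollary,'' referring precisely to the three ingredients you name: the preceding lemma giving the unstable map of categories with diagonals, the relative stabilization statement, and the naturality observation after the definition of maps of categories with Tate diagonals.
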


\begin{example}
For a graded ring spectrum $R_\bullet$ we have that the direct sum 
\[
\bigoplus_{i} \THH(R_\bullet)_i 
\]
is equivalent to $\THH(\bigoplus_i R)$ as cyclotomic spectra. Similarly, for a filtered ring spectrum $R$ we have that the filtered cyclotomic structure refines the cyclotomic structure on $\THH(R)$. 
\end{example}
\begin{example}
We finally note that one can also look at the functor 
\[
\mathrm{ev}_0\colon \Fun(\mathbb{Z}_{\geq 0}^{\mathrm{ds}}, \Sp) \to \Sp
\]
given by restriction to the $0$-th component. We claim that this also refines
    to a map of symmetric monoidal $\infty$-categories with Tate diagonals.
    This can be seen by verifying the corresponding unstable statement which is
    straightforward  using an argument similar to the one in the proof of
    Corollary \ref{cor_leftKan}. This then shows that the cyclotomic structure
    on the $0$-th graded component $\THH(R_\bullet)_0$ agrees with the one on
    $\THH(R_0)$ for every graded ring spectrum $R_\bullet$.
\end{example}
\section{Categorical lemmas} 
 \label{app:Kan}
\begin{construction}[Left Kan extensions] 
\label{LKEcons}
Let $R$ be a ring, and 
let $\mathrm{Poly}_R$  be the category of finitely generated polynomial $R$-algebras. Given a presentable $\infty$-category
$\mathcal{C}$ and an accessible functor $f\colon \mathrm{Poly}_R \to
\mathcal{C}$, we can left Kan
extend to obtain a functor $Lf\colon \mathrm{SCR}_R \to \mathcal{C}$ which commutes
with geometric realizations, for
$\mathrm{SCR}_R$ the $\infty$-category of simplicial commutative $R$-algebras. 
Compare \cite[Sec. 5.5.8]{HTT} and \cite[Sec. 4.2]{DAGVIII}. 
\end{construction} 

Let $(\mathcal{L}, \mathcal{R}) \colon \mathcal{C} \rightleftarrows \mathcal{D}$ be an adjunction 
of $\infty$-categories. Then 
for any $\infty$-category $\mathcal{E}$, we obtain an adjunction
\begin{equation} \label{pullbackadj} 
(\mathcal{R}^*, \mathcal{L}^*) = (f \mapsto f \circ \mathcal{R}, f' \mapsto f \circ \mathcal{L})\colon 
\fun( \mathcal{C}, \mathcal{E}) \rightleftarrows \fun(\mathcal{D},
\mathcal{E})  . \end{equation}

\begin{remark} 
\label{adjequiv}
Let $f_1, f_2\colon \mathcal{D} \to \mathcal{E}$ be functors. 
Suppose that for any $x \in \mathcal{D}$, the natural map $f_1( \mathcal{L}
\mathcal{R} x ) \to f_1(x)$ is an equivalence. 
Then 
we find
\begin{equation}  \hom_{\fun(\mathcal{D}, \mathcal{E})}(f_1, f_2) 
\simeq \hom_{\fun(\mathcal{C}, \mathcal{E})}(f_1 \circ \mathcal{L}, f_2 \circ
\mathcal{L}). 
\end{equation}
This follows from the adjunction \eqref{pullbackadj}. 
\end{remark}

Now we specialize to the case where $\mathcal{C} = \SCR$ is the
$\infty$-category of simplicial commutative rings and $\mathcal{D} =
\SCR_{\mathbb{F}_p}$ is the $\infty$-category of simplicial commutative
$\mathbb{F}_p$-algebras. 
We have an adjunction $(\mathcal{L}, \mathcal{R})\colon \SCR \rightleftarrows \SCR_{\mathbb{F}_p}$, where the
left adjoint is $R \mapsto R \otimes^L_{\mathbb{Z}} \mathbb{F}_p$ and the right adjoint is
simply the forgetful functor. 

For any $R \in \SCR_{\mathbb{F}_p}$, we have a canonical endomorphism $\varphi\colon
R \to R$, the Frobenius. 

\begin{lemma} 
\label{froblemmaSCR}
Let $R \in \SCR_{\mathbb{F}_p}$. 
There is a natural map $f\colon R \to R \otimes^{L}_{\mathbb{Z}} \mathbb{F}_p$ in 
$\SCR_{\mathbb{F}_p}$
such
that the composites 
$R \stackrel{f}{\to} R \otimes^{L}_{\mathbb{Z}} \mathbb{F}_p \to R$ and
$R \otimes^{L}_{\mathbb{Z}} \mathbb{F}_p \to R \to R \otimes^{L}_{\mathbb{Z}}
\mathbb{F}_p$ are the respective Frobenius endomorphisms. 
\end{lemma}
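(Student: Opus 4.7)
The plan is to construct $f$ as the composite $f_R = \eta_R \circ \varphi_R$, where $\eta_R \colon R \to R \otimes^L_{\mathbb{Z}} \mathbb{F}_p$ is the natural $\mathbb{F}_p$-algebra map $r \mapsto r \otimes 1$. This map $\eta$ is natural in $R \in \SCR_{\mathbb{F}_p}$ (it is obtained from the unit of the adjunction $\mathcal{L} \dashv \mathcal{R}$ applied to $\mathcal{R}(R)$, which lifts to an $\mathbb{F}_p$-algebra map because the two induced $\mathbb{F}_p$-algebra structures on $R \otimes^L_{\mathbb{Z}} \mathbb{F}_p$ coincide by commutativity), and it satisfies $\epsilon \circ \eta = \mathrm{id}$ by the triangle identity. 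The first claimed property $\epsilon_R \circ f_R = \varphi_R$ is then immediate.

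For the second property $f_R \circ \epsilon_R = \varphi_{\mathcal{L}\mathcal{R}(R)}$, naturality of $\varphi$ applied to $\epsilon_R$ rewrites the left-hand side as $\eta_R \circ \epsilon_R \circ \varphi_{\mathcal{L}\mathcal{R}(R)}$, so the claim reduces to showing that $\eta_R \circ \epsilon_R$ acts as the identity after post-composition with $\varphi_{\mathcal{L}\mathcal{R}(R)}$. Both sides are natural transformations of endofunctors $\mathcal{L}\mathcal{R} \to \mathcal{L}\mathcal{R}$ of $\SCR_{\mathbb{F}_p}$ that commute with sifted colimits, so by the universal property of left Kan extension (\Cref{LKEcons}) it suffices to verify the identity after restriction to $R \in \mathrm{Poly}_{\mathbb{F}_p}$.

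On a polynomial $\mathbb{F}_p$-algebra $R = \mathbb{F}_p[x_1, \ldots, x_n]$, one identifies $\mathcal{L}\mathcal{R}(R) \simeq R \otimes_{\mathbb{F}_p}(\mathbb{F}_p \otimes^L_{\mathbb{Z}} \mathbb{F}_p)$ as the free simplicial commutative $\mathbb{F}_p$-algebra on the $x_i$ in degree $0$ together with a Bockstein class $\beta \in \pi_1$ coming from the derived tensor product. A map in $\SCR_{\mathbb{F}_p}$ out of $\mathcal{L}\mathcal{R}(R)$ is then determined by the images of these generators: both $f \circ \epsilon$ and $\varphi_{\mathcal{L}\mathcal{R}(R)}$ send $x_i \mapsto x_i^p$, while both send $\beta \mapsto 0$ (the former because $\epsilon$ annihilates $\beta$; the latter because the Frobenius on $\mathcal{L}\mathcal{R}(R)$ factors through its truncation to $\pi_0$, since $\beta^p$ vanishes in the simplicial commutative ring structure for degree reasons). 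The main obstacle will be this final verification, which requires a concrete enough description of the simplicial commutative $\mathbb{F}_p$-algebra structure on $\mathcal{L}\mathcal{R}(R)$ to see that Frobenius factors through $\epsilon_R$ as a map in $\SCR_{\mathbb{F}_p}$, not merely on the level of homotopy groups.
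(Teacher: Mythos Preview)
Your approach and the paper's are essentially the same: your map $f = \eta_R \circ \varphi_R$ coincides with the paper's $f$, which is obtained by factoring the Frobenius $\varphi_{\mathcal{L}\mathcal{R}(R)}$ through the truncation $\epsilon_R$ (compose the paper's factorization with $\eta_R$ and use naturality of $\varphi$ to see this). You verify the two composite identities in the opposite order, but the crux is identical in both arguments: one must know that for $R$ a polynomial $\mathbb{F}_p$-algebra, the Frobenius on $A = R \otimes^L_{\mathbb{Z}} \mathbb{F}_p$ annihilates $\pi_1(A)$, so that it factors through $\epsilon_R$. The paper handles this by citing \cite[Prop.~11.6]{BSproj}; you attempt a direct argument.

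Your direct argument has two wrinkles. First, $\mathcal{L}\mathcal{R}(R)$ is \emph{not} the free simplicial commutative $\mathbb{F}_p$-algebra on the $x_i$ and $\beta$: the free algebra on a degree-$1$ class has a divided power algebra on $\pi_*$ and hence nonzero $\pi_i$ for all $i$, whereas $\mathcal{L}\mathcal{R}(R)$ is $1$-truncated. Your conclusion that a map out of $\mathcal{L}\mathcal{R}(R)$ is determined by the images of the $x_i$ and of $\beta$ is nevertheless correct, but for a different reason: the adjunction gives $\mathrm{Map}_{\SCR_{\mathbb{F}_p}}(\mathcal{L}\mathcal{R}(R), B) \simeq \mathrm{Map}_{\SCR_{\mathbb{Z}}}(R, B)$, and writing $R = \mathbb{Z}[x_1,\dots,x_n]/(p)$ as a pushout identifies this with $B^n \times \Omega_0 B$, so $\pi_0$ of the mapping space is $\pi_0(B)^n \times \pi_1(B)$. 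In particular, your stated ``main obstacle'' is not really an obstacle: once you know the two maps agree on $\pi_0$ and $\pi_1$, they agree in $\SCR_{\mathbb{F}_p}$.

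Second, and more seriously, the justification ``$\beta^p$ vanishes for degree reasons'' does not establish $\varphi(\beta) = 0$ in $\pi_1$. The Frobenius acts levelwise by $p$th powers, so on a simplicial representative $b \in A_1$ of $\beta$ it gives $b^p \in A_1$; this is not the $p$-fold product of $\beta$ in the graded ring $\pi_*(A)$ (which would land in $\pi_p$), and there is no ``degree reason'' for $b^p$ to be a boundary. One must actually compute (e.g.\ in the bar model for $\mathbb{F}_p \otimes^L_{\mathbb{F}_p[t]} \mathbb{F}_p$, where $t^p$ is visibly a boundary) or invoke the general fact that Frobenius on any object of $\SCR_{\mathbb{F}_p}$ factors through $\pi_0$, as the paper does.
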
 
\begin{proof} 
It suffices to assume that $R$ is discrete (even a finitely generated
polynomial ring) via left Kan extension. 
In this case, $R \otimes^L_{\mathbb{Z}} \mathbb{F}_p$ is concentrated in
homological degrees zero  and one (with $\pi_0 = R$ itself), and one knows that the Frobenius endomorphism 
annihilates $\pi_1$, cf.~\cite[Prop.~11.6]{BSproj}. Thus, the Frobenius map
$R\otimes_\ZZ^L\FF_p\rightarrow R\otimes_\ZZ^L\FF_p$ factors canonically through the truncation map
$R\otimes_\ZZ^L\FF_p\rightarrow\pi_0(R\otimes_\ZZ^L\FF_p)\iso R$.
This gives the map $f$ as desired.
\end{proof} 

\begin{corollary} 
\label{adjunctionppower}
Let $F_1, F_2\colon \SCR_{\mathbb{F}_p} \to D(\mathbb{Z})$ be two functors. 
Suppose $F_1$ has the property that the natural map $F_1(R) \to F_1(R)$ given
by Frobenius is multiplication by $p^i$. 
Then for any natural transformation 
$u\colon F_1( - \otimes^L_{\mathbb{Z}} \mathbb{F}_p) \to F_2(-
\otimes^L_{\mathbb{Z}} \mathbb{F}_p)$ of functors $\SCR_{\mathbb{Z}} \to D(\mathbb{Z})$, we have that $p^i u$ arises from a
natural transformation 
$F_1 \to F_2$.
In fact, we have 
$$\hom_{\fun(\SCR_\ZZ, D(\mathbb{Z}))}( F_1( - \otimes_{\mathbb{Z}_p}^L
\mathbb{F}_p), F_2( - \otimes_{\mathbb{Z}_p}^L \mathbb{F}_p)) [1/p]
\simeq 
\hom_{\fun(\SCR_{\mathbb{F}_p}, D(\mathbb{Z}))}( F_1, F_2) [1/p].$$
\end{corollary}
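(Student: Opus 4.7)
The strategy is to apply Remark~\ref{adjequiv} to the adjunction $(\mathcal{L},\mathcal{R})\colon\SCR\rightleftarrows\SCR_{\mathbb{F}_p}$, with $f_1=F_1$ and $f_2=F_2$, after inverting $p$ in the target. The essential input is Lemma~\ref{froblemmaSCR}, which for each $S\in\SCR_{\mathbb{F}_p}$ produces a natural Frobenius splitting $f_S\colon S\to\mathcal{L}\mathcal{R} S = S\otimes^L_{\mathbb{Z}}\mathbb{F}_p$ with $c_S\circ f_S=\phi_S$ and $f_S\circ c_S=\phi_{\mathcal{L}\mathcal{R} S}$, where $c_S$ denotes the counit.

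First I would verify the hypothesis of Remark~\ref{adjequiv} after inverting $p$: for every $S\in\SCR_{\mathbb{F}_p}$, the map $F_1(c_S)\colon F_1(\mathcal{L}\mathcal{R} S)\to F_1(S)$ becomes an equivalence after inverting $p$. Applying $F_1$ to the two Frobenius factorizations and using the assumption that Frobenius acts as $p^i$ on values of $F_1$, both composites $F_1(c_S)\circ F_1(f_S)$ and $F_1(f_S)\circ F_1(c_S)$ equal $p^i\cdot\mathrm{id}$; hence $F_1(c_S)$ is invertible after inverting $p$, with inverse $p^{-i}F_1(f_S)$. The construction in the proof of Remark~\ref{adjequiv} then shows that precomposition with $\mathcal{L}$, viewed as a map of mapping spectra $\hom(F_1,F_2)\to\hom(F_1\circ\mathcal{L},F_2\circ\mathcal{L})$, has fiber and cofiber that are bounded $p$-power torsion, and therefore becomes an equivalence after inverting $p$. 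This gives the claimed equivalence of hom spectra.

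For the first assertion, given $u$ I would define the natural transformation $v\colon F_1\to F_2$ by the explicit formula
\[
v_S := F_2(c_S)\circ u_{\mathcal{R} S}\circ F_1(f_S).
\]
Naturality in $S\in\SCR_{\mathbb{F}_p}$ is a short diagram chase using naturality of $u$ (in $\mathcal{R}(-)$), together with the naturality identities $c_{S'}\circ\mathcal{L}\mathcal{R} g=g\circ c_S$ and $\mathcal{L}\mathcal{R} g\circ f_S=f_{S'}\circ g$ supplied by Lemma~\ref{froblemmaSCR}. Since $F_1(f_S)=p^iF_1(c_S)^{-1}$ after inverting $p$, the formula for $v$ is exactly $p^i$ times the preimage of $u$ furnished by the inverse of precomposition with $\mathcal{L}$ in the proof of Remark~\ref{adjequiv}, so the image of $v$ under precomposition with $\mathcal{L}$ coincides with $p^i u$ in the $p$-inverted hom spectrum.

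The main obstacle is bookkeeping rather than ideas: the phrase ``$p^iu$ arises from a natural transformation'' must be read in the $p$-inverted sense (i.e., as an equality in the $p$-localized hom spectrum $\hom(F_1\circ\mathcal{L},F_2\circ\mathcal{L})[1/p]$), because the integral discrepancy $v_{\mathcal{L} R}-p^iu_R$ lies in the kernel of $F_1(c_{\mathcal{L} R})$ and is therefore only $p$-power torsion. Apart from this interpretation, the proof reduces to the adjunction identity of Remark~\ref{adjequiv}, with Lemma~\ref{froblemmaSCR} and the Frobenius hypothesis on $F_1$ ensuring that the counit becomes an $F_1$-equivalence after inverting $p$.
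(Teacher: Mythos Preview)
Your approach is essentially the same as the paper's: both use the adjunction $(\mathcal{L},\mathcal{R})\colon\SCR\rightleftarrows\SCR_{\mathbb{F}_p}$ together with the map $f$ of Lemma~\ref{froblemmaSCR} and the Frobenius hypothesis on $F_1$, and your explicit formula $v_S=F_2(c_S)\circ u_{\mathcal{R}S}\circ F_1(f_S)$ is exactly what one obtains by unwinding the paper's adjunction argument (take the adjoint $\mathcal{R}^*\mathcal{L}^*F_1\to F_2$ of $u$ and precompose with $F_1\to\mathcal{R}^*\mathcal{L}^*F_1$ induced by $f$). Your careful remark that the identification $\mathcal{L}^*v\simeq p^iu$ should be read in the $p$-inverted hom is a useful clarification that the paper leaves implicit.
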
 
\begin{proof} 
This follows from \eqref{pullbackadj} in the case 
of the adjunction $(\mathcal{L}, \mathcal{R}) \colon \SCR_\ZZ \rightleftarrows \SCR_{\mathbb{F}_p}$. 
By construction, we are given a map 
$\mathcal{L}^* F_1 \to \mathcal{L}^* F_2$ of functors $\SCR_{\mathbb{F}_p}
\to D(\mathbb{Z})$, or equivalently by adjointness a map 
$\mathcal{R}^*\mathcal{L}^* F_1 \to F_2$ of functors $\SCR_\ZZ \to D(\mathbb{Z})$. 
Now we have  a natural map $F_1 \to \mathcal{R}^* \mathcal{L}^* F_1$ 
given by the natural map 
$f \colon R \to R \otimes^L _{\mathbb{Z}_p} \mathbb{F}_p$ of \Cref{froblemmaSCR};
it has the property that the composites in either order with the adjunction
map $\mathcal{R}^* \mathcal{L}^* F_1 \to F_1$ are given by multiplication by
$p^i$. 
The composition
$F_1\to \mathcal{R}^* \mathcal{L}^* F_1 \to F_2$ defines the desired map $F_1 \to F_2$. 
This argument also proves the displayed equation. 
\end{proof} 
\newcommand{\fsm}{\mathrm{FSmooth}}
\newcommand{\sm}{\mathrm{Smooth}}

Let $K$ be a complete discretely valued field with ring of integers
$\mathcal{O}_K \subset K$ and residue 
field $k$; let $\pi \in \mathcal{O}_K$ be a uniformizer. 
Let $\fsm_{\mathcal{O}_K}$ denote the category of topologically finitely generated, formally smooth
$\mathcal{O}_K$-algebras and let $\sm_k$ denote the category of smooth
$k$-algebras. 

We now give a similar result for functors defined on a restricted class of
simplicial commutative $k$-algebras.
For the next result, we will argue similarly, but with a smaller set of $\infty$-categories. 
For these finiteness conditions, see
\cite[Sec.~7.2]{HA} (in the slightly more complicated $\mathbb{E}_\infty$-case). 
\begin{definition} 
\label{almostfp}
\begin{enumerate}
    \item[{\rm (1)}]
Let $\SCR^{\ft}_k$ denote the $\infty$-category of simplicial commutative
$k$-algebras 
$R$
which are almost finitely presented: equivalently, 
$\pi_0(R)$ is finitely generated as a $k$-algebra and each $\pi_i(R)$ is a
finitely generated $\pi_0(R)$-module. 
Equivalently, $R$ belongs to $\SCR^{\ft}_k$ if and only if $R$ can be written as
the geometric realization of a simplicial diagram of finitely generated
polynomial $k$-algebras. 
\item[{\rm (2)}]
Similarly, we define $\widehat{\SCR}^{\ft}_{\mathcal{O}_K}$ to be the $\infty$-category of 
$\pi$-complete 
simplicial commutative
$\mathcal{O}_K$-algebras $R$ such that 
$\pi_0(R)$ is topologically finitely generated 
over $\mathcal{O}_K$ (i.e., a quotient of a $\pi$-completed polynomial ring)
and each  $\pi_i(R)$ is finitely generated over $R$. 
Equivalently, $R$ belongs to $\widehat{\SCR}^{\ft}_{\mathcal{O}_K}$ if and only if $R$ can be written as
the geometric realization of a simplicial diagram of $\pi$-completed finitely generated
polynomial $\mathcal{O}_K$-algebras. 
Yet another characterization is that $R$ should be almost finitely presented
over the $\pi$-completion of a finitely generated polynomial algebra over
$\mathcal{O}_K$ with a structure map that is surjective on $\pi_0$. 
\end{enumerate}
\end{definition} 
\begin{corollary} 
\label{TCKanextlemma}
Let $\mathcal{E}$ be an $\infty$-category admitting sifted colimits. 
Let $F_1, F_2\colon \SCR_k^{\mathrm{afp}}  \to \mathcal{E}$ be functors. If
\begin{enumerate}
    \item[{\rm (1)}] $F_1 $  commutes with geometric realizations and
    \item[{\rm (2)}]
	 $F_1(R) \simeq F_1( \pi_0 R)$ for $R \in \SCR_k^{\mathrm{afp}}$,
\end{enumerate}
then 
\[ \hom_{\fun(\sm_{k}, \mathcal{E})}(F_1, F_2) 
\simeq \hom_{\fun( \fsm_{\mathcal{O}_K}, \mathcal{E})}( F_1( -
\otimes_{\mathcal{O}_K} k),
F_2(- \otimes_{\mathcal{O}_K} k)).
\]
\end{corollary}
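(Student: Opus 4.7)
The plan is to combine a base change / restriction of scalars adjunction argument, as in \Cref{adjunctionppower}, with a reduction from the larger afp $\infty$-categories down to $\sm_k$ and $\fsm_{\mathcal{O}_K}$ via left Kan extension from finitely generated polynomial algebras.

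First, I would consider the adjunction $(\mathcal{L}, \mathcal{R})\colon \widehat{\SCR}_{\mathcal{O}_K}^{\ft} \rightleftarrows \SCR_k^{\ft}$, where $\mathcal{L}(R) = R \otimes_{\mathcal{O}_K}^{\LL} k$ is base change and $\mathcal{R}$ is restriction of scalars along $\mathcal{O}_K \to k$. For $R \in \SCR_k^{\ft}$, the counit $\mathcal{L}\mathcal{R}R \to R$ is an isomorphism on $\pi_0$ (since $\pi$ already acts as zero on the $k$-algebra $R$), so hypothesis (2) implies that $F_1$ carries this counit to an equivalence. The adjunction \eqref{pullbackadj} and \Cref{adjequiv} then yield
\[ \hom_{\fun(\SCR_k^{\ft}, \mathcal{E})}(F_1, F_2) \simeq \hom_{\fun(\widehat{\SCR}_{\mathcal{O}_K}^{\ft}, \mathcal{E})}(F_1 \mathcal{L}, F_2 \mathcal{L}). \]

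Next, I would reduce each side to the desired subcategory via left Kan extension. By hypothesis (1) and the characterization of $\SCR_k^{\ft}$ as the geometric realization cocompletion of the category $\mathrm{Poly}_k$ of finitely generated polynomial $k$-algebras, $F_1$ is the left Kan extension of $F_1|_{\mathrm{Poly}_k}$ along $\mathrm{Poly}_k \hookrightarrow \SCR_k^{\ft}$. Transitivity of left Kan extension through the fully faithful chain $\mathrm{Poly}_k \hookrightarrow \sm_k \hookrightarrow \SCR_k^{\ft}$ shows that $F_1|_{\sm_k}$ is itself the left Kan extension of $F_1|_{\mathrm{Poly}_k}$ along $\mathrm{Poly}_k \hookrightarrow \sm_k$, and so the universal property of left Kan extension gives
\[ \hom_{\fun(\sm_k, \mathcal{E})}(F_1|_{\sm_k}, F_2|_{\sm_k}) \simeq \hom_{\fun(\mathrm{Poly}_k, \mathcal{E})}(F_1|_{\mathrm{Poly}_k}, F_2|_{\mathrm{Poly}_k}) \simeq \hom_{\fun(\SCR_k^{\ft}, \mathcal{E})}(F_1, F_2). \]
An identical argument applies on the $\mathcal{O}_K$-side: $F_1 \circ \mathcal{L}$ commutes with geometric realizations because $\mathcal{L}$ is a left adjoint, and $\mathcal{L}(R) = R \otimes_{\mathcal{O}_K} k$ for $R \in \fsm_{\mathcal{O}_K}$ since such $R$ are $\mathcal{O}_K$-flat, whence the analogous reduction relates $\hom_{\fun(\widehat{\SCR}_{\mathcal{O}_K}^{\ft}, \mathcal{E})}(F_1 \mathcal{L}, F_2 \mathcal{L})$ to $\hom_{\fun(\fsm_{\mathcal{O}_K}, \mathcal{E})}(F_1(-\otimes_{\mathcal{O}_K} k), F_2(-\otimes_{\mathcal{O}_K} k))$.

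Combining the three displayed equivalences would prove the corollary. The main point to verify is the transitivity of left Kan extension through the fully faithful inclusions---the formal fact that left Kan extension along a fully faithful functor agrees with the original functor after restriction---which is essentially automatic once $\mathcal{E}$ has the necessary sifted colimits.
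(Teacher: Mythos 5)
Your argument is essentially the paper's own proof: the paper likewise reduces $\hom$ over $\sm_k$ and over $\fsm_{\mathcal{O}_K}$ to $\hom$ over $\SCR_k^{\ft}$ and $\widehat{\SCR}_{\mathcal{O}_K}^{\ft}$ using that $F_1$ (resp.\ $F_1\circ\mathcal{L}$) is left Kan extended from polynomial, hence smooth/formally smooth, algebras, and then applies \Cref{adjequiv} to the base-change/restriction-of-scalars adjunction, with hypothesis (2) guaranteeing that $F_1$ inverts the counit. Your only difference is the order of the steps and a bit more explicitness about transitivity of Kan extensions, which is fine.
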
 

\begin{proof} 
Since $F_1$ is left Kan extended from smooth (even finite type polynomial) $k$-algebras
as it commutes with geometric realizations, 
we have 
\[  \hom_{\fun(\sm_{k}, \mathcal{E})}(F_1, F_2) 
 \simeq \hom_{\fun( \SCR_k^{\ft}, \mathcal{E})}(F_1, F_2) . 
  \]
Similarly, 
\[  \hom_{\fun(\fsm_{\mathcal{O}_K}, \mathcal{E})}(F_1( -
\otimes_{\mathcal{O}_K} k), F_2( - \otimes_{\mathcal{O}_K} k)) 
 \simeq \hom_{\fun( \widehat{\SCR}_{\mathcal{O}_K}^{\ft},
 \mathcal{E})}(F_1(- \otimes_{\mathcal{O}_K} k), F_2(- \otimes_{\mathcal{O}_K} k)) , 
  \]
because $F_1(- \otimes_{\mathcal{O}_K} k)\colon
\widehat{\SCR}_{\mathcal{O}_K}^{\ft} \to \mathcal{E}$  is left Kan extended from
$\fsm_{\mathcal{O}_K}$. Now we have an adjunction 
$\widehat{\SCR}_{\mathcal{O}_K}^{\ft} \rightleftarrows \SCR_k^{\ft}$ given by
base-change and restriction of scalars. Thus, the result follows as in
\Cref{adjequiv}. 
\end{proof}

\bibliographystyle{amsalpha}
\bibliography{B}

\end{document}